\documentclass[200pt]{amsart}
\usepackage[top=30mm, bottom=30mm, left=30mm, right=30mm]{geometry}

\usepackage{ifthen, url, amsmath, amsthm, amssymb,url, amssymb, relsize,placeins, tikz, pgffor, multicol,mathrsfs, abstract, graphicx, fp, hyperref, rotating}

\usepackage{sidenotes} 
\usepackage{lipsum} 
\usepackage{mwe} 
\usepackage[whole]{bxcjkjatype}
\usepackage{newunicodechar}
\usepackage{pdfpages}

\usetikzlibrary{positioning,shapes,shadows}
\usetikzlibrary{arrows,automata}
\linespread{1}

  \newtheoremstyle{break}
  {\topsep}
  {\topsep}
  {\upshape}
  {}
  {\bfseries}
  {.}
  {\newline}
  {}
\theoremstyle{break}
\newtheorem{theore}{Theorem}[section]

\newtheorem{defn}[theore]{Definition}
\newtheorem{remark}[theore]{Remark}
\newtheorem{example}[theore]{Example}

  \newtheoremstyle{breaktheorem}
  {\topsep}
  {\topsep}
  {\itshape}
  {}
  {\bfseries}
  {.}
  {\newline}
  {}
\theoremstyle{breaktheorem}
\newtheorem{theorem}[theore]{Theorem}
\newtheorem{lemma}[theore]{Lemma}
\newtheorem{proposition}[theore]{Proposition}
\newtheorem{corollary}[theore]{Corollary}

\setlength{\marginparwidth}{1in}
\let\oldmarginpar\marginpar
\renewcommand\marginpar[1]{\-\oldmarginpar[\raggedleft\footnotesize #1]%
{\raggedright\footnotesize #1}}

\newcommand{\speeddictzero}[2]{
\adddict{#1 ref}{#2}
\adddict{\dict{#1 ref} count}{0}}

\newcommand{\speeddictone}[3]{
\adddict{#1 ref}{#2}
\adddict{\dict{#1 ref} count}{1}
\adddict{\dict{#1 ref} ass 1}{#3}}

\newcommand{\speeddicttwo}[4]{
\adddict{#1 ref}{#2}
\adddict{\dict{#1 ref} count}{2}
\adddict{\dict{#1 ref} ass 1}{#3}
\adddict{\dict{#1 ref} ass 2}{#4}}

\newcommand{\speeddictthree}[5]{
\adddict{#1 ref}{#2}
\adddict{\dict{#1 ref} count}{3}
\adddict{\dict{#1 ref} ass 1}{#3}
\adddict{\dict{#1 ref} ass 2}{#4}
\adddict{\dict{#1 ref} ass 3}{#5}}

\newcommand{\speeddictfour}[6]{
\adddict{#1 ref}{#2}
\adddict{\dict{#1 ref} count}{4}
\adddict{\dict{#1 ref} ass 1}{#3}
\adddict{\dict{#1 ref} ass 2}{#4}
\adddict{\dict{#1 ref} ass 3}{#5}
\adddict{\dict{#1 ref} ass 4}{#6}}

\newcommand{\speeddictfive}[7]{
\adddict{#1 ref}{#2}
\adddict{\dict{#1 ref} count}{5}
\adddict{\dict{#1 ref} ass 1}{#3}
\adddict{\dict{#1 ref} ass 2}{#4}
\adddict{\dict{#1 ref} ass 3}{#5}
\adddict{\dict{#1 ref} ass 4}{#6}
\adddict{\dict{#1 ref} ass 5}{#7}}

\newcommand{\speeddictsix}[8]{
\adddict{#1 ref}{#2}
\adddict{\dict{#1 ref} count}{6}
\adddict{\dict{#1 ref} ass 1}{#3}
\adddict{\dict{#1 ref} ass 2}{#4}
\adddict{\dict{#1 ref} ass 3}{#5}
\adddict{\dict{#1 ref} ass 4}{#6}
\adddict{\dict{#1 ref} ass 5}{#7}
\adddict{\dict{#1 ref} ass 6}{#8}}

\newcommand{\speeddictseven}[9]{
\adddict{#1 ref}{#2}
\adddict{\dict{#1 ref} count}{7}
\adddict{\dict{#1 ref} ass 1}{#3}
\adddict{\dict{#1 ref} ass 2}{#4}
\adddict{\dict{#1 ref} ass 3}{#5}
\adddict{\dict{#1 ref} ass 4}{#6}
\adddict{\dict{#1 ref} ass 5}{#7}
\adddict{\dict{#1 ref} ass 6}{#8}
\adddict{\dict{#1 ref} ass 7}{#9}}

\newcommand{\Assumed}[1]{
\ifthenelse{\equal{\dict{\dict{#1 ref} count}}{1}}
{Assumed Knowledge: \ref{\dict{\dict{#1 ref} ass 1}}}
{Assumed Knowledge: \ref{\dict{\dict{#1 ref} ass 1}}\foreach \n in {2,...,\dict{\dict{#1 ref} count}}{, \ref{\dict{\dict{#1 ref} ass \n}}}}}

\newcommand{\SN}{\operatorname{Sym}(\mathbb{N})}
\newcommand{\normalsubgroup}{\unlhd}

\DeclareMathOperator{\End}{End}
\DeclareMathOperator{\A}{Aut}
\DeclareMathOperator{\Out}{Out}
\DeclareMathOperator{\supt}{supt}
\DeclareMathOperator{\fix}{fix}
\newcommand{\union}[1]{\bigcup_{\substack{#1}}}
\newcommand{\intersection}[1]{\bigcap_{\substack{#1}}}

\newcommand{\nbhd}[2]{\operatorname{Nbhds}_{#1} (#2)}
\newcommand{\QR}{\operatorname{QR}}
\newcommand{\PS}{\operatorname{PS}}
\newcommand{\Clo}{\operatorname{Clo}}
\newcommand{\CloP}{\operatorname{CloP}}

\newcommand{\Sym}{\operatorname{Sym}}
\newcommand{\Aut}{\operatorname{Aut}}
\newcommand{\Z}{\mathbb{Z}}
\newcommand{\N}{\mathbb{N}}
\newcommand{\F}{Fr\"aiss\'e }
\newcommand{\im}{\operatorname{img}}
\newcommand{\dom}{\operatorname{dom}}

\newcommand{\inj}{\operatorname{Inj}}
\newcommand{\Hom}{\operatorname{Hom}}

\newcommand{\conj}{\operatorname{conj}}
\newcommand{\ar}{\operatorname{ar}}

\newcommand{\de}{\Gamma}

\newcommand{\PFL}[1]{\operatorname{Flim}_{\leftarrow}(#1)}

\newcommand{\C}{\operatorname{Core}}

\newcommand{\makeset}[2]{\left\lbrace #1 \;\middle|\;
  \begin{tabular}{@{}l@{}}
    #2
   \end{tabular}
  \right\rbrace}

\title{On constructing topology from algebra}

\def\adddict#1#2{\expandafter\def\csname MY@#1\endcsname{#2}}

\def\dict#1{\csname MY@#1\endcsname}

\linespread{1.25}

\begin{document}
\includepdf[pages=-]{Thesis_title_page_final.pdf}
\maketitle
\begin{abstract}

In this thesis we explore natural procedures through which topological structure can be constructed from specific semigroups. 
We will do this in two ways: 1) we equip the semigroup object itself with a topological structure, and 2) we find a topological space for the semigroup to act on continuously.

We discuss various minimum/maximum topologies which one can define on an arbitrary semigroup (given some topological restrictions).
We give explicit descriptions of each these topologies for the monoids of binary relations, partial transformations, transformations, and partial bijections on the set \(\N\).
Using similar methods we determine whether or not each of these semigroups admits a unique Polish semigroup topology. 
We also do this for the following semigroups: the monoid of all injective functions on \(\N\), the monoid of continuous transformations of the Hilbert cube \([0, 1]^\N\), the monoid of continuous transformations of the Cantor space \(2^\N\), and the monoid of endomorphisms of the countably infinite atomless boolean algebra.
With the exception of the continuous transformation monoid of the Hilbert cube, we also show that all of the above semigroups admit a second countable semigroup topology such that every semigroup homomorphism from the semigroup to a second countable topological semigroup is continuous.

In a recent paper, Bleak, Cameron, Maissel, Navas, and Olukoya use a theorem of Rubin to describe the automorphism groups of the Higman-Thompson groups \(G_{n, r}\) via their canonical Rubin action on the Cantor space. 
In particular they embed these automorphism groups into the rational group \(\mathcal{R}\) of transducers introduced by Grigorchuk,  Nekrashevich, and Sushchanskii.
We generalise these transducers to be more suitable to higher dimensional Cantor spaces and give a similar description of the automorphism groups of the Brin-Thompson groups \(dV_n\) (although we do not give an embedding into \(\mathcal{R}\)).
Using our description, we show that the outer automorphism group \(\Out(dV_2)\) of \(dV_2\) is isomorphic to the wreath product of \(\Out(1V_2)\) with the symmetric group on \(d\) points.
\end{abstract}
\newpage
\renewcommand{\abstractname}{General acknowledgements}
\begin{abstract} I would like to thank my PhD supervisors, Collin Bleak and James Mitchell, both of which have been a great source of support (both mathematically and otherwise) for as long as I have known them. 何時も楽しませてくれる大空スバルさんには感謝しております。
\end{abstract}
\renewcommand{\abstractname}{Funding}
\begin{abstract}
This work was supported by the University of St Andrews (School of Mathematics).
\end{abstract}
\newpage
\tableofcontents
\part{Introduction}

This thesis is concerned with methods for choosing an appropriate or ``best" topological structure in a given context.
We are particularly interested in doing this when we are provided with a nice algebraic object to work with.
This usually involves finding a topological structure which is in some way ``compatible" with our algebraic object and investigating its uniqueness given some ``reasonable" topological assumptions.

We have three main parts. 
In Part~\ref{as section}, we establish the definitions and background knowledge that we require in the remaining parts. 
In particular we include proofs of some of the key theorems in the literature which Part~\ref{semigroups section} and Part~\ref{nv section} are dependant on.
The most important of these being:
\begin{itemize}
    \item Comparable Polish (\ref{Polish space defn}) topologies on a group are equal (Theorem~\ref{comparable Polish group topologies theorem}).
    \item All homomorphisms from the symmetric group on \(\N\) to second countable groups are continuous, and similarly all homomorphisms from the homeomorphism group of the Cantor space to second countable groups are continuous (Theorem~\ref{automatic continuity examples thm})
    \item Rubin's Theorem (Theorem~\ref{Rubin's Theorem}).
\end{itemize}
The term ``automatic continuity" is often used in the literature to describe properties like the one in Theorem~\ref{automatic continuity examples thm}.
However to avoid confusion we have avoided this terminology in all the formal statements made in this document.

In Part~\ref{semigroups section}, we are concerned with identifying natural ways of defining topologies on a given semigroup.
We are also interested in finding topologies compatible with important semigroups which are unique up to some ``reasonable" topological restrictions.

To this end, we will give five ways of defining a topology on an arbitrary semigroup, each of which is in some sense a bound on the set of ``nice" topologies compatible with the semigroup. 
These topologies are the minimum Fr\'echet topology (\ref{minimum topology defn}), the Fr\'echet/Hausdorff-Markov topologies (\ref{Markov topology defn}), the Zariski topology (\ref{Zariski topology defn}), and the second countable continuity topology (\ref{ac topology defn}).
Notably, some of these topologies are always comparable (see Proposition~\ref{topology comparison proposition}).

We explore and describe these topologies for various naturally occurring semigroups and as a result show that each of them is compatible with either 0, 1 or many Polish topologies (see Theorems~\ref{main binary relation theorem}, \ref{main N^N theorem}, \ref{main inverse monoid theorem}, \ref{much Polish Theorem}, \ref{main inj theorem}, \ref{hilber cube main theorem}, \ref{Cantor space main theorem} and \ref{main boolean algebra theorem}).

Clones (\ref{Abstract Clones defn}) are a natural generalisation of monoids, and many of the monoids we discuss have natural clone analogues.
We also extend some of the results in Part 3 to these analogues (see Corollaries~\ref{full func clone cor}, \ref{Hilbert clone cor}, \ref{Cantor clone cor} and \ref{boolean clone cor}).

In Part~\ref{nv section}, we use Rubin's Theorem (see Corollary~\ref{Rubin Automorphisms cor}) to study the automorphisms of the Brin-Thompson groups \(dV_n\) (see Definition~\ref{prefix codes and dVn defn} and \cite{Brin2004}).
In \cite{bleak2016} it was shown that, via Rubin's Theorem, it is possible to categorise which homeomorphisms of the Cantor set correspond to automorphisms of the Higman-Thompson groups \(G_{n, r}\).
This description is based on the combinatorial properties of the minimum transducers representing these homeomorphisms as described in \cite{GNS2000}.

We extend the notion of a transducer given in \cite{GNS2000} so as to allow transducers to describe continuous maps on ``higher dimensional" Cantor spaces (see Theorem~\ref{transducerable = continuous theorem}).
We then extend the methods of \cite{bleak2016} to give an analogous description of the groups \(\Aut(dV_n)\) (see Theorem~\ref{autnV}).
We then use this description to draw an algebraic connection between the outer automorphism groups of the groups \(dV_n\) for different values of \(d\) (see Theorems~\ref{main theorem 2} and \ref{main theorem 1}).

\part{Assumed Knowledge and Definitions}\label{as section}

The purpose of this part is to establish the previously known results from the literature which we need for later parts, as well as introducing some relevant notation and terminology.
It is also assumed that the reader is familiar with the following concepts:
\begin{itemize}
    \item Zorn's Lemma.
    \item Cardinals and taking the cardinalities of arbitrary sets (the cardinality of a set \(X\) is denoted by \(|X|\)).
    \item The claims about compact and Hausdorff spaces in Remark~\ref{compactness facts}.
    \item Brouwer's Theorem (Theorem~\ref{Brouwer' Theorem}).
    \item The Baire Category Theorem (Theorem~\ref{baire category theorem}).
\end{itemize}

In the Hilbert cube subsection (\ref{hilbert subsection}) of Part~\ref{semigroups section} we also make use of Theorem~\ref{extending Hilbert homeomorphisms theorem} (Theorem 5.2.4 of \cite{Mill2001aa}), and in the boolean algebra subsection (\ref{boolean subsection}) we make use of the Stone Duality Theorem (\ref{stone duality theorem}) but no other subsections of this document are dependent of these results.

\section{Sets}
Most of the concepts in this section are well-known.
However it will be important (particularly in Part~\ref{semigroups section}) that the reader is comfortable with how these objects are viewed in this thesis. 
At times we will write ``:=" instead of ``=" to indicate that the given equality if defining the object on the left.

\speeddictzero{1}{Numbers defns}

\begin{defn}[Numbers]\label{\dict{1 ref}}
We denote the set of natural numbers (including \(0\)) by \(\N\).
We also denote the set of integers by \(\Z\), the set of rational numbers by \(\mathbb{Q}\), and the set of real numbers by \(\mathbb{R}\).

If \(a, b\in \mathbb{R}\), then we write \([a, b]\), \((a, b]\), \([a, b)\) or \((a, b)\) to denote the usual closed, half-open and open intervals in \(\mathbb{R}\).
If \(a, b\in \Z\), then \(\{a, a+1, \ldots, b\}:= \makeset{i\in \Z}{\(a\leq i \leq b\)}\).
\end{defn}

\speeddictzero{2}{power set defn}
\begin{defn}[Power set]\label{power set defn}
If \(X\) is a set, then we denote the \textit{power set} of \(X\) (the set of subsets of \(X\)) by \(\mathcal{P}(X)\).
\end{defn}

\speeddictone{3}{function defn}{power set defn}
\begin{defn}[Functions, \Assumed{3}]\label{function defn}
If \(X, Y\) are sets, then we say that \(f\) is a \textit{function} from \(X\) to \(Y\) (denoted \(f:X\to Y\)) if \(f\) is a subset of the set \(X\times Y\) which satisfies the following condition:
\begin{enumerate}
    \item For all \(x\in X\), there is a unique \(y\in Y\) with \((x, y)\in f\).
\end{enumerate}
We will compose our functions from left to right. Note that we later (Definition~\ref{product sets and projection maps def}) define the set \(X \times Y\) using functions, so this is technically a circular definition. Formally this can be avoiding by defining \(X\times Y\) without the use of functions first and then redefining the notation later, but we avoid doing this for simplicity.
\end{defn}

\speeddictone{4}{function sets def}{function defn}
\begin{defn}[Function sets, \Assumed{4}]\label{function sets def}
If \(X\) and \(Y\) are sets, then we define \(X^Y\) to be the set of all functions from \(Y\) to \(X\).
If \(Y\) is a natural number \(n\), then we treat \(n\) as the set \(\{0, 1, \ldots n-1\}\) for these purposes.
\end{defn}

\speeddictone{5}{product sets and projection maps def}{function sets def}
\begin{defn}[Product sets and projection maps, \Assumed{5}]\label{\dict{5 ref}}
If \((X_i)_{i\in I}\) are sets, then we define 
\[\prod_{i\in I} X_i := \makeset{t\in \left(\union{i\in I} X_i\right)^I}{for all \(i\in I\) we have \((i)t\in X_i\)}.\]
Note that if \(X, Y\) are sets and for all \(y\in Y\) we define \(X_y:= X\), then \(X^Y = \prod_{y\in Y} X_y\).
If \(X_0, X_1, \ldots, X_{n-1}\) are sets, then we will also use the notation:
\[X_0 \times X_1 \times \cdots \times X_{n-1} := \prod_{i\in \{0, 1, \ldots, n-1\}} X_i.\]
We will often use the notation \(((0)t, (1)t, \ldots, (n-1)t)\), to define an element \(t\) of \(X_0 \times X_1 \times \ldots \times X_{n}\).
Moreover, we will generally denote the \textit{projection maps} \(\pi_i: \prod_{j\in I} X_j \to X_i\) (defined by \((t)\pi_i = (i)t\)) by \(\pi_i\). We rely on context for the domain of the map \(\pi_i\).
In particular if \(X, Y\) are sets and \((x, y)\in X\times Y\), then \((x, y)\pi_0 = x\) and \((x, y)\pi_1 = y\).
\end{defn}

\speeddicttwo{6}{binary relations defns}{power set defn}{product sets and projection maps def}
\begin{defn}[Binary relations, \Assumed{6}]\label{binary relations defns}
If \(X\) and \(Y\) are sets and \(b\subseteq X\times Y\), then we say that \(b\) is a \textit{binary relation from \(X\) to \(Y\)} (or a \textit{binary relation on \(X\)} if \(X= Y\)). 
Let \(b\) be a binary relation from a set \(X\) to a set \(Y\).
If \(S\subseteq X\), then we define the \textit{image of }\(S\)\textit{ under }\(b\) by
\[(S)b := \makeset{y\in Y}{there is \(x\in S\) with \((x, y)\in b\)}.\]
If \(S\subseteq \mathcal{P}(X)\), then we similarly define the image of \(S\) by \((S)b := \makeset{(A)b}{\(A\in S\)}\), we also extend this to \(\mathcal{P}(\mathcal{P}(X)), \mathcal{P}(\mathcal{P}(\mathcal{P}(X)))\) etc.
We define the \textit{inverse} of \(b\) by
\[b^{-1} := \makeset{(y, x)\in Y\times X}{\((x, y)\in b\)}.\]
If \(b'\) is a binary relation from \(Y\) to another set \(Z\), then we define the \textit{composition} of \(b\) and \(b'\) by
\[b b' := \makeset{(x, z)\in X\times Z}{there is \(y\in Y\) with \((x, y)\in b\) and \((y, z)\in b'\)}.\]
 We define the \textit{domain}, \textit{image} and \textit{kernel} of \(b\) by \(\dom(b) :=  (Y)b^{-1}\), \(\im(b) := (X)b\) and \(\ker(b):= \makeset{(x, y)\in \dom(b)^2}{\((\{x\})b= (\{y\})b\)}\) respectively.
 Note that functions are examples of binary relations and composition of functions is a special case of composition of binary relations, so these definitions can be applied to functions.
 
 If \(b\) is a binary relation from \(X\) to \(Y\) and \(S\) is a set, then we define the \textit{restriction} of \(b\) to \(S\) by
 \[b\restriction_{S} := b\cap (S\times Y).\]
 
 We denote the relation \(\makeset{(x, x)}{\(x\in X\)}\) by \(\text{id}_X\) (this relation is often called the identity function, the diagonal relation or equality relation on \(X\)).
 We say that a binary relation \(b\) from \(X\) to \(Y\) is \textit{surjective} if \(\im(b) = Y\) and \textit{injective} if \(\ker(b) = \text{id}_{X}\).
 If \(f\) is a function, then we say that \(f\) is \textit{bijective} if it is both injective and surjective.
\end{defn}

\speeddicttwo{7}{countable defn}{Numbers defns}{binary relations defns}
\begin{defn}[Countable, Assumed Knowledge: \ref{Numbers defns}, \ref{binary relations defns}]\label{countable defn}
We say that a set \(X\) is \textit{countable} if there is an injective function from \(X\) to \(\N\). 
In particular, finite sets are countable.
\end{defn}

\speeddictzero{8}{union and intersection convensions}
\begin{defn}[Union and intersection conventions]\label{union and intersection convensions}
If \(X\) is a set, then we define
\[\union{} X  := \union{S\in X} S, \quad\text{and} \quad \intersection{} X := \intersection{S\in X} S.\]
Where \(\union{S\in \varnothing} S = \varnothing\), and \(\intersection{S\in \varnothing}S\) will be the ``universe" or ``space" we are working in (when this is ambiguous the notation is avoided).
\end{defn}

\speeddictzero{9}{complement defn}
\begin{defn}[Complement]\label{complement defn}
If \(X\) is a set and \(S\subseteq X\), then we will denote the \textit{complement} of \(S\) in \(X\) by \(S^c\). That is
\[S^c := X\backslash S=\makeset{x\in X}{\(x\notin S\)}.\]
In the rare cases when the set \(X\) is ambiguous the notation \(S^c\) is avoided.
\end{defn}

\speeddictone{11}{types of binary relation defns}{binary relations defns}
\begin{defn}[Types of binary relation, \Assumed{11}]\label{types of binary relation defns}
Suppose that \(X\) is a set and \(b\) is a binary relation on \(X\). 
We will use the following terms to describe these potential properties of \(b\):
\begin{enumerate}
    \item Reflexive: if \(x\in X\) then we have \((x, x)\in b\).
    \item Transitive: if \((x, y)\in b\) and \((y, z)\in b\) we also have \((x, z)\in b\).
    \item Anti-Symmetric: if \((x, y)\in b\) and \((y, x)\in b\) then \(x=y\).
    \item Symmetric: if \((x, y)\in b\) then \((y, x)\in b\).
    \item Total: if \(x, y\in X\) then either \((x, y)\in b \) or \((y, x)\in b\).
\end{enumerate}
We say that \(b\) is a \textit{preorder} if it is reflexive and transitive. 
We say that a preorder \(b\) is a \textit{partial order} if it is anti-symmetric. 
We say that a preorder \(b\) is an \textit{equivalence relation} if it is symmetric. 
We say that a partial order \(b\) is a \textit{total order} if it is total.
If \(\leq\) is one of these types of binary relations we will often write \(x\leq y\) to mean \((x, y)\in\ \leq\), and \(x< y\) to mean that \(x, y\) are distinct and satisfy \(x\leq y\).

If \(X\) is a set and \(\leq\) is a partial order on \(X\), then we say that \(x\in X\) is
\begin{enumerate}
    \item \textit{Minimum} if \(x\leq y\) for all \(y\in X\).
    \item \textit{Minimal} if for all \(y\in X\) we have \(y\leq x \Rightarrow y = x\).
    \item \textit{Maximum} if \(y\leq x\) for all \(y\in X\).
    \item \textit{Maximal} if for all \(y\in X\) we have \(x\leq y \Rightarrow y = x\).
\end{enumerate}

If \(\sim\) is an equivalence relation on a set \(X\) and \(x\in X\), then we define the \textit{equivalence class} of \(x\) by \([x]_{\sim} := \makeset{y\in X}{\(x\sim y\)}\). Furthermore we define \(X/\sim\ := \makeset{[x]_{\sim}}{\(x\in X\)}\), and we define the \textit{index} of \(\sim\) to be \(|X/\sim |\).

\end{defn}

\speeddictone{90}{filters defn}{types of binary relation defns}
\begin{defn}[Filters, \Assumed{90}]\label{filters defn}
Suppose that \((X, \leq)\) is a partially ordered set. We say that a subset \(F\) of \(X\) is a \textit{filter} if:
\begin{enumerate}
    \item For all \(x, y\in F\), there is \(z\in F\) with \(z\leq x\) and \(z\leq y\).
    \item If \(z\in X\) and there is \(x\in F\) with \(x\leq z\), then \(z\in F\).
\end{enumerate}

We say that a filter \(F\) on \(X\) is \textit{proper} if \(F\neq X\), and we say that \(F\) is a \textit{ultrafilter} if it is maximal (with respect to containment) among the proper filters on \(X\).

Filters are often considered on the power set of some set ordered by inclusion, in this case it follows from Zorn's Lemma that every proper filter in contained in an ultrafilter.
\end{defn}

\section{Categories}
We will use many categories throughout this document. 
In particular we introduce a category of transducers which is used heavily throughout Part~\ref{nv section}, and we have a categorical perspective when discussing clones in Part~\ref{semigroups section}.
Most of the categories of this document have products (Definition~\ref{Products in Categories Defn}).
We make heavy use of this fact throughout the document and in particular the notation  
\[\langle (f_i)_{i\in I} \rangle_{P}\]
(where \(I\) is an index set, \((f_i)_{i\in I}\) are morphisms with a common source object, and \(P\) is a product object)
will be used heavily.

The following lengthy ``Definition" is for the purposes of those who (like the author) are scared when proper classes are used at length. If the reader is comfortable with proper classes then they are encouraged to skip it.
\speeddicttwo{105}{Proper Classes defn}{function defn}{product sets and projection maps def}
\begin{defn}[Proper classes, \Assumed{105}]\label{Proper Classes defn}
If \(P\) is a property which assigns the value true or false to every set (being countable for example), then it will often be useful to talk simultaneously about all the sets which satisfy \(P\).
We will informally use the term \textit{class} to simultaneously refer to all the sets with some property.
Recall that every element of a set is itself a set, we make no distinction between a set \(S\) and the class associated with the property of being an element of \(S\).
Many classes are not sets, a famous example being the class of sets which do not contain themselves.
We refer to a class that is not a set as a \textit{proper class}. If \(\mathcal{C}\) is a class defined by a property \(P\), then we will often write \(c\in \mathcal{C}\) or \(P(c)\) to mean that \(c\) satisfies \(P\).

As proper classes are not formal ZFC objects, much care is required when working with them.
Nevertheless as we will see later, many natural and useful examples of categories make use of proper classes.
Thus it will be useful to be able to make subclasses, tuples of classes, products of classes, and make functions between classes.

Note that we cannot define a function whose image consists of classes. In particular when we give subscripts to classes here we are not defining a function, we are only saying that we allow this notation at different times with a different number of classes being discussed.
\begin{enumerate}
\item If \(\mathcal{A}, \mathcal{B}\) are classes defined by properties \(P_A, P_B\) respectively, then we say that \(\mathcal{A}\) is a subclass of \(\mathcal{B}\) if \(P_A\) implies \(P_B\). 
    \item If \(\mathcal{C}_0, \mathcal{C}_1, \ldots, \mathcal{C}_{n-1}\) are classes (including at least one proper class) defined by the properties \(P_0, P_1, \ldots, P_{n-1}\) respectively, then we write
    \[(\mathcal{C}_0, \mathcal{C}_1, \ldots ,\mathcal{C}_{n-1})\]
    to refer to the class of pairs \((c_i, i)\) where \(c_i\) satisfies property \(P_i\). This is very different to how tuples where defined for sets (Definition~\ref{product sets and projection maps def}), it is in fact much more like how disjoint unions are typically defined, but all that will matter in this document is that our original classes are ``recoverable" from this class. Note that \(\mathcal{C}_i\) is the class of \(c\) such that \((c, i)\in (\mathcal{C}_0, \mathcal{C}_1, \ldots ,\mathcal{C}_{n-1})\). 
    
    \item If \(\mathcal{C}_0, \mathcal{C}_1, \ldots, \mathcal{C}_{n-1}\) are classes defined by the properties \(P_0, P_1, \ldots, P_{n-1}\) respectively, then we write
    \[\mathcal{C}_0\times \mathcal{C}_1 \times \ldots \times\mathcal{C}_{n-1}\]
    to refer to the class of \(n\)-tuples \((c_0, c_1, \ldots, c_{n-1})\) such that each \(c_i\) is a set which satisfies property \(P_i\). Note that in the case that our classes are sets, this definition agrees with Definition~\ref{product sets and projection maps def}.
    \item If \(\mathcal{A}, \mathcal{B}\) are classes defined by properties \(P_A, P_B\) respectively, then we say that \(f:\mathcal{A} \to \mathcal{B}\) is \textit{class function} to mean that \(f\) is a subclass of \(\mathcal{A}\times \mathcal{B}\) and for all \(a\in \mathcal{A}\), there is a unique \(b\in \mathcal{B}\) with \((a, b)\in f\). If \(a\in \mathcal{A}\), then we write \((a)f\) to denote the unique \(b\in \mathcal{B}\) with \((a, b)\in f\). Note that this agrees with Definition~\ref{function defn}.
\end{enumerate}
\end{defn}

\speeddictone{106}{Categories defn}{Proper Classes defn}
\begin{defn}[Categories and functors, \Assumed{106}]\label{Categories defn}
We say that \textit{\(\mathcal{C}\) is a category} to mean that \(\mathcal{C}\) is a tuple \((\mathcal{O}_{\mathcal{C}}, \mathcal{M}_{\mathcal{C}}, s_{\mathcal{C}}, t_{\mathcal{C}}, \circ_{\mathcal{C}})\), where
\begin{enumerate}
    \item \(\mathcal{O}_{\mathcal{C}}\) is a class (called the \textit{objects} of the category).
    \item \(\mathcal{M}_{\mathcal{C}}\) is a class (called the \textit{morphisms} of the category).

    \item \(s_{\mathcal{C}}:\mathcal{M}_{\mathcal{C}} \to \mathcal{O}_{\mathcal{C}}\) and \(t_{\mathcal{C}}:\mathcal{M}_{\mathcal{C}} \to \mathcal{O}_{\mathcal{C}}\) are class functions (which assign to morphisms a \textit{source} and \textit{target} respectively).

    \item \(\operatorname{Comp}_{\mathcal{C}}\) is the subclass of \(\mathcal{M}_{\mathcal{C}} \times \mathcal{M}_{\mathcal{C}}\) consisting of those \((m_0, m_1)\) with \((m_0)t_{\mathcal{C}} =(m_1)s_{\mathcal{C}}\) (called the \textit{composable} pairs).
    
    \item \(\circ_{\mathcal{C}}:\operatorname{Comp}_{\mathcal{C}}\to \mathcal{M}_{\mathcal{C}}\) is a class function (called \textit{composition}) such that if \((a, b), (b, c)\in \operatorname{Comp}_{\mathcal{C}}\) then 
    \[((a, b)\circ_{\mathcal{C}})s_\mathcal{C} =(a)s_\mathcal{C},\quad ((a, b)\circ_{\mathcal{C}})t_\mathcal{C} =(b)t_\mathcal{C},\quad ((a, b)\circ_{\mathcal{C}}, c)\circ_{\mathcal{C}} = (a, (b, c)\circ_{\mathcal{C}})\circ_{\mathcal{C}}.\]
    
    \item For all \(O\in \mathcal{O}_{\mathcal{C}}\), there is a unique \(1_O\in \mathcal{M}_{\mathcal{C}}\) such that \((1_O)s_{\mathcal{C}} = (1_O)t_{\mathcal{C}}= O\) and if \((1_O, m_0), (m_1, 1_O)\in \operatorname{Comp}_{\mathcal{C}}\) then 
    \[(1_O, m_0)\circ_{\mathcal{C}} = m_0 \quad \text{ and }\quad (m_1, 1_O)\circ_{\mathcal{C}} = m_1.\]
\end{enumerate}

If \(\mathcal{C}\) is a category and \(A, B\) are objects of \(\mathcal{C}\), then we say \(A, B\) are \textit{isomorphic} in \(\mathcal{C}\) if there are morphisms \(f_A, f_B\) of \(\mathcal{C}\) with
\[(f_A, f_B)\circ_{\mathcal{C}} = 1_{A} \quad \text{and} \quad (f_B, f_A)\circ_{\mathcal{C}} = 1_{B}.\]
If \(\mathcal{A}, \mathcal{B}\) are categories, then we write \(f:\mathcal{A}\to \mathcal{B}\)\textit{ is a functor} to mean that \(f\) is a tuple \((f_{\mathcal{O}}, f_{\mathcal{M}})\) where
\begin{enumerate}
    \item \(f_{\mathcal{O}}: \mathcal{O}_{\mathcal{A}} \to  \mathcal{O}_{\mathcal{B}}\) is a class function.
    \item \(f_{\mathcal{M}}: \mathcal{M}_{\mathcal{A}} \to  \mathcal{M}_{\mathcal{B}}\) is a class function.
    \item For all \(a, b\in \operatorname{Comp}_{\mathcal{C}}\) and \(O\in \mathcal{O}_C\) we have
    \[(1_O)f_{\mathcal{M}} = 1_{(O)f_{\mathcal{O}}}, \quad  ((a)t_{\mathcal{A}})f_{\mathcal{O}} = ((a)f_{\mathcal{M}})t_{\mathcal{B}}, \quad  ((b)s_{\mathcal{A}})f_{\mathcal{O}} = ((b)f_{\mathcal{M}})s_{\mathcal{B}}\]
    \[((a, b)\circ_{\mathcal{A}})f_{\mathcal{M}} = ((a)f_{\mathcal{M}}, (b)f_{\mathcal{M}})\circ_{\mathcal{B}}.\]
\end{enumerate}

We say that a category is \textit{small} if it is a tuple of sets and \textit{big} if it is not.
\end{defn}

The notation \(\pi_i\) in the following definition is used because in the category of sets and functions (see Example~\ref{Important Categories}) the natural projection maps are precisely theses maps from Definition~\ref{product sets and projection maps def}.
While these are not the only products or projection maps, all concrete examples discussed in this document will be (essentially) of this type.
For example in Definition~\ref{transducer products defn}, the projections will not even be functions, but they are triples of the usual projection maps.

\speeddicttwo{108}{Products in Categories Defn}{Numbers defns}{Categories defn}
\begin{defn}[Categorical products, \Assumed{108}]\label{Products in Categories Defn}
If \(\mathcal{C}\) is a category, \(I\) is a set and \((O_{i})_{i\in I}\) are objects of \(\mathcal{C}\), then a \textit{product} of \((O_{i})_{i\in I}\) in \(\mathcal{C}\) is a pair \((P, (\pi_i)_{i\in I})\) such that
\begin{enumerate}
    \item \(P\) is an object of \(\mathcal{C}\) (called the \textit{product object}).
    
    \item For each \(i\in I\), \(\pi_i\) is a morphism of \(\mathcal{C}\) with source \(P\) and target \(O_i\) (called the \textit{projection morphisms}).
    
    \item If \(B\) is an object of \(\mathcal{C}\) and \((f_i)_{i\in I}\) are morphisms of \(C\) with \(f_i:B\to O_i\), then there is a unique morphism \(\langle (f_i)_{i\in I} \rangle_{P}:B\to P\) of \(\mathcal{C}\) such that
    \[(\langle (f_i)_{i\in I} \rangle_{P} , \pi_i) \circ_{\mathcal{C}} = f_i\]
    for all \(i\in I\).
\end{enumerate}
The morphism described in the third condition above is dependant not only on the product object \(P\), but also on the choice of projection morphisms. However in this document we will never consider a single object as a product in multiple ways so the notation with always be unambiguous.

We are often interested in the cases when \(I\) is \(\{0, 1, \ldots, n-1\}\) for some \(n\in \N\). In this case we will often write \(\langle f_0, f_1, \ldots, f_{n-1} \rangle_{P}\text{ instead of }\langle (f_i)_{i\in \{0, 1, \ldots, n-1\}} \rangle_{P}.\)
\end{defn}

\speeddictthree{107}{Important Categories}{product sets and projection maps def}{Categories defn}{Products in Categories Defn}
\begin{example}[Important categories, \Assumed{107}]\label{Important Categories}
The most notable example of a category is the category of sets and functions. All sets are objects of this category and the morphisms are tuples \((A, f, B)\) where \(f:A\to B\) is a function. The source and target of \((A, f, B)\) are \(A, B\) respectively, and the composition of \((A, f, B), (B, g, C)\) is \((A, fg, C)\).
We cannot simply use functions as the morphisms of this category because the target of a function is not deducible from the function itself, however we will sometimes abuse language and treat functions as morphisms to avoid clunky notation.
As one would expect this category has products for arbitrary collections of sets and they are precisely as described in Definition~\ref{product sets and projection maps def}.

The ``category of categories and functors" is also a natural candidate, but this fails to be a category as big categories cannot be elements of classes. However small categories and functors do form a category in the natural fashion.

\end{example}

\section{Topology}
This section consists mostly of introducing terminology and facts from topology which will be using throughout the document.
Polish spaces (\ref{Polish space defn}) will be of particular interest, and Baire Category (\ref{baire category theorem}) plays a prominent role in the study of these spaces.

In our view of topology we are occasionally interested the topology of a space as an object in its own right.
For example in the Rubin construction (\ref{canonical Rubin action defn}) the partial order on a given topology is crucial, and in Part~\ref{semigroups section} we frequently compare multiple topologies on the same set.
\speeddictthree{12}{topological space defn}{power set defn}{union and intersection convensions}{complement defn}
\begin{defn}[Topological space, \Assumed{12}]\label{topological space defn}
A \textit{topological space} is a pair \((X, \mathcal{T})\) where \(X\) is a set and \(\mathcal{T}\subseteq \mathcal{P}(X)\) satisfies:
\begin{enumerate}
    \item If \(F\subseteq \mathcal{T}\) is finite, then \(\intersection{} F \in \mathcal{T}\).
    \item If \(S\subseteq \mathcal{T}\), then \(\union{}S\in \mathcal{T}\).
    \item Both \(\varnothing\) and \(X\) are elements of \(\mathcal{T}\).
\end{enumerate}
In this case we say that \(\mathcal{T}\) is a topology on \(X\).
Recall that we allow empty unions/intersections (Definition~\ref{union and intersection convensions}) so the third condition above is technically redundant.
When convenient we will often identity the pair \((X, \mathcal{T})\) with the set \(X\), particularly in the use of the symbols \(\in, \subseteq\) etc. We will refer to elements of \(\mathcal{T}\) as \textit{open} sets and sets whose complement is in \(\mathcal{T}\) as \textit{closed} sets. If a set is both open and closed then we will call it \textit{clopen}.
\end{defn}

\speeddictone{13}{subspaces}{topological space defn}
\begin{defn}[Subspaces, \Assumed{13}]\label{subspaces}
If \((X, \mathcal{T})\) is a topological space and \(A\subseteq X\), then we will view \(A\) as a topological space with the topology 
\[\mathcal{T}\restriction_{A}:=\makeset{U\cap A}{\(U\in \mathcal{T}\)}.\]
This is called the \textit{subspace topology} on \(A\).
\end{defn}

\speeddictfour{14}{continuous maps and homeomorphisms defn}{binary relations defns}{Categories defn}{Important Categories}{topological space defn}
\begin{defn}[Continuous maps and homeomorphisms, \Assumed{14}]\label{continuous maps and homeomorphisms defn}
If \(X\) and \(Y\) are topological spaces, then we say that a function \(f: X\to Y\) is \textit{continuous} if whenever \(U\subseteq Y\) is open, \((U)f^{-1}\) is also open. We say that \(f\) is a \textit{homeomorphism} if it is a bijection and \(f^{-1}\) is also continuous (in this case we say that \(X\) and \(Y\) are \textit{homeomorphic}). 
\end{defn}

It is routine to verify that topological spaces and continuous maps form a category (\ref{Categories defn}), and homeomorphisms are the isomorphisms of this category.

\speeddictone{15}{closure and interior defn}{topological space defn}
\begin{defn}[Closure and interior, \Assumed{15}]\label{\dict{15 ref}}
If \(X\) is a topological space and \(S\subseteq X\), then we define
\[S^- := \intersection{}\makeset{F\subseteq X}{\(F\) is closed and \(F\supseteq S\)}, \quad S^\circ := \union{}\makeset{U\subseteq X}{\(U\) is open and \(U\subseteq S\)}.\]
These are called the \textit{closure} and \textit{interior} of \(S\) respectively. Note that the closure of a set is always closed and the interior of a set is always open.
We often use these notations together with the notation for complement (\ref{complement defn}) horizontally together in one superscript to denote doing them in succession, for example \(S^{-c}(=S^{c\circ})\) denotes the complement of the closure of the set \(S\).
\end{defn}

\speeddictone{16}{nbhds defn}{closure and interior defn}
\begin{defn}[Neighbourhoods, \Assumed{16}]\label{nbhds defn}
If \(X\) is a topological space \(S\subseteq X\) and \(x\in X\), then we say that \(S\) is a \textit{neighbourhood} (often written \textit{nbhd}) of \(x\) if \(x\in S^\circ\). We denote the set of all neighbourhoods of \(x\) by \(\nbhd{X}{x}\). Note that a set \(S\) is open if and only if \(S\) is a neighbourhood of each of its elements.
\end{defn}

The set of neighbourhoods (\ref{nbhds defn}) of a point in a topological space form a filter (\ref{filters defn}).
We will later see that it is precisely these filters which allow the Rubin construction (\ref{canonical Rubin action defn}) to construct the elements of Rubin spaces.

\speeddictone{27}{discrete trivial defn}{topological space defn}
\begin{defn}[Discrete and trivial topologies, \Assumed{27}]\label{\dict{27 ref}}
If \(X\) is a set, then the \textit{discrete topology on \(X\)} is \(\mathcal{P}(X)\) and the \textit{trivial topology on \(X\)} is \(\{\varnothing, X\}\). A topological space equipped with the discrete topology is called a \textit{discrete space}.
\end{defn}

\speeddicttwo{18}{subbasis}{topological space defn}{discrete trivial defn}
\begin{defn}[Subbasis,  \Assumed{18}]\label{subbasis}
Suppose that \(X\) is a set, \(B\) is a collection of subsets of \(X\), and \(\mathcal{T}\) is the smallest topology on \(X\) containing \(B\) (this topology always exists because \(B\) is contained in the discrete topology on \(X\)).
In this case we say that \(B\) is a \textit{subbasis for \(\mathcal{T}\)} or that \(\mathcal{T}\) is the \textit{topology generated by \(B\)}.
\end{defn}

\speeddictone{19}{basis defn}{subbasis}
\begin{defn}[Basis, \Assumed{19}]\label{\dict{19 ref}}
Suppose that \(X\) is a set, \(B\) is a collection of subsets of \(X\), and \(\mathcal{T} = \makeset{\union{} S}{\(S\subseteq B\)}\) is a topology. In this case we say that \(B\) is a \textit{basis} for \(\mathcal{T}\). In particular any basis for a topology is a subbasis for that topology. Also if \(S\) is a subbasis for a topology \(\mathcal{T}\) on \(X\), then \(\makeset{\intersection{} F}{\(F\subseteq  S\) is finite}\) is a basis for \(\mathcal{T}\).
\end{defn}

\speeddicttwo{20}{second countable defn}{countable defn}{basis defn}
\begin{defn}[Second countable, \Assumed{20}]\label{second countable defn}
We say that a topology/topological space is \textit{second countable} if the topology admits a countable basis.
\end{defn}

\speeddictone{21}{isolated points defn}{topological space defn}
\begin{defn}[Isolated points, \Assumed{21}]\label{\dict{21 ref}}
If \(X\) is a topological space and \(x\in X\), then we say that \(x\) is an \textit{isolated point} of \(X\) if \(\{x\}\) is open.
\end{defn}

\speeddictfour{22}{product topology defn}{product sets and projection maps def}{Products in Categories Defn}{continuous maps and homeomorphisms defn}{subbasis}
\begin{defn}[Product topology, \Assumed{22}]\label{product topology defn}
If \((X_i)_{i\in I}\) are topological spaces, then we view \(\prod_{i\in I} X_i\) as a topological space with the topology generated by the sets of the form \((U)\pi_i^{-1}\) where \(U\) is open in \(X_i\). We call this topology the \textit{pointwise topology} or the \textit{product topology}.

Note that (with respect to the product topology) the projections maps are all continuous and map open sets to open sets. It is routine to verify that this construction gives a categorical product in the category of topological spaces and continuous functions (see Definition~\ref{Products in Categories Defn}).
\end{defn}

As we will see later, the products above are very useful for constructing continuous maps.
They are also very useful for proving that nice looking maps are indeed continuous, as we can often show that they are continuous by constructing them as compositions of maps of the form \(\langle f, g\rangle_P\) (see Definition~\ref{Products in Categories Defn}) for appropriate choices of \(f, g\) and \(P\).

\speeddictone{300}{union topology defn}{subspaces}
\begin{defn}[Union topology, \Assumed{300}]\label{disjoint union topology defn}
If \((X_i)_{i\in I}\) are disjoint topological spaces, then we view \(\union{i\in I} X_i\) as a topological space with the topology 
\[\makeset{U\subseteq \union{i\in I} X_i}{\(U\cap  X_i\) is open in \(X_i\) for all \(i\in I\)}.\]
We call this topology the \textit{disjoint union topology}.

It is routine to verify that this is a topology and for all \(i\in I\), the original topology on \(X_i\) agrees with the subspace topology on \(X_i\subseteq \union{i\in I} X_i\).
\end{defn}

\speeddictone{23}{zero-dimensional defn}{subbasis}
\begin{defn}[Zero-dimensional, \Assumed{23}]\label{zero-dimensional defn}
We say that a topological space \(X\) is \textit{zero-dimensional} if it has a subbasis consisting of clopen sets.
\end{defn}

\speeddictfour{24}{Hausdorff defn}{binary relations defns}{nbhds defn}{product topology defn}{compactness facts}
\begin{defn}[Hausdorff, \Assumed{24}]\label{Hausdorff defn}
We say that a topological space \(X\) is Hausdorff if one of the following equivalent (see Remark~\ref{compactness facts}) conditions holds :
\begin{enumerate}
    \item If \(x, y\in X\) and \(x\neq y\), then there exist disjoint neighbourhoods \(U_x,U_y\) of \(x\) and \(y\) respectively.
    \item The diagonal \(\operatorname{id}_X\) (recall Definition~\ref{binary relations defns}) of \(X\) is a closed subset of \(X\times X\).
\end{enumerate}
\end{defn}

\speeddictone{171}{frechet defn}{topological space defn}
\begin{defn}[Fréchet, \Assumed{171}]\label{frechet defn}
We say that a topological space \(X\) is \textit{Fréchet} if for all \(x\in X\), the set \(\{x\}\) is a closed subset of \(X\).
\end{defn}

The notion of a Fréchet space is a weakening of the notion of a Hausdorff space which is particularly useful as it gives us concrete examples of sets which must be open/closed with respect to our topology.
In particular in Part~\ref{semigroups section}, we will often make use of the observation that every finite subspace of a Fréchet space is discrete.

\speeddictfive{25}{compact defn}{power set defn}{filters defn}{topological space defn}{nbhds defn}{compactness facts}
\begin{defn}[Compact, \Assumed{25}]\label{compact defn}
We say that a topological space \(X\) is \textit{compact} if any of the following equivalent (see Remark~\ref{compactness facts}) conditions holds:
\begin{enumerate}
    \item If \(S\) is a collection of open subsets of \(X\) with \(\union{}S = X\), then there is a finite \(F\subseteq S\) with \(\union{} F = X\).
    
    \item If \(S\) is a collection of closed subsets of \(X\) and \(\intersection{} F \neq \varnothing\) for all finite \(F\subseteq S\), then \(\intersection{} S\neq \varnothing\).
    
    \item If \(F\) is a proper filter on \((\mathcal{P}(X), \subseteq)\), then there is a proper filter \(F'\) on \((\mathcal{P}(X), \subseteq)\) and \(x\in X\) such that \(\nbhd{X}{x}\cup F\subseteq F'\).
\end{enumerate}
\end{defn}

\speeddicttwo{94}{locally compact defn}{subspaces}{compact defn}
\begin{defn}[Locally compact, \Assumed{94}]\label{locally compact defn}
We say that a topological space \(X\) is \textit{locally compact} if every element of \(X\) has a compact neighbourhood.
\end{defn}

\speeddictfive{26}{compactness facts}{filters defn}{continuous maps and homeomorphisms defn}{product topology defn}{Hausdorff defn}{compact defn}
\begin{remark}[Compactness and Hausdorffness facts, \Assumed{26}]\label{compactness facts}
The following facts about compactness and Hausdorffness are well known and we use them frequently throughout the document.
\begin{enumerate}
    \item The three definitions of compactness given in Definition~\ref{compact defn} are equivalent.
    
    \item The two definitions of Hausdorffness given in Definition~\ref{Hausdorff defn} are equivalent.    
    \item If \(\phi:X\to Y\) is a continuous bijection between compact Hausdorff spaces then \(\phi^{-1}\) is also continuous.
    
    \item A closed subspace is a compact space is compact.
    
    \item A continuous image of a compact space is compact.
    
    \item A compact subspace of a Hausdorff space is closed.
    
    \item An arbitrary product of Hausdorff spaces is Hausdorff.
    
    \item An arbitrary product of compact spaces is compact (Tychonoff's Theorem).
    
    \item If \(X\) is a compact Hausdorff space and \(x\in X\) then every neighbourhood of \(x\) contains a compact neighbourhood of \(x\).
    
    \item If \(X\) is a compact topological space and \(\mathcal{F}\) is an ultrafilter (Definition~\ref{filters defn}) on \(X\), then there is \(x\in X\) such that \(\nbhd{X}{x}\subseteq \mathcal{F}\).
\end{enumerate}
\end{remark}
\begin{proof}
\((1):\) For \((1)\iff (2)\) see Theorem 26.9  of \cite{munkres2016topology}. 
The equivalence of \((2)\) and \((3)\) follows from the observation that a filter \(F\subseteq \mathcal{P}(X)\) is proper if and only if \(\varnothing\notin F\), and thus \(\nbhd{X}{x}\cup F\) is contained in a proper filter if and only if \(x\in \intersection{S\in F} S^-\).

\((2):\) Note that \(x, y \in X\) and \(x\neq y\), then \(U_x, U_y\) are neighbourhoods of \(x, y\) respectively if and only if \(U_x\times U_y\) is a neighbourhood of \((x, y)\).
Moreover \(U_x, U_y\) are disjoint if and only if \(U_x \times U_y\) is disjoint from \(\text{id}_X\).

\((3):\) See Theorem 26.6  of \cite{munkres2016topology}.

\((4):\) See Theorem 26.2  of \cite{munkres2016topology}.

\((5):\) See Theorem 26.5  of \cite{munkres2016topology}.

\((6):\) See Theorem 26.3  of \cite{munkres2016topology}.

\((7):\) Let \((X_i)_{i\in I}\) be Hausdorff topological spaces.
If \(x, y \in \prod_{i\in I} X_i\), and \(i\in I\) is such that \((x)\pi_i\neq (y)\pi_i\), then there are disjoint neighbourhoods \(U_x\), \(U_y\) of \((x)\pi_i, (y)\pi_i\) respectively (in \(X_i\)).
The sets \((U_x)\pi_i^{-1}\) and \((U_y)\pi_i^{-1}\) are then the required neighbourhoods of \(x\) and \(y\).

\((8):\) See Theorem 37.3  of \cite{munkres2016topology}.

\((9):\) See Theorem 26.4  of \cite{munkres2016topology}.

\((10):\) This is immediate from the third definition of compactness in Definition~\ref{compact defn}.
\end{proof}

\speeddicttwo{28}{Cantor space defn}{product topology defn}{discrete trivial defn}
\begin{defn}[Cantor space, \Assumed{28}]\label{\dict{28 ref}}
We define the \textit{Cantor space} \(2^\N\) to be the set \(\{0,1\}^\N\) with the product topology (where \{0, 1\} has the discrete topology). 
\end{defn}

\speeddictseven{29}{Brouwer' Theorem}{continuous maps and homeomorphisms defn}{second countable defn}{isolated points defn}{zero-dimensional defn}{Hausdorff defn}{compact defn}{Cantor space defn}
\begin{theorem}[Brouwer's Theorem, \Assumed{29}]\label{\dict{29 ref}}
Any second countable, zero-dimensional, Hausdorff, compact topological space with no isolated points is homeomorphic to either \(\varnothing\) or \(2^\N\).
\end{theorem}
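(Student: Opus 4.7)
The plan is to dispose of the empty case immediately, and otherwise construct an explicit homeomorphism $\phi : 2^\N \to X$. The strategy is to build a dyadic tree of clopen subsets $(U_s)_{s \in 2^{<\N}}$ of $X$ which mirrors the canonical dyadic tree of cylinder sets in $2^\N$, and then use compactness and Hausdorffness to pass to the limit.

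First I would check that $X$ admits a countable basis $\{B_n : n \in \N\}$ consisting of clopen sets: zero-dimensionality gives a subbasis of clopen sets, which can be extracted from a given countable basis by replacing each element with clopen refinements of its points and using Lindelofness (compactness together with second countability). Then I would inductively define clopen sets $U_s$ for $s \in 2^{<\N}$ satisfying (a) $U_\varnothing = X$; (b) each $U_s$ is nonempty; (c) $U_{s0}$ and $U_{s1}$ partition $U_s$; and (d) the level-$n$ partition $\{U_s : s \in 2^n\}$ refines $B_n$, meaning each $U_s$ at that level is contained in either $B_n$ or $B_n^c$. To split $U_s$ at the inductive step, if $U_s$ meets both $B_{n+1}$ and $B_{n+1}^c$, I take $U_{s0} = U_s \cap B_{n+1}$ and $U_{s1} = U_s \setminus B_{n+1}$; otherwise, the no-isolated-points hypothesis forces $U_s$ to contain at least two distinct points (else $U_s$ itself would be an open singleton), which can be separated by a clopen set via Hausdorffness and zero-dimensionality, yielding the required partition into two nonempty clopen pieces.

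Next I would define $\phi(\sigma)$ to be the unique element of $\bigcap_n U_{\sigma|n}$. Nonemptiness of this intersection follows from compactness applied to the nested family of nonempty closed sets (Remark~\ref{compactness facts}). For uniqueness, if distinct $x, y$ both lay in the intersection, Hausdorffness and zero-dimensionality would produce a clopen neighbourhood $V$ of $x$ missing $y$; since $V$ is open, some basis element $B_i$ satisfies $x \in B_i \subseteq V$, and then the refinement condition at level $i$ puts $U_{\sigma|i}$ entirely inside $B_i$ or entirely outside, both of which contradict $x, y \in U_{\sigma|i}$. Bijectivity of $\phi$ is immediate from conditions (b), (c): each $x \in X$ determines a unique descending sequence of $U_s$ containing it, hence a unique preimage. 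For continuity, the cylinders $[s] = \{\sigma : \sigma|_{|s|} = s\}$ form a basis for $2^\N$ with $\phi([s]) = U_s$, and an argument parallel to the uniqueness step shows $\{U_s\}$ is in fact a basis for $X$; thus $\phi$ is a continuous bijection between compact Hausdorff spaces, and Remark~\ref{compactness facts}(3) upgrades this to a homeomorphism.

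The main obstacle is orchestrating the inductive construction so that all three roles of the $U_s$ are simultaneously realised: the partitions at successive levels must genuinely refine each other, they must refine the enumerated basis $\{B_n\}$ to guarantee that $\phi$ separates points and has dense image of its cylinder-pieces, and each split must produce two nonempty pieces. The no-isolated-points hypothesis is used precisely in the last requirement, while second countability supplies the enumeration $\{B_n\}$ that drives the refinement, and compactness plus Hausdorffness ensure the limiting intersections are singletons. The empty case is covered by the observation that if $X = \varnothing$ then $X$ trivially satisfies the hypotheses and is homeomorphic to $\varnothing$.
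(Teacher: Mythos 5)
The paper does not give its own proof of Brouwer's Theorem; it simply cites Theorem 7.4 of Kechris's book. Your proposal supplies a correct, self-contained argument, and it is the standard one: you construct a Cantor scheme of nonempty clopen sets $(U_s)_{s\in 2^{<\N}}$ whose level-$n$ pieces refine the $n$th element of a countable clopen basis, use no-isolated-points to force each split to be nontrivial, use compactness and Hausdorffness to make the infinite intersections singletons, and then invoke Remark~\ref{compactness facts}(3) to upgrade the resulting continuous bijection to a homeomorphism. All the key uses of the hypotheses are correctly identified, and the minor indexing imprecision about exactly which level first refines $B_n$ (level $n$ versus level $n+1$) is easily repaired. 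This is essentially the same Cantor-scheme argument found in the reference the paper cites, so there is nothing novel or missing to flag.
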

There are many known proofs of Brouwer's Theorem, see for example Theorem 7.4 of \cite{kechris2012classical}.

\speeddicttwo{30}{metric space def}{Numbers defns}{product sets and projection maps def}
\begin{defn}[Metric spaces, \Assumed{30}]\label{\dict{30 ref}}
If \((X, d)\) is a pair where \(d:X\times X \to [0, \infty)\) is a function satisfying the following for all \(x, y, z\in X\):
\begin{enumerate}
    \item \((x, y)d = 0 \iff x=y\).
    
    \item \((x, y)d = (y, x)d\).
    
    \item \((x, z)d \leq (x, y)d + (y, z)d\).
\end{enumerate}
then we call \((X, d)\) a \textit{metric space} and we call \(d\) a \textit{metric on \(X\)}.
\end{defn}

\speeddicttwo{31}{topology on a metric space defn}{subbasis}{metric space def}
\begin{defn}[Topology of a metric space, \Assumed{31}]\label{\dict{31 ref}}
If \((X, d)\) is a metric space, then we define the topology \textit{induced by \(d\)} to be the topology \(\mathcal{T}\) generated by the sets of the form
\[B_{d}(x, r) := \makeset{y\in Y}{\((x, y)d < r\)}\]
for all \((x, r)\in X\times \mathbb{R}\).
In this case we say that \(\mathcal{T}\) is \textit{compatible with \(d\)}. We will always assume a metric space is equipped with this topology unless otherwise stated. We say that a topological space is \textit{metrizable} if it is compatible with a metric.
\end{defn}

\speeddicttwo{17}{convergence defn}{nbhds defn}{topology on a metric space defn}
\begin{defn}[Convergence of sequences, \Assumed{17}]\label{\dict{17 ref}}
If \(X\) is a topological space and \((x_i)_{i\in \N}\) is a sequence of points in \(X\), then we say that \((x_i)_{i\in \N}\) \textit{converges to a limit \(x\in X\)} (written \((x_i)_{i\in \N} \rightarrow x\)) if:
\begin{enumerate}
    \item For all \(N\in \nbhd{X}{x}\), the set \(\makeset{i\in \N}{\(x_i\notin N\)}\) is finite. 
\end{enumerate}
This concept is particularly useful in metrizable spaces as in these cases a function is continuous if and only if it preserves limits of sequences.
\end{defn}

\speeddicttwo{32}{complete metric defn}{convergence defn}{topology on a metric space defn}
\begin{defn}[Complete metric, \Assumed{32}]\label{\dict{32 ref}}
Let \((X, d)\) be a metric space. 
A sequence \((x_i)_{i\in \N}\) in \(X\) is called \textit{Cauchy} if it satisfies the following condition: 
\begin{enumerate}
    \item For all \(\varepsilon > 0\) there is \(N\in \N\) such that for all \(n, m\geq N\) we have \((x_n, x_m)d \leq \varepsilon\).
\end{enumerate}
We say that \(d\) is \textit{complete} if all Cauchy sequences in \((X, d)\) converge.
\end{defn}

\speeddicttwo{33}{Polish space defn}{second countable defn}{complete metric defn}
\begin{defn}[Polish spaces, \Assumed{33}]\label{Polish space defn}
We say that a topological space \(X\) is \textit{Polish}, if it is second countable and there is a complete metric on \(X\) which induces its topology.
\end{defn}

Note that it is not true in general that if \(X\) is a Polish space, then all metrics compatible with \(X\) are complete.
For example the open unit interval with the standard metric is not complete but it is homeomorphic to \(\mathbb{R}\) which is complete with respect to the standard metric.

\speeddictone{34}{denseness defn}{closure and interior defn}
\begin{defn}[Denseness, \Assumed{34}]\label{denseness defn}
Suppose that \(X\) is a topological space and \(A, B\subseteq X\). We say that \(A\) is \textit{dense in }\(B\) if \(B\subseteq A^-\).

We say that \(A\) is \textit{somewhere dense} (in \(X\)) if it is dense in a non-empty open subset of \(X\) (equivalently if \(A^{-\circ}\neq \varnothing\)). We say that \(A\) is \textit{nowhere dense} (in \(X\)) if \(A\) is not somewhere dense (in \(X\)) (equivalently if \(A^{-c}= A^{c\circ}\) is dense in \(X\)).
\end{defn}

\speeddicttwo{35}{meagre defn}{countable defn}{denseness defn}
\begin{defn}[Meagreness, \Assumed{35}]\label{\dict{35 ref}}
If \(X\) is a topological space and \(A\subseteq X\), then we say that \(A\) is \textit{meagre} (in \(X\)) if \(A\) is a countable union of nowhere dense sets (or equivalently if \(A\) is disjoint from the intersection of a countable collection of dense open sets).

We say that \(A\) is \textit{comeagre} if \(A^c\) is meagre in \(X\) (or equivalently if \(A\) contains a countable intersection of dense open sets).
\end{defn}

\speeddictthree{39}{baire category theorem}{topology on a metric space defn}{complete metric defn}{meagre defn}
\begin{theorem}[Baire Category Theorem, \Assumed{39}]\label{baire category theorem}
If \(X\) is a topological space which is compatible with a complete metric, then all comeagre subsets of \(X\) are dense in \(X\).
\end{theorem}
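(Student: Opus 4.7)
The plan is to reduce the theorem to showing that any countable intersection of dense open subsets of \(X\) is itself dense in \(X\), since by Definition~\ref{meagre defn} every comeagre set contains such an intersection. So fix a complete metric \(d\) on \(X\) compatible with the topology, a sequence \((U_n)_{n\in \N}\) of dense open subsets of \(X\), and an arbitrary non-empty open set \(V\subseteq X\); the goal is to produce a point of \(V \cap \intersection{n\in \N} U_n\).

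The key step is an inductive construction of a nested sequence of open balls \(B_d(x_n, r_n)\) such that \(B_d(x_0, r_0)^- \subseteq V \cap U_0\), \(B_d(x_{n+1}, r_{n+1})^- \subseteq B_d(x_n, r_n) \cap U_{n+1}\), and \(r_n < 2^{-n}\). At each stage the set \(B_d(x_n, r_n) \cap U_{n+1}\) is non-empty and open by denseness of \(U_{n+1}\), so I pick \(x_{n+1}\) in it and then choose \(r_{n+1}\) small enough (using the triangle inequality) to ensure both the containment into the previous open ball and the halving of the radius.

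From this construction, for all \(m\geq n\) the point \(x_m\) lies in \(B_d(x_n, r_n)^-\), so the estimate \((x_n, x_m)d \leq r_n < 2^{-n}\) shows \((x_n)_{n\in \N}\) is Cauchy; by completeness of \(d\) it converges to some \(x\in X\). Since each \(B_d(x_n, r_n)^-\) is closed (Definition~\ref{closure and interior defn}) and contains the tail \((x_m)_{m\geq n}\), we have \(x\in B_d(x_n, r_n)^- \subseteq V \cap U_n\) for every \(n\in \N\), giving \(x\in V \cap \intersection{n\in \N} U_n\) as required.

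The main subtlety is arranging at each inductive step that \(B_d(x_{n+1}, r_{n+1})^-\) sits inside the \emph{open} ball \(B_d(x_n, r_n)\), not merely the closed one; otherwise, the limit \(x\) could land on the boundary and we would not be able to pass the containment down through the induction to the open set \(U_n\). This is handled by requiring \(r_{n+1} < \min\{r_n - (x_n, x_{n+1})d,\; 2^{-n-1}\}\) and also small enough that the resulting closed ball lies inside \(U_{n+1}\) (using that \(U_{n+1}\) is open). Everything else is routine bookkeeping with the triangle inequality.
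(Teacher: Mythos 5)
Your proof is correct, and it is the standard argument for the Baire Category Theorem; the paper itself does not give a proof but cites Theorem 8.4 of Kechris, which proceeds by essentially the same nested-ball construction. The reduction to dense open sets is justified by the paper's Definition~\ref{meagre defn}, and the inductive construction, Cauchy estimate, and passage to the limit are all carried out correctly.

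One small remark on your discussion of the "main subtlety": the reason you need \(B_d(x_{n+1}, r_{n+1})^- \subseteq B_d(x_n, r_n)\) (open ball, not closed) is not really that the limit could fall on a boundary — even with a closed-ball chain the limit would still lie in every \(B_d(x_n, r_n)^- \subseteq U_n\). The genuine reason is to keep the induction alive: at stage \(n+1\) you want \(B_d(x_n, r_n) \cap U_{n+1}\) to be a non-empty \emph{open} set in which you can place a small closed ball, and openness of that intersection is exactly what a closed-ball chain would fail to deliver. Your construction already guarantees this, so this does not affect correctness.
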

There are many known proofs of the Baire Category Theorem, see for example Theorem 8.4 of \cite{kechris2012classical}.

\speeddicttwo{36}{almost containment defn}{types of binary relation defns}{meagre defn}
\begin{defn}[Almost containment, \Assumed{36}]\label{\dict{36 ref}}
If \(X\) is a topological space, then we define a preorder \(\lesssim_X\) on the subsets of \(X\) by
\[A\lesssim_X B \iff \text{ there exists }C\text{ comeagre in }X\text{ such that } A\cap C \subseteq B\cap C.\]
We also say \(A\approx_X B\) if \(A\lesssim_X B\lesssim_X A\). This notation is of particularly use due to the following observations:
\begin{align*}
    A\approx_X B &\iff \text{ there is }C\text{ comeagre in }X\text{ with }C\cap A = C\cap B,\\
     A\approx_X \varnothing&\iff A \text{ is meagre in }X ,\\
   A\approx_X X &\iff  A \text{ is comeagre in }X.
\end{align*}
\end{defn}

\speeddicttwo{37}{meagreness in subspaces rmk}{subspaces}{almost containment defn}
\begin{remark}[Meagreness in subspaces, \Assumed{37}]\label{\dict{37 ref}}
If \(X\) is a topological space and \(A\subseteq B\) are subspaces of \(X\), then \(A\approx_B \varnothing\) implies that \(A\approx_X \varnothing\) (the converse is not always true).
\end{remark}

We conclude this section with a proof of the Kuratowski-Ulam Theorem which we will use when giving a proof that all homomorphisms from groups with ample generics to second countable groups are continuous (Theorem~\ref{ample automatic continuity thm}).

The converse of the Kuratowski-Ulam Theorem also holds for sets which are almost open (Definition~\ref{almost open defn}). 
However we only provide the version of the result which we will be needing.
For more information see for example Theorem 8.41 of \cite{kechris2012classical}.

\speeddictthree{38}{kuratowsiulam}{second countable defn}{product topology defn}{almost containment defn}
\begin{theorem}[Kuratowski-Ulam, \Assumed{38}]\label{\dict{38 ref}}
Suppose that \(X, Y\) are second countable topological spaces and \(C \approx_{X\times Y} X\times Y\).
Then
\[X\approx_X \makeset{x\in X}{\((\{x\} \times Y)\approx_{(\{x\}\times Y)} (\{x\}\times Y)\cap C\)}.\]
\end{theorem}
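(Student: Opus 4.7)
The plan is to reduce to the case of a single dense open set and then exploit second countability of $Y$ together with the fact that the projection $\pi_X \colon X \times Y \to X$ is an open map.

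By hypothesis $(X\times Y)\setminus C$ is meagre in $X\times Y$, so it is contained in a countable union $\bigcup_{n\in\N} F_n$ of closed nowhere dense sets. Setting $W_n := (X\times Y)\setminus F_n$ yields dense open sets with $\bigcap_n W_n \subseteq C$. Since a countable intersection of comeagre subsets of $X$ is comeagre (recall Definition~\ref{meagre defn}), and since any subset of $\{x\}\times Y$ containing a comeagre subset is itself comeagre, it is enough to prove the following claim: for any dense open $W \subseteq X \times Y$, the set
\[
D_W \;:=\; \makeset{x \in X}{\(W \cap (\{x\}\times Y)\) is dense open in \(\{x\}\times Y\)}
\]
is comeagre in $X$; for then $\bigcap_n D_{W_n}$ is comeagre in $X$ and every $x$ in this intersection satisfies $C\cap(\{x\}\times Y) \supseteq \bigcap_n W_n\cap (\{x\}\times Y)$, a countable intersection of dense open subsets of $\{x\}\times Y$, hence comeagre in $\{x\}\times Y$.

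To prove the claim, fix a countable basis $(V_n)_{n\in\N}$ for $Y$ consisting of non-empty open sets (which exists by second countability). The slice $W\cap(\{x\}\times Y)$ is automatically open in $\{x\}\times Y$, and (using that $\{x\}\times Y$ has basis $(\{x\}\times V_n)_{n\in\N}$ via the subspace topology) it is dense if and only if it meets $\{x\}\times V_n$ for every $n$, that is, if and only if $x \in A_n := (W \cap (X\times V_n))\pi_X$ for every $n$. Each $A_n$ is open in $X$ because $\pi_X$ is an open map (it sends basic open rectangles $U\times V$ to $U$) and $W\cap(X\times V_n)$ is open. Each $A_n$ is dense: given any non-empty open $U\subseteq X$, the set $U\times V_n$ is a non-empty open subset of $X\times Y$ and therefore meets the dense set $W$, so the projection lands in $U\cap A_n$. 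Therefore $\bigcap_n A_n \subseteq D_W$ is a countable intersection of dense open subsets of $X$, hence comeagre, and the claim follows.

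There is no serious obstacle here; the only real care needed is in the bookkeeping between $X\times Y$ and the subspaces $\{x\}\times Y$. In particular, one uses that second countability passes to subspaces (so $(\{x\}\times V_n)_{n\in\N}$ is a basis for $\{x\}\times Y$), and that $\pi_X$ is an open map --- this is what converts density of $W$ in the product into density of the various $A_n$ in $X$, which is the engine driving the whole argument.
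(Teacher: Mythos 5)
Your proposal is correct and takes essentially the same approach as the paper: both fix a countable basis $(B_j)$ for $Y$, project the sets $U_i \cap (X \times B_j)$ to $X$, use the openness of the projection to see the images are open and the density of $U_i$ to see they are dense, and conclude by taking a countable intersection. The only cosmetic differences are that you reduce to a single dense open set before introducing the basis index (the paper handles the double index $i, j$ at once), and you explicitly discard empty basis elements — a small point the paper glosses over but which is needed for the density of the projections.
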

\begin{proof}
Let \((B_i)_{i\in \N}\) be a countable basis for \(Y\). Let \((U_i)_{i\in \N}\) be a sequence of dense open subsets of \(X\times Y\) such that 
\[C\supseteq \intersection{i\in \N} U_i.\]
For all \(i, j\in \N\) let \(D_{i, j}:= (U_i\cap (X\times B_j))\pi_0\).
As \(\pi_0\) maps open sets to open sets, the sets \(D_{i, j}\) are all open. Moreover as \(\pi_0\) is a continuous surjection and each of the sets \(U_i\cap (X\times B_j)\) is dense in \(X\times B_j\), the sets \(D_{i, j}\) are dense in \(X\). So for all \(i, j\in \N\) we have \(D_{i, j} \approx_X X\) and thus 
\[\intersection{i, j\in \N} D_{i, j} \approx_X X.\]
Whenever \(x\in D_{i, j}\) it follows that \(U_i \cap (X\times B_j) \cap (\{x\}\times Y) \neq \varnothing\). So if \(x\in \intersection{i, j\in \N} D_{i, j}\) then \(U_i \cap (\{x\}\times Y)\) is open and dense in \(\{x\}\times Y\) for all \(i\in \N\). The result follows.
\end{proof}

\section{Structures}
In this section, we introduce the model theoretic view of the structures we will be working with.
This allows us to talk about topological structures and define notation in a more general context. This will be useful as while we are primarily concerned with groups and semigroups we will occasionally need to use other types of structures.

\speeddicttwo{40}{signature defn}{Numbers defns}{product sets and projection maps def}
\begin{defn}[Signature, \Assumed{40}]\label{signature defn}
A \textit{signature} is a 3-tuple \(\sigma=(F_\sigma, R_\sigma, \operatorname{ar}_\sigma)\), where
\begin{enumerate}
    \item \(F_\sigma\) is a set (whose elements are called function symbols or operation symbols).
    \item \(R_\sigma\) is a set (whose elements are called relation symbols) disjoint from \(F_\sigma\).
    \item \(\operatorname{ar}_\sigma: F_\sigma\cup R_\sigma \to \N\) is a function (assigns each symbol an arity).
\end{enumerate}
\end{defn}

\speeddictone{41}{structures defn}{signature defn}
\begin{defn}[Structure, \Assumed{41}]\label{structures defn}
If \(\sigma\) is a signature, then we say that \(\mathbb{M}=(M, \sigma, I)\) is a \(\sigma\)-\textit{structure} if
\begin{enumerate}
    \item \(M\) is a set (called the \textit{universe} of \(\mathbb{M}\)).
    \item \(I:F_\sigma \cup R_\sigma \to \union{n\in \N} \left(M^{M^n} \cup \mathcal{P}(M^n)\right)\) is such that
\begin{align*}
    (F)I\in M^{M^n} &\iff (F\in F_\sigma \text{ and } (F)\operatorname{ar}_\sigma= n) \text{ and }\\
    (R)I\in \mathcal{P}(M^n) &\iff (R\in R_\sigma \text{ and } (R)\operatorname{ar}_\sigma= n).
\end{align*}
\end{enumerate}
The idea being that \(I\) interprets each element of \(F_\sigma\) as an operation and each element of \(R_\sigma\) as a relation.
If \(S\in F_\sigma \cup R_\sigma\) then we will use the notation \(S^\mathbb{M}\) to denote \((S)I\).
We often treat a structure \(\mathbb{M}\) as if it where equal to its universe \(M\) for ease of notation.
We say that a \(\sigma\)-structure is \textit{algebraic} if \(R_{\sigma}= \varnothing\).
\end{defn}

\speeddictfour{42}{structure hom defn}{binary relations defns}{Categories defn}{Products in Categories Defn}{structures defn}
\begin{defn}[Homomorphisms, \Assumed{42}]\label{structure hom defn}
If \(\mathbb{X}\) and \(\mathbb{Y}\) are \(\sigma\)-structures and \(f:\mathbb{X} \to \mathbb{Y}\) is a function, then we say that \(f\) is a \textit{homomorphism} if for all \(F\in F_\sigma\) and \(R\in R_\sigma\) we have
    \[F^\mathbb{X} \circ f = \boldsymbol{f}_{(F)\ar_\sigma}\circ F^\mathbb{Y} \quad \text{ and }\quad(R^\mathbb{X})\boldsymbol{f}_{(R)\ar_\sigma} \subseteq  R^\mathbb{Y}\]
where \(\boldsymbol{f}_n := \langle(\pi_if)_{i\in \{0, 1, \ldots, n-1\}}\rangle_{X^n}\) (recall Definition~\ref{Products in Categories Defn}).
We say that a homomorphism is a \textit{embedding} if it is injective and the containment above can be strengthened to \((R^\mathbb{X})\boldsymbol{f}_{(R)\ar_\sigma} =  R^\mathbb{Y} \cap (\im(f)^n)\).

 We say that a homomorphism is an \textit{isomorphism} if it is a surjective embedding. If there is an isomorphism between structures \(\mathbb{X}\) and \(\mathbb{Y}\), then we say that they are \textit{isomorphic} (denoted \(\mathbb{X}\cong \mathbb{Y}\)). 
\end{defn}

It is routine to verify that for a signature \(\sigma\), the class of \(\sigma\)-structures and homomorphisms forms a category (\ref{Categories defn}), and the notions of isomorphism from Definitions~\ref{Categories defn} and \ref{structure hom defn} coincide.

\speeddictone{43}{semigroup signature defn}{signature defn}
\begin{defn}[Semigroup signature, \Assumed{43}]\label{\dict{43 ref}}
We define \(\sigma_S\) to be the signature \((\{*\},\varnothing,\{(*, 2)\})\), where \(*\) is some fixed symbol. 
The choice of \(*\) is not very important, but for example one could define \(*:= (19,20,01,18)\).
\end{defn}

\speeddictone{44}{semigroup defn}{semigroup signature defn}
\begin{defn}[Semigroups and monoids, \Assumed{44}]\label{semigroup defn}
We say a \(\sigma_S\)-structure \(S\) is a \textit{semigroup} if for all \(a,b,c\in S\) we have
\(((a, b)*^{S},c)*^{S}=(a,(b,c)*^{S})*^{S}\).
If \(S\) is a semigroup, and \(a, b\in S\) then we will often write \(ab\) as an abbreviation of \((a, b)*^{S}\).

If \(S\) is a semigroup and \(s\in S\), then we define the maps \(\rho_s:S\to S\) and \(\lambda_s:S\to S\) by \((t)\rho_s = ts\), and \((t)\lambda_s = st\) for all \(t\in S\).
These maps are dependent on the semigroup \(S\) but when used the intended semigroup is to be inferred from context.

If \(S\) is a semigroup and there is an element \(e\in S\) such that \(ea=a e=a\) for all \(a\in S\), then we say that \(S\) is a \textit{monoid} and \(e\) is an \textit{identity} for \(S\). It is routine to verify that if \(M\) is a monoid then it has a unique identity element, we will denote this element by \(1_M\).
\end{defn}

\speeddictone{45}{inverse semigroup signature defn}{semigroup signature defn}
\begin{defn}[Inverse semigroup signature, \Assumed{45}]\label{\dict{45 ref}}
We define the \(\sigma_I\) to be the signature \((\{*, \iota\}, \varnothing, \{(*, 2), (\iota, 1)\})\) (extending the signature \(\sigma_S\)) where \(\iota\) is some fixed symbol.
The actual value of the symbol \(\iota\) is not very important, but for example one could define \(\iota:= (09,14,22,05,18,19,05)\).
\end{defn}

\speeddictone{46}{inverse semigroup defn}{inverse semigroup signature defn}
\begin{defn}[Inverse semigroup, \Assumed{46}]\label{\dict{46 ref}}
We say a \(\sigma_I\)-structure \(\mathbb{S} =(S, \sigma_I, I)\) is an \textit{inverse semigroup} if \((S, \sigma_S, I\restriction_{\{*\}})\) is a semigroup and for all \(a, b\in \mathbb{S}\) we have
\[ ((a)\iota^\mathbb{S})\iota^\mathbb{S}=a, \quad a\ (a)\iota^\mathbb{S}\ a=a, \quad a\ (a)\iota^\mathbb{S}\ b\ (b)\iota^\mathbb{S}=b\ (b)\iota^\mathbb{S}\ a\ (a)\iota^\mathbb{S}.\]
If \(\mathbb{S}\) is an inverse semigroup and \(a\in \mathbb{S}\), then we will usually denote \((a)\iota^\mathbb{S}\) by \(a^{-1}\) for brevity.
\end{defn}

\speeddictthree{136}{group of units defn}{structures defn}{semigroup defn}{group defn}
\begin{defn}[Units, \Assumed{136}]\label{group of units defn}
If \(S\) is a semigroup and \(g\in S\), then we say that \(g\) is a \textit{unit} if there is an element \(g^{-1}\in S\) (called the \textit{inverse} of \(g\)) such that for all \(s\in S\) we have \(g^{-1}gs=s=sgg^{-1}\).
It is routine to verify that the inverse of any unit is unique.
\end{defn}

\speeddicttwo{47}{group defn}{inverse semigroup defn}{group of units defn}
\begin{defn}[Groups, \Assumed{47}]\label{group defn}
It is routine to verify that if \(S\) is a semigroup, then the set of units \(\mathcal{U}(S)\) of \(S\) is a substructure of \(S\).
We consider \(\mathcal{U}(S)\) to be an inverse semigroup with the added unary operation being the map sending an element to its inverse.

We refer to the inverse semigroup \(\mathcal{U}(S)\) as the \textit{group of units} of \(S\), and we say that an inverse semigroup is a \textit{group} if it is the group of units of a semigroup.
Note that if \(G\) is a group and \(g\in G\), then \(gg^{-1}\) is an identity for \(G\).
\end{defn}

It is important to note that (as defined in this section), we do not consider groups or inverse semigroups to be the same objects as the semigroups obtained from them by removing their inverse operation.
It will be important to keep this distinction in mind as it will have a real impact when we start talking about topological groups and topological inverse semigroups.


\subsection{Topological structures}
\speeddictfive{48}{topological structures defn}{Categories defn}{product topology defn}{structures defn}{structure hom defn}{group defn}
\begin{defn}[Topological structures, \Assumed{48}]\label{topological structures defn}
A topological \(\sigma\)-structure \(\mathbb{S}\) is a pair \((S, \mathcal{T}_\mathbb{S})\), where 
\begin{enumerate}
    \item \(S\) is a \(\sigma\)-structure.
    \item \(\mathcal{T}_{\mathbb{S}}\) is a topology on \(S\).
    \item For all \(F\in F_\sigma\), the function \(F^\mathbb{S}\) is continuous with respect to \(\mathcal{T}_\mathbb{S}\) and the corresponding product topology.
    \item For all \(R\in R_\sigma\) the set \(R^\mathbb{S}\) is closed with respect to \(\mathcal{T}_\mathbb{S}\) and the corresponding product topology.
\end{enumerate}
 In this case we say that \(\mathcal{T}_{\mathbb{S}}\) is \textit{compatible} with \(S\). 
 We will use the established structure terminology and topology terminology for topological structures by ignoring the topological or structural part of the object as required. For example a topological group is a topological structure whose underlying structure is a group, and we consider a topological structure to be discrete if its associated topology is discrete.

We will sometimes think of a topological space as a topological structure in the signature with no symbols, and think of a structure without topology as a discrete structure.

If \(f:\mathbb{S}\to \mathbb{T}\) is a function between topological structures which is both a homeomorphism of spaces and an isomorphism of structures, then we call \(f\) a \textit{topological isomorphism}.
\end{defn}

Topological structures and continuous homomorphisms form a category (\ref{Categories defn}), this is essentially the intersection of the usual categories of structures and topological spaces.

\speeddictone{48}{substructures defn}{topological structures defn}
\begin{defn}[Substructures, \Assumed{48}]\label{\dict{48 ref}}
If \(\mathbb{S}\) is a topological \(\sigma\)-structure and \(\mathbb{M}\subseteq S\) is closed under the operations \(\{F^{\mathbb{S}}:F\in F_\sigma\}\), then we say that \(\mathbb{M}\) is a topological \textit{substructure} of \(\mathbb{S}\), and we view \(\mathbb{M}\) is a topological structure in the signature \(\sigma\) by restricting the operations, relations and topology in the natural fashion.

One should verify that such an object always satisfies the conditions required to be a topological structure.
This follows from the observation that if \(X\) is a topological space, \(A\subseteq X\) and \(V\) is a set, then the topology on \(A^V\) is the same regardless of whether we view it as a subspace of a power of \(X\) or a power of a subspace of \(X\).
\end{defn}

The following example of a topological structure is particularly important as many of the topological semigroups discussed in Part~\ref{semigroups section} are topological substructures of it.

\speeddictfive{168}{topological binary relations}{binary relations defns}{nbhds defn}{subbasis}{semigroup defn}{topological structures defn}
\begin{example}[Binary relation monoids are topological, \Assumed{168}]\label{topological binary relations}
If \(X\) is a set, then we define \(\mathcal{B}_X\) to be the monoid of binary relations from \(X\) to \(X\), with composition as the binary operation (recall Definition~\ref{binary relations defns}).
We define \(\mathcal{TB}_X\) to be the topology \(X\) generated by the sets of the form
\[U_{x, y}:= \makeset{b\in \mathcal{B}_X}{\((x, y)\in b\)}\]
for all \(x, y\in X\). With these definitions, \((\mathcal{B}_X, \mathcal{TB}_X)\) is a topological semigroup and moreover the map \(b\mapsto b^{-1}\) is continuous.
\end{example}
\begin{proof}
Let \(x, y\in X\) be arbitrary. We first need to show that the set
\[V:=\makeset{(a, b)\in \mathcal{B}_X}{\(ab\in U_{x, y}\)}\]
is open. Let \((a, b)\in V\) be arbitrary, to conclude that \(V\) is open we need only show that \(V\) is a neighbourhood of \((a, b)\).

As \(ab\in U_{x, y}\), we have \((x, y)\in ab\). Thus by the definition of \(ab\), there is some \(z\in X\) such that \((x, z)\in a\) and \((z, y)\in b\).
Thus \((a, b)\in U_{x, z}\times U_{z, y}\).
By the definition of composition of binary relations we have \(U_{x, z}U_{y,z}\subseteq U_{x, y}\), thus \(U_{x, z}\times U_{y, z}\subseteq V^\circ\). In particular, \(V\) is a neighbourhood of \((a,b)\).

It remains to show that the map \(b\mapsto b^{-1}\) is continuous. For all \(x, y\in X\) we have
\[U_{x, y}^{-1}= \makeset{b^{-1}\in \mathcal{B}_X}{\((x, y)\in b\)}
= \makeset{b\in \mathcal{B}_X}{\((y, x)\in b\)}
= U_{y, x}.\]
Thus the map \(b\to b^{-1}\) fixes the subbasis for \(\mathcal{TB}_X\) (setwise) and hence is continuous as required.
\end{proof}

\speeddicttwo{186}{full trans def}{product topology defn}{topological binary relations}
\begin{example}[The full transformation monoid, \Assumed{186}]\label{full trans def}
If \(X\) is a set, then we define the \textit{full transformation monoid} of \(X\) to be the set \(X^X\) with composition of functions as the operation.
We also define the \textit{pointwise topology} \(\mathcal{PT}_{X}\) on \(X^X\) to be the product topology obtained by viewing each copy of \(X\) with the discrete topology. The pair \((X^X, \mathcal{PT}_{X})\) is notably a topological subsemigroup of \((\mathcal{B}_X, \mathcal{TB}_X)\) (recall Example~\ref{topological binary relations}),
\end{example}

\speeddictthree{49}{product structures}{Products in Categories Defn}{product topology defn}{topological structures defn}
\begin{defn}[Product structures, \Assumed{49}]\label{product structures}
If \((\mathbb{S}_i)_{i\in I}\) are topological \(\sigma\)-structures, then we view \(\mathbb{P}:=\prod_{i\in I} (\mathbb{S}_i)_{i\in I}\) as a topological \(\sigma\)-structure as follows:

If \(f\in F_\sigma\) has \((f)\ar_\sigma = n\) and \((s_{i, 0})_{i\in I}, (s_{i, 1})_{i\in I}, \ldots (s_{i, n-1})_{i\in I}\in \mathbb{P}\) then
\[((s_{i, 0})_{i\in I}, (s_{i, 1})_{i\in I}, \ldots, (s_{i, n-1})_{i\in I})f^\mathbb{P} = ((s_{i, 0}, s_{i, 1}, \ldots, s_{i, n-1}) f^{\mathbb{S}_i})_{i\in I}.\]

If \(R\in R_\sigma\) has \((R)\ar_\sigma = n\) and \((s_{i, 0})_{i\in I}, (s_{i, 1})_{i\in I}, \ldots (s_{i, n-1})_{i\in I}\in \mathbb{P}\) then
\[((s_{i, 0})_{i\in I}, (s_{i, 1})_{i\in I}, \ldots, (s_{i, n-1})_{i\in I})\in R^{\mathbb{P}} \iff \left((s_{i, 0}, s_{i, 1}, \ldots, s_{i, n-1}) \in R^{\mathbb{S}_i} \text{ for all }i\in I\right).\]

It is routine to verify that the above construction is a product in the category of topological \(\sigma\)-structures and continuous homomorphisms (recall Definition~\ref{Products in Categories Defn}). 
\end{defn}

\speeddicttwo{700}{product poset}{types of binary relation defns}{product structures}
\begin{example}[Product of posets, \Assumed{700}]\label{product poset}
If \((X, \leq)\) is a partially ordered set (which we view as a structure in the signature with one binary relation symbol) and \(n\in \N\), then the relation on \(X^n\) as given in Definition~\ref{product structures} is also a partial order and is defined by
\[(x_0, x_1, \ldots, x_{n-1})\leq (y_0, y_1, \ldots, y_{n-1}) \iff (x_i\leq y_i \text{ for all } i<n)\]
where \(x_0, x_1, \ldots, x_{n-1},y_0, y_1, \ldots, y_{n-1} \in X\) are arbitrary.
\end{example}

\speeddictfour{50}{congruences and quotients defn}{binary relations defns}{types of binary relation defns}{substructures defn}{product structures}
\begin{defn}[Congruences and quotients, \Assumed{50}]\label{\dict{50 ref}}
If \(\mathbb{S}\) is a structure, and \(\sim\) is an equivalence relation on \(\mathbb{S}\), then we say that \(\sim\) is a \textit{congruence} on \(\mathbb{S}\) if \(\sim\) is a substructure of \(\mathbb{S}^2\). In this case we view the set \(Q:=\mathbb{S}/\sim\) as a \(\sigma\)-structure (called the \textit{quotient} of \(\mathbb{S}\) by \(\sim\)) as follows:

If \(f\in F_\sigma\), \(R\in R_\sigma\) and \(s_{0}, s_1, \ldots, s_{n-1}\in \mathbb{S}\) then
\[([s_0]_\sim, [s_1]_\sim, \ldots, [s_{n-1}]_\sim)f^{Q} = [(s_0, s_1,\ldots, s_{n-1})f^{\mathbb{S}}]_\sim,\]
\[([s_0]_\sim, [s_1]_\sim, \ldots, [s_{n-1}]_\sim)\in R^{Q} \iff (([s_0]_\sim\times[s_1]_\sim \times\ldots \times [s_{n-1}]_\sim) \cap R^{\mathbb{S}}) \neq \emptyset.\]
This is well-defined precisely because \(\sim\) is a congruence.

For an important example of this, note that if \(\mathbb{S}, \mathbb{T}\) are \(\sigma\)-structures and \(f:\mathbb{S}\to \mathbb{T}\) is a homomorphism, then \(\ker(f)\) (recall Definition~\ref{binary relations defns}) is a congruence on \(\mathbb{S}\).
Moreover the induced map \(f^*:\mathbb{S}/\operatorname{ker}(f) \to \mathbb{T}\) is an injective homomorphism.
When this induced map \(f^*\) is an isomorphism, we call \(f\) a \textit{quotient map}.
\end{defn}

\subsection{Polish semitopological groups are nice}

In this subsection we display a proof of a key result from the literature:
That comparable Polish topologies compatible with a group are always equal (Theorem~\ref{comparable Polish group topologies theorem}).
This fact is the primary reason we are interested in Polish spaces as opposed to any other class.
While the topological assumptions are quite strong, this result is very useful when investigating the potential topologies on a group. 
While this result does not hold for semigroups in general or even inverse semigroups, as we will see in Part~\ref{semigroups section}, it will still be useful to us in these more general algebraic contexts.

\speeddictfour{169}{semitopological defn}{semigroup defn}{inverse semigroup defn}{topological structures defn}{topological binary relations}
\begin{defn}[Semitopological semigroups, \Assumed{169}]\label{semitopological defn}
We define a \textit{semitopological semigroup} to be a pair \((S, \mathcal{T})\) such that
\begin{enumerate}
    \item \(S\) is a semigroup.
    \item \(\mathcal{T}\) is a topology on \(S\).
    \item For all \(s\in S\), the maps \(\lambda_s\) and \(\rho_s\) are continuous (recall Definition~\ref{semigroup defn}).
\end{enumerate}
Similarly we define a \textit{semitopological inverse semigroup} to be a pair \((I, \mathcal{T})\) such that \((S, \mathcal{T})\) is a semitopological semigroup (where \(S\) the semigroup obtained by removing the unary operation of \(I\)).
In these cases we say that \(\mathcal{T}\) is \textit{semicompatible} with \(S\) or \(I\).

For each \(s\in S\) let \(c_{S, s}:S\to S\) be the constant map with value \(s\).
As
\[\lambda_s = \langle c_{S, s}, \text{id}_S\rangle_{S^2} \circ *^S \quad \text{and}\quad \rho_s = \langle \text{id}_S, c_{S, s}\rangle_{S^2}\circ *^S \quad \text{(recall Definition~\ref{Products in Categories Defn})}\]
it follows that all topologies compatible with \(S\) are also semicompatible with \(S\).
\end{defn}

The contents of Lemma~\ref{nice Polish subspaces lemma}, Lemma~\ref{more nice Polish subspaces lemma} and Corollary~\ref{even more nice Polish subspaces cor} are well-known. 
However how these facts are stated and grouped tends to vary depending on the author's needs. 
For example in \cite{kechris2012classical}, they can be found as (parts of) Proposition 3.3, Theorem 3.11, Lemma 13.2 and Lemma 13.3.

\speeddicttwo{209}{nice Polish subspaces lemma}{subspaces}{Polish space defn}
\begin{lemma}[Nice Polish subspaces, \Assumed{209}]\label{nice Polish subspaces lemma}
If \((X, \mathcal{T})\) is a Polish topological space and \(U\subseteq X\) is open or closed, then the topological space \((U, \mathcal{T}\restriction_{U})\) is Polish.
\end{lemma}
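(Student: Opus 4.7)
The plan is to handle the closed and open cases separately, and in both cases establish second countability by the easy observation that a countable basis for $(X,\mathcal{T})$ restricts to a countable basis for the subspace topology on $U$.

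For the closed case, suppose $U \subseteq X$ is closed and let $d$ be a complete metric on $X$ compatible with $\mathcal{T}$. My plan is to simply take the restriction $d \restriction_{U\times U}$ as a candidate metric on $U$. It clearly induces the subspace topology. To verify completeness, I would take a Cauchy sequence $(x_i)_{i\in\N}$ in $(U, d\restriction_{U\times U})$, note that it is also Cauchy in $(X,d)$, hence converges to some $x \in X$; since $U$ is closed the limit must lie in $U$. This case is essentially routine.

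For the open case, the main work lies. Let $F = U^c$, which is closed in $X$. The plan is to exhibit $U$ as a closed subspace of the Polish space $X \times \mathbb{R}$ (which is Polish as a countable product of Polish spaces), and then apply the closed case. Concretely, I would define the function $g:U \to \mathbb{R}$ by
\[
(x)g := \frac{1}{\inf\{(x,y)d : y \in F\}},
\]
which is well-defined and continuous on $U$ because $F$ is closed and disjoint from $U$, so the denominator is strictly positive and continuous in $x$. Then I would consider the map $\phi : U \to X \times \mathbb{R}$ defined by $(x)\phi = (x, (x)g)$. This map is a homeomorphism of $U$ onto its image (continuity of $\phi$ follows from continuity of $g$, and continuity of $\phi^{-1}$ follows from the continuity of the first projection). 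The key claim to verify is that $\im(\phi)$ is \emph{closed} in $X\times \mathbb{R}$: if $(x_n, (x_n)g) \to (y, t)$ in $X \times \mathbb{R}$, then $y\in U$, for otherwise $\inf\{(x_n,z)d:z\in F\}\to 0$ would force $(x_n)g \to \infty$, contradicting convergence; and then continuity of $g$ forces $t = (y)g$, so $(y,t) \in \im(\phi)$.

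The hard part will be cleanly executing this embedding argument, in particular showing that $\im(\phi)$ is closed and that $\phi$ is a homeomorphism onto its image — these are the steps where one has to use the openness of $U$ in an essential way (it is only here that $g$ is defined everywhere on $U$ and blows up on approach to $F$). Once that is in hand, the closed case applied to $\im(\phi) \subseteq X \times \mathbb{R}$ gives a complete compatible metric $d'$ on $\im(\phi)$, and pulling $d'$ back through $\phi$ gives a complete metric on $U$ inducing $\mathcal{T}\restriction_U$. Second countability of $U$ in both cases follows by restricting a countable basis of $X$, completing the argument.
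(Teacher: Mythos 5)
Your argument is correct, and it is in essence the same construction as the paper's, just organized differently. The paper defines $d'(x) = \inf\{(x,v)d : v \in X\setminus U\}$ and then directly writes down the candidate metric $\delta(x,y) = (x,y)d + \bigl|\tfrac{1}{(x)d'} - \tfrac{1}{(y)d'}\bigr|$ on $U$, verifying by hand that it is a metric, that it is complete, and that it induces the subspace topology. You instead package the reciprocal-distance function as a continuous map $g:U\to\mathbb{R}$, embed $U$ into the Polish space $X\times\mathbb{R}$ via the graph map $\phi(x)=(x,(x)g)$, show the image is closed, and then invoke the closed case. These are two faces of the same idea: pulling back the sum metric on $X\times\mathbb{R}$ through $\phi$ yields precisely the paper's $\delta$. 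Your version trades hand-verification of the metric axioms and completeness for a closedness-of-graph argument and a homeomorphism check, which is roughly the same amount of work but arguably cleaner conceptually, since the completeness of $\delta$ becomes automatic from the closed case rather than needing a separate limit computation. One minor point worth making explicit if you wrote this out in full: you should note that $X\times\mathbb{R}$ is Polish (a finite product of Polish spaces is Polish), which in this paper is only established as part of Lemma~\ref{more nice Polish subspaces lemma} — so presenting it in the paper's logical order would require either proving that small fact inline or reordering the lemmas.
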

\begin{proof}
First observe that a subspace of a second countable space is always second countable so we need only show that the subspaces are compatible with complete metrics.
Let \(d\) be a complete metric compatible with \((X, \mathcal{T})\).
First suppose that \(U\) is closed. In this case any Cauchy sequence of \((U, d\restriction_{U\times U})\) has a limit in \(X\), which must be an element of \(U\) due to \(U\) being closed. We next suppose that \(U\) is open. We define a map \(d': U\to \mathbb{R}\) by
\[(x)d'= \inf\left(\makeset{(x, v)d}{\(v\in X\backslash U\)}\right).\]
Note that if \(\varepsilon >0\), \(x, y\in U\) and \((x, y)d\leq \varepsilon\) then \(|(x)d'- (y)d'|\leq \varepsilon\), so the map \(d'\) is continuous. 

As \(X\backslash U\) is closed, it follows that \(\im(d')\subseteq (0, \infty)\).
We define a new map \(\delta: U\times U \to \mathbb{R}\) by
\[(x, y)\delta = (x, y)d + \left|\frac{1}{(x)d'} - \frac{1}{(y)d'}\right|.\]
It is routine to verify that \(\delta\) satisfies the first two conditions of being a metric on \(U\).
For the third condition, note that if \(x, y, z\in U\), then
\begin{align*}
 (x, z)\delta&= (x, z)d + \left|\frac{1}{(x)d'} - \frac{1}{(z)d'}\right|\\
 &\leq (x, y)d + (y, z)d + \left|\frac{1}{(x)d'} - \frac{1}{(y)d'}\right| +\left|\frac{1}{(y)d'} - \frac{1}{(z)d'}\right|=(x, y)\delta + (y, z)\delta.
\end{align*}
If \((x_i)_{i\in \N}\) is a sequence in \(U\), then if \((x_i)_{i\in \N}\) is Cauchy with respect to \(\delta\), then \((x_i)_{i\in \N}\) is Cauchy with respect to \(d\) and hence converges with respect to \(\mathcal{T}\restriction_{U}\).
Similarly if \((x_i)_{i\in \N}\) converges to \(x\in U\) with respect to \(\mathcal{T}\restriction_{U}\), then
\[\lim_{i\in \N} (x, x_i)\delta =\lim_{i\in \N} \left((x, x_i)d + \left|\frac{1}{(x)d'} - \frac{1}{(x_i)d'}\right|\right) = 0 .\]
Thus the metric \(\delta\) is complete and compatible with \(\mathcal{T}\restriction_{U}\) as required.
\end{proof}

\speeddictthree{221}{more nice Polish subspaces lemma}{Products in Categories Defn}{product topology defn}{nice Polish subspaces lemma}
\begin{lemma}[More nice Polish spaces, \Assumed{221}]\label{more nice Polish subspaces lemma}
Suppose that \(((X_i, \mathcal{T}_i))_{i\in \N}\) are Polish topological spaces and consider the space \((I, \mathcal{T}_I)\) where \(I\) is the intersection of the sets \(X_i\) and \(\mathcal{T}_I\) is the topology on \(I\) generated by all of the subspace topologies \(\mathcal{T}_i\restriction_{I}\).

The space \(\prod_{i\in \N} X_i\) is also Polish (with the usual product topology).
Moreover, if there is a Hausdorff space \((X, \mathcal{T})\) such that for all \(i\in \N\), we have \(X_i\subseteq X\) and \(\mathcal{T}\restriction_{X_i}\subseteq \mathcal{T}_i\), then \(\mathcal{T}_I\) is Polish.
\end{lemma}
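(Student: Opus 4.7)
The plan is to prove the two claims in turn, handling the product case directly and then reducing the intersection case to a closed subspace of that product.

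For the product $\prod_{i\in \N} X_i$, second countability follows from the standard observation that if each $X_i$ has a countable basis $\mathcal{B}_i$, then finite intersections of sets of the form $(U)\pi_i^{-1}$ with $U\in \mathcal{B}_i$ form a countable basis for the product topology. For the metric, I would take complete metrics $d_i$ compatible with each $\mathcal{T}_i$, replace them with $\min(d_i, 1)$ (which remains complete and induces the same topology), and define
\[
d((x_i)_{i\in \N}, (y_i)_{i\in \N}) \;=\; \sum_{i\in \N} 2^{-i}(x_i, y_i)d_i.
\]
It is routine to verify that $d$ is a metric compatible with the product topology; completeness follows because a $d$-Cauchy sequence is coordinate-wise $d_i$-Cauchy and the coordinate-wise limit is the $d$-limit.

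For the intersection case, the strategy is to realise $(I, \mathcal{T}_I)$ as a closed subspace of $\prod_{i\in \N} X_i$ via the diagonal map $\Delta: I \to \prod_{i\in \N} X_i$ sending $x$ to the constant tuple with every coordinate equal to $x$. I first check that $\Delta$ is a homeomorphism onto its image: a subbasic open set of $\mathcal{T}_I$ has the form $U\cap I$ for some $i\in \N$ and some $U\in \mathcal{T}_i$, and its image under $\Delta$ is precisely $(U)\pi_i^{-1}\cap \Delta(I)$, which is a subbasic open set of $\Delta(I)$; the correspondence goes both ways.

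The main content of the proof is showing $\Delta(I)$ is closed in $\prod_{i\in\N} X_i$, and this is where the Hausdorff space $(X, \mathcal{T})$ enters. The hypothesis $\mathcal{T}\restriction_{X_i}\subseteq \mathcal{T}_i$ is exactly the statement that each inclusion $\iota_i:(X_i, \mathcal{T}_i)\hookrightarrow (X, \mathcal{T})$ is continuous, so by the universal property of the product (Definition~\ref{Products in Categories Defn}) we obtain a continuous map $\Phi:= \langle (\pi_i \iota_i)_{i\in \N}\rangle_{X^\N}:\prod_{i\in \N} X_i \to X^\N$. Since $X$ is Hausdorff and $X^\N$ is the product of countably many copies of it, the ``generalised diagonal''
\[
D \;=\; \makeset{(z_i)_{i\in \N}\in X^\N}{\(z_i=z_j\) for all \(i, j\in \N\)}
\]
is closed in $X^\N$: it is the intersection over pairs $(i,j)\in \N^2$ of the preimages of $\operatorname{id}_X\subseteq X\times X$ under the continuous maps $\langle \pi_i, \pi_j\rangle_{X^2}$, and $\operatorname{id}_X$ is closed by Hausdorffness (Definition~\ref{Hausdorff defn}). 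Therefore $(D)\Phi^{-1}$ is closed in $\prod_{i\in\N} X_i$, and I claim this preimage equals $\Delta(I)$: any tuple $(x_i)_{i\in \N}$ with all coordinates equal to some common $x\in X$ must have $x\in X_i$ for every $i$, hence $x\in I$. Applying Lemma~\ref{nice Polish subspaces lemma} to the closed subspace $\Delta(I)$ of the Polish space $\prod_{i\in \N} X_i$ yields that $\Delta(I)$, and hence $(I, \mathcal{T}_I)$, is Polish.

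The main technical point to be careful about is the direction of the topological inclusion $\mathcal{T}\restriction_{X_i}\subseteq \mathcal{T}_i$ (which gives continuity of $\iota_i$ in the correct direction) and the verification that $(D)\Phi^{-1} = \Delta(I)$ rather than something larger; but both checks are short. No additional hypothesis beyond Hausdorffness of $X$ is used, which is reassuring given that the conclusion only concerns completeness and second countability of $\mathcal{T}_I$.
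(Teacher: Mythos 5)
Your proof is correct and takes essentially the same approach as the paper: construct an explicit complete metric on the product (you use a weighted sum, the paper uses a weighted max — both work), then realise $(I,\mathcal{T}_I)$ as the diagonal subset of $\prod_{i\in\N} X_i$, show this diagonal is closed using Hausdorffness of $X$ (the paper writes $D=\bigcap_{i,j}(\operatorname{id}_X)\langle\pi_i,\pi_j\rangle_{X^2}^{-1}$ directly; you route through $X^\N$ — the same computation), and finish with Lemma~\ref{nice Polish subspaces lemma}.
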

\begin{proof}
For each \(i\in \N\), let \(d_i\) be a metric compatible with \(X_i\). Then for each \(i\in \N\), let \(d_i': X_i \times X_i \to \mathbb{R}\) be defined by \((x, y)d_i' = \min(\{(x, y)d_i, \frac{1}{2^i}\})\).
We now define a metric \(d\) on \(\prod_{i\in \N} X_i\) by
\[(x, y)d = \max_{i\in \N} ((x)\pi_i, (y)\pi_i)d_i'.\]
It is routine to verify that this is a well defined complete metric compatible with the product topology on \(\prod_{i\in \N} X_i\). Thus \(\prod_{i\in \N} X_i\) is Polish. Consider the subspace
\[D:= \makeset{x\in \prod_{i\in \N} X_i}{\((x)\pi_i = (x)\pi_j\)\\ for all \(i, j\in \N\)}=  \makeset{x\in \prod_{i\in \N} X_i}{\((x)\langle \pi_i, \pi_j \rangle_{X^2}\in \text{id}_X\)\\ for all \(i, j\in \N\)}=\intersection{i, j\in \N} (\text{id}_X)\langle \pi_i, \pi_j \rangle_{X^2}^{-1}\]
of \(\prod_{i\in \N} X_i\). As \((X, \mathcal{T})\) is Hausdorff, the space \(D\) is a closed subspace of the Polish space \(\prod_{i\in \N} X_i\).
By Lemma~\ref{nice Polish subspaces lemma}, it follows that \(D\) is a Polish space.
It follows from the definition of \(D\) that \(\langle (\text{id}_I)_{i\in \N}\rangle_{\prod_{i\in \N} X_i}:I \to  D\) is a continuous bijection.
As \(\pi_i\) is the inverse of this map for every choice of \(i\in \N\), it follows that this is a homeomorphism from \(I\) to a Polish space. The result follows.
\end{proof}

\speeddictthree{233}{even more nice Polish subspaces cor}{disjoint union topology defn}{nice Polish subspaces lemma}{more nice Polish subspaces lemma}
\begin{corollary}[Even more nice Polish spaces, \Assumed{221}]\label{even more nice Polish subspaces cor}
Suppose that \((X, \mathcal{T})\) is a Polish topological space and \((V_i)_{i\in \N}\) are closed subsets of \(X\). In this case the subspace \(\intersection{i\in \N} X\backslash V_i\) of \((X, \mathcal{T})\) is Polish, and the topology generated by \(\mathcal{T}\) and all of the sets \((V_i)_{i\in \N}\) is also Polish.
\end{corollary}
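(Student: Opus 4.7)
The plan is to derive both claims by invoking Lemmas~\ref{nice Polish subspaces lemma} and~\ref{more nice Polish subspaces lemma}.

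For the first claim, each $X \setminus V_i$ is open in $X$, hence Polish by Lemma~\ref{nice Polish subspaces lemma}. Setting $X_i := X \setminus V_i$ and $I := \bigcap_{i\in\N} X_i$, the fact that each $X_i$ is open in $X$ means the subspace topology on $I$ induced from $X_i$ coincides with the subspace topology from $X$; hence the topology on $I$ generated by all these restrictions is simply $\mathcal{T}\restriction_I$. Applying Lemma~\ref{more nice Polish subspaces lemma} with ambient Hausdorff space $(X, \mathcal{T})$ (Polish spaces being Hausdorff) yields that $\mathcal{T}\restriction_I$ is Polish.

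For the second claim, let $\mathcal{T}'$ denote the topology generated by $\mathcal{T} \cup \{V_i : i \in \N\}$, and for each $i$ let $\mathcal{T}_i$ be the topology generated by $\mathcal{T} \cup \{V_i\}$. A direct analysis of the subbasis shows that a set $W \subseteq X$ is $\mathcal{T}_i$-open iff $W \cap V_i$ is open in the subspace $V_i$ and $W \cap (X \setminus V_i)$ is open in the subspace $X \setminus V_i$ (both as subspaces of $(X,\mathcal{T})$); equivalently, $(X, \mathcal{T}_i)$ is the disjoint union topology (Definition~\ref{disjoint union topology defn}) of $V_i$ and $X \setminus V_i$, each of which is Polish by Lemma~\ref{nice Polish subspaces lemma}. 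To see that this disjoint union is Polish, I would take complete metrics $d_V, d_W$ on the two pieces bounded by $1/2$, and define $d_i$ on $X$ to agree with $d_V$ or $d_W$ within each piece and to equal $1$ across pieces; a routine check shows $d_i$ is complete and compatible with $\mathcal{T}_i$, and combining countable bases for the two pieces produces a countable basis. Finally, since $\mathcal{T}'$ is generated by $\bigcup_i \mathcal{T}_i$, Lemma~\ref{more nice Polish subspaces lemma} applied with the constant sequence of sets $X_i := X$ carrying topologies $\mathcal{T}_i$, and ambient Hausdorff space $(X, \mathcal{T})$ (using $\mathcal{T} \subseteq \mathcal{T}_i$ for every $i$), shows that $\mathcal{T}'$ is Polish.

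The main obstacle is verifying that each single-$V_i$ topology $\mathcal{T}_i$ is Polish. A tempting shortcut is to restrict a fixed complete metric on $(X, \mathcal{T})$ to the two pieces, but this fails completeness on the open piece $X \setminus V_i$, since its Cauchy sequences can converge in $X$ to points of $V_i$ which are not in the piece. Lemma~\ref{nice Polish subspaces lemma}, which supplies a genuine complete metric on the open subspace $X \setminus V_i$, is precisely what is needed to make the glued metric $d_i$ complete.
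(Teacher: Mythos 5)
Your proof is correct and follows essentially the same route as the paper's: decompose into $V_i$ and $X\setminus V_i$ via Lemma~\ref{nice Polish subspaces lemma}, identify $\mathcal{T}_i$ with the disjoint union topology, and then combine the $\mathcal{T}_i$ via Lemma~\ref{more nice Polish subspaces lemma}. The only difference is that you explicitly verify that a finite disjoint union of Polish spaces is Polish (with the glued bounded metric), a step the paper leaves implicit.
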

\begin{proof}
From Lemma~\ref{nice Polish subspaces lemma} each of the spaces \(X\backslash V_i\) is Polish, so their intersection is also Polish from Lemma~\ref{more nice Polish subspaces lemma}.

Let \(i\in \N\) be arbitrary. By Lemma~\ref{nice Polish subspaces lemma}, each of \(V_i\) and \(X\backslash V_i\) is a Polish subspace of \(X\).
As the topology \(\mathcal{T}_i\) generated by \(\mathcal{T}\) and \(V_i\) is equal to the disjoint union topology of \(V_i\) and \(X\backslash V_i\), it follows that it is Polish. Thus from Lemma~\ref{more nice Polish subspaces lemma}, it follows that the topology generated by all of the topologies \(\mathcal{T}_i\) is also Polish as required.
\end{proof}

\speeddicttwo{203}{almost open defn}{topological space defn}{almost containment defn}
\begin{defn}[Almost open sets, \Assumed{203}]\label{almost open defn}
If \(X\) is a topological space and \(S\subseteq X\), then we say that \(S\) is \textit{almost open} if there is an open set \(U\) such that \(U\approx_X S\) (recall Definition~\ref{almost containment defn}). 
\end{defn}

Note that there is no relation between algebraic \(\sigma\) structures as defined earlier and \(\sigma\)-algebras from the following definition.

\speeddicttwo{204}{sigma algebra defn}{power set defn}{countable defn}
\begin{defn}[\(\sigma\)-algebras, \Assumed{204}]\label{sigma algebra defn}
Suppose that \(X\) is a set and \(\Sigma\subseteq \mathcal{P}(X)\) is such that
\begin{enumerate}
    \item If \(A\in \Sigma\), then \(A^c=X\backslash A\in \Sigma\).
    \item If \(A\subseteq \Sigma\) is countable, then \(\union{} A \in \Sigma\).
    \item The empty set is an element of \(\Sigma\).
\end{enumerate}
In this case we say that \(\Sigma\) is a \textit{\(\sigma\)-algebra} on \(X\). Recall that we allow empty unions (Definition~\ref{union and intersection convensions}) so the third condition is technically redundant.
\end{defn}

\speeddicttwo{208}{almost sigma algebra lemma}{almost open defn}{sigma algebra defn}
\begin{lemma}[cf Proposition 8.22 of \cite{kechris2012classical}, Almost open \(\sigma\)-algebra, \Assumed{208}]\label{almost sigma algebra lemma}
If \(X\) is a topological space, then the almost open subsets of \(X\) form a \(\sigma\)-algebra.
\end{lemma}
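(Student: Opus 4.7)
The plan is to verify directly the three closure properties defining a $\sigma$-algebra (Definition~\ref{sigma algebra defn}). The empty set is open and hence trivially almost open, so only closure under complements and countable unions requires real work.

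For countable unions, suppose $(A_i)_{i\in\N}$ are almost open, and choose open sets $U_i$ and comeagre sets $C_i$ with $A_i\cap C_i = U_i\cap C_i$ (using the equivalent formulation of $\approx_X$ given in Definition~\ref{almost containment defn}). I would set $U := \union{i\in\N} U_i$, which is open, and $C := \intersection{i\in\N} C_i$. Since a countable intersection of comeagre sets is comeagre, $C$ is comeagre, and distributing intersection over union gives $(\union{i\in\N} A_i)\cap C = (\union{i\in\N} U_i)\cap C = U\cap C$, so $\union{i\in\N} A_i \approx_X U$.

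Closure under complements is the main point. Suppose $A$ is almost open with witness open set $U$ and comeagre set $C$ such that $A\cap C = U\cap C$. The natural candidate for an open set witnessing that $A^c$ is almost open is $V:= X\setminus U^{-}$, the complement of the closure of $U$. The key ingredient is that for an open set $U$, the boundary $U^{-}\setminus U$ is nowhere dense: it is closed (as the intersection of $U^{-}$ with $U^c$), and any point in its interior would have an open neighbourhood contained in $U^c$ yet consisting entirely of points of $U^-$, which is impossible since points of an open set disjoint from $U$ cannot lie in $U^-$. Hence $U^{-}\setminus U$ is meagre, so its complement $D$ is comeagre, and $C':= C\cap D$ is comeagre. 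On $C'$ we have $A\cap C' = U\cap C'$, and also $U\cap C' = U^{-}\cap C'$, so taking complements gives $A^c\cap C' = U^{-c}\cap C' = V\cap C'$, showing $A^c\approx_X V$.

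The step I expect to be mildly subtle is this observation that the boundary of an open set is nowhere dense, which is what lets the complement of the closure (rather than just the complement of $U$ itself, which need not be open) serve as the open witness. Everything else is routine manipulation of the equivalences in Definition~\ref{almost containment defn} together with the elementary fact that countable intersections of comeagre sets are comeagre.
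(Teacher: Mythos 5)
Your proof is correct and takes essentially the same approach as the paper: for complements you take $V=U^{-c}$ ($=U^{c\circ}$, exactly the paper's witness), and the crucial step in both arguments is that the boundary $U^-\setminus U = U^c\cap U^-$ of an open set is closed with empty interior, hence nowhere dense. The countable-union argument is identical as well.
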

\begin{proof}
Let \(A\subseteq X\) be almost open. Let \(U\) be an open subset of \(X\) such that \(U\approx_X A\).
Note that
\[U^c=U^{c\circ} \cup (U^{c}\backslash U^{c\circ})=U^{c\circ} \cup (U^{c}\cap U^{c\circ c}) = U^{c\circ} \cup (U^{c}\cap U^{cc-})=U^{c\circ} \cup (U^{c}\cap U^{-}).\]
As \(U^{c}\cap U^{-}\) is closed and \((U^{c}\cap U^{-})^\circ= U^{c\circ}\cap U^{-\circ} = U^{-c}\cap U^{-\circ}=\varnothing\), it follows that \(U^{c}\cap U^{-}\approx_X \varnothing\).
Thus \(A^c \approx_X U^c=U^{c\circ} \cup (U^{c}\cap U^{-})\approx_X U^{c\circ}\) and so \(A^c\) is almost open.

Suppose that \(A\) is a countable collection of almost open sets.
If \(A\) is empty then \(\union{} A = \varnothing\) is almost open, otherwise let \(A= \makeset{A_i}{\(i\in \N\)}\).
For each \(i\in \N\), let \(U_i\) be open and let \(C_i\) be comeagre such that \(U_i \cap C_i= A_i\cap C_i\). It follows that
\[\union{}A \approx_X \left(\intersection{i\in \N} C_i\right)\cap \left(\union{i\in \N}A_i \right)= \left(\intersection{i\in \N} C_i\right)\cap \left(\union{i\in \N}U_i \right)\approx_X \union{i\in \N}U_i.\]
As \(\union{i\in \N}U_i\) is open, the result follows.
\end{proof}

\speeddicttwo{205}{analytic defn}{continuous maps and homeomorphisms defn}{Polish space defn}
\begin{defn}[Analytic spaces, \Assumed{205}]\label{analytic defn}
We say that a topological space \(X\) is \textit{analytic} if it is the image of a Polish space under a continuous map.
\end{defn}

\speeddictfive{207}{nice baire space lemma}{power set defn}{continuous maps and homeomorphisms defn}{Polish space defn}{full trans def}{more nice Polish subspaces lemma}
\begin{lemma}[cf. Theorem 7.9 of \cite{kechris2012classical}, \(\N^\N\) is a nice space, \Assumed{207}]\label{nice baire space lemma}
The topological space \((\N^\N, \mathcal{PT}_\N)\) from Example~\ref{full trans def} is Polish. 
Moreover if \(X\) is a non-empty Polish topological space, then \(X\) is the image of a continuous map from \((\N^\N, \mathcal{PT}_\N)\).
\end{lemma}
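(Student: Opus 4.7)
The plan is to verify the two claims separately. For the first, observe that $\mathbb{N}$ with the discrete topology is itself Polish: singletons form a countable basis, and the discrete metric (taking value $1$ on distinct pairs and $0$ otherwise) is complete and induces the discrete topology. Since $\mathbb{N}^{\mathbb{N}} = \prod_{i \in \mathbb{N}} \mathbb{N}$ carries the product topology by construction, Lemma~\ref{more nice Polish subspaces lemma} yields immediately that $(\mathbb{N}^{\mathbb{N}}, \mathcal{PT}_\mathbb{N})$ is Polish.

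For the second claim, I would fix a complete compatible metric $d$ on $X$ and construct a Lusin-type scheme: closed sets $F_s \subseteq X$ indexed by finite sequences $s$ of natural numbers, with $F_\varnothing = X$, each $F_s$ non-empty and of $d$-diameter at most $2^{-|s|}$ when $|s| \geq 1$, and $F_s = \bigcup_{n \in \mathbb{N}} F_{s^{\frown} n}$ for every $s$. The inductive step exploits second countability of the subspace $F_s$: cover it by countably many non-empty relatively closed subsets of diameter at most $2^{-|s|-1}$, obtained by intersecting $F_s$ with closures of sufficiently small basic open sets, discarding empty intersections, and then padding the list by repetition so that it is indexed by all of $\mathbb{N}$.

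With the scheme in hand, I would define $f : \mathbb{N}^{\mathbb{N}} \to X$ by letting $(\alpha)f$ be the unique point of $\bigcap_{n \in \mathbb{N}} F_{\alpha \restriction_n}$. This intersection is a singleton because any choice of $x_n \in F_{\alpha \restriction_n}$ forms a $d$-Cauchy sequence (diameters shrink to $0$) whose limit lies in the intersection by closedness, and distinct points cannot both lie in a set of diameter $0$. Continuity of $f$ follows since if $\alpha_k \to \alpha$ in $\mathbb{N}^{\mathbb{N}}$, then for every $N$ eventually $\alpha_k \restriction_N = \alpha \restriction_N$, placing both $(\alpha_k)f$ and $(\alpha)f$ in $F_{\alpha \restriction_N}$ and hence within $d$-distance $2^{-N}$. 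Surjectivity follows from the union property: given $x \in X$, recursively choose $(n)\alpha$ so that $x \in F_{\alpha \restriction_{n+1}}$ at each stage, then $x = (\alpha)f$ by uniqueness.

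The main obstacle is the inductive construction of the scheme, specifically arranging that every $F_{s^{\frown} n}$ is non-empty while maintaining the union condition. The naive covering can leave gaps or introduce empty entries; the padding-by-repetition trick cleanly resolves this, but it is the one step requiring genuine care. Everything else reduces to routine verifications using completeness of $d$ and the shrinking-diameter condition.
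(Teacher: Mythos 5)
Your proof is correct. For the first claim you and the paper argue identically: $\mathbb{N}$ is Polish and the product lemma (Lemma~\ref{more nice Polish subspaces lemma}) gives the result. For the surjection, however, you take a genuinely different route. You build a Lusin scheme of nonempty closed sets $F_s$ with shrinking diameters satisfying $F_s = \bigcup_n F_{s^\frown n}$, arranged so that \emph{every} infinite branch is valid; the map sending $\alpha$ to the unique point of $\bigcap_n F_{\alpha\restriction_n}$ is then a total continuous surjection with no further work. The paper instead fixes a countable dense set $D = \{d_i\}$ of basepoints, defines the closed subspace $C\subseteq \mathbb{N}^\mathbb{N}$ of codes $f$ for which $(d_{(i)f})_{i\in\mathbb{N}}$ is ``strongly Cauchy'' (successive distances bounded by $2^{-i}$), sends such a code to the limit of its sequence (giving a continuous surjection $C\to X$), and then pre-composes with a retraction $\psi:\mathbb{N}^\mathbb{N}\to C$ built from a choice function. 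The paper's method factors the problem into two reusable pieces (a closed-domain surjection plus a retraction of $\mathbb{N}^\mathbb{N}$ onto any nonempty closed subset), whereas your scheme-with-padding approach is more self-contained and sidesteps the retraction entirely, at the cost of being slightly more delicate in the inductive construction — which you handle correctly by intersecting $F_s$ with closures of small basic open sets, discarding empty intersections, and padding by repetition. Both arguments are sound.
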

\begin{proof}
The fact that \(\N^\N\) is Polish follows from the fact that \(\N\) is Polish together with Lemma~\ref{more nice Polish subspaces lemma}.

Let \(d\) be a complete metric compatible with \(X\) and let \(B:= \makeset{U_i}{\(i\in \N\)}\) be a countable basis for \(X\) consisting of non-empty sets.
For each \(i\in \N\), we choose a fixed \(d_i\in U_i\).
As \(B\) is a basis for \(X\), it follows that \(D:=\makeset{d_i}{\(i\in \N\)}\) is dense in \(X\) and so for all \(x\in X\), there are sequences of points in \(D\) converging to \(x\).

 We say that a sequence \((x_i)_{i\in \N}\) in \(X\) is strongly Cauchy if for all \(i\in \N\) we have \((x_i, x_{i+1})d\leq \frac{1}{2^i}\).
It is routine to verify that strongly Cauchy sequences are Cauchy.

We now define
\[C:=\makeset{f\in \N^\N}{\((d_{(i)f})_{i\in \N}\) is a strongly Cauchy sequence}.\]
From the definition of a strongly Cauchy sequence, it follows that the subspace \(C\) of \(\N^\N\) is closed.
As strongly Cauchy sequences are Cauchy and \(d\) is complete, it follows that the map \(\phi: C\to X\) given by
\[(f)\phi = \lim_{i\to \infty} d_{(i)f}\]
is well defined. It is routine to verify that the map \(\phi\) is also continuous.
Moreover as \(D\) is dense in \(X\), the map \(\phi\) is surjective as well.

Let \(c:\mathcal{P}(\N^\N)\backslash \{\varnothing\} \to \N^\N\) be such that \((S)c\in S\) for all \(S\in \mathcal{P}(\N^\N)\backslash \{\varnothing\}\).
For each \(f\in \N^\N\backslash C\), we define \(m_f:=\max\left(\makeset{m\in \N}{there exists \(h\in C\) with \(h\restriction_m=f\restriction_m\)}\right)\) (note that this is only well defined because \(C\) is closed). We then define a map \(\psi: \N^\N \to C\) as follows
\[(f)\psi = \left\{\begin{array}{lr}
f     &\text{ if }f\in C  \\
\left(\makeset{g\in C}{\(g\restriction_{m_f}=f\restriction_{m_f}\)}\right)c     & \text{ if }f\not\in C
\end{array}\right\}.\]
This map \(\psi:\N^\N \to C\) is surjective by definition, it is also routine to verify that \(\psi\) is continuous.
It follows that \(\psi\phi\) is a continuous surjection from \(\N^\N\) to \(X\) as required.
\end{proof}

\speeddictfive{206}{nice analytic lemma}{subspaces}{Hausdorff defn}{sigma algebra defn}{analytic defn}{nice baire space lemma}
\begin{lemma}[cf. Theorem 14.7 of \cite{kechris2012classical}, Lusin Separation Theorem, \Assumed{206}]\label{nice analytic lemma}
Suppose that \((X, \mathcal{T})\) is a Hausdorff topological space and \(\Sigma\) is a \(\sigma\)-algebra on \(X\) with \(\mathcal{T}\subseteq \Sigma\).
If \(\{U, V\}\) is a partition of \(X\) into analytic subspaces of \((X, \mathcal{T})\), then \(\{U, V\}\subseteq \Sigma\).
\end{lemma}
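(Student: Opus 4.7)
The plan is the classical Lusin separation argument. I would call two subsets $A, B \subseteq X$ $\Sigma$-\emph{separable} if there exists $S \in \Sigma$ with $A \subseteq S$ and $B \cap S = \varnothing$. Since $\{U,V\}$ partitions $X$, any such separator of $U$ from $V$ must equal $U$, so it suffices to prove that $U$ and $V$ are $\Sigma$-separable. The first key step is a countable additivity lemma: if $A = \bigcup_{i \in \N} A_i$, $B = \bigcup_{j \in \N} B_j$, and every pair $(A_i, B_j)$ is $\Sigma$-separated by some $S_{i,j}$, then $\bigcup_{i \in \N} \bigcap_{j \in \N} S_{i,j}$ lies in $\Sigma$ (using that $\Sigma$ is closed under countable intersections, via complementing countable unions) and separates $A$ from $B$.

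Suppose for contradiction that $U, V$ are not $\Sigma$-separable. After handling the trivial case in which one of them is empty, Lemma~\ref{nice baire space lemma} gives continuous maps $f, g$ from $\N^\N$ with $U = (\N^\N)f$ and $V = (\N^\N)g$. For $s \in \N^{<\N}$ let $N_s$ denote the basic clopen set of extensions of $s$ in $\N^\N$. Since $N_s = \bigcup_{i \in \N} N_{s^\frown i}$ (where $s^\frown i$ appends $i$ to $s$), the contrapositive of the additivity lemma says that whenever $(N_s)f$ and $(N_t)g$ are not $\Sigma$-separable, there exist $i, j \in \N$ for which $(N_{s^\frown i})f$ and $(N_{t^\frown j})g$ are also not $\Sigma$-separable. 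Starting from $s_0 = t_0 = \varnothing$ and iterating, I build extending sequences $s_0 \subset s_1 \subset \ldots$ and $t_0 \subset t_1 \subset \ldots$ in $\N^{<\N}$ whose corresponding image pairs are never $\Sigma$-separable.

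Let $h, k \in \N^\N$ be the resulting limits. If $(h)f \neq (k)g$, Hausdorffness produces disjoint open $W_U \ni (h)f$ and $W_V \ni (k)g$; continuity of $f$ and $g$ together with the fact that $(N_{s_n})_{n \in \N}$ forms a neighbourhood basis of $h$ (and similarly for $k$) gives $(N_{s_n})f \subseteq W_U$ and $(N_{t_n})g \subseteq W_V$ for sufficiently large $n$. Since $W_U \in \mathcal{T} \subseteq \Sigma$, this $\Sigma$-separates the corresponding pair, contradicting the construction. So $(h)f = (k)g$, but $(h)f \in U$ and $(k)g \in V$ with $U \cap V = \varnothing$, giving a final contradiction. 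The main obstacle is organisational rather than conceptual: the contrapositive of the additivity lemma must be formulated so that both coordinates split simultaneously during the tree construction, and one must be careful that only the $\sigma$-algebra axioms and Hausdorffness of $X$ are invoked — notably, $X$ itself is not assumed Polish, metrizable, or even second countable, so the Hausdorff step cannot be replaced by a sequential argument on $X$.
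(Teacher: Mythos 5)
Your proposal is correct and follows essentially the same argument as the paper: both reduce to showing $U$ and $V$ are $\Sigma$-separable, prove the same "countable additivity of non-separability" claim, run the same tree construction by repeatedly splitting the cylinders in $\N^\N$ on both sides simultaneously, and derive the final contradiction from Hausdorffness plus continuity. The only cosmetic difference is that you use a one-sided separator $S \in \Sigma$ with $U \subseteq S$ and $V \cap S = \varnothing$, whereas the paper uses a pair of disjoint sets $U', V' \in \Sigma$ covering $U$ and $V$ respectively; these are interchangeable since $\Sigma$ is closed under complementation, and your version makes the final observation (that the separator must equal $U$) slightly cleaner.
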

\begin{proof}
If \(U=\varnothing\) or \(V=\varnothing\), then the result is clear.
Otherwise both \(U\) and \(V\) are images of non-empty Polish spaces under continuous maps.
Thus from Lemma~\ref{nice baire space lemma}, there are continuous maps \(\phi_U:\N^\N \to X, \phi_V:\N^\N \to X\) with \(\im(\phi_U)= U\) and \(\im(\phi_V) = V\).

We say \(A, B\subseteq X\) are \textit{separated by }\(\Sigma\) if there are \(A', B'\in \Sigma\) such that \(A\subseteq A', B\subseteq B'\) and \(A'\cap B'= \varnothing\). It suffices to show that \(U\) and \(V\) are separated by \(\Sigma\).

\underline{Claim:} Suppose that \((U_{i})_{i\in \N}, (V_i)_{i\in \N}\) are sequences of subsets of \(X\) such that \(\union{i\in \N}U_i\) and \(\union{i\in \N}V_i\) are not separated by \(\Sigma\). In this case there are \(i, j\in \N\) such that \(U_i\) and \(V_j\) are not separated by \(\Sigma\).\\
\underline{Proof of Claim:}
Suppose for a contradiction that \(U_i\) and \(V_j\) are separated by \(\Sigma\) for all \(i, j\in \N\).
For each \(i, j\in \N\) let \(U_{i, j}', V_{i, j}'\in \Sigma\) be disjoint such that \(U_i\subseteq U_{i, j}'\) and \(V_j\subseteq V_{i, j}'\).
We define
\[U':=  \union{i\in \N}\intersection{j\in \N} U_{i, j}' \quad \text{ and }\quad V':=  \union{j\in \N}\intersection{i\in \N} V_{i, j}'.\]
We have that \(U', V'\in \Sigma\) are disjoint, \(\union{i\in \N}U_i \subseteq U' \) and \(\union{i\in \N}V_i \subseteq V' \).
As \(U'\cap V' = \varnothing\), it follows that \(\union{i\in \N}U_i\) and \(\union{i\in \N}V_i\) are separated by \(\Sigma\).
This is a contradiction. \(\diamondsuit\)\\
Suppose for a contradiction are \(U\) and \(V\) are not separated by \(\Sigma\). For each \(n\in \N\), we define \(f_n:\{0, 1, \ldots, n-1\}\to \N\), and \(g_n:\{0, 1, \ldots, n-1\}\to \N\) as follows:
\begin{enumerate}
    \item \(f_0=g_0= \varnothing\).
    \item If \(f_i\) is already defined and the sets 
    \[\left(\makeset{f\in \N^\N}{\(f_i\subseteq f\)}\right)\phi_U\quad \text{ and }\quad \left(\makeset{g\in \N^\N}{\(g_i\subseteq g\)}\right)\phi_V\]
    are not separated by \(\Sigma\), then by the claim choose \(f_{i+1}, g_{i+1}\) such that \(f_i\subseteq f_{i+1}\), \(g_i\subseteq g_{i+1}\) and \(\left(\makeset{f\in \N^\N}{\(f_{i+1}\subseteq f\)}\right)\phi_U\) and \(\left(\makeset{g\in \N^\N}{\(g_{i+1}\subseteq g\)}\right)\phi_V\) are not separated by \(\Sigma\).
\end{enumerate}
We can then define \(f:= \union{i\in \N} f_i\) and \(g:= \union{i\in \N} g_i\).
As \(X\) is Hausdorff and \((f)\phi_U \neq (g)\phi_V\) (\(\phi_U\) and \(\phi_V\) has disjoint images), we can choose open \(U_f\in \nbhd{X}{(f)\phi_U}\) and \(V_g\in \nbhd{X}{(g)\phi_V}\) such that \(U_f\cap V_g = \varnothing\). As \(\phi_U, \phi_V\) are both continuous, there is some \(N\in \N\) such that 
\[(U_f)\phi_U^{-1}\supseteq \makeset{f\in \N^\N}{\(f_N\subseteq f\)}\quad\text{ and }\quad (V_g)\phi_V^{-1}\supseteq \makeset{g\in \N^\N}{\(g_N\subseteq g\)}.\]
As \(U_f, V_g\in \Sigma\), it follows that \(\left(\makeset{f\in \N^\N}{\(f_N\subseteq f\)}\right)\phi_U\) and \(\left(\makeset{g\in \N^\N}{\(g_N\subseteq g\)}\right) \phi_V\) are separated by \(\Sigma\), this is a contradiction.
\end{proof}

\speeddictthree{210}{comparable Polish group topologies theorem}{Polish space defn}{topological structures defn}{semitopological defn}
\begin{theorem}[Comparable Polish group topologies, \Assumed{210}]\label{comparable Polish group topologies theorem}
Suppose that \(G,H\) are groups, and \(\mathcal{T}_G, \mathcal{T}_H\) are Polish topologies semicompatible with \(G, H\) respectively.
If \(\phi:G\to H\) is an isomorphism and \(\phi\) is continuous with respect to \(\mathcal{T}_G\) and \(\mathcal{T}_H\), then \(\phi^{-1}\) is also continuous.
\end{theorem}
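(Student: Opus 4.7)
The plan is to show that $\phi$ is an open map, which (since $\phi$ is a continuous bijection) is equivalent to continuity of $\phi^{-1}$. Because every left and right translation in $G$ and in $H$ is a homeomorphism (the maps $\lambda_g, \rho_g$ are continuous with continuous inverses $\lambda_{g^{-1}}, \rho_{g^{-1}}$) and $\phi$ is a homomorphism, this reduces to showing: whenever $U \subseteq G$ is an open neighbourhood of $1_G$, the image $\phi(U) \subseteq H$ is a neighbourhood of $1_H$.

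First, I would show that $\phi(V)$ is almost open in $H$ for every open $V \subseteq G$. By Lemma~\ref{nice Polish subspaces lemma}, both $V$ and $G \setminus V$ are Polish subspaces of $G$, so $\phi(V)$ and $\phi(G \setminus V) = H \setminus \phi(V)$ are analytic. As the almost open subsets of $H$ form a $\sigma$-algebra containing $\mathcal{T}_H$ (Lemma~\ref{almost sigma algebra lemma}), Lemma~\ref{nice analytic lemma} applied to this $\sigma$-algebra yields that $\phi(V)$ is almost open.

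Second, I would carry out a Pettis-style argument. Fix an open neighbourhood $U$ of $1_G$, and choose a symmetric open neighbourhood $V$ of $1_G$ with $V V^{-1} \subseteq U$ (the existence of such a $V$ in a semitopological Polish group requires a separate Baire-category argument, since we lack joint continuity of multiplication and inversion). Since $G$ is Polish and hence separable, there is a countable dense $\{g_n : n \in \N\} \subseteq G$ with $G = \bigcup_n g_n V$, and thus $H = \phi(G) = \bigcup_n \phi(g_n)\phi(V)$. By the Baire Category Theorem (Theorem~\ref{baire category theorem}), some translate $\phi(g_n) \phi(V)$ is non-meagre in $H$; since $\rho_{\phi(g_n)}$ is a homeomorphism, $\phi(V)$ itself is non-meagre. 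Combined with almost openness, there is a non-empty open $W \subseteq H$ in which $\phi(V)$ is comeagre.

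Third, for any $h \in H$ with $hW \cap W \neq \varnothing$ (an open condition on $h$ that contains $1_H$ by continuity of $\lambda_h$), both $\phi(V)$ and $h\,\phi(V)$ are comeagre in the non-empty open set $hW \cap W$, so Baire category forces them to intersect, giving $h \in \phi(V)\phi(V)^{-1} = \phi(V V^{-1}) \subseteq \phi(U)$; hence $\phi(U)$ is a neighbourhood of $1_H$. The main obstacle is the Pettis step, and in particular justifying within a merely semitopological (rather than jointly continuous) group the existence of a symmetric open $V$ with $V V^{-1} \subseteq U$, and the assertion that $h\,\phi(V)$ is comeagre in $hW \cap W$; both rely on delicate Baire-category arguments that exploit Polishness.
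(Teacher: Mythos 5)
The gap you flag in the Pettis step is genuine and is the crux of the matter. In a Polish group that is only assumed \emph{semitopological}, neither inversion nor joint continuity of multiplication is available, so there is no cheap way to produce a symmetric open \(V\) with \(VV^{-1}\subseteq U\): the existence of such neighbourhoods is essentially equivalent to \(G\) being a topological group, and the fact that a Polish (or more generally \v{C}ech-complete) semitopological group is automatically a topological group is itself a deep theorem in the Montgomery--Ellis--Pfister circle of results, not something derivable from the lemmas in this paper. So your second step would require importing a nontrivial external result, and without it the chain \(h\in\phi(V)\phi(V)^{-1}=\phi(VV^{-1})\subseteq\phi(U)\) does not get off the ground. (By contrast, the subsidiary worry you raise about \(h\,\phi(V)\) being comeagre in \(hW\cap W\) is not actually a problem: \(\lambda_h\) is a homeomorphism in a semitopological group, so it carries ``\(\phi(V)\) comeagre in \(W\)'' to ``\(h\phi(V)\) comeagre in \(hW\)'', and comeagreness relativizes to the non-empty open subset \(hW\cap W\) exactly as you would hope.)

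The paper sidesteps this entirely by not trying to show \(\phi\) is open. Instead it proves \(\phi^{-1}\) is \emph{sequentially} continuous (which suffices since \(\mathcal{T}_H\) is metrizable): your first step is retained verbatim to produce, via Lusin separation, a comeagre \(C\subseteq H\) on which \(\phi^{-1}\restriction_C\) is continuous, but then the argument takes a different turn. Given a convergent sequence \(S=\{x\}\cup\{x_i:i\in\N\}\) in \(H\), the set \(C\cap Cx^{-1}\cap\bigcap_i Cx_i^{-1}\) is a countable intersection of translates of a comeagre set, hence comeagre and (by Baire) non-empty, so one can pick a single element \(h\) there; then \(hS\subseteq C\), and \(\phi^{-1}\restriction_S=\lambda_h\restriction_S\circ\phi^{-1}\restriction_C\circ\lambda_{(h^{-1})\phi^{-1}}\) exhibits \(\phi^{-1}\restriction_S\) as a composition of continuous maps. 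The point is that this argument only ever translates on one side at a time, using the maps \(\lambda_h\) and \(\rho_{x_i^{-1}}\) that are homeomorphisms in a semitopological group, and never needs a symmetric product neighbourhood. This is exactly what makes the theorem work under the weaker ``semicompatible'' hypothesis, and it is the main place your proposal and the paper's proof diverge.
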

\begin{proof}
We need only show that if \((x_i)_{i\in \N}\) is a sequence of elements of \(H\) which converge to some \(x\in H\) with respect to \(\mathcal{T}_H\), and \(S:= \{x\}\cup \makeset{x_i}{\(i\in \N\)}\), then \(\phi^{-1}\restriction_{S}\) is continuous.

Let \(B = \makeset{U_i}{\(i\in \N\)}\) be a countable basis for the topology \(\mathcal{T}_G\).
By Lemma~\ref{nice Polish subspaces lemma}, the sets \(U_i\) and \(X\backslash U_i\) are analytic (with respect to \(\mathcal{T}_G\)) for each \(i\in \N\).
Therefore as \(\phi\) is continuous, it follows that \((U_i)\phi\) and \((X\backslash U_i)\phi\) are analytic with respect to \(\mathcal{T}_H\).
From Lemmas~\ref{almost sigma algebra lemma} and \ref{nice analytic lemma}, it follows that \(U_i\) and \(X\backslash U_i\) are almost open with respect to \(\mathcal{T}_H\). 
Thus there is a comeagre subset \(C\) of \(H\) such that for all \(i\in \N\), the set \(U_i\cap C\) is open in \(\mathcal{T}_H\restriction_{C}\).
In particular \(\phi^{-1}\restriction_{C}\) is continuous.

As \(C\) is comeagre in the semitopological group \((H, \mathcal{T}_H)\), it follows that
\(C\cap Cx^{-1}\cap \left(\intersection{i\in \N} Cx_i^{-1}\right)\)
is also comeagre in \((H, \mathcal{T}_H)\).
So from Theorem~\ref{baire category theorem}, it follows that there is some \(h\in C\cap Cx^{-1}\cap \left(\intersection{i\in \N} Cx_i^{-1}\right)\).
Thus \((S)\lambda_h\subseteq C\). It follows that \(\phi^{-1}\restriction_{S} =\lambda_h\restriction_{S}\circ \phi^{-1}\restriction_{C} \circ \lambda_{(h^{-1})\phi^{-1}}\), and is thus continuous as required.
\end{proof}

\subsection{Words and terms}
In this subsection we introduce words and terms. 
Words are used constantly throughout Part~\ref{nv section} and terms give us access to a large collection of continuous maps to work with given a topological structure (they are also used to define the Zariski topology in Part~\ref{semigroups section}). 

\speeddictfour{97}{words defn}{function sets def}{binary relations defns}{types of binary relation defns}{semigroup defn}
\begin{defn}[Words, \Assumed{97}]\label{words defn}
If \(X\) is a set, then we denote the \textit{free monoid} on \(X\) by \(X^*\). The universe of this monoid is the set
\[X^* := \union{n\in \N} X^n.\]
If \(w\in X^* \cup X^\N\), then we call \(w\) a \textit{word} or \textit{string} in the \textit{alphabet} \(X\).
In this context we will refer to the elements of \(X\) as letters.
We denote by \(|w|\) the \textit{length} of the word \(w\), that is the value \(n\in \N\cup \{\aleph_0\}\) such that \(w\in X^n\) (conveniently this is the same as the cardinality of \(w\) so this notation is unambiguous).
If \(n\in \N\) then we write \(w\restriction_n\) to mean the restriction of the function \(w\) to the set \({\{0, 1, \ldots, n-1\}}\), we call such restrictions the \textit{prefixes} of \(w\). The set \(X^*\cup X^\N\) is partially ordered by the relation \(v\leq w\) if \(v\) is a prefix of \(w\) (this is the same as the \(\subseteq\) relation).

If \(v=(v_0, v_1, \ldots, v_{|v|-1})\in X^*\) and \(w=(w_0, w_1, \ldots)\in X^* \cup X^\N\), then the \textit{concatenation} of \(v\) and \(w\) is defined by
\[v w = (v_0, v_1, \ldots, v_{|v|-1}, w_0, w_1, \ldots).\]

Restricting concatenation to \(X^*\) gives the binary operation of \(X^*\) as a monoid. The identity of \(X^{*}\) is the element of length \(0\) which we call the \textit{empty word} and denote by \(\varepsilon\).
Although technically \(X\neq X^1\) (Definition~\ref{function sets def}), we will often for convenience treat letters and words of length \(1\) as the same objects.

It is routine to verify that concatenation is always \textit{cancellative}, that is if \(v, u,  w\) are finite words then
\[v u = v w \Rightarrow u = w, \quad\text{ and } \quad u v = w v \Rightarrow u  = w.\]
It follows from this that maps \(\rho_w\) and \(\lambda_w\) (using the monoid \(X^*\)) are injective for all \(w\in X^*\).
When infinite words are being used, we extend the domain of \(\lambda_w\) to allow for infinite words (note that \(\lambda_w\) is still injective).
\end{defn}

\speeddicttwo{98}{terms defn}{signature defn}{words defn}
\begin{defn}[Terms of a signature, \Assumed{98}]\label{terms defn}
For this definition we will need sets to represent the symbols comma, open bracket and closed bracket. 
To avoid confusion with the symbols being used for their English function, we use 、to denote the set representing a comma, we use「 to denote the set representing an open bracket and we use 」to done the set representing a closed bracket.
The actual choice of these sets is unimportant as long as they are distinct from each other and anything else they could be confused with. For example one could define
\begin{align*}
    \text{、} &:= (03, 15, 13, 13, 01)\\
    \text{「} &:= (15, 16, 05, 14, 00, 02, 18, 01, 03, 11, 05, 20)\\
    \text{」} &:= (03, 12, 15, 19, 05, 04, 00, 02, 18, 01, 03, 11, 05, 20).
\end{align*}
If \(V\) is a set and \(\sigma\) is a signature, then we inductively define a \textit{term} of \(\sigma\) over the variable set \(V\) to be a string in the alphabet \(\{\text{、}, \text{「}, \text{」}\}\cup F_\sigma \cup V\) which is one of:
\begin{enumerate}
    \item The string \(x\) for some \(x\in V\).
    \item The string 「\(t_0\)、\(t_1\)、\(\ldots\) 、\(t_n\)」\(F\) where \(F\in F_\sigma\) has arity \(n\) and \(t_0, t_1,\ldots, t_n\) are terms.
\end{enumerate}
\end{defn}

\speeddictfour{99}{terms operations defn}{Products in Categories Defn}{product topology defn}{topological structures defn}{terms defn}
\begin{defn}[Term operations, \Assumed{99}]\label{term operations defn}
If \(\mathbb{X}\) is a \(\sigma\)-structure, and \(V\) is a set (called a \textit{variable set}), then for each term \(t\) of \(\sigma\) over \(V\) we will define a corresponding \textit{term operation} \(t_{V}^\mathbb{X}:\mathbb{X}^V \to \mathbb{X}\).
We do this inductively on the length of \(t\) as follows:
\begin{enumerate}
    \item If \(t\in V\), then we define \(t_V^\mathbb{X}\) to be the projection map \(\pi_t\).
    \item If \(t=\)「\(t_0\)、\(t_1\)、\(\ldots\) 、\(t_n\)」\(F\) where \(F\in F_\sigma\) has arity \(n\) and \(t_0, t_1, \ldots, t_n\) are terms, then we define \(t_{V}^\mathbb{X}:\mathbb{X}^V\to \mathbb{X}\) by
    \[t_V^\mathbb{X} = \langle {t_0}_V^\mathbb{X},{t_1}_V^\mathbb{X}, \ldots,{t_{n-1}}_V^\mathbb{X}\rangle_{\mathbb{X}^n} \circ F^\mathbb{X}\quad \text{(recall Definition~\ref{Products in Categories Defn}).}\]
    
\end{enumerate}
It is routine to check that any term of \(\sigma\) over \(V\) is considered exactly once in exactly one of the above cases. Note that by construction, if \(\mathbb{X}\) is a topological structure, then all term operations of \(\mathbb{X}\) over any variable set are continuous (using the appropriate product topologies).
\end{defn}

\speeddictone{600}{term operations in groups and inverse semigroups}{term operations defn}
\begin{example}[Term operations in semigroups and inverse semigroups, \Assumed{600}]\label{term operations in groups and inverse semigroups}
It is routine to verify that if \(S\) is a topological semigroup and \(V\) is a variable set, then the term operations \(t:S^V\to S\) are precisely the maps of the form
\[((s_v)_{v\in V})t = s_{v_0}s_{v_1}s_{v_2}\ldots s_{v_{n-1}}\]
where \(n\in \N\) and \(v_0, v_1, \ldots, v_{n-1}\in V\). In particular these maps are continuous.
Similarly if \(S\) is a topological inverse semigroup and \(V\) is a variable set, then the term operations \(t:S^V\to S\) are precisely the maps of the form
\[((s_v)_{v\in V})t = s_{v_0}^{i_0}s_{v_1}^{i_1}\ldots s_{v_{n-1}}^{i_{n-1}}\]
where \(n\in \N\), \(i_0, i_1, \ldots, i_{n-1}\in \{1, -1\}\) and \(v_0, v_1, \ldots, v_{n-1}\in V\).

\end{example}

\section{Nice actions}
In this section we introduce actions of semigroups on objects from categories. We also provide proofs of the remaining key results from the literature motioned earlier. 
We first show that all homomorphisms from \(\Aut(2^\N)\) or \(\Sym(\N)\) to second countable groups are continuous, and we conclude by proving Rubin's Theorem which given a nice group allows one to construct a canonical action of that group on a nice topological space. 

\speeddictfive{54}{automorphism groups defn}{binary relations defns}{Categories defn}{semigroup defn}{group defn}{topological structures defn}
\begin{defn}[Endomorphisms and automorphisms, \Assumed{54}]\label{automorphism groups defn}

If \(\mathcal{C}\) is a category, \(O\) is an object of \(\mathcal{C}\), and the class of morphisms in \(\mathcal{C}\) with \(O\) as both source and target form a set, then we denote this set by \(\End(O)\) (the category being used will be given by the context).

We give \(\End(O)\) a monoid structure by defining the product of \(f, g\in \End(O)\) to be \((f, g)\circ_{\mathcal{C}}\). We then define \(\Aut(O)\) to be the group of units of \(\End(O)\) (recall Definition~\ref{group defn}).
\end{defn}

\speeddicttwo{83}{group actions defn}{product structures}{automorphism groups defn}
\begin{defn}[Actions, \Assumed{83}]\label{group actions defn}
If \(S\) is a topological semigroup, \(\mathbb{X}\) is a topological structure and \(a:\mathbb{X}\times S \to \mathbb{X}\) is a function, then we say that \(a\) is an \textit{action}
(of \(S\) on \(\mathbb{X}\)) if the map \(\phi_a:S\to \End(\mathbb{X})\) given by
\[(x)((f)\phi_a)=(x,f)a\]
is a well-defined homomorphism of semigroups. We define actions analogously for groups using \(\Aut(\mathbb{X})\) in place of \(\End(\mathbb{X})\). Recall that as per Definition~\ref{topological structures defn}, we consider both structures with no topology and topological spaces with no structure as topological structures. In the case that an action being used is clear from context we will sometimes write \(xf\) instead of \((x, f)a\). We say that the action \(a\) is:
\begin{enumerate}
    \item \textit{Continuous} if the map \(a:\mathbb{X} \times S \to \mathbb{X}\) is continuous.
    \item \textit{Transitive} if for all \(x, y\in \mathbb{X}\), there is \(f\in S\) with \((x, f)a= y\).
    \item \textit{Faithful} if the map \(\phi_a\) is injective.
\end{enumerate}

\end{defn}

\speeddictone{55}{powers of actions defn}{group actions defn}
\begin{defn}[Powers of actions, \Assumed{55}]\label{\dict{55 ref}}
If \(a\) is an action of a group \(G\) on a set \(X\), then we define \(a^n\) to be the action of \(G\) on \(X^n\) given by:
\[((x_0, x_1 \ldots x_{n-1}), g)a^n = ((x_0,g)a, (x_1,g)a, \ldots, (x_{n-1},g)a).\]
This notation always takes precedent over the notation in Definition~\ref{function sets def}.
\end{defn}

\speeddictone{56}{action orbits defn}{group actions defn}
\begin{defn}[Action orbits, \Assumed{56}]\label{\dict{56 ref}}
If \(a\) is an action of a group \(G\) on a topological structure \(\mathbb{X}\), and  \(x\in \mathbb{X}^n\) then we call the set \((\{x\}\times G)a\) the \textit{orbit} of \(x\) under \(a\).
\end{defn}

The following type of action (ample generics) frequently occurs in the literature in the case of a group acting on itself by conjugation.
Indeed we will only be using it in this case.
There are many nice examples of actions with ample generics (see Theorem \ref{ample symmetry} and Theorem \ref{ample canor}) but proving this tends to be quite involved.
\speeddictthree{57}{ample generics defn}{meagre defn}{powers of actions defn}{action orbits defn}
\begin{defn}[Ample generics, \Assumed{57}]\label{\dict{57 ref}}
If \(a\) is an action of a topological group \(G\) on a topological structure \(\mathbb{X}\), then we say that \(a\) has \textit{ample generics} if for all \(n\in \N\) the action \(a^n\) has an orbit comeagre in \(\mathbb{X}^n\).
\end{defn}

\speeddicttwo{86}{centralizer defn}{group defn}{substructures defn}
\begin{defn}[Centralizer and normalizer, \Assumed{86}]\label{centralizer defn}
Recall that two semigroup elements \(f, g\) \textit{commute} if \(fg=gf\). If \(G\) is a group and \(S\subseteq G\), then we define the \textit{centralizer} and \textit{normalizer} of \(S\) in \(G\) by
\[C_G(S) := \makeset{f\in G}{\(f\) commutes with all elements of \(S\)},\]
\[N_G(S) := \makeset{f\in G}{\(fS=Sf\)}\]
respectively. It is routine to verify that \(C_G(S)\) and \(N_G(S)\) are always subgroups of \(G\). In the case that \(S\) is a singleton \(\{f\}\), we will often write \(C_G(f)\) in place of \(C_G(\{f\})\) for brevity.

If \(G\) is a group, \(S\) is a subgroup of \(G\) and \(N_G(S) = G\), then we say that \(S\) is a \textit{normal subgroup} of \(G\) (denoted \(S \normalsubgroup G\)). In this case we define the \textit{quotient} of \(G\) by \(S\) by:
\[G/S := \makeset{gS}{\(g\in G\)}.\]
The set \(G/S\) is viewed as a group, and sets of elements of \(G\) are multiplied in the usual fashion.
\end{defn}

\speeddicttwo{58}{conjugation action defn}{group actions defn}{centralizer defn}
\begin{defn}[Conjugation action, \Assumed{58}]\label{\dict{58 ref}}
If \(G\) is a group, then we define \(\conj_{G}:G^2\to G\) to be the action
\[(g, h)\conj_{G}= h^{-1}g h.\]
We call the group \((G)\phi_{\conj_{G}}\) (recall Definition~\ref{group actions defn}), the \textit{inner automorphism} group of \(G\) (denoted \(\operatorname{Inn}(G)\)).
If \(\psi\in \Aut(G)\), and \(h\in G\), then \(\psi^{-1} \circ((h)\phi_{\conj_{G}})\circ \psi =  ((h)\psi)\phi_{\conj_{G}}\).
In particular \(\operatorname{Inn}(G) \normalsubgroup \operatorname{Aut}(G)\).
We call the group \(\A(G)/\operatorname{Inn}(G)\) the \textit{outer automorphism group} of \(G\) and denote this group by \(\operatorname{Out}(G)\).
\end{defn}

Before proceeding to \(\Sym(\N)\) and \(\Aut(2^\N)\), we give a nice and easy consequence of a group having ample generics (Theorem~\ref{ample commutators thm}). 
This fact seems to be well known but we do not know of a specific reference for it.

\speeddictone{59}{commutator defn}{topological structures defn}
\begin{defn}[Commutators, \Assumed{59}]\label{\dict{59 ref}}
If \(G\) is a topological group, then we define the \textit{commutator map }\([\cdot,\cdot]: G^2\to G\) by
\([f, g]=f^{-1}g^{-1}f g\).
\end{defn}

\speeddictthree{60}{ample commutators thm}{baire category theorem}{ample generics defn}{commutator defn}
\begin{theorem}[Commutator width 1, \Assumed{60}]\label{ample commutators thm}
If \(G\) is a completely metrizable topological group and \(\conj_{G}\) has a comeagre orbit (in particular if it has ample generics), then the commutator map  \([\cdot, \cdot]: G^2\to G\) is surjective.
\end{theorem}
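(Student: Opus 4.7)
The plan is to reduce surjectivity of the commutator map to a Baire category argument on the conjugacy class. Fix an element $h \in G$ whose preimage we want to produce. Note that $[f,g] = f^{-1}g^{-1}fg = h$ rearranges to $g^{-1}fg = fh$, which is exactly the assertion that $f$ and $fh$ lie in the same conjugacy class of $G$. So the whole task is to produce, for every $h \in G$, an element $f$ such that $f$ and $fh$ are conjugate; once we have such an $f$, a witnessing $g$ is precisely the element whose existence turns $[f,g]$ into $h$.

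To produce such an $f$, let $O$ be the comeagre orbit of $\conj_G$ guaranteed by hypothesis; this $O$ is a single conjugacy class (the orbit of some $x \in G$ is $\{g^{-1}xg : g \in G\}$, by the defining formula for $\conj_G$). Since $G$ is a topological group (indeed a topological structure in the group signature, so inversion and multiplication are continuous), right translation $\rho_{h^{-1}}$ is a homeomorphism of $G$ with itself. Homeomorphisms send comeagre sets to comeagre sets, so $Oh^{-1} = (O)\rho_{h^{-1}}$ is comeagre.

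Now apply the Baire Category Theorem (Theorem~\ref{baire category theorem}) to the completely metrizable space $G$: the intersection $O \cap Oh^{-1}$ of two comeagre sets is itself comeagre, hence non-empty and in fact dense. Pick any $f \in O \cap Oh^{-1}$. Then $f \in O$ and $fh \in O$, so $f$ and $fh$ belong to the same conjugacy class; choose $g \in G$ with $g^{-1}fg = fh$. Then $[f,g] = f^{-1}g^{-1}fg = f^{-1}(fh) = h$, so $h$ lies in the image of the commutator map.

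The argument has no real obstacle beyond the correct algebraic rearrangement $[f,g] = h \iff fh \sim f$; once that observation is in hand, everything else is a two-line application of the fact that translation is a self-homeomorphism of $G$ together with Baire. The ample generics hypothesis is only used at the very weakest level, namely that the $n=1$ diagonal conjugation action has a comeagre orbit, which is why the stronger statement about a single comeagre orbit of $\conj_G$ suffices.
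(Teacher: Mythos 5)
Your proof is correct and follows essentially the same route as the paper: both translate the comeagre orbit by the target element, intersect with the original orbit using the Baire Category Theorem, and then read off a commutator from the conjugacy relation between the two elements found in the intersection. The only cosmetic difference is in bookkeeping — you observe directly that $[f,g] = h \iff g^{-1}fg = fh$ and pick $f \in O \cap Oh^{-1}$, whereas the paper writes the target as $f$, picks $g \in C \cap Cf$, sets $h = gf^{-1}$, and unwinds the conjugacy $h = k^{-1}gk$ to get $f = [k,g]$.
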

\begin{proof}
Let \(C\) be a comeagre orbit of \(\conj_{G}\) and let \(f\in G\) be arbitrary. Then it follows that \(Cf\) is also comeagre, and thus by the Baire Category Theorem (Theorem~\ref{baire category theorem}) the set \(C\cap Cf\) is dense in \(G\).
It follows that there are \(g,h\in C\) such that \(g=hf\).
Thus \(h^{-1}g=f\). As \(h, g\in C\), there is \(k\in G\) such that \(h=k^{-1}gk\). Thus 
\[f=h^{-1}g=(k^{-1}gk)^{-1}g=k^{-1}g^{-1}kg=[k, g]\]
as required.
\end{proof}

\speeddictthree{63}{symmetric group topology}{subspaces}{topological structures defn}{full trans def}
\begin{defn}[Symmetric groups, \Assumed{63}]\label{symmetric group topology}
If \(X\) is a set, then we define \(\Sym(X)\) to be the group of all bijections from \(X\) to itself (if \(X\in \N\), we treat \(X\) as \(\{0, 1, \ldots, X-1\}\)). We define the \textit{pointwise topology} on \(\Sym(X)\) to be the topology \(\mathcal{TB}_X\restriction_{\Sym(X)}=\mathcal{PT}_X\restriction_{\Sym(X)}\) (recall Examples~\ref{topological binary relations} and \ref{full trans def}).
The collection of sets of the form
\[U_{x, y}:=\makeset{f\in \Sym(X)}{\((x)f= y\)}\]
for \(x, y\in X\) form a subbasis for this topology, and the collection of sets of the form
\[U_{h}:=\makeset{f\in \SN}{ \(h\subseteq f\)}\]
for finite \(h\subseteq X^2\) form a basis.
From Example~\ref{topological binary relations}, we know that this topology makes \(\Sym(X)\) into a topological group.
\end{defn}

\speeddictthree{73}{continuous function space defn}{continuous maps and homeomorphisms defn}{compact defn}{subbasis}
\begin{defn}[Spaces of continuous functions, \Assumed{73}]\label{continuous function space defn}
If \(X, Y\) are topological spaces. Then we define \(C(X, Y)\) to be the set of continuous maps from \(X\) to \(Y\). If \(X=Y\) then we write \(C(X)\) for brevity.
The \textit{compact-open topology} on \(C(X, Y)\) is then the topology generated by the sets of the form
\[\makeset{f\in C(X, Y)}{\((F)f\subseteq U\)}\]
where \(F\subseteq X\) is compact and \(U\subseteq Y\) is open.
\end{defn}

\speeddictthree{74}{compact-open is Polish thm}{closure and interior defn}{Polish space defn}{continuous function space defn}
\begin{theorem}[cf Proposition~1.3.3 of \cite{Mill2001aa}, Polish compact-open topologies, \Assumed{74}]\label{compact-open is Polish thm}
If \(X\) and \(Y\) are compact metrizable spaces, then \(C(X, Y)\) with the compact-open topology is a Polish space. Moreover if \(B_X\) and \(B_Y\) are countable bases for \(X\) and \(Y\) respectively, then the collection of sets of the form
\[B(U_X, U_Y):=\makeset{f\in C(X, Y)}{\((U_X^-)f \subseteq U_Y\)}\]
for \(U_X\in B_X\) and \(U_Y\in B_Y\) generate the topology on \(C(X, Y)\).
Moreover if \(d\) is any metric compatible with \(Y\), then the metric \(d_\infty\) on \(C(X, Y)\) defined by
\[(f, g)d_\infty = \sup_{x\in X} ((x)f, (x)g)d\]
is complete and compatible with the compact-open topology.
\end{theorem}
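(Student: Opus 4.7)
The plan is to show that three topologies on $C(X,Y)$ — the $d_\infty$-metric topology, the compact-open topology, and the topology generated by the family $\{B(U_X, U_Y) : U_X \in B_X, U_Y \in B_Y\}$ — all coincide, that $d_\infty$ is a complete compatible metric, and that the common topology is second countable.

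First I would verify that $d_\infty$ is a well-defined complete metric on $C(X,Y)$. Since $Y$ is compact metrizable, any compatible metric $d$ is bounded (it is continuous on the compact set $Y \times Y$) and complete (any $d$-Cauchy sequence in a compact metric space has a convergent subsequence, and a Cauchy sequence with a convergent subsequence converges). So $d_\infty(f,g) = \sup_{x \in X}((x)f,(x)g)d$ is finite-valued and routinely a metric. For completeness of $d_\infty$, a $d_\infty$-Cauchy sequence $(f_n)$ is pointwise $d$-Cauchy, hence converges pointwise to some $f : X \to Y$ by completeness of $d$; uniformity of the Cauchy condition in $x$ makes the convergence uniform, and a standard $\varepsilon/3$ argument shows $f$ is continuous.

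Next I would identify the three topologies via four inclusions. (i) Each $B(U_X, U_Y)$ is $d_\infty$-open: if $f \in B(U_X, U_Y)$, then the compact set $(U_X^-)f$ lies in $U_Y$, so its $d$-distance $\delta$ from the closed set $Y \setminus U_Y$ is positive, and the $d_\infty$-ball of radius $\delta$ around $f$ sits in $B(U_X, U_Y)$. (ii) Each $B(U_X, U_Y)$ is compact-open subbasic (take $K = U_X^-$, $V = U_Y$). (iii) Every compact-open subbasic set $\{g : (K)g \subseteq V\}$ is, around any member $f$, a finite intersection of $B(U_X, U_Y)$'s: for each $x \in K$ pick $U_Y \in B_Y$ with $(x)f \in U_Y \subseteq V$, then by regularity of the compact Hausdorff space $X$ find $U_X \in B_X$ with $x \in U_X$ and $U_X^- \subseteq (U_Y)f^{-1}$, and finite-subcover $K$ by such $U_X$'s. (iv) Every $d_\infty$-ball of radius $\varepsilon$ around $f$ contains such a finite intersection: for each $x \in X$ choose $U_Y \in B_Y$ of $d$-diameter at most $\varepsilon$ containing $(x)f$, proceed as in (iii), and finite-subcover $X$; then any $g$ in the intersection of the chosen $B(U_X, U_Y)$'s satisfies, for every $x$, that $(x)g$ and $(x)f$ lie in a common $U_Y$ of diameter at most $\varepsilon$.

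Combining (i)--(iv), the three topologies coincide; the countable family $\{B(U_X, U_Y) : U_X \in B_X, U_Y \in B_Y\}$ therefore generates this common topology, its finite intersections form a countable basis, and $C(X,Y)$ is second countable. Together with completeness of $d_\infty$ this makes $C(X,Y)$ Polish. The main obstacle is step (iv): one must check that $B_Y$ contains arbitrarily small-$d$-diameter neighbourhoods of each point — this is routine since $B_Y$ is a basis and $d$-balls are open — and carefully extract a finite subcover of $X$ to convert a pointwise construction into the required finite intersection of $B(U_X, U_Y)$'s.
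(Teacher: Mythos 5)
Your proof is essentially correct. Note that the paper does not actually prove this theorem; it is stated with a citation to Proposition~1.3.3 of van Mill's book, so there is no in-text argument to compare against. Your strategy — showing that the $d_\infty$-metric topology, the compact-open topology, and the topology generated by the sets $B(U_X, U_Y)$ coincide via the four inclusions, then concluding completeness and second countability — is the standard one and works.

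All four steps are sound. The only imprecision is in step (iv): choosing $U_Y \in B_Y$ of diameter \emph{at most} $\varepsilon$ around $(x)f$ yields, for $g$ in your finite intersection, only $(f,g)d_\infty \le \varepsilon$, which places $g$ in the closed rather than open $d_\infty$-ball of radius $\varepsilon$. This is immediately repaired by instead choosing $U_Y$ of diameter strictly less than $\varepsilon$ (e.g.\ a basis element inside the open $d$-ball of radius $\varepsilon/3$ around $(x)f$), which is possible since $B_Y$ is a basis. You correctly flag the need for arbitrarily small-diameter basis elements as the main point to check; the regularity argument you invoke in steps (iii) and (iv) — for each $x$ in an open $W$ there is $U_X \in B_X$ with $x \in U_X \subseteq U_X^- \subseteq W$ — is valid in any regular space with a fixed basis and you apply it correctly.
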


It is well known (see for example Proposition 4.5 of \cite{kechris2012classical}) that if \(f:(X, d_x) \to (Y, d_y)\) is a continuous map between compact metric spaces and \(\varepsilon>0\), then there is \(\delta>0\) such that 
\[(a, b)d_x<\delta\Rightarrow ((a)f, (b)f)d_y < \varepsilon.\]
As a result we obtain the following corollary to Theorem~\ref{compact-open is Polish thm}.

\speeddictthree{500}{Composition of continuous maps is continuous}{product topology defn}{continuous function space defn}{compact-open is Polish thm}
\begin{corollary}[Composition of continuous maps is continuous, \Assumed{500}]\label{Composition of continuous maps is continuous}
Let \((X, d_x), (Y, d_y), (Z, d_z)\) be compact metric spaces. 
The composition map from \(C(X, Y) \times C(Y, Z) \to C(X, Z)\) is continuous (using the compact-open topologies (Definition~\ref{continuous function space defn}) and the product topology (Definition~\ref{product topology defn})).
\end{corollary}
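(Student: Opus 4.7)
The plan is to prove continuity at an arbitrary point $(f_0, g_0) \in C(X, Y) \times C(Y, Z)$ by working with the complete sup metrics $d_\infty$ on each function space supplied by Theorem~\ref{compact-open is Polish thm}. Fixing $\varepsilon > 0$, I would aim to produce $\delta_1, \delta_2 > 0$ such that whenever $(f, f_0)d_\infty < \delta_1$ and $(g, g_0)d_\infty < \delta_2$ one has $(fg, f_0 g_0) d_\infty < \varepsilon$, and then invoke Theorem~\ref{compact-open is Polish thm} together with the fact that the product topology on a product of metric spaces is induced by the maximum of the metrics to convert this into continuity in the sense required.

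The core step is a pointwise triangle inequality. Recall that composition is from left to right, so $(x)(fg) = ((x)f)g$. For each $x\in X$ I would write
\[
((x)fg,\, (x)f_0 g_0)\, d_z \;\leq\; ((x)fg,\, (x)fg_0)\, d_z \;+\; ((x)fg_0,\, (x)f_0 g_0)\, d_z.
\]
The first summand equals $((y)g,\,(y)g_0)\, d_z$ with $y:=(x)f\in Y$, hence is at most $(g,g_0)d_\infty$ uniformly in $x$; taking $\delta_2 := \varepsilon/2$ deals with it. For the second summand I apply the uniform continuity remark quoted just before the statement (cf.\ Proposition~4.5 of \cite{kechris2012classical}) to the fixed map $g_0$: there exists $\delta_1 > 0$ such that $(a, b) d_y < \delta_1$ implies $((a)g_0, (b)g_0) d_z < \varepsilon/2$. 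Then $(f, f_0) d_\infty < \delta_1$ forces $((x)f, (x)f_0) d_y < \delta_1$ for every $x$, so the second summand is below $\varepsilon/2$ pointwise. Taking the supremum over $x$ and combining gives $(fg, f_0 g_0) d_\infty \leq \varepsilon$, as required.

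I do not anticipate any genuine obstacle; the calculation is short and the compactness hypotheses feed directly into the uniform continuity of $g_0$. The one decision worth flagging is the order of the splitting: routing the estimate through $fg_0$ rather than $f_0 g$ means uniform continuity only needs to be invoked for the single fixed map $g_0$, whereas the symmetric split would demand control over a whole family of $f$'s near $f_0$ and thus an equicontinuity argument that is not obviously available from the assumptions alone.
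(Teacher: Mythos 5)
Your argument is correct and is essentially identical to the paper's: both fix the pair and split the sup metric estimate through the intermediate composition (varying $f$)$\circ$(fixed $g_0$), so that uniform continuity is invoked only for the single fixed map $g_0$ and the other summand is bounded directly by $(g, g_0)d_\infty$. The observation you flag about the choice of intermediate — avoiding an equicontinuity requirement on the family of $f$'s — is exactly why the paper's proof is structured the same way, though the paper does not comment on it.
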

\begin{proof}
Let \((f, g)\in C(X, Y) \times C(Y, Z)\) and \(\varepsilon > 0\) be arbitrary.
Let \(\delta >0\) be such that 
\[(a, b)d_y<\delta\Rightarrow ((a)g, (b)g)d_x < \frac{\varepsilon}{2}.\]
It follows that if 
\[\sup_{x\in X} ((x)f, (x)f')d_y < \delta \quad \text{ and }\quad \sup_{y\in Y} ((x)g, (x)g')d_z <  \frac{\varepsilon}{2},\]
then
\[\sup_{x\in X} ((x)f' g', (x)f g)d_z \leq \sup_{x\in X} \left(((x)f' g, (x)f g)d_z +  ((x)f' g, (x)f' g')d_z\right)< \varepsilon.\]
The result then follows from Theorem~\ref{compact-open is Polish thm}.
\end{proof}

\speeddictthree{256}{compact-open group is Polish prop}{compactness facts}{even more nice Polish subspaces cor}{Composition of continuous maps is continuous}
\begin{proposition}[Polish homeomorphism groups, \Assumed{256}]\label{compact-open group is Polish prop}
If \(X\) is a compact metrizable topological space, then the group \(\Aut(X)\) of homeomorphisms of \(X\) is a Polish space with the compact-open topology.
\end{proposition}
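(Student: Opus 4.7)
The plan is to realise $\Aut(X)$ as a $G_\delta$ subset of the Polish space $C(X, X)$ provided by Theorem~\ref{compact-open is Polish thm}, and then to invoke Corollary~\ref{even more nice Polish subspaces cor}. Since $X$ is compact Hausdorff, every continuous bijection $X \to X$ is automatically a homeomorphism by Remark~\ref{compactness facts}(3), so $\Aut(X)$ coincides with the set of injective surjective elements of $C(X, X)$. It therefore suffices to show that each of these two conditions cuts out a $G_\delta$.

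First I would fix a metric $d$ compatible with $X$ together with a countable dense set $D = \{d_i : i \in \N\} \subseteq X$. Since $f(X)$ is a continuous image of a compact space, it is compact and hence closed in $X$, so $f \in C(X, X)$ is surjective exactly when $D \subseteq f(X)$, which in turn holds iff for every $i, n \in \N$ there exists $x \in X$ with $d((x)f, d_i) < 1/n$. The sets
$$S_{i, n} := \{f \in C(X, X) : \exists x \in X,\ d((x)f, d_i) < 1/n\}$$
are then easily checked to be open in the uniform metric $d_\infty$ supplied by Theorem~\ref{compact-open is Polish thm}, and the surjective continuous maps form $\bigcap_{i, n \in \N} S_{i, n}$.

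For injectivity I would introduce
$$E_n := \{f \in C(X, X) : \exists x, y \in X \text{ with } d(x, y) \geq 1/n \text{ and } (x)f = (y)f\},$$
so that $f$ is injective iff $f \notin E_n$ for every $n \in \N$. The key step is to show each $E_n$ is closed in $C(X, X)$, which I would do by a compactness argument: given $f_k \to f$ uniformly with witnessing pairs $(x_k, y_k) \in E_n$, pass to a subsequence along which $x_k \to x$ and $y_k \to y$ using compactness of $X$; uniform convergence combined with continuity of $f$ forces $(x_k)f_k \to (x)f$ and $(y_k)f_k \to (y)f$, so $(x)f = (y)f$ and $d(x, y) \geq 1/n$, placing $f$ in $E_n$.

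Combining the two descriptions, $\Aut(X) = \bigcap_{i, n} S_{i, n} \cap \bigcap_n (C(X, X) \setminus E_n)$ is a countable intersection of complements of closed sets in the Polish space $C(X, X)$, and Corollary~\ref{even more nice Polish subspaces cor} delivers the Polish conclusion; the intrinsic compact-open topology on $\Aut(X)$ agrees with the subspace topology inherited from $C(X, X)$ since both are generated by sets of the form $\{f : (F)f \subseteq U\}$. The only genuinely technical point is the closedness of $E_n$, and since this reduces to a standard compactness plus uniform-convergence argument I do not anticipate a serious obstacle.
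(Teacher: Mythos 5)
Your proof is correct and takes essentially the same route as the paper: both realise $\Aut(X)$ as a $G_\delta$ subset of the Polish space $C(X,X)$ (using Remark~\ref{compactness facts}(3) to identify $\Aut(X)$ with the continuous bijections) and then invoke Corollary~\ref{even more nice Polish subspaces cor}. The only difference is in implementation — you express the injectivity and surjectivity conditions via a compatible metric and a countable dense subset of $X$, whereas the paper writes out the same $G_\delta$ directly in terms of a countable basis $B$ and the compact-open basic sets $B(U,V)$.
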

\begin{proof}
Let \(B\) be a countable basis for \(X\) with \(\varnothing\not\in B\).
From Theorem~\ref{compact-open is Polish thm}, the sets \(B(U, V):= \makeset{f\in C(X)}{\((U^-)f\subseteq V\)}\) for \(U, V\in B\) form a basis for the compact-open topology.

For all \(x, y\in X\) with \(x\neq y\), let 
\[P_{x, y}:=\makeset{(U, V)\in B^2}{there exist \(U',V'\in B\) such that \(x\in U\subseteq U^-\subseteq U'\),\\ \(y\in V\subseteq V^-\subseteq V'\), and \(U'\cap V'=\varnothing\)}\]
Note that (by Remark~\ref{compactness facts} part 9) the set \(P_{x, y}\) is never empty and that the set \(P:=\union{x, y\in X} P_{x, y}\subseteq B^2\) is countable.
From Remark~\ref{compactness facts}, \(\Aut(X)\) consists of the elements of \(C(X)\) which are bijections.
Thus
\[\Aut(X)= \left(\intersection{(U, V)\in P}\union{U', V'\text{open}\\\text{and disjoint}} B(U, U'')\cap B(V, V'')\right) \cap\left( \intersection{V\in B} \union{U\in B} B(U, V)\right).\]
is a countable intersection of sets which are open with respect to the compact-open topology. 
It follows from Corollary~\ref{even more nice Polish subspaces cor} that this subspace of \(C(X)\) is also Polish.
\end{proof}

\subsection{The symmetric group}
In \cite{kechris2007turbulence}, it is stated that ``one can easily show that \(S_\infty\) has ample generics". 
We do not know of a reference for this fact so we give our own proof (Theorem~\ref{ample symmetry}).
As we see below, the key in showing that \(\Sym(\N)\) has ample generics will be viewing elements of \(\Sym(\N)\) as algebraic objects.

\speeddictone{61}{unary algebras defn}{signature defn}
\begin{defn}[Unary algebras, \Assumed{61}]\label{\dict{61 ref}}
If \(n\in \N\), then we define the signature \(\sigma_{u,n}:= (\{0, 1, \ldots, n-1\}, \varnothing, \{(0, 1), (1, 1), \ldots, (n-1, 1)\})\).
That is \(\sigma_{u, n}\) has the \(n\) unary function symbols \(\{0, 1, \ldots, n - 1\}\) and no other symbols. We call a \(\sigma_{u, n}\)-structure an \textit{\(n\)-unary algebra}. Moreover, we say that an \(n\)-unary algebra \(\mathbb{U}\) is \textit{bijective} if for all \(i\in \{0, 1, \ldots, n - 1\}\) the map \(i^\mathbb{U}\) is a permutation of \(\mathbb{U}\).

For example when \(n=2\) a bijective \(n\)-unary algebra consists of a set \(X\) and pair of permutations of \(X\). 
\end{defn}

\speeddictthree{62}{unary algebra components}{types of binary relation defns}{substructures defn}{unary algebras defn}
\begin{defn}[Connected components, \Assumed{62}]\label{\dict{62 ref}}
The \textit{connected components} of an \(n\)-unary algebra \(\mathbb{U}\) are the equivalence classes of the least equivalence relation on \(\mathbb{U}\) containing the set
\[\bigcup_{i<n} i^\mathbb{U}.\]
We say that an \(n\)-unary algebra is \textit{connected} if this relation is all of \(\mathbb{U}\times \mathbb{U}\). In particular, the connected components of a unary algebra partition it into connected substructures.
\end{defn}

\speeddictthree{257}{pointwise group is Polish prop}{even more nice Polish subspaces cor}{nice analytic lemma}{symmetric group topology}
\begin{proposition}[Polish symmetric groups, \Assumed{257}]\label{pointwise group is Polish prop}
The group \(\Sym(\N)\) is a Polish space with the pointwise topology.
\end{proposition}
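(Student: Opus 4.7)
The plan is to realise \(\Sym(\N)\) as a \(G_\delta\) subset of the full function space \(\N^\N\) equipped with the pointwise topology \(\mathcal{PT}_\N\), and then invoke Lemma~\ref{nice baire space lemma} and Corollary~\ref{even more nice Polish subspaces cor}. By Definition~\ref{symmetric group topology} the pointwise topology on \(\Sym(\N)\) is exactly \(\mathcal{PT}_\N\restriction_{\Sym(\N)}\), so it suffices to exhibit \(\Sym(\N)\) as the complement in \(\N^\N\) of a countable union of closed sets; once this is done, Corollary~\ref{even more nice Polish subspaces cor} (applied to the Polish space \(\N^\N\) from Lemma~\ref{nice baire space lemma}) will immediately deliver the result.

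The decomposition itself splits according to the two defining properties of a bijection. For injectivity, for each pair \(i \neq j\) in \(\N\), I would consider
\[C_{i,j} := \makeset{f \in \N^\N}{\((i)f = (j)f\)}.\]
Each \(C_{i,j}\) is closed: if \((f_n)_{n\in\N}\) is a sequence in \(C_{i,j}\) converging pointwise to \(f\), then eventually \((i)f_n = (i)f\) and \((j)f_n = (j)f\), forcing \((i)f = (j)f\). For surjectivity, for each \(k \in \N\) I would consider
\[D_k := \makeset{f \in \N^\N}{\(k \notin \im(f)\)} = \intersection{i\in \N} \makeset{f \in \N^\N}{\((i)f \neq k\)},\]
which is closed as an intersection of clopen subbasic sets (the condition \((i)f \neq k\) is clopen because the subbasis of the pointwise topology consists of the clopen sets \(\{f : (i)f = k'\}\)). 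Then by definition,
\[\Sym(\N) = \N^\N \setminus \left(\union{\substack{i,j \in \N\\ i \neq j}} C_{i,j} \;\cup\; \union{k \in \N} D_k\right),\]
which is a countable intersection of complements of closed sets in \(\N^\N\).

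Applying Corollary~\ref{even more nice Polish subspaces cor} with this countable family of closed sets yields that \(\Sym(\N)\), viewed as a subspace of \(\N^\N\), is Polish. Since this subspace topology coincides with the pointwise topology of Definition~\ref{symmetric group topology}, the proof concludes. There is no real obstacle here: the content of the argument is entirely in the two preparatory results (\ref{nice baire space lemma} and \ref{even more nice Polish subspaces cor}), and what remains is the routine verification that ``bijective'' is a \(G_\delta\) condition in the pointwise topology. The only point that requires a moment of care is recognising that the sets \(D_k\) are closed, which follows by writing them as intersections of clopen sets rather than trying to argue directly.
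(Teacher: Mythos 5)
Your argument is correct and matches the paper's proof in all essential respects: the paper likewise realises \(\Sym(\N)\) as a countable intersection of open subsets of the Polish space \(\N^\N\), encoding surjectivity and injectivity by countable unions of basic clopen sets, and then cites Corollary~\ref{even more nice Polish subspaces cor}. Your sets \(C_{i,j}\) and \(D_k\) are precisely the complements of the open sets appearing in the paper's display, so the two decompositions coincide after a de Morgan transformation.
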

\begin{proof}
By Lemma~\ref{nice baire space lemma}, the space \(\N^\N\) is Polish.
As 
\[\Sym(\N) =\left(\intersection{a\in \N}\union{b\in \N}\makeset{f\in \N^\N}{\((b, a)\in f\)}\right)\cap \left(\intersection{a\neq b}\union{c\neq d}\makeset{f\in \N^\N}{\((a, c)\in f\) and \((b, d)\in f\)}\right),\]
the result follows from Corollary~\ref{even more nice Polish subspaces cor}.
\end{proof}

\speeddictsix{64}{ample symmetry}{function sets def}{almost containment defn}{ample generics defn}{conjugation action defn}{unary algebra components}{symmetric group topology}
\begin{theorem}[\(\SN\) has ample generics, \Assumed{64}]\label{ample symmetry}
The action \(\conj_{\SN}\) has ample generics (using the pointwise topology).
\end{theorem}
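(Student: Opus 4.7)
The plan is to realise an $n$-tuple $(f_1,\ldots,f_n)\in\Sym(\N)^n$ as the bijective $n$-unary algebra (\ref{unary algebras defn}) $\mathbb{U}_{f_1,\ldots,f_n}:=(\N,f_1,\ldots,f_n)$ on $\N$, and to observe that the $\conj_{\Sym(\N)}^n$-orbits are exactly the isomorphism classes of such structures; concretely, $h\in\Sym(\N)$ satisfies $h^{-1}f_ih=g_i$ for every $i$ if and only if $h$ is an isomorphism $\mathbb{U}_{f_1,\ldots,f_n}\to\mathbb{U}_{g_1,\ldots,g_n}$. From this point on, the problem becomes: exhibit a single isomorphism type whose realisations in $\Sym(\N)^n$ form a comeagre set.

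I would take as the candidate comeagre orbit the set $G_n\subseteq\Sym(\N)^n$ consisting of those tuples $(f_1,\ldots,f_n)$ whose algebra $\mathbb{U}_{f_1,\ldots,f_n}$ has every connected component (\ref{unary algebra components}) finite, and realises every isomorphism type of finite connected bijective $n$-unary algebra as a component infinitely many times. That $G_n$ is a single $\conj_{\Sym(\N)}^n$-orbit is a back-and-forth argument: any two members of $G_n$ yield algebras with $\aleph_0$ copies of each finite connected isomorphism type and no infinite components, so pairing components of matching type and choosing any isomorphism on each pair yields an $h\in\Sym(\N)$ intertwining all $n$ operations at once.

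To see $G_n$ is comeagre, I would write it as a countable intersection of dense open subsets of the Polish space $\Sym(\N)^n$ (\ref{pointwise group is Polish prop}, \ref{more nice Polish subspaces lemma}). For each $x\in\N$, the set $F_x$ of tuples whose component of $x$ is finite is the union over finite $C\subseteq\N$ with $x\in C$ of the condition $(C)f_i\subseteq C$ for all $i$, which depends on only finitely many values of each $f_i$; hence $F_x$ is open, and density follows by extending the finite partial data of any basic neighbourhood to close off a finite bijective $n$-unary algebra that contains $x$. For each isomorphism type $A$ of finite connected bijective $n$-unary algebras and each $N\in\N$, the set $D_{A,N}$ of tuples whose algebra has at least $N$ components isomorphic to $A$ is open for the same reason, and dense by installing $N$ fresh disjoint copies of $A$ on an unused finite part of $\N$. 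Since there are countably many $x$ and countably many pairs $(A,N)$, the intersection $\bigcap_x F_x\cap\bigcap_{A,N}D_{A,N}=G_n$ is comeagre by the Baire Category Theorem (\ref{baire category theorem}), and the conclusion follows.

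The main obstacle is the extension step underpinning both density arguments: given finite partial injections $h_1,\ldots,h_n$ on $\N$ and a finite $F_0\subseteq\N$, one must extend each $h_i$ to a permutation of some common finite superset $F\supseteq F_0\cup\dom(h_i)\cup\im(h_i)$, disjoint from the finite region where the $N$ fresh copies of $A$ will be installed. This reduces to the routine observation that, after introducing enough fresh elements of $\N$, one can pair elements of $\dom(h_i)\setminus\im(h_i)$ with elements of $\im(h_i)\setminus\dom(h_i)$ (or with fresh elements) to extend each $h_i$ to a bijection of $F$; these choices can be carried out for all indices $i$ simultaneously because there are only finitely many of them. Once this closing-off procedure is in hand, the rest of the argument is formal.
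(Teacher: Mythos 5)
Your proposal is correct and follows essentially the same route as the paper: realise a tuple in $\Sym(\N)^n$ as a bijective $n$-unary algebra on $\N$, take as the target orbit the tuples whose algebra has all components finite with each finite connected type appearing infinitely often, identify this as a single orbit by a back-and-forth/matching argument, and write it as a countable intersection of open dense sets of exactly the two kinds ($B_m$ and $B_{\mathbb{A},m}$ in the paper, your $F_x$ and $D_{A,N}$) to conclude comeagreness via Baire. The paper is only slightly more explicit in the closing-off step for density (for the finiteness condition it completes the finite partial bijections to permutations of $P\cup\{m\}$, and for the copies condition it does not bother closing off at all, merely installing the fresh copies and extending arbitrarily), but the substance is identical.
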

\begin{proof}
Let \(n\in \N\) be fixed, and let 
  \[C := \makeset{y\in \SN^n}{the connected components of the unary algebra \((\N,\sigma_{u, n}, y)\)\\ consist of countably infinitely many copies of every finite\\ bijective connected \(n\)-unary algebra.}\]
We will first show that \(C\) is an orbit of \(\conj_{\SN}^n\).

Note that \(x,y \in \Sym(\N)^n\) are in the same orbit precisely when the corresponding unary algebras \((\N,\sigma_{u,n}, x)\) and \((\N, \sigma_{u,n}, y)\) are isomorphic (a conjugating element of \(\SN\) is the same as an isomorphism).
As the property defining \(C\) is preserved by isomorphisms, it follows that \(C\) is closed under the action of \(\Sym(\N)\).

We now show that all elements of \(C\) are in the same orbit. Suppose that \(x, y\in C\). Let \(\{C_{i, j}: i, j\in \N\}\) be the connected components of \((\N,\sigma_{u,n}, y)\), indexed in such a way that \(C_{i_0, j_0} \cong C_{i_1, j_1}\) if and only if \(i_0 = i_1\).
Similarly let \(\{D_{i, j}: i, j\in \N\}\) be the set of connected components of \((\N,\sigma_{u,n}, x)\), indexed in such that way that \(D_{i_0, j_0} \cong D_{i_1, j_1}\) if and only if \(i_0 = i_1\), and \(C_{i_0, j} \cong D_{i_1, j}\) if and only if \(i_1 = i_2\).

For all \(i, j\in \N\), let \(\phi_{i, j}: C_{i, j} \to D_{i, j}\) be an isomorphism of unary algebras. We now define \(\phi := \union{i, j\in \N} \phi_{i, j}\). It follows that \(\phi: (\N,\sigma_{u, n}, y) \to (\N,\sigma_{u, n}, x)\) is an isomorphism of unary algebras and hence \((y, \phi)\conj_{\SN}^n = x\).

Now that we have identified our orbit, we show that it is comeagre.
Let \(R\) be a countable set whose elements are bijective connected \(n\)-unary algebras and such that each isomorphism class of finite bijective \(n\)-unary algebras has precisely one representative in \(R\). Our description of \(C\) gives us that if we define the sets
\begin{align*}
B_m:= &\makeset{y\in \SN^n}{the connected component of \(m\) in \((\N,\sigma_{u,n},y)\) is finite}\\
B_{\mathbb{A}, m}:=&\makeset{y\in \SN^n}{\((\N,\sigma_{u,n}, y)\) has at least \(m\) connected components\\ isomorphic to \(\mathbb{A}\)}
\end{align*}
for \(m\in \N\) and \(\mathbb{A}\in R\) then
\[C = \left(\bigcap_{m\in \N}B_m\right)\cap \left(\bigcap_{\substack{\mathbb{A}\in R\\ m\in \N}}B_{\mathbb{A}, m}\right).\]
So it suffices to show that all of these sets we are open and dense.
They are open because the properties defining them are determined by only finitely many points.

We must show that they are dense.
Let \(m\in \N\) and \(\mathbb{A}\in R\) be arbitrary. Let 
\(U:=\prod_{i< n}U_i\) be a non-empty basic open set in \(\SN^n\), and for all \(k< n\) let \(f_k\) be finite bijections between subsets of \(\N\) such that \(U_k = \makeset{g\in \SN}{\(f_k\subseteq g\)}\).
Let \[P:= \bigcup_{k\in \{0, 1, \ldots ,n-1\}} \dom(f_k) \cup \im(f_k).\]

We first show that \(B_m\cap U \neq \emptyset\). For all \(k< n\), let \(f_k'\) be a permutation of \(P\cup  \{m\}\) which contains \(f_k\), and let \(g_k\in \SN\) contain \(f_k'\). Then \((g_0, g_1, \ldots, g_{n-1}) \in B_m\cap U\).

It remains to show that \(B_{\mathbb{A},m}\cap U \neq \emptyset\). 
Let \(\phi_{0}, \phi_1, \ldots, \phi_{m-1}:\mathbb{A}\to \N\backslash P\) be injections with disjoint images.
For each \(k\in \{0, 1, \ldots, n-1\}\) let \(g_k\in \SN\) contain the partial bijection
\[f_{k}\cup \union{j\in m} \phi_j^{-1} (k^\mathbb{A}) \phi_j.\]
Then \((g_0, g_1, \ldots, g_{n-1})\in B_{\mathbb{A}, m} \cap U\) as required. 
\end{proof}

\speeddicttwo{65}{Permutations are Commutators}{ample commutators thm}{ample symmetry}
\begin{corollary}[Permutations are commutators, \Assumed{65}]\label{\dict{65 ref}}
Every element of the group \(\SN\) is a commutator.
\end{corollary}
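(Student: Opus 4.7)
The plan is to apply Theorem~\ref{ample commutators thm} (commutator width 1) directly, after verifying that its hypotheses hold for $\SN$ equipped with the pointwise topology. That theorem requires two things: that $\SN$ be a completely metrizable topological group, and that the conjugation action $\conj_{\SN}$ have a comeagre orbit.

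First I would recall that $(\SN, \mathcal{PT}_\N\restriction_{\SN})$ is a topological group. This is immediate from Example~\ref{topological binary relations} (which gives that $(\mathcal{B}_\N, \mathcal{TB}_\N)$ is a topological semigroup with continuous inversion) together with the observation in Definition~\ref{symmetric group topology} that the pointwise topology on $\SN$ is the restriction of $\mathcal{TB}_\N$. For complete metrizability, I would cite Proposition~\ref{pointwise group is Polish prop}, which shows that $\SN$ is Polish; in particular its topology is induced by some complete metric.

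Next I would invoke Theorem~\ref{ample symmetry}, which tells us that $\conj_{\SN}$ has ample generics. By Definition~\ref{ample generics defn}, this means that for every $n \in \N$ the action $\conj_{\SN}^n$ has a comeagre orbit in $\SN^n$; in particular, taking $n=1$, the action $\conj_{\SN}$ itself has a comeagre orbit in $\SN$.

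With both hypotheses in place, Theorem~\ref{ample commutators thm} applies and gives that the commutator map $[\cdot,\cdot]:\SN^2 \to \SN$ is surjective, which is exactly the statement that every element of $\SN$ is a commutator. There is no real obstacle here since all the substantive work has been done in the referenced results; the only thing to be careful about is not conflating ``ample generics'' with ``comeagre orbit in $G$ itself'' — but the former trivially implies the latter by the $n = 1$ instance.
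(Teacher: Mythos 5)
Your proof is correct and is precisely the argument the paper intends: the corollary's assumed-knowledge tags point to Theorem~\ref{ample commutators thm} and Theorem~\ref{ample symmetry}, which you combine (together with Proposition~\ref{pointwise group is Polish prop} for complete metrizability) exactly as expected. The observation that ample generics trivially yields a comeagre conjugacy orbit by taking $n=1$ is the right bridge and is also what the statement of Theorem~\ref{ample commutators thm} itself already flags.
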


\subsection{Homeomorphisms of Cantor space}
Our main aim in this subsection is to provide an argument that the action \(\conj_{\Aut(2^\N)}\) has ample generics (where \(\Aut(2^\N)\) is the group of homeomorphisms of the Cantor set equipped with the compact-open topology).

The argument we present is that of Aleksandra Kwiatkowska \cite{good_generic_Cantor}, which shows that a generic homeomorphism of \(2^\N\) can be constructed as a sort of limit of a nice class of finite topological structures.
This limit is in the sense of ``Projective Fra\"iss\'e Theory" as introduced by Irwin and Solecki in \cite{projective_fr}.

The conditions defining a projective Fra\"iss\'e class are analogous to those for the usual definition of a Fra\"iss\'e class, except with quotient maps used in place of embeddings (however a projective Fra\"iss\'e class need not be closed under quotients).

\speeddicttwo{51}{projective fraisse class def}{structure hom defn}{congruences and quotients defn}
\begin{defn}[Projective Fra\"iss\'e class, \Assumed{51}]\label{projective fraisse class def}
We say a class \(\mathcal{C}\) of discrete \(\sigma\)-structures is a \textit{projective Fra\"iss\'e class} if it satisfies the following properties:
\begin{enumerate}
    \item \(\mathcal{C}\) is closed under topological isomorphism.
    \item \(\mathcal{C}\) contains countably infinitely many structures up to isomorphism.
    \item \(\mathcal{C}\) consists of finite structures.
    \item If \(\mathbb{D}, \mathbb{E}\in \mathcal{C}\), then there is \(\mathbb{F}\in \mathcal{C}\) and quotient maps \(\phi_D, \phi_E\) from \(\mathbb{F}\) onto \(\mathbb{D}\) and \(\mathbb{E}\) respectively.
    \item If \(\mathbb{C},\mathbb{D}, \mathbb{E}\in \mathcal{C}\) and \(\phi_{D, C}:\mathbb{D}\to \mathbb{C}, \phi_{E,C}:\mathbb{E}\to \mathbb{C}\) are quotient maps, then there is \(\mathbb{F}\in \mathcal{C}\) and quotient maps \(\phi_{F,D}, \phi_{F,E}\) from \(\mathbb{F}\) onto \(\mathbb{D}\) and \(\mathbb{E}\) respectively, such that
    \(\phi_{F, D}\phi_{D, C}= \phi_{F, E}\phi_{E, C}.\)
\end{enumerate}
\end{defn}

\speeddicttwo{500}{finite semigroups are a projective fraisse class example}{semigroup defn}{projective fraisse class def}
\begin{example}[Non-empty finite semigroups, \Assumed{51}]\label{finite semigroups are a projective fraisse class example}
The class of non-empty finite discrete semigroups form a projective Fra\"iss\'e class.
\end{example}
\begin{proof}
The first three conditions of Definition~\ref{projective fraisse class def} are immediate.
We first verify Condition 4.
If \(D\) and \(E\) are finite discrete semigroups, then so is \(F:=D\times E\).
Moreover if \(D, E\) are non-empty then the projection maps \(\pi_0:F \to D\) and \(\pi_1:F\to E\) are surjective.
As semigroups are algebraic structures, it follows that these are quotient maps, thus Condition 4 follows.

It remains to verify Condition 5. Suppose that \(C, D\) and \(E\) are finite discrete semigroups and \(\phi_{D, C}:D\to C, \phi_{E, C}:E\to C\) are quotient maps. 
Let 
\[F:= \makeset{(d, e)\in D\times E}{\((d)\phi_{D, C}= (e)\phi_{E, C}\)}.\]
It is routine to verify that \(F\) is a subsemigroup of \(D\times E\).
If we define \(\phi_{F,D}:F\to D\) and \(\phi_{F,E}:F\to E\) to be the restrictions of the projection maps \(\pi_0\) and \(\pi_1\) respectively to the set \(F\), then we have 
\(\phi_{F, D}\phi_{D, C}= \phi_{F, E}\phi_{E, C}\).

It thus suffices to show that \(\phi_{F,D}\) and \(\phi_{F,E}\) are quotient maps (which for algebraic structures is equivalent to being a surjective homomorphism).
If \(d\in D\) is arbitrary, then we have 
\[\{d\}\times (\{(d)\phi_{D, C}\})\phi_{E, C}^{-1} \subseteq F\]
(by the definition of \(F\)). 
As \(\phi_{E, C}\) is a quotient map, it follows that \((\{(d)\phi_{D, C}\})\phi_{E, C}^{-1}\neq \varnothing\), hence \(\phi_{F, D}\) is surjective.
By the same argument, \(\phi_{F, E}\) is also surjective as required.
\end{proof}

\speeddictthree{52}{projective fraisse limit theorem}{Brouwer' Theorem}{topology on a metric space defn}{projective fraisse class def}
\begin{theorem}[Projective Fra\"iss\'e limit, \Assumed{52}]\label{\dict{52 ref}}
If \(\mathcal{C}\) is a projective Fra\"iss\'e class, then there is a unique (up to topological isomorphism) second countable, compact, metrizable, zero-dimensional topological structure \(\operatorname{Flim}_{\leftarrow}(\mathcal{C})\) satisfying
\begin{enumerate}
    \item For all \(\mathbb{D}\in \mathcal{C}\), there is a continuous quotient map \(\phi_D:\operatorname{Flim}_{\leftarrow}(\mathcal{C})\to \mathbb{D}\).
    \item If \(X\) is a discrete, finite set and \(f: \PFL{\mathcal{C}} \to X\) is a continuous surjection, then there is \(\mathbb{S}_f\in \mathcal{C}\), a continuous quotient map \(\phi_f:\PFL{\mathcal{C}}\to \mathbb{S}_f\) and a surjective map \(s_f: \mathbb{S}_f\to X\) such that \(f=\phi_f s_f\).
    \item If \(\mathbb{A},\mathbb{B}\in \mathcal{C}\), and \(\psi_1:\mathbb{A} \to \mathbb{B},\ \psi_2: \operatorname{Flim}_{\leftarrow}(\mathcal{C})\to \mathbb{B}\) are continuous quotient maps, then there is a continuous quotient map \(\psi_3 :\PFL{\mathcal{C}} \to \mathbb{A}\) such that \(\psi_3\psi_1=\psi_2\).
\end{enumerate}
We call such an object \(\operatorname{Flim}_{\leftarrow}(\mathcal{C})\) a \textbf{projective Fr\"aiss\'e limit} of \(\mathcal{C}\).
\end{theorem}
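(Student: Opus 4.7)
The plan is to construct $\PFL{\mathcal{C}}$ as the inverse limit of a carefully chosen sequence in $\mathcal{C}$, verify the three properties on it, and then establish uniqueness by a back-and-forth. I would first use countability of the isomorphism classes (condition 2), joint projection (condition 4), and projective amalgamation (condition 5) to inductively build a sequence $(\mathbb{D}_n)_{n\in\N}$ in $\mathcal{C}$ together with continuous quotient maps $t_n:\mathbb{D}_{n+1}\to\mathbb{D}_n$; write $\tau_{m,n} := t_{m-1}\circ t_{m-2}\circ\cdots\circ t_n:\mathbb{D}_m\to \mathbb{D}_n$ for $m>n$. The sequence is arranged to meet two bookkeeping requirements: (a) every $\mathbb{D}\in\mathcal{C}$ is a quotient of some $\mathbb{D}_n$, and (b) for every $n$, every quotient $s:\mathbb{D}_n\to\mathbb{B}$ in $\mathcal{C}$, and every quotient $\psi:\mathbb{A}\to\mathbb{B}$ with $\mathbb{A}\in\mathcal{C}$, there exist $m>n$ and a quotient $r:\mathbb{D}_m\to\mathbb{A}$ with $r\psi = \tau_{m,n}s$. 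Each individual request is addressed by a single application of condition 4 or 5, and all countably many requests are organised into one enumeration and handled in turn.

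I would then take
\[\PFL{\mathcal{C}} := \left\{ x \in \prod_{n\in\N} \mathbb{D}_n \,:\, (x)\pi_n = ((x)\pi_{n+1}) t_n \text{ for all } n \in \N \right\},\]
with each $\mathbb{D}_n$ discrete, the subspace topology from the product, and operations and relations defined coordinatewise. This is a closed subset of a product of finite discrete spaces, hence compact Hausdorff, metrisable, second countable, and zero-dimensional, and the coordinatewise interpretation makes it a topological $\sigma$-structure. Writing $\phi_n := \pi_n\restriction_{\PFL{\mathcal{C}}}$, a standard inverse-limit argument using finiteness of each $\mathbb{D}_n$ shows that $\phi_n$ is a continuous surjection; that it is a structure quotient map follows because each $t_n$ is, so preimages of tuples from any relation can be coherently chosen. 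Property 1 then follows from (a) by post-composing an appropriate $\phi_n$ with a quotient onto the target. Property 2 follows from compactness and zero-dimensionality: any continuous $f:\PFL{\mathcal{C}}\to X$ to a finite discrete set has clopen fibres, and by a subbasis argument each fibre depends on only finitely many coordinates, so $f = \phi_n \circ s$ for some $n$ and surjection $s$. Property 3 is exactly clause (b): by Property 2 write $\psi_2 = \phi_n s$, apply (b) to obtain $m$ and $r:\mathbb{D}_m\to\mathbb{A}$, and set $\psi_3 := \phi_m r$; then $\psi_3 \psi_1 = \phi_m r \psi_1 = \phi_m \tau_{m,n} s = \phi_n s = \psi_2$.

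For uniqueness, suppose $\mathbb{L}_1, \mathbb{L}_2$ both satisfy the three conditions; both are compact, Hausdorff, zero-dimensional, and second countable, and their continuous quotients onto members of $\mathcal{C}$ (supplied by Property 1) separate points, because by Property 2 any clopen partition factors through such a quotient. I would then perform a back-and-forth alternating Property 2 (to pick the next finite quotient on one side) with Property 3 (to lift it compatibly on the other), producing matching cofinal inverse systems of quotient maps from $\mathbb{L}_1$ and $\mathbb{L}_2$ onto a common sequence in $\mathcal{C}$; taking the inverse limit of these systems identifies both spaces and yields the required topological isomorphism. The main obstacle throughout is the bookkeeping in the first step: one must organise countably many extension requirements into a single enumeration so each is eventually addressed while the sequence remains inside $\mathcal{C}$ at every stage. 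Once the enumeration and the two applications of conditions 4 and 5 are correctly set up, the topological verifications and the back-and-forth for uniqueness are essentially routine.
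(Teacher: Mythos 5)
Your proposal follows essentially the same route as the paper: build a cofinal sequence $(\mathbb{D}_n)$ in $\mathcal{C}$ by enumerating countably many joint-projection and amalgamation requests, take the inverse limit as a closed subspace of the product of discrete finite structures, verify the three properties via the same compactness/finiteness arguments, and establish uniqueness by a back-and-forth. The construction, the two bookkeeping conditions (a) and (b), and the verification sketches all match the paper's proof, so this is correct and not a genuinely different argument.
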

\begin{proof}
We first show existence.
Let \(S=\makeset{\mathbb{S}_i}{\(i\in \N\)}\) be a countable set of structures in \(\mathcal{C}\), such that every element of \(\mathcal{C}\) is isomorphic to a unique element of \(S\).

Build by the following algorithm a sequence of structures \((\mathbb{P}_{i})_{i\in \N}\) in \(\mathcal{C}\), and a sequence of quotient maps \((\phi_{i+1}:\mathbb{P}_{i+1}\to \mathbb{P}_{i})_{i\in \N}\):
\begin{enumerate}
    \item Choose \(N\subseteq \N\) containing \(0\) such that \(|N|=|N^c|\), define \(i:= 0\), choose \(t: N\to S\) to be a bijection, and define \(\mathbb{P}_0:= \mathbb{S}_0\).
    
    \item Choose \(M\subseteq \N \backslash (N\cup \{0,1, \ldots i-1\})\) such that \(|\N|=|\N \backslash (M\cup N\cup \{0,1, \ldots i-1\})|\) and choose a bijection \(q\) from \(M\) to the set
    \[\union{j<i \\ \mathbb{A},\mathbb{B}\in S}
    \makeset{(\psi_0, \psi_1)\in \Hom(\mathbb{A}, \mathbb{B}) \times \Hom(\mathbb{P}_j, \mathbb{B})}{\(\psi_0, \psi_1\) is are quotient maps}.\]
    
    \item Redefine \(t:= t\cup q\), \(N:= N\cup M\) and \(i:= i+1\).
    
    \item If \((i)t\) is undefined, then define \(\mathbb{P}_i:= \mathbb{P}_{i-1}\) and define \(\phi_i\) to be the identity map.
    If \((i)t\) is a structure, then by Definition~\ref{projective fraisse class def} Condition 4, let \(\mathbb{P}_i\in \mathcal{C}\) be a structure which \((i)t\) and \(\mathbb{P}_{i-1}\) are both quotients of and choose \(\phi_i:\mathbb{P}_i\to \mathbb{P}_{i-1}\) to be a quotient map.
    If \((i)t=(\psi_0, \psi_1)\) is a pair of quotient maps \(\psi_0:\mathbb{A}\to \mathbb{B}\) and \(\psi_1:\mathbb{P}_j\to \mathbb{B}\), then by Definition~\ref{projective fraisse class def} Condition 5, choose \(\mathbb{P}_i\) to be a structure and \(\phi_i:\mathbb{P}_i\to \mathbb{P}_{i-1}\) be a quotient map such that there is a quotient map \(\psi:\mathbb{P}_i\to \mathbb{A}\) satisfying 
    \[\psi\circ \psi_0=\phi_i\circ (\phi_{i-1}\circ \ldots \circ \phi_{j+1}\circ \psi_1).\]
    
\item Go to step 2.
\end{enumerate}
In particular, this sequence has the property that every structure in \(\mathcal{C}\) is a quotient of a structure in the sequence, and if \(\mathbb{A}, \mathbb{B}\in \mathcal{C}\) and \(\psi_1: \mathbb{A}\to \mathbb{B},\ \psi_2:\mathbb{P}_j\to \mathbb{B}\) are quotient maps, then there is \(i\in \N\) and a quotient map from \(\psi_3:\mathbb{P}_i\to \mathbb{A}\) such that \(\psi_3\psi_1=\phi_i\phi_{i-1}\ldots\phi_{j+1}\psi_2\).

We now define \(\mathbb{L}\) to be the topological structure
\[\makeset{(p_i)_{i\in \N}\in \prod_{i\in \N} \mathbb{P}_i}{for all \(i\in \N\) we have \((p_{i+1})\phi_{i+1} = p_i\) }.\]
We next need to verify that \(\mathbb{L}\) has the required properties.
First note that \(\mathbb{L}\) is a closed subspace of \(\prod_{i\in \N} \mathbb{P}_i\) which by Theorem~\ref{Brouwer' Theorem} is a Cantor space, so \(\mathbb{L}\) satisfies the given topological conditions.

Note also that all the projection maps \(\pi_i: \mathbb{L}\to \mathbb{P}_i\) are quotient maps. Thus \(\mathbb{L}\) has the first required property as each element of \(\mathcal{C}\) is a quotient of one of the \(\mathbb{P}_i\).

To check the second condition, suppose that \(X\) is a finite discrete space and \(f:\mathbb{L} \to X\) is a continuous surjection.
The preimage of each point in \(X\) is a clopen and is hence a finite union of sets of the form \((\{a\})\pi_i^{-1}\) where \(a\in \mathbb{P}_i\). It follows, by taking the max \(M\) of all such \(i\), that \(f\) can be factored into maps \(\pi_Ms_f\) as required.

We now check the third condition. Let \(\psi_1:\mathbb{A}\to \mathbb{B}\) and \(\psi_2:\mathbb{L}\to \mathbb{B}\) be continuous quotient maps. As in the above paragraph, we can find \(j\in \N\) and \(\psi_2':\mathbb{P}_j\to \mathbb{B}\) such that \(\psi_2= \pi_j\psi_2'\). Thus by the construction of \((\mathbb{P}_{i})_{i\in \N}\), there is \(i\in \N\) and \(\psi_3':\mathbb{P}_i\to \mathbb{A}\) with \(\psi_3'\psi_1 = \phi_{i}\phi_{i-1}\ldots\phi_{j+1}\psi_2'\). Thus choosing \(\phi_3:= \pi_i\phi_3'\) gives the required map.

It remains to show uniqueness.
Let \(\mathbb{L}_0\) and \(\mathbb{L}_1\) be structures with the listed properties. As \(\mathbb{L}_0\) is zero-dimensional and second countable, we can find a sequence of equivalence relations \((\sim_{0, i})_{i\in \N}\), such that the equivalence classes of these relations are all clopen and they together form a basis for the topology on \(\mathbb{L}_0\).
Be redefining \(\sim_{0, i}\ :=\ \sim_{0, i-1}\cap \sim_{0, i}\), we can assume without loss of generality that each relation in this sequence is contained in the previous one (so the corresponding sequence of partitions is getting finer).
Also by the second condition of the theorem (and refining more if necessary), we can assume that for all \(i\in \N\), the structure \(\mathbb{L}_0/\sim_{0, i}\) is isomorphic to a structure in \(\mathcal{C}\). 
We also define an analogous sequence of equivalence relations \((\sim_{1, i})_{i\in \N}\) on \(\mathbb{L}_1\).

As 
\begin{enumerate}
    \item the classes of \((\sim_{0, i})_{i\in \N}\) form a basis for \(\mathbb{L}_0\),
    \item \(\mathbb{L}_1/\sim_{1, 0}\) is isomorphic to an element of \(\mathcal{C}\), and
    \item\(\mathbb{L}_0\) satisfies the first condition of the theorem,
\end{enumerate}
  we can find \(i_0\in \N\) and a quotient map \(\phi_{0, 0}: \mathbb{L}_0/\sim_{0,i_0} \to \mathbb{L}_1/\sim_{1, 0}\). Similarly using the fact that \(\mathbb{L}_1\) satisfies the third condition of the theorem, we can find \(j_0\in \N\) and \(\phi_{1, 0}:\mathbb{L}_1/\sim_{1, j_0}\to \mathbb{L}_0/\sim_{0,i_0}\) such that \(([x]_{\sim_{1, j_0}})\phi_{1, 0}\phi_{0, 0}= [x]_{\sim_{1, 0}}\) for all \(x\in \mathbb{L}_1\).

Continuing in this fashion we construct strictly increasing sequences \((i_k)_{k\in \N}, (j_k)_{k\in \N}\), and sequences \((\phi_{0, k})_{k\in \N}, (\phi_{1, k})_{k\in \N}\) such that for all \(x_0\in \mathbb{L}_0, x_1\in \mathbb{L}_1\) and \(k\in \N\backslash\{0\}\) we have:
\[([x_0]_{\sim_{0, i_{k+1}}})\phi_{0, k+1}\phi_{1, k}= [x_0]_{\sim_{1, i_{k}}},\quad \text{and}\quad  ([x_1]_{\sim_{1, j_k}})\phi_{1, k}\phi_{0, k}= [x_1]_{\sim_{1, j_{k -1}}}.\]

We define a homomorphism \(\phi_0: \mathbb{L}_0\to \mathbb{L}_1\) by defining \((x_0)\phi_0\) to be the unique element of the set \(\intersection{k\in \N} ([x_0]_{\sim_{0, i_{k+1}}})\phi_{0, k+1}\) (this exists because \(\mathbb{L}_1\) is compact and is unique because \(\mathbb{L}_1\) is Hausdorff). We define \(\phi_2:\mathbb{L}_1\to \mathbb{L}_0\) analogously. By construction \(\phi_0, \phi_1\) are continuous homomorphisms and they are inverses of each other, so the result follows.
\end{proof}
We also give an alternative formulation of the above theorem which will be more useful at times.

\speeddictone{53}{projective fraisse limit theorem clopen sets version}{projective fraisse limit theorem}
\begin{theorem}[Projective Fra\"iss\'e limit 2, \Assumed{53}]\label{projective fraisse limit theorem clopen sets version}
If \(\mathcal{C}\) is a projective Fra\"iss\'e class, then \(\operatorname{Flim}_{\leftarrow}(\mathcal{C})\) is the unique (up to topological isomorphism) compact, second countable, Hausdorff, zero-dimensional topological structure  satisfying:
\begin{enumerate}
    \item For all \(\mathbb{D}\in \mathcal{C}\), there is a partition of \(\operatorname{Flim}_{\leftarrow}(\mathcal{C})\) into clopen sets such that the corresponding quotient structure is isomorphic to \(\mathbb{D}\).
    \item If \(P\) is a finite partition of \(\operatorname{Flim}_{\leftarrow}(\mathcal{C})\) into clopen sets, then there is a partition \(R\) of \(\operatorname{Flim}_{\leftarrow}(\mathcal{C})\) into clopen sets refining \(P\) and such that the quotient structure corresponding to \(R\) is an element of \(\mathcal{C}\).
    \item If \(P_0, P_1\) are finite partitions of \(\operatorname{Flim}_{\leftarrow}(\mathcal{C})\) into clopen sets and \(\phi: P_0 \to P_1\) is an isomorphism of the corresponding quotient structures, then there is a topological isomorphism \(h:\operatorname{Flim}_{\leftarrow}(\mathcal{C}) \to \operatorname{Flim}_{\leftarrow}(\mathcal{C})\) such that \((S)h = (S)\phi\) for all \(S\in P_0\).
\end{enumerate}
\end{theorem}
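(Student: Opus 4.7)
The plan is to deduce both existence and uniqueness from Theorem~\ref{projective fraisse limit theorem} by a translation between clopen partitions and continuous surjections. The key dictionary is that for any compact Hausdorff zero-dimensional space \(X\), a finite partition \(P\) of \(X\) into clopen sets corresponds bijectively to a continuous surjection \(q_P:X\to P\) onto the finite discrete space \(P\) sending each point to its class; under this correspondence, continuous quotient maps onto structures in \(\mathcal{C}\) correspond precisely to those partitions whose induced equivalence relation is a congruence with quotient in \(\mathcal{C}\). Observing that by Urysohn's theorem the topological hypotheses of this theorem match those of Theorem~\ref{projective fraisse limit theorem}, it suffices to show that conditions (1)--(3) here are equivalent to conditions (1)--(3) of that theorem.

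First I would verify that (1) here is an immediate reformulation of (1) there: the partition is obtained from the quotient map \(\phi_D\) by taking fibers, and conversely. Then I would deduce (2) from condition (2) of Theorem~\ref{projective fraisse limit theorem} applied to the continuous surjection \(q_P\) induced by the given partition \(P\); the factorization \(q_P=\phi_f s_f\) produces \(\mathbb{S}_f\in\mathcal{C}\) together with a continuous quotient map \(\phi_f\), whose fibers form the required refinement \(R\) of \(P\).

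The main work, which I expect to be the chief obstacle, is condition (3), the homogeneity statement. Given finite clopen partitions \(P_0,P_1\) of \(\PFL{\mathcal{C}}\) and an isomorphism \(\phi:P_0\to P_1\) of the associated quotient structures, write \(q_i:\PFL{\mathcal{C}}\to P_i\) for the quotient maps, so the goal is to produce a homeomorphism \(h:\PFL{\mathcal{C}}\to\PFL{\mathcal{C}}\) with \(hq_1=q_0\phi\). I would build \(h\) by a back-and-forth argument mirroring the uniqueness proof of Theorem~\ref{projective fraisse limit theorem}: construct increasing sequences of finite clopen partitions \((P_{0,k})_{k\in\N}\) and \((P_{1,k})_{k\in\N}\) starting from \(P_0\) and \(P_1\) respectively, whose classes together form a basis for the topology, together with compatible isomorphisms \(\phi_k:P_{0,k}\to P_{1,k}\) extending \(\phi\). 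At each step one alternates between condition (2) of Theorem~\ref{projective fraisse limit theorem} (to refine the current partition on one side to one whose quotient lies in \(\mathcal{C}\)) and condition (3) of that theorem (to lift this refinement through \(\phi_k\) to a compatible refinement on the other side). Passing to the limit of the resulting interlocking tower, as in the uniqueness argument of Theorem~\ref{projective fraisse limit theorem}, yields the required \(h\).

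For uniqueness, I would show conversely that any space satisfying the topological hypotheses and (1)--(3) of the present theorem also satisfies (1)--(3) of Theorem~\ref{projective fraisse limit theorem}: (1) and (2) translate directly via the partition/surjection dictionary (any finite discrete space in (2) there is of the form \(P\) for an appropriate partition), and (3) of Theorem~\ref{projective fraisse limit theorem} follows by using (1) here to realise \(\mathbb{A}\) and \(\mathbb{B}\) as partitions of \(\PFL{\mathcal{C}}\) and then applying (3) here to the resulting isomorphism to produce the required lift. Thus uniqueness transfers from Theorem~\ref{projective fraisse limit theorem}. The hardest technical point throughout will be arranging the alternating refinements in (3) so that the isomorphisms \(\phi_k\) commute with all the refinement maps simultaneously, which requires a careful interleaved use of conditions (2) and (3) of Theorem~\ref{projective fraisse limit theorem} at each stage.
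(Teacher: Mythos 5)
Your proposal follows essentially the same route as the paper: translate between clopen partitions and continuous quotient maps for conditions (1)--(2), then run the zigzag/back-and-forth argument (reusing the uniqueness proof of Theorem~\ref{projective fraisse limit theorem}) to get (3) in the forward direction, and use (1) and (3) of the new theorem directly to recover (3) of the old for uniqueness. One small imprecision: the back-and-forth in the paper's uniqueness argument builds alternating quotient maps between quotients of the two sides rather than an isomorphism \(\phi_k:P_{0,k}\to P_{1,k}\) at every level; you cannot in general lift a refinement on one side to an isomorphic refinement on the other, so the intermediate stages must be quotient maps, with mutual inverses only appearing in the limit.
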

\begin{proof}
The first two conditions are reformulations of the corresponding conditions in Theorem~\ref{projective fraisse limit theorem} using the fact that for any quotient map \(\phi\) we have
\[\dom(\phi)/\ker(\phi) \cong \im(\phi).\]
We show that the third condition of Theorem~\ref{projective fraisse limit theorem} is equivalent to the third condition of Theorem~\ref{projective fraisse limit theorem clopen sets version}.

((3) of Theorem~\ref{projective fraisse limit theorem} \(\Rightarrow\) (3) of Theorem~\ref{projective fraisse limit theorem clopen sets version}): Let \(P_0, P_1\) be finite partitions of \(\operatorname{Flim}_{\leftarrow}(\mathcal{C})\) into clopen sets. By the second condition, we can assume without loss of generality that the corresponding quotient structures are in \(\mathcal{C}\).
The result follows by revisiting the uniqueness proof from Theorem~\ref{projective fraisse limit theorem} using \(\mathbb{L}_0 = \mathbb{L}_1 = \operatorname{Flim}_{\leftarrow}(\mathcal{C})\) and choosing \(\phi_{0, 0}\) to be the map \(x\to ([x]_{P_0})\phi\) (where \([x]_{P_0}\) is the element of \(P_0\) containing \(x\)).
The desired map \(h\) is then we map \(\phi_0\) constructed in that proof.

((3) of Theorem~\ref{projective fraisse limit theorem clopen sets version} \(\Rightarrow\) (3) of Theorem~\ref{projective fraisse limit theorem}): Let \(\mathbb{A}, \mathbb{B}\in \mathcal{C}\) and \(\phi_1:\mathbb{A}\to \mathbb{B}, \phi_2:\operatorname{Flim}_{\leftarrow}(\mathcal{C})\to \mathbb{B}\) are quotient maps.
By renaming elements of \(\mathbb{A}\) using Condition (1) we can assume without loss of generality that \(\mathbb{A}\) is a quotient of \(\operatorname{Flim}_{\leftarrow}(\mathcal{C})\).
By renaming elements of \(\mathbb{B}\), we can assume without loss of generality that \(\phi_2\) is the map \(x\mapsto [x]_{\ker(\phi_2)}\).
Let \(\psi:\operatorname{Flim}_{\leftarrow}(\mathcal{C})\to \mathbb{A}\) be the quotient map mapping a point to the set containing it.
We have a natural isomorphism \(\phi:\operatorname{Flim}_{\leftarrow}(\mathcal{C})/\ker(\phi_2)\to \operatorname{Flim}_{\leftarrow}(\mathcal{C})/\ker(\psi\phi_1)\) defined by 
\[([x]_{\ker(\phi_2)})\phi = (\{x\phi_2\})(\psi\phi_1)^{-1}.\]
So there is a topological isomorphism \(h:\operatorname{Flim}_{\leftarrow}(\mathcal{C})\to \operatorname{Flim}_{\leftarrow}(\mathcal{C})\) corresponding to this isomorphism. Then defining \(\phi_3 = h\psi\) we have
\[([x]_{\ker(\phi_2)})\phi_3\phi_1 = (\{x\phi_2\})(\psi\phi_1)^{-1} \psi\phi_1 = \{x\phi_2\}\]
for all \(x\in \operatorname{Flim}_{\leftarrow}(\mathcal{C})\), and so \(\phi_3\phi_1 = \phi_2\) as required.
\end{proof}

We now establish the classes of structures whose limits will give our generic homeomorphisms.

\speeddictone{66}{CA signature}{signature defn}
\begin{defn}[Cantor signatures, \Assumed{66}]\label{\dict{66 ref}}
If \(n\in \N\), then we define the signature \(\sigma_{b,n}:= (\varnothing, \{0, 1, \ldots, n-1\}, \{(0, 2), (1, 2), \ldots, (n-1, 2)\})\). That is \(\sigma_{b, n}\) has the \(n\) binary relation symbols \(\{0, 1, \ldots, n-1\}\) and no other symbols.
\end{defn}
\begin{defn}[Amalgamation structures: \ref{binary relations defns}, \ref{structures defn}, \ref{CA signature}]\label{amalgamation structures defn}
For \(n\geq 1\), we define an \textit{\(n\)-Cantor amalgamation structure (\(n\)-CA structure)} to be a non-empty finite \(\sigma_{b, n}\)-structure \(\mathbb{A}\) satisfying the following properties:
\begin{enumerate}
    \item For each \(i< n\), the binary relations \(i^\mathbb{A}\) and \((i^\mathbb{A})^{-1}\) are surjective.
    \item The binary relation 
    \[\de_{\mathbb{A}}:=\makeset{(a, (i, j))\in A\times\left(\{0, 1,\ldots, n-1\}\times \{-1, 1\} \right)}{\(|(\{a\})(i^{\mathbb{A}})^j|\geq 2\)}\]
    is a function with domain \(A\).
    \item If \((a, b)\in i^\mathbb{A}\), then either \((a)\de_{\mathbb{A}}=(i, 1)\) or \((b)\de_{\mathbb{A}} = (i, -1)\).
\end{enumerate}
\end{defn}

It is somewhat difficult to construct natural examples of \(n\)-CA structures, but the following theorem will allow us to construct a large collection of examples.
\speeddictthree{67}{cofinal CA}{product sets and projection maps def}{congruences and quotients defn}{amalgamation structures defn}
\begin{theorem}[cf. Theorem 4.6 of~\cite{good_generic_Cantor}, Cofinal amalgams, \Assumed{67}]\label{\dict{67 ref}}
Suppose that \(\mathbb{S}\) is a finite non-empty \(\sigma_{b, n}\)-structure, and each of \(i^\mathbb{S}\) and \((i^\mathbb{S})^{-1}\) a surjective binary relation for all \(i<n\).
Then there is an \(n\)-CA structure \(Q_{\mathbb{S}}\) which has \(\mathbb{S}\) as a quotient.
\end{theorem}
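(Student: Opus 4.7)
The plan is to construct $Q_{\mathbb{S}}$ by taking several ``labelled copies'' of each $s\in\mathbb{S}$ and then defining edges so that every copy carries a prescribed multi-direction. Concretely, set
\[
Q_{\mathbb{S}} := \mathbb{S} \times \{0,1,\ldots,n-1\}\times\{-1,1\}\times\{1,2\},
\]
and let $\phi\colon Q_{\mathbb{S}}\to\mathbb{S}$ be the first-coordinate projection. The intention is that the copy $(s,i,j,k)$ will acquire multi-direction $(i,j)$; the factor $\{1,2\}$ is a doubling device which will force the ``$\geq 2$'' clause of Condition~(2) of Definition~\ref{amalgamation structures defn} to hold even when the corresponding degree in $\mathbb{S}$ happens to be exactly $1$.

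For each $i<n$, I will populate $i^{Q_{\mathbb{S}}}$ with three disjoint families of edges. The \emph{main} edges are $((s,i,1,k),(t,i,-1,k'))$ for each $(s,t)\in i^{\mathbb{S}}$ and $k,k'\in\{1,2\}$; these make $\phi$ surjective on $i^{\mathbb{S}}$ and equip every copy of type $(i,1)$ with at least two outgoing $i$-edges. The \emph{outgoing fillers} attach, to each copy $(s,i',j',k)$ with $(i',j')\neq(i,1)$, a single $i$-edge into $(t^{*}_{s},i,-1,1)$, where $t^{*}_{s}$ is any fixed element of $(\{s\})i^{\mathbb{S}}$ (which is nonempty because $(i^{\mathbb{S}})^{-1}$ is surjective). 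The \emph{incoming fillers} are defined symmetrically, using a fixed $i$-predecessor of each element. The filler families together guarantee Condition~(1), and the three families have pairwise disjoint edge sets: a main edge has source of type $(i,1)$ and target of type $(i,-1)$, whereas the filler rules forbid precisely these cases.

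It then remains to verify Conditions~(2) and~(3) by case analysis on the type of a copy relative to the fixed relation $i$. For a copy of type $(i,1)$ the main edges contribute $2|(\{s\})i^{\mathbb{S}}|\geq 2$ outgoing $i$-edges while every other direction receives at most a single filler edge; a dual count settles the type $(i,-1)$. For a transverse copy $(s,i',j',k)$ with $i'\neq i$, the main-edge count applied to relation $i'$ supplies the designated multi-direction $(i',j')$, and all the directions associated with $i$ contribute at most one filler edge each. Condition~(3) is built into the very definitions of the three families: every main edge has source typed $(i,1)$, every outgoing filler has target typed $(i,-1)$, and every incoming filler has source typed $(i,1)$.

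The main obstacle is organisational rather than mathematical---tracking all the directional degree counts and confirming that the interactions between the main edges and the two filler families never cause a ``$\leq 1$'' upper bound to fail. The critical design choice that keeps the accounting clean is that every filler edge is routed through a reserved copy with $k=1$ (or $k'=1$) and that the copies are doubled by the $\{1,2\}$-factor; together these choices decouple the main and filler contributions to each directional count, so that once the casework is finished $Q_{\mathbb{S}}$ is the desired $n$-CA structure and $\phi$ is the desired quotient map.
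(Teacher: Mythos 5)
Your construction is correct and is, up to a relabeling and reordering of the factors, the one the paper uses: the universe $\mathbb{S}\times\{0,\ldots,n-1\}\times\{-1,1\}\times\{0,1\}$, the main edges pulled back from $i^{\mathbb{S}}$ between the type-$(i,1)$ and type-$(i,-1)$ copies, and one filler edge per element per relation, each routed through a reserved slot of the doubling factor so that off-direction degrees come out to exactly $1$. The only divergence is organizational: you trim the filler families so that they are disjoint from the main edges, while the paper writes the unrestricted union (so that a few fillers are simply absorbed into the main family); both bookkeeping choices produce the same $Q_{\mathbb{S}}$ and the same degree verification.
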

\begin{proof}
We define \(\{0, 1, \ldots, n-1\}\times \{-1, 1\} \times \{0, 1\}\times \mathbb{S}\) to be the universe of \(Q_\mathbb{S}\). For all \(s\in \mathbb{S}\) and \(i<n\), we choose some \(s_{i, l}, s_{i, r}\in \mathbb{S}\) with \((s_{i,l}. s), (s, s_{i, r})\in i^{\mathbb{S}}\). We then define
\begin{align*}
    i^{Q_{\mathbb{S}}} := &\makeset{((i, 1, 0, s_{i, l}), (j, p, k, s))}{\((j, p, k, s)\in Q_{\mathbb{S}}\)}\\
    &\cup \makeset{( (j, p, k, s),(i, -1, 0, s_{i, r}))}{\((j, p, k, s)\in Q_{\mathbb{S}}\)}\\
    &\cup \makeset{( (i, 1, j, a),(i, -1, k, b))}{\((a, b)\in i^{\mathbb{S}}, j,k\in \{0, 1\}\)}.
\end{align*}
First note that due to the third of the sets in the union above, the map \((a, b, c, d) \to d\) from \(Q_\mathbb{S} \to \mathbb{S}\) is a quotient map.
We now need to check that \(Q_\mathbb{S}\) in an \(n\)-CA structure.
First each of \(i^{Q_{\mathbb{S}}}\) and \((i^{Q_{\mathbb{S}}})^{-1}\) is surjective because of the first two sets in the union.

We need to show that \(\de_{Q_{\mathbb{S}}}\) is a function.
Let \((a, b, c, d)\in Q_{\mathbb{S}}\) be arbitrary.
We must show that there is exactly one pair \((i, j)\in \{0, 1, \ldots, n-1\}\times \{-1, 1\}\) with \(|(a, b, c, d)(i^{Q_{\mathbb{S}}})^{j}|\geq 2\).
By definition of \(i^{Q_{\mathbb{S}}}\) any such pair \((i, j)\) must be equal to the pair \((a, b)\).
Moreover the last set in the union defining \((a^{Q_{\mathbb{S}}})^b\) must pair \((a, b, c, d)\) with at least \(2\) elements.
So \(\de_{Q_{\mathbb{S}}}\) is precisely the function \((a,b,c,d)\to (a,b)\).
From this observation the third condition for being an \(n\)-CA structure is clear.
\end{proof}

Our proof of Lemma~\ref{nca has amalgamation} is quite different from the proof of the analogous statement given in \cite{good_generic_Cantor}, but it is more inline with the theme of this document.
\speeddictfive{68}{nca has amalgamation}{product sets and projection maps def}{Products in Categories Defn}{words defn}{group actions defn}{cofinal CA}
\begin{lemma}[cf. Theorem 4.1 of \cite{good_generic_Cantor}, Amalgams amalgamate, \Assumed{68}]\label{nca has amalgamation}
Suppose that \(\mathbb{A}, \mathbb{B}\), and \(\mathbb{C}\) are \(n\)-CA structures, and \(\phi_\mathbb{B}, \mathbb{B}\to \mathbb{A}, \phi_\mathbb{C}:\mathbb{C}\to \mathbb{A}\) are quotient maps. Then we can find an \(n\)-CA structure \(\mathbb{D}\) and a pair of quotient maps \(\phi_{\mathbb{D}, \mathbb{B}}:\mathbb{D}\to \mathbb{B}\) and \(\phi_{\mathbb{D}, \mathbb{C}}:\mathbb{D}\to \mathbb{C}\) such that 
\[\phi_{\mathbb{D}, \mathbb{C}}\phi_{\mathbb{C}} = \phi_{\mathbb{D}, \mathbb{B}}\phi_{\mathbb{B}}.\]
\end{lemma}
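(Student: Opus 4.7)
The plan is to construct $\mathbb{D}$ in two stages: first form the fibered product $\mathbb{F}$ of $\mathbb{B}$ and $\mathbb{C}$ over $\mathbb{A}$ in the category of $\sigma_{b,n}$-structures, then apply Theorem~\ref{cofinal CA} to fatten $\mathbb{F}$ into a genuine $n$-CA structure. Concretely, take the universe
\[\mathbb{F} := \makeset{(b,c) \in \mathbb{B} \times \mathbb{C}}{\((b)\phi_\mathbb{B} = (c)\phi_\mathbb{C}\)}\]
and for each $i < n$ set $((b_1,c_1),(b_2,c_2)) \in i^\mathbb{F}$ precisely when $(b_1,b_2)\in i^\mathbb{B}$ and $(c_1,c_2)\in i^\mathbb{C}$. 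The two projections $\pi_\mathbb{B}: \mathbb{F} \to \mathbb{B}$ and $\pi_\mathbb{C}: \mathbb{F} \to \mathbb{C}$ are then homomorphisms sitting in a commuting square with $\phi_\mathbb{B}$ and $\phi_\mathbb{C}$, and non-emptiness of $\mathbb{F}$ is immediate from the surjectivity of $\phi_\mathbb{B}$ and $\phi_\mathbb{C}$ applied to any $a\in \mathbb{A}$.

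The technical heart of the argument is to verify that $\pi_\mathbb{B}$ and $\pi_\mathbb{C}$ are quotient maps and that $\mathbb{F}$ satisfies the surjectivity hypotheses of Theorem~\ref{cofinal CA}. Surjectivity of $\pi_\mathbb{B}$ will follow from surjectivity of $\phi_\mathbb{C}$: for any $b\in\mathbb{B}$, pick $c\in\mathbb{C}$ with $(c)\phi_\mathbb{C}=(b)\phi_\mathbb{B}$, giving $(b,c)\in\mathbb{F}$. For the relational quotient condition, given $(b_1,b_2)\in i^\mathbb{B}$ one notes that $((b_1)\phi_\mathbb{B},(b_2)\phi_\mathbb{B})\in i^\mathbb{A}$, so the quotient property of $\phi_\mathbb{C}$ produces $(c_1,c_2)\in i^\mathbb{C}$ with $(c_j)\phi_\mathbb{C}=(b_j)\phi_\mathbb{B}$; then $((b_1,c_1),(b_2,c_2))\in i^\mathbb{F}$ projects to $(b_1,b_2)$. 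A symmetric argument handles $\pi_\mathbb{C}$, and the same lifting trick applied to the surjectivity of $i^\mathbb{B}$ and $(i^\mathbb{B})^{-1}$ shows that $i^\mathbb{F}$ and $(i^\mathbb{F})^{-1}$ are surjective, so Theorem~\ref{cofinal CA} applies to $\mathbb{F}$.

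With this in hand, let $\mathbb{D}:=Q_\mathbb{F}$ and let $q:\mathbb{D}\to\mathbb{F}$ be the quotient map supplied by Theorem~\ref{cofinal CA}. Setting $\phi_{\mathbb{D},\mathbb{B}}:=q\pi_\mathbb{B}$ and $\phi_{\mathbb{D},\mathbb{C}}:=q\pi_\mathbb{C}$, and using that a composition of quotient maps of $\sigma_{b,n}$-structures is again a quotient map, yields the required pair of maps out of the $n$-CA structure $\mathbb{D}$. The equality $\phi_{\mathbb{D},\mathbb{C}}\phi_\mathbb{C}=\phi_{\mathbb{D},\mathbb{B}}\phi_\mathbb{B}$ is built into the definition of $\mathbb{F}$, since on any $(b,c)\in\mathbb{F}$ both sides compute $(b)\phi_\mathbb{B}=(c)\phi_\mathbb{C}$. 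The main obstacle is the relational lifting step for the projections: this is exactly where the hypothesis that $\phi_\mathbb{B}$ and $\phi_\mathbb{C}$ are quotient maps (rather than merely homomorphisms) is indispensable. A secondary subtlety is that $\mathbb{F}$ itself may fail to be an $n$-CA structure, because $\delta_\mathbb{F}$ need not be functional on every element, which is exactly why the detour through Theorem~\ref{cofinal CA} is needed.
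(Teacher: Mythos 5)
Your fibered product $\mathbb{F}$ is the same object the paper calls $\mathbb{P}$, and your verification that the projections out of $\mathbb{F}$ are quotient maps (the ``relational lifting step'') is correct and matches the paper. The gap is the claim, stated in one clause, that ``the same lifting trick applied to the surjectivity of $i^\mathbb{B}$ and $(i^\mathbb{B})^{-1}$ shows that $i^\mathbb{F}$ and $(i^\mathbb{F})^{-1}$ are surjective''. That is a different statement, and the lifting trick does not prove it. To show that $(b,c)\in\mathbb{F}$ has an $i$-successor in $\mathbb{F}$ you need $b_1\in\mathbb{B}$ and $c_1\in\mathbb{C}$ with $(b,b_1)\in i^\mathbb{B}$, $(c,c_1)\in i^\mathbb{C}$, \emph{and} $(b_1)\phi_\mathbb{B}=(c_1)\phi_\mathbb{C}$. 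Surjectivity of $i^\mathbb{B}$ and $i^\mathbb{C}$ gives you some $b_1$ and some $c_1$, but there is no reason their images in $\mathbb{A}$ should coincide. If $a:=(b)\phi_\mathbb{B}=(c)\phi_\mathbb{C}$ satisfies $(a)\de_{\mathbb{A}}=(i,1)$, then $a$ has several $i$-successors in $\mathbb{A}$; meanwhile $b$ and $c$ may each have a \emph{unique} $i$-successor (if $(b)\de_{\mathbb{B}}\neq(i,1)$ and $(c)\de_{\mathbb{C}}\neq(i,1)$), and these unique successors may project to different $i$-successors of $a$. In that situation $(b,c)$ has no $i$-successor at all in $\mathbb{F}$, so Theorem~\ref{cofinal CA} does not apply to $\mathbb{F}$. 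The quotient-map property you invoked tells you how to lift an \emph{edge} of $\mathbb{A}$ to an edge of $\mathbb{B}$ with prescribed endpoints; it does not let you prescribe \emph{one} endpoint of the lift and then find the other, which is what surjectivity of $i^\mathbb{F}$ demands.

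This is exactly the difficulty the paper flags with ``however $\mathbb{P}$ might not be an $n$-CA structure so we need to do a bit more work''. The fix is to pass to the substructure $\mathbb{P}^{\operatorname{core}}$ of those $p\in\mathbb{P}$ which are the image of the empty word under a homomorphism from the Cayley graph of $F_n$ into $\mathbb{P}$; on $\mathbb{P}^{\operatorname{core}}$ the relations and their inverses are automatically surjective, and then Theorem~\ref{cofinal CA} applies. But cutting down to $\mathbb{P}^{\operatorname{core}}$ reintroduces the question of whether the restricted projections are still quotient maps, and answering that is the technical heart of the paper's proof: one traces a chain of elements backward in $\mathbb{B}$ guided by $\de_{\mathbb{A}}$, splits into a ``finite chain'' case and an ``infinite chain forces a cycle'' case, and in each case builds commuting homomorphisms $q_\mathbb{B}:F_n\to\mathbb{B}$ and $q_\mathbb{C}:F_n\to\mathbb{C}$ whose pairing lands inside $\mathbb{P}^{\operatorname{core}}$ and hits the desired edge. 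None of that work is avoidable, and it is precisely the content missing from your proposal.
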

\begin{proof}
First we consider the following substructure of \(\mathbb{B}\times \mathbb{C}\):
\[\mathbb{P}:= \makeset{(b, c)\in \mathbb{B}\times \mathbb{C}}{\((b)\phi_\mathbb{B}= (c)\phi_\mathbb{C}\)}.\]
 Note that the homomorphisms \(\pi_{0}\phi_\mathbb{B}\) and \(\pi_{1}\phi_\mathbb{C}\) from \(\mathbb{B}\times \mathbb{C}\) to \(\mathbb{A}\) are the same when restricted to \(\mathbb{P}\), however \(\mathbb{P}\) might not be an \(n\)-CA structure so we need to do a bit more work.

We define the free group of rank \(n\) by \(F_n\). That is \(F_n\) is the set of all finite words in the alphabet \(\{0, 1, \ldots, n-1\}\times \{-1, 1\}\) (where we consider \((i, 1)\) and \((i, -1)\) to be mutual inverses for each \(i<n\)) with the property that no letter can follow its inverse. The composition is given by concatenation and then removing all instances of a letter followed by its inverse.

We view \(F_n\) as an \(\sigma_{b, n}\)-structure (its Cayley graph) by defining 
\[i^{F_n} = \makeset{(x, y)\in F_{n}}{\(xi=y\)}.\]

Note that this structure is symmetric in that its automorphism group is transitive (the group \(F_n\) acts transitively on it by left multiplication).
Note also that if we have a \(\sigma_{b, n}\)-structure \(\mathbb{S}\), then showing that all the relations \(i^\mathbb{S}\) and \((i^\mathbb{S})^{-1}\) are surjective is equivalent to showing that every \(s\in \mathbb{S}\) is the image of the empty word \(\varepsilon\) under a homomorphism from \(F_n\) to \(\mathbb{S}\).

Consider the following substructure of \(\mathbb{P}\) \[\mathbb{P}^{\operatorname{core}}:= \makeset{p\in \mathbb{P}}{there is a homomorphism \(q:F_n \to \mathbb{P}\) with \((\varepsilon)q= p\)}.\]
By the comment above, the binary relations \(i^{\mathbb{P}^{\operatorname{core}}}\) and \((i^{\mathbb{P}^{\operatorname{core}}})^{-1}\) are surjective for all \(i< n\).

\underline{Claim:} The maps \(\pi_{0}\restriction_{\mathbb{P}^{\operatorname{core}}}\) and \(\pi_{1}\restriction_{\mathbb{P}^{\operatorname{core}}}\) are both quotient maps.\\
\underline{Proof of Claim:} We show that \(\pi_0\restriction_{\mathbb{P}^{\operatorname{core}}}\) is a quotient map, the other case is symmetric.
Let \(i< n\) be arbitrary, and \((a,b)\in i^\mathbb{B}\) be arbitrary. It suffices to show that there is \((p, q)\in i^{\mathbb{P}^{\operatorname{core}}}\) with \(((p)\pi_0, (q)\pi_0) = (a, b)\).
As \(\mathbb{A}\) is an \(n\)-CA structure, we either have \(((a)\phi_{\mathbb{B}})\de_\mathbb{A} = (i, 1)\) or \(((b)\phi_{\mathbb{B}})\de_{\mathbb{A}} = (i, -1)\). We assume without loss of generality that \(((b)\phi_{\mathbb{B}})\de_{\mathbb{A}} = (i, -1)\). There are 2 cases to consider:
\begin{enumerate}
    \item There are finite sequences \(b= b_0, a=b_{-1}, b_{-2}, \ldots, b_{-l}\) in \(\mathbb{B}\) and \(i_{0}, i_{-1}, \ldots, i_{-l}\) in 
    \(\{0,1,\ldots, n-1\}\) such that \((b_{k-1}, b_k)\in i_k^{\mathbb{B}}\) and \(((b_k)\phi_{\mathbb{B}})\de_{\mathbb{A}}= (i_k, -1)\) for each \(-l<k \leq 0\) and \(((b_{-l})\phi_{\mathbb{B}})\de_{\mathbb{A}}= (i_{-l}, 1)\).
    
    \item There are infinite sequences \(b= b_0, a=b_{-1}, b_{-2}, \ldots\) in \(\mathbb{B}\) and \(i_{0}, i_{-1}, i_{-2}, \ldots\) in \(n\) such that \((b_{k-1}, b_k)\in i_k^{\mathbb{B}}\) and \(((b_k)\phi_{\mathbb{B}})\de_{\mathbb{A}}= (i_k, -1)\) for each \(k\in -\N\).
\end{enumerate}
For now we consider the first case. Let \((c_0, c_1)\in i_{-l + 1}^\mathbb{C}\) be such that 
\[((c_0)\phi_{\mathbb{C}}, (c_1)\phi_{\mathbb{C}})= ((b_{-l})\phi_{\mathbb{B}}, (b_{-l+ 1})\phi_{\mathbb{B}}),\] (these must exist as \(\phi_{\mathbb{C}}\) is a quotient map).
Let \(q_{\mathbb{C}}:F_n\to \mathbb{C}\) be any homomorphism which maps \((\varepsilon, i_{-l+1})\) to \((c_0, c_1)\). Similarly, let \(q_{\mathbb{B}}:F_n\to \mathbb{B}\) be any homomorphism which maps \((\varepsilon, i_{-l+1})\) to \((b_{-l}, b_{1-l})\), maps \((i_{-l+1},i_{-l+1}i_{-l+2})\) to \((b_{-l+1}, b_{-l+2})\) and so on. 

Note that \(((b_{-l})\phi_{\mathbb{B}})\de_{\mathbb{A}}= (i_{-l}, 1)\) and \(((b_{-l+1})\phi_{\mathbb{B}})\de_{\mathbb{A}}= (i_{-l+1}, -1)\).
A routine inductive argument using this fact and the fact that \(\mathbb{A}\) is an \(n\)-CA structure shows that there is a unique homomorphism \(q:F_n\to \mathbb{A}\) such that \((\varepsilon)q= (b_{-l})\phi_{\mathbb{B}}\) and \((i_{-l+1})q = (b_{-l+1})\phi_{\mathbb{B}}\).
In particular it follows that \(q_{\mathbb{C}}\phi_{\mathbb{C}}= q_{\mathbb{B}}\phi_{\mathbb{B}}\) as both maps have this property.

The map \(\langle q_{\mathbb{B}}, q_{\mathbb{C}}\rangle_{\mathbb{B}\times\mathbb{C}}: F_n\to \mathbb{B}\times\mathbb{C}\) is a homomorphism (recall Definition~\ref{Products in Categories Defn}).
As \(q_{\mathbb{C}}\phi_{\mathbb{C}}= q_{\mathbb{B}}\phi_{\mathbb{B}}\), it follows that the image of this homomorphism is contained in \(\mathbb{P}\) and hence \(\mathbb{P}^{\operatorname{core}}\), finally the image of the pair \((i_{-l+1}i_{-l+2}\ldots i_{-1},\ i_{-l+1}i_{-l+2}\ldots i_{0})\) under \(\langle q_{\mathbb{B}}, q_{\mathbb{C}}\rangle_{\mathbb{B}\times\mathbb{C}}\) is mapped by \(\pi_{0}\) to \((a, b)\) as required.

We now consider the case that there are infinite sequences \(b= b_0, a=b_{-1}, b_{-2}, \ldots\) in \(\mathbb{B}\) and \(i_{0}, i_{-1}, i_{-2}, \ldots\) in \(n\) such that \((b_{k-1}, b_k)\in i_k^{\mathbb{B}}\) and \(((b_k)\phi_{\mathbb{B}})\de_{\mathbb{A}}= (i_k, -1)\) for each \(k\in -\N\).
As \(\mathbb{B}\) is finite, we can choose \(s, t\in \N\) such that \(-s<-t\) and \(b_{-s} = b_{-t}\).
Let \((c_0, c_1)\in i_{-s+1}^\mathbb{C}\) be such that \(((c_0)\phi_\mathbb{C}, (c_1)\phi_\mathbb{C})= ((b_{-s})\phi_\mathbb{B}, (b_{-s+1})\phi_\mathbb{B})\). We then inductively define a sequence \(c_0, c_1, c_2, \ldots\) as follows:
\begin{enumerate}
    \item Let \(c_0, c_1\) be as before.
    \item If \(c_k\) has already been defined and \((c_k)\phi_\mathbb{C} = (b_{-j})\phi_{\mathbb{B}}\) where \(-s\leq-j<-t\), then we choose \(c_{k+1}\) to be such that \((c_k, c_{k+1})\in i_{-j+1}^\mathbb{C}\).
\end{enumerate}
As the sequence \(c_0, c_1, c_2, \ldots\) is infinite it must repeat, thus we can assume without loss of generality that we chose it to be eventually be periodic, and thus by changing the choice of \(c_0, c_1\), we can assume without loss of generality that there is \(k\in \N\) such that \(c_l = c_{l \text{ mod }k}\) for all \(l\in \N\).

We are now in the position to define maps from \(F_n\) to \(\mathbb{B}\) and \(\mathbb{C}\).
We have established some ``cycles" of note, that is \(c_0, c_1, \ldots, c_k = c_0\) and \(b_{-s}, b_{1-s}, \ldots, b_{-t}= b_{-s}\).
Moreover these cycles are mapped respectively by \(\phi_\mathbb{C}\) and \(\phi_\mathbb{B}\) to some common cycle \(a_0, a_1, \ldots, a_{p}=a_0\) in \(\mathbb{A}\) (hence both \(k\) and \(s-t\) are multiples of \(p\) but are not necessarily equal).
We define a map \(q_{\mathbb{C}}: F_n\to \mathbb{C}\) by defining \((\varepsilon)q_{\mathbb{C}}= c_0\) and inductively extending this homomorphism to all of \(F_n\) as follows:
\begin{enumerate}
    \item  If \((w_0w_1w_2\ldots w_{l-1})q_{\mathbb{C}}\) is in the cycle \(c_0, c_1, \ldots, c_{k}=c_0\) and mapping \(w_0w_1w_2\ldots w_{l}\) to the next/previous element in the cycle would preserve the fact that \(q\) is a homomorphism, then map \(w_0w_1w_2\ldots w_{l}\) to the next/previous element in the cycle.
    \item Otherwise map \(w_0w_1w_2\ldots w_{l}\) to any element of \(\mathbb{C}\) which preserves the fact that \(q_{\mathbb{C}}\) is a homomorphism.
\end{enumerate}
We define a map \(q_{\mathbb{B}}\) similarly using the cycle \(b_{-s}, b_{1-s}, \ldots , b_{-t}=b_{-s}\), and a map \(q_{\mathbb{A}}\) using the cycle \(a_0, a_1, \ldots, a_p=a_0\).
Note that we never make any arbitrary choices in the construction of \(q_{\mathbb{A}}\) as if we ever had a choice it would follow that \(\de_\mathbb{A}\) was not a function.
It follows that \[q_{\mathbb{B}}\phi_{\mathbb{B}}= q_{\mathbb{A}}=q_{\mathbb{C}}\phi_{\mathbb{C}}.\]

Thus \(\langle q_{\mathbb{B}}, q_{\mathbb{C}}\rangle_{\mathbb{B}\times\mathbb{C}}\) is a homomorphism into \(\mathbb{P}^{\operatorname{core}}\).
It is routine to verify that the edge \((a, b)\) is mapped onto by \(q_{\mathbb{B}}\), so again we have the required outcome. \(\diamondsuit\)

By Theorem~\ref{cofinal CA}, let \(\mathbb{D}\) be an \(n\)-CA structure such that there is a quotient map \(\psi:\mathbb{D}\to \mathbb{P}^{\operatorname{core}}\). The result follows by defining \(\phi_{\mathbb{D}, \mathbb{C}}:= \psi \pi_{1}\) and \(\phi_{\mathbb{D}, \mathbb{B}}:= \psi \pi_{0}\).
\end{proof}
\begin{theorem}[We have a Projective \F Class, Assumed Knowledge: \ref{projective fraisse class def}, \ref{product structures}, \ref{cofinal CA}, \ref{nca has amalgamation}]\label{CA is a PFC}
For each \(n\geq 1\), the \(n\)-CA structures form a projective \F class.
\end{theorem}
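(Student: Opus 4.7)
The plan is to verify the five conditions of Definition~\ref{projective fraisse class def} in turn for the class of $n$-CA structures. Conditions (1) and (3) are immediate from Definition~\ref{amalgamation structures defn}, since $n$-CA structures are defined to be finite non-empty $\sigma_{b,n}$-structures and the defining properties (surjectivity of each $i^{\mathbb{A}}$ and $(i^{\mathbb{A}})^{-1}$, and the condition on $\de_{\mathbb{A}}$) are all preserved under isomorphism of $\sigma_{b,n}$-structures. Condition (2) follows because a finite $\sigma_{b,n}$-structure with universe of cardinality $k$ is determined up to isomorphism by the $n$ binary relations on $\{0, 1, \ldots, k-1\}$, of which there are only finitely many; summing over $k \in \N$ gives countably many isomorphism classes in total. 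Condition (5) is exactly the content of Lemma~\ref{nca has amalgamation}.

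The only condition needing genuine work is (4), the joint projection property. Given $n$-CA structures $\mathbb{D}$ and $\mathbb{E}$, I would form the product $\mathbb{P} := \mathbb{D} \times \mathbb{E}$ in the signature $\sigma_{b,n}$ as in Definition~\ref{product structures}. Because each of $i^{\mathbb{D}}, (i^{\mathbb{D}})^{-1}, i^{\mathbb{E}}, (i^{\mathbb{E}})^{-1}$ is surjective (and in particular nonempty), a coordinatewise pre-image argument shows that each $i^{\mathbb{P}}$ and $(i^{\mathbb{P}})^{-1}$ is surjective, so $\mathbb{P}$ satisfies the hypotheses of Theorem~\ref{cofinal CA}. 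That theorem yields an $n$-CA structure $\mathbb{F}$ together with a quotient map $\psi: \mathbb{F} \to \mathbb{P}$.

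It remains to observe that the projections $\pi_0: \mathbb{P} \to \mathbb{D}$ and $\pi_1: \mathbb{P} \to \mathbb{E}$ are themselves quotient maps: they are surjective homomorphisms, and since $i^{\mathbb{E}}$ and $i^{\mathbb{D}}$ are nonempty, every edge of $\mathbb{D}$ or $\mathbb{E}$ lifts through the corresponding projection, which is exactly what is needed for the induced map out of the kernel to be an isomorphism. A short verification shows that the composition of two quotient maps of $\sigma_{b,n}$-structures is again a quotient map. Hence $\psi\pi_0: \mathbb{F} \to \mathbb{D}$ and $\psi\pi_1: \mathbb{F} \to \mathbb{E}$ are the required quotient maps, establishing condition (4).

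The main obstacle is condition (4), with the subtlety that $\mathbb{P} = \mathbb{D} \times \mathbb{E}$ is typically \emph{not} itself an $n$-CA structure (the relation $\de_{\mathbb{P}}$ can easily fail to be a function), so one cannot simply take the product; the key idea is to use $\mathbb{P}$ only as a target and let Theorem~\ref{cofinal CA} manufacture an honest $n$-CA structure upstairs. All other conditions are either definitional or already proved in the cited lemmas.
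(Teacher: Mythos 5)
Your proposal takes essentially the same route as the paper: you verify the five conditions of Definition~\ref{projective fraisse class def} directly, with Theorem~\ref{cofinal CA} supplying an $n$-CA structure quotienting onto $\mathbb{D}\times\mathbb{E}$ for condition (4) and Lemma~\ref{nca has amalgamation} handling condition (5). Your only omission is in condition (2): you establish that the class has \emph{at most} countably many isomorphism types, but the definition requires countably \emph{infinitely} many; the easy fix, as in the paper, is to note that Theorem~\ref{cofinal CA} produces $n$-CA structures of arbitrarily large finite cardinality, so there are infinitely many isomorphism classes.
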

\begin{proof}
We verify conditions (1)-(5) of Definition~\ref{projective fraisse class def}.
\begin{enumerate}
    \item The defining property of the class is an isomorphism invariant, so the class is closed under isomorphisms.
    \item The class consists of finite structures in a finite signature and so has countably many isomorphism types. It also has infinitely many isomorphism types as by Theorem~\ref{cofinal CA}, it contains structures of arbitrarily large finite cardinality.
    \item The class consists of finite structures by definition.
    \item Suppose that \(\mathbb{B}\) and \(\mathbb{C}\) are \(n\)-CA structures.
    Note that the structure \(\mathbb{B}\times \mathbb{C}\) is a non-empty \(\sigma_{b, n}\)-structure such that each relation and its inverse are surjective, moreover the projection maps \(\pi_{0}\) and \(\pi_{1}\) are quotient maps.
    By Theorem~\ref{cofinal CA} we can find an \(n\)-CA structure \(\mathbb{D}\) such that there is a quotient map \(\psi: \mathbb{D}\to \mathbb{B}\times \mathbb{C}\). 
    The maps \( \psi \pi_{0}\) and \(\psi \pi_{1}\) are the required quotient maps. 
    \item This is immediate from Lemma~\ref{nca has amalgamation}.
\end{enumerate}
\end{proof}

\speeddicttwo{69}{The CA Limit}{projective fraisse limit theorem}{CA is a PFC}
\begin{defn}[The CA limit, \Assumed{69}]\label{69 ref}
For each \(n\geq 1\), define \(n\mathbb{L}\) to be a projective \F limit of the class of \(n\)-CA structures (see Theorem~\ref{projective fraisse limit theorem}).
\end{defn}

Now that for all \(n\in \N\backslash\{0\}\) we have the objects \(n\mathbb{L}\) that we wanted, we need to show that \(n\mathbb{L}\) can be viewed as a generic element of \(\Aut(2^\N)^n\).

\speeddictone{70}{useful intermediate nCA}{cofinal CA}
\begin{lemma}[Splitting points in \(n\)-CA structures, \Assumed{70}]\label{useful intermediate nCA}
Suppose that \(\mathbb{S}\) is an \(n\)-CA structure, \(i< n\), \(p\in \mathbb{S}\), \(k\geq 2\) and \((p)i^{\mathbb{S}}
=\{q_0,q_1,\ldots, q_{k-1}\}\). 
Then there is an \(n\)-CA structure \(\mathbb{S}_2\) and a quotient map \(\phi:\mathbb{S}_2\to \mathbb{S}\) with \((p)\phi^{-1}\geq k\) and such that for each \(p'\in (p)\phi^{-1}\) the set \(((\{p'\})i^{\mathbb{S}_2})\phi\) is a singleton.
\end{lemma}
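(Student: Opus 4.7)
The plan is to construct $\mathbb{S}_2$ in two stages: first define an auxiliary finite $\sigma_{b,n}$-structure $\mathbb{S}'$ (not necessarily an $n$-CA structure) that ``splits'' $p$ into $k$ copies with the desired singleton-image property, then refine $\mathbb{S}'$ to an $n$-CA structure by applying Theorem~\ref{cofinal CA}.

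For the splitting stage I would take $\mathbb{S}'$ to have universe $(\mathbb{S}\setminus\{p\}) \sqcup \{p_0,\ldots,p_{k-1}\}$ and introduce the surjection $\phi_2\colon \mathbb{S}' \to \mathbb{S}$ sending each $p_l$ to $p$ and fixing all remaining points. The relations on $\mathbb{S}'$ are defined so that $\phi_2$ is a quotient map while each $p_l$ has only $i$-successors mapping under $\phi_2$ to a single point. Explicitly: edges of $\mathbb{S}$ not involving $p$ are retained; each incoming edge $(a,p)\in j^{\mathbb{S}}$ with $a\neq p$ is copied to $(a,p_l)\in j^{\mathbb{S}'}$ for every $l$; each outgoing non-$i$ edge $(p,b)\in j^{\mathbb{S}}$ with $b\neq p$ is copied to $(p_l,b)$ for every $l$; and each outgoing $i$-edge $(p,q_l)$ with $q_l\neq p$ is installed as the single edge $(p_l,q_l)\in i^{\mathbb{S}'}$. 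Self-loops at $p$ need extra care: for $j\neq i$ I would install $(p_l,p_l)\in j^{\mathbb{S}'}$ for every $l$, and for $j=i$ (in which case $p=q_{l_0}$ for some $l_0$) I would install $(p_{l_0},p_l)\in i^{\mathbb{S}'}$ for every $l$ so that each $p_l$ still has an $i$-predecessor. A routine check using $\de_{\mathbb{S}}(p)=(i,1)$ and the fact that $\mathbb{S}$ is an $n$-CA structure then shows that $\phi_2$ is a quotient map and that every $j^{\mathbb{S}'}$ together with its inverse is surjective, so $\mathbb{S}'$ satisfies the hypothesis of Theorem~\ref{cofinal CA}.

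Applying that theorem produces an $n$-CA structure $Q_{\mathbb{S}'}$ together with a quotient map $\phi_1\colon Q_{\mathbb{S}'}\to \mathbb{S}'$; I would set $\mathbb{S}_2:=Q_{\mathbb{S}'}$ and $\phi:=\phi_1\phi_2$. Since a composition of quotient maps is again a quotient map, $\phi$ is a quotient map. The size condition holds because $(p)\phi_2^{-1}=\{p_0,\ldots,p_{k-1}\}$ and $\phi_1$ is surjective, so $|(p)\phi^{-1}|\geq k$. Finally, for any $p'\in (p)\phi^{-1}$, letting $p_l:=(p')\phi_1$, the homomorphism property of $\phi_1$ yields $((\{p'\})i^{\mathbb{S}_2})\phi_1 \subseteq (\{p_l\})i^{\mathbb{S}'}$, which by construction is mapped by $\phi_2$ to a single point of $\mathbb{S}$; the image is non-empty because $(i^{\mathbb{S}_2})^{-1}$ is surjective in the $n$-CA structure $\mathbb{S}_2$. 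The main obstacle is the edge-by-edge definition of $\mathbb{S}'$, particularly the self-loop case $(p,p)\in i^{\mathbb{S}}$, where naively splitting $p$ would leave some $p_l$ without any $i$-predecessor; installing the ``hub'' edges $(p_{l_0},p_l)$ is exactly the right remedy, since $\mathbb{S}'$ is not required to be an $n$-CA structure and these extra edges still collapse correctly under $\phi_2$.
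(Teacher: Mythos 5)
Your argument is correct and is essentially the paper's own: split $p$ into $k$ copies with the $i$-edges routed so that each copy's $i$-successors all collapse under the projection to a single target, then promote the resulting $\sigma_{b,n}$-structure to an $n$-CA structure via Theorem~\ref{cofinal CA} and compose the two quotient maps. You are in fact more careful than the paper's literal wording at the self-loop case $(p,p)\in i^{\mathbb{S}}$ — where the clause ``if $a=(p,l)$ then $b=q_l$'' in the paper's definition of $i^{\mathbb{T}}$ should be read as $(b)\phi_{\mathbb{T}}=q_l$, which produces exactly your ``hub'' edges and restores surjectivity of $i^{\mathbb{T}}$ and its inverse.
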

\begin{proof}
Let \(\mathbb{T}\) denote the structure with universe \((\mathbb{S}\backslash \{p\}) \cup (\{p\}\times \{0, 1, \ldots, k-1\})\), and relations to be defined later (we assume without loss of generality that \(\{p\}\times \{0, 1, \ldots, k-1\}\) is disjoint from \(\mathbb{S}\)).
Define \(\phi_\mathbb{T}:\mathbb{T}\to \mathbb{S}\) by
\[(x)\phi_\mathbb{T}= \left\{\begin{array}{cc}
x   &  \text{ if }  x\in \mathbb{S}\backslash \{p\}\quad\quad\quad\quad\quad\quad\quad\\
p    & \text{ if } x\in \{p\}\times \{0, 1,\ldots, k-1\}
\end{array}\right\}.\]
For \(j\in \{0, 1, \ldots, n-1\}\backslash\{i\}\), we define
\[j^\mathbb{T} := \makeset{(a, b)}{\(((a)\phi_\mathbb{T}, (b)\phi_{\mathbb{T}})\in j^\mathbb{S}\)}\]
and \[i^\mathbb{T} := \makeset{(a, b)}{\(((a)\phi_\mathbb{T}, (b)\phi_{\mathbb{T}})\in i^\mathbb{S}\) and if \(a = (p, l)\) then \(b = q_l\)}.\]
Note that \(\phi_\mathbb{T}\) is a quotient map from \(\mathbb{T}\) to \(\mathbb{S}\) and that each \(j^\mathbb{T}\) is surjective with surjective inverse. Thus by Theorem~\ref{cofinal CA} there is an \(n\)-CA structure \(\mathbb{S}_2\) which quotients onto \(\mathbb{T}\). The composite of these two quotient maps gives the required quotient map \(\phi\).
\end{proof}

\speeddictthree{71}{Cantor linit universe}{Brouwer' Theorem}{projective fraisse limit theorem clopen sets version}{useful intermediate nCA}
\begin{lemma}[Cantor limit is a Cantor space, \Assumed{71}]\label{Cantor linit universe}
For each \(n\geq 1\), the topological structure \(n\mathbb{L}\) is homeomorphic to the Cantor set \(2^\N\).
\end{lemma}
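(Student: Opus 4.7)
The plan is to apply Brouwer's Theorem (Theorem~\ref{Brouwer' Theorem}). By Theorem~\ref{projective fraisse limit theorem clopen sets version}, $n\mathbb{L}$ is already known to be compact, second countable, Hausdorff and zero-dimensional, and it is non-empty because $n$-CA structures exist (e.g.\ by Theorem~\ref{cofinal CA}) and every $n$-CA structure is a continuous quotient of $n\mathbb{L}$. So the only remaining hypothesis of Brouwer's Theorem is that $n\mathbb{L}$ has no isolated points; once this is established, Brouwer's Theorem forces $n\mathbb{L}$ to be homeomorphic to $2^{\N}$.

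To show there are no isolated points I would argue by contradiction: assume $x\in n\mathbb{L}$ were isolated, so that $\{x\}$ is clopen. Applying condition (2) of Theorem~\ref{projective fraisse limit theorem clopen sets version} to the partition $\{\{x\},\ n\mathbb{L}\setminus\{x\}\}$ yields a finer partition $P$ of $n\mathbb{L}$ into clopen sets whose associated quotient structure $\mathbb{S}$ is an $n$-CA structure. Since $\{x\}$ is already a singleton it cannot be refined further, so $\{x\}\in P$ and corresponds to a distinguished point $p\in\mathbb{S}$.

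The main technical step is then to produce an $n$-CA structure $\mathbb{S}_2$ together with a quotient map $\phi:\mathbb{S}_2\to\mathbb{S}$ satisfying $|(p)\phi^{-1}|\geq 2$. Because $\mathbb{S}$ is an $n$-CA structure, $\de_{\mathbb{S}}$ is a total function on $\mathbb{S}$, so $(p)\de_{\mathbb{S}}=(i,j)$ for some $i<n$ and $j\in\{-1,1\}$. When $j=1$ we have $|(\{p\})i^{\mathbb{S}}|\geq 2$ and Lemma~\ref{useful intermediate nCA} directly supplies the required $\mathbb{S}_2$ and $\phi$; the case $j=-1$ is handled by the evident symmetric analogue of that lemma (the construction in its proof splits $p$ into copies indexed by the outgoing $i$-edges out of $p$, and the entirely parallel construction indexed by the incoming $i$-edges into $p$ works when $|(\{p\})(i^{\mathbb{S}})^{-1}|\geq 2$). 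I expect justifying this symmetric case to be the only non-routine part of the argument, and I would simply mirror the construction of Lemma~\ref{useful intermediate nCA}.

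To close the contradiction, condition (1) of Theorem~\ref{projective fraisse limit theorem} gives a continuous quotient map $\psi:n\mathbb{L}\to\mathbb{S}$, and since $\{x\}\in P$ we have $(p)\psi^{-1}=\{x\}$. Condition (3) of the same theorem applied to $\phi$ and $\psi$ then yields a continuous quotient map $\psi':n\mathbb{L}\to\mathbb{S}_2$ with $\psi'\phi=\psi$. Surjectivity of $\psi'$ shows that for each $p'\in(p)\phi^{-1}$ the set $(p')(\psi')^{-1}$ is a non-empty subset of $(p)\psi^{-1}=\{x\}$, and these subsets are pairwise disjoint as $p'$ varies. Two distinct choices of $p'$ therefore produce two disjoint non-empty subsets of the singleton $\{x\}$, which is the desired contradiction, completing the proof.
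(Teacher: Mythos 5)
Your proposal is correct and follows essentially the same path as the paper's proof: apply Brouwer's Theorem, reduce to showing no isolated points, suppose otherwise to get a singleton clopen class $\{x\}$, refine to an $n$-CA quotient $\mathbb{S}$ with distinguished point $p$, split $p$ using Lemma~\ref{useful intermediate nCA}, and derive a contradiction from disjoint nonempty preimages inside $\{x\}$. The only mechanical difference is in the final step: the paper invokes Theorem~\ref{projective fraisse limit theorem clopen sets version}~(3) to build an autohomeomorphism $h$ of $n\mathbb{L}$ and contradicts via $(A)h^{-1},(B)h^{-1}$, while you invoke Theorem~\ref{projective fraisse limit theorem}~(3) to get a quotient map $\psi':n\mathbb{L}\to\mathbb{S}_2$ factoring $\psi$ through $\phi$ and contradict via the preimages $(p')(\psi')^{-1}$ — equivalent (the two theorem formulations are shown to be interchangeable) and arguably slightly more direct. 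Two small cleanups worth making: take $\psi$ to be the canonical quotient map sending each point to the element of $P$ containing it (rather than appealing to condition (1), which only gives \emph{some} quotient map onto $\mathbb{S}$, so $(p)\psi^{-1}=\{x\}$ would not automatically hold); and you are right that the symmetric case $(p)\de_{\mathbb{S}}=(i,-1)$ requires a mirrored version of Lemma~\ref{useful intermediate nCA} — the paper acknowledges this only in a terse parenthetical, so your explicit treatment of both signs of $\de_{\mathbb{S}}$ is more complete than the paper's.
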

\begin{proof}
By Theorem~\ref{Brouwer' Theorem} it suffices to show that \(n\mathbb{L}\) is non-empty, second countable, zero-dimensional, Hausdorff, compact and has no isolated points. All of these except no isolated points are immediate from Theorem~\ref{projective fraisse limit theorem}. So we show that \(n\mathbb{L}\) has no isolated points.

Suppose for a contradiction that \(p\) is an isolated point of \(n\mathbb{L}\).
Thus \(\{\{p\}, n\mathbb{L}\backslash\{p\}\}\) is a partition of \(n\mathbb{L}\) into clopen sets.
By Theorem~\ref{projective fraisse limit theorem clopen sets version} part (2), there is \(k\in \N\) and a partition \(\mathbb{S}:=\{C_0, C_1, \ldots, C_{k-1}, \{p\}\}\) of \(n\mathbb{L}\) into clopen sets such that the corresponding quotient structure is an \(n\)-CA structure.
By Lemma~\ref{useful intermediate nCA} there is an \(n\)-CA structure \(\mathbb{S}_2\) and a quotient map \(\phi:\mathbb{S}_2\to \mathbb{S}\) with \((\{\{p\}\})\phi^{-1}\geq 2\) (we can use \(\de_\mathbb{S}\) to choose a relation allowing us the guarantee this).

By Theorem~\ref{projective fraisse limit theorem clopen sets version} part (1), we can assume without loss of generality that \(\mathbb{S}_2\) is a quotient structure of \(n\mathbb{L}\) (whose elements are clopen subsets of \(n\mathbb{L}\)).
Thus the map sending \(s\in \mathbb{S}\) to \(\union{}(\{s\})\phi^{-1}\) is an isomorphism from \(\mathbb{S}\) to a quotient structure of \(n\mathbb{L}\).
By Theorem~\ref{projective fraisse limit theorem clopen sets version} part (3), we can find a topological isomorphism \(h:n\mathbb{L} \to n\mathbb{L}\) such that \((s)h= \union{}(\{s\})\phi^{-1}\) for all \(s\in \mathbb{S}\).
Let \(A, B\in (\{\{p\}\})\phi^{-1}\) be distinct.
It follows that \((A)h^{-1}\) and \((B)h^{-1}\) are disjoint non-empty subsets of \(\{p\}\), this is a contradiction.
\end{proof}

\speeddictthree{72}{Cantor limit homeomorphisms}{product sets and projection maps def}{compactness facts}{Cantor linit universe}
\begin{lemma}[Cantor limit relations are homeomorphisms, \Assumed{72}]\label{Cantor limit homeomorphisms}
If \(n\geq 1\) and \(i\in \{0, 1, \ldots, n-1\}\), then \(i^{n\mathbb{L}}\) is a homeomorphism of \(n\mathbb{L}\).
\end{lemma}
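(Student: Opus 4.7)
The plan is to reduce everything to showing that $i^{n\mathbb{L}}$ is a total bijective function, since then continuity follows automatically: $n\mathbb{L}$ is compact Hausdorff by Lemma~\ref{Cantor linit universe}, the graph $i^{n\mathbb{L}}$ is closed in $n\mathbb{L}^2$ by Definition~\ref{topological structures defn}, so the closed-graph condition yields continuity of $i^{n\mathbb{L}}$, and Remark~\ref{compactness facts}(3) then gives continuity of the inverse. So the work splits into showing (a) that $i^{n\mathbb{L}}$ and its converse are each partial functions, and (b) that both are total.

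For (a), suppose for contradiction that $x \in n\mathbb{L}$ has two distinct $i^{n\mathbb{L}}$-images $y_0, y_1$. Choose disjoint clopen sets $U_0 \ni y_0$, $U_1 \ni y_1$, and use Theorem~\ref{projective fraisse limit theorem clopen sets version}(2) to find a clopen partition $\mathbb{S}$ refining $\{U_0, U_1, (U_0\cup U_1)^c\}$ whose induced quotient is an $n$-CA structure. Let $B, A_0, A_1 \in \mathbb{S}$ be the cells containing $x, y_0, y_1$; by the quotient definition of relations (Definition~\ref{congruences and quotients defn}) we have $(B, A_0), (B, A_1) \in i^\mathbb{S}$, so $|(\{B\})i^\mathbb{S}| \geq 2$. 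Apply Lemma~\ref{useful intermediate nCA} to obtain an $n$-CA structure $\mathbb{S}_2$ and a quotient map $\phi : \mathbb{S}_2 \to \mathbb{S}$ splitting $B$ into multiple preimages, each having the property that its $i^{\mathbb{S}_2}$-image projects via $\phi$ to a single $\mathbb{S}$-cell. By Theorem~\ref{projective fraisse limit theorem}(3), $\mathbb{S}_2$ lifts to a partition of $n\mathbb{L}$ refining $\mathbb{S}$. Let $B'_x$ be the $\mathbb{S}_2$-cell of $x$ and $A'_0, A'_1$ those of $y_0, y_1$; then $(B'_x, A'_0), (B'_x, A'_1) \in i^{\mathbb{S}_2}$ while $(A'_0)\phi = A_0 \neq A_1 = (A'_1)\phi$, contradicting the singleton property. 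The same argument for $(i^{n\mathbb{L}})^{-1}$ goes through using the fact that the $n$-CA axioms are symmetric under simultaneously inverting each relation $i^\mathbb{A}$ and swapping $(i, 1) \leftrightarrow (i, -1)$ in $\de_\mathbb{A}$, so a preimage-analog of Lemma~\ref{useful intermediate nCA} is available.

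For (b), fix $x \in n\mathbb{L}$ and let $I_\mathbb{S}(x)$ denote the union of the $\mathbb{S}$-cells lying in $(\{B_\mathbb{S}(x)\}) i^\mathbb{S}$, where $\mathbb{S}$ ranges over all clopen partitions of $n\mathbb{L}$ whose quotient structure is an $n$-CA structure. Each $I_\mathbb{S}(x)$ is a nonempty clopen set, since surjectivity of $(i^\mathbb{S})^{-1}$ forces every element of $\mathbb{S}$ to have at least one $i^\mathbb{S}$-image, and common refinements via Theorem~\ref{projective fraisse limit theorem clopen sets version}(2) give the collection the finite intersection property. Compactness of $n\mathbb{L}$ then supplies a point $y \in \bigcap_\mathbb{S} I_\mathbb{S}(x)$. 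For any basic open neighbourhood $U \times V$ of $(x, y)$, we can refine $\mathbb{S}$ so that $x$'s cell lies inside $U$ and $y$'s cell inside $V$; the converse direction of Definition~\ref{congruences and quotients defn} then supplies a representative pair $(x', y') \in i^{n\mathbb{L}} \cap (U \times V)$. Hence $(x, y) \in (i^{n\mathbb{L}})^- = i^{n\mathbb{L}}$. Totality of $(i^{n\mathbb{L}})^{-1}$ follows by the symmetric argument.

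The main obstacle is step (a): it requires combining the projective Fra\"iss\'e quotient realisation (Theorem~\ref{projective fraisse limit theorem clopen sets version}(2)), the splitting lemma (Lemma~\ref{useful intermediate nCA}) together with its preimage counterpart, and the compatibility of refined $n$-CA structures with partition refinements of $n\mathbb{L}$ (Theorem~\ref{projective fraisse limit theorem}(3)), and then verifying that once $\mathbb{S}_2$ is genuinely lifted to a refinement of the original partition of $n\mathbb{L}$ the splitting forces the desired contradiction at the level of $x$'s own refined cell rather than some unrelated cell.
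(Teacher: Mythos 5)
Your argument is correct and follows essentially the same route as the paper's proof, but you are more thorough on one point. Your step (a) -- refining a separating clopen partition to an $n$-CA quotient, applying Lemma~\ref{useful intermediate nCA} to split the cell of $x$, lifting via Theorem~\ref{projective fraisse limit theorem}(3), and deriving a contradiction with the singleton-image property -- is exactly the paper's argument for the uniqueness of $i^{n\mathbb{L}}$-images, and your remark on the symmetry of the $n$-CA axioms correctly unpacks the paper's one-line ``by symmetry'' for $(i^{n\mathbb{L}})^{-1}$. Where you diverge is step (b): the paper proves only uniqueness (``it suffices to show $y=z$''), declares $i^{n\mathbb{L}}$ a function, and never explicitly verifies that every point has an image or a preimage, even though Definition~\ref{function defn} makes totality part of being a function. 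Your finite-intersection-property argument genuinely closes that gap (a marginally shorter alternative: the domain of a closed partial function is closed, so if it were proper one could separate a missing point by a clopen cell and contradict surjectivity of $(i^\mathbb{S})^{-1}$ in a refined $n$-CA quotient). For continuity, you cite the closed-graph theorem for compact Hausdorff spaces as a black box, whereas the paper inlines an equivalent derivation using that $\pi_0,\pi_1$ restricted to $i^{n\mathbb{L}}$ are continuous bijections between compact Hausdorff spaces together with Remark~\ref{compactness facts}(3); since the closed-graph fact is not among the listed prerequisites, the paper's self-contained version is slightly preferable, but the mathematics is the same.
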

\begin{proof}
We first show that \(i^{n\mathbb{L}}\) is a function. Let \(x\in n\mathbb{L}\) be arbitrary, and suppose that \((x, y), (x, z)\in i^{n\mathbb{L}}\). It suffices to show that \(y= z\). Suppose for a contradiction that \(y\neq z\) and let \(\{U_y, U_z\}\) be a partition of \(n\mathbb{L}\) into disjoint clopen subsets of \(n\mathbb{L}\) with \(y\in U_y\), \(z\in U_z\). 
By Theorem~\ref{projective fraisse limit theorem clopen sets version} part (2) there is a finite refinement \(\mathbb{S}\) of this partition into clopen sets such that if we view \(\mathbb{S}\) as quotient structure of \(n\mathbb{L}\), then \(\mathbb{S}\) is an \(n\)-CA structure.

Using the element of \(\mathbb{S}\) containing \(x\) as \(p\), let \(\phi\) and \(\mathbb{S}_2\) be as in Lemma~\ref{useful intermediate nCA}. 
By Theorem~\ref{projective fraisse limit theorem} part (3), there is a continuous quotient map \(\psi:n\mathbb{L} \to \mathbb{S}_2\) such that \(\psi\phi\) is the map sending an element of \(n\mathbb{L}\) to the element of \(\mathbb{S}\) containing it.

Thus by the choice of \(\mathbb{S}_2\), each element of \( (\{[x]_{\ker(\psi)}\})i^{n\mathbb{L}/\ker(\psi)}\) is contained in a common element of \(\mathbb{S}\) (and hence only one element of \(\{U_x, U_y\}\)).
This is a contradiction as \(x\) is related to both \(y\) and \(z\) by \(i^{n\mathbb{L}}\).

We have shown that \(i^{n\mathbb{L}}\) is a function, by symmetry \((i^{n\mathbb{L}})^{-1}\) is also a function and thus \(i^{n\mathbb{L}}\) is a bijection. It remains to show that \(i^{n\mathbb{L}}\) is continuous. As \(n\mathbb{L}\) is a topological structure, we have that \(i^{n\mathbb{L}}\) is closed and hence compact subspace of \(n\mathbb{L}^2\) (Remark~\ref{compactness facts}).
 As \(i^{n\mathbb{L}}\) is a bijection, both the maps \(\pi_0\) and \(\pi_1\) from \(i^{n\mathbb{L}}\) to \(n\mathbb{L}\) are bijections.
 Moreover (as projections) both the maps \(\pi_0\restriction_{i^{n\mathbb{L}}}\), \(\pi_1\restriction_{i^{n\mathbb{L}}}\) are continuous and thus are homeomorphisms as they map between compact Hausdorff spaces.
 As the function \(i^{n\mathbb{L}}\) is equal to the composition of the inverse of \(\pi_0\) (using \(i^{n\mathbb{L}}\) as the domain of \(\pi_0\)) with \(\pi_1\), the result follows.
\end{proof}

\speeddictseven{75}{ample canor}{topological structures defn}{projective fraisse limit theorem clopen sets version}{automorphism groups defn}{ample generics defn}{cofinal CA}{Cantor limit homeomorphisms}{Composition of continuous maps is continuous}
\begin{theorem}[cf. Theorem 4.7 of \cite{good_generic_Cantor}, \(\Aut(2^\N)\) has ample generics, \Assumed{75}]\label{ample canor}
The action \(\conj_{\Aut(2^\N)}\) has ample generics (using the compact-open topology on \(\Aut(2^\N)\)) .
\end{theorem}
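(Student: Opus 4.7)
The plan is to identify a specific simultaneous conjugacy orbit and prove it is comeagre for every $n$. Via Lemma~\ref{Cantor linit universe} I would fix a homeomorphism $2^\N \cong n\mathbb{L}$ and, using Lemma~\ref{Cantor limit homeomorphisms}, view the distinguished tuple $\bar{g} = (0^{n\mathbb{L}}, \ldots, (n-1)^{n\mathbb{L}})$ as an element of $\Aut(2^\N)^n$. For $\bar{h} = (h_0, \ldots, h_{n-1}) \in \Aut(2^\N)^n$, let $\mathbb{X}_{\bar{h}}$ denote the topological $\sigma_{b,n}$-structure on $2^\N$ whose $i$-th relation is the graph of $h_i$; then a homeomorphism $\phi$ simultaneously conjugates $\bar{g}$ to $\bar{h}$ precisely when $\phi$ is a topological isomorphism $\mathbb{X}_{\bar{g}} = n\mathbb{L} \to \mathbb{X}_{\bar{h}}$. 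The claim I will pursue is that $\{\bar{h} : \mathbb{X}_{\bar{h}} \cong n\mathbb{L}\}$ is comeagre in $\Aut(2^\N)^n$.

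To attack this I would invoke Theorem~\ref{projective fraisse limit theorem clopen sets version} to characterize $n\mathbb{L}$ up to topological isomorphism by three properties: realization of every $n$-CA structure as a clopen-partition quotient, refinement of every finite clopen partition to an $n$-CA quotient, and lifting of isomorphisms between $n$-CA quotient partitions to self-homeomorphisms. Each of these is a countable conjunction of local conditions on $\bar{h}$, one per $n$-CA structure, finite clopen partition, or pair of such with an isomorphism. Since $\Aut(2^\N)^n$ is Polish by Proposition~\ref{compact-open group is Polish prop} together with Lemma~\ref{more nice Polish subspaces lemma}, it then suffices to check each of these local conditions defines a dense open or dense $G_\delta$ set and apply the Baire Category Theorem (Theorem~\ref{baire category theorem}).

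Openness should be routine, as whether a prescribed finite clopen partition witnesses one of the three properties depends only on the action of the $h_i$ between finitely many pieces, and such data is preserved under small compact-open perturbations. For density in the first two properties, I would start from a basic open set prescribing partial data on some finite clopen partition $Q$ of $2^\N$, read off the induced finite $\sigma_{b,n}$-structure $\mathbb{Q}$, enlarge $\mathbb{Q}$ so that every relation and its inverse is surjective, and apply Theorem~\ref{cofinal CA} to realize this enlargement as a quotient of an $n$-CA structure; amalgamating via Lemma~\ref{nca has amalgamation} with the target $n$-CA structure $\mathbb{D}$ (or with a desired refinement) and realizing the amalgam as a clopen partition of $2^\N$ produces a tuple in the basic open set with the required witness.

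The hard part will be density of the third property: given a basic open set together with finite clopen partitions $P_0, P_1$ of $2^\N$ and an isomorphism $\phi : P_0 \to P_1$ of the induced quotient structures, I must produce $\bar{h}$ in the open set for which $\phi$ extends to a genuine self-homeomorphism of $2^\N$. The appropriate tool is a back-and-forth construction mirroring the uniqueness half of Theorem~\ref{projective fraisse limit theorem}: one iteratively applies Lemma~\ref{nca has amalgamation} to lift $\phi$ and its partial inverses to isomorphisms between ever finer $n$-CA refinements of $P_0$ and $P_1$, and the limiting nested clopen partitions determine the desired conjugating homeomorphism. Once all three properties are shown to define comeagre sets, their intersection is comeagre by Baire category and coincides with the orbit of $\bar{g}$; applying this for every $n$ yields ample generics.
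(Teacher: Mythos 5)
Your overall strategy is the right one, and agrees with the paper: identify the orbit $C = \{\bar h : \mathbb{X}_{\bar h} \cong n\mathbb L\}$, use the earlier lemmas to know it is a genuine simultaneous conjugacy class, and show it is comeagre by expressing ``isomorphic to $n\mathbb L$'' as a countable intersection of suitable conditions and applying Baire category. You also correctly anticipate that the basic clopen sets in $\Aut(2^\N)^n$ are indexed by surjections from finite clopen partitions to finite $\sigma_{b,n}$-structures, and that density is won by cofinality (Theorem~\ref{cofinal CA}) plus amalgamation (Lemma~\ref{nca has amalgamation}).

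The gap is in your choice of characterization. You propose to use the three conditions of Theorem~\ref{projective fraisse limit theorem clopen sets version}, and assert they are ``each a countable conjunction of local conditions'' with openness ``routine.'' That is true of the first two, but not of the third. The third condition of the clopen-sets version asks: for each pair of finite clopen partitions $P_0, P_1$ and each isomorphism $\phi$ of their quotient structures, \emph{there exists a self-homeomorphism $h \in \Aut(2^\N)$} commuting with $\bar h$ (an automorphism of $\mathbb{X}_{\bar h}$) that realizes $\phi$ on $P_0$. Fixing $P_0, P_1, \phi$, the set of $\bar h$ satisfying this is of the form (clopen complement) $\cup$ (a projection of a closed subset of $\Aut(2^\N)^n \times \Aut(2^\N)$), hence analytic, and not obviously $G_\delta$; so the Baire argument does not apply as stated. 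What the paper uses instead is the third condition of the \emph{original} Theorem~\ref{projective fraisse limit theorem}, reformulated as: whenever $\phi : P \to \mathbb{R}$ is a quotient onto an $n$-CA structure and $g : \mathbb{R}' \to \mathbb{R}$ is a quotient between $n$-CA structures, there is a finite refinement $Q$ and a quotient $\psi : Q \to \mathbb{R}'$ with $\psi g = \phi$. Fixing $\phi, g$, this is of the form $U_\phi^c \cup \bigl(\bigcup_\psi U_\psi\bigr)$, a union of a clopen set with an open set, so the countable intersection over all $(\phi, g)$ is $G_\delta$. The equivalence of the two versions of the third condition is exactly what Theorem~\ref{projective fraisse limit theorem clopen sets version} proves, so you lose nothing by switching; you just must encode the $G_\delta$ via the quotient-lifting form, not the self-homeomorphism form.

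A secondary remark: once you have $C$ written as a countable intersection of open sets, you need not prove density of each condition separately by a back-and-forth as you propose. Since Lemmas~\ref{Cantor linit universe} and~\ref{Cantor limit homeomorphisms} already exhibit a concrete element of $C$ (namely the tuple $\bar g$ you fixed), and since $U_\phi$-basic sets can be taken with $\im(\phi)$ an $n$-CA structure, you can produce an element of $U_\phi \cap C$ simply by conjugating $\bar g$ by a homeomorphism supplied by Theorem~\ref{projective fraisse limit theorem clopen sets version}(3) applied to $\mathbb{X}_{\bar g}$; the back-and-forth is already inside that theorem. Your route also works, but is redundant effort.
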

\begin{proof}
For all \(f=(f_0, f_1, \ldots, f_{n-1})\in \Aut(2^\N)^n\), let \(\mathbb{S}_f\) denote the \(\sigma_{b, n}\) structure with universe \(2^\N\) and \(i^{\mathbb{S}_f}= f_i\) for all \(i< n\). We show that the set 
\[C :=\makeset{f\in \Aut(2^\N)^n}{\(\mathbb{S}_f \cong n\mathbb{L}\)}\]
is comeagre.

Let \(R= \makeset{\mathbb{R}_i}{\(i\in \N\)}\) be a collection of \(n\)-CA structures such that every \(n\)-CA structure is isomorphic to precisely one element of \(R\).
Let \(\Clo(2^\N)\) denote the collection of clopen subsets of \(2^\N\), let \(\CloP(2^\N)\) denote the set of finite partitions of \(2^\N\) into clopen sets.
Let \(\PS(2^\N)\) denote the set of surjections from elements of \(\CloP(2^\N)\) to elements of \(R\).
Let \(\QR\) denote the set of quotient maps between elements of \(R\).
Note that the sets \(R,\Clo(2^\N), \CloP(2^\N), \QR\) and \(\PS(2^\N)\) are all countable.

By Theorem~\ref{compact-open is Polish thm}, it follows that the sets of the form
\[\makeset{f\in \Aut(2^\N)}{\((U)f = V\)} = B(U, V)\cap B(U^c, V^c) \]
where \(U, V\in \Clo(2^\N)\), form a subbasis for the topology on \(\Aut(2^\N)\).

\underline{Claim:} The sets of the form 
\[U_{\phi}:= \makeset{f\in \Aut(2^\N)^n}{ \(\phi\) is a quotient map from \(\mathbb{S}_f\) to \(\im(\phi)\)}\]
where \(\phi\) can be any surjection from an element of \(\CloP(2^\N)\) to a \(\sigma_{b, n}\)-structure, are a basis of clopen sets for the topology on \(\Aut(2^\N)^n\).\\
\underline{Proof of Claim}:\\
Let \(\phi\) be an arbitrary surjection from a partition of \(2^\N\) into clopen sets to a \(\sigma_{b, n}\)-structure.
To see that \(U_\phi\) is clopen in the compact-open topology, note that it is the intersection of the following two sets
\[ \intersection{i<n\\(a, b)\in i^{\im(\phi)} } \union{U, V\in \Clo(2^\N)\\
U \subseteq \union{} ((a)\phi^{-1})\\
V\subseteq \union{} ((b)\phi^{-1})} \makeset{(f_0, f_1,\ldots, f_{n-1})\in \Aut(2^\N)^n}{\((U)f_i = V\)},\]
\[ \intersection{i<n\\(a, b)\notin i^{\im(\phi)} } \union{ V\in \Clo(2^\N)\\
V\cap \left(\union{} ((b)\phi^{-1})\right) = \varnothing} \makeset{(f_0, f_1,\ldots, f_{n-1})\in \Aut(2^\N)^n}{\((\union{} ((a)\phi^{-1}))f_i = V\)}.\]
We now need to show that every set of the form
\[\intersection{j<k}\makeset{(f_0,f_1, \ldots, f_{n-1})\in \Aut(2^\N)^n}{\((U_j)f_i = V_j\)}\]
for \(i<n\), \(k\in \N\), \(U_0, U_1, \ldots, U_{k-1} V_0, V_1, \ldots, V_{k-1}\in \Clo(2^\N)\) is a union of sets of the form \(U_\phi\).
Let \(f=(f_0,f_1, \ldots, f_{n-1})\) in the above set be arbitrary.
Let \(P\) be a partition of \(2^\N\) into clopen sets such that each of the sets \(U_j\) and \(V_j\) can be expressed as unions of elements of \(P\) and let \(\phi\) be the identity map from \(P\) to \(P\) (where we view the second \(P\) as a quotient structure of \(\mathbb{S}_f\)).
Note that \(f\in U_\phi\) and let \(g= (g_0, g_1,\ldots, g_{n-1})\in U_\phi\) be arbitrary. 
Then \((F)g_i\subseteq V_j\) for all \(j<k\) and \(F\in P\) with \(F\subseteq U_j\). 
Similarly \((F)g_i \cap V_j = \varnothing\) for all \(j<k\) and \(F\in P\) with \(F\cap U_j = \varnothing\).
As \(g\) was arbitrary, it follows that 
\[U_\phi\subseteq \intersection{j<k}\makeset{(f_0,f_1, \ldots, f_{n-1})\in \Aut(2^\N)^n}{\((U_j)f_i = V_j\)}\]
as required.\(\diamondsuit\)

We then define:
\begin{align*}
    U_1 &:= \intersection{i\in \N} \union{\phi\in \PS(2^\N) \\ \im(\phi) = \mathbb{R}_i} U_\phi,\\
    U_2 &:= \intersection{P\in \CloP(2^\N)} \union{\phi\in \PS(2^\N)\\\ker(\phi) \text{ refines }P} U_\phi,\\
    U_3 &:= \intersection{\phi \in \PS(2^\N)\\
    g\in \QR\\
    \im(g) = \im(\phi)}\left((\Aut(2^\N)^n\backslash U_{\phi}) \cup \left(\union{\psi \in \PS(2^\N)\\
    \dom(g) = \im(\psi)\\
    \psi g=\phi} U_{\psi} \right)\right).
\end{align*}
From Theorem~\ref{projective fraisse limit theorem}, it follows that \(C= U_1\cap U_2\cap U_3\).

We have now shown that \(C\) is a countable intersection of open sets.
We need now only show that \(C\) is dense. Let \(\phi\in \PS(2^\N)\) be an arbitrary surjection from a clopen partition of \(2^\N\) to a \(\sigma_{b, n}\)-structure. 
If \(U_\phi \neq \varnothing\), then \(\im(\phi)\) has the property that all its relations are surjective with surjective inverse.
Thus by Theorem~\ref{cofinal CA}, we can assume without loss of generality that \(\im(\phi)\) is an \(n\)-CA structure.

It suffices to show that \(U_\phi \cap C\) is not empty.
By Lemmas~\ref{Cantor linit universe} and \ref{Cantor limit homeomorphisms}, let \(f\in C\) be fixed.
As \(f\in U_1\), there is \(\phi'\in \PS(2^\N)\) such that \(\im(\phi')= \im(\phi)\) and \(f\in U_{\phi'}\).
Let \(q:2^\N \to \im(\phi)\) be defined by \((x)q= (U)\phi'\) where \(U\) is the element of \(\dom(\phi')\) containing \(x\).
We have that \(q:\mathbb{S}_f\to \im(\phi)\) is a continuous quotient map.
Let \(h\) be any homeomorphism of \(2^\N\) such that for all \(p\in \im(\phi)\) we have \(((\{p\})q^{-1})h = \union{}((\{p\})\phi^{-1})\) (Theorem~\ref{projective fraisse limit theorem clopen sets version} (3)).
It follows that \((f, h)\conj_{\Aut(2^\N)}^n\) is an element of \(C\cap U_{\phi}\) as required.
\end{proof}

\subsection{Automatic continuity for groups}\label{ac for groups subsection}
In this subsection we use the ample generics results from the previous two subsection to show that all homomorphisms from either of \(\Sym(\N)\) or \(\Aut(2^\N)\) to second countable groups are continuous.

This is done via a result (Theorem~\ref{ample automatic continuity thm}) of the paper \cite{kechris2007turbulence} of Kechris and Rosendal, which states that any Polish group whose conjugacy action has ample generics will satisfy this condition. 
The results in this subsection are all individually present in some form in the literature but we include them so we can present a self contained proof of Theorem~\ref{automatic continuity examples thm}.

\speeddictfour{77}{countable covers lem}{nbhds defn}{second countable defn}{denseness defn}{topological structures defn}
\begin{lemma}[Countable covers, \Assumed{77}]\label{\dict{77 ref}}
Suppose that \(G\) is a second countable topological group with identity \(1_G\). If \(N\) is a neighbourhood of \(1_G\), then there is a sequence \((g_{i})_{i\in \N}\) of elements of \(G\) such that \(G = \union{i\in \N} N g_i\).
\end{lemma}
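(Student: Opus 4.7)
The plan is to exhibit the collection $\{N^\circ g : g \in G\}$ as an open cover of $G$ and then use second countability to extract a countable subcover. First observe that, since $N$ is a neighbourhood of $1_G$ (Definition~\ref{nbhds defn}), we have $1_G \in N^\circ$. Because $G$ is a topological group, for each $g\in G$ the map $\rho_g$ is a homeomorphism (its inverse is $\rho_{g^{-1}}$), so $N^\circ g = (N^\circ)\rho_g$ is open. In particular every $g\in G$ satisfies $g = 1_G \cdot g \in N^\circ g \subseteq Ng$, so $\{N^\circ g : g\in G\}$ is an open cover of $G$.

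Now I would invoke the standard fact that any second countable topological space is Lindel\"of: if $(B_i)_{i\in \N}$ is a countable basis for $G$ (Definition~\ref{second countable defn}), then let
\[I := \makeset{i\in \N}{there exists \(g\in G\) with \(B_i \subseteq N^\circ g\)},\]
and for each $i\in I$ choose some $g_i\in G$ with $B_i \subseteq N^\circ g_i$; for $i\notin I$ set $g_i := 1_G$. For any $x\in G$, we have $x\in N^\circ g$ for some $g\in G$, and since $(B_i)_{i\in\N}$ is a basis there exists $i\in \N$ with $x\in B_i \subseteq N^\circ g$; this $i$ lies in $I$, so $x\in B_i \subseteq N^\circ g_i$. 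Hence
\[G = \union{i\in \N} N^\circ g_i \subseteq \union{i\in \N} N g_i \subseteq G,\]
so $G = \union{i\in \N} N g_i$ as required.

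There is no real obstacle here; the only point worth being careful about is that $N$ itself need not be open, which is why one passes to $N^\circ$ before applying the Lindel\"of argument, and that $\rho_g$ being a homeomorphism (a consequence of $G$ being a \emph{topological} group rather than merely a semitopological one) is what makes the translates $N^\circ g$ open.
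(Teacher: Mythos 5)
Your proof is correct, but it takes a genuinely different route from the paper's. You run a Lindel\"of-type argument: cover $G$ by the open translates $N^\circ g$, then use a countable basis to extract a countable subcover. The paper instead picks a countable \emph{dense} subset $\{g_i\}$ (by choosing one point from each nonempty basis element), sets $A := N \cap N^{-1}$, and observes that for any $g\in G$ the neighbourhood $Ag$ of $g$ must contain some $g_i$, whence $g \in A^{-1}g_i = Ag_i \subseteq Ng_i$. The two approaches are dual in flavour: yours needs the right translations $\rho_g$ to be homeomorphisms so the translates $N^\circ g$ are open, while the paper's needs continuity of inversion so that $A$ is a \emph{symmetric} neighbourhood of the identity, which is what lets one pass from $g_i \in Ag$ to $g \in Ag_i$. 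Your version has the advantage of being the standard ``second countable implies Lindel\"of'' argument and doesn't require the symmetrisation trick; the paper's version is arguably a hair shorter once the dense sequence is in hand and highlights the role of the group operation more explicitly. Both are complete and correct.
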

\begin{proof}
As \(G\) is a topological group and \(1_G=1_G^{-1}\), it follows that \(A := N\cap N^{-1}\) is a neighbourhood of \(1_G\).
Let \(B\) be a countable basis for \(G\) with \(\varnothing \notin B\).
By the axiom of choice, let \((g_i)_{i\in \N}\) be a sequence of elements of \(G\) such that every element of \(B\) contains \(g_i\) for some \(i\in \N\).
It follows that \(\makeset{g_i}{\(i\in \N\)}\) is a dense subset of \(G\).
It suffices to show that \(G = \union{i\in \N} N g_i\).
Let \(g\in G\) be arbitrary. 
The set \(Ag\) is a neighbourhood of \(g\), so there is \(i\in \N\) such that \(g_i\in Ag\). It follows that \(g\in A^{-1}g_i= Ag_i\subseteq N g_i\) as required.
\end{proof}

The following theorem is often referred to as (part of) Effros' Theorem.
For more information see \cite{effrosvan2004note} for example.
\speeddictseven{78}{microtrans}{closure and interior defn}{topology on a metric space defn}{Polish space defn}{meagre defn}{topological structures defn}{group actions defn}{countable covers lem}
\begin{theorem}[Effros' Theorem, \Assumed{78}]\label{\dict{78 ref}}
Suppose that \(G\) is a Polish group and suppose that \(a:X\times G\to X\) is a continuous and transitive action of \(G\) on a metrizable space \(X\) which is not meagre in itself. 
If \(N\) is a neighbourhood of \(1_G\) in \(G\) and \(x\in X\), then \((x,N)a\) is a neighbourhood of \(x\) in \(X\). 
\end{theorem}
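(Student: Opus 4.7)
The plan is a Pettis-style argument: produce a non-empty open subset $U_0 \subseteq X$ contained in $(x, W^2)a$ for a sufficiently small symmetric open neighbourhood $W$ of $1_G$, then translate $U_0$ by a carefully chosen element of $W$ to obtain an open neighbourhood of $x$ contained in $(x, W^3)a \subseteq (x, N)a$. The existence of such a $W$ with $W^3 \subseteq N$ follows from the joint continuity of the group multiplication and inversion of $G$.

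The first step is to extract two background facts from transitivity and Lemma~\ref{countable covers lem}. Writing $G = \bigcup_{i\in\N} Wg_i$ and applying the action yields $X = \bigcup_{i\in\N}(x, Wg_i)a$, and each summand is a homeomorphic copy of $(x, W)a$ under the self-homeomorphism $\tau_g: z\mapsto (z, g)a$ of $X$ (which is a homeomorphism because $a$ is a continuous action with continuous inverse $\tau_{g^{-1}}$). Since $X$ is not meagre in itself, it follows that $(x, W)a$ is non-meagre. The same covering trick, applied to arbitrary basepoints $y \in X$ and arbitrary open neighbourhoods $W'$ of $1_G$, shows that $(y, W')a$ is always non-meagre in $X$ and, in particular (taking $W' = G$), that every non-empty open subset of $X$ is non-meagre, so $X$ is a Baire space.

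Next, since $W$ is open in the Polish space $G$, Lemma~\ref{nice Polish subspaces lemma} makes $W$ Polish, so $(x, W)a$ is analytic in $X$, and Lemmas~\ref{almost sigma algebra lemma} and~\ref{nice analytic lemma} then give an open set $U_0 \subseteq X$ with $U_0 \approx_X (x, W)a$; non-meagreness of $(x, W)a$ forces $U_0 \neq \varnothing$. The heart of the argument is to show $U_0 \subseteq (x, W^2)a$. For $z \in U_0$, continuity of $g \mapsto (z,g)a$ at $1_G$ yields a symmetric open neighbourhood $W' \subseteq W$ of $1_G$ with $(z, W')a \subseteq U_0$, and the earlier covering argument makes $(z, W')a$ non-meagre. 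Applying the almost-open argument to $(z, W)a$ produces an open $U_z$ with $U_z \approx_X (z, W)a$, and the non-meagreness of $(z, W)a \cap U_0$ forces $U_z \cap U_0 \neq \varnothing$. On this non-empty open intersection both $(x, W)a$ and $(z, W)a$ are comeagre, so Baireness of $X$ gives a common point $(x, w_1)a = (z, w_2)a$ with $w_1, w_2 \in W$, yielding $z = (x, w_1 w_2^{-1})a \in (x, W^2)a$ by symmetry of $W$.

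Finally, since $U_0$ is a non-empty open subset of the Baire space $X$ and $(x, W)a$ is comeagre in $U_0$, one can pick $y \in (x, W)a \cap U_0$ and write $y = (x, w)a$ with $w \in W$. The homeomorphism $\tau_{w^{-1}}$ sends $y$ to $x$ and sends $U_0$ to an open neighbourhood of $x$ contained in $\tau_{w^{-1}}((x, W^2)a) = (x, W^2 w^{-1})a \subseteq (x, W^3)a \subseteq (x, N)a$, completing the proof. The main obstacle is the middle computation: aligning the Baire-property envelopes $U_0$ of $(x, W)a$ and $U_z$ of $(z, W)a$ so that $U_z \cap U_0$ is non-empty, which is where continuity of the action at the identity and the non-meagreness of the small ``orbit neighbourhoods'' $(z, W')a$ play essential roles.
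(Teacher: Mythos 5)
Your argument is a genuinely different route from the paper's. The paper proves the auxiliary claim that $z \in ((z,A)a)^{-\circ}$ using Lemma~\ref{countable covers lem} and then runs a back-and-forth Cauchy-sequence construction (converging pairs $g_i, h_i$ with $(x,g_i)a$ and $(y,h_i)a$ shadowing each other) to conclude $y = (x, gh^{-1})a$ for $g,h$ in a closure of the given neighbourhood. Yours instead runs a Pettis-style argument through the Baire-property machinery already developed for Theorem~\ref{comparable Polish group topologies theorem}: show $(x,W)a$ is almost open, align its open envelope $U_0$ with the envelope of $(z,W)a$ for $z\in U_0$, and use Baireness to extract a common point, giving $U_0 \subseteq (x,W^2)a$ and then translating. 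Both approaches lean on Lemma~\ref{countable covers lem} for non-meagreness, but the key engines (Cauchy completeness of the group metric vs.\ the almost-open $\sigma$-algebra plus the separation lemma) are distinct, and the translation trick at the end is a different finish than the paper's metric estimates.

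There is one gap in the way you invoke Lemma~\ref{nice analytic lemma}. That lemma requires $\{U, V\}$ to be a \emph{partition} of $X$ into two sets, \emph{both} of which are analytic; it does not say that a single analytic set is almost open. You only argue that $(x,W)a$ is analytic (the continuous image of the Polish space $W$ under $g\mapsto (x,g)a$); you never address $X \setminus (x,W)a$, and likewise for $(z,W)a$. Without that, the lemma simply does not apply. The gap is fillable: the stabiliser $H := \{g\in G : (x,g)a = x\}$ is closed in $G$ (preimage of the closed singleton $\{x\}$ under the continuous map $g\mapsto(x,g)a$; $X$ is metrizable hence Hausdorff), so $HW$ is open and $G\setminus HW$ is closed in the Polish $G$, hence Polish by Lemma~\ref{nice Polish subspaces lemma}. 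A short computation with the right action shows $(x,g)a \in (x,W)a \iff g\in HW$, so $X\setminus (x,W)a$ is the continuous image of the Polish space $G\setminus HW$, hence analytic. The same verification, with the stabiliser of $z$, is needed before you can write down $U_z$. Once these are added, your argument goes through as written. As a minor wording point, the claim that every non-empty open subset of $X$ is non-meagre should come from picking $y\in U$ and a small $W'$ with $(y,W')a\subseteq U$, then applying your covering argument to $(y,W')a$; the parenthetical ``taking $W'=G$'' does not by itself give this.
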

\begin{proof}
Let \(d_G\) be a complete metric compatible with \(G\), and \(d_X\) be a metric compatible with \(X\).

\underline{Claim:} If \(A\) is a neighbourhood of \(1_G\) in \(G\) and \(z\in X\), then \(z\in ((z,A)a)^{-\circ}\).\\
\underline{Proof of Claim:} Let \(A_2\) be a neighbourhood of \(1_G\) in \(G\) such that \(A_2^{-1}=A_2\) and \(A_2^2\subseteq A\).
By Lemma~\ref{countable covers lem}, let \((t_i)_{i\in \N}\) be a sequence of elements of \(G\) such that \(\union{i\in \N}A_2t_i= G\).
It follows that \(\union{i\in \N} (z,A_2t_i)a = X\).
As \(X\) is not meagre in itself, it follows that at least one of the sets \((z,A_2t_i)a\) is somewhere dense and so \((z,A_2)a\) is somewhere dense.
If \(b\in A_2\) is such that \((z,b)a\in ((z,A_2)a)^{-\circ}\), then \(z\in (((z,A_2)a)^{-\circ},b^{-1})a=((z,A_2b^{-1})a)^{-\circ}\subseteq  ((z,A)a)^{-\circ}\) as required.\(\diamondsuit\)

Let \(V\) be an open neighbourhood of \(1_G\) such that \(V^{-1} = V\) and \((V^-)^2\subseteq N\).
By the claim,  we have \(x\in ((x, V)a)^{-\circ}\).
It thus suffices to show that \(((x, V)a)^{-\circ}\subseteq (x,N)a\).

Let \(y\in ((x,V)a)^{-\circ}\) be arbitrary.
As \(V\) is inverse closed, it follows that similarly \(x\in ((y,V)a)^{-\circ}\).
We define sequences \((g_i)_{i\in \N}\) and \((h_i)_{i\in \N}\) in \(G\) inductively as follows:
\begin{enumerate}
    \item Choose \(g_0\in  V\) to be such that 
    \[(x, g_0)a \in B_{d_X}\left(y, \frac{1}{2^0}\right)\cap \left(\left(y,B_{d_G}\left(1_G, \frac{1}{2^0}\right)\cap V\right)a\right)^{-\circ}\]
    (we can do this as \((x,V)a\) is dense in a neighbourhood of \(y\)).
    
    \item Choose \(h_0\in B_{d_G}\left( 1_G, \frac{1}{2^0}\right)\cap V\) to be such that 
    \[(y, h_0)a \in B_{d_X}\left((x, g_0)a, \frac{1}{2^1}\right)\cap \left(\left(x,B_{d_G}\left(g_0, \frac{1}{2^1}\right)\cap V\right)a\right)^{-\circ}\]
     (we can do this as \(\left(y, B_{d_G}\left( 1_G, \frac{1}{2^0}\right)\cap V\right)a\) is dense in a neighbourhood of \((x, g_0)a\)).
     
    \item Suppose that \(g_{i-1}\) and \(h_{i-1}\) are defined. Choose \(g_i\in B_{d_G}\left(g_{i-1}, \frac{1}{2^i}\right)\cap V\) to be such that 
    \[(x, g_i)a \in B_{d_X}\left((y, h_{i-1})a, \frac{1}{2^i}\right)\cap \left(\left(y,B_{d_G}\left(h_{i-1}, \frac{1}{2^i}\right)\cap V\right)a\right)^{-\circ}\]
     (we can do this as \(\left(x, B_{d_G}\left(g_{i-1}, \frac{1}{2^i}\right)\cap V\right)a\) is dense in a neighbourhood of \((y, h_{i-1})a\)).
     
    \item Suppose that \(g_{i}\) and \(h_{i-1}\) are defined. Choose \(h_i\in B_{d_G}\left(h_{i-1}, \frac{1}{2^i}\right)\cap V\) to be such that 
    \[(x, h_i)a \in B_{d_X}\left((x, g_{i})a, \frac{1}{2^{i+1}}\right)\cap \left(\left(x, B_{d_G}\left(g_{i}, \frac{1}{2^{i+1}}\right)\cap V\right)a\right)^{-\circ}\]
     (we can do this as \(\left(y,B_{d_G}\left(h_{i-1}, \frac{1}{2^i}\right)\cap V\right)a\) is dense in a neighbourhood of \((x, g_i)a\)).
\end{enumerate}
By construction, the sequences \((g_i)_{i\in \N}\) and \((h_i)_{i\in \N}\) are Cauchy and so they converge to some \(g, h\in V^-\) respectively.
As our action is continuous, it follows that \(((x, g_i)a)_{i\in \N}\) and \(((y, h_i)a)_{i\in \N}\) converge to \((x, g)a\) and \((y,h)a\) respectively.

For all \(i\in \N\) we have \(((x,g_{i})a, (y,h_i)a)d_X\leq \frac{1}{2^i}\), so we must have that \((x, g)a=(y, h)a\).
Thus \(y=(x, g h^{-1})a\), and so \(y\in (x,V^-(V^{-1})^-)a=(x, (V^-)^2)a\subseteq (x, N)a\) as required.
\end{proof}

\speeddictseven{79}{generic extensions}{nbhds defn}{Polish space defn}{meagreness in subspaces rmk}{kuratowsiulam}{baire category theorem}{ample generics defn}{microtrans}
\begin{lemma}[cf. Lemma 6.6 of \cite{kechris2007turbulence}, Extending generic tuples, \Assumed{79}]\label{\dict{79 ref}}
Suppose that \(G\) is a Polish Group, the action \(\conj_{G}\) has ample generics,  \(A\subseteq G\) is not meagre in \(G\), \((x_0, x_1, \ldots, x_{n-1})\in~G^n\) has a comeagre orbit under \(\conj_G^n\), \(B\subseteq G\) is such that for all comeagre subsets \(C\) of \(G\) the set \(B\cap C \) is dense in \(G\), and \(N\) is a neighbourhood of the identity \(1_G\) of \(G\).

In this case there is \((a, g, b)\in A\times N \times B\) such that
\[((x_0, x_1, \ldots, x_{n-1}, a),g)\conj_G^n = (x_0, x_1, \ldots, x_{n-1}, b)\]
has comeagre orbit under \(\conj_G^{n+1}\).
\end{lemma}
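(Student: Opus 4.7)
The plan is to locate a good candidate for $a$ via ample generics and the Kuratowski--Ulam Theorem, and then to convert $a$ into the desired $b$ via conjugation by a small $g$ using Effros' Theorem applied to the comeagre orbit of $\conj_G^{n+1}$.

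Write $\vec{x} := (x_0, \ldots, x_{n-1})$ and let $\mathcal{O}'$ denote the comeagre $\conj_G^{n+1}$-orbit, which exists by ample generics. First I would apply Theorem~\ref{kuratowsiulam} to the decomposition $G^{n+1} = G^n \times G$ and the comeagre set $\mathcal{O}'$: this shows that the collection of $\vec{y} \in G^n$ for which $D_{\vec{y}} := \{c \in G : (\vec{y}, c) \in \mathcal{O}'\}$ is comeagre in $G$ is itself comeagre in $G^n$, and hence meets the comeagre $\conj_G^n$-orbit of $\vec{x}$. For any $\vec{y}_0$ in this intersection, if $h \in G$ satisfies $h^{-1} \vec{x} h = \vec{y}_0$, then conjugation by $h$ is a homeomorphism of $G$ sending $D_{\vec{y}_0}$ onto $D_{\vec{x}}$, so $D_{\vec{x}}$ is comeagre as well. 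Since $A$ is non-meagre in $G$ and $G \setminus D_{\vec{x}}$ is meagre, I can then pick $a \in A \cap D_{\vec{x}}$, which gives $(\vec{x}, a) \in \mathcal{O}'$.

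Next, I would apply Effros' Theorem (Theorem~\ref{microtrans}) to the continuous transitive conjugation action of $G$ on $\mathcal{O}'$, based at the point $(\vec{x}, a)$. This produces an open set $U \subseteq G^{n+1}$ with $(\vec{x}, a) \in U$ and $U \cap \mathcal{O}' \subseteq \{(g^{-1}\vec{x}g, g^{-1}ag) : g \in N\}$. Setting $V := \{c \in G : (\vec{x}, c) \in U\}$, which is open in $G$ and contains $a$, the hypothesis on $B$ applied to the comeagre set $D_{\vec{x}}$ ensures that $V \cap B \cap D_{\vec{x}}$ is non-empty; picking $b$ in this intersection, the point $(\vec{x}, b)$ lies in $U \cap \mathcal{O}'$, so there is $g \in N$ with $g^{-1} \vec{x} g = \vec{x}$ and $g^{-1} a g = b$. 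The triple $(a, g, b)$ then satisfies the stated requirements, and $(\vec{x}, b) \in \mathcal{O}'$ has comeagre orbit.

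The step I expect to be the main obstacle is verifying the hypotheses of Effros' Theorem for the restricted action on $\mathcal{O}'$: $\mathcal{O}'$ is automatically metrizable as a subspace of the Polish space $G^{n+1}$, but I also need $\mathcal{O}'$ to be not meagre in itself. This will follow from the standard fact that a comeagre subset of a Baire space is Baire in its subspace topology, which is a short direct Baire-category argument showing that the closure in the subspace of any nowhere dense subset is nowhere dense in the ambient space.
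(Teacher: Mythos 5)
Your proof is correct, and the opening step (using Kuratowski--Ulam plus a conjugation trick to find $a \in A$ with $(\vec{x}, a)$ generic) is essentially the paper's. Where you diverge is in the application of Effros' Theorem (Theorem~\ref{microtrans}). The paper first identifies the slice $D_{\vec{x}} = \{c : (\vec{x}, c) \in \mathcal{O}'\}$ as the orbit of $a$ under the conjugation action of the centralizer $G_{\vec{x}}$, checks $G_{\vec{x}}$ is Polish (a closed subgroup of $G$), checks $D_{\vec{x}}$ is not meagre in itself via Remark~\ref{meagreness in subspaces rmk}, and applies Effros to that $G_{\vec{x}}$-action on $D_{\vec{x}} \subseteq G$. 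You instead apply Effros directly to the $G$-action on the full orbit $\mathcal{O}' \subseteq G^{n+1}$, and recover the constraint $g^{-1}\vec{x}g = \vec{x}$ after the fact by matching first coordinates in $(\vec{x}, b) = (g^{-1}\vec{x}g, g^{-1}ag)$. Both routes are valid; yours is slightly more economical in that it avoids verifying that $G_{\vec{x}}$ is Polish and that $D_{\vec{x}}$ is an orbit of $G_{\vec{x}}$, at the cost of working in $G^{n+1}$ rather than $G$. Your ``not meagre in itself'' justification for $\mathcal{O}'$ is sound --- a comeagre subset of a Baire space is Baire, hence not meagre in itself --- though you could equally well cite Remark~\ref{meagreness in subspaces rmk} as the paper does, since $\mathcal{O}'$ is not meagre in $G^{n+1}$ by Theorem~\ref{baire category theorem}.
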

\begin{proof}
For the purposes of this proof, we will say that a element of \(G^n\) is \textit{generic} if its orbit under \(\conj_G^n\) is comeagre.
By Theorems~\ref{baire category theorem} and \ref{kuratowsiulam}, there exists a generic \((z_0, z_1,\ldots, z_{n-1})\in G^n\) such that 
\[G \approx_G \makeset{y\in G}{\((z_0, z_1,\ldots, z_{n-1}, y)\)  is generic}.\]
Let \(h\in G\) be such that \((z_0, z_1,\ldots, z_{n-1})= ((x_0, x_1, \ldots, x_{n-1}),h)\conj_G^n\), we now have:
\begin{align*}
    G &= h G h^{-1}\\
    &\approx_G h\makeset{y\in G}{\((z_0, z_1,\ldots, z_{n-1}, y)\)  is generic}h^{-1}\\
    &= \makeset{h y h^{-1}}{\((z_0, z_1,\ldots, z_{n-1}, y)\)  is generic}\\
    &= \makeset{y\in G}{\((z_0, z_1,\ldots, z_{n-1}, h^{-1}y h)\)  is generic}\\
  &= \makeset{y\in G}{\(((x_0, x_1,\ldots, x_{n-1}, y), h)\conj_G^n\) is generic}\\
  &= \makeset{y\in G}{\((x_0, x_1,\ldots, x_{n-1}, y)\) is generic}.
\end{align*}
Let \(a\in A\) be such that \((x_0, x_1,\ldots, x_{n-1}, a)\) is generic. Let \(G_{x_0, x_1,\ldots, x_{n-1}}\) be the subgroup of elements of \(G\) which commute with each of \(x_0, x_1, \ldots, x_{n-1}\). Note that 
\[\makeset{y\in G}{\((x_0, x_1,\ldots, x_{n-1}, y)\) is generic} = (a, G_{x_0, x_1,\ldots, x_{n-1}})\conj_G.\]
Moreover this set is comeagre and so it is not meagre in \(G\) by Theorem~\ref{baire category theorem}.
In particular the space \((a, G_{x_0, x_1,\ldots, x_{n-1}})\conj_G\) is not meagre in itself by Remark~\ref{meagreness in subspaces rmk}.
As \(G_{x_0, x_1,\ldots, x_{n-1}}\leq G\) is closed, it is Polish and so by Theorem~\ref{microtrans}, we have that \((a, G_{x_0, x_1,\ldots, x_{n-1}}\cap N)\conj_G\) is a neighbourhood of \(a\) in the space \((a, G_{x_0, x_1,\ldots, x_{n-1}})\conj_G\).
By assumption \(B\cap (a, G_{x_0, x_1,\ldots, x_{n-1}})\conj_G\) is dense in \(G\) and so in particular, it follows that there is some \(b\in B\cap (a, G_{x_0, x_1,\ldots, x_{n-1}}\cap N)\conj_G\) as required.
\end{proof}

\speeddictthree{80}{continuous generic Cantor map}{Cantor space defn}{words defn}{generic extensions}
\begin{lemma}[cf. Lemma 6.7 of \cite{kechris2007turbulence}, Generic Cantor map, \Assumed{80}]\label{\dict{80 ref}}
Suppose that \(G\) is a Polish Group, the action \(\conj_{G}\) has ample generics,  \(A\subseteq G\) is not meagre in \(G\), \(B\subseteq G\) is such that for all comeagre subsets \(C\) of \(G\) the set \(B\cap C \) is dense in \(G\).

In this case, there is a continuous map \(a\mapsto h_a\) from \(2^\N \to G\) such that if \(p\in \{0, 1\}^*\), \(a,b\in 2^\N\), \(p0< a\) and \(p1<b\), then \(h_a^{-1}Ah_a \cap h_b^{-1}B h_b \neq \varnothing\) (recall Definition~\ref{words defn}).
\end{lemma}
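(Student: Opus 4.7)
The plan is to construct, by induction on $|p|$ for $p \in \{0,1\}^*$, elements $g_p \in G$, $a_p \in A$, $b_p \in B$, and ``generic tuples'' $T_p$ of elements of $G$ (where a tuple is \emph{generic} if its $\conj_G^{n}$-orbit is comeagre in $G^n$), subject to three constraints: (i) $d(g_{p0}, g_p), d(g_{p1}, g_p) < 2^{-|p|}$ in a fixed complete compatible metric $d$ on $G$, so that $(g_{a\restriction_n})_{n\in\N}$ is Cauchy and $h_a := \lim_n g_{a\restriction_n}$ defines a continuous map $2^\N \to G$; (ii) $g_{p0}^{-1} a_p g_{p0} = g_{p1}^{-1} b_p g_{p1} =: t_p$; and (iii) for every $q \in \{0,1\}^*$ with $p0 \leq q$, both $a_p$ and $g_{p0}$ appear as coordinates of $T_q$, and symmetrically both $b_p$ and $g_{p1}$ appear as coordinates of $T_q$ whenever $p1 \leq q$.

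At the inductive step at $p$, apply Lemma~\ref{generic extensions} to $T_p$ (which already contains $g_p$), to the non-meagre set $A$, to $B$, and to a small neighborhood $N$ of $1_G$ of $d$-diameter less than $2^{-|p|}$. This yields $a_p \in A$, $c_p \in N$, $b_p \in B$ with $c_p$ centralizing every coordinate of $T_p$ (in particular $g_p$), $c_p^{-1} a_p c_p = b_p$, and $(T_p, b_p)$ generic. Set $g_{p0} := g_p$ and $g_{p1} := g_p c_p^{-1}$; then $d(g_{p1}, g_p) < 2^{-|p|}$, and using $c_p g_p = g_p c_p$ twice,
\[
g_{p1}^{-1} b_p g_{p1} = c_p g_p^{-1}(c_p^{-1} a_p c_p) g_p c_p^{-1} = g_p^{-1} a_p g_p = g_{p0}^{-1} a_p g_{p0},
\]
verifying (ii). Take $T_{p0}$ and $T_{p1}$ to be extensions of $(T_p, a_p, b_p)$, obtained by further invocations of Lemma~\ref{generic extensions} (using that $(T_p, b_p)$ and $(T_p, a_p)$ are both generic, being $c_p$-conjugate), that carry the witnesses required by (iii) at the next level.

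For the key intersection property, let $p \in \{0,1\}^*$ and $a, b \in 2^\N$ with $p0 \leq a$ and $p1 \leq b$. Writing $h_a = g_{p0} k_a$ with $k_a := \lim_n g_{p0}^{-1} g_{a\restriction_n}$, we see that $k_a$ is a limit of products of elements $c_q^{\pm 1}$ for $p0 \leq q < a$; by (iii) together with the centralization guaranteed by Lemma~\ref{generic extensions}, every such $c_q$ commutes with both $a_p$ and $g_{p0}$, so $k_a$ commutes with $t_p = g_{p0}^{-1} a_p g_{p0}$. Hence $h_a^{-1} a_p h_a = k_a^{-1} t_p k_a = t_p$, and analogously $h_b^{-1} b_p h_b = t_p$, so $t_p \in h_a^{-1} A h_a \cap h_b^{-1} B h_b$. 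The main obstacle is executing property (iii): at each $p$ one must split the generic-tuple bookkeeping between the $p0$- and $p1$-branches so that subsequent applications of Lemma~\ref{generic extensions} on each branch force centralization of the appropriate witness. This branching bookkeeping is delicate but routine, and rests crucially on ample generics, which allows us to keep extending generic tuples with arbitrarily many new coordinates at every stage.
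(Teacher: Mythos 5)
Your scheme mirrors the paper's — build a tree of data indexed by $\{0,1\}^*$ by iterating Lemma~\ref{generic extensions} and maintaining growing generic tuples — but there is a genuine gap in maintaining constraint (iii), and it traces back to updating the $g$'s by \emph{right} multiplication, $g_{p1} := g_p c_p^{-1}$. Writing $h_a = g_{p0}k_a$ with $k_a$ a limit of right-products of $c_q^{-1}$'s, your computation $h_a^{-1}a_p h_a = k_a^{-1}t_p k_a$ forces each $c_q$ to centralize $t_p = g_{p0}^{-1}a_p g_{p0}$, which you propose to arrange by keeping $a_p, g_{p0}$ (respectively $b_p, g_{p1}$) inside $T_q$. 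But $g_{p1} = g_p c_p^{-1}$ is a \emph{forced} element once the step at $p$ has chosen $c_p$, and Lemma~\ref{generic extensions} extends a generic tuple by \emph{some} element of a prescribed non-meagre set, not by a pre-chosen target. There is no reason for $(T_p, b_p, g_{p1})$ to be generic, and no mechanism to manufacture a generic $T_{p1}$ containing this particular $g_{p1}$. The assertion that this bookkeeping is ``delicate but routine'' conceals the actual obstruction. (The $g_{p0} = g_p$ side only propagates when $p$ ends in $0$, since then $g_p = g_{p'}$ is inherited; when $p$ ends in $1$, $g_p$ again has the form $g_{p'}c_{p'}^{-1}$, and the same issue recurs one level up.)

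The repair — which is what the paper does — is to update by \emph{left} multiplication: set $g_{p1} := c_p^{-1}g_p$. Writing instead $h_a = k_a g_{p0}$ with $k_a$ a limit of left-products of $c_q^{-1}$'s, you get $h_a^{-1}a_p h_a = g_{p0}^{-1}\left(k_a^{-1}a_p k_a\right)g_{p0}$, which equals $g_{p0}^{-1}a_p g_{p0} = t_p$ as soon as $k_a$ centralizes $a_p$, with no reference to $g_{p0}$ whatsoever. Constraint (ii) still holds, since $g_{p1}^{-1}b_p g_{p1} = g_p^{-1}\left(c_p b_p c_p^{-1}\right)g_p = g_p^{-1}a_p g_p$, and you no longer even need $c_p$ to centralize $g_p$. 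Now the only centralization required is that $c_q$ commute with $a_p$ for $q\geq p0$ and with $b_p$ for $q\geq p1$, which is arranged by $T_{p0} := (T_p, a_p)$ and $T_{p1} := (T_p, b_p)$: the latter is generic by Lemma~\ref{generic extensions}, and the former is its conjugate by $c_p$. The $g$'s drop out of the generic-tuple bookkeeping entirely — as they must, since in the paper's version $h_\varepsilon = 1_G$ could never sit inside a generic tuple. The Cauchy estimate, the closed-centralizer argument for the limit, and the conclusion $t_p\in h_a^{-1}Ah_a\cap h_b^{-1}Bh_b$ then go through exactly as you outlined.
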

\begin{proof}
Let \(d\) be a complete metric for \(G\). We define a maps \(w\to h_w\), \(w\to x_w\) from \(\{0, 1\}^*\to G\) inductively to satisfy for following properties:

\begin{enumerate}
    \item If \(w\in \{0, 1\}^n\) then the tuple
    \((x_{w\restriction_{0}},\ x_{w\restriction_{1}},\ x_{w\restriction_{2}},\ \ldots,\ x_{w})\)
    has a generic orbit under \(\conj_G^{n+1}\).
    
    \item \(h_{\varepsilon} = 1_G\).
    
    \item If \(w\in \{0, 1\}^n\) and \((n-1)w = 0\) then \(x_{w}\in A\).
    \item If \(w\in \{0, 1\}^n\) and \((n-1)w = 1\) then \(x_{w}\in B\).
    \item If \(w\in \{0, 1\}^n\) and \((n-1)w = 1\) then \(h_{w}=h_{w\restriction_{n-1}}\).
    \item If \(w_0, w_1\in \{0, 1\}^n\), \((n-1)w_0 = 0\), \((n-1)w_1= 1\) and \(w_0\restriction_{n-1} =w_{1}\restriction_{n-1}\) then \((h_{w_0},h_{w_1})d \leq \frac{1}{2^{n}}\), and moreover there is some \(g\in G\) which commutes with each of \(x_{w_0\restriction_{0}}, x_{w_0\restriction_{1}},\ldots ,x_{w_0\restriction_{n-1}}\) and such that \((x_{w_0},g)\conj_G=x_{w_1}\) and \(h_{w_0} = g h_{w_1}\).
\end{enumerate}
We do this by defining \(h_\varepsilon = 1_G\) and \(x_\varepsilon\) to have comeagre orbit under \(\conj_G\), we then define the rest by induction on \(n\) (the inductive step follows from Lemma~\ref{generic extensions}).

For \(w\in 2^\N\), we define \(h_{w}\) to be the limit of the sequence \((h_{w\restriction_n})_{n\in \N}\) (by construction this sequence is Cauchy and so converges). By construction, if \(a\neq b\in 2^\N\), \(n= \min\makeset{i\in \N}{\((i)a\neq (i)b\)}\) and \((n)a =0\), then \(x_{a\restriction_{n+1}}\in A\), \(x_{b\restriction_{n+1}}\in B\).
Note also that (by conditions (5) and (6)) the sequences \(((x_{a\restriction_{n+1}},h_{a\restriction_k})\conj_G)_{k\geq  n+1}\) and \(((x_{b\restriction_{n+1}},h_{b\restriction_k})\conj_G)_{k\geq  n+1}\) are constant.
Thus:
\begin{align*}
    (x_{a\restriction_{n+1}},h_a)\conj_G&=(x_{a\restriction_{n+1}},h_{a\restriction_{n+1}})\conj_G  \\
    &=(x_{a\restriction_{n+1}},g h_{b\restriction_{n+1}})\conj_G \quad \text{ where }g\text{ is as in Condition }(6)\\
    &=(x_{b\restriction_{n+1}},h_{b\restriction_{n+1}})\conj_G\\
    &=(x_{b\restriction_{n+1}},h_{b})\conj_G.
\end{align*}
In particular this value is an element of \(h_a^{-1}Ah_a\cap h_b^{-1}B h_b\) as required.
\end{proof}

\speeddicttwo{81}{ample automatic continuity thm}{structure hom defn}{continuous generic Cantor map}
\begin{theorem}[cf. Theorem 6.24 of \cite{kechris2007turbulence}, Ample continuity, \Assumed{81}]\label{ample automatic continuity thm}
If \(G\) is a Polish Group and the action \(\conj_{G}\) has ample generics, then every homomorphism from \(G\) to a second countable topological group is continuous.
\end{theorem}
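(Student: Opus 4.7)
The plan is to prove continuity of $\phi$ at the identity $1_G$; since $\phi$ is a group homomorphism between topological groups, continuity at a single point propagates to all of $G$ by left/right translations. Fix an open neighbourhood $N$ of $1_H$, choose a symmetric open neighbourhood $V$ of $1_H$ with $V^2 \subseteq N$, and set $A := \phi^{-1}(V)$. Since $A^{-1}A \subseteq \phi^{-1}(V^2) \subseteq \phi^{-1}(N)$, it suffices to show that $A^{-1}A$ contains a neighbourhood of $1_G$.

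First I would establish that $A$ is non-meagre in $G$. Second countability of $H$ and Lemma~\ref{countable covers lem} yield a sequence $(k_n)_{n\in\N}$ in $H$ with $H = \bigcup_n V k_n$; whenever $Vk_n \cap \im(\phi) \neq \varnothing$ I pick $g_n \in \phi^{-1}(Vk_n)$, so that $\phi^{-1}(Vk_n) = A g_n$. This exhibits $G$ as a countable union of right translates of $A$, and the Baire Category Theorem (Theorem~\ref{baire category theorem}) applied to the Polish space $G$ then forces some $Ag_n$, and hence $A$, to be non-meagre.

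Now suppose for contradiction that $A^{-1}A$ is not a neighbourhood of $1_G$. I would select a set $B \subseteq G$ such that (i) $B \cap C$ is dense in $G$ for every comeagre $C \subseteq G$, so that the hypothesis of Lemma~\ref{continuous generic Cantor map} is satisfied, and (ii) $\phi(B)$ is kept disjoint from some fixed neighbourhood of $1_H$; the natural candidate is a comeagre refinement of the complement of $\phi^{-1}(V^2)$, obtained by intersecting with countably many dense open sets extracted from a countable basis of $H$ at $1_H$. Lemma~\ref{continuous generic Cantor map}, applied to $A$ and $B$, then produces a continuous map $a\mapsto h_a$ from $2^\N$ to $G$ with the property that $h_a^{-1}Ah_a \cap h_b^{-1}Bh_b \neq \varnothing$ whenever $a, b \in 2^\N$ branch just after a common prefix in the prescribed way.

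The contradiction comes from pushing this structure through $\phi$: any $x$ in such an intersection yields $\phi(h_a)\phi(x)\phi(h_a)^{-1} \in V$ and $\phi(h_b)\phi(x)\phi(h_b)^{-1} \in \phi(B)$, which in turn forces the conjugate $\phi(h_b h_a^{-1}) \, V \, \phi(h_b h_a^{-1})^{-1}$ to meet $\phi(B)$. Ranging over the uncountably many branching pairs in $2^\N$ and combining with the second countability of $H$ (via a countable basis at $1_H$) produces uncountably many incompatible translates inside $H$, which cannot fit in a second countable group — the contradiction. I expect the main obstacle to be the precise construction of $B$: it must be simultaneously dense-in-every-comeagre-set (to invoke Lemma~\ref{continuous generic Cantor map}) and uniformly away from $\phi^{-1}(V^2)$ in the $\phi$-image (to drive the final separation argument), and this is exactly the step where ample generics interacts most delicately with the Polish and Baire structure of $G$.
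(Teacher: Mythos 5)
There is a genuine gap, concentrated in exactly the step you flag as delicate: the construction of $B$.

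Your contradiction hypothesis is that $A^{-1}A$ fails to be a neighbourhood of $1_G$. That is a purely \emph{local} statement about $1_G$, but the set $B$ fed into Lemma~\ref{continuous generic Cantor map} must satisfy a \emph{global} condition: $B\cap C$ dense in $G$ for every comeagre $C$, equivalently $B$ non-meagre in every non-empty open subset of $G$. Your proposed $B$ (a ``comeagre refinement of the complement of $\phi^{-1}(V^2)$'') does not have this property: if $\phi^{-1}(V^2)$ happened to be comeagre in some non-empty open $U$, then its complement would be meagre in $U$ and no refinement would help; and your contradiction hypothesis gives you no information ruling this out. The paper avoids this by arguing by contradiction \emph{at a different point}: it assumes that some fixed power $A^k$ (with $k=5$, and $V$ chosen inverse-closed so that $A$ and hence $A^k$ are inverse-closed) is not comeagre in \emph{any} non-empty open set, and then sets $B:=(A^k)^c$. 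Under that hypothesis $B$ is non-meagre in every open set by construction, which is exactly what the Cantor map lemma requires. Once the contradiction is reached, one only knows that $A^5$ is comeagre in \emph{some} non-empty open set, and a further Baire-category argument (translate, symmetrize, use $U\cap Uh$) is needed to promote this to ``$A^{20}$ contains a neighbourhood of $1_G$''; this is why the paper takes $V^{20}\subseteq N$ rather than your $V^2\subseteq N$.

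The final step of your argument also does not close. The paper's contradiction is concrete: since $A$ and $B=(A^5)^c$ are disjoint and inverse-closed, the map $w\mapsto h_w$ from $2^\N$ must be injective; covering $G$ by the countably many right translates $Ag_i$ (obtained from Lemma~\ref{countable covers lem}, which you do correctly) forces two values $h_a, h_b$ into the same $Ag_i$, and then the algebraic identity $(A,\,h_ah_b^{-1})\conj_G\subseteq A^5$ collides with $(h_b^{-1}Bh_b)\cap(h_a^{-1}Ah_a)\neq\varnothing$. Your version, which pushes the intersection condition through $\phi$ and appeals to ``uncountably many incompatible translates inside $H$'' violating second countability, never produces pairwise-disjoint non-empty \emph{open} sets in $H$, and mere uncountably many disjoint subsets is not a contradiction with second countability. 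The second countability of $H$ is only needed once, to invoke Lemma~\ref{countable covers lem}; everything afterwards happens inside $G$. You should redirect your proof along the paper's lines: fix $V$ inverse-closed with $V^{20}\subseteq N$, prove $A$ is non-meagre (your argument here is fine), prove $A^5$ is comeagre in some open set via the $B=(A^5)^c$ contradiction, and then extract a neighbourhood contained in $A^{20}$.
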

\begin{proof}
Let \(H\) be a second countable topological group. Let \(\phi: G\to H\) be a homomorphism.
We need to show that \(\phi\) is continuous at \(1_G\), (that the preimage of every neighbourhood of \(1_H\) is a neighbourhood of \(1_G\)).
Let \(N\) be a neighbourhood of \(1_H\), let \(V\) be an inverse closed neighbourhood of \(1_H\) with \(V^{20}\subseteq N\).
Define \(A:= (V)\phi^{-1}\), we have that
\[G= \union{h\in (G)\phi} (V h)\phi^{-1}.\]
By Lemma~\ref{countable covers lem} (as \((G)\phi\) is second countable) there is a sequence \((g_i)_{i\in \N}\) with 
\[G= \union{i\in \N} (V (g_i)\phi)\phi^{-1}=\union{i\in \N} Ag_i.\]
Thus \(A\) is not meagre in \(G\).

\underline{Claim:} There is a non-empty open \(U\subseteq G\) such that \(U\cap A^5\) is comeagre in \(U\).\\
\underline{Proof of Claim:} Suppose that \(A^5\) is not comeagre in any non-empty open set \(U\). It follows that \(B:=(A^5)^c\) is not meagre in any non-empty open set. If \(C\) is comeagre in \(G\), then \(C\) is comeagre in all non-empty open subsets of \(G\) so \(B\cap C\) is dense in \(G\).
By Lemma~\ref{continuous generic Cantor map}, let \(w\to h_w\) from \(2^\N\) be a continuous map such that if \(p\in \{0, 1\}^*\), \(a, b\in 2^\N\), \(p0\leq a\) and \(p1\leq b\) then \((h_a^{-1}A{h_a}) \cap (h_b^{-1}B{h_b}) \neq \varnothing\).

If \(h\in G\), then 
\[(h^{-1}A{h}) \cap (h^{-1}B{h})= (h^{-1}A{h}) \cap (h^{-1}((A^5)^c){h})\subseteq (h^{-1}A{h}) \cap (h^{-1}(A^c){h})=\varnothing.\]
It follows that the map \(w\to h_w\) is injective. As \(2^\N\) is uncountable, there must be \(i\in \N\) and \(a\neq b\in 2^\N\) such that \(h_a,h_b\in Ag_i\). 
Assume without loss of generality, that there is \(p\in \{0, 1\}^*\) such that \(p0\leq a\) and \(p1\leq b\).

Note that
\begin{align*}
    (A,{h_a h_b^{-1}})\conj_G&=(A,{h_a g_i^{-1}g_i h_b^{-1}})\conj_G\\
    &= (h_b g_i^{-1})(h_a g_i^{-1})^{-1} A(h_a g_i^{-1}) (h_b g_i^{-1})^{-1}\\
    &\subseteq A^5.
\end{align*}
But by assumption \((B, h_b)\conj_G \cap (A,{h_a})\conj_G \neq \varnothing\), so \(B\cap (A,{h_a h_b^{-1}})\conj_G\neq \varnothing\) which is a contradiction. \(\diamondsuit\)\\

So there is a non-empty open set subset of \(G\) in which \(A^5\) is comeagre. As \(A\) (and thus \(A^5\)) are inverse closed, it follows that \(A^{10}\) is comeagre in an open neighbourhood \(U\) of \(1_G\). Let \(h\in U\) be arbitrary.
We have that \(U\cap Uh\) is an open neighbourhood of \(h\). Thus \(A^{10}\cap A^{10}h\) is comeagre in \(U\cap Uh\). So there are \(h_0, h_1\in A^{10}\) such that \(h_0 =h_1h\). It follows that \(h\in A^{20}\).

As \(h\) was arbitrary we obtain \(U\subseteq A^{20}\subseteq (N)\phi^{-1}\), and \(\phi\) is continuous as required.
\end{proof}

\speeddictthree{252}{automatic continuity examples thm}{ample automatic continuity thm}{compact-open group is Polish prop}{pointwise group is Polish prop}
\begin{theorem}[Ample automatic examples, \Assumed{81}]\label{automatic continuity examples thm}
If \(G\) is one of the following topological groups, then all homomorphisms from \(G\) to second countable topological groups are continuous.
\begin{enumerate}
    \item \(\Sym(\N)\) with the pointwise topology (recall Proposition~\ref{pointwise group is Polish prop}).
    \item \(\Aut(2^\N)\) with the compact-open topology (recall Proposition~\ref{compact-open group is Polish prop}).
\end{enumerate}
\end{theorem}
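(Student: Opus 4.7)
The plan is to observe that Theorem~\ref{automatic continuity examples thm} is essentially an immediate corollary of the machinery assembled in the previous subsections, namely Theorem~\ref{ample automatic continuity thm}. That result states that whenever a Polish group $G$ has the property that $\conj_G$ has ample generics, every homomorphism from $G$ to a second countable topological group is continuous. So my strategy is simply to verify the two hypotheses (Polish and ample generics for the conjugation action) separately in each of the two listed cases, and then invoke Theorem~\ref{ample automatic continuity thm}.

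For the first case, $\Sym(\N)$ with the pointwise topology, Proposition~\ref{pointwise group is Polish prop} already supplies that the space is Polish, and Theorem~\ref{ample symmetry} supplies that $\conj_{\Sym(\N)}$ has ample generics. For the second case, $\Aut(2^\N)$ with the compact-open topology, the analogous facts are Proposition~\ref{compact-open group is Polish prop} (Polish) and Theorem~\ref{ample canor} (ample generics of $\conj_{\Aut(2^\N)}$). Both cases therefore fall within the hypothesis of Theorem~\ref{ample automatic continuity thm}, and the conclusion follows.

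There is no genuine obstacle here since all of the substantive work has already been carried out: the hard analytic content lies in Theorem~\ref{ample automatic continuity thm} (via Effros' Theorem \ref{microtrans}, the extension lemma \ref{generic extensions}, and the $2^\N$-indexed family construction of Lemma~\ref{continuous generic Cantor map}), while the hard combinatorial/topological content lies in constructing the generic orbits (the unary-algebra amalgamation argument in Theorem~\ref{ample symmetry} and the projective Fra\"iss\'e limit of $n$-CA structures in Theorem~\ref{ample canor}). Consequently the proof proposal is essentially a two-line citation argument for each bullet: verify Polish, verify ample generics, apply Theorem~\ref{ample automatic continuity thm}.
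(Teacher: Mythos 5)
Your proposal is correct and matches the paper's proof exactly: both invoke Propositions~\ref{pointwise group is Polish prop} and \ref{compact-open group is Polish prop} for Polishness, Theorems~\ref{ample symmetry} and \ref{ample canor} for ample generics of the conjugation actions, and then conclude by Theorem~\ref{ample automatic continuity thm}.
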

\begin{proof}
The groups \(\Sym(\N)\) and \(\Aut(2^\N)\) are Polish groups by Propositions \ref{pointwise group is Polish prop} and \ref{compact-open group is Polish prop} respectively.
Moreover, by Theorems \ref{ample symmetry} and \ref{ample canor} respectively we know that \(\conj_{\Sym(\N)}\) and \(\conj_{\Aut(2^\N)}\) have ample generics.
The result then follows from Theorem~\ref{ample automatic continuity thm}.
\end{proof}

\subsection{Rubin's Theorem}
Rubin's Theorem was first proved by Matatyahu Rubin in \cite{Rubin1989reconstruction} Corollary~3.5.
The theorem shows that for a given group, there is at most one way for it to act ``nicely" on a ``nice" topological space (see Theorem~\ref{Rubin's Theorem}).
In particular, the topological space is constructable entirely from the group. 
In this subsection we give a proof of Rubin's Theorem, and conclude with the corollary (Corollary~\ref{Rubin Automorphisms cor}) to Rubin's Theorem which we will require in Part~\ref{nv section}. 
The author of this thesis learned of Rubin's proof via a set of notes taken by Francesco Matucci and James Belk on Rubin's original proof.
What is presented here is a minimalistic version of the proof which avoids all references to boolean algebras and gives an explicit description of the Rubin action of a given group (see Definition~\ref{canonical Rubin action defn}).

\speeddictthree{82}{support and fix defns}{Hausdorff defn}{substructures defn}{group actions defn}
\begin{defn}[Support and fix, \Assumed{82}]\label{\dict{82 ref}}
Suppose that \(G\) is a group, \(a:X\times G\to X\) is a continuous action of \(G\) on a topological space \(X\), and \(g\in G\).
We define
\[\supt_a(g) := \{x\in X:(x,g)a\neq x\},\quad \fix_a(g) := \{x\in X:(x,g)a= x\}.\]
Moreover if \(U\subseteq X\) then we define
\[G_{U, a}:= \{g\in G: \supt_a(g) \subseteq U\}.\]
It is routine to verify that \(G_{U, a}\) is always a subgroup of \(G\), and if \(X\) is Hausdorff, then the set \(\supt_a(g)\) is always open, and the set \(\fix_a(g)\) is always closed.
\end{defn}

\speeddictthree{84}{locally moving defn}{nbhds defn}{denseness defn}{support and fix defns}
\begin{defn}[Locally moving, \Assumed{84}]\label{locally moving defn}
If \(a:X\times G \to X\) is an action of a group \(G\) on a topological structure \(X\), then we say that \(a\) is \textit{locally moving} if for all \(x\in X\) and neighbourhoods \(U\) of \(x\), we have that \(x\in((x,G_{U, a})a)^{-\circ}\).
\end{defn}

The support sets (Definition~\ref{support and fix defns}), are precisely how the topological space will be constructed.
Due to the restrictions we will put on the action, the interplay between the sets of this type will completely determine the topological space we need.

The following lemma (Lemma~\ref{comsupt}), is our means for understanding the action of the group on these sets using the group alone.

\speeddictfour{85}{comsupt}{nbhds defn}{conjugation action defn}{commutator defn}{support and fix defns}
\begin{lemma}[Support tricks, \Assumed{85}]\label{comsupt}
If \(a:X\times G \to X\) is a continuous action of a group \(G\) on a topological space \(X\), and \(f, g\in G\), then
\[\supt_a((f, g)\conj_G)= (\supt_a(f),g)a,\]
\[\supt_a([f, g])\subseteq \supt_a(f) \cup (\supt_a(f),g)a,\quad \text{ and }\quad \supt_a([f, g])\subseteq \supt_a(g) \cup (\supt_a(g),f)a.\]
Moreover if \(X\) is Hausdorff and \(x\in \supt_a(f),\) then there is an open neighbourhood \(U\) of \(x\) such that \(U\subseteq \supt_a(f)\) and \((U, f)a\cap U = \varnothing\).
\end{lemma}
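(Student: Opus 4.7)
The plan is to reduce all four claims to direct set-theoretic computations with the action, using the shorthand $xh := (x,h)a$. Since $\phi_a$ is a semigroup homomorphism and composition in $\End(X)$ is left-to-right, this notation satisfies $x(fg) = (xf)g$, and since each $g \in G$ is a unit, the map $\rho_g : X \to X$ given by $x \mapsto xg$ is a bijection of $X$ and in particular is injective. I will also use the immediate observation that $\supt_a(f^{-1}) = \supt_a(f)$, since $xf = x \iff xf^{-1} = x$.

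For the first identity, $x \in \supt_a((f,g)\conj_G) = \supt_a(g^{-1}fg)$ means $((xg^{-1})f)g \neq x$, and applying $\rho_{g^{-1}}$ to both sides and using injectivity of $\rho_g$ this is equivalent to $(xg^{-1})f \neq xg^{-1}$, i.e.\ $xg^{-1} \in \supt_a(f)$, i.e.\ $x \in (\supt_a(f), g)a$. For the first commutator containment I would suppose $x \notin \supt_a(f) \cup (\supt_a(f), g)a$; then $xf^{-1} = x$ and $(xg^{-1})f = xg^{-1}$, so unwinding gives
\[x[f,g] = (((xf^{-1})g^{-1})f)g = ((xg^{-1})f)g = (xg^{-1})g = x,\]
so $x \notin \supt_a([f,g])$. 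The second containment follows immediately by noting that $[f,g]^{-1} = [g,f]$, so $\supt_a([f,g]) = \supt_a([g,f])$, and the first containment applied with $f$ and $g$ swapped yields the statement.

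For the moreover, the key observation is that fixing $f$, the map $\rho_f : X \to X$, $y \mapsto (y,f)a$ is continuous, being the composition of the continuous inclusion $y \mapsto (y,f)$ with the continuous action $a$. Given $x \in \supt_a(f)$ one has $xf \neq x$, so by Hausdorffness I would pick disjoint open neighbourhoods $V_x$ of $x$ and $V_{xf}$ of $xf$, and set $U := V_x \cap (V_{xf})\rho_f^{-1}$. This $U$ is an open neighbourhood of $x$ with $(U, f)a \subseteq V_{xf}$, whence $(U, f)a \cap U \subseteq V_{xf} \cap V_x = \varnothing$; moreover $U \subseteq \supt_a(f)$ comes for free, since any $f$-fixed point $y \in U$ would satisfy $y = yf$ and thus lie in both $V_x$ and $V_{xf}$. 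The entire lemma is essentially bookkeeping once the left-to-right action convention is set up, and I do not anticipate any substantive obstacle beyond being careful with that convention.
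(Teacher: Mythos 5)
Your proof is correct and takes essentially the same approach as the paper: the first identity by the same cancellation of $\rho_g$, the commutator inclusions by the same direct computation of $x[f,g]$ on the complement, and the Hausdorff separation argument for the ``moreover'' clause. The only cosmetic differences are that you derive the second inclusion cleanly via $[f,g]^{-1}=[g,f]$ where the paper says ``similarly,'' and you establish $U\subseteq\supt_a(f)$ by observing a fixed point of $f$ in $U$ would lie in $V_x\cap V_{xf}$, whereas the paper shrinks $U_0,U_1$ into the already-open set $\supt_a(f)$; both routes work.
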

\begin{proof}
The first equality can be seen as follows:
\begin{align*}
   x\in  \supt_a((f, g)\conj_G)&\iff(x,(f, g)\conj_G)a\neq x \\
    &\iff (x,g^{-1}f g)a\neq x\\
    &\iff (x,g^{-1}f)a\neq (x, g^{-1})a\\
    &\iff ((x,g^{-1})a, f)a\neq (x, g^{-1})a\\
    &\iff (x,g^{-1})a \in \supt_a(f)\\
    &\iff x \in (\supt_a(f),g)a.
\end{align*}

We now show the first of the inclusions, the second is proved similarly. From the first equality it follows that if \(x\notin \supt_a(f) \cup (\supt_a(f), g)a\) then \(x\in \fix_a(f)\cap \fix_a((f,g)\conj_G)=\fix_a(f^{-1})\cap \fix_a((f,g)\conj_G)\). 
So in particular, if \(x\notin  \supt_a(f) \cup (\supt_a(f), g)a\) then
\[(x,[f, g])a=(x,f^{-1}(f,g)\conj_G)a=(x, (f,g)\conj_G)a=x,\]
the inclusion follows.

Finally suppose that \(X\) is Hausdorff and \(x\in \supt_a(f)\). Let \(U_0, U_1\) be disjoint open sets containing \(x, (x, f)a\) respectively. As \(X\) is Hausdorff, \(\supt_a(f)\) is an open neighbourhood of both \(x\) and \((x, f)a\). Thus by replacing \(U_0, U_1\) with their intersection with \(\supt_a(f)\), we can assume without loss of generality that \(U_0, U_1\) are both contained in \(\supt_a(f)\). Let \(U:= U_0\cap (U_1, f^{-1})a\). The set \(U\) is the required neighbourhood of \(x\).
\end{proof}

The main tool we still need to establish, before we proceed with the main proof, is a purely algebraic means of describing how the support sets (Definition~\ref{support and fix defns}) compare with each other.
This role is served by the notion of ``Algebraic Disjointness" (as we will see in Theorem~\ref{Algebraic disjointness}).

\speeddicttwo{87}{algebraic disjointness defn}{commutator defn}{centralizer defn}
\begin{defn}[Algebraic disjointness, \Assumed{87}]\label{\dict{87 ref}}
If \(G\) is a group and \(f, g\in G\), then we say that \(f\) is \textit{algebraically disjoint} from \(g\) if for all \(h\in G\backslash C_G(f)\) there are \(h_{0, g}, h_{1,g}\in C_G(g)\) such that \([[h, h_{0,g}],h_{1, g}]\) is a non-identity element of \(C_G(g)\).
\end{defn}

\speeddictfive{88}{Algebraic disjointness}{isolated points defn}{Hausdorff defn}{locally moving defn}{comsupt}{algebraic disjointness defn}
\begin{theorem}[Disjoint vs algebraically disjoint, \Assumed{88}]\label{\dict{88 ref}}
Suppose that \(X\) is a Hausdorff topological space without isolated points, \(G\) is a group of homeomorphisms of \(X\), and the natural action \(a:X\times G\to X\) defined by \((x, g)a= (x)g\) is locally moving.
If \(f, g\in G\) then
\begin{align*}
    \supt_a(f)\cap \supt_a(g) = \varnothing & \Rightarrow f \text{ is algebraically disjoint from }g\\
    &\Rightarrow \supt_a(f) \cap \supt_a(g^{12}) = \varnothing.
\end{align*}
\end{theorem}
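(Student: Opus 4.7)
For the first implication, the key observation is that if $k \in G$ has $\supt_a(k) \subseteq \supt_a(f)$ then $k \in C_G(g)$, by a direct pointwise check using $\supt_a(f) \cap \supt_a(g) = \varnothing$. Given $h \in G \setminus C_G(f)$, my aim is to exhibit a non-empty open $V \subseteq \supt_a(f)$ with $V \cap (V, h)a = \varnothing$ and $(V, h)a$ disjoint from $\supt_a(g)$. By the locally moving hypothesis such a $V$ yields a non-trivial $h_{0, g}$ with $\supt_a(h_{0, g}) \subseteq V$, and Lemma~\ref{comsupt} gives $\supt_a([h, h_{0, g}]) \subseteq V \cup (V, h)a$, hence $[h, h_{0, g}] \in C_G(g)$; moreover $V \cap (V, h)a = \varnothing$ prevents $h$ from preserving $\supt_a(h_{0, g})$ setwise, so $[h, h_{0, g}] \neq 1$.

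To finish, I use the last clause of Lemma~\ref{comsupt} to obtain a non-empty open $W \subseteq \supt_a([h, h_{0, g}])$ with $W \cap (W, [h, h_{0, g}])a = \varnothing$, and pick a non-trivial $h_{1, g}$ supported in $W$ via local moving. Since $W \subseteq V \cup (V, h)a$ is disjoint from $\supt_a(g)$, we have $h_{1, g} \in C_G(g)$. Because $[h, h_{0, g}]$ fixes the complement of its own support pointwise, it preserves $\supt_a([h, h_{0, g}])$ setwise, so $\supt_a([[h, h_{0, g}], h_{1, g}]) \subseteq W \cup (W, [h, h_{0, g}])a \subseteq V \cup (V, h)a$ is disjoint from $\supt_a(g)$, giving $[[h, h_{0, g}], h_{1, g}] \in C_G(g)$; non-triviality follows from $W \cap (W, [h, h_{0, g}])a = \varnothing$ by the same setwise-preservation argument.

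For the reverse implication I argue by contrapositive. Given $p \in \supt_a(f) \cap \supt_a(g^{12})$, the constant $12$ reflects a pigeonhole argument on the $\langle g \rangle$-orbit of $p$: since $g^{12}(p) \neq p$, the $g$-period of $p$ does not divide $12$, guaranteeing enough distinct orbit points to separate by disjoint open neighbourhoods via Hausdorffness. I would then construct a specific $h \in G \setminus C_G(f)$ built from $f$ and conjugates of $f$ by small powers of $g$, designed so that every pair $h_{0, g}, h_{1, g} \in C_G(g)$ with $[[h, h_{0, g}], h_{1, g}] \in C_G(g)$ forces the double commutator to act trivially on an open neighbourhood of some orbit point, and hence to be the identity globally.

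The main obstacle is the sub-case of the forward direction where $h$ sends $\supt_a(f)$ into $\overline{\supt_a(g)}$: under mild regularity on $\supt_a(g)$ no non-empty open $V \subseteq \supt_a(f)$ can then have $(V, h)a$ disjoint from $\supt_a(g)$, and the direct argument fails. One must shrink $V$ so that $V$, $(V, h)a$, and $(V, h^2)a$ are pairwise disjoint and analyse the action of $[h, h_{0, g}]$ on $V$ directly, exploiting that the actual support of $[h, h_{0, g}]$ is strictly inside $\supt_a(h_{0, g}) \cup (\supt_a(h_{0, g}), h)a$ rather than all of $V \cup (V, h)a$, which lets the commutator land in $C_G(g)$ after all. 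The reverse direction's pigeonhole delivering the exact constant $12$ is equally delicate, requiring case analysis on the cycle length of $p$ under $g$ and on the interaction between $g$ and $f$.
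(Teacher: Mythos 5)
Your high-level outline (commutator confinement for the forward implication, pigeonhole for the reverse) matches the paper, but both directions have genuine gaps that the paper resolves differently from what you propose.

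\textbf{Forward direction.} You correctly identify the obstacle: nothing forces the existence of a nonempty open $V\subseteq\supt_a(f)$ with $(V,h)a$ disjoint from $\supt_a(g)$, and if $h$ pushes $\supt_a(f)$ into $\overline{\supt_a(g)}$ the whole approach fails. Your proposed fix (shrink so that $V$, $(V,h)a$, $(V,h^2)a$ are pairwise disjoint and argue about the ``actual'' support of $[h,h_{0,g}]$) does not rescue this: $(\supt_a(h_{0,g}),h)a$ can still meet $\supt_a(g)$, so you never get $[h,h_{0,g}]\in C_G(g)$ — and in fact you do not need it. The paper's resolution is different and avoids the difficulty entirely: take $U_0\subseteq\supt_a(f)\cap\supt_a(h)$ with $(U_0,h)a\cap U_0=\varnothing$, pick $h_{0,g}\in G_{U_0,a}$ non-trivial, then take $U_1\subseteq U_0$ with $(U_1,h_{0,g})a\cap U_1=\varnothing$ and pick $h_{1,g}\in G_{U_1,a}$ non-trivial. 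Because $\supt_a(h^{-1}h_{0,g}^{-1}h)\subseteq(U_0,h)a$ is disjoint from $U_0$, the element $[h,h_{0,g}]$ \emph{agrees with $h_{0,g}$ on $U_1$}. Consequently
\[
\supt_a\bigl([[h,h_{0,g}],h_{1,g}]\bigr)\subseteq \supt_a(h_{1,g})\cup(\supt_a(h_{1,g}),[h,h_{0,g}])a = \supt_a(h_{1,g})\cup(\supt_a(h_{1,g}),h_{0,g})a\subseteq U_0,
\]
which is inside $\supt_a(f)$ regardless of where $h$ sends things. All three of $h_{0,g}$, $h_{1,g}$, and the double commutator are then automatically supported inside $\supt_a(f)$, hence in $C_G(g)$. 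Your choice of $W\subseteq\supt_a([h,h_{0,g}])$, moved off itself by $[h,h_{0,g}]$, can land $W$ in $(V,h)a$, which is exactly what you cannot control; the paper chooses $U_1$ inside $U_0$ and uses $h_{0,g}$ (not $[h,h_{0,g}]$) as the moving element, and this is what keeps everything in $\supt_a(f)$.

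\textbf{Reverse direction.} Your sketch is too vague to evaluate and does not match the paper's mechanism. In particular, the paper does not build $h$ from $f$ and conjugates of $f$; it takes an arbitrary non-trivial $h\in G_{U,a}$ and checks $h\notin C_G(f)$ because $(U)f\cap U=\varnothing$. The role of $12=\mathrm{lcm}(1,2,3,4)$ is only to guarantee $\supt_a(g^{12})\subseteq\bigcap_{1\le i\le 4}\supt_a(g^i)$, whence one can shrink $U$ so that $U,(U)g,\dots,(U)g^4$ are pairwise disjoint and also $(U)f\cap U=\varnothing$. The pigeonhole is then: $\supt_a(h_2)\subseteq\bigcup_{p\in P}(U)p$ with $P=\{1,h_{0,g},h_{1,g},h_{0,g}h_{1,g}\}$ (four sets), while $g$-invariance of $\supt_a(h_2)$ together with the disjointness of the $(U)g^i$ produces five distinct points $(y)kg^{-i}$, $0\le i\le 4$, in $\supt_a(h_2)$; two of them must lie in the same $(U)p$, and commuting $p$ with $g$ gives $((U)g^i\cap(U)g^j)p=\varnothing$, a contradiction. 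There is no case analysis on cycle lengths. Your sketch as written omits the identification $P$, the $g$-invariance of $\supt_a(h_2)$, and the crucial step where commutativity of $p\in C_G(g)$ with $g$ is used to derive the contradiction, so the route to the constant $12$ is not actually established.
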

\begin{proof}
 First suppose that \(\supt_a(f)\cap \supt_a(g) = \varnothing\).
 Let \(h\in G\backslash C_G(f)\) and let \(p\in \supt_a(f)\cap \supt_a(h)\).
By Lemma~\ref{comsupt}, let \(U_0\) be a neighbourhood of \(p\) contained in \(\supt_a(f)\cap \supt_a(h)\) such that \((U_0, h)a\cap U_0=\varnothing\).
As \(a\) is locally moving and \(X\) is Hausdorff and has no isolated points, we may choose \(h_{0,g}\in G_{U_0, a}\) such that \((p)h_{0,g}\neq p\).
Similarly, let \(U_1\subseteq U_0\) be a neighbourhood of \(p\) such that \((U_1,h_{0,g})a\cap U_1= \varnothing\), and let \(h_{1,g}\in G_{U_1, a}\) be such that \((p)h_{1,g}\neq p\).

By Lemma~\ref{comsupt}, \(\supt_a(h^{-1}h_{0, g}^{-1} h) \cap U_1 = \varnothing\), so the actions of \([h, h_{0,g}]\) and \(h_{0,g}\) agree on \(U_1\). Similarly the actions of \([h_{0,g}, h_{1, g}]\) and \(h_{1,g}\) agree on \(U_1\), it follows that
\[(p)[[h,h_{0,g}], h_{1, g}]=(p)[h_{0,g}, h_{1, g}]=(p)h_{1,g} \neq p.\]
So \([[h,h_{0,g}], h_{1, g}] \neq \operatorname{id}_X\). Also by Lemma \ref{comsupt} we have
\begin{align*}
    \supt_a([[h,h_{0,g}], h_{1, g}])&\subseteq \supt_a(h_{1, g})\cup (\supt_a(h_{1, g}),[h,h_{0,g}])a\\
&=\supt_a(h_{1, g})\cup (\supt_a(h_{1, g}),h_{0,g})a\subseteq U_0 \cup U_1\subseteq \supt_a(f).
\end{align*}
As each of \(h_{0, g}, h_{1, g}\) and \([[h,h_{0,g}], h_{1, g}]\) have supports disjoint from \(g\), they all commute with \(g\) so the first implication follows.

Now suppose that \(f\) is algebraically disjoint from \(g\). Suppose for a contradiction that \(x\in \supt_a(f)\cap \supt_a(g^{12})\). Note that
\[\supt_a(g^{12})\subseteq \bigcap_{1\leq i\leq 4} \supt_a(g^i).\]
It follows from Lemma~\ref{comsupt}, that we can find a neighbourhood \(U\) of \(x\) such that
\[(U)f\cap U= (U)g\cap U = (U)g^2\cap U=(U)g^3\cap U= (U)g^4\cap U = \varnothing.\]
Note that the sets \(\{(U)g^{i}:0\leq i\leq 4\}\) are pairwise disjoint.
As \(G\) is locally moving and \(X\) is Hausdorff without isolated points, we may choose an element \(h\) of \(G_{U, a}\backslash \{\operatorname{id}_X\}\).
By Lemma~\ref{comsupt}, \(\supt_a(h)\cap \supt_a(f^{-1}hf)= \varnothing\) and in particular \(h\in G\backslash C_{G}(f)\).
So by assumption we may find \(h_{0, g}, h_{1, g}\in C_G(g)\) such that \(h_2:=[[h, h_{0, g}], h_{1, g}]\in C_G(g)\backslash \{\operatorname{id}_X\}\).
Let \(P:= \{\operatorname{id}_X, h_{0,g}, h_{1, g}, h_{0,g}h_{1,g}\}\subseteq C_{G}(g)\).
By Lemma \ref{comsupt} (applied twice) we have
\[\supt(h_2)\subseteq \bigcup_{p\in P} (U)p.\]
As \(h_2\neq \operatorname{id}_X\), there is thus some \(y\in U\) and \(k\in P\) such that \(((y)k)h_2\neq (y)k\).
As \(g^{-1}h_2g=h_2\), we have \(\supt_a(h_2)g^i= \supt_a(h_2)\) for all \(i\in \mathbb{Z}\).
Moreover, as the sets \(\{(U)g^{i}:0\leq i\leq 4\}\) are pairwise disjoint, when \(|i-j|\in \{1, 2, 3, 4\}\) we have \((y)kg^{i}= (y)g^{i}k\neq (y)g^{j}k=(y)kg^{j}\).
So 
\[|\{(y)kg^{-i}:0\leq i\leq 4\}| = 5.\]
As \(|P|=4\) and \(\{y k g^{-i}:0\leq i\leq 4\} \subseteq \supt_a(h_2)\subseteq \bigcup_{p\in P} (U)p\), there is some \(p\in P\) and
\(0\leq i<j\leq 4\) such that \(\{(y)kg^{-i}, (y)kg^{-j}\}\subseteq (U)p\).
Thus \((y)k\in (U)pg^i\cap (U)pg^j=(U)g^i p\cap (U)g^j p=((U)g^i\cap (U)g^j)p=\varnothing\).
This is a contradiction.
\end{proof}

\speeddicttwo{89}{non trivial powers}{locally moving defn}{comsupt}
\begin{lemma}[Non-trivial powers, \Assumed{89}]\label{non trivial powers}
Suppose that \(X\) is a Hausdorff topological space without isolated points, \(G\) is a group of homeomorphisms of \(X\), and the natural action \(a:X\times G\to X\) defined by \((x, g)a = (x)g\) is locally moving. 
If \(U\subseteq X\) is open and non-empty, then for all \(n\in \N\backslash\{0\}\) there is \(g\in G_{U, a}\) such that \(g^n\neq \operatorname{id}_X\).
\end{lemma}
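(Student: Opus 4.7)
The argument is by strong induction on \(n\). For the base case \(n=1\), I show \(G_{U,a}\) contains a non-identity element: picking any \(x\in U\), local moving gives \(x\in ((x,G_{U,a})a)^{-\circ}\), so if \(G_{U,a}\) were trivial we would have \(x\in \{x\}^{-\circ}\), contradicting Hausdorffness together with the absence of isolated points. For the inductive step (\(n\geq 2\)), assume the result for every positive integer less than \(n\) and every non-empty open subset of \(X\). If some \(h\in G_{U,a}\) satisfies \(h^n\neq \operatorname{id}_X\) we are done, so suppose instead that every element of \(G_{U,a}\) has finite order dividing \(n\).

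The plan is to exhibit \(g\) of the form \(\kappa h\), where \(h\) cyclically permutes a disjoint family of open sets in \(U\) and \(\kappa\) is supported on one of them. First I pass to an element of prime-power order: start with any non-identity \(h_0\in G_{U,a}\), let \(d=\operatorname{ord}(h_0)\), pick a prime \(p\mid d\), let \(p^a\) be the largest power of \(p\) dividing \(d\), and set \(h:=h_0^{d/p^a}\). Then \(h\in G_{U,a}\) has order \(m:=p^a\geq 2\), and a short divisibility argument shows \(h\) has an orbit of size exactly \(m\): some \(h_0\)-orbit has size divisible by \(p^a\), and since \(\gcd(d/p^a,\,p^a)=1\), on such an orbit \(h_0^{d/p^a}\) acts as a full \(p^a\)-cycle. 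Fix \(x\) on such an orbit. The points \(x,hx,\ldots,h^{m-1}x\) are \(m\) distinct elements of \(U\); using Hausdorff to separate them and continuity of \(h\) to shrink, I obtain an open \(V\subseteq U\) containing \(x\) such that the sets \(V_i:=(V,h^i)a\) for \(i=0,1,\ldots,m-1\) are pairwise disjoint. Since \(h^m=\operatorname{id}_X\) globally, the cycle closes: \((V_i,h)a=V_{(i+1)\bmod m}\) for all \(i\).

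Applying the strong inductive hypothesis to the non-empty open set \(V\) and the integer \(n/m\) (which is strictly less than \(n\) because \(m\geq 2\)), I obtain \(\kappa\in G_{V,a}\) with \(\kappa^{n/m}\neq \operatorname{id}_X\). Set \(g:=\kappa h\); then \(\supt_a(g)\subseteq V\cup \supt_a(h)\subseteq U\), so \(g\in G_{U,a}\). Because \(\kappa\) fixes everything outside \(V_0=V\), a direct check shows \(g\) acts as \(h\) on each \(V_i\) with \(i\geq 1\) and as ``\(\kappa\) followed by \(h\)'' on \(V_0\), so \(g\) cycles the \(V_i\). For \(y\in V\), tracing one full cycle gives \((y,g^m)a=((y,\kappa)a,h^m)a=(y,\kappa)a\) (using \(h^m=\operatorname{id}_X\)), and a straightforward induction on \(j\) yields \((y,g^{jm})a=(y,\kappa^j)a\). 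Taking \(j=n/m\) and choosing \(y\in \supt_a(\kappa^{n/m})\subseteq V\) gives \((y,g^n)a\neq y\), so \(g^n\neq \operatorname{id}_X\) as required. The main obstacle is the choice of \(h\): a naive non-identity element of order dividing \(n\) may have every orbit strictly smaller than its order (e.g.\ orbits of sizes \(2\) and \(3\) with order \(6\)), so \(h^{\text{orbit-size}}\) fails to be the global identity and the cyclic closure breaks; replacing \(h_0\) by its prime-power piece \(h_0^{d/p^a}\) is exactly what rescues the orbit-size/order match needed for the construction to go through.
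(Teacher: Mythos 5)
Your proof is correct, but it takes a genuinely different route from the paper's. The paper establishes a stronger inductive claim: for each $n$ it produces $U_n\subseteq U$ open and $g_n\in G_{U,a}$ such that the translates $(U_n)g_n^i$ for $0\leq i<n$ are pairwise disjoint, growing this chain one step at a time; when the current candidate $g$ accidentally satisfies $g^k=\operatorname{id}$ on $U_{k-1}$, the paper perturbs it by an element $h$ supported in $U_{k-1}$, using the first-return-type calculation $(x)(hg)^k=(x)hg^k=(x)h$ for $x\in\supt_a(h)$. Your proof instead argues by contradiction: assuming every element of $G_{U,a}$ has order dividing $n$, you pass to a prime-power-order element $h$ of order $m=p^a$, exhibit an $m$-cycle of pairwise disjoint open sets $V=V_0,\dots,V_{m-1}$, invoke the strong inductive hypothesis on $V$ with the strictly smaller exponent $n/m$ to obtain $\kappa$ with $\kappa^{n/m}\neq\operatorname{id}_X$, and conclude from the first-return identity $g^{jm}\restriction_V=\kappa^j\restriction_V$ for $g=\kappa h$. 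The underlying trick — composing a cycling element with a perturbation of small support and observing that the return map to the support equals the perturbation — is the same, but your organisation is a divide-and-conquer recursion on the exponent rather than the paper's incremental construction of longer chains of disjoint translates. The point you flag at the end is a real one and is handled cleanly: the reduction to prime-power order is exactly what guarantees both that $h^m=\operatorname{id}_X$ globally (so the $V_i$ really form a closed cycle) and that some $h$-orbit has length exactly $m$, since the lcm of the divisors of $p^a$ realised as orbit lengths can only equal $p^a$ if $p^a$ itself occurs. The contradiction hypothesis is what supplies $m\mid n$, so $n/m$ is a valid smaller input to the induction. All the supporting facts you invoke (supports combine as $\supt_a(\kappa h)\subseteq\supt_a(\kappa)\cup\supt_a(h)$, $G_{V,a}$ is a subgroup, $\supt_a(\kappa^{j})\subseteq\supt_a(\kappa)$) are available from the surrounding material. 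This is a valid alternative proof.
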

\begin{proof}
 We prove by induction on \(n\), that for all \(n\in \N\) there is a non-empty open set \(U_n\) and \(g_n\in G_{U, a}\) such that \(U_n\subseteq U\) and \(\{(U_{n})g_n^i:0\leq i < n\}\) are pairwise disjoint (note that this implies the lemma).
 
 As \(a\) is locally moving, and \(X\) is Hausdorff with no isolated points, it follows that there is \(g_1\in G_{U, a}\backslash \{1_G\}\). The case for \(n=1\) now follows immediately from Lemma~\ref{comsupt}.
 
  Suppose inductively that the claim is true for all \(n<k\). 
  If \(U_{k-1}\not\subseteq \fix_a(g_{k-1}^k)\) then let \(x\in \supt_a(g_{k-1}^k)\cap U_{k-1}\).
  For each \(i\in \{0, 1, 2, \ldots, k\}\) choose a neighbourhood \(V_i\) of \((x)g_{k-1}^{i}\) such that the sets \(V_i\) are pairwise disjoint (note that the points \((x)g_{k-1}^{i}\) are distinct by assumption).
  We can then define \(U_k:= \intersection{i\in \{0, 1, 2, \ldots, k\}} V_i g_{k-1}^{-i}\) (which is notably a neighbourhood of \(x\)) and \(g_k:= g_{k-1}\).
  
 If \(U_{k-1}\subseteq \fix_a(g_{k-1}^k)\), choose some \(h\in G_{U_{k-1}, a}\backslash \{1_G\}\).
 If \(x\in \supt_a(h)\), then we have \((x)(h g_{k-1})^k= (x)h g_{k-1}^k=(x)h\) (the first equality follows from the fact that \((\supt_a(h))g_{k-1}^i\cap \supt_a(h)= \varnothing\) for \(0<i<k\)).
 Moreover for \(0\leq i < k\), we have \((U_{k-1})(h g)^i = (U_{k-1})g^i\).
 Hence \(h g_{k-1}\) was an appropriate candidate for \(g_{k-1}\) and we can return to the case that \(U_{k-1}\not\subseteq \fix_a(g_{k-1}^k)\).
\end{proof}

\speeddictfour{91}{algebraic subbasis defn}{basis defn}{automorphism groups defn}{centralizer defn}{algebraic disjointness defn}
\begin{defn}[Algebraic basis, \Assumed{91}]\label{algebraic subbasis defn}
If \(G\) is a group and \(f\in G\), then we define
\[G_f:= C_G(\{g^{12}:f\text{ is algebraically disjoint from }g\}).\]
Note that if \(\phi\in \Aut(G)\), then \((G_f)\phi= G_{(f)\phi}\). 
Moreover, if \(F\subseteq G\) is finite, then we define \(G_F:= \cap_{f\in F} G_f\).
\end{defn}

\speeddictthree{92}{canonical Rubin action defn}{filters defn}{conjugation action defn}{algebraic subbasis defn}
\begin{defn}[Canonical Rubin action, \Assumed{92}]\label{canonical Rubin action defn}
If \(G\) is a group, then we define the \textit{canonical Rubin space} \((X_G, \mathcal{T}_G)\) and \textit{canonical Rubin action} \(r_G:X_G\times G\to X_G\) of \(G\) as follows:
\begin{enumerate}
    \item Let \(B_G := \makeset{G_F}{\(F\) is a finite subset of \(G\)}\). The set \(B_G\) is partially ordered by containment, and \(G\) acts on \(B_G\) by conjugation.
    
    \item Let \(\mathcal{U}_G := \{\mathcal{F}\subseteq B_G: \mathcal{F} \text{ is an ultrafilter of }B_G\text{ as a poset}\}\) (recall Definition~\ref{filters defn}).
    
    \item If \(\mathcal{F}\in \mathcal{U}_G\) and \(F\) is a finite subset of \(G\), then we say that \(\mathcal{F} \in_G G_F\) if there is some non-identity element \(g\in G\) with \(G_g\leq G_F\) and such that
    \[(\mathcal{F},G_F)\conj_G = \makeset{(b, h)\conj_G}{\(b\in \mathcal{F}, h\in G_F\)}\supseteq \{b\in B_G:b\leq G_g\}.\]
    Note that if \(g\in G\) and \(\mathcal{F}\in_G G_F\), then \((\mathcal{F},g)\conj_G\in_G G_{(F,g)\conj_G}\).
    
    \item We define an equivalence relation \(\sim_G\) on \(\mathcal{U}_G\) by
    \[\mathcal{F}_1 \sim_G \mathcal{F}_2 \iff \{G_F\in B_G: \mathcal{F}_1\in_G G_F\}= \{G_F\in B_G: \mathcal{F}_2\in_G G_F\}.\]
    We also extend the relation \(\in_G\) to \(\mathcal{U}_G/\sim_G\) by \([\mathcal{F}]_{\sim_G}\in_G G_F \iff \mathcal{F}\in_G G_F\).
    
    \item We define \(X_G := \{[\mathcal{F}]_{\sim_G}: \mathcal{F}\in \mathcal{U}_G\text{ such that there is }f\in G\text{ with }\mathcal{F}\in_G G_{\{f\}}=G_f\}\).
    
    \item For each finite \(F\subseteq G\), we define \(G_F': = \makeset{x\in X_G}{\(x\in_G G_F\)}\).
    
    \item We define \(\mathcal{T}_{G}\), to be the topology on \(X_G\) generated by the sets \(G_F'\) for finite \(F\subseteq G\).
    
    \item We define \(r_G:X_G\times G\to X_G\) by \(([\mathcal{F}]_{\sim_G}, g)r_G = [(\mathcal{F}, g)\conj_G]_{\sim_G}\). It is routine to verify that this is indeed a well-defined action.
\end{enumerate}
\end{defn}

\speeddicttwo{93}{conjugate actions defn}{Products in Categories Defn}{group actions defn}
\begin{defn}[Conjugate actions, \Assumed{93}]\label{conjugate actions defn}
Suppose that \(G\) is a group, \(X_0, X_1\) are topological structures and \(a_0:X_0\times G\to X_0, a_1:X_1\times G \to X_1\) are actions of \(G\) on these structures. We say that \(a_0, a_1\) are \textit{topologically conjugate} if there is a topological isomorphism \(\psi:X_0 \to X_1\) such that \(a_0\circ \psi = \langle \pi_0\psi, \pi_1 \rangle_{X_1\times G} \circ a_1\) (recall Definition~\ref{Products in Categories Defn}).
\end{defn}

\speeddictsix{95}{Rubin's Theorem}{compactness facts}{locally compact defn}{structure hom defn}{Algebraic disjointness}{canonical Rubin action defn}{conjugate actions defn}
\begin{theorem}[Rubin's Theorem, \Assumed{95}]\label{Rubin's Theorem}
If \(a:X\times G\to X\) is a continuous, faithful, locally moving action of a group \(G\) on a locally compact, Hausdorff topological space \(X\) without isolated points, then \(a\) is topologically conjugate to the canonical Rubin action \(r_G\).
\end{theorem}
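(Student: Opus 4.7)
The plan is to explicitly construct a topological conjugacy $\psi: X \to X_G$. The engine of the construction is a dictionary between the algebraically-defined subgroups $G_F \in B_G$ (Definition~\ref{algebraic subbasis defn}) and certain open subsets of $X$. Theorem~\ref{Algebraic disjointness} tells us that, modulo passing to $12$-th powers, algebraic disjointness of $f$ and $g$ is the same as disjointness of $\supt_a(f)$ and $\supt_a(g)$. From this one deduces that $G_f$ contains every element of $G$ whose support lies in $\supt_a(f)$ while excluding every element whose support lies entirely in the interior of $\fix_a(f)$. Thus each $G_F$ is sandwiched between the subgroups $G_{U, a}$ for $U$ ranging over a small collection of opens determined by $F$, and the partial order on $B_G$ coarsely reflects containment of the associated open sets.

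With this dictionary in hand, I would define $\psi$ as follows. For each $x \in X$, let $\mathcal{F}_x$ be the filter on $B_G$ generated by the sets $\{G_F : x \in U_F\}$ where $U_F := \bigcap_{f\in F}\supt_a(f)^{-\circ}$, using Lemma~\ref{non trivial powers} to verify that this generating family has the finite intersection property (so the filter is proper). Extend $\mathcal{F}_x$ to an ultrafilter by Zorn's lemma (Definition~\ref{filters defn}); I claim the equivalence class $[\mathcal{F}_x]_{\sim_G}$ depends only on $x$, and that $[\mathcal{F}_x]_{\sim_G} \in X_G$. For the first claim one verifies that any ultrafilter extending $\mathcal{F}_x$ satisfies $\mathcal{G} \in_G G_F$ iff $x \in U_F$, by using Lemma~\ref{comsupt} to unwind the definition of $\in_G$ in terms of conjugation orbits of support sets. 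The membership $[\mathcal{F}_x]_{\sim_G} \in X_G$ requires producing a non-identity $f \in G$ such that $\mathcal{F}_x \in_G G_{\{f\}}$; here local compactness enters, providing a compact neighborhood of $x$ inside which the locally moving hypothesis (Definition~\ref{locally moving defn}) supplies the desired $f$.

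Bijectivity of $\psi$ follows from faithfulness plus Hausdorffness (two distinct points are separated by disjoint $U_F$'s, so $\mathcal{F}_x \neq \mathcal{F}_y$ up to $\sim_G$) and from the compactness-type argument of the previous paragraph, which reverses to show that any element of $X_G$ converges, via its defining ultrafilter, to a unique point of $X$. Continuity of $\psi$ reduces to showing $\psi^{-1}(G_F') = U_F$ is open, which is immediate from the construction. Continuity of $\psi^{-1}$ follows because the sets $U_F$, as $F$ ranges over finite subsets of $G$, form a basis for the topology of $X$; this in turn uses that the action is locally moving and that $X$ has no isolated points. Equivariance $\psi((x,g)a) = (\psi(x), g)r_G$ is then a direct consequence of the first identity of Lemma~\ref{comsupt}, $\supt_a((f,g)\conj_G) = (\supt_a(f),g)a$, which translates conjugation in $G$ into the $G$-action on $X$ under the dictionary $G_f \leftrightarrow \supt_a(f)^{-\circ}$.

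The main obstacle is the imprecision of the dictionary: $G_f$ is \emph{not} exactly the group $G_{\supt_a(f), a}$, because boundary behaviour of $\supt_a(f)$ creates slippage, and this slippage is precisely what forces $X_G$ to be built from ultrafilters modulo $\sim_G$ rather than a direct identification with a collection of open sets. Handling this slippage cleanly requires repeated invocation of Theorem~\ref{Algebraic disjointness}, Lemma~\ref{comsupt}, and Lemma~\ref{non trivial powers}, together with careful use of the no-isolated-points hypothesis to invoke the locally moving property at every candidate point. The local compactness is needed at exactly one place, but it is essential there: it guarantees that every ultrafilter witnessing an element of $X_G$ actually converges in $X$, so that $\psi$ is surjective.
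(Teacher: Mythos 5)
Your outline is essentially the paper's proof run in the opposite direction: you construct $\psi\colon X \to X_G$ while the paper builds $\psi\colon X_G \to X$, but both rest on the same four ingredients — the exact order-embedding $\gamma\colon G_F \mapsto \bigcap_{f\in F}\supt_a(f)^{-\circ}$ (the paper's Claim~1, which is where Theorem~\ref{Algebraic disjointness} and Lemma~\ref{non trivial powers} are actually consumed), the fact that $\{(G_f)\gamma : f\in G\}$ forms a basis for $X$ (Claim~2), the convergence characterization of $\in_G$ (Claim~3), and local compactness to guarantee each ultrafilter witnessing membership in $X_G$ actually converges. Two corrections to your framing, though: the family $\{G_F : x\in U_F\}$ is already downward directed because $U_{F_1}\cap U_{F_2} = U_{F_1\cup F_2}$, so Lemma~\ref{non trivial powers} is not what makes that filter proper; and the dictionary is not a coarse sandwich but the exact equality $G_f = G_{\supt_a(f)^{-\circ},a}$, so the reason $X_G$ must be built from ultrafilters modulo $\sim_G$ is not ``slippage'' in the dictionary but simply that many distinct ultrafilters on $B_G$ converge to the same point of $X$ and must be identified.
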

\begin{proof}
Let \(\mathcal{T}\) denote the topology on \(X\) partially ordered by \(\subseteq\).\\
\underline{Claim 0:}
 If \(U, V\subseteq X\) are arbitrary open sets satisfying \(G_{U^{-\circ}, a} \subseteq G_{V^{-\circ}. a}\), then \(U^{-\circ} \subseteq V^{-\circ}\).\\
 \underline{Proof of Claim:}
Let \(U'\) be an arbitrary non-empty open subset of \(U\). As \(X\) is Hausdorff and has no isolated points, and \(a\) is locally moving, it follows that we can find some non-identity \(h\in G_{U', a}\). As
\[h\in G_{U', a}\subseteq G_{U, a}\subseteq G_{U^{-\circ}, a}\subseteq G_{V^{-\circ}, a}\]
it follows that \(\supt_a(h)\subseteq V^{-\circ}\), so \(U'\cap V^{-\circ} \neq \varnothing\), and thus \(U'\cap V \neq \varnothing\).
As \(U'\) was arbitrary, it follows that \(V\) is dense in \(U\), so \(U^-\subseteq V^-\), and \(U^{-\circ}\subseteq V^{-\circ}\).\(\diamondsuit\)\\
\underline{Claim 1:} For all finite \(F\subseteq G\), we have that \(G_F = G_{\intersection{f\in F}\supt_a(f)^{-\circ}, a}\). 
If we then define \(\gamma:B_G \to \mathcal{T}\) by \((G_F)\gamma= \intersection{f\in F}\supt_a(f)^{-\circ}\), then \(\gamma\) is a well-defined embedding of partially ordered sets. Moreover, for all \(f, g\in G\) we have \(((G_f, g)\conj_G)\gamma= ((G_f)\gamma, g)a.\)\\
\underline{Proof of Claim}:\\
As \(G_{\intersection{f\in F}\supt_a(f)^{-\circ}, a}= \intersection{f\in F}G_{\supt_a(f)^{-\circ}, a}\), we need only show the first part of the claim in the case that \(F\) is a singleton \(\{f\}\). 

We first show that \(G_f\subseteq G_{\supt_a(f)^{-\circ}, a}\). Suppose that \(h \not\in G_{\supt(f)^{-\circ}, a}\) is arbitrary, we will show that \(h\notin G_f\).
As \(\supt_a(h)\not\subseteq \supt_a(f)^{-\circ}\), there is a non-empty open set \(U_0\subseteq \supt_a(h)\backslash \supt_a(f)\).
By Lemma~\ref{comsupt}, let \(U_1\) be a non-empty open subset of \(U_0\) such that \((U_1,h)a\cap U_1 = \varnothing\).
By Lemma~\ref{non trivial powers}, let \(g\in G_{U_1, a}\) be such that \(g^{12}\neq 1_G\). Then \(\supt_a(g)\subseteq U_1 \subseteq \supt_a(h)\backslash \supt_a(f)\).
So by Theorem~\ref{Algebraic disjointness}, \(f\) is algebraically disjoint from \(g\).
But \(g^{12}\) and \(h^{-1}g^{12}h\) have disjoint supports, so \(g^{12}\) does not commute with \(h\).
Thus \(h\not\in G_f\).

We next show that \(G_{\supt_a(f)^{-\circ}, a}\subseteq G_f\).
Let \(h\in G_{\supt_a(f)^{-\circ}}\) be arbitrary, and \(g\in G\) be arbitrary such that \(f\) is algebraically disjoint from \(g\).
We show that \(h\) commutes with \(g^{12}\).
By Theorem~\ref{Algebraic disjointness}, we have that \(\supt_a(f)\cap \supt_a(g^{12})= \varnothing\).
As \(\supt_a(f)\) is dense in \(\supt_a(h)\) and \(\supt_a(g^{12})\) is open, it follows that \(\supt_a(h)\cap \supt_a(g^{12})= \varnothing\). Thus \(h\) commutes with \(g^{12}\) as required.

We now show that \(\gamma\) is well-defined. If \(F\subseteq G\) is finite, then \[(G_F)\gamma\subseteq (G_F)\gamma^{-\circ}= \left(\intersection{f\in F}\supt_a(f)^{-\circ}\right)^{-\circ}\subseteq\left(\intersection{f\in F}\supt_a(f)^{-\circ-}\right)^{\circ}=\left(\intersection{f\in F}\supt_a(f)^{-}\right)^{\circ}\subseteq (G_F)\gamma.\]
Thus \((G_F)\gamma\) is of the form in Claim 0. If \(G_{U^{-\circ}, a} = (G_F)\gamma= G_{V^{-\circ}, a} \), then we have both \(U^{-\circ}\subseteq V^{-\circ}\) and \(V^{-\circ}\subseteq U^{-\circ}\), so \(V^{-\circ}= U^{-\circ}\). 
Thus \(\gamma\) is a well-defined function (and is order preserving).

We next show that \(\gamma\) is an embedding.
By the first part of the claim, the map \(\gamma': \mathcal{T}\to \mathcal{P}(G)\) defined by \((U)\gamma'= G_{U, a}\) satisfies \(\gamma\gamma' = \operatorname{id}_{B_G}\) (where \(\operatorname{id}_{B_G}\) is the identity function on \(B_G\)).
It follows that \(\gamma\) is an injective function. Moreover the map \(\gamma'\) preserves containment and hence so does \(\gamma^{-1}\subseteq \gamma'\).

Finally by Lemma~\ref{comsupt}, for all \(f,g\in G\) we have that
\begin{align*}
    ((G_f, g)\conj_G)\gamma& = (G_{g^{-1}f g})\gamma= (\supt_a(g^{-1}f g))^{-\circ}\\
    &=((\supt_a(f),g)a)^{-\circ} = (\supt(f)^{-\circ},g)a= ((G_f)\gamma, g)a.\diamondsuit
\end{align*}

\underline{Claim 2:} The set \(\left(\makeset{G_f}{\(f\in G\)}\right)\gamma\subseteq \im(\gamma)\) is a basis for the topology \(\mathcal{T}\).\\
\underline{Proof of Claim}:\\
Let \(x\in X\) and \(U\) an arbitrary open neighbourhood of \(x\).
Let \(V\) be a compact neighbourhood of \(x\).
It follows that \(U \cap V\) is an open neighbourhood of \(x\) in the compact space \(V\).
By Remark~\ref{compactness facts}, there is a compact neighbourhood \(W\) of \(x\) contained in \(U\cap V\).
In particular \(W\) is closed in \(X\).

Let \(f\in G_{W^\circ, a}\) be such that \((x)f\neq x\). We have
\[x\in \supt_a(f)\subseteq \supt_a(f)^{-\circ}\subseteq W^{\circ-\circ}\subseteq W\subseteq U.\]
As \(\supt_a(f)^{-\circ}= (G_f)\gamma\), the result follows.\(\diamondsuit\)\\
\underline{Claim 3:} If \(\mathcal{F}\in \mathcal{U}_G\) and \(U\) is an open set such that \(G_{U, a}\in B_G\), then
\[\left(\text{there is }x\in U\text{ such that }\nbhd{X}{x}\subseteq ((\mathcal{F})\gamma)^*\right) \iff \mathcal{F}\in_G G_{U,a}.\]
Where \(((\mathcal{F})\gamma)^*:= \makeset{S\in \mathcal{P}(X)}{\(S\) contains an element of \( (\mathcal{F})\gamma\)}\).\\
\underline{Proof of Claim}:\\
\((\Rightarrow):\) Suppose that \(x\in U\) and \(\nbhd{X}{x}\subseteq ((\mathcal{F})\gamma)^*\).
As \(a\) is locally moving, it follows from Claim 2 that there is a non-identity element \(g\in G\) such that \((x,G_{U,a})a\) is dense in \((G_g)\gamma\).
Let \(b\in B_G\) be arbitrary with \(b\leq G_g\). It suffices to show that \(b\in (\mathcal{F}, G_{U,a})\conj_G\).

As \((x,G_{U,a})a\) is dense in \((G_g)\gamma\), there is some \(f_b\in G_{U, a}\) such that \((x,f_b)a\in (b)\gamma\).
It follows that \(((b)\gamma ,f_b^{-1})a\in\nbhd{X}{x}\subseteq ((\mathcal{F})\gamma)^*\).
As \(\mathcal{F}\) is a filter, we have \((\mathcal{F})\gamma = ((\mathcal{F})\gamma)^*\cap (B_G)\gamma\).
Moreover, by Claim 1 we have \(((b, f_b^{-1})\conj_G)\gamma=((b)\gamma, f_b^{-1})a\), so \(((b)\gamma, f_b^{-1})a\in  ((\mathcal{F})\gamma)^*\cap (B_G)\gamma=(\mathcal{F})\gamma\). Thus we have
\begin{align*}
 (b)\gamma\in  ((\mathcal{F})\gamma, f_b)a \Rightarrow (b)\gamma\in ((\mathcal{F},f_b)\conj_G)\gamma\Rightarrow b\in (\mathcal{F},f_b)\conj_G\Rightarrow b\in (\mathcal{F}, G_{U, a})\conj_G
\end{align*}
 as required.

\((\Leftarrow):\) Suppose that \(\mathcal{F}\in_G G_{U, a}\).
Let \(g\in G\) be such that \(G_g \leq G_{U,a}\), and for all \(h\in G\) with \(G_h\leq G_g\), we have that \(G_h\in (\mathcal{F},G_{U,a})\conj_G\).
As shown in the proof of Claim 2, \((G_{g})\gamma\) contains a compact set with non-empty interior, and by Claim 2 we can always choose this set to be the closure of an element of \((B_G)\gamma\), thus by replacing \(G_g\) with a smaller element of \(B_G\) we can assume without loss of generality that \(((G_g)\gamma)^-\) is a compact subset of \(U\).

Let \(h\in G\) be arbitrary such that \(G_h\leq G_g\), and note that \(((G_h)\gamma)^-\) is compact.
There is some \(f_h\in G_{U, a}\) such that \(G_{f_h^{-1}hf_h}=(G_h,{f_h})\conj_G\in \mathcal{F}\).
Let \(k := f_h^{-1}hf_h\) and note that \(G_k\in \mathcal{F}\) and \[((G_k)\gamma)^-=(((G_h)\gamma)^-, f_h)a\subseteq (((G_g)\gamma)^-, f_h)a\subseteq (U,f_h)a=U.\]

By Remark~\ref{compactness facts}, there is some \(x\in  (G_k)\gamma^-\cap \intersection{b\in \mathcal{F}}(b)\gamma^-\).
Let \(N\in \nbhd{X}{x}\) be arbitrary. We need only show that \(N\in ((\mathcal{F})\gamma)^*\). Let \(N':=(G_k)\gamma\cap N\). 
As \(B_G\) and \(\mathcal{F}\) are closed under finite intersections, the set \(\makeset{A\in B_G}{\(A\supseteq b\cap (N')\gamma^{-1}\) for some \(b\in \mathcal{F}\)}\) is a proper filter on \(B_G\) containing \(\mathcal{F}\).
As \(\mathcal{F}\) is an ultrafilter, it follows that \((N')\gamma^{-1}\in \mathcal{F}\). Hence \(N\in ((\mathcal{F})\gamma)^*\) as required.
\(\diamondsuit\). \\
From Claim 3, for all \(x\in X_G\) there is \((x)\psi\in X\) such that \(\nbhd{X}{(x)\psi}\subseteq ((\mathcal{F})\gamma)^*\) for all \(\mathcal{F}\in x\). 
As \(X\) is Hausdorff it follows from Claim 2 that this point is unique (thus we can define a map \(\psi:X_G\to X\) in this fashion).
It also immediate from Claim 3 and the definition of \(\sim_G\) that the function \(\psi\) is injective.
Furthermore, by Claim 2 and the definition of \(\mathcal{T}_G\), a subset \(U\subseteq X_G\) is open if and only if it is the preimage of an open set under \(\psi\).

If \(x\in X\) is arbitrary, then \(\nbhd{X}{x}\) is a proper filter on \(\mathcal{P}(X)\), thus it can be extended to some ultrafilter \(\mathcal{F}\) on \(\mathcal{P}(X)\), thus \(([(\mathcal{F} \cap (B_G)\gamma)\gamma^{-1}]_{\sim_G})\psi=x\). As \(x\) was arbitrary, it follows that \(\psi\) is surjective. We now have that \(\psi:X_G\to X\) is a homeomorphism.

It now suffices to show that for all \(g\in G\), and \(x\in X_G\), we have \((x,g)r_G\psi= ((x)\psi, g)a\). Let \(g\in G\) and \(x\in X_G\) be arbitrary. By Claim 2 and the assumption that \(X\) is Hausdorff, it suffices to show that
\[\{f\in G: (x,g)r_G\psi\in (G_f)\gamma\} =\{f\in G: ((x)\psi, g)a\in (G_f)\gamma\}.\]
This can be seen as follows:
\begin{align*}
    \{f\in G: (x,g)r_G\psi\in (G_f)\gamma\}
    &=\{f\in G: (x,g)r_G\in_G G_{f}\}\\
    &=\{f\in G: x\in_G (G_{f}, g^{-1})\conj_G\}\\
    &=\{f\in G: x\in_G G_{g f g^{-1}}\}\\
    &=\{f\in G: (x)\psi\in (G_{g f g^{-1}})\gamma\}\\
    &=\{f\in G: (x)\psi\in (( G_{f}, g^{-1})\conj_G)\gamma\}\\
   &=\{f\in G: (x)\psi\in ( (G_{f})\gamma, g^{-1})a)\}\\
   &=\{f\in G: ((x)\psi, g)a\in (G_f)\gamma\}.\qedhere
\end{align*}
\end{proof}

Now that we have proved Rubin's Theorem (Theorem~\ref{Rubin's Theorem}), we know that any sufficiently nice action of a group on a sufficiently nice topological space is in fact an algebraic invariant of the group.
This is particularly useful when taking about isomorphisms between groups.
In the following corollary, we show how Rubin's Theorem can help with understanding the automorphisms of a group (this corollary is crucial to Part~\ref{nv section}).

\speeddicttwo{96}{Rubin Automorphisms cor}{centralizer defn}{Rubin's Theorem}
\begin{corollary}[Rubin automorphisms, \Assumed{96}]\label{Rubin Automorphisms cor}
If \(G\) is a group and \(a:X\times G\to X\) is a continuous, faithful, locally moving action of \(G\) on a locally compact, Hausdorff topological space \(X\) without isolated points, then 
\[\Aut(G)\cong N_{\Aut(X)}((G)\phi_a)\]
(where \(\phi_a\) is as in Definition~\ref{group actions defn})
via an isomorphism which restricts to an isomorphism from \(\operatorname{Inn}(G)\) to \((G)\phi_a\).
\end{corollary}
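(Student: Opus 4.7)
The strategy is to explicitly build the inverse isomorphism $\Phi: N_{\Aut(X)}((G)\phi_a) \to \Aut(G)$ by conjugation. Given $h$ normalising $(G)\phi_a$, conjugation by $h$ is an automorphism of the subgroup $(G)\phi_a$; pulling this back through $\phi_a$ (which is a bijection to $(G)\phi_a$ by faithfulness of $a$) produces an element $(h)\Phi \in \Aut(G)$ characterised by $((g)(h)\Phi)\phi_a = h^{-1} \cdot (g)\phi_a \cdot h$ for all $g \in G$. It is routine that $\Phi$ is a group homomorphism.

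For injectivity, the kernel of $\Phi$ is $C_{\Aut(X)}((G)\phi_a)$, which I would show is trivial using local moving. Suppose $h$ centralises $(G)\phi_a$ and, towards contradiction, $(x)h \neq x$ for some $x \in X$. Pick disjoint open neighbourhoods $U \ni x$ and $V \ni (x)h$; since $a$ is locally moving and $X$ has no isolated points, the set $(x, G_{U, a})a$ is somewhere dense around $x$, so some $g \in G_{U, a}$ satisfies $(x, g)a \neq x$. Because $\supt_a(g) \subseteq U$, the point $(x, g)a$ stays in $U$ while $(x)h \in V$ is fixed by $g$. The commutation equation $((x, g)a) h = ((x)h, g)a = (x)h$ then forces $(x, g)a = x$ (since $h$ is a bijection), a contradiction.

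For surjectivity, given $\psi \in \Aut(G)$, I would consider the twisted action $a_\psi: X \times G \to X$ defined by $(x, g)a_\psi := (x, (g)\psi)a$. Because $\psi$ is an automorphism, $\phi_{a_\psi} = \psi \circ \phi_a$ has the same image $(G)\phi_a$ in $\Aut(X)$, so $a_\psi$ inherits continuity, faithfulness, and the locally moving property (these depend only on the image subgroup, not on the presentation). By Rubin's Theorem~\ref{Rubin's Theorem} applied to both $a$ and $a_\psi$, each is topologically conjugate to the canonical Rubin action $r_G$, hence to each other. The conjugating topological isomorphism $h_\psi \in \Aut(X)$ then satisfies $h_\psi^{-1} \cdot ((g)\psi)\phi_a \cdot h_\psi = (g)\phi_a$ for all $g \in G$, which places $h_\psi \in N_{\Aut(X)}((G)\phi_a)$ and yields $(h_\psi)\Phi = \psi^{-1}$; varying $\psi$ shows $\Phi$ is surjective.

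Finally, for the restriction claim, note that for $g \in G$ we have $(g)\phi_a \in (G)\phi_a \subseteq N_{\Aut(X)}((G)\phi_a)$, and directly from the defining formula $((g)\phi_a)\Phi$ is the automorphism $g' \mapsto g^{-1} g' g$, i.e.\ the inner automorphism associated to $g$. Hence $\Phi$ restricts to a surjection $(G)\phi_a \to \operatorname{Inn}(G)$, which must be an isomorphism by the injectivity already shown; consequently $\Phi^{-1}$ restricts to the desired isomorphism $\operatorname{Inn}(G) \to (G)\phi_a$. The main obstacle is keeping the conjugation convention consistent between $\Aut(G)$, $\Aut((G)\phi_a)$, and the normaliser; once the bookkeeping is pinned down, Rubin's Theorem does essentially all the work.
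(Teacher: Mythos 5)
Your proposal is correct, and the injectivity argument is essentially the same as the paper's (both locate an open set $U$ moved off itself by a nontrivial element of the centraliser, use locally moving to find $g\in G_{U,a}$ moving a point of $U$, and derive a contradiction from commutativity with $(g)\phi_a$). Where you genuinely diverge is in the surjectivity argument. The paper works directly with the canonical Rubin space: it fixes the conjugating homeomorphism $\psi: X_G\to X$ from Rubin's Theorem, observes that $X_G\subseteq\mathcal{P}(\mathcal{P}(\mathcal{P}(G)))$ so that each $\varphi\in\Aut(G)$ acts naturally on $X_G$, checks that the induced map $\varphi'$ is a homeomorphism of $X_G$ (because it permutes the subbasic sets $G_F'$), and then computes that $\psi^{-1}\varphi'\psi$ lands in the normaliser and maps to $\varphi$. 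You instead twist the action: $a_\psi(x,g) := a(x, (g)\psi)$ has the same image subgroup $(G)\phi_a\le\Aut(X)$, so it inherits continuity, faithfulness, and the locally moving property; Rubin's Theorem applied to $a$ and to $a_\psi$ (on the same space $X$, same group $G$) then gives a topological conjugacy between them, and the conjugating homeomorphism is precisely the required normalising element. Your route treats Rubin's Theorem purely as a uniqueness statement and never needs to peer inside the construction of $X_G$, which makes the argument more modular and conceptually lighter; the paper's route is more hands-on but has the side effect of exhibiting concretely how $\Aut(G)$ permutes the ultrafilters of $B_G$. Both are valid, and the bookkeeping you flag at the end (whether the conjugacy hands you $\psi$ or $\psi^{-1}$) is indeed the only thing to be careful about — either suffices for surjectivity since $\Aut(G)$ is closed under inversion.
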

\begin{proof}
We define a map \(n: G\times N_{\Aut(X)}((G)\phi_a) \to G\) of \(N_{\Aut(X)}((G)\phi_a)\) on \(G\) by:
\[(g, f)n = (f^{-1}(g)\phi_a f)\phi_a^{-1}.\]
Note that this is only well-defined because of the choice of the group \(N_{\Aut(X)}((G)\phi_a)\).

It suffices to show that \(n\) is a faithful action of \(N_{\Aut(X)}((G)\phi_a)\) on \(G\) and that the corresponding embedding \(\phi_n: N_{\Aut(X)}((G)\phi_a) \to \Aut(G)\) is surjective.

We first show that \(n\) is an action. Let \(g\in G\) and \(f, h\in N_{\Aut(X)}((G)\phi_a)\) be arbitrary, then
\begin{align*}
    (((g, f)n, h)n &=  (h^{-1}((f^{-1}(g)\phi_a f)\phi_a^{-1})\phi_a h)\phi_a^{-1}\\
     &=  (h^{-1}f^{-1}(g)\phi_a f h)\phi_a^{-1}\\
    &=  ((f h)^{-1}(g)\phi_a f h)\phi_a^{-1}\\
    &=  (g, f h)n.
\end{align*}

We next show that \(\phi_n\) is injective.
Suppose that \(f, h\in N_{\Aut(X)}((G)\phi_a)\) are such that \((f)\phi_n=(h)\phi_n\), we show that \(f = h\).
It suffices to show that \(f h^{-1}\) is the identity function on \(X\).
As \(\phi_n\) is a homomorphism, we know that \((f h^{-1})\phi_n\) is the identity function on \(G\).
Thus by the definition of \(n\), it follows that \(f h^{-1}\) commutes with all elements of \((G)\phi_a\).
Suppose for a contradiction that \(f h^{-1}\) is non-trivial. It follows from Lemma~\ref{comsupt}, that there is a non-empty open subset \(U\) of \(X\) such that \((U)f h^{-1} \cap U = \varnothing\).
As \(X\) is Hausdorff and has no isolated points, and \(a\) is locally moving, it follows that there is some non-identity element \(g\in G_{U, a}\).
By Lemma~\ref{comsupt} it follows that no elements of \(\supt_a(g)\) are fixed by \(((g)\phi_a, f h^{-1})\conj_{\Aut(X)}\).
In particular \(f h^{-1}\) does not commute with \((g)\phi_a\), this is a contradiction.

It remains to show that \(\phi_n\) is surjective.
By Rubin's Theorem (\ref{Rubin's Theorem}) let \(\psi: X_G\to X\) be a homeomorphism such that for all \(x\in X_G\) and \(g\in G\) we have \(((x)\psi, g)a= (x, g)r_G\psi\).
Recall that \(X_G \subseteq \mathcal{P}(\mathcal{P}(\mathcal{P}(G)))\), and as per Definition~\ref{binary relations defns}, any automorphism \(\varphi\) of \(G\) can act on \(X_G\). Thus for each \(\varphi\in \Aut(G)\), we define a map \(\varphi':X_G\to X_G\) by \((x)\varphi' = (x)\varphi\).

For all \(f\in G\), we have \((G_{f}')\varphi' = G_{(f)\varphi}'\), so in particular \(\varphi'\) is a homeomorphism of \(X_G\).
We now choose \(\varphi\in \Aut(G)\) to be arbitrary, it suffices to show that \(\varphi\in \im(\phi_n)\).
For all \(g\in G\), we define an automorphism \(c_g\) of \(G\) by \((f)c_g= (f, g)\conj_G\).
Note that for all \(g\in G\) we have \(\varphi^{-1}c_g\varphi = c_{(g)\varphi}\), and the condition on \(\psi\) together with the definition of \(r_G\) gives that \(\psi ((g)\phi_a) \psi^{-1} = c_g'\) for all \(g\in G\).

Let \(g\in G\) be arbitrary, it suffices to show that \((g)\varphi= (g)((\psi^{-1}\varphi'\psi)\phi_n)\). This can be seen as follows:
\begin{align*}
    (g)((\psi^{-1}\varphi'\psi)\phi_n) &= (g, \psi^{-1}\varphi'\psi)n \\
     &=  ((\psi^{-1}\varphi'\psi)^{-1}((g)\phi_a)  (\psi^{-1}\varphi'\psi))\phi_a^{-1}\\
     &=  (\psi^{-1}(\varphi')^{-1}(\psi((g)\phi_a)  \psi^{-1})\varphi'\psi)\phi_a^{-1}\\
     &=  (\psi^{-1}(\varphi')^{-1}(c_g')\varphi'\psi)\phi_a^{-1}\\
     &=  (\psi^{-1}(\varphi^{-1}c_g\varphi)'\psi)\phi_a^{-1}\\
     &=  (\psi^{-1}c_{(g)\varphi}'\psi)\phi_a^{-1}\\
     &=(g)\varphi.
\end{align*}
\end{proof}

\part{Semigroups And Clones}\label{semigroups section}
The results of this part are (for the most part) the results from \cite{AcPaper1} by J. Jonušas, Z. Mesyan, J. D. Mitchell, M. Morayne, Y. Péresse and the author of this document.
The other authors have kindly given permission for the inclusion of these results here.

In this part we discuss various natural ways of defining topologies on arbitrary semigroups, and investigate how these topologies interact (see Proposition~\ref{topology comparison proposition}).
Many of the definitions are minimal/maximal up to some topological condition so by understanding them we gain a better understanding of all the topologies compatible with the semigroups satisfying the required topological conditions.

The topological conditions we are particularly interested are those of being Hausdorff, second countable or Polish.
Hausdorffness (Definition~\ref{Hausdorff defn}) is a useful and popular separation condition which forces the topologies in question to be ``reasonably big" (in particular it allows us to exclude indiscrete/trivial topologies).
Given that we will be primarily concerned with topological spaces of continuum cardinality, second countableness (Definition~\ref{second countable defn}) similarly forces the topologies in question to be ``reasonably small"  (in particular it allows us to exclude discrete topologies).
Polishness (Definition~\ref{Polish space defn}) is a much stronger restriction, but it turns out to be somewhat common for nice semigroups to admit Polish topologies.
Polishness is also very useful as it implies the previous two conditions and gives us access to Theorem~\ref{comparable Polish group topologies theorem}.

In this part we also consider the notion of a (topological) clone (for examples of work on topological clones in the literature see \cite{Behrisch2017aa,Bodirsky2017aa, Pech:2018aa}).
In the same sense that groups are a generalisation of permutation groups and semigroups are a generalisation of transformation semigroups, clones are a generalisation of operation clones (where here an ``operation" is a finitary operation on a set in the sense of Definition~\ref{structures defn}).

For example the set of real polynomials in finitely many variables forms a clone, or more generally the set of finite arity term operations for any algebraic structure (Definition~\ref{term operations defn}).
Also if \(X\) is any object in a category with finite products (for example a graph, a semigroup, or a topological space) then the set of polymorphisms of \(X\) (morphisms between finite powers of \(X\)) form a clone.

The elements of a clone are ``combined" in the same manner as term operations (see Definition~\ref{term operations defn}).
Clones give us another reasonable way of comparing algebraic structures, as if we have two algebraic structures such that every operation of one is a term operation of the other, it is reasonable to think of them as equivalent although this does not imply that they are isomorphic.
Although term operations are defined in such a way that an operation can only output a single element, it is sometimes useful to think of an \(n\)-tuple of \(m\)-ary operations \((t_0, t_1, \ldots, t_{n-1})\) on a set \(X\) as the single object \(\langle t_0, t_1, \ldots, t_{n-1} \rangle_{X^n}\) (recall Definition~\ref{Products in Categories Defn}).
In particular, this reduces the way operations are combined to the usual composition of maps.

We describe the results of the topological constructions when applied to specific examples of semigroups and use this to show various uniqueness/non-uniqueness results about the possible topologies on the semigroups and clones.
Specifically, we consider the following semigroups and clones:
\begin{enumerate}
    \item The binary relation monoid \(\mathcal{B}_\N\) (see Example~\ref{topological binary relations} and Theorem~\ref{main binary relation theorem}).
    
    \item The full transformation monoid \(\N^\N\) (see Example~\ref{full trans def} and Theorem~\ref{main N^N theorem}).
    
    \item The partial function monoid \(\mathcal{P}_\N\) (see Example~\ref{partial function monoid defn} and Theorem~\ref{main N^N theorem}).
    
    \item The symmetric inverse monoid \(\mathcal{I}_\N\) (see Example~\ref{symmetric inverse monoids defn} and Theorem~\ref{main inverse monoid theorem}).
    
    \item The injective function monoid \(\inj(\N)\) (see Example~\ref{topological injective function monoids defn}, Theorem~\ref{much Polish Theorem} and Theorem~\ref{main inj theorem}).
    
    \item The continuous function monoid of the Hilbert cube \(C([0, 1]^\N)\) (See Proposition~\ref{compact-open is Polish thm} and Theorem~\ref{hilber cube main theorem}).
    
    \item The continuous function monoid of the Cantor space \(C(2^\N)\) (See Proposition~\ref{compact-open is Polish thm} and Theorem~\ref{Cantor space main theorem}).
    
    \item The endomorphism monoid of the countably infinite atomless boolean algebra \(B_\infty\) (see Theorem~\ref{main boolean algebra theorem}).
    
    \item The clone of all maps between finite powers of the set \(\N\) (see Corollary~\ref{full func clone cor}).
    
    \item The polymorphism clone of the Hilbert cube \([0, 1]^\N\) (see Corollary~\ref{Hilbert clone cor}).
    
    \item The polymorphism clone of the Cantor set \(2^\N\) (see Corollary~\ref{Cantor clone cor}).
    
    \item The polymorphism clone of the countably infinite atomless boolean algebra \(B_\infty\) (see Corollary~\ref{boolean clone cor}).
\end{enumerate}
Another related topic of interest which has received much attention in the literature (particularly for groups) is that of ``automatic continuity" (see \cite{Mann2016aa,Paolini2020,Tsankov2013aa,Yaacov2010aa} for example). 

As seen earlier in Theorem~\ref{automatic continuity examples thm}, it is sometimes possible to conclude that a homomorphism between semigroups is continuous with little information about the spaces in question.
In particular when we know that a large class of maps from a topological space are continuous, this allows us to use the topology on the spaces as an ``upper bound" of sorts.
In the case of Polish groups, such a bound can often allow us to immediately identity a unique Polish topology compatible with the group by using Theorem~\ref{comparable Polish group topologies theorem}.

As we see throughout this part, while the same approach (see Theorem~\ref{main inverse monoid theorem} for example) does not apply to arbitrary semigroups, such continuity results still give us a great deal of information about the potential topologies compatible with the semigroups.

\section{Natural topologies for arbitrary semigroups}
In this section we introduce various natural ways of defining a topology on an arbitrary semigroup.

Every semigroup has a corresponding ``dual semigroup" (Definition~\ref{dual semigroup defn}).
This semigroup may not be isomorphic to the original semigroup (notably it is in the case of an inverse semigroup) although for our purposes, these objects are essentially the same.

\speeddicttwo{174}{anti-homomorphisms defn}{Products in Categories Defn}{semigroup defn}
\begin{defn}[Anti-homomorphisms, \Assumed{174}]\label{anti-homomorphisms defn}
Suppose that \(S, T\) are semigroups and \(\phi:S\to T\) is a function, then we say that \(\phi\) is an \textit{anti-homomorphism} if \(*^S \circ \phi = \langle \pi_1\phi, \pi_0\phi\rangle_{S^2}\circ *^T\) (recall Definition~\ref{semigroup defn}).
That is for all \(a, b\in S\) we have \((ab)\phi = (b)\phi(a)\phi\).
If \(\phi\) is also a bijection, then we say that \(\phi\) is a \textit{anti-isomorphism} and that \(S\) and \(T\) are \textit{anti-isomorphic}.
\end{defn}

\speeddicttwo{175}{dual semigroup defn}{semigroup defn}{anti-homomorphisms defn}
\begin{defn}[Dual semigroups, \Assumed{175}]\label{dual semigroup defn}
If \(S\) is a semigroup then we define the \textit{dual semigroup} \(S^\dagger\) of \(S\) to be the semigroup with the same universe and such that for all \(s, t\in S\) we have \((s, t)*^{S^\dagger}  = (t, s)*^{S}\). By definition \(S^{\dagger}\) is anti-isomorphic to \(S\) via the identity map.
\end{defn}

It will be useful going forward to be able to make semigroup topologies from other semigroup topologies without needing to reprove continuity, we give a few such ways here.

\speeddictsix{173}{pull back topologies prop}{binary relations defns}{Products in Categories Defn}{topological structures defn}{semitopological defn}{anti-homomorphisms defn}{dual semigroup defn}
\begin{proposition}[Pulling back topologies, \Assumed{173}]\label{pull back topologies prop}
Suppose that \(S,T\) are algebraic structures and \(\phi: S\to T\) is a homomorphism or anti-homomorphism and \(\mathcal{T}\) is a topology on the set \(T\).
\begin{enumerate}
    \item If \((T, \mathcal{T})\) is a semitopological semigroup, then \((S, (\mathcal{T})\phi^{-1})\) is a semitopological semigroup.
    \item If \((T, \mathcal{T})\) is a topological algebraic structure, then \((S, (\mathcal{T})\phi^{-1})\) is a topological structure.
\end{enumerate}
\end{proposition}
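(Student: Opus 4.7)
The plan is to first observe that $(\mathcal{T})\phi^{-1} = \makeset{(V)\phi^{-1}}{\(V\in\mathcal{T}\)}$ is genuinely a topology on $S$: taking preimages under a function commutes with arbitrary unions and intersections, and $(\varnothing)\phi^{-1} = \varnothing$, $(T)\phi^{-1} = S$. By construction $\phi:(S, (\mathcal{T})\phi^{-1})\to (T,\mathcal{T})$ is continuous, and therefore for each $n\in\N$ the map $\boldsymbol{\phi}_n = \langle(\pi_i\phi)_{i<n}\rangle_{T^n}:S^n\to T^n$ is continuous into the product topology on $T^n$ (recall Definition~\ref{Products in Categories Defn} and Definition~\ref{product topology defn}).

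For part $(1)$, I would prove continuity of $\lambda_s$ and $\rho_s$ on $S$ directly from the defining identities of (anti-)homomorphism. In the homomorphism case,
\[ \lambda_s\circ \phi = \phi\circ \lambda_{(s)\phi}, \qquad \rho_s\circ \phi = \phi\circ \rho_{(s)\phi}, \]
whereas in the anti-homomorphism case (see Definition~\ref{anti-homomorphisms defn}) $\lambda$ and $\rho$ swap:
\[ \lambda_s\circ \phi = \phi\circ \rho_{(s)\phi}, \qquad \rho_s\circ \phi = \phi\circ \lambda_{(s)\phi}. \]
In either case, if $V\in\mathcal{T}$ then $(V)\phi^{-1}\lambda_s^{-1}$ equals $(V)W^{-1}\phi^{-1}$ for $W\in\{\lambda_{(s)\phi},\rho_{(s)\phi}\}$, and this lies in $(\mathcal{T})\phi^{-1}$ since $(T,\mathcal{T})$ is semitopological.

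For part $(2)$, I would use the fact that an algebraic structure has $R_\sigma = \varnothing$, so only the function symbols need to be checked. For $F\in F_\sigma$ of arity $n$, the homomorphism condition gives $F^S\circ\phi = \boldsymbol{\phi}_n\circ F^T$ (recall Definition~\ref{structure hom defn}), so for any $V\in\mathcal{T}$,
\[ (V)\phi^{-1}(F^S)^{-1} \;=\; (V)(F^T)^{-1}\boldsymbol{\phi}_n^{-1}, \]
which is open in $S^n$ because $F^T$ is continuous in $(T,\mathcal{T})$ and $\boldsymbol{\phi}_n$ is continuous as noted above. For an anti-homomorphism (which by Definition~\ref{anti-homomorphisms defn} only makes sense when $S,T$ are semigroups), replace the identity above by $*^S\circ\phi = \langle\pi_1\phi,\pi_0\phi\rangle_{T^2}\circ *^T$; since $\langle\pi_1\phi,\pi_0\phi\rangle_{T^2}$ is continuous into the product topology, the same preimage calculation goes through.

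Both parts are essentially routine unravelling of the definitions, so I do not expect any substantial obstacle; the only care required is to keep track of the left-to-right composition convention and, in the anti-homomorphism case, the swapping of $\lambda$ with $\rho$ and of the two coordinate projections. No additional machinery beyond Definitions~\ref{topological structures defn} and \ref{semitopological defn} is needed.
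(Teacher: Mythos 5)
Your proof is correct and follows essentially the same route as the paper: note that $\phi$ is continuous into $(T,\mathcal{T})$ by construction of $(\mathcal{T})\phi^{-1}$, then push the (anti-)homomorphism identity through to rewrite preimages of open sets as preimages under a continuous map. The only cosmetic difference is that the paper proves the anti-homomorphism semigroup case first and then obtains the homomorphism case of part (1) by passing to the dual semigroup $T^\dagger$, whereas you handle the homomorphism case directly from $\lambda_s\circ\phi=\phi\circ\lambda_{(s)\phi}$; both are fine and neither buys anything the other lacks.
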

\begin{proof}
We first consider the case that \(\phi\) is an anti-homomorphism of semigroups. Note that the map \(\phi:(S, (\mathcal{T})\phi^{-1}) \to (T, \mathcal{T})\) is continuous by definition. If \(s\in S\) is arbitrary, then we have 
\[\phi^{-1}\lambda_s^{-1} = (\lambda_s\phi)^{-1} = (\phi\rho_{(s)\phi})^{-1}\quad \text{and }\quad \phi^{-1}\lambda_s^{-1} = (\rho_s\phi)^{-1} = (\phi\lambda_{(s)\phi})^{-1}\]
(here we are using composition of binary relations).
In particular if \(T\) is a semitopological semigroup and \(U\) is an open subset of \(T\), then
\[((U)\phi^{-1})\lambda_s^{-1} = (U)(\phi\rho_{(s)\phi})^{-1} \quad \text{and} \quad ((U)\phi^{-1})\rho_s^{-1} = (U)(\phi\lambda_{(s)\phi})^{-1}\]
are open sets, so \(S\) is also a semitopological semigroup as well. Similarly if \(T\) is a topological semigroup then
\[((U)\phi^{-1})(*^{S})^{-1}= (U) (\langle \pi_1\phi, \pi_0\phi\rangle_{S^2}\circ *^T)^{-1}\]
is open and thus \(S\) is also a topological semigroup.

We now consider the case that \(\phi\) is a homomorphism of semigroups. As the identity function is an anti-isomorphism from \(T\) to \(T^\dagger\), it follows that \((T^\dagger, \mathcal{T})\) is a (semi)topological semigroup if and only if \((T, \mathcal{T})\) is as well.
If \(\phi:S\to T\) is a homomorphism, then \(\phi\) is an anti-homomorphism from \(S\) to \((T^\dagger, \mathcal{T})\) and so the result follows from the first case.

If \(\phi\) is a homomorphism of arbitrary algebraic structures, then for any open subset \(U\) of \(T\) and any \(n\)-ary function symbol \(F\) from the signature of \(S\) we have
\[((U)\phi^{-1})(F^{S})^{-1}= (U) (\langle \pi_0\phi, \pi_1\phi, \ldots, \pi_{n-1}\phi\rangle_{S^m}\circ F^T)^{-1}.\]
As before, it follows that \(F^S:S^m\to S\) is continuous as required.
\end{proof}

\speeddictone{178}{combining topologies lemma}{topological structures defn}
\begin{lemma}[Combining topologies, \Assumed{178}]\label{combining topologies lemma}
If \(\mathbb{S}\) is an algebraic structure, \(I\) is a set, and \(\makeset{\mathcal{T}_i}{\(i\in I\)}\) are topologies compatible with \(\mathbb{S}\), then the topology generated by \(\union{i\in I}\mathcal{T}_i\) is compatible with \(\mathbb{S}\).
\end{lemma}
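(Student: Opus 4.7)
The plan is straightforward: since $F^{\mathbb{S}}$ being continuous is a statement about preimages of open sets, and open sets in the generated topology $\mathcal{T}$ are built from the subbasis $\bigcup_{i \in I} \mathcal{T}_i$ by finite intersection and arbitrary union, it suffices to check continuity on subbasic open sets. So I would fix an $n$-ary function symbol $F$ of the signature of $\mathbb{S}$, fix $i \in I$, and show that for every $U \in \mathcal{T}_i$ the preimage $(U)(F^{\mathbb{S}})^{-1}$ is open in the product topology on $\mathbb{S}^n$ induced by $\mathcal{T}$.

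First I would observe that since $\mathcal{T}_i$ is compatible with $\mathbb{S}$, the map $F^{\mathbb{S}} : (\mathbb{S}^n, \mathcal{T}_i^n) \to (\mathbb{S}, \mathcal{T}_i)$ is continuous (where $\mathcal{T}_i^n$ denotes the corresponding product topology, as in Definition~\ref{product topology defn}). In particular $(U)(F^{\mathbb{S}})^{-1}$ belongs to $\mathcal{T}_i^n$. Then I would note the monotonicity of the product topology construction: if $\mathcal{T}'$ and $\mathcal{T}''$ are topologies on a set $X$ with $\mathcal{T}' \subseteq \mathcal{T}''$, then the corresponding product topologies on $X^n$ satisfy $(\mathcal{T}')^n \subseteq (\mathcal{T}'')^n$, because a subbasis of $(\mathcal{T}')^n$ consists of sets of the form $(V)\pi_j^{-1}$ with $V \in \mathcal{T}' \subseteq \mathcal{T}''$, hence each such set lies in $(\mathcal{T}'')^n$. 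Applying this with $\mathcal{T}'' := \mathcal{T}$, we obtain $\mathcal{T}_i^n \subseteq \mathcal{T}^n$, so $(U)(F^{\mathbb{S}})^{-1} \in \mathcal{T}^n$.

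Finally I would package this with the general subbasis principle: a function into a space with topology generated by a subbasis $\mathcal{S}$ is continuous iff preimages of elements of $\mathcal{S}$ are open, since preimage commutes with union and intersection. Applying this to the subbasis $\bigcup_{i \in I} \mathcal{T}_i$ of $\mathcal{T}$ and the two preceding paragraphs shows $F^{\mathbb{S}} : (\mathbb{S}^n, \mathcal{T}^n) \to (\mathbb{S}, \mathcal{T})$ is continuous. Since $F$ was an arbitrary function symbol and $\mathbb{S}$ is algebraic (there is no relational obligation in Definition~\ref{topological structures defn} to verify), the topology $\mathcal{T}$ is compatible with $\mathbb{S}$.

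There is really no obstacle here; the only thing one needs to resist doing is giving a direct $\varepsilon$-style argument when the subbasis/preimage argument is much cleaner. The one subtlety worth stating explicitly in the written proof is the monotonicity of the product construction, since it is where the restriction to algebraic structures conveniently pays off: if the signature contained relation symbols, one would additionally need that each $R^{\mathbb{S}}$ remains closed in $\mathcal{T}^n$, and closedness is not preserved when passing to a finer topology in general, so the statement of the lemma would fail in that setting.
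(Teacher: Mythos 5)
Your proof is correct and follows the same route as the paper's: reduce to checking preimages of subbasic open sets, use compatibility with each $\mathcal{T}_i$ to get the preimage in $\mathcal{T}_i^n$, and invoke monotonicity of the product-topology construction to pass to $\mathcal{T}^n$. Your closing remark explaining why the hypothesis that $\mathbb{S}$ is algebraic (no relation symbols) is genuinely needed, since closedness is not preserved under refinement of topologies, is a useful observation that the paper leaves implicit.
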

\begin{proof}
Let \(\mathcal{T}\) be the topology generated by \(\union{i\in I}\mathcal{T}_i\). Let \(U\in \union{i\in I}\mathcal{T}_i\) be arbitrary and let \(f:\mathbb{S}^n \to \mathbb{S}\) be an arbitrary operation of \(\mathbb{S}\). It suffices to show that \((U)f^{-1}\) is open with respect to \(\mathcal{T}\).

Let \(i\in I\) be such that \(U\in \mathcal{T}_i\). As \(\mathcal{T}_i\) is compatible with \(\mathbb{S}\), it follows that \((U)f^{-1}\) is an open subset of \(\mathbb{S}^n\) using the product topology obtained from using the topology \(\mathcal{T}_i\) on \(\mathbb{S}\).
As this topology on \(\mathbb{S}^n\) is contained in the topology that we are using, the result follows.
\end{proof}

\speeddicttwo{177}{somewhat nice topologies lemma}{frechet defn}{semitopological defn}
\begin{lemma}[Intersecting somewhat nice topologies, \Assumed{177}]\label{somewhat nice topologies lemma}
Suppose that \(S\) is a semigroup, \(I\) is a set and \(\makeset{\mathcal{T}_i}{\(i\in I\)}\) is a collection of Fréchet topologies semicompatible with \(S\). In this case \(\mathcal{T}:= \intersection{i\in I} \mathcal{T}_i\) is also Fréchet and semicompatible with \(S\).
\end{lemma}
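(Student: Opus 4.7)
The plan is to observe that the required properties are all ``upward-preserving" in a sense that makes them stable under intersection: being closed in $\mathcal{T}_i$ for each $i$ implies being closed in the intersection, and preimages of open sets remain open under a given map as long as each factor topology makes that map continuous. So the proof is a short routine verification split into three bullets: $\mathcal{T}$ is a topology, $\mathcal{T}$ is Fréchet, and $\mathcal{T}$ is semicompatible with $S$.

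First I would note that an arbitrary intersection of topologies on the fixed set $S$ is itself a topology, by checking the three conditions of Definition~\ref{topological space defn} pointwise: $\varnothing$ and $S$ lie in every $\mathcal{T}_i$; a finite intersection of $\bigcap_{i\in I}\mathcal{T}_i$-open sets lies in each $\mathcal{T}_i$ (since each $\mathcal{T}_i$ is closed under finite intersections) and hence in $\mathcal{T}$; and the same reasoning works for arbitrary unions. This also handles the degenerate case $I=\varnothing$, where the intersection is by convention (Definition~\ref{union and intersection convensions}) the discrete topology on $S$, which is trivially Fréchet and semicompatible with every semigroup.

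For the Fréchet property, let $x\in S$ be arbitrary. Since each $\mathcal{T}_i$ is Fréchet, the set $\{x\}^c=S\setminus\{x\}$ belongs to $\mathcal{T}_i$ for every $i\in I$, hence lies in $\mathcal{T}$; thus $\{x\}$ is closed in $\mathcal{T}$ as required by Definition~\ref{frechet defn}.

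For semicompatibility, fix $s\in S$ and an open set $U\in\mathcal{T}$. Since $U\in\mathcal{T}_i$ for every $i$, and since $(S,\mathcal{T}_i)$ is a semitopological semigroup by hypothesis, Definition~\ref{semitopological defn} gives that both $(U)\lambda_s^{-1}$ and $(U)\rho_s^{-1}$ are elements of $\mathcal{T}_i$ for every $i\in I$. Taking the intersection over $i$ yields $(U)\lambda_s^{-1},(U)\rho_s^{-1}\in\mathcal{T}$, so $\lambda_s$ and $\rho_s$ are continuous with respect to $\mathcal{T}$. There is no real obstacle here; the content of the lemma is simply that ``Fréchet" and ``the $\lambda_s,\rho_s$ are continuous" are both conditions requiring certain sets to lie in the topology (closed sets, respectively preimages of open sets), and such membership conditions survive arbitrary intersection.
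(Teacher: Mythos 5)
Your proof is correct and takes essentially the same approach as the paper: Fréchetness and semicompatibility are both membership conditions on the topology, so they survive intersection. The only addition is your explicit verification that an intersection of topologies is a topology and the note about the $I=\varnothing$ case, both of which the paper leaves implicit.
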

\begin{proof}
Let \(s\in S\) and \(U\in \mathcal{T}\) be arbitrary. To show that \((S, \mathcal{T})\) is a semitopological semigroup, it suffices to show that \((U)\lambda_s^{-1}\) and \((U)\rho_s^{-1}\) are open with respect to \( \mathcal{T}\).
As \(U\) is open in all of the topologies \(\mathcal{T}_i\) and these topologies are semicompatible with \(S\), it follows these two sets are also open in all of the topologies \(\mathcal{T}_i\). Thus they are open in \(\mathcal{T}\) by definition.

If \(s\in S\) is arbitrary, then \(S\backslash\{s\}\) is an element of all Fréchet topologies on \(S\).
As \(\mathcal{T}\) is an intersection of Fréchet topologies, it follows that \(S\backslash\{s\}\in \mathcal{T}\) as well and thus \(\mathcal{T}\) is Fréchet.
\end{proof}

The first of the topologies we introduce (the minimum Fr\'echet topology) is very useful due to its simple definition, however when considered on a group it is less interesting as it always results in the cofinite topology.
\speeddictthree{172}{minimum topology defn}{types of binary relation defns}{frechet defn}{semitopological defn}
\begin{defn}[Minimum topologies, \Assumed{172}]\label{minimum topology defn}
If \(S\) is an (inverse) semigroup, then we define the \textit{minimum Fr\'echet topology} of \(S\) by
\[\mathcal{MF}(S) := \intersection{} \makeset{\mathcal{T}}{\((S, \mathcal{T})\) is a Fr\'echet semitopological (inverse) semigroup}\]
(by Lemma~\ref{somewhat nice topologies lemma} this is in the minimum, with respect to containment of topologies, Fr\'echet topology semicompatible with \(S\)).
\end{defn}

\speeddicttwo{171}{Markov topology defn}{frechet defn}{topological structures defn}
\begin{defn}[Markov topologies, \Assumed{171}]\label{Markov topology defn}
If \(\mathbb{S}\) is an algebraic structure, then we define the \textit{Fr\'echet-Markov topology} on \(\mathbb{S}\) by
\[\mathcal{FM}(\mathbb{S}):= \intersection{} \makeset{\mathcal{T}}{\((\mathbb{S}, \mathcal{T})\) is a Fr\'echet topological structure},\]
and the \textit{Hausdorff-Markov topology} on \(\mathbb{S}\) by
\[\mathcal{HM}(\mathbb{S}):= \intersection{} \makeset{\mathcal{T}}{\((\mathbb{S}, \mathcal{T})\) is a Hausdorff topological structure}.\]
\end{defn}

In contrast to the minimum Fr\'echet topology being semi-compatible, the Markov topologies on a structure need not be compatible with the structure.

\speeddictfour{176}{nice minimum topologies prop}{pull back topologies prop}{somewhat nice topologies lemma}{minimum topology defn}{Markov topology defn}
\begin{proposition}[Small topologies are nice, \Assumed{176}]\label{nice minimum topologies prop}
If \(S\) is a semigroup, then \(\mathcal{MF}(S)\), \(\mathcal{FM}(S)\) and \(\mathcal{HM}(S)\) are Fréchet and semicompatible with \(S\). Moreover if \(\phi\) is an automorphism or anti-automorphism of \(S\) as a discrete semigroup, then \(\phi\) is continuous with respect to each of \(\mathcal{MF}(S)\), \(\mathcal{FM}(S)\) and \(\mathcal{HM}(S)\).
\end{proposition}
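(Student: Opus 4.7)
For the first assertion I would reduce immediately to Lemma~\ref{somewhat nice topologies lemma}. Note that every Hausdorff topology is Fr\'echet, and that every topological semigroup is semitopological (this is observed at the end of Definition~\ref{semitopological defn}). Hence each of the three families of topologies being intersected to form $\mathcal{MF}(S)$, $\mathcal{FM}(S)$ and $\mathcal{HM}(S)$ consists of Fr\'echet topologies semicompatible with $S$, and Lemma~\ref{somewhat nice topologies lemma} yields that each of the three intersections is itself Fr\'echet and semicompatible with $S$.

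For the second assertion, let $\phi$ be an automorphism or anti-automorphism of $S$ as a discrete semigroup, and write $\mathcal{C}_{\mathcal{MF}}, \mathcal{C}_{\mathcal{FM}}, \mathcal{C}_{\mathcal{HM}}$ for the three families of topologies whose intersections are the corresponding minimum topologies. The crux is to verify that the pullback operation $\mathcal{T}\mapsto (\mathcal{T})\phi^{-1}$ sends each family into itself. Preservation of (semi)compatibility with $S$ is exactly Proposition~\ref{pull back topologies prop}. Preservation of the Fr\'echet property uses only that $\phi$ is a bijection: if $\mathcal{T}$ is Fr\'echet then for every $s\in S$ we have $S\setminus\{s\}=(S\setminus\{(s)\phi\})\phi^{-1}\in (\mathcal{T})\phi^{-1}$; the Hausdorff case is identical using disjoint pairs of open neighbourhoods of distinct points.

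Since $\phi^{-1}$ is itself an automorphism or anti-automorphism of $S$, the previous paragraph applied to $\phi^{-1}$ shows that pullback along $\phi$ is actually a bijection from each family $\mathcal{C}_{\bullet}$ to itself. Finally, because $\phi$ is a set-theoretic bijection one has
\[\left(\intersection{\mathcal{T}\in \mathcal{C}_{\bullet}} \mathcal{T}\right)\phi^{-1}\;=\;\intersection{\mathcal{T}\in \mathcal{C}_{\bullet}}(\mathcal{T})\phi^{-1}\;=\;\intersection{\mathcal{T}'\in \mathcal{C}_{\bullet}}\mathcal{T}',\]
where the second equality uses that $\mathcal{T}\mapsto (\mathcal{T})\phi^{-1}$ permutes $\mathcal{C}_{\bullet}$. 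This says precisely that pullback along $\phi$ fixes each of $\mathcal{MF}(S)$, $\mathcal{FM}(S)$ and $\mathcal{HM}(S)$, so $\phi$ is continuous (in fact a homeomorphism) with respect to each of these topologies. The argument has no hard step; the whole proposition reduces to Lemma~\ref{somewhat nice topologies lemma} and Proposition~\ref{pull back topologies prop} together with the observation that bijections preserve Fr\'echet and Hausdorff separation.
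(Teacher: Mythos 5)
Your proof is correct and follows essentially the same route as the paper: the first assertion is deduced by observing that every family being intersected consists of Fréchet topologies semicompatible with $S$, so Lemma~\ref{somewhat nice topologies lemma} applies, and the second assertion is proved by showing the pullback $\mathcal{T}\mapsto(\mathcal{T})\phi^{-1}$ permutes each defining family and hence fixes the intersection. Your write-up is slightly more explicit about why the pullback restricts to a bijection of each family (via $\phi^{-1}$ also being an automorphism or anti-automorphism), a point the paper's proof leaves implicit, but the underlying argument is identical.
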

\begin{proof}
The first statement follows from Lemma~\ref{somewhat nice topologies lemma}.
Let \(\phi\) be an arbitrary automorphism or anti-automorphism of \(S\).
Let \(\mathcal{T}_0, \mathcal{T}_1\) and \(\mathcal{T}_2\) be \(\mathcal{MF}(S)\), \(\mathcal{FM}(S)\) and \(\mathcal{HM}(S)\) respectively. Similarly let \(P_0, P_1\) and \(P_2\) be the properties of being a Fr\'echet semitopological semigroup, Fr\'echet topological semigroup and Hausdorff topological semigroup respectively.

As \(\phi\) is a bijection it follows that if \(\mathcal{T}\) is a Fréchet/Hausdorff topology on the set \(S\), then so is \((\mathcal{T})\phi^{-1}\). So for each \(i\in \{0, 1, 2\}\), it follows from Proposition~\ref{pull back topologies prop} that
\begin{align*}
    (\mathcal{T}_i)\phi &= \left(\intersection{} \makeset{\mathcal{T}}{\((S, \mathcal{T})\) satisfies \(P_i\)}\right)\phi=\intersection{} \makeset{(\mathcal{T})\phi}{\((S, \mathcal{T})\) satisfies \(P_i\)}\\
    &=\intersection{} \makeset{\mathcal{T}}{\((S, (\mathcal{T})\phi^{-1})\) satisfies \(P_i\)}=\intersection{} \makeset{\mathcal{T}}{\((S, \mathcal{T})\) satisfies \(P_i\)}=\mathcal{T}_i.
\end{align*}
So \(\phi\) is continuous as required.
\end{proof}

\speeddictthree{170}{Zariski topology defn}{subbasis}{topological structures defn}{term operations defn}
\begin{defn}[Zariski topology, \Assumed{170}]\label{Zariski topology defn}
Let \(\mathbb{S}\) be an algebraic \(\sigma\)-structure. We say that a subset \(E\subseteq \mathbb{S}\) is \textit{elementary algebraic} if there are \(n, m\in \N\backslash\{0\}\), \(s_1, s_2, \ldots, s_{n-1}, t_1, t_2, \ldots, t_{m-1} \in \mathbb{S}\), terms \(a, b\) of \(\sigma\) over \(\{0, 1, \ldots, n-1\}\) and \(\{0, 1, \ldots, m-1\}\) respectively such that
\[E = \makeset{s\in \mathbb{S}}{\((s, s_1, \ldots, s_{n-1})a_{\{0, 1, \ldots, n-1\}}^{\mathbb{S}} = (s, t_1,\ldots, t_{m-1})b_{\{0, 1, \ldots, m-1\}}^{\mathbb{S}}\)}\]
(recall Definition~\ref{term operations defn}).
We then define the \textit{Zariski topology} \(\mathcal{Z}(\mathbb{S})\) of \(\mathbb{S}\) to be the topology on \(\mathbb{S}\) generated by the sets of the form \(\mathbb{S}\backslash E\) where \(E\) is elementary algebraic.
\end{defn}

For semigroups in particular, there is a rather limited scope for term operations, in fact every term operation is of the form
\[(s_0, s_1, s_2, \ldots, s_{n-1}) \to s_{i_0}s_{i_1}\ldots s_{i_{k-1}}\]
for some \(n, k\in \N\) and \(i_0, i_1,\ldots i_{k-1}< n\).
Thus the elementary algebraic sets for the Zariski topology can be thought of as the ``solution sets" to pairs of ``polynomials".

\speeddictfour{179}{Zariski topologies are nice}{semigroup signature defn}{anti-homomorphisms defn}{dual semigroup defn}{Zariski topology defn}
\begin{proposition}[Zariski topologies are nice, \Assumed{179}]\label{Zariski topologies are nice}
If \(S\) is a semigroup, then \(\mathcal{Z}(S)\) is a Fréchet topology semicompatible with \(S\). Moreover if \(\phi:S\to T\) is an isomorphism or anti-isomorphism between \(S\) and \(T\) as discrete semigroups, then \(\phi\) is continuous with respect to \(\mathcal{Z}(S)\) and \(\mathcal{Z}(T)\). 
\end{proposition}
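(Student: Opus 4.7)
The plan is first to understand the elementary algebraic sets concretely in the semigroup setting. By the description in Example~\ref{term operations in groups and inverse semigroups}, every term operation of \(S\) is a map of the form \((s_0, \ldots, s_{n-1}) \mapsto s_{v_0} s_{v_1} \cdots s_{v_{k-1}}\). So the elementary algebraic sets of \(S\) are precisely the sets of the form
\[
\{x \in S : x_{i_0} x_{i_1} \cdots x_{i_{k-1}} = x_{j_0} x_{j_1} \cdots x_{j_{l-1}}\}
\]
where \(x_0 = x\) and \(x_1, \ldots, x_{n-1}\) are fixed elements of \(S\). Taking \(n = 2\), the word on the left being just the variable \(0\), and the word on the right being just the variable \(1\) (with constant \(x_1 = s\)), yields the set \(\{s\}\). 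So every singleton is closed, proving that \(\mathcal{Z}(S)\) is Fréchet.

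Next, for semicompatibility, I would show that for each \(s \in S\) the preimage of any elementary algebraic set under \(\rho_s\) or \(\lambda_s\) is again elementary algebraic, hence closed. Given \(E\) defined as above, a point \(x\) lies in \((E)\rho_s^{-1}\) iff \(xs\) satisfies the word equation; replacing every occurrence of the variable \(0\) on both sides by the two-letter word \(0\,n\) (where \(n\) is a fresh variable, interpreted at the constant \(s\)) yields a new word equation in the variables \(\{0, 1, \ldots, n\}\) whose solution set is precisely \((E)\rho_s^{-1}\). This witnesses it as elementary algebraic. The case of \(\lambda_s\) is symmetric, prepending \(n\,0\) instead.

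Finally, for the (anti-)isomorphism statement, the key point is that elementary algebraic sets are defined in purely algebraic terms. If \(\phi : S \to T\) is an isomorphism of discrete semigroups, then \(\phi^{-1}\) sends an elementary algebraic set of \(T\) given by a word equation with constants \(t_1, \ldots, t_{n-1}\) to the elementary algebraic set of \(S\) given by the same word equation with constants \((t_1)\phi^{-1}, \ldots, (t_{n-1})\phi^{-1}\); hence \(\phi\) is continuous. For an anti-isomorphism \(\phi : S \to T\), observe that \(\phi\) is an isomorphism \(S^\dagger \to T\) (Definition~\ref{dual semigroup defn}), and that \(\mathcal{Z}(S^\dagger) = \mathcal{Z}(S)\): every word equation in \(S^\dagger\) corresponds (by reversing both sides of the equation) to a word equation in \(S\) with the same solution set, and vice versa. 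Composing with the isomorphism case gives continuity of \(\phi\).

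The substantive bookkeeping is in the semicompatibility step, where one must rigorously carry out the term-substitution to see that the solution set of the substituted word equation truly equals the preimage; the only delicate conceptual point is the verification that \(\mathcal{Z}(S) = \mathcal{Z}(S^\dagger)\), which needs the elementary observation that reversing both sides of a semigroup word equation preserves its solution set.
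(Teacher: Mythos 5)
Your proposal is correct and takes essentially the same approach as the paper's proof: singletons are expressed via projection terms; semicompatibility is established by substituting a fresh variable into the free variable's position to absorb the translation (which is precisely the paper's construction of the modified term operations $a'$ and $b'$); and the anti-isomorphism case is handled by passing to $S^\dagger$ and noting $\mathcal{Z}(S) = \mathcal{Z}(S^\dagger)$, which you justify by word-reversal while the paper justifies it by observing the operations of $S$ and $S^\dagger$ are term operations of each other — two phrasings of the same fact.
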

\begin{proof}
Let \(a\), \(b\) be arbitrary terms of \(\sigma_S\) over the variable sets \(\{0,1 , \ldots, n-1\}\) and \(\{0, 1, \ldots, m-1\}\) respectively (recall Definition~\ref{semigroup signature defn}). Moreover let \(u, s_1, s_2, \ldots, s_{n-1}\), \(t_1, t_2, \ldots, t_{m-1}\in S\) be arbitrary.
To conclude that \(\mathcal{Z}(S)\) is semicompatible with \(S\) it suffices to show that the sets
\[\left(\makeset{s\in S}{\((s, s_1, \ldots, s_{n-1})a_{\{0, 1,\ldots, n-1\}}^S = (s, t_1,\ldots, t_{m-1})b_{\{0, 1, \ldots, m-1\}}^S\)}\right)\lambda_u^{-1},\]
\[\left(\makeset{s\in S}{\((s, s_1, \ldots, s_{n-1})a_{\{0, 1, \ldots, n-1\}}^S = (s, t_1,\ldots, t_{m-1})b_{\{0, 1, \ldots, m-1\}}^S\)}\right)\rho_u^{-1}\]
are closed in \(\mathcal{Z}(S)\). 
We show that the first of these sets is closed, the second follows by a symmetric argument.
We define term operations \(a':S^{n+1}\to S\) and \(b':S^{m+1}\to S\) by
\begin{align*}
    (x_0, x_1, \ldots, x_n)a'&= ((x_0, x_n)*^S, x_1, \ldots, x_{n-1})a_{\{0, 1, \ldots, n-1\}}^S,\\
    (x_0, x_1, \ldots, x_m)b'&= ((x_0, x_m)*^S, x_1, \ldots, x_{m-1})b_{\{0, 1, \ldots, m-1\}}^S.
\end{align*}
We now have
\begin{align*}
    &\left(\makeset{s\in S}{\((s, s_1, \ldots, s_{n-1})a_{\{0, 1, \ldots, n-1\}}^S = (s, t_1,\ldots, t_{m-1})b_{\{0, 1, \ldots, m-1\}}^S\)}\right)\lambda_u^{-1}\\
    =&\makeset{s\in S}{\(((s)\lambda_u, s_1, \ldots, s_{n-1})a_{\{0, 1, \ldots, n-1\}}^S = ((s)\lambda_u, t_1,\ldots, t_{m-1})b_{\{0, 1, \ldots, m-1\}}^S\)}\\
    =&\makeset{s\in S}{\(((u, s)*^S, s_1, \ldots, s_{n-1})a_{\{0, 1, \ldots, n-1\}}^S = ((u, s)*^S, t_1,\ldots, t_{m-1})b_{\{0, 1, \ldots, m-1\}}^S\)}\\
    =&\makeset{s\in S}{\((s, s_1, \ldots, s_{n-1}, u)a' = (s, t_1,\ldots, t_{m-1}, u)b'\)}.
\end{align*}
So the required set is elementary algebraic and hence closed.
To see that \(\mathcal{Z}\) is Fréchet, note that the projection maps \(\pi_0, \pi_1:S^2\to S\) are term operations, thus for all \(x\in S\) the set \(\{x\}= \makeset{s\in S}{\((s, x)\pi_0 = (s, x)\pi_1\)}\) is elementary algebraic and hence closed in \(\mathcal{Z}(S)\).

If \(\phi:S\to T\) is an isomorphism, then 
\[\left(\makeset{s\in S}{\((s, s_1, \ldots, s_{n-1})a_{\{0, 1, \ldots, n-1\}}^S = (s, t_1,\ldots, t_{m-1})b_{\{0, 1, \ldots, m-1\}}^S\)}\right)\phi\]
\[=\makeset{t\in T}{\((t, (s_1)\phi, \ldots, (s_{n-1})\phi)a_{\{0, 1, \ldots, n-1\}}^T = (t, (t_1)\phi,\ldots, (t_{m-1})\phi)b_{\{0, 1, \ldots, m-1\}}^T\)}.\]
In particular \(\phi\) and \(\phi^{-1}\) map elementary algebraic sets to elementary algebraic sets, so \(\phi\) is a homeomorphism of the Zariski topologies.

If \(\phi:S\to T\) is an anti-isomorphism, then \(\phi\) is an isomorphism from \(S^\dagger\) to \(T\).
So \(\phi:(S, \mathcal{Z}(S^\dagger)) \to (T, \mathcal{Z}(T))\) is a homeomorphism.
As the operations of \(S\) and \(S^\dagger\) are term operations of each other, it follows that they have the same term operations and thus \(\mathcal{Z}(S)= \mathcal{Z}(S^\dagger)\).
Thus \(\phi\) is continuous with respect to \(\mathcal{Z}(S)\) and \(\mathcal{Z}(T)\) as required.
\end{proof}

The ``second countable continuity topology'' defined in the following definition is primarily of interest due to Proposition~\ref{ac topology is nice}.
In this part we will see many examples of semigroups for which this topology is second countable and is thus maximum among the second countable topologies compatible with the semigroup.

\speeddictthree{180}{ac topology defn}{second countable defn}{topological structures defn}{combining topologies lemma}
\begin{defn}[Second countable continuity topology, \Assumed{180}]\label{ac topology defn}
If \(\mathbb{S}\) is an algebraic structure, then we define the \textit{second countable continuity topology} \(\mathcal{SCC}(\mathbb{S})\) of \(\mathbb{S}\) to be the topology generated by the set
\[\union{}\makeset{\mathcal{T}}{\((\mathbb{S}, \mathcal{T})\) is a second countable topological structure}\]
(by Lemma~\ref{combining topologies lemma}, this topology is compatible with \(\mathbb{S}\)).
\end{defn}

\speeddictthree{181}{ac topology is nice}{group defn}{dual semigroup defn}{ac topology defn}
\begin{proposition}[Second countable continuity topologies are nice, \Assumed{181}]\label{ac topology is nice}
Suppose that \((S, \mathcal{T})\) is a topological semigroup. The following are equivalent:
\begin{enumerate}
    \item \(\mathcal{SCC}(S)\subseteq \mathcal{T}\).
    \item If \(\phi:(S, \mathcal{T}) \to T\) is a homomorphism, where \(T\) is a second countable topological semigroup, then \(\phi\) is continuous.
    \item \(\mathcal{SCC}(S^\dagger)\subseteq \mathcal{T}\).
    \item If \(\phi:(S, \mathcal{T}) \to T\) is an anti-homomorphism, where \(T\) is a second countable topological semigroup, then \(\phi\) is continuous.
\end{enumerate}
Suppose further that there is an inverse semigroup \(I\) such that \(S\) is the semigroup obtained by removing the unary operation of \(I\), and that \((I, \mathcal{T})\) is a topological inverse semigroup.
In this case \(\mathcal{SCC}(S) = \mathcal{SCC}(I)\) and the following condition is equivalent to the above conditions:
\begin{enumerate}
\setcounter{enumi}{4}
    \item If \(\phi: (I, \mathcal{T})\to T\) is a homomorphism, where \(T\) is a second countable topological inverse semigroup, then \(\phi\) is continuous.
\end{enumerate}
\end{proposition}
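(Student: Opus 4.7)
The plan is to set up the core equivalence (1)$\iff$(2) first, then bootstrap everything else from it via Proposition~\ref{pull back topologies prop}. For (1)$\Rightarrow$(2), I will take a homomorphism $\phi:(S,\mathcal{T})\to T$ with $T$ second countable. By Proposition~\ref{pull back topologies prop}, the pullback $(\mathcal{T}_T)\phi^{-1}$ is a topology compatible with $S$, and since $T$ is second countable, so is this pullback. Hence $(\mathcal{T}_T)\phi^{-1}\subseteq \mathcal{SCC}(S)\subseteq \mathcal{T}$, so $\phi$ is continuous. For (2)$\Rightarrow$(1), given any second countable $\mathcal{T}'$ compatible with $S$, the identity map $(S,\mathcal{T})\to (S,\mathcal{T}')$ is a continuous homomorphism by (2), so $\mathcal{T}'\subseteq \mathcal{T}$. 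Taking the join over all such $\mathcal{T}'$ gives $\mathcal{SCC}(S)\subseteq \mathcal{T}$.

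Next I observe that $\mathcal{SCC}(S)=\mathcal{SCC}(S^\dagger)$ because a topology is compatible with $S$ as a topological semigroup if and only if it is compatible with $S^\dagger$ (the two multiplication maps differ by the continuous swap of coordinates). This immediately gives (1)$\iff$(3). For (3)$\iff$(4), I apply the already-established (1)$\iff$(2) to $S^\dagger$, using that anti-homomorphisms $S\to T$ correspond bijectively to homomorphisms $S^\dagger \to T$.

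For the inverse semigroup portion, the main task is to prove $\mathcal{SCC}(S)=\mathcal{SCC}(I)$. The containment $\mathcal{SCC}(I)\subseteq \mathcal{SCC}(S)$ is immediate from the definitions since every topology compatible with $I$ is compatible with $S$. For the reverse, take any second countable topology $\mathcal{T}'$ compatible with $S$. The inverse operation $\iota$ is an anti-homomorphism of semigroups, so by Proposition~\ref{pull back topologies prop} the pullback $(\mathcal{T}')\iota^{-1}$ is again second countable and compatible with $S$. By Lemma~\ref{combining topologies lemma} the topology $\mathcal{T}''$ generated by $\mathcal{T}'\cup (\mathcal{T}')\iota^{-1}$ is still compatible with $S$, and it is still second countable (countable subbasis). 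Using $\iota^2=\mathrm{id}$ one checks that $\iota$ is continuous with respect to $\mathcal{T}''$, so $\mathcal{T}''$ is a second countable topology compatible with $I$; hence $\mathcal{T}'\subseteq\mathcal{T}''\subseteq\mathcal{SCC}(I)$.

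Finally, (1)$\iff$(5) follows by the same identity-map argument as (1)$\iff$(2): (1)$\Rightarrow$(5) because a second countable topological inverse semigroup, with the inverse operation forgotten, is still a second countable topological semigroup, so (2) applies; and (5)$\Rightarrow$(1) because taking $\mathrm{id}:(I,\mathcal{T})\to (I,\mathcal{T}')$ for an arbitrary second countable $\mathcal{T}'$ compatible with $I$ shows $\mathcal{T}'\subseteq\mathcal{T}$, whence $\mathcal{SCC}(I)\subseteq\mathcal{T}$, and the equality $\mathcal{SCC}(S)=\mathcal{SCC}(I)$ gives (1). The only non-bookkeeping step is the anti-homomorphism trick in the $\mathcal{SCC}(S)=\mathcal{SCC}(I)$ argument, so I expect that to be where I should be careful; everything else is a routine universal-property chase.
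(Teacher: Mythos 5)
Your proposal is correct and follows essentially the same approach as the paper's proof: the (1)$\iff$(2) equivalence via Proposition~\ref{pull back topologies prop} and the identity map, bootstrapping (3),(4) through $S^\dagger$, and establishing $\mathcal{SCC}(S)=\mathcal{SCC}(I)$ by pulling back along the inverse anti-isomorphism and invoking Lemma~\ref{combining topologies lemma}. The only cosmetic difference is that where you argue directly that a topology is compatible with $S$ iff compatible with $S^\dagger$ (the continuous coordinate swap), the paper routes this through the pullback proposition applied to the anti-isomorphism $\mathrm{id}_S$, which is the same fact phrased slightly differently.
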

\begin{proof}
\((1\Rightarrow 2):\) Let \(\phi:(S, \mathcal{T}) \to T\) be a homomorphism, where \(T\) is a second countable topological semigroup.
Let \(\mathcal{U}\) be the topology on \(T\). 
By Proposition~\ref{pull back topologies prop}, the topology \((\mathcal{U})\phi^{-1}\) is compatible with \(S\).
If \(B\) is a countable basis for \(\mathcal{U}\), then \((B)\phi^{-1}\) is a countable basis for \((\mathcal{U})\phi^{-1}\), so \((\mathcal{U})\phi^{-1}\) is second countable. By the definition of \(\mathcal{SCC}(S)\), it follows that \((\mathcal{U})\phi^{-1}\subseteq \mathcal{SCC}(S)\), and hence \(\phi\) is continuous.

\((2\Rightarrow 1):\) Let \(\mathcal{U}\) be an arbitrary second countable topology compatible with \(S\).
The identity map \(\text{id}_S:(S, \mathcal{T}) \to (S, \mathcal{U})\) is a homomorphism, so by assumption it is continuous.
Thus \(\mathcal{U}\subseteq \mathcal{T}\). As \(\mathcal{U}\) was arbitrary and such topologies generate \(\mathcal{SCC}(S)\), it follows that \(\mathcal{SCC}(S)\subseteq \mathcal{T}\).

\((1\iff 3):\) The topology \(\mathcal{SCC}(S)\) is generated by the set
\[\union{}\makeset{\mathcal{U}}{\((S, \mathcal{U})\) is a second countable topological semigroup}\]
and the topology \(\mathcal{SCC}(S^\dagger)\) is generated by the set
\[\union{}\makeset{\mathcal{U}}{\((S^\dagger, \mathcal{U})\) is a second countable topological semigroup}.\]
By Proposition~\ref{pull back topologies prop} (using the anti-isomorphism \(\text{id}_S\)) it follows that these sets are the same and hence \(\mathcal{SCC}(S^\dagger) = \mathcal{SCC}(S)\).

\((3\iff 4):\) Note that \(\phi:S\to T\) is an anti-homomorphism if and only if \(\phi:S^\dagger\to T\) is a homomorphism.
Thus by replacing \(S\) with \(S^\dagger\), it follows that  \((1\iff 2) \iff (3\iff 4)\).
As we have already shown \((1\iff 2)\), the result follows.

\((\mathcal{SCC}(S)= \mathcal{SCC}(I)):\) Note that the map \(x\to x^{-1}\) is an anti-isomorphism from \(I\) to itself.
In particular if \(\mathcal{T}\) is a topology compatible with \(S\), then \(\mathcal{T}^{-1}\) is also a topology compatible with \(S\) (by Proposition~\ref{pull back topologies prop}).
Thus by Lemma~\ref{combining topologies lemma}, the topology generated by \(\mathcal{T}\cup \mathcal{T}^{-1}\) is also compatible with \(S\).
As the set \(\mathcal{T}\cup \mathcal{T}^{-1}\) is inverse closed, it follows that the topology generated by this set is compatible with \(I\) as well.

We have shown that if \(\mathcal{T}\) is a topology compatible with \(S\), then the topology generated by \(\mathcal{T} \cup \mathcal{T}^{-1}\) is compatible with \(I\).
If \(B\) is a countable basis for \(\mathcal{B}\), then \(\makeset{\cap F}{finite \(F\subseteq B\cup B^{-1}\)}\) is a countable basis for the topology generated by \(\mathcal{T} \cup \mathcal{T}^{-1}\). Thus every second countable topology compatible with \(S\) is contained in a second countable topology compatible with \(I\).
The result then follows from the definitions of \(\mathcal{SCC}(I)\) and \(\mathcal{SCC}(S)\).

\((1\iff 5):\) This proof is identical to the proof of \((1\Rightarrow 2)\) together with the proof of \((2\Rightarrow 1)\) if we replace \(S\) with \(I\).
\end{proof}

\speeddicttwo{191}{sym(N) has SCC cor}{automatic continuity examples thm}{ac topology is nice}
\begin{corollary}[\(\Sym(\N)\) continuity, \Assumed{191}]\label{sym(N) has SCC cor}
The topology \(\mathcal{SCC}(\Sym(\N))\) is the pointwise topology on \(\Sym(\N)\).
\end{corollary}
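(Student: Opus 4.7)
The plan is to prove the two inclusions separately. For the easy direction (pointwise topology $\subseteq \mathcal{SCC}(\Sym(\N))$), I would simply observe that the pointwise topology is second countable (Proposition~\ref{pointwise group is Polish prop}) and makes $\Sym(\N)$ into a topological group (Definition~\ref{symmetric group topology}), hence into a topological inverse semigroup. By Definition~\ref{ac topology defn}, it is therefore contained in $\mathcal{SCC}(\Sym(\N))$.

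For the reverse inclusion, I would apply condition (5) of Proposition~\ref{ac topology is nice} to $\Sym(\N)$ with the pointwise topology. This reduces the task to showing that every inverse semigroup homomorphism $\phi:\Sym(\N)\to I$, where $I$ is a second countable topological inverse semigroup, is continuous. The key observation is that the image $H:=\phi(\Sym(\N))$ is a \emph{subgroup} of $I$: its identity is $\phi(1_{\Sym(\N)})$, and since $\phi$ preserves the unary operation $\iota$, the inverse of $\phi(g)$ in $I$ equals $\phi(g^{-1})\in H$. With the subspace topology from $I$, multiplication and inversion on $H$ are restrictions of continuous operations on $I$, so $H$ is a topological group, and it is second countable as a subspace of a second countable space.

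Therefore $\phi$ corestricts to a group homomorphism from $\Sym(\N)$ (with the pointwise topology) to the second countable topological group $H$. By Theorem~\ref{automatic continuity examples thm} this corestriction is continuous, and composing with the (continuous) inclusion $H\hookrightarrow I$ shows that $\phi$ itself is continuous, as required.

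There is no serious obstacle in this argument; the one subtle point is why I invoke the inverse semigroup version of Proposition~\ref{ac topology is nice}(5) rather than the semigroup version (2). In the semigroup version, the image of $\phi$ would merely be a group sitting inside a topological semigroup, and the inversion on the image need not be continuous, so Theorem~\ref{automatic continuity examples thm} would not directly apply. The inverse semigroup target ensures that inversion on $I$, and hence on $H$, is continuous, which is exactly what makes $H$ a topological group and unlocks the automatic continuity result.
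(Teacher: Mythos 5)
Your proposal is correct and follows the same approach the paper sketches (Proposition~\ref{ac topology is nice} plus Theorem~\ref{automatic continuity examples thm}); the paper's two-sentence proof simply elides the step you spell out, namely the reduction from second countable topological inverse semigroup targets to second countable topological group targets via the observation that the image $H=(\Sym(\N))\phi$ is itself a second countable topological group under the subspace topology. That reduction is exactly the right way to bridge the gap, and your final remark correctly identifies why the inverse semigroup version of the criterion is the one to use. One small remark: you could also shortcut the argument by applying Theorem~\ref{automatic continuity examples thm} directly to the identity map from $(\Sym(\N),\text{pointwise})$ to $(\Sym(\N),\mathcal{T})$ for each second countable group topology $\mathcal{T}$ compatible with $\Sym(\N)$, which yields $\mathcal{T}\subseteq \mathcal{PT}_\N\restriction_{\Sym(\N)}$ and hence the generated topology $\mathcal{SCC}(\Sym(\N))$ is contained in the pointwise topology, bypassing the discussion of arbitrary inverse-semigroup targets altogether.
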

\begin{proof}
From Proposition~\ref{ac topology is nice} together with Theorem~\ref{automatic continuity examples thm}, we have that \(\mathcal{SCC}(\Sym(\N))\) is contained in the pointwise topology.
As every second countable topology compatible with \(\SN\) is contained in \(\mathcal{SCC}(\Sym(\N))\), we also have the reverse inclusion.
\end{proof}

\speeddictthree{182}{Hausdorff Zariski lemma}{Products in Categories Defn}{Hausdorff defn}{Zariski topology defn}
\begin{lemma}[Hausdorff Zariski Comparison, \Assumed{182}]\label{Hausdorff Zariski lemma}
If \(\mathbb{S}\) is an algebraic \(\sigma\)-structure and \(\mathcal{T}\) is a Hausdorff topology compatible with \(\mathbb{S}\), then \(\mathcal{Z}(\mathbb{S})\subseteq \mathcal{T}\).
\end{lemma}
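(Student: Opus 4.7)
The plan is to show directly that every elementary algebraic subset of $\mathbb{S}$ is closed with respect to $\mathcal{T}$, which will give the containment of topologies since $\mathcal{Z}(\mathbb{S})$ is generated by complements of such sets.

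Fix an elementary algebraic set $E$ arising from terms $a,b$ and constants $s_1,\ldots,s_{n-1},t_1,\ldots,t_{m-1}\in \mathbb{S}$ as in Definition~\ref{Zariski topology defn}. I would first build two continuous maps $f_a,f_b:(\mathbb{S},\mathcal{T})\to (\mathbb{S},\mathcal{T})$ which ``plug in'' the given constants into the term operations $a^{\mathbb{S}}_{\{0,\ldots,n-1\}}$ and $b^{\mathbb{S}}_{\{0,\ldots,m-1\}}$. Concretely, let $c_x:\mathbb{S}\to\mathbb{S}$ denote the constant map with value $x$ (which is continuous) and set
\[
f_a \;=\; \bigl\langle \mathrm{id}_{\mathbb{S}},\, c_{s_1},\,\ldots,\, c_{s_{n-1}}\bigr\rangle_{\mathbb{S}^n}\circ a^{\mathbb{S}}_{\{0,\ldots,n-1\}},\qquad
f_b \;=\; \bigl\langle \mathrm{id}_{\mathbb{S}},\, c_{t_1},\,\ldots,\, c_{t_{m-1}}\bigr\rangle_{\mathbb{S}^m}\circ b^{\mathbb{S}}_{\{0,\ldots,m-1\}}.
\]
These are continuous by Definition~\ref{Products in Categories Defn} together with the fact that term operations on a topological structure are continuous (Definition~\ref{term operations defn}). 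By construction $(s)f_a=(s,s_1,\ldots,s_{n-1})a^{\mathbb{S}}_{\{0,\ldots,n-1\}}$ and analogously for $f_b$.

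Now consider the continuous map $F:=\langle f_a,f_b\rangle_{\mathbb{S}^2}:(\mathbb{S},\mathcal{T})\to (\mathbb{S}^2,\mathcal{T}^2)$. Since $\mathcal{T}$ is Hausdorff, the diagonal $\mathrm{id}_{\mathbb{S}}\subseteq \mathbb{S}^2$ is closed (Definition~\ref{Hausdorff defn}), and by the defining property of products we have
\[
E \;=\; \bigl\{s\in \mathbb{S} : (s)f_a=(s)f_b\bigr\} \;=\; (\mathrm{id}_{\mathbb{S}})F^{-1},
\]
so $E$ is closed in $\mathcal{T}$.

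Hence every subbasic open set $\mathbb{S}\setminus E$ of $\mathcal{Z}(\mathbb{S})$ lies in $\mathcal{T}$, and therefore $\mathcal{Z}(\mathbb{S})\subseteq \mathcal{T}$. There is no real obstacle here; the only subtlety is being careful that the ``plug in the constants'' construction really does produce continuous maps, which is handled cleanly by expressing $f_a$ and $f_b$ as products of continuous maps followed by the continuous term operations supplied by Definition~\ref{term operations defn}.
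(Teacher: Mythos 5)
Your proof is correct and is essentially identical to the paper's own argument: both define the maps $a',b'$ (your $f_a,f_b$) by pairing the identity with constant maps and composing with the term operations, then pull back the closed diagonal along $\langle a',b'\rangle_{\mathbb{S}^2}$ using the Hausdorff assumption.
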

\begin{proof}
Let \(a, b\) be arbitrary terms of \(\sigma\) over \(\{0, 1, \ldots, n-1\}\) and \(\{0, 1, \ldots, m-1\}\) respectively. Suppose that \(s_1, s_2, \ldots, s_{n-1}\), \(t_1, t_2\ldots, t_{m-1}\in \mathbb{S}\) are arbitrary and \(c_{s,1}, c_{s, 2}, \ldots, c_{s, n-1},\) \(c_{t, 1}, c_{t, 2}, \ldots, c_{t, m-1}:\mathbb{S}\to \mathbb{S}\) are the constant maps with these values. 
We define continuous maps \(a':\mathbb{S}\to \mathbb{S}\) and \(b':\mathbb{S}\to \mathbb{S}\) as follows (recall Definition~\ref{Products in Categories Defn})
\[a' := \langle \text{id}_{\mathbb{S}}, c_{s, 1}, \ldots, c_{s, n-1}\rangle_{\mathbb{S}^n} \circ a_{\{0, 1, \ldots n-1\}}^S,\]
\[ b' := \langle \text{id}_{\mathbb{S}}, c_{t, 1}, \ldots, c_{t, m-1}\rangle_{\mathbb{S}^m} \circ b_{\{0, 1, \ldots m-1\}}^S.\]
Note that by construction the maps \(a', b'\) are continuous with respect to \(\mathcal{T}\) and
\begin{align*}
    E&:= \makeset{s\in \mathbb{S}}{\((s,s_1, \ldots, s_{n-1})a_{\{0, 1, \ldots n-1\}}^S= (s, t_1, \ldots, t_{m-1})b_{\{0, 1, \ldots n-1\}}^S\)}\\
    &=\makeset{s\in \mathbb{S}}{\((s)a' = (s)b'\)}=\makeset{s\in \mathbb{S}}{\((s)\langle a', b'\rangle_{\mathbb{S}^2}\in \text{id}_{\mathbb{S}}\)}= (\text{id}_{\mathbb{S}})\langle a', b'\rangle_{\mathbb{S}^2}^{-1}
\end{align*}
is an arbitrary elementary algebraic subset of \(\mathbb{S}\).
As \(\mathcal{T}\) is Hausdorff, it follows that \(\text{id}_{\mathbb{S}}\) is a closed subset of \(\mathbb{S}^2\) and hence \(E\) is closed as required.
\end{proof}

\speeddictsix{183}{topology comparison proposition}{minimum topology defn}{Markov topology defn}{nice minimum topologies prop}{Zariski topologies are nice}{ac topology defn}{Hausdorff Zariski lemma}
\begin{proposition}[Comparing topologies, \Assumed{183}]\label{topology comparison proposition}
If \(S\) is a semigroup, then we have the following containments:\newline

\vspace*{10pt}~\hspace*{135pt}\begin{rotate}{00}\(\mathcal{MF}(S)\)

\begin{rotate}{-45}\(\quad\subseteq\quad
\)\begin{rotate}{45}\(\mathcal{FM}(S)
\)\begin{rotate}{45}\(\quad\subseteq\quad
\)\begin{rotate}{-45}\(\mathcal{HM}(S).
\)\end{rotate}\end{rotate}\end{rotate}\end{rotate}

 \begin{rotate}{45}\(\quad\subseteq\quad
\)\begin{rotate}{-45} \(\mathcal{Z}(S)
\) \begin{rotate}{-45}\(\quad\subseteq\quad
\)\end{rotate}\end{rotate}\end{rotate}

\end{rotate}

\vspace*{20pt}
\noindent Moreover if \(S\) is compatible with some second countable Hausdorff topology \(\mathcal{T}\), then \(\mathcal{HM}(S)\subseteq \mathcal{T}\subseteq \mathcal{SCC}(S)\).
\end{proposition}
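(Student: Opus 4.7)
The strategy is to chase each containment from the definitions, using only Propositions~\ref{nice minimum topologies prop} and \ref{Zariski topologies are nice} and Lemma~\ref{Hausdorff Zariski lemma} where needed. None of the steps should require genuine combinatorial work; the main ``obstacle'' is just keeping the direction of each inclusion straight, since all of $\mathcal{MF}(S)$, $\mathcal{FM}(S)$ and $\mathcal{HM}(S)$ are defined as intersections, and shrinking the indexing family of topologies enlarges the intersection.

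For the first diagonal $\mathcal{MF}(S)\subseteq \mathcal{FM}(S)\subseteq \mathcal{HM}(S)$, I would observe that every Hausdorff topological semigroup on $S$ is in particular a Fréchet topological semigroup, and every Fréchet topological semigroup is a Fréchet semitopological semigroup (since compatibility implies semicompatibility by the last paragraph of Definition~\ref{semitopological defn}). Hence the indexing classes of topologies satisfy the reverse inclusions, and taking intersections flips the inclusions to give the displayed chain.

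For the other diagonal $\mathcal{MF}(S)\subseteq \mathcal{Z}(S)\subseteq \mathcal{HM}(S)$, I would invoke Proposition~\ref{Zariski topologies are nice} to conclude that $\mathcal{Z}(S)$ is itself a Fréchet topology semicompatible with $S$, so $\mathcal{Z}(S)$ lies in the intersection defining $\mathcal{MF}(S)$ and thus $\mathcal{MF}(S)\subseteq \mathcal{Z}(S)$. For the other containment I would apply Lemma~\ref{Hausdorff Zariski lemma}: each Hausdorff topology $\mathcal{T}$ compatible with $S$ contains $\mathcal{Z}(S)$, so $\mathcal{Z}(S)$ is contained in the intersection $\mathcal{HM}(S)$ of all such $\mathcal{T}$.

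Finally, for the second countable Hausdorff topology $\mathcal{T}$ on $S$, the inclusion $\mathcal{HM}(S)\subseteq \mathcal{T}$ is immediate because $\mathcal{T}$ is one of the topologies in the intersection defining $\mathcal{HM}(S)$, and the inclusion $\mathcal{T}\subseteq \mathcal{SCC}(S)$ is immediate from Definition~\ref{ac topology defn}, since the generating family for $\mathcal{SCC}(S)$ contains $\mathcal{T}$ itself. This completes the plan; no step beyond bookkeeping the direction of the set inclusions is expected to cause difficulty.
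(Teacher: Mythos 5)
Your proposal is correct and follows essentially the same route as the paper: $\mathcal{MF}(S)\subseteq\mathcal{Z}(S)$ via Proposition~\ref{Zariski topologies are nice}, $\mathcal{Z}(S)\subseteq\mathcal{HM}(S)$ via Lemma~\ref{Hausdorff Zariski lemma}, $\mathcal{FM}(S)\subseteq\mathcal{HM}(S)$ and the final chain directly from the definitions. The one small divergence is that for $\mathcal{MF}(S)\subseteq\mathcal{FM}(S)$ you observe directly that the indexing family defining $\mathcal{FM}(S)$ is a subfamily of the one defining $\mathcal{MF}(S)$ (so the intersection is larger), whereas the paper routes through Proposition~\ref{nice minimum topologies prop} to show $\mathcal{FM}(S)$ is itself Fréchet and semicompatible; your version is a slight simplification but the content is the same.
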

\begin{proof}
\((\mathcal{MF}(S) \subseteq \mathcal{Z}(S) \text{ and }\mathcal{MF}(S) \subseteq \mathcal{FM}(S)):\) By Propositions~\ref{nice minimum topologies prop} and \ref{Zariski topologies are nice}, both of the topologies \(\mathcal{Z}(S) \text{ and } \mathcal{FM}(S)\) are Fréchet and semicompatible with \(S\).
The containments then follow from the definition of \(\mathcal{MF}(S)\).

\((\mathcal{Z}(S)\subseteq \mathcal{HM}(S)):\) By the definition of \(\mathcal{HM}(S)\), it suffices to show that \(\mathcal{Z}(S)\) is contained in every Hausdorff topology compatible with \(S\). This is immediate from Lemma~\ref{Hausdorff Zariski lemma}.

\((\mathcal{FM}(S)\subseteq \mathcal{HM}(S)):\) As all Hausdorff spaces are Fréchet, the topologies in the intersection defining \(\mathcal{HM}(S)\) are included in the intersection defining the topology \(\mathcal{FM}(S)\).

\((\mathcal{HM}(S)\subseteq \mathcal{T}\subseteq  \mathcal{SCC}(S)):\) This is immediate from the definitions of \(\mathcal{HM}(S)\) and \(\mathcal{SCC}(S)\).
\end{proof}

\section{The small index property and property \textbf{W}}

We next introduce the ``right/left small index properties" for topological semigroups and explain how they interact with the second countable continuity topology mentioned earlier (see Corollary~\ref{SCC vs small index cor}).
These notions generalise the notation of the ``small index property" for topological groups which says that all countable index subgroups of a group are open.
This property has been studied a lot in the literature (see for example \cite{Dixon1986aa,Evans1986aa,Herwig1998aa,Hodges1993ab,Truss1989aa}). 
It is routine to verify that each of the right and left small index properties coincide with the usual small index property when applied to a group.

\speeddicttwo{184}{one-sided congruences}{types of binary relation defns}{semigroup defn}
\begin{defn}[One-sided congruences, \Assumed{184}]\label{one-sided congruences}
Let \(S\) be a semigroup. We say that an equivalence relation \(\sim\) on \(S\) is a \textit{right congruence} if for all \((s,t)\in\ \sim\) and \(u\in S\) we have \((s u, t u)\in\ \sim\). Similarly we say that \(\sim\) is a \textit{left congruence} if for all \((s,t)\in\ \sim\) and \(u\in S\) we have \((us, u t)\in\ \sim\).
\end{defn}

\speeddicttwo{185}{small index property defn}{topological structures defn}{one-sided congruences}
\begin{defn}[Small index properties, \Assumed{185}]\label{small index property defn}
If \(S\) is a semitopological semigroup, then we say that \(S\) has the \textit{right small index property} if all countable index right congruences on \(S\) are open subsets of \(S^2\) (recall Definition~\ref{types of binary relation defns}).
We define the \textit{left small index property} analogously.
\end{defn}

\speeddictfive{187}{small index property prop}{group actions defn}{dual semigroup defn}{pull back topologies prop}{full trans def}{small index property defn}
\begin{proposition}[Small index condition equivalence, \Assumed{187}]\label{small index property prop}
If \((S, \mathcal{T})\) is a semitopological semigroup, then the following are equivalent:
\begin{enumerate}
    \item All homomorphisms \(\psi:(S, \mathcal{T}) \to (\N^\N, \mathcal{PT}_{\N})\) are continuous,
    \item \((S, \mathcal{T})\) has the right small index property.
\end{enumerate}
Similarly the following are equivalent (to each other not to the first two points):
\begin{enumerate}
\setcounter{enumi}{2}
    \item All anti-homomorphisms \(\psi:(S, \mathcal{T}) \to (\N^\N, \mathcal{PT}_{\N})\) are continuous,
    \item \((S, \mathcal{T})\) has the left small index property.
\end{enumerate}
\end{proposition}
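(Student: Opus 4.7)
The plan is to prove $(1) \iff (2)$; the equivalence $(3) \iff (4)$ will then follow by applying the first equivalence to the dual semigroup $S^\dagger$ of Definition~\ref{dual semigroup defn}, using that (by Proposition~\ref{pull back topologies prop}) $(S^\dagger, \mathcal{T})$ is a semitopological semigroup, that anti-homomorphisms $S \to \mathbb{N}^\mathbb{N}$ coincide with homomorphisms $S^\dagger \to \mathbb{N}^\mathbb{N}$, and that left congruences on $S$ coincide with right congruences on $S^\dagger$.

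The key preliminary observation is that a right congruence $\sim$ on $S$ is open in $S^2$ if and only if each $\sim$-class is open in $S$. Indeed, if $\sim$ is open and $s \in [b]_\sim$, applying openness at the diagonal point $(s, s) \in\ \sim$ and intersecting the two factors of the resulting basic open rectangle produces an open neighborhood $W$ of $s$ with $W \times W \subseteq\ \sim$, whence $W \subseteq [b]_\sim$; conversely $\sim = \bigcup_c [c]_\sim \times [c]_\sim$ is a union of products of open sets. For $(2) \Rightarrow (1)$, given a homomorphism $\psi: S \to \mathbb{N}^\mathbb{N}$ and $a \in \mathbb{N}$, define $s \sim_a s' \iff (a)\psi(s) = (a)\psi(s')$. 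This is a right congruence of countable index (its classes are indexed by values in $\mathbb{N}$), so by (2) its classes are open, and the preimage of each subbasic open $\{f \in \mathbb{N}^\mathbb{N} : (a)f = b\}$ under $\psi$ is either empty or a $\sim_a$-class, hence open.

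For $(1) \Rightarrow (2)$, given a countable-index right congruence $\sim$, set $X = (S/\sim) \sqcup \{\infty\}$ and define $\phi: S \to X^X$ by $\phi(s)([t]_\sim) = [ts]_\sim$ for $t \in S$, and $\phi(s)(\infty) = [s]_\sim$; a routine check (using that $\sim$ is a right congruence) shows $\phi$ is a semigroup homomorphism. Since $X$ is countable, fixing an injection $\iota: X \hookrightarrow \mathbb{N}$ and extending each $\phi(s)$ to act as the identity on $\mathbb{N} \setminus \iota(X)$ yields a homomorphism $\tilde\phi: S \to \mathbb{N}^\mathbb{N}$, which by (1) is continuous. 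For each $b \in S$, the preimage under $\tilde\phi$ of the subbasic open $\{f \in \mathbb{N}^\mathbb{N} : (\iota(\infty))f = \iota([b]_\sim)\}$ is precisely $[b]_\sim$, which is therefore open.

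The crux of the argument, and what I expect to be the main obstacle, lies in the design of $\phi$ in the $(1) \Rightarrow (2)$ direction: the naive homomorphism $\phi(s)([t]_\sim) = [ts]_\sim$ alone only delivers openness of sets of the form $\{s : ts \sim b\}$, which are unions of $\sim$-classes but typically strictly coarser than the classes themselves (there is no reason for $S$ to have a left identity that would collapse these sets). The adjunction of the auxiliary fixed point $\infty$ whose image under $\phi(s)$ records $[s]_\sim$ itself is exactly what converts continuity of $\tilde\phi$ into openness of each individual $\sim$-class.
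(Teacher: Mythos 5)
Your proof is correct and is essentially the same as the paper's: both directions use the same core devices (a right-translation action on $S/\sim$ augmented by a ``sentinel'' point whose image under $\tilde\phi(s)$ records $[s]_\sim$, and the countable-index right congruence $\sim_a$ coming from evaluation at a point $a\in\N$), with only cosmetic differences — the paper realizes the sentinel mechanism via a bijection $(S/\sim)\cup\N\to\N$ where you adjoin a single $\infty$ and extend by the identity off $\iota(X)$, and the paper passes from openness of $\sim$ to openness of its classes by projecting $((S\times\{s\})\cap\sim)\pi_0$ where you use the diagonal-point trick, but these are interchangeable. One small inaccuracy in your closing discussion: for $\sim$ merely a \emph{right} congruence, $\{s : ts\sim b\}$ need not be a union of $\sim$-classes (saturation under $\sim$ would require left-compatibility, i.e.\ a left congruence); the broader observation — that these sets are in general not the classes $[b]_\sim$, which is why the sentinel is needed — still stands, so the proof itself is unaffected.
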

\begin{proof}
\((1\Rightarrow 2):\) Let \(\sim\) be an arbitrary countable index right congruence on \(S\).
We assume without loss of generality that \((S/\sim) \cap \N = \varnothing\). 
Let \(\Phi: (S/\sim) \cup \N \to \N\) be a bijection.
We define an action \(a\) of \(S\) on the set \(\N\) by
\[(p, s)a = \left\{\begin{array}{lr}
([ts]_{\sim})\Phi           &\text{ if } p = ([t]_{\sim})\Phi \\
([s]_{\sim})\Phi    &\text{ if } p\in (\N)\Phi
\end{array}\right\}.\]
Note that the map \(\phi_a:(S, \mathcal{T})\to (\N^\N, \mathcal{PT}_\N)\) defined by \((i)((s)\phi_a)= (i, s)a\) is a semigroup homomorphism. By assumption, it follows that \(\phi_a\) is continuous. Let \(s\in S\) be arbitrary and consider the open subset
\[U_{(1)\Phi, ([s]_{\sim})\Phi}:= \makeset{f\in \N^\N}{\(((1)\Phi)f =  ([s]_{\sim})\Phi\)}\]
of \(\N^\N\). As \(\phi_a\) is continuous, it follows that
\begin{align*}
    (U_{(1)\Phi, ([s]_{\sim})\Phi})\phi_a^{-1}&=\makeset{t\in S}{\(((1)\Phi)((f)\phi_a) =  ([s]_{\sim})\Phi\)}\\
    &=\makeset{t\in S}{\(((1)\Phi,s)a =  ([s]_{\sim})\Phi\)}=[s]_{\sim}
\end{align*}
is an open subset of \(S\).
As \(\sim = \union{s\in S} \left([s]_{\sim} \times [s]_{\sim}\right)\),  it follows that \(\sim\) is open as required.

\((2\Rightarrow 1):\) Let \(\psi:(S, \mathcal{T}) \to (\N^\N, \mathcal{PT}_{\N})\) be a homomorphism, let \(a, b\in \N\) be arbitrary and let
\[U_{a, b}:= \makeset{f\in \N^\N}{\((a, b)\in f\)}.\]
It suffices to show that \((U_{a, b})\psi^{-1}\) is an open subset of \(S\).
We define an equivalence relation \(\sim_a\) on \(S\) by 
\[\sim_a := \makeset{(s, t)\in S}{\((a)((s)\psi) = (a)((s)\psi)\)}.\]
As \(\phi\) is a homomorphism, it follows that \(\sim_a\) is a right congruence.
Note also that 
\[S/\sim_a = \makeset{\makeset{s\in S}{\((a)((s)\phi) = b\)}}{\(b\in \N\)}\]
and so \(\sim_a\) has countable index. Thus \(\sim_a\) is open and \((U_{a, b})\phi^{-1}\) is an equivalence class of \(\sim_a\).
If \(s\in S\), then \([s]_{\sim_a} = ((S\times \{s\})\cap \sim_a)\pi_0\). As \((S\times \{s\})\cap \sim_a\) is an open subset of \(S\times \{s\}\) and the projection map \(\pi_0:S\times \{s\} \to S\) is a homeomorphism, it follows that all equivalence classes of \(\sim_a\) are open. The result follows.

\((3\iff 4):\) The identity map \(\text{id}_S:(S, \mathcal{T}) \to (S^\dagger, \mathcal{T})\) is a homeomorphism of spaces and an anti-isomorphism of semigroups. Moreover by Proposition~\ref{pull back topologies prop}, the pair \((S^\dagger, \mathcal{T})\) is a topological semigroup.
By the definition of \(S^\dagger\), it follows that \(\sim\) is a left congruence on \(S\) if and only if \(\sim\) is a right congruence on \(S^\dagger\). Moreover \(\psi:S\to \N^\N\) is an anti-homomorphism if and only if \(\phi:S^\dagger \to \N^\N\) is a homomorphism. 

Thus we need only show that \((S^\dagger, \mathcal{T})\) having the right small index property is equivalent to all homomorphisms \(\psi: (S^\dagger, \mathcal{T})\to (\N^\N, \mathcal{PT}_\N)\) being continuous. This follows from the equivalence \((1\iff 2)\).
\end{proof}

\speeddicttwo{188}{SCC vs small index cor}{ac topology is nice}{small index property prop}
\begin{corollary}[Automatic continuity vs the small index property, \Assumed{188}]\label{SCC vs small index cor}
If \(S\) is a semigroup, then \((S, \mathcal{SCC}(S))\) has both the right and left small index properties.
\end{corollary}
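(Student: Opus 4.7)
The proof should be a short and direct combination of the previously established Propositions~\ref{ac topology is nice} and \ref{small index property prop}.

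First I would observe that $(\N^\N, \mathcal{PT}_\N)$ is a second countable topological semigroup. Indeed, $\mathcal{PT}_\N$ is the product topology with $\N$ discrete (Example~\ref{full trans def}), and since $\N$ is countable its discrete topology is second countable; a countable product of second countable spaces is second countable, and we already know from Example~\ref{full trans def} that composition is continuous with respect to $\mathcal{PT}_\N$.

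Next I would note that $(S, \mathcal{SCC}(S))$ is itself a topological semigroup (this was observed in Definition~\ref{ac topology defn} via Lemma~\ref{combining topologies lemma}) and trivially satisfies $\mathcal{SCC}(S) \subseteq \mathcal{SCC}(S)$. So condition (1) of Proposition~\ref{ac topology is nice} is met, hence conditions (2) and (4) also hold: every homomorphism and every anti-homomorphism from $(S, \mathcal{SCC}(S))$ to a second countable topological semigroup is continuous. Applying this to the target $(\N^\N, \mathcal{PT}_\N)$, we conclude that all homomorphisms and all anti-homomorphisms $\psi: (S, \mathcal{SCC}(S)) \to (\N^\N, \mathcal{PT}_\N)$ are continuous.

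Finally, Proposition~\ref{small index property prop} translates the continuity of all homomorphisms into $(\N^\N, \mathcal{PT}_\N)$ to the right small index property, and similarly for anti-homomorphisms and the left small index property. Combining these gives the corollary. There is no real obstacle here, since all of the work has been done by the preceding two propositions; the only tiny verification is that $(\N^\N, \mathcal{PT}_\N)$ is second countable, which is immediate.
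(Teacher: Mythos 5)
Your proof is correct and follows exactly the route the paper takes: observe that $(\N^\N, \mathcal{PT}_\N)$ is a second countable topological semigroup, then combine Propositions~\ref{ac topology is nice} and \ref{small index property prop}. You simply spell out the intermediate steps that the paper's one-line proof leaves implicit.
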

\begin{proof}
As the topological semigroup \((\N^\N, \mathcal{PT}_{\N})\) is second countable, this follows from Propositions~\ref{ac topology is nice} and \ref{small index property prop}.
\end{proof}

We now introduce ``property \textbf{W}" (our version of what is called ``property \textbf{X}" in the joint paper \cite{AcPaper1}), which is a key tool that we use to lift automatic continuity type results from groups to semigroups in which they are ``prominently" contained (see Lemma~\ref{property W is nice lemma}).

\speeddictthree{189}{property W defn}{nbhds defn}{topological structures defn}{term operations defn}
\begin{defn}[Property \textbf{W}, \Assumed{189}]\label{property W defn}
Suppose that \(\mathbb{S}\) is a topological algebraic \(\sigma\)-structure and \(A\subseteq \mathbb{S}\).
We say that \(\mathbb{S}\) has \textit{property \textbf{W} with respect to \(A\)} if:
\begin{enumerate}
    \item For all \(s\in \mathbb{S}\) there is \(t_s\in A\), \(t_{1, s}, t_{2, s}, \ldots, t_{m -1, s}\in \mathbb{S}\) and a term \(a\) of \(\sigma\) over \(\{0, 1, \ldots, m-1\}\) (for some \(m\in \N\backslash\{0\}\)) such that for all \(N\in \nbhd{A}{t_s}\) we have
\[(t_s, t_{1,s}, t_{2, s}, \ldots, t_{m-1, s}){a}_{\{0, 1, \ldots, m-1\}}^\mathbb{S} =s\quad \text{and}\]
\[(N, t_{1,s}, t_{2, s}, \ldots, t_{m-1, s}){a}_{\{0, 1, \ldots, m-1\}}^\mathbb{S} \in \nbhd{S}{s}.\]
\end{enumerate}
\end{defn}

Every topological structure has property \textbf{W} with respect to itself but more interesting examples tend to be quite technical to verify. 
Later on we will be showing that many semigroups have property \textbf{W} with respect to their groups of units (see Theorems~\ref{BX property W}, \ref{X^X property W}, \ref{partial property W}, \ref{inverse property W lemma}, \ref{injective property w lemma}, \ref{Hilbert property W lemma}, and \ref{Cantor property W lemma}).

\speeddictthree{190}{property W is nice lemma}{subspaces}{structure hom defn}{property W defn}
\begin{lemma}[Property \textbf{W} is nice, \Assumed{190}]\label{property W is nice lemma}
Suppose that \(\mathbb{S}, \mathbb{T}\) are topological \(\sigma\)-structures and \(\mathbb{S}\) has property \textbf{W} with respect to \(A\subseteq \mathbb{S}\).
If \(\phi:\mathbb{S} \to \mathbb{T}\) is a homomorphism and \(\phi\restriction_{A}:A\to \mathbb{T}\) is continuous then \(\phi\) is continuous.
\end{lemma}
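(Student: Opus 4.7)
The plan is to show continuity pointwise: fix $s \in \mathbb{S}$ and an arbitrary open neighbourhood $M$ of $(s)\phi$ in $\mathbb{T}$, and produce a neighbourhood of $s$ in $\mathbb{S}$ whose image under $\phi$ lies in $M$. The idea is to transport a neighbourhood of $(s)\phi$ back to one of $s$ by going through $A$, using the continuous restriction $\phi\restriction_A$ together with property~\textbf{W} of $\mathbb{S}$ and the fact that term operations in a topological structure are continuous (see Definition~\ref{term operations defn}).

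First I would invoke property~\textbf{W} to obtain $t_s \in A$, auxiliary elements $t_{1,s}, \ldots, t_{m-1,s} \in \mathbb{S}$, and a term $a$ of $\sigma$ over $\{0, 1, \ldots, m-1\}$ such that $(t_s, t_{1,s}, \ldots, t_{m-1,s}) a^\mathbb{S}_{\{0,\ldots,m-1\}} = s$ and such that, for every neighbourhood $N$ of $t_s$ in $A$, the set $(N, t_{1,s}, \ldots, t_{m-1,s}) a^\mathbb{S}_{\{0,\ldots,m-1\}}$ is a neighbourhood of $s$ in $\mathbb{S}$. Since $\phi$ is a homomorphism of $\sigma$-structures, applying $\phi$ to the equation defining $s$ gives
\[
(s)\phi = ((t_s)\phi, (t_{1,s})\phi, \ldots, (t_{m-1,s})\phi) a^\mathbb{T}_{\{0,\ldots,m-1\}}.
\]

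Next I would use the continuity of the term operation $a^\mathbb{T}_{\{0,\ldots,m-1\}} : \mathbb{T}^m \to \mathbb{T}$ (guaranteed by Definition~\ref{term operations defn} for any topological structure): there is a neighbourhood $N'$ of $(t_s)\phi$ in $\mathbb{T}$ such that
\[
(N', (t_{1,s})\phi, \ldots, (t_{m-1,s})\phi) a^\mathbb{T}_{\{0,\ldots,m-1\}} \subseteq M.
\]
Because $\phi\restriction_A$ is continuous, $N := (N')(\phi\restriction_A)^{-1}$ is a neighbourhood of $t_s$ in $A$. By property~\textbf{W}, the set $V := (N, t_{1,s}, \ldots, t_{m-1,s}) a^\mathbb{S}_{\{0,\ldots,m-1\}}$ is a neighbourhood of $s$ in $\mathbb{S}$.

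Finally, I would verify $(V)\phi \subseteq M$. Any element of $V$ has the form $(n, t_{1,s}, \ldots, t_{m-1,s}) a^\mathbb{S}_{\{0,\ldots,m-1\}}$ for some $n \in N$; applying the homomorphism $\phi$ and using $(N)\phi \subseteq N'$ yields an element of $(N', (t_{1,s})\phi, \ldots, (t_{m-1,s})\phi) a^\mathbb{T}_{\{0,\ldots,m-1\}}$, which lies in $M$ by choice of $N'$. Since $s$ and $M$ were arbitrary, $\phi$ is continuous. There is no real obstacle here: property~\textbf{W} has been crafted precisely so that continuity at an arbitrary point can be lifted from continuity along the ``good'' subset $A$, and the only bookkeeping is making sure we keep the auxiliary coordinates $t_{i,s}$ fixed while varying only the first coordinate.
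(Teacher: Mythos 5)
Your proposal is correct and follows essentially the same route as the paper's proof: you use property \textbf{W} to produce the witnesses $t_s, t_{1,s},\ldots,t_{m-1,s}$ and the term $a$, apply the homomorphism to relate $(s)\phi$ to a term operation in $\mathbb{T}$, use continuity of that term operation together with $\phi\restriction_A$ to pull a neighbourhood back to $t_s$ in $A$, and then push it forward through property \textbf{W} to a neighbourhood of $s$. The paper packages the neighbourhood $N'$ slightly differently (it defines the preimage $V$ of the target open set under the ``slice'' of $a^{\mathbb{T}}$ directly), but this is cosmetic; the argument is the same.
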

\begin{proof}
Let \(A, \mathbb{S}, \mathbb{T}, \phi:\mathbb{S}\to \mathbb{T}\) be as hypothesised.
Let \(U\subseteq \mathbb{T}\) be open. We need to show that \((U)\phi^{-1}\) is open. Let \(s\in (U)\phi^{-1}\) be arbitrary, it suffices to show that \((U)\phi^{-1}\in \nbhd{\mathbb{S}}{s}\).

Let \(m\in \N\backslash \{0\}\), \(t_s\in A\), \(t_{1, s}, t_{2,s}, \ldots, t_{m -1}\in \mathbb{S}\), and a term \(a\in (\{\text{、}, \text{「}, \text{」}\}\cup F_\sigma \cup \{0, 1, \ldots, m-1\})^*\) be as in the definition of property \textbf{W}. As \(U\) is open, it follows that 
\[V:= \makeset{t\in \mathbb{T}}{\((t, (t_{1,s})\phi, \ldots, (t_{m-1, s})\phi){a}_{\{0, 1, \ldots, m-1\}}^\mathbb{T}\in U\)}\]
is an open subset of \(\mathbb{T}\). Moreover as \((t_s, t_{1,s}, t_{2, s}, \ldots, t_{m-1, s}){a}_{\{0, 1, \ldots, m-1\}}^\mathbb{S} = s\) and \(\phi\) is a homomorphism, it follows that \((t_s)\phi \in V\).

As \(V\in \nbhd{\mathbb{T}}{(t_s)\phi}\) and \(\phi\restriction_{A}\) is continuous, it follows that \((V)\phi^{-1}\cap A \in \nbhd{A}{t_s}\). Thus by property \textbf{W}, it follows that 
\[((V)\phi^{-1}\cap A, t_{1,s}, t_{2, s}, \ldots, t_{m-1, s}){a}_{\{0, 1, \ldots, m-1\}}^\mathbb{S} \in \nbhd{\mathbb{S}}{s}.\]
By the definition of \(V\) we have
\[((V)\phi^{-1}\cap A, t_{1,s}, t_{2, s}, \ldots, t_{m-1, s}){a}_{\{0, 1, \ldots, m-1\}}^\mathbb{S} \subseteq (U)\phi^{-1}\]
so \((U)\phi^{-1}\in \nbhd{\mathbb{S}}{s}\) as required.
\end{proof}

\speeddictone{192}{property W is transitive lemma}{property W defn}
\begin{lemma}[Property \textbf{W} is transitive lemma, \Assumed{192}]\label{property W is transitive lemma}
Suppose that \(\mathbb{S}\) is a topological algebraic \(\sigma\)-structure, \(\mathbb{T}\) is a substructure of \(\mathbb{S}\) and \(A\subseteq \mathbb{T}\). If \(\mathbb{S}\) has property \textbf{W} with respect to \(\mathbb{T}\), and \(\mathbb{T}\) has property \textbf{W} with respect to \(A\), then \(\mathbb{S}\) has property \textbf{W} with respect to \(A\).
\end{lemma}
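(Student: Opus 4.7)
The plan is to show property \textbf{W} is transitive by composing the term witnesses: given $s \in \mathbb{S}$, first use property \textbf{W} of $\mathbb{S}$ with respect to $\mathbb{T}$ to ``descend'' from $s$ to some $t_s \in \mathbb{T}$ via a term $a$ and parameters in $\mathbb{S}$, and then use property \textbf{W} of $\mathbb{T}$ with respect to $A$ to descend further from $t_s$ to some $t'_{t_s} \in A$ via a term $a'$ and parameters in $\mathbb{T}$. The composite term obtained by substituting $a'$ into the first argument of $a$, together with the concatenation of the two parameter tuples, will witness property \textbf{W} of $\mathbb{S}$ with respect to $A$ at the point $s$.

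More precisely, given $s \in \mathbb{S}$, apply Definition~\ref{property W defn} to get $t_s \in \mathbb{T}$, parameters $t_{1,s}, \ldots, t_{m-1,s} \in \mathbb{S}$ and a term $a$ over $\{0, 1, \ldots, m-1\}$ satisfying the two displayed conditions with respect to neighbourhoods in $\mathbb{T}$. Apply Definition~\ref{property W defn} again (using $\mathbb{T}$'s property \textbf{W} with respect to $A$) to $t_s \in \mathbb{T}$ to get $t'_{t_s} \in A$, parameters $t'_{1, t_s}, \ldots, t'_{k-1, t_s} \in \mathbb{T}$ and a term $a'$ over $\{0, 1, \ldots, k-1\}$ with the analogous properties. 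Construct a new term $b$ over $\{0, 1, \ldots, k+m-2\}$ by replacing the variable $0$ in $a$ with the term obtained from $a'$ by relabelling its variables as $0, 1, \ldots, k-1$, and by relabelling the remaining variables $1, 2, \ldots, m-1$ of $a$ to $k, k+1, \ldots, k+m-2$. Then by induction on the structure of $a$ (i.e.\ by the inductive definition of term operations in Definition~\ref{term operations defn}) we have
\[b_{\{0, 1, \ldots, k+m-2\}}^{\mathbb{S}}(x_0, \ldots, x_{k+m-2}) = a_{\{0, 1, \ldots, m-1\}}^{\mathbb{S}}\left(a'_{\{0, 1, \ldots, k-1\}}^{\mathbb{S}}(x_0, \ldots, x_{k-1}),\; x_k, \ldots, x_{k+m-2}\right).\]

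Define the witness parameters for $s$ with respect to $A$ to be $t_s^* := t'_{t_s} \in A$, together with $t'_{1, t_s}, \ldots, t'_{k-1, t_s}, t_{1, s}, \ldots, t_{m-1, s}$. Since $\mathbb{T}$ is a substructure of $\mathbb{S}$, Definition~\ref{substructures defn} implies that the term operation $a'^{\mathbb{S}}_{\{0, 1, \ldots, k-1\}}$ restricted to tuples in $\mathbb{T}$ equals $a'^{\mathbb{T}}_{\{0, 1, \ldots, k-1\}}$. Hence the value at $(t_s^*, t'_{1, t_s}, \ldots, t'_{k-1, t_s}, t_{1, s}, \ldots, t_{m-1, s})$ of $b^{\mathbb{S}}$ equals $a^{\mathbb{S}}(t_s, t_{1, s}, \ldots, t_{m-1, s}) = s$, giving the first required equality.

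For the neighbourhood condition, let $N \in \nbhd{A}{t_s^*}$ be arbitrary. By property \textbf{W} of $\mathbb{T}$ with respect to $A$, the set $N' := a'^{\mathbb{T}}(N, t'_{1, t_s}, \ldots, t'_{k-1, t_s})$ is a neighbourhood of $t_s$ in $\mathbb{T}$. Since $N \subseteq A \subseteq \mathbb{T}$ and all other arguments lie in $\mathbb{T}$, the substructure property again gives that this equals $a'^{\mathbb{S}}(N, t'_{1, t_s}, \ldots, t'_{k-1, t_s})$. Then property \textbf{W} of $\mathbb{S}$ with respect to $\mathbb{T}$ gives that $a^{\mathbb{S}}(N', t_{1, s}, \ldots, t_{m-1, s}) \in \nbhd{\mathbb{S}}{s}$, and by the displayed identity above this set equals $b^{\mathbb{S}}(N, t'_{1, t_s}, \ldots, t'_{k-1, t_s}, t_{1, s}, \ldots, t_{m-1, s})$, as required. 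The only genuine care needed is in the formal construction of the composite term $b$ and its variable relabellings; everything else is a direct substitution, with the substructure hypothesis ensuring the two term operations agree on overlaps.
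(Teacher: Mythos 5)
Your proof is correct and follows essentially the same route as the paper: both construct the composite term by substituting the inner witness term into the first variable of the outer one, relabel the combined variable set, and verify the two conditions of Definition~\ref{property W defn} by passing through the intermediate neighbourhood of $t_s$ in $\mathbb{T}$. The only cosmetic differences are that you write term operations in function-application order rather than the paper's right-action order, and you make explicit (via Definition~\ref{substructures defn}) that the $\mathbb{T}$- and $\mathbb{S}$-term operations agree on tuples from $\mathbb{T}$, a point the paper leaves implicit.
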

\begin{proof}
Let \(s\in \mathbb{S}\) be arbitrary. 
Let \(t_s\in \mathbb{T}\), \(t_{1, s}, \ldots, t_{m-1, s}\in \mathbb{S}\), \(m\in \N\backslash\{0\}\) and a term \(a\) of \(\sigma\) over \(\{0, 1, \ldots, m-1\}\) be as in the definition of property \textbf{W} with respect to \(\mathbb{T}\). 
Using the fact that \(\mathbb{T}\) has property \textbf{W} with respect to \(A\), let \(t_{t_s}\in A\), \(t_{1, t_s}, \ldots, t_{m-1, t_s}\in \mathbb{S}\), \(n\in \N\backslash\{0\}\) and a term \(b\) of \(\sigma\) over \(\{0, 1, \ldots, n-1\}\) be such that for all \(N\in \nbhd{A}{t_{t_s}}\) we have
\[(t_{t_s}, t_{1,t_s}, t_{2, t_s}, \ldots, t_{m-1, t_s}){b}_{\{0, 1, \ldots, n-1\}}^\mathbb{T} =s\quad \text{and}\]
\[(N, t_{1,s}, t_{2, s}, \ldots, t_{m-1, s}){b}_{\{0, 1, \ldots, n-1\}}^\mathbb{T} \in \nbhd{\mathbb{T}}{s}.\]
Let \(c:\mathbb{S}^{n+m-1}\to \mathbb{S}\) be the term operation such that \((x_0, x_1, \ldots, x_{n+m-2})c\) is defined to be 
\[((x_0, x_{1}, x_{2}, \ldots, x_{n-1}){b}_{\{0, 1, \ldots, n-1\}}^\mathbb{S}, x_{n}, x_{n+1}, \ldots, x_{n+m-2}){a}_{\{0, 1, \ldots, n-1\}}^\mathbb{S}.\]
We now have
\begin{align*}
&(t_{t_s}, t_{1,t_s}, t_{2, t_s}, \ldots, t_{n-1, t_s}, t_{1,s}, t_{2, s}, \ldots, t_{m-1, s})c\\
    =&((t_{t_s}, t_{1,t_s}, t_{2, t_s}, \ldots, t_{n-1, t_s}){b}_{\{0, 1, \ldots, n-1\}}^\mathbb{S}, t_{1,s}, t_{2, s}, \ldots, t_{m-1, s}){a}_{\{0, 1, \ldots, n-1\}}^\mathbb{S}\\
    =&(t_s, t_{1,s}, t_{2, s}, \ldots, t_{m-1, s}){a}_{\{0, 1, \ldots, n-1\}}^\mathbb{S}=s.
\end{align*}
Similarly if \(N\in \nbhd{A}{t_{t_s}}\), then
\[(N, t_{1,t_s}, t_{2, t_s}, \ldots, t_{n-1, t_s}, t_{1,s}, t_{2, s}, \ldots, t_{m-1, s})c\in \nbhd{\mathbb{S}}{s}\]
and so \(\mathbb{S}\) has property \textbf{W} with respect to \(A\) as required.
\end{proof}

\section{Topological clones}
In this section we introduce two ways of viewing topological clones and explain why they are actually equivalent.
We also prove a proposition (Proposition~\ref{topological clones from topological semigroups}) which will we be using later to transfer topological semigroup results to the corresponding clones.
\speeddicttwo{236}{Abstract Clones defn}{Categories defn}{Products in Categories Defn}
\begin{defn}[Abstract clones, \Assumed{236}]\label{Abstract Clones defn}
An \textit{abstract clone} is a pair \((\mathcal{C}, \operatorname{Pow}_\mathcal{C})\) where
\begin{enumerate}
    \item  \(\mathcal{C}\) is a small category (recall Definition~\ref{Categories defn}).
    \item \(\operatorname{Pow}_\mathcal{C}\) is a bijection from the object set of \(\mathcal{C}\) to \(\N\) (the \textit{power map}).
    \item For each \(n\in \N\), there exist morphisms \(\pi_0, \pi_1, \ldots, \pi_{n-1}\) of \(\mathcal{C}\)  such that  \[((n)\operatorname{Pow}_\mathcal{C}^{-1}, (\pi_0, \pi_1, \ldots, \pi_{n-1}))\]
    is a product of \(n\) copies of \((1)\operatorname{Pow}_\mathcal{C}^{-1}\) (recall Definition~\ref{Products in Categories Defn}).
\end{enumerate}
For ease of notation, we often treat a clone as if it where equal to its underlying small category.
\end{defn}

Note that in the previous definition, the choice of morphisms \(\pi_0, \pi_1, \ldots, \pi_{n-1}\) is usually not unique. 
However we will always be using the ``obvious" choice when such a choice exists.

\speeddictthree{237}{topological abstract clones defn}{continuous maps and homeomorphisms defn}{topological structures defn}{Abstract Clones defn}
\begin{defn}[Topological abstract clones, \Assumed{237}]\label{topological abstract clones defn}
A \textit{topological abstract clone} is a triple \((\mathcal{C}, \mathcal{T}_{\mathcal{C},\mathcal{O}}, \mathcal{T}_{\mathcal{C},\mathcal{M}})\), where
\begin{enumerate}
    \item \(\mathcal{C}\) is an abstract clone.
    \item \(\mathcal{T}_{\mathcal{C},\mathcal{O}}\) is a topology on the object set \(\mathcal{O}_\mathcal{C}\) of \(\mathcal{C}\).
    \item \(\mathcal{T}_{\mathcal{C},\mathcal{M}}\) is a topology on the morphism set \(\mathcal{O}_\mathcal{M}\) of \(\mathcal{C}\).
    \item The power map \(\operatorname{Pow}_\mathcal{C}:(\mathcal{O}_\mathcal{C}, \mathcal{T}_{\mathcal{C}, \mathcal{O}}) \to \N\) is continuous (or equivalently \(\mathcal{T}_{\mathcal{C}, \mathcal{O}}\) is discrete).
    \item The source and target maps \(s_{\mathcal{C}},t_{\mathcal{C}}:(\mathcal{M}_{\mathcal{C}}, \mathcal{T}_{\mathcal{C},\mathcal{M}}) \to (\mathcal{O}_{\mathcal{C}}, \mathcal{T}_{\mathcal{C},\mathcal{O}})\) are continuous,
    \item The composition map \(\circ_{\mathcal{C}}:\operatorname{Comp}_{\mathcal{C}}\to \mathcal{M}_{\mathcal{C}}\) is continuous (where \(\mathcal{M}_{\mathcal{C}}\) has the topology \(\mathcal{T}_{\mathcal{C}, \mathcal{M}}\) and \(\operatorname{Comp}_{\mathcal{C}}\) is viewed as a subspace of the product space \(\mathcal{M}_{\mathcal{C}}\times \mathcal{M}_{\mathcal{C}}\)).
    \item For each \(i, j\in \N\), the bijection \[\phi_{i,j}:\left(\makeset{f\in \mathcal{M}_\mathcal{C}}{\((f)s_{\mathcal{C}}\operatorname{Pow}_{\mathcal{C}}=i, (f)t_{\mathcal{C}}\operatorname{Pow}_{\mathcal{C}}=1\)}\right)^j \to \makeset{f\in \mathcal{M}_\mathcal{C}}{\((f)s_{\mathcal{C}}\operatorname{Pow}_{\mathcal{C}}=i, (f)t_{\mathcal{C}}\operatorname{Pow}_{\mathcal{C}}=j\)}\]
 defined by  \((f_0, f_1, \ldots, f_{j-1})\phi_{i, j} = \langle f_0, f_1, \ldots, f_{j-1}\rangle_{X^j}\)
 is continuous.
\end{enumerate}
In this case we say that \(\mathcal{C}, \mathcal{T}_{\mathcal{C}, \mathcal{O}}\) and \(\mathcal{T}_{\mathcal{C}, \mathcal{M}}\) are compatible. If \(P\) is a topological property, then we will describe \((\mathcal{C}, \mathcal{T}_{\mathcal{C},\mathcal{O}}, \mathcal{T}_{\mathcal{C},\mathcal{M}})\) as having this property if both of \(\mathcal{T}_{\mathcal{C}, \mathcal{O}}\) and \(\mathcal{T}_{\mathcal{C}, \mathcal{M}}\) do.
\end{defn}

\speeddicttwo{238}{operation clones defn}{function sets def}{Products in Categories Defn}
\begin{defn}[Operation clones, \Assumed{238}]\label{operation clones defn}
An \textit{operation clone} (over a set \(X\)) is a collection of functions from finite powers of \(X\) to \(X\) such that
\begin{enumerate}
    \item For all \(n\in \N\), the projection maps \(\pi_0, \pi_1, \ldots, \pi_{n-1}:X^n \to X\) are elements of \(\mathcal{C}\).
    \item If \(f_0, f_1, \ldots, f_{n-1}\in \mathcal{C}\) have the same domain and \(g\in \mathcal{C}\) has domain \(X^n\), then \[\langle f_0, f_1, \ldots, f_{n-1} \rangle_{X^n}\circ g\in \mathcal{C}.\]
\end{enumerate}
If \(f\in \mathcal{C}\), then we define the \textit{arity} \((f)\ar\) of \(f\) to be the unique \(n\in \N\) such that \(\dom(f) = X^n\). 
We also denote the set \(\makeset{f\in \mathcal{C}}{\(f\) has arity \(n\)}\) by \(\mathcal{C}^{(n)}\).
\end{defn}

\speeddictthree{239}{topological Operation clones defn}{continuous maps and homeomorphisms defn}{topological structures defn}{Abstract Clones defn}

\begin{defn}[Topological operation clones, \Assumed{239}]\label{topological Operation clones defn}
A \textit{topological operation clone} is a pair \((\mathcal{C}, \mathcal{T}_{\mathcal{C}})\), where:
\begin{enumerate}
    \item \(\mathcal{C}\) is an operation clone.
    \item \(\mathcal{T}_{\mathcal{C}}\) is a topology on the set \(\mathcal{C}\).
    \item For all \(m, n\in \N\), the map \(\circ_{m, n}:(\mathcal{C}^{(m)})^n \times \mathcal{C}^{(n)} \to \mathcal{C}^{(m)}\) defined by
\[((f_0, f_1, \ldots, f_{n-1}), g)\circ_{m, n} = \langle f_0, f_1, \ldots, f_{n-1} \rangle_{X^n}\circ g\in \mathcal{C}\]
is continuous.
\end{enumerate}
\end{defn}

\speeddictthree{240}{topological clones are top clones prop}{Important Categories}{topological abstract clones defn}{topological Operation clones defn}
\begin{proposition}[Topological clones are topological clones, \Assumed{240}]\label{topological clones are top clones prop}
If \((\mathcal{C}, \mathcal{T}_{\mathcal{C}})\) is a topological operation clone over a non-empty set \(X\), then the object \((\mathcal{C}^*, \mathcal{T}_{\mathcal{C}^*, \mathcal{O}}, \mathcal{T}_{\mathcal{C}^*, \mathcal{M}})\) defined as follows is a topological abstract clone.
\begin{enumerate}
    \item Let \(\mathcal{C}^*\) be the subcategory of the category of sets and functions with object set \(\makeset{X^n}{\(n\in \N\)}\) and morphism set consisting of the maps
    \[\langle f_0, f_1, \ldots, f_{n-1}\rangle_{X^n}\]
    where \(n, m\in \N\) and \(f_0, f_1, \ldots, f_{n-1}\in \mathcal{C}^{(m)}\) are arbitrary.
    \item Let \(\mathcal{T}_{\mathcal{C}^*, \mathcal{O}}\) be the discrete topology on \(\mathcal{O}_{\mathcal{C}^*}\).
    \item Let \(\mathcal{T}_{\mathcal{C}^*, \mathcal{M}}\) be the topology on \(\mathcal{M}_{\mathcal{C}^*}\) generated by the sets of the forms
    \[A_{n, m}:=\makeset{f\in \mathcal{M}_{\mathcal{C}^*}}{\((f)s_{\mathcal{C}^*}\operatorname{Pow}_{\mathcal{C}^*}=n,(f)t_{\mathcal{C}^*}\operatorname{Pow}_{\mathcal{C}^*}=m\)},\]
    \[V_{n, U}:=\makeset{f\in \mathcal{M}_{\mathcal{C}^*}}{\(f\circ \pi_n \in U\)}\]
    where \(n, m\in \N\) and \(U\in \mathcal{T}_{\mathcal{C}}\) are arbitrary.
\end{enumerate}
Moreover, the original topological operation clone is then precisely the subspace of \(\mathcal{M}_{\mathcal{C}^*}\) consisting of morphisms with target \(X\), and (up to topological isomorphism) \((\mathcal{C}^*, \mathcal{T}_{\mathcal{C}^*, \mathcal{O}}, \mathcal{T}_{\mathcal{C}^*, \mathcal{M}})\) is the only topological abstract clone with this property.
\end{proposition}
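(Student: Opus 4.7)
The plan is to split the verification into three parts: first, check that $\mathcal{C}^*$ is a small category with products for which the stated topologies satisfy Conditions (4)--(7) of Definition~\ref{topological abstract clones defn}; second, confirm that the subspace of morphisms with target $X$ carries precisely $\mathcal{T}_{\mathcal{C}}$; third, derive the uniqueness statement from the topological abstract clone axioms.

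For the first part, I begin with the category/clone structure. Identities are $\text{id}_{X^n} = \langle \pi_0, \ldots, \pi_{n-1}\rangle_{X^n}$, using $\pi_i \in \mathcal{C}^{(n)}$ from Condition (1) of Definition~\ref{operation clones defn}, and closure under composition follows from the identity
\[\langle f_0, \ldots, f_{k-1}\rangle_{X^k} \circ \langle g_0, \ldots, g_{p-1}\rangle_{X^p} = \langle \langle f_0, \ldots, f_{k-1}\rangle_{X^k} \circ g_0, \ldots, \langle f_0, \ldots, f_{k-1}\rangle_{X^k} \circ g_{p-1}\rangle_{X^p}\]
combined with Condition (2) of Definition~\ref{operation clones defn}. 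The product structure on $X^n$ with projections $\pi_i$ is then the universal property of the bracket notation. For continuity of composition, it suffices to check preimages of subbasic sets; the preimage of $A_{n',m'}$ is $\bigcup_k (A_{n',k}\times A_{k,m'})\cap \operatorname{Comp}_{\mathcal{C}^*}$, and on the piece $A_{m,k}\times A_{k,p}$ with $p > n$ the map $(f,g)\mapsto (f\circ g)\circ \pi_n$ factors as $(f,g)\mapsto ((f\circ\pi_0,\ldots,f\circ\pi_{k-1}),g\circ \pi_n)\mapsto ((f\circ\pi_0,\ldots,f\circ\pi_{k-1}),g\circ\pi_n)\circ_{m,k}$. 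The first step is continuous because each component map $h\mapsto h\circ \pi_i$ pulls back $\mathcal{T}_{\mathcal{C}}$-open sets to subbasic $V_{i,\cdot}$-sets, and the second step is the hypothesised continuity of composition in $(\mathcal{C},\mathcal{T}_{\mathcal{C}})$. Continuity of $\phi_{i,j}$ and of the source/target maps is then verified by the same sort of direct inspection of subbasic preimages.

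For the second part, restricting the subbasis of $\mathcal{T}_{\mathcal{C}^*,\mathcal{M}}$ to $\bigcup_m A_{m,1}$ yields precisely the arity pieces $\mathcal{C}^{(n')}$ (from $A_{n',1}$) and the sets $U\cap \mathcal{C}$ for $U\in \mathcal{T}_{\mathcal{C}}$ (from $V_{0,U}$, since $\pi_0$ is the identity on $X^1=X$), with higher $V_{n,U}$ contributing nothing because target-$X$ morphisms admit no projection $\pi_n$ for $n\geq 1$. Under the standing assumption (implicit in Definition~\ref{topological Operation clones defn}) that the arity function $\mathcal{C}\to \mathbb{N}$ is continuous, so each $\mathcal{C}^{(n)}$ is clopen in $\mathcal{T}_{\mathcal{C}}$, this subspace topology agrees with $\mathcal{T}_{\mathcal{C}}$. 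For uniqueness, given any topological abstract clone $\mathcal{D}$ whose target-$X$ subspace recovers $(\mathcal{C},\mathcal{T}_{\mathcal{C}})$, Condition (7) forces the analogous bijection $\phi^{\mathcal{D}}_{i,j}$ in $\mathcal{D}$ to be continuous, while continuity of the component-extracting maps $f\mapsto f\circ \pi^{\mathcal{D}}_i$ (inherited from continuity of composition in $\mathcal{D}$) makes $\phi^{\mathcal{D}}_{i,j}$ a homeomorphism onto its image. This forces the topology on $\mathcal{M}_{\mathcal{D}}$ to coincide with the one generated by the analogues of the $A_{n,m}$ and $V_{n,U}$, and the target-$X$ isomorphism then extends uniquely to a topological isomorphism $\mathcal{D}\to \mathcal{C}^*$.

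The main obstacle is the bookkeeping around the distinct topologies appearing in the continuity-of-composition argument: $\mathcal{T}_{\mathcal{C}}$ on the original clone, the subspace topology on $A_{m,1}$ induced from $\mathcal{T}_{\mathcal{C}^*,\mathcal{M}}$, and the product topology on $(\mathcal{C}^{(m)})^k$ all need to be identified before the topological operation clone hypothesis on $\circ_{m,k}$ can be deployed inside the $\mathcal{C}^*$-picture. Granting clopenness of the arity pieces, this identification is immediate and the remainder of the argument is essentially algebraic manipulation of the bracket notation.
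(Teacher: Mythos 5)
Your verification of Conditions (6) and (7) of Definition~\ref{topological abstract clones defn} follows the same route as the paper: compute preimages of the subbasic sets $V_{n,U}$ to get continuity of $\phi_{i,j}$ (and its inverse), then factor $\circ_{\mathcal{C}^*}$ on each piece $A_{n,i}\times A_{i,m}$ through $\phi^{-1}$ and $\circ_{m,n}$. The genuine difference is that you also attempt the ``Moreover'' clause, which the paper's proof simply omits (it stops after Condition~6). In doing so you identify a real issue: for the target-$X$ subspace of $\mathcal{M}_{\mathcal{C}^*}$ to carry exactly $\mathcal{T}_{\mathcal{C}}$, each arity piece $\mathcal{C}^{(n)}$ must be open in $\mathcal{T}_{\mathcal{C}}$, since each $A_{m,1}$ is automatically open in the subspace coming from $\mathcal{T}_{\mathcal{C}^*,\mathcal{M}}$. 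You label this an assumption ``implicit in Definition~\ref{topological Operation clones defn},'' but it is in fact not a consequence of that definition --- if $X$ is a singleton, every topology on $\mathcal{C}$ makes each $\circ_{m,n}$ continuous (they are maps between one-point spaces), yet a coarse topology on $\mathcal{C}$ will fail to have the $\mathcal{C}^{(n)}$ open, and then the ``Moreover'' claim as stated is false. So what you flagged as an implicit convention is actually a missing hypothesis in the paper (harmless in all of the paper's applications, where the arity pieces are explicitly declared open, but worth making explicit). Your uniqueness sketch --- that Condition~(7) together with continuity of $h\mapsto h\circ\pi_i$ forces $\phi^{\mathcal{D}}_{i,j}$ to be a homeomorphism and hence determines $\mathcal{T}_{\mathcal{D},\mathcal{M}}$ from the target-$X$ subspace --- is correct in spirit, though you should spell out that the $A_{i,j}^{\mathcal{D}}$ are open in $\mathcal{D}$ by continuity of $s_{\mathcal{D}},t_{\mathcal{D}},\operatorname{Pow}_{\mathcal{D}}$ before invoking this piecewise.
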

\begin{proof}
We need to verify the conditions of Definition~\ref{topological abstract clones defn}.
The first five conditions are immediate from the definition. We next show Condition 7. For each \(i, j\in \N\), let \(\phi_{i,j}:(A_{i, 1})^j \to A_{i, j}\) be the bijection defined by \((f_0, f_1, \ldots, f_{j-1})\phi_{i, j} = \langle f_0, f_1, \ldots, f_{j-1}\rangle_{X^j}\).
We need to show that \(\phi_{i, j}\) is continuous with respect to \(\mathcal{T}_{C^*, \mathcal{M}}\).
Let \(n\in \N\) and \(U\in \mathcal{T}_{\mathcal{C}}\) be arbitrary. We have
\begin{align*}
    (V_{n, U})\phi_{i, j}^{-1}&=  \left(\makeset{f\in A_{i, j}}{\(f\circ \pi_n \in U\)}\right)\phi_{i, j}^{-1}\\
    &=  \makeset{f\in (A_{i, 1})^j}{\(((f)\phi_{i, j})\circ \pi_n \in U\)}\\
 &=  \makeset{f\in (A_{i, 1})^j}{\((f) \pi_n \in U \)}\\
 &=  \makeset{f\in (A_{i, 1})^j}{\((f) \pi_n \in V_{0, U}\cap A_{i, 1}\)}\\
  &=  (V_{0, U}\cap A_{i, 1})\pi_n^{-1}.
\end{align*}
So indeed \(\phi_{i, j}\) is continuous. Moreover, as \(\phi_{i, j}\) is a bijection and the topology on \(A_{i, j}^j\) is generated by the sets of the form \((V_{0, U}\cap A_{i, 1})\pi_n^{-1}=(U)\pi_n^{-1}\) for \(n\in \N\) and \(U\in \mathcal{T}_{\mathcal{C}}\), it follows from the above equality that \(\phi_{i, j}\) is a homeomorphism.

It remains to show Condition \(6\). As the sets \((A_{n, i}\times A_{i, m})_{i, n, m\in \N}\) are a partition of \(\operatorname{Comp}_{\mathcal{C}^*}\) into open sets, it suffices to show that for all \(n, m, i\in \N\), the map \(\circ_{\mathcal{C}^*}\restriction_{A_{n, i}\times A_{i, m}}\) is continuous.
Let \(i, n, m\in \N\) be arbitrary and let \((f, g)\in A_{n, i}\times A_{i, m}\).
We have
\begin{align*}
    (f, g)\circ_{\mathcal{C}^*}&= f\circ g\\
  &= \langle f\circ \pi_0, f\circ \pi_1,\ldots, f\circ \pi_{n-1}\rangle_{X^n}\circ g\\
   &= ((f\circ \pi_0, f\circ \pi_1,\ldots, f\circ \pi_{n-1}),g)\circ_{m, n} \\
     &= ((f)\phi_{m, n}^{-1},g)\circ_{m, n}.
\end{align*}
As \(\phi_{m,n}^{-1}\) and \(\circ_{m, n}\) are continuous, it follows that \(\circ_{C^*}\) is also continuous as required.
\end{proof}

\speeddictsix{241}{topological clones from topological semigroups}{disjoint union topology defn}{semigroup defn}{topological structures defn}{nice Polish subspaces lemma}{Hausdorff Zariski lemma}{topological abstract clones defn}
\begin{proposition}[Corresponding clone and semigroup topologies, \Assumed{241}]\label{topological clones from topological semigroups}
Suppose that \(\mathcal{C}\) is an abstract clone, and \(M\) is the endomorphism monoid of the object \((1)\operatorname{Pow}_{\mathcal{C}}^{-1}\) in the category \(\mathcal{C}\). Suppose further that the objects \((1)\operatorname{Pow}_{\mathcal{C}}^{-1}\) and \((2)\operatorname{Pow}_{\mathcal{C}}^{-1}\) are isomorphic in the category \(\mathcal{C}\).

If \(\mathcal{S}\) is a topology compatible with the semigroup \(M\), then there is at most one pair of topologies \((\mathcal{T}_{\mathcal{C}, \mathcal{O}, \mathcal{S}},\mathcal{T}_{\mathcal{C}, \mathcal{M}, \mathcal{S}})\) compatible with \(\mathcal{C}\) such that \(\mathcal{T}_{\mathcal{C}, \mathcal{M}, \mathcal{S}}\restriction_{M} = \mathcal{S}\) (recall Definition~\ref{subspaces}). 
Moreover the correspondence \(\mathcal{S} \leftrightarrow (\mathcal{T}_{\mathcal{C}, \mathcal{O}, \mathcal{S}},\mathcal{T}_{\mathcal{C}, \mathcal{M}, \mathcal{S}})\) preserves containments of topologies as well as the properties of being Hausdorff, second countable or Polish.
\end{proposition}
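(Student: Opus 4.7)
The plan is to establish uniqueness by showing that the topology $\mathcal{S}$ on $M=A_{1,1}$ fully determines the topology on every hom-set \[A_{n,m}:=\makeset{f\in\mathcal{M}_{\mathcal{C}}}{\((f)s_{\mathcal{C}}\operatorname{Pow}_{\mathcal{C}}=n,\ (f)t_{\mathcal{C}}\operatorname{Pow}_{\mathcal{C}}=m\)},\] and from there the whole topology on $\mathcal{M}_{\mathcal{C}}$. First I would invoke conditions (4) and (5) of Definition~\ref{topological abstract clones defn} to see that $\mathcal{T}_{\mathcal{C},\mathcal{O}}$ is forced to be discrete and, using continuity of the source and target maps, conclude that each $A_{n,m}$ is clopen in $\mathcal{M}_{\mathcal{C}}$. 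Consequently $\mathcal{T}_{\mathcal{C},\mathcal{M}}$ is the disjoint union topology on $\bigsqcup_{n,m\in\N}A_{n,m}$, so it suffices to pin down each subspace topology.

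Next I would exploit the hypothesis $(1)\operatorname{Pow}_{\mathcal{C}}^{-1}\cong(2)\operatorname{Pow}_{\mathcal{C}}^{-1}$. Since $(n)\operatorname{Pow}_{\mathcal{C}}^{-1}$ is the $n$-fold product of $(1)\operatorname{Pow}_{\mathcal{C}}^{-1}$, a short induction yields $(n)\operatorname{Pow}_{\mathcal{C}}^{-1}\cong(1)\operatorname{Pow}_{\mathcal{C}}^{-1}$ for every $n\geq 1$. Fixing isomorphisms $\alpha_n:(n)\operatorname{Pow}_{\mathcal{C}}^{-1}\to(1)\operatorname{Pow}_{\mathcal{C}}^{-1}$, composition with $\alpha_n$ and $\alpha_n^{-1}$ gives mutually inverse continuous bijections between $A_{n,1}$ and $M$, so $A_{n,1}$ is homeomorphic to $M$. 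Combined with the homeomorphism $\phi_{n,m}:A_{n,1}^m\to A_{n,m}$ from condition (7) (whose inverse $g\mapsto(g\circ\pi_0,\dots,g\circ\pi_{m-1})$ is continuous by continuity of composition), this fixes the topology on every $A_{n,m}$ with $n\geq 1$. Moreover $A_{n,0}$ is a singleton as $(0)\operatorname{Pow}_{\mathcal{C}}^{-1}$ is terminal, so the only remaining case is $A_{0,1}$ (whence the $A_{0,m}$ follow via $\phi_{0,m}$).

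The hard part will be $A_{0,1}$, since $(0)\operatorname{Pow}_{\mathcal{C}}^{-1}$ need not be isomorphic to $(1)\operatorname{Pow}_{\mathcal{C}}^{-1}$. My approach is to let $!_1\in A_{1,0}$ denote the unique morphism into the terminal object and consider the ``constant morphism'' map $\sigma:A_{0,1}\to M$ defined by $(c)\sigma=(!_1,c)\circ_{\mathcal{C}}$, which is continuous as composition with a fixed morphism is continuous. Assuming $A_{0,1}\neq\varnothing$ (the empty case is trivial) and fixing $c_0\in A_{0,1}$, the map $\rho_{c_0}:M\to A_{0,1}$ given by $\rho_{c_0}(f)=(c_0,f)\circ_{\mathcal{C}}$ is continuous, and associativity together with $(c_0,!_1)\circ_{\mathcal{C}}=1_{(0)\operatorname{Pow}_{\mathcal{C}}^{-1}}$ (by terminality) yields $\rho_{c_0}\circ\sigma=\operatorname{id}_{A_{0,1}}$. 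Hence $\sigma$ is a topological embedding; since its image $\sigma(A_{0,1})\subseteq M$ is determined by the abstract clone structure alone, the topology on $A_{0,1}$ must coincide with the pullback of the subspace topology of $\sigma(A_{0,1})$ in $(M,\mathcal{S})$, giving uniqueness.

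For the preservation claims, containment is then immediate since finer $\mathcal{S}$ produces finer topologies on each hom-set through the homeomorphisms and embeddings above, while $M$ being a clopen subspace of $\mathcal{M}_{\mathcal{C}}$ gives the reverse. Hausdorffness and second countability propagate to each $A_{n,m}$ via homeomorphisms, subspaces and finite products, then lift to $\mathcal{M}_{\mathcal{C}}$ as a countable disjoint union of such pieces (and $\mathcal{O}_{\mathcal{C}}$ is countable and discrete). For Polishness I would appeal to Lemma~\ref{more nice Polish subspaces lemma} for the finite powers and Lemma~\ref{nice Polish subspaces lemma} for $A_{0,1}$, observing that when $A_{0,1}\neq\varnothing$ and $M$ is Hausdorff, the identity $\sigma(A_{0,1})=\makeset{f\in M}{\((k,f)\circ_{\mathcal{C}}=f\)}$ with $k=\sigma(c_0)$ (verifiable by a short associativity computation) shows the image is closed in $M$ by continuity of $\circ_{\mathcal{C}}$; a countable disjoint union of Polish spaces is Polish by the standard bounded-metric construction.
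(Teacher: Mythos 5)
Your proof is correct and follows essentially the same strategy as the paper's: reduce to pinning down the subspace topology on each clopen piece $A_{n,m}$, transport $\mathcal{S}$ across composition with the isomorphisms $(n)\operatorname{Pow}_{\mathcal{C}}^{-1}\cong(1)\operatorname{Pow}_{\mathcal{C}}^{-1}$ for $n\geq 1$, and embed the problematic domain-$0$ hom-sets into a hom-set with domain $1$ via precomposition with the unique morphism into $(0)\operatorname{Pow}_{\mathcal{C}}^{-1}$, with a retraction coming from postcomposition with a fixed constant. The minor differences (you pass through $A_{0,1}$ and then use $\phi_{0,m}$, where the paper embeds $A_{0,j}$ directly into $A_{1,j}$; and your closedness argument for $\sigma(A_{0,1})$ uses the single equation $kf=f$ rather than the paper's intersection $RO=\bigcap_{g\in M}\{f:gf=f\}$, though these coincide since $g\circ !_1=!_1$ by terminality) are cosmetic, and your version of the closed-image argument is in fact slightly more economical.
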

\begin{proof}
For all possible \(n\in \N\backslash \{0\}\), choose \(\psi_{1,n}:(1)\operatorname{Pow}_{\mathcal{C}}^{-1}\to (n)\operatorname{Pow}_{\mathcal{C}}^{-1}\) to be an isomorphism
(by assumption the map \(\psi_{1, 2}\) is defined). For all \(n\in \N\backslash \{0\}\) the morphism
\[\langle \pi_0\circ_{\mathcal{C}}\psi_{1, 2}\circ_{\mathcal{C}} \pi_0,\ \pi_0\circ_{\mathcal{C}}\psi_{1, 2}\circ_{\mathcal{C}} \pi_1,\ \pi_1,\ \pi_2,\ \ldots,\ \pi_{n-1}\rangle_{(n+1)\operatorname{Pow}_{\mathcal{C}}^{-1}}\]
is an isomorphism from \((n)\operatorname{Pow}_{\mathcal{C}}^{-1}\) to \((n+1)\operatorname{Pow}_{\mathcal{C}}^{-1}\). So the morphisms \(\psi_{1, n}\) are defined for all \(n\in \N\backslash\{0\}\).
    For all \(n\in \N\), let \(\iota_n\) be the unique morphism \(\langle \rangle_{(0)\operatorname{Pow}_{\mathcal{C}}^{-1}}\) from \((n)\operatorname{Pow}_{\mathcal{C}}^{-1}\) to \((0)\operatorname{Pow}_{\mathcal{C}}^{-1}\)

Suppose that \((\mathcal{T}_{\mathcal{C}, \mathcal{O}, \mathcal{S}},\mathcal{T}_{\mathcal{C}, \mathcal{M}, \mathcal{S}}), (\mathcal{T}_{\mathcal{C}, \mathcal{O}, \mathcal{S}}',\mathcal{T}_{\mathcal{C}, \mathcal{M}, \mathcal{S}}')\) are pairs of topologies compatible with \(\mathcal{C}\) and such that \(\mathcal{T}_{\mathcal{C}, \mathcal{M}, \mathcal{S}}\restriction_{M} =\mathcal{T}_{\mathcal{C}, \mathcal{M}, \mathcal{S}}\restriction_{M}' = \mathcal{S}\).  
We need to show that \((\mathcal{T}_{\mathcal{C}, \mathcal{O}, \mathcal{S}}, \mathcal{T}_{\mathcal{C}, \mathcal{M}, \mathcal{S}})=(\mathcal{T}_{\mathcal{C}, \mathcal{O}, \mathcal{S}}', \mathcal{T}_{\mathcal{C}, \mathcal{M}, \mathcal{S}}')\).
 As \(\mathcal{T}_{\mathcal{C}, \mathcal{O}, \mathcal{S}}\) and \(\mathcal{T}_{\mathcal{C}, \mathcal{O}, \mathcal{S}}'\) are both discrete, they must be equal.
 It remains to show that \(\mathcal{T}_{\mathcal{C}, \mathcal{M}, \mathcal{S}}'= \mathcal{T}_{\mathcal{C}, \mathcal{M}, \mathcal{S}}\).
  For all \(i, j\in \N\), let
    \[A_{i, j}:=\makeset{f\in \mathcal{M}_{\mathcal{C}}}{\((f)s_{\mathcal{C}}\operatorname{Pow}_{\mathcal{C}}=i,(f)t_{\mathcal{C}}\operatorname{Pow}_{\mathcal{C}}=j\)}.\]
    
    As \(s_\mathcal{C}, t_{\mathcal{C}}\) and \(\operatorname{Pow}_{\mathcal{C}}\) are all assumed to be continuous, the sets \(A_{i, j}\) are open with respect to both \(\mathcal{T}_{\mathcal{C}, \mathcal{M}, \mathcal{S}}\) and \(\mathcal{T}_{\mathcal{C}, \mathcal{M}, \mathcal{S}}'\).  
    Note that the sets \(A_{i, j}\) partition \(\mathcal{M}_{\mathcal{C}}\) into sets which are open with respect to both of the topologies \(\mathcal{T}_{\mathcal{C}, \mathcal{M}, \mathcal{S}}'\) and \(\mathcal{T}_{\mathcal{C}, \mathcal{M}, \mathcal{S}}\).
    Let \(i, j\in \N\) be arbitrary.
    To conclude that \((\mathcal{T}_{\mathcal{C}, \mathcal{O}, \mathcal{S}}, \mathcal{T}_{\mathcal{C}, \mathcal{M}, \mathcal{S}})=(\mathcal{T}_{\mathcal{C}, \mathcal{O}, \mathcal{S}}', \mathcal{T}_{\mathcal{C}, \mathcal{M}, \mathcal{S}}')\), it suffices to show that the topologies \(\mathcal{T}_{\mathcal{C}, \mathcal{M}, \mathcal{S}}'\restriction_{A_{i, j}}\) and \(\mathcal{T}_{\mathcal{C}, \mathcal{M}, \mathcal{S}}\restriction_{A_{i, j}}\) are equal.
    
    In the case that \(i=j=1\), both of these topologies are \(\mathcal{S}\) and in particular they are equal.
    If neither of \(i, j\) are \(0\), then since composition is continuous with respect to both of \(\mathcal{T}_{\mathcal{C}, \mathcal{M}, \mathcal{S}}, \mathcal{T}_{\mathcal{C}, \mathcal{M}, \mathcal{S}}'\) and \(\psi_{1, i}^{-1}, \psi_{1, j}\) are both invertible, it follows that 
    \[\mathcal{T}_{\mathcal{C}, \mathcal{M}, \mathcal{S}}\restriction_{A_{i, j}} =\psi_{1, i}^{-1} \circ_{\mathcal{C}} \mathcal{S}\circ_{\mathcal{C}} \psi_{1,j}= \mathcal{T}_{\mathcal{C}, \mathcal{M}, \mathcal{S}}'\restriction_{A_{i, j}}.\]
    
    If \(j= 0\), then \(A_{i, j}\) is the singleton \(\{\iota_i\}=\{\langle \rangle_{(0)\operatorname{Pow}_\mathcal{C}^{-1}}\}\), so again \(\mathcal{T}_{\mathcal{C}, \mathcal{M}, \mathcal{S}}'\restriction_{A_{i, j}}=\mathcal{T}_{\mathcal{C}, \mathcal{M}, \mathcal{S}}\restriction_{A_{i, j}}\).
    It remains to consider the case that \(i=0\) and \(j\neq 0\). If \(A_{0, j} = \varnothing\), then the result is clear, otherwise let \(z\in A_{0, j}\) be fixed.
    
    Let \(\phi_j: A_{0, j} \to A_{1, j}\) and \(\phi_j':\im(\phi_j) \to A_{0, j}\) be defined by
    \[(f)\phi_j= \iota_1 \circ_{\mathcal{C}} f  \quad \text{ and }\quad (f)\phi_j'= z \circ_{\mathcal{C}} \psi_{1, j}^{-1} \circ_{\mathcal{C}} f .\]
    By definition, both of these maps are continuous with respect to both of \(\mathcal{T}_{\mathcal{C}, \mathcal{M}, \mathcal{S}}\) and \(\mathcal{T}_{\mathcal{C}, \mathcal{M}, \mathcal{S}}'\).
    If \(f\in A_{i, j}\) is arbitrary, then \((f)\phi_j\phi_j'=z \circ_{\mathcal{C}} \psi_{1, j}^{-1}\circ_{\mathcal{C}}\iota_1 \circ_{\mathcal{C}} f\). 
    As \(z \circ_{\mathcal{C}} \psi_{1, j}^{-1}\circ_{\mathcal{C}}\iota_1\) has both source and target \((0)\operatorname{Pow}_{\mathcal{C}}^{-1}\), it follows that this is \(\iota_0\) (which is also the identity element of the object).
    Thus \((f)\phi_j\phi_j'= f\).
    Moreover if \((f)\phi_j\in \im(\phi_j)\) is arbitrary, then \(((f)\phi_j)\phi_j'\phi_j=((f)\phi_j\phi_j')\phi_j=(f)\phi_j\). 
    Thus \(\phi_j\) is a homeomorphism from \(A_{0, j}\) to \(\im(\phi_j)\). 
    As \(\im(\phi_j)\) is contained in \(A_{1, j}\) and we have already shown that \(\mathcal{T}_{\mathcal{C}, \mathcal{M}, \mathcal{S}}\restriction_{A_{1, j}}=\mathcal{T}_{\mathcal{C}, \mathcal{M}, \mathcal{S}}'\restriction_{A_{1, j}}\), it follows that
     \[\mathcal{T}_{\mathcal{C}, \mathcal{M}, \mathcal{S}}'\restriction_{A_{i, j}} =(\mathcal{T}_{\mathcal{C}, \mathcal{M}, \mathcal{S}}'\restriction_{\im(\phi_j)})\phi_j'=(\mathcal{T}_{\mathcal{C}, \mathcal{M}, \mathcal{S}}\restriction_{\im(\phi_j)})\phi_j'= \mathcal{T}_{\mathcal{C}, \mathcal{M}, \mathcal{S}}\restriction_{A_{i, j}}.\]
     
     We have now shown that the topology \(\mathcal{T}_{\mathcal{C}, \mathcal{M}, \mathcal{S}}\) is a disjoint union of countably many topological spaces which are homeomorphic to specific subspaces of \((M, \mathcal{S})\) (either \(M\) or \(\im(\phi_j\psi_{1, j}^{-1})\) for some \(j\)). 
     Thus the correspondence preserves containment as well as the properties of being second countable and Hausdorff.
     To show that Polishness is preserved, it suffices to show that if \(\mathcal{S}\) is Polish and \(j\in \N\backslash \{0\}\), then so is \(\mathcal{S}\restriction_{\im\left(\phi_j\psi_{1, j}^{-1}\right)}\).
     
     We show that \(\im\left(\phi_j\right) \circ_{\mathcal{C}} \psi_{1, j}^{-1}\) is closed (and is thus Polish from Lemma~\ref{nice Polish subspaces lemma}).
     As \(\mathcal{S}\) is Hausdorff and thus \(\mathcal{Z}(M)\subseteq \mathcal{S}\) (Lemma~\ref{Hausdorff Zariski lemma}), the set
     \[RO:=\intersection{g\in M}\makeset{f\in M}{\(g f=f\)}\]
     is closed. 
     We need only show that \(\im\left(\phi_j\right) \circ_{\mathcal{C}} \psi_{1, j}^{-1}= RO\).
     If \((f)\phi_j \in \im\left(\phi_j\right), g\in M\) are arbitrary, then
     \[g\circ_{\mathcal{C}}(f)\phi_j \circ_{\mathcal{C}} \psi_{1, j}^{-1}=(g\circ_{\mathcal{C}}\iota_1) \circ_{\mathcal{C}} f \circ_{\mathcal{C}} \psi_{1, j}^{-1}=\iota_1 \circ_{\mathcal{C}} f \circ_{\mathcal{C}} \psi_{1, j}^{-1}=(f)\phi_j \circ_{\mathcal{C}} \psi_{1, j}^{-1}.\]
     So indeed \((f)\phi_j \circ_{\mathcal{C}} \psi_{1, j}^{-1}\in RO\). 
     If \(f\in RO\) is arbitrary, then
     \[f = (\iota_1 \circ_{\mathcal{C}} z) \circ_{\mathcal{C}} f=(\iota_1 \circ_{\mathcal{C}} z) \circ_{\mathcal{C}} f \circ_\mathcal{C} (\psi_{1, j} \circ_\mathcal{C} \psi_{1, j}^{-1})=(z \circ_{\mathcal{C}} f \circ_\mathcal{C} \psi_{1, j})\phi_j \circ_{\mathcal{C}} \psi_{1, j}^{-1}.\]
     So \(f\in \im(\phi_j) \circ_{\mathcal{C}} \psi_{1, j}^{-1}\) as required.
\end{proof}

\section{Investigating important semigroups and clones}\label{big section}

In this section we investigate the topologies compatible with various important examples of semigroups using the tools introduced in the previous sections.

\subsection{Binary relations}

In this subsection we explore the binary relation monoids (see Example~\ref{topological binary relations}).
These monoids contain many of the monoids we will be studying later.

\speeddictthree{199}{BX property W}{nbhds defn}{topological binary relations}{property W defn}
\begin{lemma}[Property \textbf{W} for \(\mathcal{B}_X\), \Assumed{199}]\label{BX property W}
If \(X\) is an infinite set, then the topological semigroup \((\mathcal{B}_X, \mathcal{TB}_X)\) (recall Example~\ref{topological binary relations}) has property \textbf{W} with respect to \(\Sym(X)\).
\end{lemma}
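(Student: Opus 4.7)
The plan is to represent every $s \in \mathcal{B}_X$ as a product $s = u \cdot g_s \cdot v$ in which $g_s \in \Sym(X)$ carries all the data of $s$ while $u, v \in \mathcal{B}_X$ are two ``coding'' relations engineered so that perturbing $g_s$ on a finite set in $\Sym(X)$ corresponds to perturbing $s$ on a finite subset in $\mathcal{B}_X$. The witnessing term will be $a(x_0, x_1, x_2) = x_1 x_0 x_2$, with $t_s = g_s$ and $t_{1, s} = u$, $t_{2, s} = v$.

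For the construction, using $|X \times X| = |X|$ I partition $X = X_1 \sqcup X_2 \sqcup X_3 \sqcup X_4$ into four pieces of cardinality $|X|$ and pick injections $(x, y) \mapsto p_{x, y} \in Y_1 \subsetneq X_1$ and $(x, y) \mapsto q_{x, y} \in Y_2 \subsetneq X_2$ whose complements $X_1 \setminus Y_1, X_2 \setminus Y_2$ again have cardinality $|X|$. I set $u = \{(x, p_{x, y}) : x, y \in X\}$ and $v = \{(q_{x, y}, y) : x, y \in X\}$, and let $g_s^0 := \{(p_{a, c}, q_{a, c}) : (a, c) \in s\}$, a partial bijection $Y_1 \to Y_2$. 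I then extend $g_s^0$ to a permutation $g_s \in \Sym(X)$ satisfying $g_s \cap (X_1 \times X_2) = g_s^0$; the slack provided by $X_1 \setminus Y_1$, $X_2 \setminus Y_2$, $X_3$, $X_4$ makes this extension routine. Writing $P_a := \{p_{a, y} : y \in X\}$ and $Q_c := \{q_{x, c} : x \in X\}$, a direct computation of composition shows $(a, c) \in u \cdot h \cdot v$ if and only if $h \cap (P_a \times Q_c) \neq \varnothing$ for every $h \in \mathcal{B}_X$; combined with $g_s \cap (X_1 \times X_2) = g_s^0$ this specialises to $u \cdot g_s \cdot v = s$.

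For the neighbourhood condition, let $V_f = \{h \in \Sym(X) : h \supseteq f\}$ be a basic neighbourhood of $g_s$ in $\Sym(X)$ (with $f \subseteq g_s$ finite), and set $F_f := \{(a, c) \in s : (p_{a, c}, q_{a, c}) \in f\} \subseteq s$, which is itself finite. I argue that $\{b \in \mathcal{B}_X : F_f \subseteq b\} \subseteq u \cdot V_f \cdot v$: given such a $b$ I construct $h \supseteq f$ with $u \cdot h \cdot v = b$ by adjoining to $f$ one new pair $(p_{a, y_{a, c}}, q_{x_{a, c}, c})$ for each $(a, c) \in b \setminus F_f$, choosing the indices $y_{a, c}$ distinct along each row $a$ and $x_{a, c}$ distinct along each column $c$ while avoiding the finitely many indices already used by $f$, and then extending the resulting partial bijection to a full permutation of $X$ without adding any further pair in $X_1 \times X_2$; the previous computation then yields $u \cdot h \cdot v = b$.

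The heart of the argument is maintaining the invariant ``the restriction of $h$ to $X_1 \times X_2$ is exactly the intended coding'' through both of the extensions above; this is what makes the pullback from ``$h \supseteq f$'' to ``$b \supseteq F_f$'' work cleanly. Reserving the size-$|X|$ slack in $X_1 \setminus Y_1, X_2 \setminus Y_2, X_3, X_4$ is precisely what guarantees that at each extension step there is enough room in $X$ to complete a bijection between the uncovered portions of the domain and codomain while steering the map away from the forbidden rectangle $X_1 \times X_2$ outside the intended coding pairs.
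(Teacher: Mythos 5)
Your proposal is correct and takes essentially the same route as the paper: factor $s$ as a triple product with a permutation in the middle, engineered so that each pair of $s$ is encoded by exactly one ``witness'' entry of the permutation in a designated rectangle region, whence finite restrictions on the permutation pull back to finite restrictions on $s$, and the infinite slack outside the coding region lets one always complete partial bijections to full permutations. The only differences are cosmetic bookkeeping: you partition $X$ into four pieces carrying two separate coding injections $p, q$ with distinct relations $u$ and $v$, whereas the paper conjugates by a bijection $\Phi : X \to X \times X \times \{0,1\}$ and uses a single broadcast relation $f$ together with $f^{-1}$, with the $\{1\}$-slice supplying the slack.
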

\begin{proof}
Let \(s\in \mathcal{B}_X\) be arbitrary.
It suffices to find \(f_s, g_s\in \mathcal{B}_X\) and \(h_s\in \Sym(X)\), such that \(f_sh_s g_s = s\) and for all \(N\in \nbhd{\Sym(X)}{h_s}\) we have \(f_s N g_s\in \nbhd{\mathcal{B}_X}{s}\).

We first fix a bijection \(\Phi:X\to X\times X \times \{0, 1\}\) (this must exist because \(X\) is infinite and \(\{0, 1\}\) is finite).
We then define a binary relation \(f\) from \(X\) to \(X\times X \times \{0, 1\}\) by \(f:= \makeset{(x, (x, y, 0))}{\(x, y\in X\)}\).
We then define
\[f_s :=  f \Phi^{-1} \quad\text{ and }\quad g_s := \Phi f^{-1}.\]
\underline{Claim:} Suppose that \(k\in \mathcal{B}_{X}\) and \(a\) is a bijection between finite subsets of \(X\times X\times \{0, 1\}\) such that \(f a f^{-1}\subseteq k\).
In this case there is some \(t_{k, a}\in \Sym(X\times X \times \{0, 1\})\) such that \(a\subseteq t_{k, a}\) and \(k=ft_{k, a}f^{-1}\).\\
\underline{Proof of Claim:}
Let \(B:=(\dom(a))\pi_0 \cup (\dom(a))\pi_1 \cup (\im(a))\pi_0 \cup (\im(a))\pi_1\), \(G:= X\backslash B\) and \(\phi:X\to G\) be a bijection. 
For each \(x\in X\), let 
\[t_x: \{x\}\times (\{x\})k\phi \times \{0\} \to (\{x\})k \times (\{x\})\phi \times \{0\}\] be the bijection defined by
\[(x, (y)\phi, 0)t_x = (y, (x)\phi, 0).\]
Note that for each \(x\in X\) we have \(\dom(t_x)\cap \dom(a)= \varnothing=\im(t_x)\cap \im(a)\) and \(ft_x f^{-1} = k\cap (\{x\} \times X)\). Moreover if \(x\neq y\), then \(\dom(t_x)\cap \dom(t_y)= \varnothing =\im(t_x)\cap \im(t_y)\). Thus \(t:= \union{x\in X}t_x\) is a bijection between a subsets of \(X\times X\times \{0\}\) and \(f t f^{-1}= k\).

 We now define \(R_d := (X\times X\times \{0\}) \backslash (\dom(t) \cup \dom(a))\) and \(R_i:= (X\times X\times \{0\}) \backslash (\im(t)\cup \im(a))\).
 Let \(b\in G\) be fixed.
 As \(|R_d|, |R_i|\leq |X|\), we can define injections \(u_d:R_d\to G\times \{b\} \times \{1\}\) and \(u_i:R_i\to G\times \{b\} \times \{1\}\).
 We have that \(|G\times (G\backslash \{b\})\times \{1\}|= |X|\), moreover \(G\times (G\backslash \{b\})\times \{1\}\) is disjoint from \[\dom(a)\cup \im(a) \cup \dom(t) \cup \im(t) \cup \dom(u_i)\cup \im(u_i)\cup \dom(u_d) \cup \im(u_d).\]
So there is a bijection \(u\) from the set 
\[(X\times X\times \{0, 1\})\backslash (\dom(a)\cup \dom(u_d) \cup \im(u_i) \cup \dom(t))\] to the set
\[(X\times X\times \{0, 1\})\backslash (\im(a)\cup \im(u_d) \cup \dom(u_i) \cup \im(t)).\]

In particular the sets 
\[\{\dom(u), \dom(u_d), \im(u_i), \dom(t), \dom(a)\} \text{ and } \{\im(u), \im(u_d), \dom(u_i), \im(t), \im(a)\}\] are both partitions of \(X\times X\times \{0, 1\}\).
It follows that \(t_{k, a}:= u\cup u_d\cup u_i^{-1}\cup t \cup a\) is a bijection from \(X\times X\times \{0, 1\}\) to itself.
By definition \(a\subseteq t_{k, a}\), so we need only show that \(ft_{k, a}f^{-1}=k\). This can be seen as follows:
\begin{align*}
    ft_{k, a}f^{-1} &= (f u f^{-1}) \cup (f u_d f^{-1})\cup (f u_i^{-1}f^{-1})\cup (f t f^{-1})\cup (f a f^{-1})\\
&= (\varnothing) \cup (\varnothing)\cup (\varnothing)\cup (k)\cup (f a f^{-1})\\
&=k.\diamondsuit
\end{align*}

By the claim (using \(a=\varnothing\)), we can choose \(t_s\in \Sym(X\times X \times \{0, 1\})\) such that \(f t_s f^{-1} = k\).
So if we define \(h_s:= \Phi t_s \Phi^{-1}\), then \(f_sh_s g_s=s\).

Let \(N\in \nbhd{\Sym(X)}{h_s}\) be arbitrary, it suffices to show that \(f_s N g_s\in \nbhd{\mathcal{B}_X}{s}\).
As \(N\) is a neighbourhood of \(h_s\), there is some finite \(a\subseteq t_s\) such that
\[\Phi\makeset{t\in \Sym(X\times X\times \{0, 1\})}{\(a\subseteq t\)}\Phi^{-1} \subseteq N.\]
We define \(U:=\makeset{k\in \mathcal{B}_X}{\(f a f^{-1}\subseteq k\)}\).
As \(f a f^{-1}\) is finite, the set \(U\) is an open subset of \(\mathcal{B}_X\). Moreover \(f a f^{-1} \subseteq ft_sf^{-1} = s\), so \(U\in \nbhd{\mathcal{B}_X}{s}\).
It therefore suffices to show that \(U\subseteq f_s N g_s\). Let \(k\in U\) be arbitrary. By the claim let \(t_{k, a}\in \Sym(X\times X\times \{0, 1\})\) be such that \(f t_{k, a}f^{-1} = k\) and \(a\subseteq t_{k, a}\).
By the definition of \(a\), it follows that \(\Phi t_{k, a}\Phi^{-1}\in N\). Thus
\[k=ft_{k, a} f^{-1}=f_s\Phi t_{k, a}\Phi^{-1} g_s \in f_s N g_s\]
as required.
\end{proof}

\speeddictthree{200}{Hausdorff binary relations}{Hausdorff defn}{topological binary relations}{nice Hausdorff binary relations}
\begin{defn}[Hausdorff binary relation topology, \Assumed{200}]\label{Hausdorff binary relations}
If \(X\) is a set, then we define the topology \(\mathcal{HTB}_X\) on the set \(\mathcal{B}_X\) to be the topology generated by the sets of the form
\[U_{Y, Z} := \makeset{f\in \mathcal{B}_X}{\((Y\times Z) \cap f\neq \varnothing\)}\quad \text{ and }\quad V_{Y, Z} := \makeset{f\in \mathcal{B}_X}{\((Y\times Z) \cap f= \varnothing\)}\]
for all \(Y, Z\subseteq X\) (We show in Lemma~\ref{nice Hausdorff binary relations} that this is a Hausdorff semigroup topology).
\end{defn}

\speeddictfour{201}{nice Hausdorff binary relations}{subspaces}{nbhds defn}{minimum topology defn}{Hausdorff binary relations}
\begin{lemma}[Hausdorff binary relation topology is nice, \Assumed{201}]\label{nice Hausdorff binary relations}
If \(X\) is a set, then the topology \(\mathcal{HTB}_X\) from Definition~\ref{Hausdorff binary relations} is a Hausdorff topology compatible with \(\mathcal{B}_X\). Moreover \(\mathcal{HTB}_X= \mathcal{MF}(\mathcal{B}_X)\).
\end{lemma}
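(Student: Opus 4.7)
The plan is to verify the three claims in the lemma in turn: (i) $\mathcal{HTB}_X$ is compatible with $\mathcal{B}_X$, (ii) $\mathcal{HTB}_X$ is Hausdorff, and (iii) $\mathcal{HTB}_X = \mathcal{MF}(\mathcal{B}_X)$. Hausdorffness is immediate: if $a\neq b\in\mathcal{B}_X$ then some $(y,z)$ lies in $a\triangle b$, and $U_{\{y\},\{z\}}, V_{\{y\},\{z\}}$ are disjoint opens separating $a$ and $b$. For compatibility, I would check joint continuity of composition on the subbasis of $\mathcal{HTB}_X$. The preimage of $U_{Y,Z}$ is $\bigcup_{w\in X}U_{Y,\{w\}}\times U_{\{w\},Z}$, which is open. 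For $V_{Y,Z}$, given $(a,b)$ with $ab\in V_{Y,Z}$, setting $W=(Y)a$ yields $(a,b)\in V_{Y,X\setminus W}\times V_{W,Z}$, and it is routine to check that this product lies inside $(V_{Y,Z})*^{-1}$.

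The containment $\mathcal{MF}(\mathcal{B}_X)\subseteq\mathcal{HTB}_X$ is then immediate: $\mathcal{HTB}_X$ is a Hausdorff (hence Fréchet) topological (hence semitopological) semigroup topology, so it occurs in the intersection defining $\mathcal{MF}(\mathcal{B}_X)$.

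The substantive direction is $\mathcal{HTB}_X\subseteq\mathcal{MF}(\mathcal{B}_X)$. Fix a Fréchet semitopological topology $\mathcal{T}$ on $\mathcal{B}_X$; the goal is to show every subbasic $U_{Y,Z}$ and $V_{Y,Z}$ lies in $\mathcal{T}$. The key engine is the observation that for any $y,z\in X$, the map $\lambda_{\{(y,y)\}}\rho_{\{(z,z)\}}:\mathcal{B}_X\to\mathcal{B}_X$ is $\mathcal{T}$-continuous and sends $f$ to $f\cap\{(y,z)\}$, taking only the two values $\varnothing$ and $\{(y,z)\}$. Since $\mathcal{T}$ is Fréchet, both of these singletons are $\mathcal{T}$-closed, so their preimages $V_{\{y\},\{z\}}$ and $U_{\{y\},\{z\}}$ are $\mathcal{T}$-clopen. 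Then $U_{Y,Z}=\bigcup_{y\in Y,z\in Z}U_{\{y\},\{z\}}$ is a $\mathcal{T}$-union of opens.

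The remaining step, and the main obstacle, is handling $V_{Y,Z}$ for arbitrary (possibly infinite) $Y,Z$; the naive formula $V_{Y,Z}=\bigcap V_{\{y\},\{z\}}$ is only an intersection of opens and so does not give openness directly. The trick will be to upgrade one coordinate at a time using continuity of $\lambda$ and $\rho$ for well-chosen elements. If $Y$ or $Z$ is empty then $V_{Y,Z}=\mathcal{B}_X$ and we are done; otherwise pick $y\in Y$ and $z\in Z$ and set $s:=\{y\}\times Y$, $t:=Z\times\{z\}\in\mathcal{B}_X$. A direct computation gives $sf=\{y\}\times (Y)f$ and $ft=(Z)f^{-1}\times\{z\}$, from which one reads off the identities
\[
\lambda_s^{-1}(V_{\{y\},\{z\}})=V_{Y,\{z\}}\qquad\text{and}\qquad\rho_t^{-1}(V_{Y,\{z\}})=V_{Y,Z}.
\]
Applying continuity of $\lambda_s$ shows $V_{Y,\{z\}}$ is $\mathcal{T}$-open, and then continuity of $\rho_t$ shows $V_{Y,Z}$ is $\mathcal{T}$-open. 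This closes the double inclusion and establishes the lemma.
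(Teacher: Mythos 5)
Your proposal is correct; the Hausdorffness and compatibility arguments coincide with the paper's in essence, so the only genuine divergence is in the hard inclusion $\mathcal{HTB}_X\subseteq\mathcal{MF}(\mathcal{B}_X)$. The paper dispenses with this in a single stroke: for arbitrary $Y,Z\subseteq X$ it considers the continuous map $\lambda_{Y\times Y}\circ\rho_{Z\times Z}$, whose image is the finite (hence discrete, since $\mathcal{T}$ is Fréchet) set $\{\varnothing, Y\times Z\}$; pulling back the two singletons yields the partition $\{V_{Y,Z},U_{Y,Z}\}$ of $\mathcal{B}_X$ into $\mathcal{T}$-open sets. You instead run a two-stage argument: first establish the singleton case via the analogous map $\lambda_{\{(y,y)\}}\circ\rho_{\{(z,z)\}}$ sending $f$ to $f\cap\{(y,z)\}$, so that each $U_{\{y\},\{z\}}$ and $V_{\{y\},\{z\}}$ is $\mathcal{T}$-clopen; then get $U_{Y,Z}$ for free as a union, while $V_{Y,Z}$ is lifted from the singleton case through the two preimage identities $\lambda_{\{y\}\times Y}^{-1}(V_{\{y\},\{z\}})=V_{Y,\{z\}}$ and $\rho_{Z\times\{z\}}^{-1}(V_{Y,\{z\}})=V_{Y,Z}$, both of which I verified. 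The two approaches are logically equivalent in strength; the paper's is more economical since it handles $U_{Y,Z}$ and $V_{Y,Z}$ simultaneously and for arbitrary $Y,Z$ in one application of semicompatibility and Fréchetness, whereas yours is more modular and isolates the Fréchet input to the two-element case, relying solely on continuity of left/right translation thereafter. (One minor house-keeping point you already noted: the $Y=\varnothing$ or $Z=\varnothing$ case must be dispatched separately so that the elements $s=\{y\}\times Y$ and $t=Z\times\{z\}$ can be formed.)
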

\begin{proof}
We first show that \(\mathcal{HTB}_X\) is Hausdorff.
Let \(f, g\in \mathcal{B}_X\) be arbitrary with \(f\neq g\).
We can assume without loss of generality that there is some pair \((x, y)\in f\backslash g\).
It follows that \(f\in U_{\{x\}, \{y\}}\) and \(g\in \mathcal{B}_X\backslash U_{\{x\}, \{y\}}= V_{\{x\}, \{y\}}\).
As \(f\) and \(g\) were arbitrary, it follows that \(\mathcal{HTB}_X\) is Hausdorff.

We next show that \(\mathcal{HTB}_X\) is compatible with \(\mathcal{B}_X\). Let \(Y, Z\subseteq X\) be arbitrary. We first show that \((U_{Y, Z})(*^{\mathcal{B}_X})^{-1}\) is open.
Let \((f, g)\in (U_{Y, Z})(*^{\mathcal{B}_X})^{-1}\) be arbitrary.
Note that \((Y\times Z)\cap f g \neq \varnothing\iff (Y)f\cap (Z)g^{-1} \neq \varnothing\).
So there is some \(p\in (Y)f\cap (Z)g^{-1}\).
In particular we have
\[(f, g)\in U_{Y, \{p\}} \times U_{\{p\}, Z} \subseteq (U_{Y, Z})(*^{\mathcal{B}_X})^{-1}.\]
So \((U_{Y, Z})(*^{\mathcal{B}_X})^{-1}\in \nbhd{\mathcal{B}_X^2}{(f,g)}\) and hence \((U_{Y, Z})(*^{\mathcal{B}_X})^{-1}\) is open. 
We next show that the set \((V_{Y, Z})(*^{\mathcal{B}_X})^{-1}\) is open.
Let \((f, g)\in (V_{Y, Z})(*^{\mathcal{B}_X})^{-1}\) be arbitrary.
As before \((Y\times Z)\cap f g = \varnothing \iff (Y)f\cap (Z)g^{-1} = \varnothing\). So we have
\[(f, g)\in V_{Y, X\backslash (Y)f} \times V_{X\backslash (Z)g^{-1}, Z} \subseteq (V_{Y, Z})(*^{\mathcal{B}_X})^{-1}.\]
So \((V_{Y, Z})(*^{\mathcal{B}_X})^{-1}\in \nbhd{\mathcal{B}_X^2}{(f,g)}\) and hence \((V_{Y, Z})(*^{\mathcal{B}_X})^{-1}\) is open.

It remains to show that \(\mathcal{HTB}_X= \mathcal{MF}(\mathcal{B}_X)\).
As \(\mathcal{HTB}_X\) is Fréchet and compatible with \(\mathcal{B}_X\), it follows from the definition of \(\mathcal{MF}(\mathcal{B}_X)\) that \(\mathcal{MF}(\mathcal{B}_X)\subseteq \mathcal{HTB}_X\).
To show the reverse inclusion, let \(\mathcal{T}\) be an arbitrary Fréchet topology semicompatible with \(\mathcal{B}_X\) and let \(Y, Z\subseteq X\) be arbitrary.

It suffices to show that \(U_{Y, Z}\) and \(V_{Y, Z}\) are open in \(\mathcal{T}\).
As \(\mathcal{T}\) is Fréchet, the subspace \[\im(\lambda_{Y\times Y}\circ\rho_{Z\times Z}) = \{\varnothing, Y \times Z\}\]
of \((\mathcal{B}_X, \mathcal{T})\) is discrete.
Moreover as \(\mathcal{T}\) is semicompatible with \(\mathcal{B}_X\), it follows that \((\{\{\varnothing\}, \{Y \times Z\}\})(\lambda_{Y\times Y}\circ\rho_{Z\times Z})^{-1}\) is a partition of \(\mathcal{B}_X\) into open sets. As
\[(\{\{\varnothing\}, \{Y \times Z\}\})(\lambda_{Y\times Y}\circ \rho_{Z\times Z})^{-1}= \{V_{Y\times Z}, U_{Y\times Z}\},\]
the result follows.
\end{proof}

\speeddictseven{202}{main binary relation theorem}{subbasis}{ac topology is nice}{sym(N) has SCC cor}{topology comparison proposition}{property W is nice lemma}{BX property W}{nice Hausdorff binary relations}
\begin{theorem}[The topologies of \(\mathcal{B}_\N\), \Assumed{202}]\label{main binary relation theorem}
Every homomorphism from \((\mathcal{B}_\N, \mathcal{TB}_\N)\) (recall Example~\ref{topological binary relations}) to a second countable topological semigroup is continuous and there are no second countable Fréchet topologies compatible with \(\mathcal{B}_\N\).
Moreover we have
\[\mathcal{TB}_\N = \mathcal{SCC}(\mathcal{B}_\N) \subsetneq \mathcal{MF}(\mathcal{B}_\N)= \mathcal{Z}(\mathcal{B}_\N)  =  \mathcal{FM}(\mathcal{B}_\N) = \mathcal{HM}(\mathcal{B}_\N)=\mathcal{HTB}_\N .\]
\end{theorem}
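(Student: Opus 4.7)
The plan decomposes the theorem into three tasks: (i) establishing \(\mathcal{TB}_\N = \mathcal{SCC}(\mathcal{B}_\N)\) together with the automatic continuity claim, (ii) the collapse \(\mathcal{MF}(\mathcal{B}_\N) = \mathcal{Z}(\mathcal{B}_\N) = \mathcal{FM}(\mathcal{B}_\N) = \mathcal{HM}(\mathcal{B}_\N) = \mathcal{HTB}_\N\), and (iii) the strict containment \(\mathcal{TB}_\N \subsetneq \mathcal{HTB}_\N\), from which the absence of a second countable Fréchet topology on \(\mathcal{B}_\N\) follows. The real content is (i); (ii) and (iii) are essentially bookkeeping once (i) is in hand.

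For (i), given an arbitrary homomorphism \(\phi\colon (\mathcal{B}_\N, \mathcal{TB}_\N) \to T\) into a second countable topological semigroup, Lemma~\ref{BX property W} and Lemma~\ref{property W is nice lemma} reduce the continuity of \(\phi\) to continuity of \(\phi\restriction_{\Sym(\N)}\). The subspace topology on \(\Sym(\N) \subseteq (\mathcal{B}_\N, \mathcal{TB}_\N)\) is precisely the pointwise topology (Definition~\ref{symmetric group topology}), and Corollary~\ref{sym(N) has SCC cor} combined with Proposition~\ref{ac topology is nice} tells us that every semigroup homomorphism from pointwise-topology \(\Sym(\N)\) into a second countable topological semigroup is continuous, finishing the reduction. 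A second application of Proposition~\ref{ac topology is nice} converts the automatic continuity statement into \(\mathcal{SCC}(\mathcal{B}_\N) \subseteq \mathcal{TB}_\N\). The reverse inclusion is immediate since \(\{U_{x, y} : x, y \in \N\}\) is a countable subbasis for \(\mathcal{TB}_\N\), so \(\mathcal{TB}_\N\) is itself a second countable topology compatible with \(\mathcal{B}_\N\) and hence contained in \(\mathcal{SCC}(\mathcal{B}_\N)\).

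For (ii), Lemma~\ref{nice Hausdorff binary relations} simultaneously identifies \(\mathcal{MF}(\mathcal{B}_\N)\) with \(\mathcal{HTB}_\N\) and shows that \(\mathcal{HTB}_\N\) is a Hausdorff topology compatible with \(\mathcal{B}_\N\), forcing \(\mathcal{HM}(\mathcal{B}_\N) \subseteq \mathcal{HTB}_\N\) from the definition of \(\mathcal{HM}\). Combining this with the containments \(\mathcal{MF}(\mathcal{B}_\N) \subseteq \mathcal{Z}(\mathcal{B}_\N), \mathcal{FM}(\mathcal{B}_\N) \subseteq \mathcal{HM}(\mathcal{B}_\N)\) from Proposition~\ref{topology comparison proposition} sandwiches all four topologies between \(\mathcal{HTB}_\N\) and itself, collapsing them.

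For (iii), I would witness the strictness with \(V_{\{0\}, \{0\}} = \{b \in \mathcal{B}_\N : (0,0)\notin b\}\), which is \(\mathcal{HTB}_\N\)-open by definition and contains the empty relation \(\varnothing\); any nonempty finite intersection of subbasis elements \(U_{x, y}\) forces at least one ordered pair to lie in each of its members, so no basic \(\mathcal{TB}_\N\)-open set other than \(\mathcal{B}_\N\) contains \(\varnothing\). Consequently the only \(\mathcal{TB}_\N\)-open neighbourhood of \(\varnothing\) is \(\mathcal{B}_\N\) itself, and so \(V_{\{0\}, \{0\}}\) fails to be \(\mathcal{TB}_\N\)-open. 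The absence of a second countable Fréchet topology compatible with \(\mathcal{B}_\N\) is then immediate: any such topology \(\mathcal{T}\) would satisfy \(\mathcal{HTB}_\N = \mathcal{MF}(\mathcal{B}_\N) \subseteq \mathcal{T} \subseteq \mathcal{SCC}(\mathcal{B}_\N) = \mathcal{TB}_\N\), contradicting the strict containment just established.
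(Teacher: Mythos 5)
Your proposal is correct and follows essentially the same route as the paper: Lemma~\ref{nice Hausdorff binary relations} plus Proposition~\ref{topology comparison proposition} collapse the right-hand chain, and property \textbf{W} (Lemmas~\ref{BX property W}, \ref{property W is nice lemma}) together with Corollary~\ref{sym(N) has SCC cor} gives automatic continuity and $\mathcal{SCC}(\mathcal{B}_\N)=\mathcal{TB}_\N$; the paper phrases the reduction in terms of condition~(1) of Proposition~\ref{ac topology is nice} (``every second countable compatible $\mathcal{T}$ satisfies $\mathcal{T}\subseteq\mathcal{TB}_\N$'') where you phrase it via condition~(2), but these are equivalent. One small point worth making explicit: to obtain the strict \emph{inclusion} $\mathcal{TB}_\N\subsetneq\mathcal{HTB}_\N$ in the displayed chain, you also need $\mathcal{TB}_\N\subseteq\mathcal{HTB}_\N$, which holds since each subbasic $U_{x,y}$ of $\mathcal{TB}_\N$ coincides with $U_{\{x\},\{y\}}\in\mathcal{HTB}_\N$; your witness $V_{\{0\},\{0\}}$ shows only $\mathcal{HTB}_\N\not\subseteq\mathcal{TB}_\N$ (which does suffice for the ``no second countable Fréchet topology'' conclusion, but not by itself for the $\subsetneq$).
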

\begin{proof}
From Lemma~\ref{nice Hausdorff binary relations}, we have that \(\mathcal{HTB}_\N = \mathcal{MF}(\mathcal{B}_\N)\) is Hausdorff and compatible with \(\mathcal{B}_\N\).
Thus from the definition of \(\mathcal{HM}(\mathcal{B}_\N)\) we have \(\mathcal{HM}(\mathcal{B}_\N)\subseteq \mathcal{HTB}_\N=\mathcal{MF}(\mathcal{B}_\N)\).
Thus from Proposition~\ref{topology comparison proposition}, we have 
\[\mathcal{MF}(\mathcal{B}_\N)= \mathcal{Z}(\mathcal{B}_\N)  =  \mathcal{FM}(\mathcal{B}_\N) = \mathcal{HM}(\mathcal{B}_\N)=\mathcal{HTB}_\N.\]
If \(x, y\in \N\) are arbitrary, then the set \(U_{x, y}\) from Example~\ref{topological binary relations} coincides with the set \(U_{\{x\}, \{y\}}\) from Definition~\ref{Hausdorff binary relations}.
Thus \(\mathcal{TB}_\N\subseteq \mathcal{HTB}_\N\). Moreover, the only neighbourhood of \(\varnothing\) in the topology \(\mathcal{TB}_\N\) is the entire space, thus \(\mathcal{TB}_\N\) is not Hausdorff and so \(\mathcal{TB}_\N\subsetneq \mathcal{HTB}_\N\).

It remains to show that \(\mathcal{TB}_\N = \mathcal{SCC}(\mathcal{B}_\N)\) (recall Proposition~\ref{ac topology is nice}).
By definition, the topology \(\mathcal{TB}_\N\) is second countable.
As \(\mathcal{TB}_\N\) is compatible with \(\mathcal{B}_\N\), it follows from the definition of \(\mathcal{SCC}(\mathcal{B}_\N)\) that \(\mathcal{TB}_\N \subseteq \mathcal{SCC}(\mathcal{B}_\N)\).
Let \(\mathcal{T}\) be an arbitrary second countable topology compatible with \(\mathcal{B}_\N\).
It suffices to show that \(\mathcal{T}\subseteq \mathcal{TB}_\N\).

By Corollary~\ref{sym(N) has SCC cor}, we have \(\mathcal{SCC}(\Sym(\N)) = \mathcal{TB}_\N\restriction_{\Sym(\N)}\).
Thus \(\mathcal{T}\restriction_{\Sym(\N)}\subseteq \mathcal{TB}_{\N}\restriction_{\Sym(\N)}\). By applying Lemma~\ref{property W is nice lemma} to the identity homomorphism from \((\mathcal{B}_\N, \mathcal{TB}_\N)\) to \((\mathcal{B}_\N, \mathcal{T}_\N)\), it suffices to show that \(\mathcal{B}_X\) has property \textbf{W} with respect to \(\Sym(\N)\). This is shown in Lemma~\ref{BX property W}.
\end{proof}

Sets and binary relations form a category in a natural fashion.
This category even has products, although the products of this category are very different form the usual products in the categories of sets and functions. 
There is thus a natural clone of binary relations to consider using Proposition~\ref{topological clones from topological semigroups},
however from Theorem~\ref{main binary relation theorem} there is no second countable Fréchet topology compatible with \(\mathcal{B}_\N\), so the same is true of such a clone.

\subsection{(Partial) transformations}
In this subsection we explore the topologies of the transformation and partial transformation monoids (see Examples~\ref{full trans def} and \ref{partial function monoid defn}).

\speeddictone{193}{partial function monoid defn}{topological binary relations}
\begin{example}[Partial function monoid, \Assumed{193}]\label{partial function monoid defn}
If \(X\) is a set then we define the \textit{partial function monoid} \(\mathcal{P}_X\) to be the submonoid of \(\mathcal{B}_X\) (recall Example~\ref{topological binary relations}) consisting of those relations which are functions from subsets of \(X\) to \(X\).

We define the \textit{partial pointwise topology} \(\mathcal{PPT}_X\) on \(\mathcal{P}_X\) to be the topology generated by sets of the form
\[U_{x, y}:= \makeset{f\in \mathcal{P}_X}{\((x, y)\in f\)} \quad \text{ or } \quad V_x:= \makeset{f\in \mathcal{P}_X}{\(x \not\in \dom(f)\)}\]
for \(x, y\in X\). We show in Lemma~\ref{topological partial functions lemma} that \((\mathcal{P}_X, \mathcal{PPT}_X)\) is a topological semigroup.
\end{example}

\speeddictthree{194}{topological partial functions lemma}{subspaces}{Polish space defn}{partial function monoid defn}
\begin{lemma}[Topological partial functions, \Assumed{194}]\label{topological partial functions lemma}
The pair \((\mathcal{P}_X, \mathcal{PPT}_X)\) from Example~\ref{partial function monoid defn} is a topological semigroup.
\end{lemma}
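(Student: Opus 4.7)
The plan is to verify continuity of the composition map $*: \mathcal{P}_X \times \mathcal{P}_X \to \mathcal{P}_X$ directly by checking the preimage of each element of the given subbasis for $\mathcal{PPT}_X$. Since the subbasis consists of two types of sets, $U_{x,y}$ and $V_x$, we only need two computations.

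First I would handle the sets $U_{x, y}$. Note that $(f, g)(*^{\mathcal{P}_X})^{-1} \in U_{x, y}$ precisely when $(x, y) \in fg$, i.e.\ when there exists $z \in X$ with $(x, z) \in f$ and $(z, y) \in g$. Hence
\[
(U_{x, y})(*^{\mathcal{P}_X})^{-1} = \union{z \in X} U_{x, z} \times U_{z, y},
\]
which is open in $\mathcal{P}_X \times \mathcal{P}_X$. (This is essentially the same calculation as in Example~\ref{topological binary relations}, since $U_{x,y} = \mathcal{P}_X \cap \{b \in \mathcal{B}_X : (x,y) \in b\}$.)

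Next I would handle the sets $V_x$. Observe that $x \notin \dom(fg)$ if and only if either $x \notin \dom(f)$, or $x \in \dom(f)$ and $(x)f \notin \dom(g)$. Writing the unique image point as $z$ in the second case gives
\[
(V_x)(*^{\mathcal{P}_X})^{-1} = (V_x \times \mathcal{P}_X) \ \cup\ \union{z \in X} \bigl(U_{x, z} \times V_z\bigr),
\]
which is a union of open sets and hence open. Since the preimage under $*^{\mathcal{P}_X}$ of every subbasic open set is open, $*^{\mathcal{P}_X}$ is continuous, so $(\mathcal{P}_X, \mathcal{PPT}_X)$ is a topological semigroup.

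There is really no obstacle here beyond a careful case split in the $V_x$ computation; partiality is the only new feature compared with Example~\ref{topological binary relations}, and it is handled by the observation that $x$ fails to lie in $\dom(fg)$ in exactly two mutually exclusive ways, each of which is described by basic open conditions on $f$ and $g$ separately. No appeal to property \textbf{W}, the Zariski topology, or any deeper machinery is needed for this lemma.
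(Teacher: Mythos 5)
Your proof is correct and takes essentially the same route as the paper's: check continuity on the subbasic sets, handling $V_x$ by the same case split into $(V_x\times\mathcal{P}_X)\cup\union{z\in X}(U_{x,z}\times V_z)$. The only cosmetic difference is that the paper dispatches the $U_{x,y}$ case by observing that $(\mathcal{P}_X,\mathcal{TB}_X\restriction_{\mathcal{P}_X})$ is a topological subsemigroup of $(\mathcal{B}_X,\mathcal{TB}_X)$ with $\mathcal{TB}_X\restriction_{\mathcal{P}_X}\subseteq\mathcal{PPT}_X$, whereas you redo the short computation directly.
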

\begin{proof}
Let \(x,y\in X\) be arbitrary. It suffices to show that the sets
\[(U_{x, y})(*^{\mathcal{P}_X})^{-1} \quad \text{ and }\quad (V_{x})(*^{\mathcal{P}_X})^{-1}\]
are open subsets of \(\mathcal{P}_X^2\) (using the topology \(\mathcal{PPT}_X\)). As \((\mathcal{P}_X, \mathcal{TB}_X\restriction_{\mathcal{P}_X})\) is a topological subsemigroup of \((\mathcal{B}_X, \mathcal{TB}_X)\) and \(\mathcal{TB}_X\restriction_{\mathcal{P}_X}\subseteq \mathcal{PPT}_X\) (recall Example~\ref{topological binary relations} and Definition~\ref{subspaces}), it follows that \((U_{x, y})(*^{\mathcal{P}_X})^{-1}\) is open.

It remains to show that \((V_{x})(*^{\mathcal{P}_X})^{-1}\) is open. Note that if \(a, b\in \mathcal{P}_X\) are arbitrary, then
\(x\not\in \dom(ab) \iff (x\notin \dom(f) \text{ or }(x)f\notin \dom(g))\). In other words
\[(V_{x})(*^{\mathcal{P}_X})^{-1} = (V_x\times \mathcal{P}_X)\cup \left(\union{y\in X} \left( U_{x, y}\times V_y\right)\right).\]
So \((V_{x})(*^{\mathcal{P}_X})^{-1}\) is open as required.
\end{proof}

In this document we will only be using the following lemma for the semigroups \(\mathcal{P}_X\) and \(X^X\) (to establish the ``lower bounds" in Theorem~\ref{main N^N theorem}), but it also applies to many other potentially interesting semigroups (\(\End(\mathbb{Q}, \leq)\) for example)).

\speeddictthree{195}{partial minimal T1 lemma}{subspaces}{minimum topology defn}{partial function monoid defn}
\begin{lemma}[Minimum partial function topologies, \Assumed{195}]\label{partial minimal T1 lemma}
Suppose that \(X\) is a set and \(S\) is a subsemigroup of \(\mathcal{P}_X\) such that
\begin{enumerate}
    \item For all \(x\in X\) the constant map \(c_{X, x}:= X\times \{x\}\) is an element of \(S\).
    \item For all \(x\in X\) there is some \(f_x\in S\) such that \((\{x\})f_x^{-1} = \{x\}\) and \(|\im(f_x)|\) is finite.
\end{enumerate}
In this case \(\mathcal{MF}(S) = \mathcal{PPT}_X\restriction_{S}\).
\end{lemma}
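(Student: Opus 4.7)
The plan is to prove the equality in two containments, the first being essentially a definitional check and the second being the substantive part where the hypotheses on $S$ are used.

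For the easy containment $\mathcal{MF}(S) \subseteq \mathcal{PPT}_X\restriction_{S}$, I would first observe that $\mathcal{PPT}_X$ is Hausdorff: if $f, g \in \mathcal{P}_X$ are distinct, then either some $x$ lies in $\dom(f)\triangle \dom(g)$ (so $f$ and $g$ are separated by a set of the form $U_{x,\cdot}$ and the set $V_x$), or else they agree on domains but differ at some $x$ with $(x)f = y_1 \neq y_2 = (x)g$, in which case $U_{x,y_1}$ and $U_{x,y_2}$ are disjoint neighbourhoods (two distinct values at the same point are inconsistent with being a partial function). Hence $\mathcal{PPT}_X\restriction_S$ is Hausdorff, so Fréchet, and is compatible with $S$ as a subspace of the topological semigroup $(\mathcal{P}_X, \mathcal{PPT}_X)$ (Lemma~\ref{topological partial functions lemma}). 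Being compatible, it is semicompatible, and the definition of $\mathcal{MF}(S)$ immediately yields the containment.

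For the reverse containment, I fix an arbitrary Fréchet topology $\mathcal{T}$ semicompatible with $S$ and show that every subbasic open set of $\mathcal{PPT}_X\restriction_S$ lies in $\mathcal{T}$. The key computation is this: for any $g \in \mathcal{P}_X$ and $x \in X$, the composite $c_{X,x}\, g$ equals $c_{X,(x)g}$ if $x \in \dom(g)$ and equals $\emptyset$ otherwise; and a further composition with $c_{X,y'}$ on the right gives $c_{X,y'}$ in the former case and $\emptyset$ in the latter. For $U_{x,y}\cap S$, I would consider the map $\alpha := \lambda_{c_{X,x}} \circ \rho_{f_y} : S \to S$, which is continuous by semicompatibility. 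Since $\im(f_y)$ is finite, the image of $\alpha$ is contained in the finite set $\{c_{X,z} : z \in \im(f_y)\} \cup \{\emptyset\}$. Because $\mathcal{T}$ is Fréchet, every finite subspace is discrete, so $\{c_{X,y}\}$ is clopen in $\im(\alpha)$; pick $W \in \mathcal{T}$ with $W \cap \im(\alpha) = \{c_{X,y}\}$.

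Using the hypothesis $(\{y\})f_y^{-1} = \{y\}$, I then verify that $g\alpha = c_{X,y}$ forces $(x)g = y$, so $(W)\alpha^{-1} = U_{x,y} \cap S$, which is therefore in $\mathcal{T}$. For $V_x \cap S$, I would instead use $\beta := \lambda_{c_{X,x}} \circ \rho_{c_{X,y'}}$ for any fixed $y' \in X$ (the case $X=\varnothing$ is trivial), whose image lies in the two-point set $\{c_{X,y'}, \emptyset\}$; by Fréchetness this subspace is discrete, and pulling back $\{\emptyset\}$ through the continuous $\beta$ gives exactly $V_x \cap S$. The main obstacle here is really just identifying the right composite operators; once one notices that constants kill ``most'' of the information about $g$, leaving only the finite amount needed to test membership in $U_{x,y}$ or $V_x$, the Fréchet-forces-discrete argument on finite images closes the proof.
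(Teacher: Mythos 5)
Your proof is correct and follows essentially the same route as the paper: both use the constant-sandwich maps $g \mapsto c_{X,x}\, g\, f_y$ (whose image lies in the finite set $\{c_{X,z} : z \in \im(f_y)\}\cup\{\varnothing\}$) and $g\mapsto c_{X,x}\, g\, c_{X,\cdot}$, together with the observation that Fréchetness makes finite subspaces of $(S,\mathcal{T})$ discrete. The packaging differs only cosmetically---the paper takes complements of preimages of finite closed sets while you pull back an open set meeting the finite image in a singleton---and your writeup is, if anything, slightly more explicit than the paper's displayed identities about why $V_x\cap S$ is open rather than merely closed.
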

\begin{proof}
First note that by Lemma~\ref{topological partial functions lemma}, the topology \(\mathcal{PPT}_X\) is compatible with \(S\). Thus by the definition of \(\mathcal{MF}(S)\) we have \(\mathcal{MF}(S) \subseteq \mathcal{PPT}_X\restriction_{S}\).
It remains to show the reverse inclusion.

Let \(\mathcal{T}\) be an arbitrary Fréchet topology semicompatible with \(S\) and let \(x, y\in X\) be arbitrary.
It suffices to show that the sets \(U_{x, y}, V_x\) from Example~\ref{partial function monoid defn} are open in \(\mathcal{T}\).
Note that if \(g\in S\) then
\begin{align*}
(x, y)\in g&\iff c_{X, x}g f_{y} =c_{X, y} \iff c_{X, x}g f_{y} \notin \{\varnothing\} \cup \makeset{c_{X, z}}{\(z\in \im(f_y)\backslash\{y\}\)},\\
x\not\in\dom(g)&\iff c_{X, x}g c_{X, x} \neq c_{X, x} \iff c_{X, x}g c_{X, x} \in \{\varnothing\}.
\end{align*}
In other words 
\[U_{x, y} = \mathcal{P}_X\backslash \left(\left( \{\varnothing\} \cup \makeset{c_{X, z}}{\(z\in \im(f_y)\backslash\{y\}\)}\right)\lambda_{c_{X, x}}^{-1}\rho_{f_y}^{-1}\right),\]
\[V_x= (\{\varnothing\})\lambda_{c_{X, x}}^{-1}\rho_{c_{X, x}}^{-1}.\]

As the topology \(\mathcal{T}\) is Fréchet, the finite sets \(\{\varnothing\}\) and \(\{\varnothing\} \cup \makeset{c_{X, z}}{\(z\in \im(f_y)\backslash\{y\}\)}\) are closed with respect to \(\mathcal{T}\).
As \(\mathcal{T}\) is semicompatible with \(S\), it follows from the above equalities that \(U_{x, y}\) is open with respect to \(\mathcal{T}\).
\end{proof}

We next prove the lemmas for establishing the ``upper bounds" in Theorem~\ref{main N^N theorem}.

\speeddictthree{196}{X^X property W}{nbhds defn}{full trans def}{property W defn}
\begin{lemma}[Property \textbf{W} for \(X^X\), \Assumed{196}]\label{X^X property W}
If \(X\) is a set, then the topological semigroup \((X^X, \mathcal{PT}_X)\) has property \textbf{W} with respect to \(\Sym(X)\).
\end{lemma}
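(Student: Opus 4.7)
The plan is to adapt the strategy of Lemma~\ref{BX property W}, but arranging that the outer ``coding'' maps are genuine functions rather than relations. If $X$ is finite then $(X^X, \mathcal{PT}_X)$ is discrete and the claim is trivial (take $t_s = 1_X$ and the term $x_0 x_1$ with $t_{1,s} = s$), so assume $X$ is infinite. Per Definition~\ref{property W defn}, it suffices to produce, for each $s \in X^X$, elements $f_s, g_s \in X^X$ and $h_s \in \Sym(X)$ with $f_s h_s g_s = s$ and with $f_s N g_s \in \nbhd{X^X}{s}$ for every $N \in \nbhd{\Sym(X)}{h_s}$ (then $h_s$ plays the role of $t_s$ and the term is $x_1 x_0 x_2$).

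Fix a bijection $\Phi : X \to X \times X$ and a point $x_0 \in X$. Given $s$, define $(x)f_s := \Phi^{-1}(x, x_0)$ and $(y)g_s := \pi_1(\Phi(y))$. Let $\widetilde h_s$ be the involution of $X \times X$ that, within each fibre $\{x\} \times X$, transposes $(x, x_0)$ and $(x, s(x))$ (and is the identity on that fibre when $s(x) = x_0$); put $h_s := \Phi \widetilde h_s \Phi^{-1} \in \Sym(X)$. A direct computation then gives
\[
(x)(f_s h_s g_s) = \pi_1(\widetilde h_s(x, x_0)) = \pi_1(x, s(x)) = s(x),
\]
so $f_s h_s g_s = s$.

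For the neighbourhood condition, a basic open neighbourhood of $h_s$ in $\Sym(X)$ has the form $N = \{h \in \Sym(X) : (u_i)h = (u_i)h_s \text{ for all } i < n\}$ for some $u_0, \ldots, u_{n-1} \in X$. Set $v_i := \Phi(u_i)$ and
\[
F := \{x \in X : (x, x_0) \in \{v_0, \ldots, v_{n-1}\}\},
\]
a finite subset of $X$. I will show that the basic open set $U := \{t \in X^X : t\restriction_F = s\restriction_F\}$ (which is a neighbourhood of $s$) is contained in $f_s N g_s$. Given $t \in U$, it suffices to build a permutation $\widetilde h$ of $X \times X$ satisfying $\widetilde h(v_i) = \widetilde h_s(v_i)$ for every $i < n$ and $\pi_1(\widetilde h(x, x_0)) = t(x)$ for every $x \in X$; then $h := \Phi \widetilde h \Phi^{-1}$ lies in $N$ and realises $f_s h g_s = t$.

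For $x \in F$ the two constraints coincide, both prescribing $\widetilde h(x, x_0) = (x, s(x)) = (x, t(x))$. For $x \notin F$ I set $\widetilde h(x, x_0) := (z_x, t(x))$, choosing the first coordinates $z_x$ distinct from one another, from the elements of $F$, and from the first coordinates of the finitely many prescribed images $\widetilde h_s(v_i)$ that happen to share a second coordinate with some $t(x)$; this is possible because $X$ is infinite. Injectivity of $\widetilde h_s$ rules out any residual collision between $\widetilde h_s(v_i)$ (for $v_i$ with $\pi_1(v_i) \neq x_0$) and the values $(x, s(x))$ with $x \in F$, since those are precisely the images of $(x, x_0)$ under $\widetilde h_s$. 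The resulting partial bijection of $X \times X$ has domain $(X \times \{x_0\}) \cup \{v_0, \ldots, v_{n-1}\}$ of cardinality $|X|$ and image of cardinality $|X|$, so both complements have cardinality $|X \times X|$ and the partial bijection extends to a permutation of $X \times X$.

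The main obstacle is the case analysis ensuring that the choices $z_x$ produce a genuine partial injection; once this is in place, the extension to a full permutation of $X \times X$ follows immediately from cardinality considerations.
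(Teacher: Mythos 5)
Your proposal is correct and follows essentially the same strategy as the paper's proof: encode each $x \in X$ as a pair in $X \times X$, have the permutation write the target value $t(x)$ into one coordinate while using the other coordinate to keep the map injective, and project. The only cosmetic difference is that you project onto the \emph{second} coordinate and use the first for injectivity, while the paper's $g_s = \Phi\pi_0$ projects onto the \emph{first} coordinate and uses the second for injectivity (via an auxiliary injection $\phi$); you also build $h_s$ directly as a fibrewise involution, whereas the paper obtains it from its Claim applied with $a = \varnothing$.

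One point deserves tightening. You assert that because the domain and image of the partial bijection each have cardinality $|X|$, ``both complements have cardinality $|X\times X|$''; for the image this does not follow from the cardinality count alone, since a subset of size $|X|$ inside a set of size $|X\times X| = |X|$ can have a small or even empty complement. In your construction the conclusion \emph{is} correct, but only because the $z_x$ are pairwise distinct: for each fixed first coordinate $c$, the image meets the fibre $\{c\}\times X$ in at most finitely many points (one from $\{(z_x,t(x))\}$, one from $\{(x,s(x)) : x\in F\}$, finitely many from $\widetilde h_s(v_i)$), so the complement of the image in each fibre is infinite and the total complement has size $|X|$. The paper avoids having to make this observation by sending the auxiliary coordinate into a set $M$ chosen with $|X\setminus M| = |X|$, so that $X\times(X\setminus M)$ is automatically contained in the complement of the image. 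Either route works, but ``immediately from cardinality considerations'' undersells the step; a sentence spelling out the fibrewise argument (or adopting the paper's $M$-trick) would close the gap.
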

\begin{proof}
Let \(s\in X^X\) be arbitrary.
It suffices to find \(f_s, g_s\in X^X\) and \(h_s\in \Sym(X)\), such that \(f_sh_sg_s = s\) and for all \(N\in \nbhd{\Sym(X)}{h_s}\) we have \(f_s N g_s\in \nbhd{X^X}{s}\).

If \(X\) is finite, then \(X^X\) is discrete and so we can choose \(f_s=s, h_s=1_{X^X}, g_s= 1_{X^X}\).
If \(X\) is infinite then we fix a bijection \(\Phi:X\to X\times X\) and \(b\in X\). 

We then define \(f_s, g_s\in X^X\) by
\[(x)f_s = (x, b)\Phi^{-1} \quad \text{ and }\quad (x)g_s = (x)\Phi\pi_0.\]
\underline{Claim:} Suppose that \(k\in X^X\) and \(a\) is a bijection between finite subsets of \(X\times X\) such that whenever \((x, b)\in \dom(a)\) we have \((x, b)a\pi_0 = (x)k\).
In this case there is some \(t_{k, a}\in \Sym(X\times X)\) such that \(a\subseteq t_{k, a}\) and \(f_s\Phi t_{k, a} \Phi^{-1} g_s = k\).\\
\underline{Proof of Claim:} Let \(M\subseteq X\) be such that \(|M|= |X\backslash M|\) and let \(\phi:X\to M\backslash ((\im(a))\pi_0 \cup (\im(a))\pi_1)\) be an injection.

Define \(t': (X\times \{b\})\backslash \dom(a) \to X\times M\) by \((x, b)t' = ((x)k, (x)\phi)\).
As \(t'\pi_1\) is injective, it follows that \(t'\) is injective. Moreover
\[|X|= |X\times X|\geq |(X\times X)\backslash \dom(t')|\geq |X\times (X\backslash \{b\})| \geq|X|,\]
\[|X|=|X\times X|\geq |(X\times X)\backslash \im(t')|\geq |X\times (X\backslash M)| \geq|X|.\]
So \(|(X\times X)\backslash \dom(t')|=|(X\times X)\backslash \im(t')|=|X|\). As \(a\) is finite, there is a bijection
\[u: (X\times X)\backslash (\dom(t')\cup \dom(a))\to (X\times X)\backslash (\im(t')\cup \im(a)).\]
By construction each of \(t', a, u\) is injection and the sets
\[\{\dom(t'), \dom(a), \dom(u)\}\quad \text{ and }\quad \{\im(t'), \im(a), \im(u)\}\]
are partitions of \(X\times X\). Thus \(t_{k, a} := t'\cup a\cup u\) is an element of \(\Sym(X\times X)\).
Moreover by definition \(a\subseteq t_{k, a}\) and if \(x\in X\) then either \((x, b)\in \dom(a)\) and 
\[(x)f_s\Phi t_{k, a} \Phi^{-1} g_s = (x, b)t_{k, a}\pi_0 = (x, b)a\pi_0 = (x)k,\]
or \((x, b)\not\in \dom(a)\) and
\[(x)f_s\Phi t_{k, a} \Phi^{-1} g_s = (x, b)t_{k, a}\pi_0 = ((x)k, (x)\phi)\pi_0 = (x)k.\diamondsuit\]

By the claim (using \(a=\varnothing\)), we can choose \(t_s\in \Sym(X\times X)\) such that \(f_s\Phi t_s \Phi^{-1} g_s = k\). We then define \(h_s:= \Phi t_s \Phi^{-1}\).

Let \(N\in \nbhd{\Sym(X)}{h_s}\) be arbitrary, it suffices to show that \(f_s N g_s\in \nbhd{X^X}{s}\).
As \(N\) is a neighbourhood of \(h_s\), there is some finite \(a\subseteq t_s\) such that
\[\Phi\makeset{t\in \Sym(X\times X)}{\(a\subseteq t\)}\Phi^{-1} \subseteq N.\]
We define
\[U:=\makeset{k\in X^X}{whenever \((x, b)\in \dom(a)\) we have \((x)k=(x, b)a\pi_0\)}.\]
As \(a\) is finite, the set \(U\) is an open subset of \(X^X\). Moreover if \((x, b)\in \dom(a)\), then \((x)s=(x)f_sh_sg_s= (x, b)a\pi_0\) and so \(U\in \nbhd{X^X}{s}\).
It therefore suffices to show that \(U\subseteq f_s N g_s\).

Let \(k\in U\) be arbitrary. By the claim let \(t_{k, a}\in \Sym(X\times X)\) be such that \(f_s\Phi t_{k, a}\Phi^{-1} g_s = k\) and \(a\subseteq t_{k, a}\).
By the definition of \(a\), it follows that \(\Phi t_{k, a}\Phi^{-1}\in N\). Thus
\[k= f_s\Phi t_{k, a}\Phi^{-1} g_s \in f_s N g_s\]
as required.
\end{proof}

\speeddictthree{197}{partial property W}{nbhds defn}{property W defn}{partial function monoid defn}
\begin{lemma}[Property \textbf{W} for \(\mathcal{P}_X\), \Assumed{197}]\label{partial property W}
If \(X\) is a set, then the topological semigroup \((\mathcal{P}_X, \mathcal{PPT}_X)\) has property \textbf{W} with respect to \(X^X\).
\end{lemma}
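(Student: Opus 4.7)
The plan is to mimic the construction from Lemma~\ref{X^X property W}, but to build in a mechanism for the domain of the output to vary as the total ``middle'' function is perturbed. The idea: sandwich a total function \(t_s\in X^X\) between two partial functions \(t_{1,s},t_{2,s}\in\mathcal{P}_X\), where the right one \(t_{2,s}\) is defined only on a ``marked'' slice of \(X\). Changing \(t_s\) can either route an input into the marked slice (producing a defined value, which we can dial to any prescribed element) or send it off the slice (leaving it undefined). This lets the composition realise any partial function close to \(s\).

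Concretely: if \(X\) is finite then \(\mathcal{P}_X\) is finite and discrete, so the conclusion is immediate via the term \(a = 0\cdot 1\) with \(t_s=1_{X^X}\) and \(t_{1,s}=s\). Otherwise fix a bijection \(\Phi:X\to X\times X\) and distinct elements \(b,c\in X\), and take the term \(a = 1\cdot 0\cdot 2\). For each \(s\in\mathcal{P}_X\) set \(t_{1,s}\in\mathcal{P}_X\) to be the total map \(x\mapsto(x,b)\Phi^{-1}\) and \(t_{2,s}\in\mathcal{P}_X\) to be the partial map
\[
t_{2,s} \;=\; \makeset{(y,\,(y)\Phi\pi_0)}{\((y)\Phi\pi_1=b\)}
\]
(so \(y\in\dom(t_{2,s})\) iff \((y)\Phi\) lies on the marked slice \(X\times\{b\}\)). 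Choose \(t_s\in X^X\) to be any total extension of the partial assignment that sends \((x,b)\Phi^{-1}\mapsto ((x)s,b)\Phi^{-1}\) for \(x\in\dom(s)\) and \((x,b)\Phi^{-1}\mapsto (x,c)\Phi^{-1}\) for \(x\notin\dom(s)\). A direct check shows \(t_{1,s}\,t_s\,t_{2,s}=s\): on \(\dom(s)\) the image lands on the marked slice and \(t_{2,s}\) returns \((x)s\); off \(\dom(s)\) the image lands off the slice, so the composition is undefined.

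The main obstacle is the neighbourhood condition. Given a basic open \(N=\makeset{t\in X^X}{\((p)t=(p)t_s\) for all \(p\in F\)}\) with \(F\subseteq X\) finite, set \(F_0=\makeset{x\in X}{\((x,b)\Phi^{-1}\in F\)}\), \(I=F_0\cap\dom(s)\), and \(J=F_0\setminus\dom(s)\), and consider the basic open neighbourhood
\[
U \;=\; \makeset{h\in\mathcal{P}_X}{\((i)h=(i)s\) for all \(i\in I\) and \(j\notin\dom(h)\) for all \(j\in J\)}
\]
of \(s\). One shows \(U\subseteq t_{1,s}\,N\,t_{2,s}\) by, for each \(h\in U\), explicitly constructing a total \(t\in N\) with \(t_{1,s}\,t\,t_{2,s}=h\): on each \((x,b)\Phi^{-1}\) with \(x\in\dom(h)\) demand \((x,b)\Phi^{-1}\cdot t=((x)h,b)\Phi^{-1}\); on each \((x,b)\Phi^{-1}\) with \(x\notin\dom(h)\) demand \((x,b)\Phi^{-1}\cdot t=(x,c)\Phi^{-1}\); enforce \(t=t_s\) on \(F\); and define \(t\) arbitrarily elsewhere. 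Consistency at a clash point \((x,b)\Phi^{-1}\in F\) is exactly the assertion that \(x\in I\Rightarrow (x)h=(x)s\) (forcing the same target) and \(x\in J\Rightarrow x\notin\dom(h)\) (forcing the off-slice target), both of which hold by \(h\in U\). Thus the required \(t\) exists, proving property \textbf{W}, and then by Lemma~\ref{property W is transitive lemma} together with Lemma~\ref{X^X property W} one immediately gets that \(\mathcal{P}_X\) has property \textbf{W} with respect to \(\Sym(X)\) as well.
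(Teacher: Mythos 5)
Your proof is correct, but it takes a genuinely different route from the paper's. The paper uses the two-factor term \(a = 0\cdot 1\): it fixes a single ``forbidden'' point \(p\in X\), lets \(g_s : X\setminus\{p\}\to X\) be a bijection, and builds \(h_s\in X^X\) so that \(h_s g_s = s\); the undefinedness of \(s\) at an input \(x\) is encoded by having \(h_s\) send \(x\) to the lone hole \(p\). You instead use the three-factor term \(a = 1\cdot 0\cdot 2\), with an encoding \(\Phi : X\to X\times X\), a total left factor \(t_{1,s}\) that writes inputs onto the ``marked slice'' \(X\times\{b\}\), and a partial right factor \(t_{2,s}\) that is undefined off that slice; undefinedness is then encoded by routing into the second coordinate \(c\neq b\). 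Both mechanisms are sound; the paper's is more economical (one auxiliary factor, a single forbidden point, no auxiliary bijection \(\Phi\)), while yours is stylistically closer to Lemmas~\ref{X^X property W} and~\ref{BX property W}, which likewise route through a product encoding, so it perhaps generalises more uniformly across the other property~\textbf{W} arguments in the section. Your consistency check at the clash points \(F_0\) is exactly right, and the closing remark about transitivity via Lemma~\ref{property W is transitive lemma} and Lemma~\ref{X^X property W} matches how the result is actually deployed in Theorem~\ref{main N^N theorem}.
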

\begin{proof}
Let \(s\in X^X\) be arbitrary.
It suffices to find \(g_s\in \mathcal{P}_X\) and \(h_s\in X^X\), such that \(h_s g_s = s\) and for all \(N\in \nbhd{X^X}{h_s}\) we have \(N g_s\in \nbhd{\mathcal{P}_X}{s}\).

If \(X\) is finite, then \(\mathcal{P}_X\) is discrete and so we can choose \(h_s=1_{\mathcal{P}_X}, g_s= s\).
If \(X\) is infinite then fix some \(p\in X\) and let \(g_s:X\backslash\{p\} \to X \) be a bijection. 

\underline{Claim:} Suppose that \(k\in \mathcal{P}_X\) and \(a\) is a function between finite subsets of \(X\) such that \(a g_s \subseteq k\) and \(
(\{p\})a^{-1} \cap \dom(k)= \varnothing\).
In this case there is some \(h_{k, a}\in X^X\) such that \(a\subseteq h_{k, a}\) and \(h_{k, a}g_s = k\).\\
\underline{Proof of Claim:} Let \(b: \dom(k)\backslash \dom(a) \to X\) and \(c: X\backslash (\dom(k) \cup \dom(a)) \to X\) be defined by 
\[(x)b = (x)kg_s^{-1} \quad\text{ and }\quad (x)c = p.\] 
As \(\{\dom(a), \dom(b), \dom(c)\}\) is a partition of \(X\), it follows that \(h_{k, a}:= a\cup b\cup c \in X^X\).

Note that \(\dom(a g_s)= \dom(a)\backslash (\{p\})a^{-1})\) and so by the hypothesis of the claim we have \(\dom(b) = \dom(k) \backslash \dom(a g_s)\).
It remains to show that \(h_{k, a}g_s = k\). There are \(4\) cases to consider:
\begin{enumerate}
    \item If \(x\in \dom(k)\cap \dom(a g_s)\) then
    \((x)h_{k, a}g_s = (x)a g_s= (x)k.\)
    \item If \(x\in \dom(k)\backslash \dom(a g_s)\) then
        \((x)h_{k, a}g_s = (x)kg_s^{-1}g_s= (x)k.\)
    \item If \(x\in \dom(a g_s)\backslash \dom(k)\) then
        (as \(a g_s\subseteq k\)) we have that \(x\in \varnothing\) and hence \(x\notin \dom(h_{k, a}g_s)\).
    \item If \(x\in X\backslash (\dom(a g_s)\cup \dom(k))\) then \(x\not\in \dom(a g_s) \cup \dom(b g_s) =\dom(a g_s) \cup \dom(b g_s)\cup \dom(cg_s) = \dom(h_{k, a}g_s)\).
\end{enumerate}
So \(h_{k, a}g_s = k\) as required.\(\diamondsuit\)\\
By the claim (using \(s=k\) and \(\varnothing=a\)) let \(h_s\in X^X\) be such that \(h_s g_s = s\).
Let \(N\in \nbhd{X^X}{h_s}\) be arbitrary.
It suffices to show that \(N g_s\in \nbhd{\mathcal{P}_X}{s}\). By the choice of \(N\), there is a finite \(a\subseteq h_s\) such that \(N\supseteq \makeset{h\in X^X}{\(a\subseteq h\)}.\) We define
\[U:= \makeset{k\in \mathcal{P}_\N}{\(a g_s\subseteq k\) and \((\{p\})a^{-1} \cap \dom(k)= \varnothing\)}.\]
Note that the set \(U\) is open in \(\mathcal{PPT}_X\). Moreover we have \(a g_s\subseteq h_s g_s= s\), and if \(x\in (\{p\})a^{-1}\) then \((a)h_s = p\) and hence \(x\notin \dom(h_s g_s)\).
Thus \(U\in \nbhd{\mathcal{P}_X}{s}\).
It thus suffices to show that \(U\subseteq \makeset{h\in X^X}{\(a\subseteq h\)}g_s\subseteq N g_s\).
This is immediate from the claim together with the definition of \(a\), so the result follows.
\end{proof}

\speeddictseven{198}{main N^N theorem}{automatic continuity examples thm}{topology comparison proposition}{property W is nice lemma}{property W is transitive lemma}{partial minimal T1 lemma}{X^X property W}{partial property W}
\begin{theorem}[The topologies of \(\N^\N\) and \(\mathcal{P}_\N\), \Assumed{198}]\label{main N^N theorem}
The topology \(\mathcal{PT}_\N\) is the only second countable Fréchet topology compatible with the semigroup \(\N^\N\) (recall Example~\ref{full trans def}).
Similarly the topology \(\mathcal{PPT}_\N\) is the only second countable Fréchet topology compatible with the semigroup \(\mathcal{P}_\N\) (Recall Example~\ref{partial function monoid defn}).
Moreover all semigroup homomorphisms from either of \((\N^\N,\mathcal{PT}_\N)\) or \((\mathcal{P}_\N, \mathcal{PPT}_\N)\) to second countable semigroups are continuous,
\[\mathcal{PT}_\N = \mathcal{Z}(\N^\N)  = \mathcal{MF}(\N^\N)= \mathcal{FM}(\N^\N) = \mathcal{HM}(\N^\N)= \mathcal{SCC}(\N^\N),\]
\[\mathcal{PPT}_\N = \mathcal{Z}(\mathcal{P}_\N)  = \mathcal{MF}(\mathcal{P}_\N)= \mathcal{FM}(\mathcal{P}_\N) = \mathcal{HM}(\mathcal{P}_\N)= \mathcal{SCC}(\mathcal{P}_\N),\]
and both of these topologies are Polish.
\end{theorem}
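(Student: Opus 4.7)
The plan is to identify both endpoints of each chain with $\mathcal{PT}_\N$ (respectively $\mathcal{PPT}_\N$) and then squeeze the middle via Proposition~\ref{topology comparison proposition}. First, $(\N^\N, \mathcal{PT}_\N)$ is the Baire space --- a countable product of discrete copies of $\N$ --- hence Polish and Hausdorff, while $(\mathcal{P}_\N, \mathcal{PPT}_\N)$ is homeomorphic via $f \mapsto \tilde f$ (sending $x \in \dom(f)$ to $(x)f$ and $x \notin \dom(f)$ to a fresh symbol $*$) to $(\N \cup \{*\})^\N$ with discrete factors, so $\mathcal{PPT}_\N$ is also Polish and Hausdorff. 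Compatibility is given by Example~\ref{full trans def} and Lemma~\ref{topological partial functions lemma}.

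To identify $\mathcal{MF}$, apply Lemma~\ref{partial minimal T1 lemma} to both semigroups: both contain every constant map $c_{\N, x}$, and for each $x$ the function $f_x$ defined by $(x)f_x = x$ and $(y)f_x = y_0$ for $y \neq x$ (any fixed $y_0 \neq x$) lies in $\N^\N \subseteq \mathcal{P}_\N$, satisfies $f_x^{-1}(\{x\}) = \{x\}$, and has finite image. Hence $\mathcal{MF}(\N^\N) = \mathcal{PT}_\N$ and $\mathcal{MF}(\mathcal{P}_\N) = \mathcal{PPT}_\N$. Combining this with Proposition~\ref{topology comparison proposition} and the fact that $\mathcal{PT}_\N$ and $\mathcal{PPT}_\N$ are themselves Hausdorff and compatible, every intermediate topology $\mathcal{Z}$, $\mathcal{FM}$, $\mathcal{HM}$ is forced to coincide with these endpoints.

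The remaining equality $\mathcal{SCC} = \mathcal{PT}_\N$ (resp.~$\mathcal{PPT}_\N$) is the main content. The inclusion $\mathcal{PT}_\N \subseteq \mathcal{SCC}(\N^\N)$ is immediate from second countability. For the reverse, let $\mathcal{T}$ be any second countable topology compatible with $\N^\N$; Lemma~\ref{X^X property W} together with Lemma~\ref{property W is nice lemma} applied to the identity map $(\N^\N, \mathcal{PT}_\N) \to (\N^\N, \mathcal{T})$ reduce the task to showing $\mathcal{T}|_{\Sym(\N)} \subseteq \mathcal{PT}_\N|_{\Sym(\N)}$. The main obstacle is that $\mathcal{T}|_{\Sym(\N)}$ is \emph{a priori} only a second countable semigroup topology on the group $\Sym(\N)$, so Corollary~\ref{sym(N) has SCC cor} does not apply directly to it. To circumvent this, thicken $\mathcal{T}|_{\Sym(\N)}$ to the topology $\mathcal{T}'$ generated by $\mathcal{T}|_{\Sym(\N)} \cup \iota^{-1}(\mathcal{T}|_{\Sym(\N)})$, where $\iota: g \mapsto g^{-1}$. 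Then $\mathcal{T}'$ is second countable, inversion is continuous in $\mathcal{T}'$ by construction, and multiplication stays continuous by a brief calculation: the preimage of a subbasic open of the form $\iota^{-1}(V \cap \Sym(\N))$ under multiplication is $\{(f, g) : g^{-1} f^{-1} \in V\}$, which is the preimage of the $\mathcal{T}|_{\Sym(\N)}$-open set $\{(u, v) : uv \in V\} \cap \Sym(\N)^2$ under the $\mathcal{T}'$-continuous map $(f, g) \mapsto (g^{-1}, f^{-1})$. Thus $(\Sym(\N), \mathcal{T}')$ is a second countable topological group, Corollary~\ref{sym(N) has SCC cor} yields $\mathcal{T}' \subseteq \mathcal{PT}_\N|_{\Sym(\N)}$, and the desired inclusion follows. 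The case of $\mathcal{P}_\N$ is identical, using Lemma~\ref{partial property W} with Lemma~\ref{property W is transitive lemma} to pass property \textbf{W} through $\N^\N$ to $\Sym(\N)$.

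Finally, automatic continuity of semigroup homomorphisms to second countable topological semigroups is immediate from $\mathcal{SCC}(\N^\N) = \mathcal{PT}_\N$ (resp.~$\mathcal{SCC}(\mathcal{P}_\N) = \mathcal{PPT}_\N$) via Proposition~\ref{ac topology is nice}, and uniqueness of the second countable Fr\'echet topology follows because any such topology $\mathcal{T}$ satisfies $\mathcal{MF}(S) \subseteq \mathcal{T} \subseteq \mathcal{SCC}(S)$, which we have shown to coincide.
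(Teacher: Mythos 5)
Your proof is correct and uses essentially the same ingredients as the paper: Lemma~\ref{partial minimal T1 lemma} for the lower bound $\mathcal{MF}(S)=\mathcal{PPT}_\N\restriction_S$, Proposition~\ref{topology comparison proposition} for the squeeze, and property~\textbf{W} (Lemmas~\ref{X^X property W}, \ref{partial property W}, \ref{property W is transitive lemma}) together with automatic continuity on $\Sym(\N)$ for the upper bound $\mathcal{SCC}(S)\subseteq\mathcal{PPT}_\N\restriction_S$. (Stating the latter via ``every second countable compatible topology is $\subseteq\mathcal{PT}_\N$'' rather than ``every homomorphism to a second countable semigroup is continuous'' is an equivalent reformulation via Proposition~\ref{ac topology is nice}, not a different argument.)

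One small point worth flagging: the thickening of $\mathcal{T}\restriction_{\Sym(\N)}$ to the inversion-closed topology $\mathcal{T}'$ is a correct argument, but it is unnecessary. Proposition~\ref{ac topology is nice} already proves that $\mathcal{SCC}(S)=\mathcal{SCC}(I)$ whenever $S$ is the semigroup reduct of an inverse semigroup $I$ (and the proof of that equality is exactly the thickening you are re-deriving). Consequently $\mathcal{SCC}(\Sym(\N))$ in Corollary~\ref{sym(N) has SCC cor} already refers simultaneously to the supremum of second countable semigroup topologies and of second countable group topologies, so any second countable semigroup topology on $\Sym(\N)$ --- in particular $\mathcal{T}\restriction_{\Sym(\N)}$ --- is already known to lie inside $\mathcal{PT}_\N\restriction_{\Sym(\N)}$ without any modification. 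The paper's own proof exploits this: it simply cites Proposition~\ref{ac topology is nice} together with Theorem~\ref{automatic continuity examples thm} to conclude that the restriction of the relevant homomorphism to $\Sym(\N)$ is continuous, and then applies property~\textbf{W}. Your detour does no harm, but recognizing that this bookkeeping is already folded into $\mathcal{SCC}(S)=\mathcal{SCC}(I)$ shortens the argument.
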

\begin{proof}
Let \(d: \mathcal{P}_X^2  \to \mathbb{R}\) be defined by
\[(f, g )d = \inf\left(\makeset{\frac{1}{2^n}}{\(f\restriction_{\{0, 1, \ldots, n-1\}}= g\restriction_{\{0, 1, \ldots, n-1\}}, n\in\N\)}\right).\]

It is routine to verify that this is a complete metric compatible with \(\mathcal{PPT}_X\), and it restricts to a complete metric compatible with \(\mathcal{PT}_X\) so these topologies are Polish.
Let \(S\in \{\N^\N, \mathcal{P}_\N\}\) be arbitrary.
Let \(\mathcal{T}\) be an arbitrary second countable Fréchet topology compatible with \(S\).
By Lemma~\ref{partial minimal T1 lemma} the topology \(\mathcal{T}\) contains \(\mathcal{PPT}_{\N}\restriction_{S}\), so \(\mathcal{T}\) is Hausdorff.
Thus from Proposition~\ref{topology comparison proposition}, we have\newline

\vspace*{10pt}~\hspace*{70pt}\begin{rotate}{00}\(\mathcal{MF}(S)\)

\begin{rotate}{-45}\(\quad\subseteq\quad
\)\begin{rotate}{45}\(\mathcal{FM}(S)
\)\begin{rotate}{45}\(\quad\subseteq\quad
\)\begin{rotate}{-45}\(\mathcal{HM}(S)\subseteq \mathcal{T} \subseteq \mathcal{SCC}(S).\)\end{rotate}\end{rotate}\end{rotate}\end{rotate}

 \begin{rotate}{45}\(\quad\subseteq\quad
\)\begin{rotate}{-45} \(\mathcal{Z}(S)
\) \begin{rotate}{-45}\(\quad\subseteq\quad
\)\end{rotate}\end{rotate}\end{rotate}

\end{rotate}

\vspace*{20pt}
It suffices to show that all of these containments are equalities.
Thus we need only show that \(\mathcal{SCC}(S)\subseteq \mathcal{MF}(S)\).

By Lemma~\ref{partial minimal T1 lemma}, we have that \(\mathcal{MF}(S) = \mathcal{PPT}_\N\restriction_{S}\) (so if \(S=\N^\N\) then \(\mathcal{MF}(S) = \mathcal{PT}_{\N}\)).
Thus it suffices to show that \(\mathcal{SCC}(S) \subseteq \mathcal{PPT}_\N\restriction_{S}\).
Let \(\phi\) be an arbitrary semigroup homomorphism from \((\mathcal{S}, \mathcal{PPT}_\N\restriction_{S})\) to a second countable topological semigroup.
From Proposition~\ref{ac topology is nice}, it suffices to show that \(\phi\) is continuous.

From Proposition~\ref{ac topology is nice} and Theorem~\ref{automatic continuity examples thm}, it follows that \(\phi\restriction_{\SN}\) is continuous.
From Lemma~\ref{property W is nice lemma}, it suffices to show that \(S\) has property \textbf{W} with respect to \(\Sym(\N)\).
If \(S= \N^\N\), then this follows from Lemma~\ref{X^X property W} and if \(S=\mathcal{P}_\N\), then this follows from Lemmas~\ref{X^X property W}, \ref{property W is transitive lemma} and \ref{partial property W}.
\end{proof}

We have now proved the main result of this subsection, and we can use Proposition~\ref{topological clones from topological semigroups} to extend it to the corresponding clones.
The category of sets and functions, and the category of sets and partial functions are both nice enough for the proposition to apply.
However in the category of sets and partial functions the products and projections are a bit unusual (the projection maps are not defined everywhere anymore and a product of no sets no longer contains the empty tuple).
As a result we only state the clone corollary for the full function clone but for those interested, the argument for the partial function clone is the same.

\speeddictthree{242}{full func clone cor}{topological abstract clones defn}{topological clones from topological semigroups}{main N^N theorem}
\begin{corollary}[The topologies of the full function clone, \Assumed{242}]\label{full func clone cor}
Let \(\mathcal{C}\) be the subcategory of sets and functions with the object set
\(\mathcal{O}_\mathcal{C}:= \makeset{\N^i}{\(i\in \N\)}\), and the morphism set \(\mathcal{M}_\mathcal{C}\) consisting of the all functions between these objects.
Let \(\mathcal{P}_\mathcal{O}\) be the discrete topology on \(\mathcal{O}_{\mathcal{C}}\) and let \(\mathcal{P}_\mathcal{M}\) be the topology on \(\mathcal{M}_\mathcal{C}\) generated by the sets of the form
\[A_{i, j} := (\N^{j})^{\N^i}\quad \text{ and } \quad U_{x, y}:= \makeset{f\in \mathcal{M}_{\mathcal{C}}}{\((x, y)\in f\)}\]
for all \(i, j\in \N\), \(x\in \N^i\), and \(y\in \N^j\).

The triple \((\mathcal{C}, \mathcal{P}_{\mathcal{O}}, \mathcal{P}_{M})\) is a Polish topological abstract clone.
Moreover if \((\mathcal{T}_{\mathcal{O}}, \mathcal{T}_{\mathcal{M}})\) are compatible with the clone \(\mathcal{C}\) then
\begin{enumerate}
    \item If \(\mathcal{T}_{\mathcal{M}}\) is Hausdorff then \(\mathcal{T}_{\mathcal{M}}\supseteq \mathcal{P}_{\mathcal{M}}\).
    \item If \(\mathcal{T}_{\mathcal{M}}\) is second countable then \(\mathcal{T}_{\mathcal{M}}\subseteq \mathcal{P}_{\mathcal{M}}\).
\end{enumerate}
\end{corollary}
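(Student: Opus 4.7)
The plan is to first verify directly that $(\mathcal{C}, \mathcal{P}_\mathcal{O}, \mathcal{P}_\mathcal{M})$ satisfies the axioms of Definition~\ref{topological abstract clones defn}, and then apply Proposition~\ref{topological clones from topological semigroups} to reduce the containment statements to the already established results about the semigroup topologies of $\N^\N$ in Theorem~\ref{main N^N theorem}.

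For the first step, the discreteness of $\mathcal{P}_\mathcal{O}$ handles conditions (4) and (5), and the source and target maps are continuous because each set $A_{i,j}$ is open in $\mathcal{P}_\mathcal{M}$ by definition. For continuity of composition, I would fix a subbasic open set $U_{x, y}$ with $x \in \N^i$, $y \in \N^k$ together with a composable pair $(f, g) \in A_{i, j} \times A_{j, k}$ satisfying $f \circ_\mathcal{C} g \in U_{x, y}$; setting $z := (x)f \in \N^j$, the product neighbourhood $U_{x, z} \times U_{z, y}$ of $(f, g)$ lies in the preimage of $U_{x, y}$, since any $(f', g')$ in it satisfies $(x)(f' \circ_\mathcal{C} g') = ((x)f')g' = (z)g' = y$. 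Continuity of each $\phi_{i, j}$ follows from a similarly routine computation with the subbasis. For Polishness, note that $\mathcal{M}_\mathcal{C}$ is a countable disjoint union of the clopen sets $A_{i, j}$, and the subspace topology on each $A_{i, j} = (\N^j)^{\N^i}$ is a countable product of copies of discrete $\N$, homeomorphic to $\N^\N$ and hence Polish by Lemma~\ref{nice baire space lemma}.

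For the second step, Proposition~\ref{topological clones from topological semigroups} applies because the endomorphism monoid of $(1)\operatorname{Pow}_\mathcal{C}^{-1} = \N$ is exactly $M = \N^\N$, and the objects $\N$ and $\N^2$ are isomorphic in $\mathcal{C}$ (any bijection $\N \to \N^2$ witnesses this). The restriction of $\mathcal{P}_\mathcal{M}$ to $\N^\N = A_{1,1}$ is precisely the pointwise topology $\mathcal{PT}_\N$, since the only subbasic sets $U_{x, y}$ meeting $A_{1,1}$ are those with $x, y \in \N$, for which $U_{x, y} \cap A_{1,1} = \{f \in \N^\N : (x)f = y\}$. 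Hence by the uniqueness clause of the proposition, $\mathcal{P}_\mathcal{M}$ is the unique clone topology extending $\mathcal{PT}_\N$.

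Finally, given any $(\mathcal{T}_\mathcal{O}, \mathcal{T}_\mathcal{M})$ compatible with $\mathcal{C}$, setting $\mathcal{S} := \mathcal{T}_\mathcal{M}\restriction_{\N^\N}$ yields a semigroup topology on $\N^\N$ that inherits Hausdorffness or second countability from $\mathcal{T}_\mathcal{M}$. If $\mathcal{T}_\mathcal{M}$ is Hausdorff, Theorem~\ref{main N^N theorem} gives $\mathcal{S} \supseteq \mathcal{HM}(\N^\N) = \mathcal{PT}_\N$, and the containment-preservation clause of Proposition~\ref{topological clones from topological semigroups} lifts this to $\mathcal{T}_\mathcal{M} \supseteq \mathcal{P}_\mathcal{M}$. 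Similarly, if $\mathcal{T}_\mathcal{M}$ is second countable, then $\mathcal{S} \subseteq \mathcal{SCC}(\N^\N) = \mathcal{PT}_\N$ and so $\mathcal{T}_\mathcal{M} \subseteq \mathcal{P}_\mathcal{M}$. The main technical hurdle is the direct verification of continuity of composition (and of the product-forming maps $\phi_{i,j}$) in the clone topology; once that check and Proposition~\ref{topological clones from topological semigroups} are both available, the containment statements follow mechanically.
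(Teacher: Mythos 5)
Your proof is correct and follows essentially the same route as the paper: directly verify the axioms of Definition~\ref{topological abstract clones defn} (with the continuity of composition and of the maps $\phi_{i,j}$ being the only non-trivial checks), observe that $\mathcal{P}_\mathcal{M}\restriction_{\N^\N}$ is precisely $\mathcal{PT}_\N$, confirm the hypotheses of Proposition~\ref{topological clones from topological semigroups} (here $M = A_{1,1} = \N^\N$ and $\N \cong \N^2$ in $\mathcal{C}$), and then transfer the containments from Theorem~\ref{main N^N theorem} via that proposition. One minor slip worth correcting: not every $A_{i,j}$ is homeomorphic to $\N^\N$ --- for instance $A_{0,j} \cong \N^j$ is countable discrete when $j \geq 1$, and $A_{i,0}$ is a singleton --- though all of these are still Polish, so your Polishness conclusion survives; the paper avoids this issue entirely by establishing Polishness of $\mathcal{P}_\mathcal{M}\restriction_{\N^\N}$ and then letting the Polishness-preservation clause of Proposition~\ref{topological clones from topological semigroups} do the rest.
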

\begin{proof}
We first show that \((\mathcal{C}, \mathcal{P}_{\mathcal{O}}, \mathcal{P}_{M})\) is a topological abstract clone.
We need to verify the seven conditions in Definition~\ref{topological abstract clones defn}.
The first five conditions are immediate from the definition. 
We next show that \(\circ_{\mathcal{C}}\) is continuous. 
As the sets \((A_{n, i}\times A_{i, m})_{i, n, m\in \N}\) are a partition of \(\operatorname{Comp}_{\mathcal{C}}\) into open sets, it suffices to show that for all \(n, m, i\in \N\), the map
\(\circ_{\mathcal{C}}\restriction_{A_{n, i}\times A_{i, m}}\) is continuous.
For all \(j,k \in \N\), \(x\in \N^i\), \(y\in \N^j\) the set \((A_{j, k})\circ_{\mathcal{C}}\restriction_{A_{n, i}\times A_{i, m}}^{-1}\) is either empty or \(A_{n, i}\times A_{i, m}\) and is thus open, and 
\[(U_{x, y})\circ_{\mathcal{C}}\restriction_{A_{n, i}\times A_{i, m}}^{-1} = 
\union{l\in \N}\union{z\in  \N^l}
U_{x, z} \times U_{z, y}\]
which is also open.

It remains to show that for all \(i, j\in \N\), the bijection \(\phi_{i, j}:A_{i, 1}^j \to A_{i, j}\) from Definition~\ref{topological abstract clones defn} is continuous.
This follows from the observation that if \(x\in \N^i\) and \(y\in \N^j\), then
\[(U_{x, y})\phi_{i, j}^{-1}= \prod_{k\in \{0, 1, \ldots, j-1\}} U_{x, (y)\pi_k}.\]
Thus \((\mathcal{C}, \mathcal{P}_{\mathcal{O}}, \mathcal{P}_{M})\) is a topological abstract clone.
From Theorem~\ref{main N^N theorem}, it follows that \(\mathcal{P}_\mathcal{M}\restriction_{\N^\N}\) is Polish and the required containments hold when all topologies are restricted to \(\N^\N\).
Thus the result follows from Proposition~\ref{topological clones from topological semigroups}.
\end{proof}

\subsection{Partial bijections}
In this subsection we explore the topologies on the symmetric inverse monoids (see Theorem~\ref{main inverse monoid theorem}), these monoids are the inverse semigroup analogues of the symmetric groups.

\speeddicttwo{211}{symmetric inverse monoids defn}{inverse semigroup defn}{partial function monoid defn}
\begin{example}[Symmetric inverse monoids, \Assumed{211}]\label{symmetric inverse monoids defn}
If \(X\) is a set, then we define the \textit{symmetric inverse monoid} \(\mathcal{I}_X\) of \(X\) to be the inverse semigroup with universe consisting of the bijections between subsets of \(X\), and composition and inversion of binary relations as the operations.
Note that the universe of \(\mathcal{I}_X\) is precisely \(\mathcal{P}_X\cap \mathcal{P}_X^{-1}\).
\end{example}

\speeddictthree{212}{topological symmetric inverse monoids defn}{subspaces}{partial function monoid defn}{symmetric inverse monoids defn}
\begin{defn}[Topologies on symmetric inverse monoids, \Assumed{212}]\label{topological symmetric inverse monoids defn}
If \(X\) is a set, then we define the following topologies on the set \(\mathcal{I}_X\)
\begin{enumerate}
    \item The \textit{complemented pointwise topology} \(\mathcal{CPT}_X\) generated by the sets of the form
    \[\makeset{f\in \mathcal{I}_X}{\((x, y)\in f\)} \text{ and }\makeset{f\in \mathcal{I}_X}{\((x, y)\in (X\times X)\backslash f\)}\]
    for all \(x, y\in X\).
    \item The \textit{left pointwise topology} \(\mathcal{LPT}_X := \mathcal{PPT}_X\restriction_{\mathcal{I}_X}\).
  \item The \textit{right pointwise topology} \(\mathcal{RPT}_X :=\mathcal{LPT}_X^{-1}\),
   \item The \textit{inverse pointwise topology} \(\mathcal{IPT}_X\), generated by  \(\mathcal{LPT}_X\cup \mathcal{RPT}_X\).
\end{enumerate}
\end{defn}

\speeddictfour{213}{complemented pointwise topology is nice lemma}{Hausdorff defn}{compactness facts}{minimum topology defn}{topological symmetric inverse monoids defn}
\begin{lemma}[The complemented pointwise topology is nice, \Assumed{213}]\label{complemented pointwise topology is nice lemma}
If \(X\) is a set, then \(\mathcal{CPT}_X= \mathcal{MF}(\mathcal{I}_X)\). Moreover, this topology is compact and Hausdorff.
\end{lemma}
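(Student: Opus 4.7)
The plan is to first establish that \(\mathcal{CPT}_X\) is a compact Hausdorff topology compatible with \(\mathcal{I}_X\) as a (semitopological) inverse semigroup, and then to prove the two inclusions \(\mathcal{CPT}_X \subseteq \mathcal{MF}(\mathcal{I}_X)\) and \(\mathcal{MF}(\mathcal{I}_X) \subseteq \mathcal{CPT}_X\). For the topological properties, I would observe that the map sending \(f \in \mathcal{I}_X\) to its indicator function in \(\{0,1\}^{X \times X}\) (with each factor \(\{0,1\}\) discrete) is an embedding whose image carries the subspace topology inherited from the product topology on \(\{0,1\}^{X \times X}\). Hausdorffness is then immediate from the subbasis. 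For compactness, which by Tychonoff's Theorem (Remark~\ref{compactness facts}) holds for \(\{0,1\}^{X \times X}\), it suffices to show \(\mathcal{I}_X\) is a closed subspace: the failure of the partial-bijection condition is witnessed by finitely many coordinates (the existence of \(x, y_1 \neq y_2\) with \((x, y_1), (x, y_2) \in f\), or the symmetric condition on \(f^{-1}\)), and each such witness is an open condition in the product topology.

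To see that \(\mathcal{CPT}_X\) makes \(\mathcal{I}_X\) a semitopological inverse semigroup I would directly compute preimages of subbasic sets under \(\rho_s\) and \(\lambda_s\) for arbitrary \(s \in \mathcal{I}_X\). Since \(s\) is a partial bijection, \((x, y) \in gs\) holds if and only if \(y \in \im(s)\) and \((x, (y)s^{-1}) \in g\), so \((\{f : (x,y) \in f\})\rho_s^{-1}\) is either empty or itself a subbasic open set, and the analogous computations handle the complementary subbasic sets and \(\lambda_s\). This gives \(\mathcal{MF}(\mathcal{I}_X) \subseteq \mathcal{CPT}_X\) immediately from the definition of \(\mathcal{MF}\).

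For the reverse containment, let \(\mathcal{T}\) be any Fr\'echet topology semicompatible with \(\mathcal{I}_X\) as an inverse semigroup, and fix \(x, y \in X\). Writing \(e_{\{x\}}\) for the partial identity on \(\{x\}\) and \(b_{x,y}\) for the partial bijection \(\{(x, y)\}\), the continuous map \(\lambda_{e_{\{x\}}}\rho_{e_{\{y\}}} : \mathcal{I}_X \to \mathcal{I}_X\) sending \(f \mapsto e_{\{x\}} f e_{\{y\}}\) has image contained in the two-element set \(\{\varnothing, b_{x, y}\}\). Since \(\mathcal{T}\) is Fr\'echet, this finite subspace is discrete, so both singletons are clopen in the subspace topology; their preimages under the continuous map are exactly the two subbasic sets of \(\mathcal{CPT}_X\) determined by the coordinate \((x, y)\), and these must therefore lie in \(\mathcal{T}\). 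This gives \(\mathcal{CPT}_X \subseteq \mathcal{T}\), and since \(\mathcal{T}\) was arbitrary, \(\mathcal{CPT}_X \subseteq \mathcal{MF}(\mathcal{I}_X)\). The only fiddly step in the whole argument is the case analysis for continuity of \(\rho_s\) and \(\lambda_s\); there is no real conceptual obstacle.
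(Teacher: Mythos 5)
Your proposal is correct and follows essentially the same route as the paper: the identical embedding into $\{0,1\}^{X\times X}$ with a closed image for compactness and Hausdorffness, the identical case-by-case preimage computation to show $\mathcal{CPT}_X$ is semicompatible (hence $\mathcal{MF}(\mathcal{I}_X)\subseteq \mathcal{CPT}_X$), and the identical use of the two-element image of $\lambda_{\{(x,x)\}}\rho_{\{(y,y)\}}$ under the Fr\'echet assumption for the reverse containment. The only caveat is cosmetic: for the translation preimages the positive subbasic set pulls back to either $\varnothing$ or a subbasic set, but the complementary one pulls back to either all of $\mathcal{I}_X$ or a subbasic set, so ``empty or subbasic'' should really read ``empty, full, or subbasic'' --- a slip of wording rather than a gap.
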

\begin{proof}
We first show that this space is compact and Hausdorff.
By Remark~\ref{compactness facts}, if we view \(\{0, 1\}\) as a discrete space, then \(\{0,1\}^{X\times X}\) is a compact space (using the product topology). For each \(f\in \mathcal{B}_X\), we define \(\chi_f\in \{0,1\}^{X\times X}\) by:
\[(a, b)\chi_f:= \left\{\begin{array}{lr}
 1    & \text{ if } (a, b)\in f \\
0    & \text{ if }(a, b)\in (X\times X)\backslash f \\
\end{array}\right\}.\]
Let \(\phi: \mathcal{B}_X\to \{0, 1\}^{X\times X}\) be the bijection \(f\to \chi_f\).
From the definition of \(\mathcal{CPT}_X\), it follows that \(\phi\restriction_{\mathcal{I}_X}\) is a homeomorphism from \((\mathcal{I}_X, \mathcal{CPT}_X)\) to \((\mathcal{I}_X)\phi\).

Thus by Remark~\ref{compactness facts}, to conclude that \(\mathcal{CPT}_X\) is compact and Hausdorff, it suffices to show that \((\mathcal{I}_X)\phi\) is a closed subset of \(\{0, 1\}^{X\times X}\). Note that
\begin{align*}
     (\mathcal{I}_X)\phi   &= \left(\makeset{f\in \mathcal{B}_X}{for all \(x\in X\) we have \(|(\{x\}) f|\leq 1\) and \(|(\{x\})f^{-1}|\leq 1\)}\right)\phi\\
  &= \makeset{\alpha\in \{0, 1\}^{X\times X}}{for all \(x\in X\) we have \(\left|\makeset{y\in X}{\((x, y)\alpha = 1\)}\right|\leq 1\)\\
   and \(\left|\makeset{y\in X}{\((y, x)\alpha = 1\)}\right|\leq 1\)}.
\end{align*}
As the complement of the set above is
\[\makeset{\alpha\in \{0, 1\}^{X\times X}}{there are \(x, y, z\in X\) such that \(y\neq z\) and either\\ \((x, y)\alpha=(x, z)\alpha = 1\) or
   \((y, x)\alpha=(z, x)\alpha = 1\)},\]
it follows that \((\mathcal{I}_X)\phi\) is closed.

We next show that \(\mathcal{CPT}_X = \mathcal{MF}(\mathcal{I}_X)\).
To show \(\mathcal{CPT}_X \supseteq \mathcal{MF}(\mathcal{I}_X)\), we need to show that \(\mathcal{CPT}_X\) is semicompatible with \(\mathcal{I}_X\) (we have already shown that it is Fréchet).
Let \(f\in \mathcal{I}_X\) and \(x, y\in X\) be arbitrary.
If \(y\not\in \im(f)\), then
\[\left(\makeset{g\in \mathcal{I}_X}{\((x, y)\in g\)}\right)\rho_f^{-1}= \varnothing \text{ and }\left(\makeset{g\in \mathcal{I}_X}{\((x, y)\in (X\times X)\backslash g\)}\right)\rho_f^{-1}= \mathcal{I}_X\]
are both open in \(\mathcal{CPT}_X\). If \(y\in \im(f)\), then we have
\[\left(\makeset{g\in \mathcal{I}_X}{\((x, y)\in g\)}\right)\rho_f^{-1}= \makeset{g\in \mathcal{I}_X}{\((x, (y)f^{-1})\in g\)},\]
\[\left(\makeset{g\in \mathcal{I}_X}{\((x, y)\in (X\times X)\backslash g\)}\right)\rho_f^{-1}=\makeset{g\in \mathcal{I}_X}{\((x, (y)f^{-1})\in (X\times X)\backslash g\)}.\]
Again these sets are both open in \(\mathcal{CPT}_X\), so \(\rho_f\) is continuous. By symmetry, it follows that \(\lambda_f\) is continuous as well and thus \(\mathcal{CPT}_X\) is semicompatible with \(\mathcal{I}_X\).
It remains to show that \(\mathcal{CPT}_X \subseteq \mathcal{MF}(\mathcal{I}_X)\).

Let \(\mathcal{T}\) be an arbitrary Fréchet topology semicompatible with \(\mathcal{I}_X\).
It suffices to show that \(\mathcal{CPT}_X\subseteq \mathcal{T}\).
Let \(x, y\in X\) be arbitrary. As \(\mathcal{T}\) is Fréchet, it follows that \(\im(\lambda_{\{(x, x)\}}\circ \rho_{\{(y, y)\}}) = \{\varnothing, \{(x, y)\}\}\) is a discrete subspace of \((\mathcal{I}_X, \mathcal{T})\).
Thus
\[(\{\{\varnothing\}, \{\{(x, y)\}\}\})(\lambda_{\{(x, x)\}}\circ \rho_{\{(y, y)\}})^{-1}\]
is a partition of \(\mathcal{I}_X\) into two sets which are open with respect to \(\mathcal{T}\). 
This partition is precisely
\[\left\{\makeset{f\in \mathcal{I}_X}{\((x, y)\in (X\times X)\backslash f\)}, \makeset{f\in \mathcal{I}_X}{\((x, y)\in f\)}\right\}\]
so indeed \(\mathcal{T}\supseteq \mathcal{CPT}_X\) as required.
\end{proof}

\speeddictsix{214}{left/right pointwise topologies are nice lemma}{Hausdorff defn}{pull back topologies prop}{Markov topology defn}{topological partial functions lemma}{topological symmetric inverse monoids defn}{complemented pointwise topology is nice lemma}
\begin{lemma}[The left/right pointwise topologies are nice, \Assumed{214}]\label{left/right pointwise topologies are nice lemma}
If \(X\) is a set and \(S\) is the semigroup obtained by removing the unary operation of \(\mathcal{I}_X\), then every Fréchet topology compatible with \(S\) contains either \(\mathcal{LPT}_X\) or \(\mathcal{RPT}_X\).
Moreover these topologies are compatible with \(S\) and \(\mathcal{FM}(S)= \mathcal{HM}(S) = \mathcal{LPT}_X\cap \mathcal{RPT}_X\).
\end{lemma}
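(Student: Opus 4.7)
The plan has three steps: (1) verify compatibility of $\mathcal{LPT}_X$ and $\mathcal{RPT}_X$, (2) prove the dichotomy that every Fréchet compatible topology on $S$ contains one of them, and (3) deduce the claimed formulas for $\mathcal{FM}(S)$ and $\mathcal{HM}(S)$. Step (1) is immediate. Since $(\mathcal{P}_X,\mathcal{PPT}_X)$ is a topological semigroup by Lemma~\ref{topological partial functions lemma} and $\mathcal{I}_X$ is a subsemigroup of $\mathcal{P}_X$, the restricted topology $\mathcal{LPT}_X=\mathcal{PPT}_X\restriction_{\mathcal{I}_X}$ is compatible with $S$. The inversion map $f\mapsto f^{-1}$ is an anti-isomorphism of $S$, so Proposition~\ref{pull back topologies prop} yields compatibility of $\mathcal{RPT}_X=\mathcal{LPT}_X^{-1}$, and both topologies are Hausdorff.

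Step (2) is the main content. Let $\mathcal{T}$ be Fréchet compatible with $S$. First I would replay the argument of Lemma~\ref{complemented pointwise topology is nice lemma}, which uses only the continuity of the maps $\lambda_{\{(x,x)\}}\circ\rho_{\{(y,y)\}}$ (with two-element discrete image $\{\varnothing,\{(x,y)\}\}$ in a Fréchet space) and therefore applies verbatim to the semigroup $S$; this shows $\mathcal{CPT}_X\subseteq \mathcal{T}$. Next I would establish a key reduction: for each $x\in X$, the continuous map $\lambda_{1_{\{x\}}}$ satisfies $\lambda_{1_{\{x\}}}(v)=\varnothing$ when $x\notin\dom v$ and $\lambda_{1_{\{x\}}}(v)=\{(x,v(x))\}$ otherwise. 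Pulling back any $\mathcal{T}$-open set containing $\varnothing$ but no singleton $\{(x,a)\}$ recovers $\{v:x\notin \dom v\}$ as a preimage of an open set; combined with elementary manipulation this proves that $\{v:x\notin \dom v\}$ is $\mathcal{T}$-open if and only if it is a $\mathcal{T}$-nbhd of $\varnothing$. A symmetric statement via $\rho_{1_{\{y\}}}$ applies to $\{w:y\notin \im w\}$. The heart of the dichotomy is therefore: if there is $x_0$ with $\{v:x_0\notin \dom v\}$ not a $\mathcal{T}$-nbhd of $\varnothing$, then for every $y\in X$ the set $\{w:y\notin \im w\}$ is a $\mathcal{T}$-nbhd of $\varnothing$. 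Fix such $y$ and apply continuity of composition at $(\varnothing,\varnothing)$ to targets of the form $\bigcap_{(a,b)\in F}\{f:(a,b)\notin f\}$ for carefully chosen finite $F\subseteq X\times X$. For each such target, tracing the constraint $V\cdot W\subseteq U$ through singleton elements $\{(x_0,a)\}\in V$ and $\{(a,y)\}\in W$ yields constraints on $W$; the idempotent-pullback reduction, applied iteratively to arbitrary $\mathcal{T}$-open subsets of $V$, forces the value-set $\{v(x_0):v\in V,\,x_0\in\dom v\}$ to be cofinite (if it were not, then removing finitely many clopen $\mathcal{CPT}_X$-sets from $V$ would produce a smaller $\mathcal{T}$-open nbhd of $\varnothing$ inside $\{v:x_0\notin \dom v\}$, contradicting the hypothesis). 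One then extracts from $W$ a $\mathcal{T}$-open nbhd of $\varnothing$ inside $\{w:y\notin \im w\}$ by intersecting $W$ with finitely many clopen sets from $\mathcal{CPT}_X$. The hard part will be executing this bookkeeping cleanly, balancing the finite-$F$ continuity with the infinitary nature of ``$y\notin \im w$''.

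Step (3) is routine. Since $\mathcal{LPT}_X$ and $\mathcal{RPT}_X$ are Hausdorff topologies compatible with $S$, Proposition~\ref{topology comparison proposition} gives $\mathcal{FM}(S)\subseteq \mathcal{HM}(S)\subseteq \mathcal{LPT}_X\cap \mathcal{RPT}_X$. Conversely, the dichotomy in Step~(2) implies every Fréchet compatible topology contains $\mathcal{LPT}_X\cap \mathcal{RPT}_X$, so $\mathcal{FM}(S)\supseteq \mathcal{LPT}_X\cap \mathcal{RPT}_X$, and the three topologies coincide.
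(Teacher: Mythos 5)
Your Steps (1) and (3) are correct and match the paper. Step (2) is where the real work is, and your sketch has a genuine gap.

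Your aim in Step (2) is correct — if $\{v:x_0\notin\dom v\}$ fails to be a $\mathcal{T}$-neighbourhood of $\varnothing$, you want to conclude every $\{w:y\notin\im w\}$ is one. And your idea of pulling back through the idempotent $\{(x,x)\}$ is actually the right way to upgrade ``is a neighbourhood of $\varnothing$'' to ``is open''; the paper does this too, via $\rho_{\{(x,x)\}}^{-1}(U'') = \{f:x\notin\im f\}$. But the argument you propose for establishing the neighbourhood dichotomy doesn't close. You say that if the value-set $D_V := \{(x_0)v:v\in V,\,x_0\in\dom v\}$ fails to be cofinite, intersecting $V$ with finitely many $\mathcal{CPT}_X$-clopen sets gives a smaller $\mathcal{T}$-open neighbourhood of $\varnothing$ inside $\{v:x_0\notin\dom v\}$. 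That deduction only works if $D_V$ is \emph{finite}: you would intersect with the finitely many sets $\{f:(x_0,a)\notin f\}$, $a\in D_V$. It gives nothing when $D_V$ is infinite but not cofinite, which is the case that has to be ruled out, and so your conclusion that $D_V$ is cofinite is unsupported. There is also no visible route from ``$D_V$ is cofinite for every open $V\ni\varnothing$'' to the neighbourhood claim about $\{w:y\notin\im w\}$.

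What the paper does instead is structurally different and is the step you are missing. It starts from $V := \{f:(x,x)\notin f\}\in\mathcal{CPT}_X\subseteq\mathcal{T}$ (not from $\{v:x\notin\dom v\}$, which need not be $\mathcal{T}$-open), picks $\mathcal{T}$-open $U\ni\varnothing$ with $UU\subseteq V$ by continuity of $*$ at $(\varnothing,\varnothing)$, and observes that disjointness of $\bigcup_{a\in U}(\{x\})a$ and $\bigcup_{b\in U}(\{x\})b^{-1}$ forces $X = A_U\cup B_U$ in the notation $A_W = X\setminus\bigcup_{a\in W}(\{x\})a$, $B_W = X\setminus\bigcup_{b\in W}(\{x\})b^{-1}$. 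Since $A_U = B_{U^{-1}}$, one of $B_U, B_{U^{-1}}$ has full cardinality $|X|$. The decisive idea is then to pick $B\subseteq B_{U'}$ with $|B|=|X\setminus B|=|X|$ and an involution $\phi\in\Sym(X)$ swapping $B$ with $X\setminus B$; the set $U'' := U'\cap(U')\lambda_\phi$ is then open in $\mathcal{T}$ or $\mathcal{T}^{-1}$, contains $\varnothing$, and is contained in $\{f:x\notin\im f\}$ — because membership in $U'$ and in $\phi U'$ pushes $(\{x\})f^{-1}$ into two disjoint halves of $X$. This involution trick (plus the transfer to all other $y$ via conjugation by suitable involutions in $\Sym(X)$) is what makes the dichotomy close; a finite bookkeeping argument of the kind you outline cannot replace it.
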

\begin{proof}
If \(X\) is finite then the result is clear as all of these topologies are discrete, so we assume without loss of generality that \(X\) is infinite.
As \(\mathcal{PPT}_X\) is a Hausdorff semigroup topology on \(\mathcal{P}_X\) (Lemma~\ref{topological partial functions lemma}), it follows that \(\mathcal{LPT}_X\) is Hausdorff and compatible with \(S\).
Moreover as the inversion map is an anti-isomorphism of \(S\), it follows from Proposition~\ref{pull back topologies prop} that \(\mathcal{RPT}_X\) is also Hausdorff and compatible with \(S\).
Thus by the definitions of \(\mathcal{FM}(S)\) and \(\mathcal{HM}(S)\), it follows that \(\mathcal{FM}(S)\subseteq \mathcal{HM}(S)\subseteq \mathcal{LPT}_X\cap \mathcal{RPT}_X\).

To conclude the second part of the lemma, it therefore suffices to show that \(\mathcal{LPT}_X\cap \mathcal{RPT}_X\subseteq \mathcal{FM}(S)\).
This follows from the first part of the lemma which we will now show.

Let \(\mathcal{T}\) be an arbitrary Fréchet topology compatible with \(S\) and let \(x\in X\) be fixed. Recall that by Proposition~\ref{pull back topologies prop}, the topology \(\mathcal{T}^{-1}\) is also a Fréchet and compatible with \(S\).
By Lemma~\ref{complemented pointwise topology is nice lemma}, we know that \(\mathcal{CPT}_X\subseteq \mathcal{T}\).
In particular the set \(V:=\makeset{f\in S}{\((x, x)\not\in f\)}\) is open with respect to \(\mathcal{T}\).
As \(\varnothing \circ \varnothing = \varnothing \in V\) and \(\mathcal{T}\) is compatible with \(S\), it follows that there is an open neighbourhood \(U\) of \(\varnothing\) (in \(\mathcal{T}\)) such that \(UU\subseteq V\).

For each \(W\subseteq S\), we define 
\[A_W:=X\backslash \union{a\in W}(\{x\})a,\quad \text{ and }\quad B_W:= X\backslash\union{b\in W}(\{x\})b^{-1}.\]
As \(UU\subseteq V\), the sets \(\union{a\in U}(\{x\})a\) and \(\union{b\in U}(\{x\})b^{-1}\) are disjoint, and so \(X=A_{U} \cup B_U\).
As \(A_U= B_{U^{-1}}\) and \(B_U= A_{U^{-1}}\), it follows that either \(|B_U|= |X|\) or \(|B_{U^{-1}}|= |X|\). Let \(U'\in \{U, U^{-1}\}\) be such that \(|B_{U'}|=X\).

Let \(B\subseteq B_{U'}\) be such that \(|B|=|X|=|X\backslash B|\), and let \(\phi':B\to (X\backslash B)\) be a bijection.
It follows that \(\phi:= \phi'\cup {\phi'}^{-1}\in \Sym(X)\subseteq \mathcal{I}_X\), and \(\phi=\phi^{-1}\).

If \(f\in U'\), then by the choice of \(B\) we have \((\{x\})f^{-1} \cap B = \varnothing\). So if \(f= \phi g\in U'\cap (U')\lambda_{\phi}\), then \((\{x\})f^{-1}= (\{x\})g^{-1}\phi = \varnothing\).

As \(\lambda_{\phi}= \lambda_{\phi}^{-1}\), the set \(U'':=U'\cap (U')\lambda_{\phi}\) is open with respect to either \(\mathcal{T}\) or \(\mathcal{T}^{-1}\).
As shown above, we also have \(\varnothing\in U''\subseteq \makeset{f\in S}{\(x\not\in \im(f)\)}\).
Thus if \(f\in S\) is arbitrary, then 
\[x\notin \im(f) \Rightarrow f\circ \{(x,x)\}= \varnothing \Rightarrow  (f)\rho_{\{(x, x)\}} \in U'' \Rightarrow x\notin \im(f).\]
It follows that \((U'')\rho_{\{(x, x)\}}^{-1} = \makeset{f\in S}{\(x\not\in \im(f)\)}\), thus this set is either open in \(\mathcal{T}\) or \(\mathcal{T}^{-1}\).
If \(y\in X\) is arbitrary and \(f\in \Sym(X)\) is such that \(f^{-1}=f\) and \((x)f= y\), then 
\[\left(\makeset{f\in S}{\(x\not\in \im(f)\)}\right) \rho_f^{-1}=\left(\makeset{f\in S}{\(x\not\in \im(f)\)}\right) \rho_f = \makeset{f\in S}{\(y\not\in \im(f)\)}.\]
So we either have \(\mathcal{RPT}_X\subseteq \mathcal{T}\) or \(\mathcal{RPT}_X\subseteq \mathcal{T}^{-1}\). As \(\mathcal{RPT}_X\subseteq \mathcal{T}^{-1}\) if and only if \(\mathcal{LPT}_X\subseteq \mathcal{T}\), the result follows.
\end{proof}

\speeddictfour{215}{inverse pointwise topologies are nice lemma}{combining topologies lemma}{Markov topology defn}{topological symmetric inverse monoids defn}{left/right pointwise topologies are nice lemma}
\begin{lemma}[The inverse pointwise topologies are nice, \Assumed{215}]\label{inverse pointwise topologies are nice lemma}
If \(X\) is a set then \(\mathcal{FM}(\mathcal{I}_X)= \mathcal{HM}(\mathcal{I}_X) = \mathcal{IPT}_X\) and this topology is compatible with \(\mathcal{I}_X\).
\end{lemma}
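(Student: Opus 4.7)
The plan is to first show $\mathcal{IPT}_X$ is compatible with $\mathcal{I}_X$ (as an inverse semigroup), then establish the two equalities by sandwiching $\mathcal{IPT}_X$ between $\mathcal{FM}(\mathcal{I}_X)$ and $\mathcal{HM}(\mathcal{I}_X)$.

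For compatibility, note that $\mathcal{IPT}_X$ is generated by $\mathcal{LPT}_X \cup \mathcal{RPT}_X$. By Lemma~\ref{left/right pointwise topologies are nice lemma}, both $\mathcal{LPT}_X$ and $\mathcal{RPT}_X$ are compatible with $S$, so by Lemma~\ref{combining topologies lemma} the topology they generate (namely $\mathcal{IPT}_X$) is compatible with $S$. To upgrade this to compatibility with $\mathcal{I}_X$ as an inverse semigroup, I need the inversion map $\iota$ to be continuous. Since $\iota$ is an anti-automorphism of $S$, Proposition~\ref{pull back topologies prop} gives $\mathcal{LPT}_X^{-1} = \mathcal{RPT}_X$ (which is the defining relation in Definition~\ref{topological symmetric inverse monoids defn}) and likewise $\mathcal{RPT}_X^{-1} = \mathcal{LPT}_X$. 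Hence $\iota$ permutes the subbasis $\mathcal{LPT}_X \cup \mathcal{RPT}_X$ setwise and is therefore continuous with respect to $\mathcal{IPT}_X$.

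For $\mathcal{HM}(\mathcal{I}_X) \subseteq \mathcal{IPT}_X$: the topology $\mathcal{IPT}_X$ is Hausdorff because it contains the Hausdorff topology $\mathcal{LPT}_X$. Combined with the compatibility just proved, the definition of $\mathcal{HM}$ gives the inclusion. Since every Hausdorff topology is Fréchet, we also have $\mathcal{FM}(\mathcal{I}_X) \subseteq \mathcal{HM}(\mathcal{I}_X) \subseteq \mathcal{IPT}_X$.

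The core step is the reverse inclusion $\mathcal{IPT}_X \subseteq \mathcal{FM}(\mathcal{I}_X)$. Let $\mathcal{T}$ be an arbitrary Fréchet topology compatible with $\mathcal{I}_X$; it suffices to show $\mathcal{T} \supseteq \mathcal{IPT}_X$. Viewing $\mathcal{I}_X$ just as the semigroup $S$, the topology $\mathcal{T}$ is also a Fréchet topology compatible with $S$, so by Lemma~\ref{left/right pointwise topologies are nice lemma} we have either $\mathcal{T} \supseteq \mathcal{LPT}_X$ or $\mathcal{T} \supseteq \mathcal{RPT}_X$. Because $\mathcal{T}$ is compatible with $\mathcal{I}_X$, the inversion map $\iota:(\mathcal{I}_X,\mathcal{T})\to(\mathcal{I}_X,\mathcal{T})$ is continuous; as $\iota$ is its own inverse this forces $\mathcal{T}^{-1} = \mathcal{T}$. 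Applying $\iota$ to the inclusion $\mathcal{T} \supseteq \mathcal{LPT}_X$ (or $\mathcal{RPT}_X$) thus yields $\mathcal{T} \supseteq \mathcal{RPT}_X$ (or $\mathcal{LPT}_X$). In either case $\mathcal{T} \supseteq \mathcal{LPT}_X \cup \mathcal{RPT}_X$, and hence $\mathcal{T} \supseteq \mathcal{IPT}_X$ as required. The main subtlety (though not really an obstacle) is simply remembering that Fréchet topologies compatible with the inverse semigroup $\mathcal{I}_X$ are automatically closed under inversion, which is what lets us combine the two alternatives from Lemma~\ref{left/right pointwise topologies are nice lemma} into a single inclusion.
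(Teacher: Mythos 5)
Your proof is correct and follows essentially the same route as the paper: compatibility of $\mathcal{IPT}_X$ via Lemma~\ref{combining topologies lemma} plus the observation that inversion permutes the subbasis, and then the sandwich $\mathcal{IPT}_X \subseteq \mathcal{FM}(\mathcal{I}_X) \subseteq \mathcal{HM}(\mathcal{I}_X) \subseteq \mathcal{IPT}_X$, with the key step being that a Fréchet topology compatible with $\mathcal{I}_X$ is inverse-closed and hence must contain both $\mathcal{LPT}_X$ and $\mathcal{RPT}_X$ by Lemma~\ref{left/right pointwise topologies are nice lemma}. You spell out the detail $\mathcal{T}^{-1}=\mathcal{T}$ a bit more explicitly than the paper does, but the argument is the same.
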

\begin{proof}
Let \(S\) be the semigroup obtained by removing the unary operation of \(\mathcal{I}_X\).
By Lemma~\ref{left/right pointwise topologies are nice lemma}, both \(\mathcal{LPT}_X\) and \(\mathcal{RPT}_X\) are compatible with \(S\).
So by Lemma~\ref{combining topologies lemma}, it follows that \(\mathcal{IPT}_X\) is compatible with \(S\).
By the definition of \(\mathcal{IPT}_X\), the inversion map is also continuous with respect to this topology, so \(\mathcal{IPT}_X\) is compatible with \(\mathcal{I}_X\) as well.

From Lemma~\ref{left/right pointwise topologies are nice lemma}, it follows that every Fréchet topology compatible with \(\mathcal{I}_X\) contains either \(\mathcal{LPT}_X\) or \(\mathcal{RPT}_X\).
As inversion is required to be continuous and these topologies are mapped to each other by the inverse map, it follows that every Fréchet topology compatible with \(\mathcal{I}_X\) contains \(\mathcal{IPT}_X\).

Thus \(\mathcal{IPT}_X \subseteq \mathcal{FM}(\mathcal{I}_X)\subseteq \mathcal{HM}(\mathcal{I}_X) \subseteq \mathcal{IPT}_X \), so we have equality throughout as required.
\end{proof}

\speeddictthree{216}{inverse property W lemma}{property W defn}{property W is nice lemma}{topological symmetric inverse monoids defn}
\begin{lemma}[Property \textbf{W} for \(\mathcal{I}_X\), \Assumed{216}]\label{inverse property W lemma}
If \(X\) is a set and \(S\) is the semigroup obtained by removing the unary operation from \(\mathcal{I}_X\), then \((S, \mathcal{IPT}_X)\) has property \textbf{W} with respect to \(\Sym(X)\).
\end{lemma}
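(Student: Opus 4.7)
The plan is to follow the pattern of Lemmas~\ref{BX property W}, \ref{X^X property W} and \ref{partial property W}. When $X$ is finite, $\mathcal{IPT}_X$ is discrete and property \textbf{W} holds trivially by taking $t_s := \mathrm{id}_X \in \Sym(X)$ together with $t_{1,s} := s$ and the term $a(x_0, x_1) = x_0 x_1$. So I will assume $X$ is infinite. I then fix a bijection $\Phi : X \to X \times \{0,1\}$, set $A := \Phi^{-1}(X \times \{0\})$ and $B := \Phi^{-1}(X \times \{1\})$, and let $j \in S$ denote the partial bijection $x \mapsto \Phi^{-1}(x, 0)$, which has $\dom(j) = X$ and $\im(j) = A$. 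Both $A$ and $B$ have cardinality $|X|$, which is what will make the later cardinality counts work out.

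For each $s \in S$ with $\dom(s) = D$ and $\im(s) = I$, I will construct a permutation $h_s \in \Sym(X)$ that extends the partial bijection $j \circ s \circ j^{-1} : j(D) \to j(I)$ and additionally satisfies $h_s(j(X \setminus D)) \subseteq B$. Such a permutation exists by a standard cardinality argument: once $h_s$ is fixed on $j(D)$ and $j(X \setminus D)$ is required to map into $B$, the remaining data is a bijection between two sets each of cardinality $|X|$ which can be chosen freely. With this $h_s$, direct computation gives $j^{-1} h_s j = s$ in $S$, so setting $t_s := h_s$, $t_{1,s} := j^{-1}$, $t_{2,s} := j$ and taking the term $a(x_0, x_1, x_2) = x_1 x_0 x_2$ produces the equality required in the definition of property \textbf{W}.

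The heart of the argument will be an extension claim analogous to those in the three lemmas cited above: for every finite $\alpha \subseteq h_s$, there is a basic $\mathcal{IPT}_X$-neighborhood $U$ of $s$ such that every $t \in U$ satisfies $t = j^{-1} \tau j$ for some $\tau \in \Sym(X)$ with $\alpha \subseteq \tau$. The neighborhood $U$ is defined by imposing, for each $(p, q) \in \alpha$, one of four open conditions on $t$: when $p = j(x)$ and $q = j(y)$ both lie in $A$, require $(x, y) \in t$; when $p = j(x) \in A$ and $q \in B$, require $x \notin \dom(t)$; when $p \in B$ and $q = j(y) \in A$, require $y \notin \im(t)$; and when $p, q \in B$, impose no condition. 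All four conditions are open because $\mathcal{IPT}_X$ contains both $\mathcal{LPT}_X$ (supplying the sets $U_{x,y}$ and $V_x$ from the proof of Lemma~\ref{topological partial functions lemma}) and $\mathcal{RPT}_X$ (supplying their inverse counterparts), and $s$ itself satisfies all of them by the construction of $h_s$.

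The main obstacle will be the construction of the permutation $\tau$ from a given $t \in U$. I will build $\tau$ in three stages: first define $\tau$ on $j(\dom(t))$ by $\tau(j(x)) := j(t(x))$; second extend $\tau$ on $A \setminus j(\dom(t))$ to an injection into $B$ that agrees with $\alpha$ wherever forced and whose image avoids the finite obstruction $\im(\alpha) \cap B$; third extend $\tau$ on $B$ to a bijection onto the remaining part of $X$ that agrees with $\alpha$ restricted to $B$. The key technical point is that the open conditions defining $U$ guarantee exactly the compatibility between $t$ and $\alpha$ that is needed for stages two and three to succeed, and the infinitude of $X$ ensures that all the intermediate complements retain cardinality $|X|$ so that the required free extensions exist. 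Once $\tau$ is in hand, the identity $j^{-1} \tau j = t$ is routine from the definitions, and the inclusion $j^{-1}\{\sigma \in \Sym(X) : \alpha \subseteq \sigma\} j \supseteq U$ then immediately yields $j^{-1} N j \in \nbhd{S}{s}$ for every $N \in \nbhd{\Sym(X)}{h_s}$, completing the verification of property \textbf{W}.
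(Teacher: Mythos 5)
Your approach mirrors the paper's: conjugate through a fixed bijection $j$ from $X$ onto a moiety $A$ of $X$ (the paper's $f_s$), build $h_s \in \Sym(X)$ by extending the conjugate of $s$ on $j(\dom(s))$ and routing $j(X \setminus \dom(s))$ into the complementary moiety $B$, and define the neighbourhood $U$ by pointwise compatibility conditions with a finite fragment $\alpha \subseteq h_s$ (the paper packages the same three constraints as $f_s a g_s \subseteq k$, $\dom(f_s a) \cap \dom(k) \subseteq \dom(f_s a g_s)$ and $\im(a g_s) \cap \im(k) \subseteq \im(f_s a g_s)$). One slip: under the paper's left-to-right composition convention the identity should read $j\,h_s\,j^{-1} = s$ rather than $j^{-1}\,h_s\,j = s$ --- the latter is a total bijection of $A$ onto itself and so cannot equal a general $s$ --- hence $t_{1,s} := j$ and $t_{2,s} := j^{-1}$; with that swap your argument is a faithful reproduction of the paper's proof.
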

\begin{proof}
It suffices to show for all \(s\in S\), that there are \(f_s,g_s\in S\) and \(h_s\in \Sym(X)\) such that for all \(N\in \nbhd{\Sym(X)}{h_s}\) we have \(f_s N g_s\in\nbhd{S}{s}\).
If \(X\) is finite then \(S\) is discrete, so we can choose \(f_s=h_s = 1_S, g_s=s\).
Thus we can assume that \(X\) is infinite.

Let \(s\in S\) be arbitrary. Let \(\{M_0, M_1\}\) be a partition of \(X\) such that \(|M_0|=|M_1|=X\), and choose \(f_s:X\to M_0\) to be a bijection. We then define \(g_s:= f_s^{-1}\).

\noindent\underline{Claim:} Suppose that \(k\in S\) and \(a\) is a bijection between finite subsets of \(X\) satisfying:
\begin{enumerate}
    \item \(f_sag_s \subseteq  k\).
    \item \(\dom(f_s a) \cap \dom(k) \subseteq \dom(f_sag_s)\).
    \item \(\im(a g_s) \cap \im(k) \subseteq \im(f_sag_s)\).
\end{enumerate}
In this case there is some \(h_{k, a}\in \Sym(X)\) such that
\(a\subseteq h_{k, a}\) and \(f_sh_{k, a}g_s = k\).\\
\underline{Proof of Claim:}  If \(x\in (\dom(k))f_s\cap \dom(a)\), then \((x)f_s^{-1}\in \dom(f_s a)\cap \dom(k)\subseteq \dom(f_sag_s)\).
Thus \((x)f_s^{-1}kg_s^{-1}=(x)f_s^{-1}f_sag_s g_s^{-1}=(x)a\). By a symmetric argument, if \(x\in (\im(k))g_s^{-1}\cap \im(a)\), then \((x)(f_s^{-1}kg_s^{-1})^{-1}=(x)a^{-1}\).
Thus if we define \(b:=f_s^{-1}kg_s^{-1}\backslash (\dom(a) \times \im(a))\), then \(b=f_s^{-1}kg_s^{-1}\backslash a\).
In particular 
\[\dom(b)=(\dom(k))f_s\backslash \dom(a)\quad \text{ and }\quad \im(b)=(\im(k))g_s^{-1}\backslash \im(a).\]

As \(\im(b)\subseteq M_0\), and \(a\) is finite, it follows that \(|M_1\backslash (\im(a)\cup \im(b))|= |X|\). Thus we can choose an injection \(c:M_0\backslash (\dom(a)\cup \dom(b)) \to (M_1\backslash (\im(a)\cup \im(b)))\) such that \(|X\backslash (\im(a)\cup \im(b)\cup \im(c))|=|X|\).

Similarly, as \(\dom(a)\) is finite and \(\dom(b)\cup \dom(c)\subseteq M_0\), we can choose a bijection \(d:X\backslash (\dom(a)\cup \dom(b)\cup \dom(c))\to X\backslash (\im(a)\cup \im(b)\cup \im(c))\).

As each of \(a, b, c, d\) is a bijection between subsets of \(X\) and 
\[\{\dom(a), \dom(b), \dom(c), \dom(d)\}\quad \text{ and }\quad \{\im(a), \im(b), \im(c), \im(d)\}\]
are partitions of \(X\), it follows that \(h_{k,a}:= a\cup b\cup c\cup d \in \Sym(X)\).

By definition \(a\subseteq h_{k, a}\), it remains to show that \(f_sh_{k, a}g_s= k\). Let \(x\in X\) be arbitrary, there are 5 cases to consider:
\begin{enumerate}
    \item If \((x)f_s\in \dom(a)\) and \((x)f_s a\in \dom(g_s)\), then \(x\in \dom(f_s a g_s)\) and hence \[(x)f_sh_{k, a}g_s=(x)f_sag_s=(x)k.\]
    \item  If \((x)f_s\in \dom(a)\), \((x)f_s a\not\in \dom(g_s)\) and \(x\in \dom(k)\), then 
    \[x\in (\dom(f_s a)\cap \dom(k))\backslash \dom(f_sag_s)= \varnothing\]
    a contradiction.
    \item  If \((x)f_s\in \dom(a)\), \((x)f_s a\not\in \dom(g_s)\) and \(x\not\in \dom(k)\), then 
    \[(\{x\})k = \varnothing = (\{(x)f_s a\})g_s=(\{x\})f_sh_{k, a}g_s.\]
    \item If \((x)f_s\in  \dom(b)\), then \(x\in \dom(f_s b g_s)\) and hence 
    \[(x)f_sh_{k, a}g_s=(x)f_s b g_s=(x)f_sf_s^{-1}kg_s^{-1}g_s=(x)k.\]
    \item If \((x)f_s\in  \dom(c)\), then \((x)f_s\notin \dom(a) \cup ((\dom(k))f_s\backslash \dom(a))\). Thus \((\{x\})k= \varnothing\). Also by the choice of \(c\), we have
    \[(\{x\})f_sh_{k, a}g_s=(\{x\})f_s c g_s=(\{(x)f_s c\}) g_s = \varnothing.\diamondsuit\]
\end{enumerate}
From the claim, using \(a= \varnothing\) and \(s=k\), let \(h_s\in \Sym(X)\) be such that \(f_sh_s g_s=s\). Let \(N\in \nbhd{\Sym(X)}{s}\)
be arbitrary, we need only show that \(f_s N g_s\in \nbhd{S}{s}\).

Let \(a\) be a finite bijection between subsets of \(X\) such that \(a\subseteq h_s\) and \(\makeset{h\in \Sym(X)}{\(a\subseteq h\)}\subseteq N\). We now define
\[U:= \makeset{k\in S}{\(f_sag_s \subseteq  k\),
     \(\dom(f_s a) \cap \dom(k) \subseteq \dom(f_sag_s)\),\\ and
     \(\im(a g_s) \cap \im(k) \subseteq \im(f_sag_s)\).}\]
Note that \(U\) is open with respect to \(\mathcal{IPT}_X\). We first show that \(s\in U\):
\begin{enumerate}
    \item By the definitions of \(a\) and \(h_s\) we have \(f_sag_s\subseteq f_sh_s g_s=s\).
    \item If \(x\in \dom(f_s a)\cap \dom(k)\), then \((x)k=(x)f_sh_s g_s=(x)f_sag_s\). So \(x\in \dom(f_sag_s)\).
    \item If \(x\in \im(a g_s)\cap \im(k)\), then \((x)k^{-1}=(x)(f_sh_s g_s)^{-1}=(x)(f_sag_s)^{-1}\). So \(x\in \im(f_sag_s)\).
\end{enumerate}
Thus \(U\in \nbhd{S}{s}\). We need only show that \(U\subseteq f_s N g_s\).
Let \(k\in U\) be arbitrary.
By the claim, there is \(h_{k, a}\in \Sym(X)\) such that \(a\subseteq h_{k, a}\) and \(f_sh_{k,a}g_s = k\).
By the definition of \(a\), it follows that \(h_{k, a}\in N\).
Thus \(k=f_sh_{k, a}g_s\in f_s N g_s\) as required.
\end{proof}

\speeddictseven{217}{main inverse monoid theorem}{even more nice Polish subspaces cor}{automatic continuity examples thm}{topology comparison proposition}{main N^N theorem}{complemented pointwise topology is nice lemma}{inverse pointwise topologies are nice lemma}{inverse property W lemma}
\begin{theorem}[The topologies of \(\mathcal{I}_X\), \Assumed{217}]\label{main inverse monoid theorem}
The topology \(\mathcal{IPT}_\N\) is Polish and is the only second countable Fréchet topology compatible with the inverse semigroup \(\mathcal{I}_\N\).
Moreover 
\[\mathcal{CPT}_\N=\mathcal{MF}(\mathcal{I}_\N)\subsetneq   \mathcal{FM}(\mathcal{I}_\N) = \mathcal{HM}(\mathcal{I}_\N)= \mathcal{SCC}(\mathcal{I}_\N)=\mathcal{IPT}_\N.\]
If \(S\) is the semigroup obtained by removing the unary operation of \(\mathcal{I}_\N\), then 
\[\mathcal{CPT}_\N=\mathcal{MF}(S)\subsetneq  \mathcal{LPT}_\N\cap \mathcal{RPT}_\N = \mathcal{FM}(S) = \mathcal{HM}(S)\subsetneq \mathcal{SCC}(S)=\mathcal{IPT}_\N\]
and all homomorphisms from \((S, \mathcal{IPT}_\N)\) to second countable topological semigroups are continuous.

\end{theorem}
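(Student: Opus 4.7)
The plan is to establish each claim in sequence by leveraging the preceding lemmas: Polishness of $\mathcal{IPT}_\N$, then the listed chains of equalities, then the $\mathcal{SCC}$ computation (the central step), and finally the strict inclusions together with the automatic continuity claim.

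First I would verify that $\mathcal{IPT}_\N$ is Polish. The subset $\mathcal{I}_\N$ is closed in $(\mathcal{P}_\N, \mathcal{PPT}_\N)$ because non-injectivity is expressible as a union of sets of the form $U_{x, z} \cap U_{y, z}$ with $x \neq y$, so Theorem~\ref{main N^N theorem} combined with Lemma~\ref{nice Polish subspaces lemma} gives that $\mathcal{LPT}_\N = \mathcal{PPT}_\N|_{\mathcal{I}_\N}$ is Polish. The inversion map is a bijection of $\mathcal{I}_\N$ sending $\mathcal{LPT}_\N$ to $\mathcal{RPT}_\N$ (with compatibility of $\mathcal{RPT}_\N$ coming from Proposition~\ref{pull back topologies prop}), so $\mathcal{RPT}_\N$ is Polish as well. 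Since $\mathcal{LPT}_\N \cap \mathcal{RPT}_\N = \mathcal{HM}(S)$ is Hausdorff and coarser than both, Lemma~\ref{more nice Polish subspaces lemma} gives that $\mathcal{IPT}_\N$ is Polish. Most of the displayed equalities then follow directly from earlier lemmas: Lemma~\ref{complemented pointwise topology is nice lemma} gives $\mathcal{CPT}_\N = \mathcal{MF}(\mathcal{I}_\N) = \mathcal{MF}(S)$ (its proof never uses inversion and so applies verbatim to $S$), Lemma~\ref{inverse pointwise topologies are nice lemma} gives $\mathcal{FM}(\mathcal{I}_\N) = \mathcal{HM}(\mathcal{I}_\N) = \mathcal{IPT}_\N$, and Lemma~\ref{left/right pointwise topologies are nice lemma} gives $\mathcal{FM}(S) = \mathcal{HM}(S) = \mathcal{LPT}_\N \cap \mathcal{RPT}_\N$.

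The substantive step is $\mathcal{SCC}(\mathcal{I}_\N) = \mathcal{IPT}_\N$. The inclusion $\mathcal{IPT}_\N \subseteq \mathcal{SCC}(\mathcal{I}_\N)$ is immediate from second countability and compatibility of $\mathcal{IPT}_\N$. For the reverse, part (5) of Proposition~\ref{ac topology is nice} reduces the problem to showing that every homomorphism $\phi\colon (\mathcal{I}_\N, \mathcal{IPT}_\N) \to T$ into a second countable topological inverse semigroup is continuous. Lemma~\ref{inverse property W lemma} establishes property \textbf{W} for $(S, \mathcal{IPT}_\N)$ with respect to $\Sym(\N)$; this transfers to $(\mathcal{I}_\N, \mathcal{IPT}_\N)$ because every semigroup term is also an inverse-semigroup term, so Lemma~\ref{property W is nice lemma} reduces further to verifying continuity of $\phi|_{\Sym(\N)}$. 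Since $T$ is a topological \emph{inverse} semigroup, the subgroup $\phi(\Sym(\N))$ inherits continuous inversion and hence, with its subspace topology, is a second countable topological group; noting that $\mathcal{IPT}_\N|_{\Sym(\N)}$ coincides with the pointwise topology on $\Sym(\N)$, Theorem~\ref{automatic continuity examples thm} then delivers the required continuity. The ``Moreover'' clause of Proposition~\ref{ac topology is nice}, witnessed by $(\mathcal{I}_\N, \mathcal{IPT}_\N)$, yields $\mathcal{SCC}(S) = \mathcal{SCC}(\mathcal{I}_\N)$.

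With $\mathcal{SCC}$ pinned down, uniqueness of the second countable Fréchet topology $\mathcal{T}$ on $\mathcal{I}_\N$ follows from the sandwich $\mathcal{IPT}_\N = \mathcal{FM}(\mathcal{I}_\N) \subseteq \mathcal{T} \subseteq \mathcal{SCC}(\mathcal{I}_\N) = \mathcal{IPT}_\N$, and the automatic continuity statement for $(S, \mathcal{IPT}_\N)$ follows from $\mathcal{SCC}(S) = \mathcal{IPT}_\N$ via part (2) of the same proposition. For the strict inclusions I would exhibit explicit witnesses. For $\mathcal{CPT}_\N \subsetneq \mathcal{FM}(\mathcal{I}_\N)$ and $\mathcal{CPT}_\N \subsetneq \mathcal{LPT}_\N \cap \mathcal{RPT}_\N$ it suffices to show that $\mathcal{CPT}_\N$ is not compatible with $S$: the sequences $f_n = \{(0, n)\}$ and $g_n = \{(n, 0)\}$ both converge to $\varnothing$ in $\mathcal{CPT}_\N$, yet their products equal $\{(0, 0)\}$ for every $n$, so multiplication is not continuous. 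For $\mathcal{LPT}_\N \cap \mathcal{RPT}_\N \subsetneq \mathcal{IPT}_\N$, the set $\{f \in \mathcal{I}_\N : 0 \notin \im(f)\}$ is open in $\mathcal{RPT}_\N$ (hence in $\mathcal{IPT}_\N$) but not in $\mathcal{LPT}_\N$, since any basic $\mathcal{LPT}_\N$-neighborhood of $\varnothing$ has the form $\bigcap_{i = 1}^{k} V_{x_i}$ and contains the partial bijection $\{(x_0, 0)\}$ for any $x_0 \notin \{x_1, \ldots, x_k\}$. The main conceptual obstacle is the $\mathcal{SCC}$ computation: routing through the inverse-semigroup structure of $T$ is essential, because Theorem~\ref{automatic continuity examples thm} demands a topological group target rather than a mere topological semigroup, which is also why the identification $\mathcal{SCC}(S) = \mathcal{SCC}(\mathcal{I}_\N)$ must be mediated by Proposition~\ref{ac topology is nice} rather than attempted directly.
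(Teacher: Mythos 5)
Your proposal follows the paper's overall route closely: the $\mathcal{SCC}$ computation via Proposition~\ref{ac topology is nice}, Theorem~\ref{automatic continuity examples thm}, property \textbf{W} and Lemma~\ref{property W is nice lemma}; the chain of equalities from Lemmas~\ref{complemented pointwise topology is nice lemma}, \ref{left/right pointwise topologies are nice lemma} and \ref{inverse pointwise topologies are nice lemma}; and the sandwich argument for uniqueness. Your explicit elaboration of why $\phi(\Sym(\N))$ is a topological group (via the inverse semigroup structure of the target) is a useful spelling-out of a step the paper leaves implicit, and the small differences in the Polishness step (Lemma~\ref{more nice Polish subspaces lemma} vs.\ Corollary~\ref{even more nice Polish subspaces cor}) and in the final strict inclusion (swapping the roles of $\dom$ and $\im$) are harmless variations.

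There is, however, a gap in the strict inclusion $\mathcal{CPT}_\N \subsetneq \mathcal{LPT}_\N \cap \mathcal{RPT}_\N$. You argue it ``suffices to show that $\mathcal{CPT}_\N$ is not compatible with $S$,'' and your sequence $f_n = \{(0,n)\}$, $g_n = \{(n,0)\}$, $f_n g_n = \{(0,0)\}$ indeed does this. This gives $\mathcal{CPT}_\N \neq \mathcal{FM}(\mathcal{I}_\N)$, because $\mathcal{FM}(\mathcal{I}_\N) = \mathcal{IPT}_\N$ \emph{is} compatible with $S$. But it does not give $\mathcal{CPT}_\N \neq \mathcal{LPT}_\N \cap \mathcal{RPT}_\N$, because the topology $\mathcal{LPT}_\N \cap \mathcal{RPT}_\N = \mathcal{FM}(S) = \mathcal{HM}(S)$ is not established to be compatible with $S$ --- the Markov topologies are defined as intersections of compatible topologies, and such an intersection is only guaranteed to be \emph{semi}compatible (Lemma~\ref{somewhat nice topologies lemma}), not compatible. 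In fact your own witness sequences already converge to $\varnothing$ in $\mathcal{LPT}_\N \cap \mathcal{RPT}_\N$ (since $g_n \to \varnothing$ in $\mathcal{LPT}_\N \supseteq \mathcal{LPT}_\N \cap \mathcal{RPT}_\N$ and $f_n \to \varnothing$ in $\mathcal{RPT}_\N \supseteq \mathcal{LPT}_\N \cap \mathcal{RPT}_\N$), so the same failure of sequential continuity occurs there; incompatibility with $S$ simply cannot distinguish the two topologies. You need a direct witness, which is what the paper provides: it shows the set $\mathcal{I}_\N \setminus V$, where $V = \{\{(0,n),(n,0)\} : n \in \N\}$, is $\mathcal{LPT}_\N$-open and (being symmetric under inversion) $\mathcal{RPT}_\N$-open, hence a $\mathcal{LPT}_\N \cap \mathcal{RPT}_\N$-neighbourhood of $\varnothing$, yet every $\mathcal{CPT}_\N$-neighbourhood of $\varnothing$ of the form $\{f : F \cap f = \varnothing\}$ (for $F$ finite) meets $V$.
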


\begin{proof}
From Theorem~\ref{main N^N theorem}, the space \((\mathcal{P}_\N, \mathcal{PPT}_\N)\) is Polish.
As 
\[\mathcal{P}_\N\backslash \mathcal{I}_\N = \makeset{f\in \mathcal{P}_\N}{there are \(x, y\in \N\) with \(x\neq y\) and \((x)f=(y)f\)},\]
it follows that \(\mathcal{I}_\N\) is closed with respect to \(\mathcal{PPT}_\N\) and is hence \(\mathcal{LPT}_\N\) is Polish by Lemma~\ref{nice Polish subspaces lemma}.
As \(\mathcal{IPT}_\N\) is generated by \(\mathcal{LPT}_\N\) and the sets of the form
\[\makeset{f\in \mathcal{I}_\N}{\(x\notin \im(f)\)}\]
for \(x\in \N\) (which are closed with respect to \(\mathcal{LPT}_\N\)) it follows from Corollary~\ref{even more nice Polish subspaces cor} that \(\mathcal{IPT}_\N\) is Polish.

By Proposition~\ref{ac topology is nice}, we have \(\mathcal{SCC}(\mathcal{I}_\N) = \mathcal{SCC}(S)\).
Moreover as \(\mathcal{IPT}_\N\) is second countable and compatible with \(S\) (Lemma~\ref{inverse pointwise topologies are nice lemma}), it follows from the definition of \(\mathcal{SCC}(S)\), that \(\mathcal{IPT}_\N\subseteq \mathcal{SCC}(S)\).

Let \(\phi\) be an arbitrary homomorphism from \((S, \mathcal{IPT}_\N)\) to a second countable topological inverse semigroup.
To conclude that \(\mathcal{IPT}_\N\supseteq \mathcal{SCC}(S)\), it suffices (by Proposition~\ref{ac topology is nice}) to show that \(\phi\) is continuous.
By Theorem~\ref{automatic continuity examples thm}, we have that \(\phi\restriction_{\Sym(\N)}\) is continuous.
Thus from Lemma~\ref{property W is nice lemma} it suffices to show that \(S\) has property \textbf{W} with respect to \(\Sym(\N)\).
This is shown in Lemma~\ref{inverse property W lemma}.

We now have \(\mathcal{IPT}_\N=\mathcal{SCC}(S)=\mathcal{SCC}(\mathcal{I}_\N)\). By Lemma~\ref{inverse pointwise topologies are nice lemma}, it follows that this topology coincides with both of \(\mathcal{FM}(\mathcal{I}_\N)\) and \(\mathcal{HM}(\mathcal{I}_\N)\) as well.
As every Fréchet second countable topology compatible with \(\mathcal{I}_\N\), contains \(\mathcal{FM}(\mathcal{I}_\N)\) and is contained in \(\mathcal{SCC}(\mathcal{I}_\N)\), they all coincide.
From Lemma~\ref{left/right pointwise topologies are nice lemma} we have \(\mathcal{LPT}_\N\cap \mathcal{RPT}_\N = \mathcal{FM}(S)= \mathcal{HM}(S)\), and from Lemma~\ref{complemented pointwise topology is nice lemma} we have \(\mathcal{CPT}_\N = \mathcal{MF}(\mathcal{I}_X)= \mathcal{MF}(S)\).

It remains to show the containments. The non-strict versions of the containments are immediate from Proposition~\ref{topology comparison proposition}. So we need only show strictness of the containments.
We start by showing that \(\mathcal{IPT}_\N\neq \mathcal{LPT}_\N\cap\mathcal{RPT}_\N\).
The set \(U:=\makeset{f\in \mathcal{I}_\N}{\(0\notin \dom(f)\)}\) is open with respect to \(\mathcal{LPT}_\N\). Moreover \(\varnothing\in U\). If \(V\in \mathcal{RPT}_\N\) is arbitrary such that \(\varnothing\in V\), then by the definition of \(\mathcal{RPT}_\N\), there is some finite \(F\subseteq \N\) such that \(\makeset{f\in \mathcal{I}_X}{\(F\cap \im(f)= \varnothing\)}\subseteq V\).
In particular if \(n\in \N\backslash F\), then \(\{(0, n)\}\in V\backslash U\).
Thus \(V\neq U\) and \(U\in \mathcal{LPT}_\N\backslash \mathcal{RPT}_\N\subseteq \mathcal{IPT}_\N\backslash (\mathcal{LPT}_\N\cap \mathcal{RPT}_\N)\).

We finally show \(\mathcal{CPT}_\N \neq \mathcal{LPT}_\N\cap\mathcal{RPT}_\N\).
Let
\(V:= \makeset{\{(0, n), (n, 0)\}}{\(n\in \N\)}\)
and note that
\[\mathcal{I}_\N\backslash V = \makeset{f\in \mathcal{I}_X}{either \(0\notin \dom(f)\), there is \((a, b)\in ((\N\backslash \{0\})\times (\N\backslash \{0\}))\)\\
with \((a, b)\in f\) or there is \(n\not\in \dom(f)\) with \((0, n)\in f\)}.\]
So \(\mathcal{I}_\N\backslash V\in \mathcal{LPT}_\N\). Thus \(\mathcal{I}_\N\backslash V=(\mathcal{I}_\N\backslash V)^{-1}\in (\mathcal{LPT}_\N)^{-1}= \mathcal{RPT}_\N\). So \(\mathcal{I}_\N\backslash V\) is a neighbourhood of \(\varnothing\) with respect to \(\mathcal{LPT}_\N\cap \mathcal{RPT}_\N\). 
However every neighbourhood of \(\varnothing\) with respect to \(\mathcal{CPT}_\N\) contains a set of the form
\[U_F:=\makeset{f\in \mathcal{I}_\N}{\(F\cap f= \varnothing\)}\]
where \(F\subseteq \N\times \N\) is finite. As all of the sets \(U_F\) intersect \(V\), it follows that \(\mathcal{I}_\N\backslash V\) is not a neighbourhood of \(\varnothing\) with respect to \(\mathcal{CPT}_\N\) so \(\mathcal{CPT}_\N\neq \mathcal{LPT}_\N\cap \mathcal{RPT}_\N\) as required.
\end{proof}

We have now established the main result of this subsection (Theorem~\ref{main inverse monoid theorem}).
The category of partial bijections does not have product objects, so we have no clones in this subsection.
However, we can use this result to distinguish the left/right small index properties discussed earlier.

\speeddicttwo{218}{non symmetric index properties example}{SCC vs small index cor}{main inverse monoid theorem}
\begin{example}[The small index properties are distinct, \Assumed{218}]\label{non symmetric index properties example}
If \(S\) is the semigroup obtained by removing the unary operation of \(\mathcal{I}_\N\), then the topological semigroup \((S, \mathcal{LPT}_\N)\) has the right small index property but not the left small index property.
\end{example}
\begin{proof}
We first show that \((S,\mathcal{LPT}_\N)\) has the right small index property.
By Theorem~\ref{main inverse monoid theorem}, we have that \(\mathcal{IPT}_\N=\mathcal{SCC}(S)\).
Thus by Corollary~\ref{SCC vs small index cor}, the topological semigroup \((S, \mathcal{IPT}_\N)\) has the right small index property.
Let \(\sim\) be an arbitrary right congruence on \(S\), and let \(s\in S\) be arbitrary.
To conclude that \((S, \mathcal{LPT}_\N)\) has the right small index property, it suffices to show that \([s]_\sim\) is a neighbourhood of \(s\) with respect to the topology \(\mathcal{LPT}_\N\).

As \((S, \mathcal{IPT}_\N)\) has the right small index property, the set \([s]_\sim\) is open in \(\mathcal{IPT}_\N\).
Thus there are finite \(A, B\subseteq  X\) and a finite bijection \(a\) between subsets of \(\N\) such that if
\[V:= \makeset{f\in S}{\(A\cap \dom(f)= \varnothing\), \(B\cap \im(f)= \varnothing\) and \(a\subseteq f\)},\]
then \(s\in V\subseteq [s]_\sim\).
Let \(U:=  \makeset{f\in S}{\(A\cap \dom(f)= \varnothing\) and \(a\subseteq f\)}\).
By construction, \(U\) is a neighbourhood of \(s\) with respect to \(\mathcal{LPT}_\N\). 
Let \(u\in U\) be arbitrary.
To conclude that \((S, \mathcal{LPT}_\N)\) has the right small index property, it suffices to show that \(u\in [s]_\sim\).
Let \(g: \N\to \N\backslash B\) be a bijection such that \(g\restriction_{\im(a)}\) is the identity map.
Note that \(u g, s g\in V\subseteq [s]_\sim\), so \(u g \sim s g\).
As \(\sim\) is a right congruence, it follows that
\(s=s g g^{-1}\sim u g g^{-1}=u\).
So \(u\sim s\) as required.

It remains to show that \((S, \mathcal{LPT}_\N)\) does not have the left small index property.
We define a left congruence \(\sim\) on \(S\) by
\[\sim := \makeset{(f, g)\in S\times S}{\((\{0\})f^{-1} = (\{0\})g^{-1}\)}.\]
Note that \([\varnothing]_{\sim} = \makeset{f\in S}{\(0\notin \dom(f)\)}\).
We show in the proof of Theorem~\ref{main inverse monoid theorem} that this set is not open with respect to \(\mathcal{LPT}_\N\).
Thus \(\sim\) is not open with respect to \(\mathcal{LPT}_\N\) and \((S, \mathcal{LPT}_\N)\) does not have the left small index property.
\end{proof}

\subsection{Injective functions}
 In this subsection we discuss injective function monoids (which notably contain all embedding monoids of structures).
 While most of the monoids we explore in this document only admit one compatible Polish topology, in the case of \(\inj(\N)\) we show that are infinitely many (see Theorems~\ref{much Polish Theorem} and \ref{main inj theorem}).
 Also unlike the previous subsections, we are unable to give nice descriptions of \(\mathcal{MF}(\inj(\N))\) or \(\mathcal{FM}(\inj(\N))\).

\speeddictthree{231}{topological injective function monoids defn}{subspaces}{full trans def}{topological symmetric inverse monoids defn}
\begin{example}[Topologies on the injective function monoid, \Assumed{231}]\label{topological injective function monoids defn}
We define the \textit{injective function monoid} \(\inj(\N)\) to be the monoid of injective functions from \(\N\) to itself with composition of binary relations as the operation.
We will consider the following topologies on this semigroup:
\begin{enumerate}
    \item The topology \(\mathcal{PT}_\N\restriction_{\inj(\N)}\) generated by the sets of the form
    \[U_{x, y}:=\makeset{f\in \inj(\N)}{\((x, y)\in f\)}\]
    for all \(x, y\in X\).
    \item The topology \(\mathcal{IPT}_\N\restriction_{\inj(\N)}\) generated by \(\mathcal{PT}_\N\restriction_{\inj(\N)}\), as well as the sets of the form
    \[V_{x}:=\makeset{f\in \inj(\N)}{\(x\not\in \im{(f)}\)}\]
    for all \(x, y\in X\).
    \item The \textit{coimage pointwise topology} \(\mathcal{CIPT}_\N\) generated by \(\mathcal{IPT}_\N\restriction_{\inj(\N)}\) as well as the sets of the form
     \[W_n:=\makeset{f\in \inj(\N)}{\(|\N\backslash \im(f)|= n\)}\]
     for each \(n\in \N \cup \{\omega\}\).
\end{enumerate}
\end{example}

\speeddictsix{232}{much Polish Theorem}{more nice Polish subspaces lemma}{pull back topologies prop}{combining topologies lemma}{main N^N theorem}{main inverse monoid theorem}{topological injective function monoids defn}
\begin{theorem}[Much Polish, \Assumed{232}]\label{much Polish Theorem}
There is a collection \(\makeset{\mathcal{T}_i}{\(i\leq \omega + 1\)}\) of Polish topologies compatible with \(\inj(\N)\) such that
\[\mathcal{PT}_\N\restriction_{\inj(\N)}\subsetneq \mathcal{IPT}_\N\restriction_{\inj(\N)} = \mathcal{T}_0\subsetneq\mathcal{T}_1\subsetneq\mathcal{T}_2\subsetneq\mathcal{T}_3 \ldots \subsetneq \mathcal{T}_{\omega} \subsetneq \mathcal{T}_{\omega + 1}= \mathcal{CIPT}_\N.\]
Moreover the topology \(\mathcal{PT}_\N\restriction_{\inj(\N)}\) is Polish.
\end{theorem}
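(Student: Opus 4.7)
The plan is to construct the whole chain by starting from the Polishness of $\mathcal{PT}_\N\restriction_{\inj(\N)}$ and iteratively applying Corollary~\ref{even more nice Polish subspaces cor}. Since $\inj(\N)$ is closed in $(\N^\N,\mathcal{PT}_\N)$ (its complement is $\bigcup_{a\neq b}\bigcup_{c\in\N}(U_{a,c}\cap U_{b,c})$, which is open), Lemmas~\ref{nice baire space lemma} and~\ref{nice Polish subspaces lemma} give that $\mathcal{PT}_\N\restriction_{\inj(\N)}$ is Polish. Next, each set $V_n=\{f\in\inj(\N):n\notin\im(f)\}$ from Definition~\ref{topological injective function monoids defn} equals $\bigcap_{k\in\N}(\inj(\N)\setminus U_{k,n})$, hence is closed in $\mathcal{PT}_\N\restriction_{\inj(\N)}$; applying Corollary~\ref{even more nice Polish subspaces cor} to $\{V_n:n\in\N\}$ then yields that $\mathcal{T}_0:=\mathcal{IPT}_\N\restriction_{\inj(\N)}$ is Polish.

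For the intermediate topologies I would define $\mathcal{T}_i$ for $i\in\N$ as the topology generated by $\mathcal{T}_0\cup\{W_0,\ldots,W_{i-1}\}$, $\mathcal{T}_\omega$ as that generated by $\mathcal{T}_0\cup\{W_n:n\in\N\}$, and $\mathcal{T}_{\omega+1}=\mathcal{CIPT}_\N$. The key observation is that each cumulative set $E_n:=\bigcup_{m<n}W_m$ equals $\inj(\N)\setminus D_n$ where $D_n:=\{f:|\N\setminus\im(f)|\geq n\}=\bigcup_{\{k_0,\ldots,k_{n-1}\}\subseteq\N}(V_{k_0}\cap\cdots\cap V_{k_{n-1}})$ is open in $\mathcal{T}_0$, hence $E_n$ is closed in $\mathcal{T}_0$. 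Applying Corollary~\ref{even more nice Polish subspaces cor} to $\{E_1,\ldots,E_i\}$ (respectively $\{E_n:n\in\N\}$) yields a Polish topology in which each $D_n$ becomes clopen and hence each $W_n=D_n\setminus D_{n+1}$ becomes open; by induction this topology equals $\mathcal{T}_i$ (respectively $\mathcal{T}_\omega$). Finally $W_\omega=\inj(\N)\setminus\bigcup_{n<\omega}W_n$ is closed in $\mathcal{T}_\omega$, so a last application of the corollary shows that $\mathcal{T}_{\omega+1}$ is Polish.

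To check that each $\mathcal{T}_i$ is compatible with the monoid operation, I will use the identity $|\N\setminus\im(fg)|=|\N\setminus\im(f)|+|\N\setminus\im(g)|$, which holds because $\im(fg)=(\im(f))g$ and $g$ is injective. This gives $(W_n)(*^{\inj(\N)})^{-1}=\bigcup_{k+l=n}W_k\times W_l$ for $n<\omega$ and $(W_\omega)(*^{\inj(\N)})^{-1}=(W_\omega\times\inj(\N))\cup(\inj(\N)\times W_\omega)$, both of which are open in the topologies in which the relevant $W_k$ are already open. Combined with compatibility of $\mathcal{T}_0$ from Lemma~\ref{inverse pointwise topologies are nice lemma}, this shows each $\mathcal{T}_i$ is a semigroup topology.

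Strictness of each containment is a matter of exhibiting witnesses. The set $V_0$ is not open in $\mathcal{PT}_\N\restriction_{\inj(\N)}$ at the shift $n\mapsto n+1$, since every basic $\mathcal{PT}_\N$-neighbourhood of the shift specifies only finitely many values. For $\mathcal{T}_i\subsetneq\mathcal{T}_{i+1}$ (and similarly $\mathcal{T}_\omega\subsetneq\mathcal{T}_{\omega+1}$), the basic $\mathcal{T}_i$-neighbourhoods of any $f\in W_i$ reduce to basic $\mathcal{T}_0$-neighbourhoods, because the extra subbasic sets $W_n$ with $n<i$ are disjoint from $W_i$ and so cannot contain $f$; each basic $\mathcal{T}_0$-neighbourhood pins down only finitely many values of $f$ and finitely many forbidden image points, so it contains injective extensions with any prescribed coimage size. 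I expect the main subtlety of the argument to be the bookkeeping in verifying that the Polish topology produced by Corollary~\ref{even more nice Polish subspaces cor} really agrees with the abstractly defined $\mathcal{T}_i$, which ultimately relies on the identity $W_n=D_n\setminus D_{n+1}$ becoming open once $E_n$ and $E_{n+1}$ have been added as open sets.
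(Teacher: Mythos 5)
Your proposal is correct and follows the same overall structure as the paper's proof: establish Polishness of the restricted pointwise topology, build the chain using Corollary~\ref{even more nice Polish subspaces cor}, verify compatibility via the coimage-additivity identity, and argue strictness by observing that a basic $\mathcal{T}_0$-neighbourhood of $f\in W_n$ admits injective extensions of $f$ of arbitrary coimage size. The two places where your organization diverges from the paper's are worth noting. For Polishness, you close off all the sets $E_n=\bigcup_{m<n}W_m$ in $\mathcal{T}_0$ at once and apply the corollary directly; the paper instead argues by induction on $i$, showing $W_{m}$ is closed in $\mathcal{T}_m$ at each step and invoking Lemma~\ref{more nice Polish subspaces lemma} separately for $\mathcal{T}_\omega$. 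Both work, and yours is arguably tidier since it avoids the case split. For compatibility, you compute the preimage $(W_n)(*^{\inj(\N)})^{-1}=\bigcup_{k+l=n}W_k\times W_l$ directly (noting that checking a subbasis suffices); the paper factors the coimage-size map through a surjective homomorphism $\phi_n:\inj(\N)\to\N_n$ onto a finite discrete semigroup $(\{0,\ldots,n\},\min(n,\cdot+\cdot))$ and then invokes Proposition~\ref{pull back topologies prop} and Lemma~\ref{combining topologies lemma}. Your approach is more hands-on but uses the same underlying fact that $|\N\setminus\im(fg)|=|\N\setminus\im(f)|+|\N\setminus\im(g)|$; the paper's packaging is a touch more conceptual and incidentally makes clear where the tropical-looking structure on $\{0,\ldots,n\}$ comes from. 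Your strictness argument is, if anything, stated more carefully than the paper's (which at one point appears to use a bijection that could land back in $W_n$ when $|B|=n$; your phrasing ``with any prescribed coimage size'' correctly leaves room to avoid this).
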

\begin{proof}
As \(\mathcal{PT}_\N\) is compatible with \(\N^\N\) and \(\mathcal{IPT}_\N\) is compatible with \(\mathcal{I}_\N\), both of these topologies are compatible with \(\inj(\N)\).
From Theorems~\ref{main N^N theorem} and \ref{main inverse monoid theorem}, the topologies \(\mathcal{PT}_\N\) and \(\mathcal{IPT}_\N\) are Polish.
As 
\[\N^\N\backslash \inj(\N) = \makeset{f\in \N^\N}{there are \(x, y\in \N\) with \(x\neq y\) and \((x)f= (y)f\)},\]
\[\mathcal{I}_\N\backslash \inj(\N) = \makeset{f\in \mathcal{I}_\N}{there is \(x\in \N\backslash \dom(f)\)},\]
the set \(\inj(\N)\) is closed in each of these Polish spaces. Thus by Lemma~\ref{nice Polish subspaces lemma}, the topologies \(\mathcal{PT}_\N\restriction_{\inj(\N)}\) and \(\mathcal{IPT}_\N\restriction_{\inj(\N)}\) are Polish. As the set \(\makeset{f\in \inj(\N)}{\(0\not\in \im(f)\)}\) is not open with respect to \(\mathcal{PT}_\N\restriction_{\inj(\N)}\), these topologies are also distinct.

For each \(i\leq \omega+ 1\), let \(\mathcal{T}_i\) be the topology generated by \(\mathcal{IPT}_\N\restriction_{\inj(\N)}\) as well as the sets \(\makeset{W_j}{\(j< i\)}\) (\(W_i\) is as in Example~\ref{topological injective function monoids defn}). 

We need to show that these topologies are Polish, compatible with \(\inj(\N)\), and distinct. We first show that they are Polish.
Suppose for a contradiction that there is \(n\in \N\cup \{\omega, \omega + 1\}\) such that \(\mathcal{T}_n\) is not Polish, and let \(n\) be the smallest such value. There are \(3\) cases to consider:
\begin{enumerate}
    \item If \(n= 0\), then \(\mathcal{T}_n\) is defined to be \(\mathcal{IPT}_\N\restriction_{\inj(\N)}\) which we have already shown is Polish.
    \item If \(n = \omega\), then \(\mathcal{T}_\omega\) is the topology generated by the topologies \((\mathcal{T}_i)_{i<\omega}\) which are all Polish. From Lemma~\ref{more nice Polish subspaces lemma}, it follows that \(\mathcal{T}_\omega\) is Polish as well.
    \item If \(n= m+1\) for some \(m\in \N\cup \{\omega\}\), then \(\mathcal{T}_m\) is Polish and
    \[\inj(\N)\backslash W_m=\left(\union{i<m} W_i\right) \cup\left(\union{A\subseteq\N\\n<|A|<\infty} \left(\intersection{a\in A} \makeset{f\in \inj(\N)}{\(a\not\in \im(f)\)}\right)\right).\]
    So \(W_m\) is closed with respect to \(\mathcal{T}_m\). As \(\mathcal{T}_{n}\) is generated by \(\mathcal{T}_m\) and \(W_m\), it follows from Corollary~\ref{even more nice Polish subspaces cor} that \(\mathcal{T}_m\) is Polish as well.
\end{enumerate}

We next show that they are distinct. It suffices to show that if \(n\in \N\cup \{\omega\}\), then \(W_n\) is not open with respect to \(\mathcal{T}_{n}\). Let \(f\in W_n\) and suppose that \(U\in \mathcal{T}_n\) is such that \(f\in U\).
As \(f\) is not an element of any of the sets \(W_m\) with \(m\neq n\), it follows that there is are finite \(a\subseteq f\) and \(B\subseteq \N\backslash \im(a)\) such that 
\[U\supseteq\makeset{g\in \inj(\N)}{\(a\subseteq g\) and \(\im(g)\cap B = \varnothing\)}\]
If \(b:\N\backslash\dom(a) \to \N\backslash (\im(a)\cup B)\) is any bijection, then \(a\cup b \in U\backslash W_m\). 
In particular \(U\neq W_n\). 
As \(U\) as arbitrary, it follows that \(W_n\not\in \mathcal{T}_n\).

It remains to show that all of these topologies are compatible with \(\inj(\N)\). For each \(n\in \N\cup \{\omega\}\), let \((\N_n, \mathcal{D}_n)\) be the discrete topological semigroup with universe \(\makeset{i\in \N}{\(i< n\)} \cup \{n\}\) and binary operation \(+_n\) defined by \(i+_n j = \min(n, i+j)\).
Define \(\phi_n: \inj(\N) \to \N_n\) by \((f)\phi_n = \min(n, |\N\backslash \im(f)|)\).
If \(f, g\in \inj(\N)\), then \(\N\backslash \im(f g)= (\N\backslash\im(f))g \cup (\N\backslash \im(g))\), so each map \(\phi_n\) is a semigroup homomorphism.
From Proposition~\ref{pull back topologies prop}, each of the topologies \((\mathcal{D}_n)\phi_n^{-1}\) is compatible with \(\inj(\N)\). 
Thus for each \(i\in \N\), it follows from Lemma~\ref{combining topologies lemma} that the topology generated by \(\mathcal{IPT}_\N\restriction_{\inj(\N)}\) and \((\mathcal{D}_n)\phi_n^{-1}\) is compatible with \(\inj(\N)\).

As the topology generated by \(\mathcal{IPT}_\N\restriction_{\inj(\N)}\) and \((\mathcal{D}_n)\phi_n^{-1}\) is the topology \(\mathcal{T}_{n+1}\), it remains only to show that \(\mathcal{T}_\omega\) is compatible with \(\inj(\N)\). As \(\mathcal{T}_\omega\) is generated by the topologies \(\mathcal{T}_n\) for \(n\in \N\), this follows from Lemma~\ref{combining topologies lemma}.
\end{proof}

The following property \textbf{W} proof is significantly easier that the previous ones we have seen so far, and unlike the previous ones we actually use different term operations for different elements.

\speeddicttwo{234}{injective property w lemma}{property W defn}{much Polish Theorem}
\begin{lemma}[Property \textbf{W} for \(\inj(\N)\), \Assumed{234}]\label{injective property w lemma}
The topological semigroup \((\inj(\N), \mathcal{CIPT}_\N)\) has property \textbf{W} with respect to \(\Sym(\N)\).
\end{lemma}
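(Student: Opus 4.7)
The plan is to take, for each $s \in \inj(\N)$, the perturbable element $t_s := 1_{\inj(\N)}$ (the identity map on $\N$, viewed as an element of $\Sym(\N)$) and the auxiliary element $t_{1,s} := s$, together with the term $a := x_1 x_0$. Then trivially $(t_s, t_{1,s}) a_{\{0,1\}}^{\inj(\N)} = s \cdot 1_{\inj(\N)} = s$, and the only substantive work is to show that for every neighbourhood $N \in \nbhd{\Sym(\N)}{1_{\inj(\N)}}$ the set $sN$ is a neighbourhood of $s$ in $(\inj(\N), \mathcal{CIPT}_{\N})$. This particularly simple choice of $t_s$ (in contrast to the elaborate permutations constructed in Lemmas \ref{BX property W}--\ref{inverse property W lemma}) suffices because $\mathcal{CIPT}_{\N}$ already records the coimage cardinality through the clopen pieces $W_n$, so no arithmetic trick is needed to encode this datum through a bijection.

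To verify the neighbourhood condition, first fix a finite $A \subseteq \N$ such that $\{h \in \Sym(\N) : (a)h = a \text{ for every } a \in A\} \subseteq N$, and set $n := |\N \setminus \im(s)|$. The candidate open neighbourhood of $s$ in $\mathcal{CIPT}_{\N}$ will be
\[
U := W_n \cap \bigcap_{x \in (A)s^{-1}} U_{x,(x)s} \cap \bigcap_{a \in A \setminus \im(s)} V_a,
\]
whose middle factor is a finite intersection because $s$ is injective and so $|(A)s^{-1}| \leq |A|$. It will suffice to show $U \subseteq sN$, which reduces to the following claim: given any $k \in U$, there exists $\eta \in \Sym(\N)$ with $(a)\eta = a$ for every $a \in A$ and $s\eta = k$.

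I would construct $\eta$ piecewise. On $\im(s)$ the equation $s\eta = k$ forces $(y)\eta := (ys^{-1})k$, which is well defined because $s$ is injective and which restricts to a bijection $\im(s) \to \im(k)$; moreover the sets $U_{x,(x)s}$ in $U$ ensure this prescription already agrees with the identity on $A \cap \im(s)$. On $A \setminus \im(s)$ I would set $\eta$ to be the identity, and the conditions $V_a$ guarantee that the target $A \setminus \im(s)$ is disjoint from $\im(k)$, so this definition is consistent with the piece just constructed on $\im(s)$. The third piece is to extend $\eta$ to a bijection from the leftover source $\N \setminus (\im(s) \cup A)$ onto the leftover target $\N \setminus (\im(k) \cup (A \setminus \im(s)))$.

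The key point -- and the main obstacle -- is ensuring that this final extension exists, i.e.\ that the two leftover sets have equal cardinality. A direct count shows both have cardinality $n - |A \setminus \im(s)|$ (interpreting $\omega - m$ as $\omega$ for finite $m$), and these agree precisely because the factor $W_n$ in $U$ forces $|\N \setminus \im(k)| = n = |\N \setminus \im(s)|$. This is exactly the step for which $\mathcal{CIPT}_{\N}$ is strictly stronger than $\mathcal{IPT}_{\N}$, and it is what allows the clean choice $t_s = 1_{\inj(\N)}$ to succeed in this lemma.
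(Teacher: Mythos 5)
Your proposal is correct and takes essentially the same approach as the paper: both use $f_s=s$, $h_s=1_{\inj(\N)}$, shrink the given neighbourhood of the identity to a point-stabiliser of a finite set $A$, take the analogous basic open set $U$ in $\mathcal{CIPT}_\N$ (same three conditions), and assemble the permutation $\eta$ from the forced piece $s^{-1}k$ on $\im(s)$, the identity on $A\setminus\im(s)$, and an arbitrary bijection on the leftover sets, with the $W_n$ factor exactly what guarantees those leftover sets are equinumerous.
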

\begin{proof}
It suffices to show that for all \(s\in \inj(\N)\), there is some \(f_s\in \inj(\N)\) and \(h_s\in \Sym(\N)\) such that \(f_sh_s= s\) and for all \(N\in \nbhd{\Sym(\N)}{h_s}\) we have \(f_s N\in \nbhd{\inj(\N)}{s}\).

Let \(s\in \inj(\N)\) be arbitrary. We define \(f_s:= s\) and \(h_s:= 1_{\inj(\N)}\). Let \(N\in \nbhd{\Sym(\N)}{h_s}\) be arbitrary.
As \(f_sh_s=s1_{\inj(\N)}= s\), it suffices to show that \(f_s N\in\nbhd{\inj(\N)}{s} \).

As \(N\in \nbhd{\Sym(\N)}{1_{\inj(\N)}}\), there is some finite \(A\subseteq N\) such that 
\[N\subseteq \makeset{h\in \Sym(\N)}{\((a)h= a\) for all \(a\in A\)}.\]
Let \(U\) be the neighbourhood 
\[\makeset{g\in \inj(\N)}{\((\{a\})g^{-1}= (\{a\})f^{-1}\) for all \(a\in A\) and \(|\N\backslash\im(g)|= |\N\backslash \im(f)|\)}\]
of \(f\), and let \(g\in U\) be arbitrary. It suffices to show that \(g\in f_s N\). We define \(h':= f^{-1}g\). Note that \(h'\) is a bijection from \(\im(f)\) to \(\im(g)\) and \(f h'=g\). 

By the choice of \(g\), we have that \(\im(g)\cap A= \im(f)\cap A\) and \(|\N\backslash \im(g)|=|\N\backslash \im(f)|\). Thus we can choose a bijection \(b:\N\backslash (\im(f)\cup A)\to\N\backslash (\im(g)\cup A)\). Let \(h:= h'\cup b \cup \makeset{(a, a)}{\(a\in A\backslash \im(f)\)}\). As \(h\) is a union of bijections whose domains and images each partition \(\N\), it follows that \(h\in \Sym(\N)\). As \(h'\subseteq h\), we also have that \(f h=f h'=g\).

It therefore suffices to show that \(h\in N\). By the definition of \(A\), we need only show that \(h\) fixes all elements of \(A\). Let \(a\in A\) be arbitrary.
If \(a\notin\im(f)\), then \((a, a)\in h\) by construction. If \(a\in \im(f)\), then \((a)h=(a)h'=(a)f^{-1}g\). As \((\{a\})g^{-1}= (\{a\})f^{-1}=\{(a)f^{-1}\}\), it follows that \((a)h=(a)f^{-1}g=a\) as required.
\end{proof}

\speeddictfive{235}{main inj theorem}{sym(N) has SCC cor}{topology comparison proposition}{property W is nice lemma}{much Polish Theorem}{injective property w lemma}
\begin{theorem}[The topologies of \(\inj(\N)\), \Assumed{235}]\label{main inj theorem}
All homomorphisms from the topological semigroup \((\inj(\N),\mathcal{CIPT}_\N)\) to second countable topological semigroups are continuous and
\[\mathcal{PT}\restriction_{\inj(\N)}=\mathcal{Z}(\inj(\N))=\mathcal{HM}(\inj(\N))\subsetneq \mathcal{SCC}(\inj(\N))= \mathcal{CIPT}_\N.\]
Moreover, the above topologies are Polish.
\end{theorem}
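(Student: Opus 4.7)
The plan is to prove the four listed equalities/inclusions together with the Polish-ness statement in three stages: first handle Polish-ness and the equality with \(\mathcal{CIPT}_\N\); then the equality chain among the three smaller topologies; and finally the strict inclusion. Polish-ness of \(\mathcal{PT}_\N\restriction_{\inj(\N)}\) and of \(\mathcal{CIPT}_\N\) is already recorded in Theorem~\ref{much Polish Theorem}, and once I show \(\mathcal{PT}_\N\restriction_{\inj(\N)} = \mathcal{Z}(\inj(\N)) = \mathcal{HM}(\inj(\N))\) the Polish-ness of the middle two topologies is immediate.

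For \(\mathcal{SCC}(\inj(\N)) = \mathcal{CIPT}_\N\) the containment \(\mathcal{CIPT}_\N \subseteq \mathcal{SCC}(\inj(\N))\) is immediate since \(\mathcal{CIPT}_\N\) is second countable and compatible with \(\inj(\N)\) (Theorem~\ref{much Polish Theorem}). For the reverse containment I would take an arbitrary homomorphism \(\phi\) from \((\inj(\N),\mathcal{CIPT}_\N)\) to a second countable topological semigroup and apply Lemma~\ref{property W is nice lemma} together with the property \textbf{W} of \((\inj(\N),\mathcal{CIPT}_\N)\) with respect to \(\Sym(\N)\) (Lemma~\ref{injective property w lemma}): it then suffices to check continuity of \(\phi\restriction_{\Sym(\N)}\). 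Because \(\Sym(\N) = W_0\) is clopen in \(\mathcal{CIPT}_\N\) and the sets \(V_x\) meet \(\Sym(\N)\) trivially, the subspace topology \(\mathcal{CIPT}_\N\restriction_{\Sym(\N)}\) coincides with the usual pointwise topology, so Theorem~\ref{automatic continuity examples thm} (equivalently Corollary~\ref{sym(N) has SCC cor}) gives continuity of \(\phi\restriction_{\Sym(\N)}\). The automatic continuity conclusion and \(\mathcal{SCC}(\inj(\N)) = \mathcal{CIPT}_\N\) then follow from Proposition~\ref{ac topology is nice}. The strict inclusion \(\mathcal{HM}(\inj(\N)) \subsetneq \mathcal{SCC}(\inj(\N))\) is a direct consequence of the strict inclusion \(\mathcal{PT}_\N\restriction_{\inj(\N)} \subsetneq \mathcal{CIPT}_\N\) from Theorem~\ref{much Polish Theorem}.

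The crux is the chain \(\mathcal{PT}_\N\restriction_{\inj(\N)} = \mathcal{Z}(\inj(\N)) = \mathcal{HM}(\inj(\N))\). The inclusion \(\mathcal{Z}(\inj(\N)) \subseteq \mathcal{HM}(\inj(\N))\) comes from Proposition~\ref{topology comparison proposition}, and \(\mathcal{HM}(\inj(\N)) \subseteq \mathcal{PT}_\N\restriction_{\inj(\N)}\) holds because \(\mathcal{PT}_\N\restriction_{\inj(\N)}\) is itself a Hausdorff topology compatible with \(\inj(\N)\). The remaining and hardest containment is \(\mathcal{PT}_\N\restriction_{\inj(\N)} \subseteq \mathcal{Z}(\inj(\N))\), equivalently that every basic open \(U_{x,y} = \{f \in \inj(\N) : (x)f = y\}\) is Zariski-open. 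Since \(\lambda_\sigma\) and \(\rho_\tau\) are continuous in \(\mathcal{Z}(\inj(\N))\) for every \(\sigma, \tau \in \Sym(\N)\) (Proposition~\ref{Zariski topologies are nice}), and since choosing \(\sigma, \tau \in \Sym(\N)\) with \((0)\sigma = x\) and \((y)\tau = 0\) yields \(U_{x,y} = (\lambda_\sigma \rho_\tau)^{-1}(U_{0,0})\), the whole problem reduces to showing the single set \(U_{0,0} = \{f \in \inj(\N) : (0)f = 0\}\) is Zariski-open. To do this I would exhibit, for each \(f \in U_{0,0}\), a Zariski neighbourhood of \(f\) contained in \(U_{0,0}\) by constructing finitely many elementary algebraic sets in one variable over \(\inj(\N)\) (using both permutations in \(\Sym(\N)\) and shift-type injections such as \(n \mapsto n+1\)) that collectively cover \(\{g : (0)g \neq 0\}\) while excluding \(f\). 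The main obstacle is precisely this step: unlike the cases \(\N^\N\) and \(\mathcal{P}_\N\) handled by Lemma~\ref{partial minimal T1 lemma}, the monoid \(\inj(\N)\) contains no constant maps and its only idempotent is the identity, so the usual polynomial-in-one-variable trick that isolates "\((x)f = y\)" via an equation \(c_x f c_y = c_y\) is simply not available, and a more delicate term construction mixing left and right multiplications by carefully chosen permutations and shifts appears to be required.
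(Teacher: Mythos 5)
Your proposal is structurally sound and, on the $\mathcal{SCC}(\inj(\N)) = \mathcal{CIPT}_\N$ side, tracks the paper's argument exactly: property~\textbf{W} with respect to $\Sym(\N)$ via Lemma~\ref{injective property w lemma} and Lemma~\ref{property W is nice lemma}, plus Corollary~\ref{sym(N) has SCC cor} on the clopen unit group $W_0 = \Sym(\N)$. The reduction of the Zariski question to a single set $U_{0,0}$ by precomposing and postcomposing with permutations is valid, although, as it turns out, unnecessary.

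The genuine gap is exactly where you flagged it, and a flag is not a proof. You have correctly diagnosed that the $c_x f c_y$ idempotent/constant-map trick of Lemma~\ref{partial minimal T1 lemma} is unavailable in $\inj(\N)$, but the sentence "a more delicate term construction mixing left and right multiplications by carefully chosen permutations and shifts appears to be required" leaves the central containment $\mathcal{PT}_\N\restriction_{\inj(\N)} \subseteq \mathcal{Z}(\inj(\N))$ unestablished; without it, the equalities $\mathcal{PT}_\N\restriction_{\inj(\N)} = \mathcal{Z}(\inj(\N)) = \mathcal{HM}(\inj(\N))$ are only one-sided inclusions, and the whole first displayed chain fails. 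Moreover, the "local" framing you propose (build, for each $f \in U_{0,0}$, a Zariski neighbourhood of $f$) points away from the argument that actually works: the complement $\{h : (x)h \neq y\}$ is a \emph{single} union of two elementary algebraic sets, the same two sets for every $h$. Fix $f_0, f_1 \in \inj(\N)$ with $\im(f_0) \cap \im(f_1) = \{x\}$, and $g_0, g_1 \in \inj(\N)$ with $(n)g_0 = (n)g_1 \iff n \neq y$; then $V_i := \{h : f_i h g_0 = f_i h g_1\}$ is elementary algebraic and equals $\{h : y \notin \im(f_i h)\}$, and the injectivity of $h$ gives $\im(f_0 h) \cap \im(f_1 h) = (\{x\})h$, whence $V_0 \cup V_1 = \{h : (x)h \neq y\}$ is $\mathcal{Z}$-closed. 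The injectivity of the variable $h$ is doing essential work here, and it is precisely the resource your proposal did not find a way to deploy.
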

\begin{proof}
From Theorem~\ref{much Polish Theorem}, it follows that \(\mathcal{PT}\restriction_{\inj(\N)}\subsetneq \mathcal{CIPT}_\N\) and both of these topologies are Polish.
As the topologies are Polish it follows also (Proposition~\ref{topology comparison proposition}) that
\[\mathcal{Z}(\inj(\N))=\mathcal{HM}(\inj(\N))\subseteq \mathcal{PT}\restriction_{\inj(\N)}\subsetneq \mathcal{CIPT}_\N\subseteq \mathcal{SCC}(\inj(\N)).\]
From Corollary~\ref{sym(N) has SCC cor}, we know that 
\[\mathcal{SCC}(\inj(\N))\restriction_{\Sym(\N)}\subseteq \mathcal{PT}\restriction_{\Sym(\N)}=\mathcal{CIPT}_\N\restriction_{\Sym(\N)}.\]
Thus from Lemmas~\ref{property W is nice lemma} and \ref{injective property w lemma}, it follows that \(\mathcal{SCC}(\inj(\N)) \subseteq \mathcal{CIPT}_\N\). It remains to show that \(\mathcal{PT}\restriction_{\inj(\N)}\subseteq \mathcal{Z}(\inj(\N))\).

Let \(x, y\in \N\) be arbitrary. We need only show that
\(\makeset{h\in \inj(\N)}{\((x)h\neq y\)}\)
is closed with respect to \(\mathcal{Z}(\inj(\N))\). Let \(f_0, f_1, g_0, g_1\in \inj(\N)\) be such that 
\[\im(f_0)\cap \im(f_1) = \{x\}\quad \text{ and }\quad(n)g_0 =(n)g_1 \iff n\neq y.\]
We then define
\[V_0:= \makeset{h\in \inj(\N)}{\(f_0h g_0=f_0h g_1\)}, \quad V_1:= \makeset{h\in \inj(\N)}{\(f_1 h g_0=f_1 h g_1\)}.\]
Note that both of these sets are elementary algebraic and hence closed with respect to \(\mathcal{Z}(\inj(\N))\).
From the choice of \(g_0, g_1\) we have
\[f_0 h g_0=f_0 h g_1 \iff (n)g_0 = (n)g_0 \text{ for all }n\in \im(f_0h)\iff y\not\in \im(f_0h),\]
\[f_1h g_0=f_1h g_1 \iff (n)g_0 = (n)g_0 \text{ for all }n\in \im(f_1h)\iff y\not\in \im(f_1h).\]
So \(V_0= \makeset{h\in \inj(\N)}{\(y\not\in \im(f_0h)\)}\) and \(V_1= \makeset{h\in \inj(\N)}{\(y\not\in \im(f_1h)\)}\). 
In particular, the set
\[V_0\cup V_1 =\makeset{h\in \inj(\N)}{\(y\not\in \im(f_0h)\cap \im(f_1h)\)}\]
is closed with respect to \(\mathcal{Z}(\inj(\N))\). 
It therefore suffices to show that \(y\not\in \im(f_0h)\cap \im(f_1h)\iff (x)h \neq y\).
As \(h\) is injective, we have
\[\im(f_0h)\cap \im(f_1h)=\im(f_0)h\cap \im(f_1)h=(\im(f_0)\cap \im(f_1))h.\]
From the choice of \(f_0\) and \(f_1\), it follows that
\begin{align*}
    y\not\in \im(f_0h)\cap \im(f_1h)&\iff y\not\in (\im(f_0)\cap \im(f_1))h\\
    &\iff y\not\in (\{x\})h\\
     &\iff (x)h\neq y
\end{align*}
as required.
\end{proof}

\subsection{The Hilbert cube}\label{hilbert subsection}
In this subsection we explore the continuous transformations of the Hilbert cube \([0, 1]^\N\) (see Theorem~\ref{hilber cube main theorem} and Corollary~\ref{Hilbert clone cor}).
Unlike the previous subsections, the main theorem of this subsection has no automatic continuity component.
However if one were to be established for the group \(\Aut([0, 1]^\N)\), we could extend it to \(C([0, 1]^\N)\) similarly to the previous subsections.

Similarly to the proof of Lemma~\ref{partial minimal T1 lemma}, we will think of the set of constant elements of a continuous function monoid as a copy of the space we are acting on.
This gives us a means of algebraically reconstructing our desired topology (the compact-open topology).
With this in mind the following pair of lemmas will be useful (in both this subsection and the Cantor space subsection).
\speeddicttwo{219}{closed constants lemma}{binary relations defns}{Zariski topology defn}
\begin{lemma}[Closed constants, \Assumed{219}]\label{closed constants lemma}
If \(S\) is a semigroup of functions with composition as the operation, then the set \(C\) of constant elements of \(S\) is closed with respect to \(\mathcal{Z}(S)\).
\end{lemma}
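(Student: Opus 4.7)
The plan is to prove that, after fixing any witness $c_0 \in C$, the entire set $C$ can be recovered as a single elementary algebraic subset of $S$, from which closedness in $\mathcal{Z}(S)$ will follow immediately from Definition~\ref{Zariski topology defn}. If $C = \varnothing$ there is nothing to prove, so I would assume $C \neq \varnothing$, fix some $c_0 \in C$ with constant value $v_0$ (where the functions in $S$ share a common underlying set $X$), and show that
$$C \;=\; E \;:=\; \{\, s \in S : c_0 s = s\,\}.$$

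The containment $C \subseteq E$ is a direct pointwise computation: for any $c \in C$ with constant value $v$ and any $x \in X$, one has $(x)(c_0 c) = ((x)c_0)c = (v_0)c = v = (x)c$, so $c_0 c = c$. For the reverse inclusion, if $s \in E$, then for every $x \in X$, $(x)s = (x)(c_0 s) = ((x)c_0)s = (v_0)s$, and the right-hand side does not depend on $x$; hence $s$ is constant (with value $(v_0)s$), so $s \in C$. The set $E$ is elementary algebraic using the parameter $c_0$, the semigroup term over $\{0,1\}$ whose term operation sends $(s, c_0)$ to $c_0 s$, and the trivial term over $\{0\}$ whose term operation sends $(s)$ to $s$; it is therefore closed in $\mathcal{Z}(S)$.

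The only step where a structural input about $c_0$ is used is the reverse inclusion, and the only fact used there is that $(x)c_0$ equals a fixed value $v_0$ for every $x$, which is precisely what ``constant function'' means. Thus no transitivity, richness, or further hypothesis on $S$ is needed beyond the bare assumption in the lemma, and the argument applies to an arbitrary semigroup of functions under composition. The main (mild) subtlety is noticing that choosing a single witness $c_0$ is enough to algebraically pin down all of $C$; once that observation is made, everything else is mechanical.
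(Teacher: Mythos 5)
Your proof is correct and follows essentially the same approach as the paper: fix a constant $c_0 \in C$ and use the identity $c_0 s = s$ to pin down the constants. The only (minor) difference is economy: you express $C$ directly as the single elementary algebraic set $\{s \in S : c_0 s = s\}$, whereas the paper defines $RO := \bigcap_{g\in S}\{f\in S : gf=f\}$ and shows $C = RO$ — the paper's verification of $RO \subseteq C$ already uses only the one constraint with $c$, so your streamlined version makes explicit that a single parameter suffices rather than an intersection of elementary algebraic sets.
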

\begin{proof}
If \(C= \varnothing\), then the result is clear.
Otherwise let \(c\in C\) be fixed.
By the definition of \(\mathcal{Z}(S)\), the set
\[RO:=\intersection{g\in S}\makeset{f\in S}{\(g f=f\)}\]
is closed with respect to \(\mathcal{Z}(S)\).
It therefore suffices to show that \(RO=C\).
As \(C\) consists of constant maps, it follows that \(C\subseteq RO\).
Moreover if \(f\in RO\) be arbitrary, then \(cf=f\). As \(c\) is a constant map, it follows that \(cf\) is a constant map as well. Thus \(f=cf\in C\) as required.
\end{proof}

\speeddicttwo{220}{Containing the compact-open topology}{Hausdorff defn}{Composition of continuous maps is continuous}
\begin{lemma}[Containing the compact-open topology, \Assumed{220}]\label{Containing the compact-open topology}
If \(X\) and \(Y\) are topological spaces, \(\mathcal{T}\) is a topology on the set \(C(X, Y)\) and the function \(a:X\times (C(X, Y),\mathcal{T})\to Y\) defined by \((x, f)a = (x)f\) is continuous, then \(\mathcal{T}\) contains the compact-open topology (Recall Definition~\ref{continuous function space defn}).
\end{lemma}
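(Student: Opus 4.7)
The plan is to show directly that every subbasis element of the compact-open topology lies in $\mathcal{T}$. So fix a compact $F \subseteq X$ and an open $U \subseteq Y$, and set
\[S(F, U) := \makeset{f\in C(X, Y)}{\((F)f\subseteq U\)}.\]
I need to show $S(F,U) \in \mathcal{T}$. It suffices to show that $S(F,U)$ is a neighbourhood (with respect to $\mathcal{T}$) of each of its points, so fix $f_0 \in S(F,U)$.

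The key observation is that for every $x \in F$, we have $(x, f_0)a = (x)f_0 \in U$, so $(x, f_0) \in (U)a^{-1}$. Since $a$ is continuous, $(U)a^{-1}$ is open in the product topology on $X \times (C(X,Y), \mathcal{T})$, so there exist an open neighbourhood $V_x$ of $x$ in $X$ and an open neighbourhood $W_x$ of $f_0$ in $(C(X,Y), \mathcal{T})$ such that $V_x \times W_x \subseteq (U)a^{-1}$; equivalently, $(y)g \in U$ for all $y \in V_x$ and $g \in W_x$.

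The family $\{V_x : x \in F\}$ is then an open cover of $F$, and I would apply compactness to extract a finite subcover $V_{x_0}, V_{x_1}, \ldots, V_{x_{n-1}}$. Setting $W := \intersection{i<n} W_{x_i}$ gives a finite intersection of elements of $\mathcal{T}$, so $W \in \mathcal{T}$, and $f_0 \in W$ by construction. For any $g \in W$ and any $y \in F$, choose $i<n$ with $y \in V_{x_i}$; then since $g \in W_{x_i}$, we get $(y)g \in U$. Thus $(F)g \subseteq U$, which means $g \in S(F,U)$. So $W$ is a neighbourhood of $f_0$ with respect to $\mathcal{T}$ contained in $S(F,U)$, completing the proof.

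There is no real obstacle here; this is essentially the standard ``tube lemma'' style argument showing that joint continuity of evaluation forces the topology on the function space to refine the compact-open topology. The continuity hypothesis on $a$ does all the work, and compactness of $F$ is used only to pass from the pointwise choice of $(V_x, W_x)$ to a single element $W$ of $\mathcal{T}$. Neither Hausdorffness of the spaces nor Corollary~\ref{Composition of continuous maps is continuous} is actually needed for this direction.
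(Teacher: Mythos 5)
Your proof is correct and is essentially the same argument as the paper's: both use continuity of $a$ to produce open rectangles inside $(U)a^{-1}$ around $F \times \{f_0\}$, extract a finite subfamily by compactness of $F$, intersect the function-space factors, and observe that the resulting tube $F \times W$ sits inside $(U)a^{-1}$. The only cosmetic difference is that the paper starts from an arbitrary decomposition of $(U)a^{-1}$ into rectangles and normalizes afterwards, whereas you choose a basic rectangle around each point $(x, f_0)$ from the outset.
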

\begin{proof}
Let \(f\in C(X, Y)\), compact \(C\subseteq X\) and open \(U\subseteq Y\) be arbitrary such that \((C)f\subseteq U\).
It suffices to show that 
\[V:= \makeset{g\in C(X, Y)}{\((C)g\subseteq U\)}\]
is a neighbourhood of \(f\) with respect to \(\mathcal{T}\).

As \(a\) is continuous, it follows that the set \((U)a^{-1}\) is open in \(X\times (C(X, Y), \mathcal{T})\).
Thus we can choose an index set \(I\), a collection of open subsets \((U_i)_{i\in I}\) of \(X\), and a collection of open subsets \((W_i)_{i\in I}\) of \((C(X, Y), \mathcal{T})\) such that
\[(U)a^{-1} = \union{i\in I} U_i \times W_i.\]
As \(C\) is compact and \(C\times \{f\}\subseteq (U)a^{-1}\), there is a finite set \(F\subseteq I\), such that 
\[C\times \{f\}\subseteq \union{i\in F} U_i \times W_i\subseteq (U)a^{-1}.\]
We can assume without loss of generality that \(f\in W_i\) for all \(i\in F\). 
Let \(W := \intersection{i\in F} W_i\), and note that \(W\) is a neighbourhood of \(f\) with respect to \(\mathcal{T}\).
By the definition of \(W\), we have \(C\times W\subseteq (U)a^{-1}\). So \(f\in W\subseteq V\), and \(V\) is a neighbourhood of \(f\) as required.
\end{proof}

The proof of the following lemma involves a somewhat unusual application of Theorem~\ref{comparable Polish group topologies theorem}, in which we show that two spaces are homeomorphic by giving them a group structure.

\speeddictfive{222}{continuous Hilbert cube action lemma}{nice Polish subspaces lemma}{even more nice Polish subspaces cor}{comparable Polish group topologies theorem}{topology comparison proposition}{closed constants lemma}
\begin{lemma}[Continuous action on \({[0, 1]}^\N\) ,\Assumed{222}]\label{continuous Hilbert cube action lemma}
Suppose that \(\mathcal{T}\) is a Polish topology compatible with the semigroup \(C([0, 1]^\N)\). In this case the map \(a:[0, 1]^\N \times (C([0, 1]^\N), \mathcal{T}) \to [0, 1]^\N\) defined by \((x, f)a = (x)f\) is continuous.
\end{lemma}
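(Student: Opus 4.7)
The plan is to identify the subspace of constant maps in $(C([0,1]^\N), \mathcal{T})$ with $[0,1]^\N$ as a Polish space via the natural bijection $c_x\mapsto x$, and then deduce continuity of $a$ by a standard factoring argument. Let $C\subseteq C([0,1]^\N)$ denote the set of constant maps. Since $\mathcal{T}$ is Polish (hence Hausdorff), Proposition~\ref{topology comparison proposition} gives $\mathcal{Z}(C([0,1]^\N))\subseteq\mathcal{HM}(C([0,1]^\N))\subseteq\mathcal{T}$, and Lemma~\ref{closed constants lemma} shows $C$ is closed in the Zariski topology, hence in $\mathcal{T}$. Lemma~\ref{nice Polish subspaces lemma} then gives that $(C,\mathcal{T}\restriction_C)$ is Polish. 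Define $\gamma:C\to[0,1]^\N$ by $\gamma(c_x)=x$ and let $\mathcal{T}_\gamma$ be the unique topology on $[0,1]^\N$ making $\gamma$ a homeomorphism; $\mathcal{T}_\gamma$ is Polish, and the main claim is that it coincides with the standard Hilbert cube topology $\mathcal{T}_{\mathrm{std}}$.

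To prove this coincidence I would apply Theorem~\ref{comparable Polish group topologies theorem}. The key algebraic observation is that for every $g\in C([0,1]^\N)$, the self-map $g$ of $[0,1]^\N$ is $\mathcal{T}_\gamma$-continuous: via $\gamma$ it corresponds to $\rho_g\restriction_C:c_x\mapsto c_x\circ g=c_{g(x)}$, the restriction of the $\mathcal{T}$-continuous right-multiplication map $\rho_g$. Thus if we equip $[0,1]^\N$ with a group structure whose left and right translations $L_a,R_a$ are standard-continuous, then both $([0,1]^\N,\mathcal{T}_{\mathrm{std}})$ and $([0,1]^\N,\mathcal{T}_\gamma)$ are Polish semitopological groups, and producing a continuous group isomorphism between them forces $\mathcal{T}_\gamma=\mathcal{T}_{\mathrm{std}}$ by the theorem.

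The hard part will be supplying this group structure together with a continuous isomorphism between the two topologies (e.g.\ the identity map continuous in one direction): the Hilbert cube does not admit any topological group structure — it is compact and contractible, which rules this out — so the group structure must only be semitopological, likely constructed by exploiting the Hilbert cube's homogeneity (for instance via Theorem~\ref{extending Hilbert homeomorphisms theorem}). Once $\gamma$ is shown to be a homeomorphism, continuity of $a$ follows at once: the map $(x,f)\mapsto c_x\circ f=c_{f(x)}$ factors as $\gamma^{-1}\times\operatorname{id}$ followed by the $\mathcal{T}$-continuous semigroup multiplication in $C([0,1]^\N)$ landing in $C$, and postcomposing with the now-continuous $\gamma$ recovers $a$.
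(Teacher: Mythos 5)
Your outline correctly identifies the top-level strategy — factor $a$ through the semigroup multiplication and reduce to showing the natural bijection $\gamma:C\to[0,1]^\N$ is a homeomorphism, then try to use Theorem~\ref{comparable Polish group topologies theorem} — and you rightly observe that the Hilbert cube cannot carry a topological group structure, so one must settle for a semitopological group. But you leave the central step as a declared gap (``the hard part will be supplying this group structure''), and your guess at how to fill it (Theorem~\ref{extending Hilbert homeomorphisms theorem} and Hilbert-cube homogeneity) is not what works. The paper's resolution is genuinely different: it does \emph{not} put any group structure on $[0,1]^\N$ itself. Instead it restricts to the $G_\delta$ subspace $(0,1)^\N$ — still Polish by Corollary~\ref{even more nice Polish subspaces cor} once $\gamma$ is shown continuous — and transfers the additive group structure of $\mathbb{R}^\N$ to $(0,1)^\N$ via coordinatewise order isomorphisms $\phi_i:(0,1)\to\mathbb{R}$. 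The crucial computation is then that each right translation in this group equals, under $\gamma$, right multiplication $\rho_{h'}$ by a specific element $h'\in C([0,1]^\N)$: this is possible precisely because coordinatewise order automorphisms of $(0,1)$ extend to homeomorphisms of $[0,1]$. That makes $\mathcal{T}$ restricted to the constant maps with value in $(0,1)^\N$ semicompatible with the group, and Theorem~\ref{comparable Polish group topologies theorem} then applies to the continuous group isomorphism $I\phi$. This ``pass to an open sub-cube and borrow a group structure'' idea is the missing ingredient, and your appeal to $\mathcal{T}_\gamma$-continuity of each $g\in C([0,1]^\N)$ — while true via $\rho_g$ — does not by itself yield a semitopological group, because the translations of an arbitrary group structure need not all lie in $C([0,1]^\N)$.

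There is a second omission. Before the group-topology argument can even be invoked, you need one direction of comparability: the paper proves separately, in a self-contained Claim, that $\gamma$ (the paper's $I$) is continuous, i.e.\ that $\mathcal{T}\restriction_C$ refines the standard topology pulled back along $\gamma$. Your sketch never establishes either direction of comparison; it only assembles the pieces on the assumption that $\mathcal{T}_\gamma$ and $\mathcal{T}_{\mathrm{std}}$ can be compared. That continuity-of-$\gamma$ step uses an explicit ``clamp to $(a,b)$ in coordinate $i$'' map $g\in C([0,1]^\N)$ and the closedness of finite subsets of the Fréchet space $(C,\mathcal{T}\restriction_C)$; it is a genuine (if short) argument and should not be elided. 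In summary: the skeleton is right, but both load-bearing steps — continuity of $\gamma$, and the restriction to $(0,1)^\N$ with the transferred $\mathbb{R}^\N$-group structure — are missing, and the one you did try to guess at goes in the wrong direction.
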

\begin{proof}
Let \(C\) denote the set of constant maps from \([0, 1]^\N\) to itself.
By Lemma~\ref{closed constants lemma}, the set \(C\) is closed with respect to \(\mathcal{Z}(C([0, 1]^\N))\).
So by Proposition~\ref{topology comparison proposition}, it follows that \(C\) is closed with respect to \(\mathcal{T}\). Thus by Lemma~\ref{nice Polish subspaces lemma}, the topology \(\mathcal{T}\restriction_{C}\) is Polish.

Let \(I:C\to [0, 1]^\N\) be the bijection sending a constant map to the unique point in its image.
Note that \(a = \langle \pi_0I^{-1}, \pi_1 \rangle_{C \times C([0, 1]^\N)} \circ *^{C([0, 1]^\N)} \circ I.\)
Thus to show that \(a\) is continuous, it suffices to show that \(I:C\to [0, 1]^\N\) is a homeomorphism.

\underline{Claim:} The map \(I\) is continuous.\\
\underline{Proof of claim:}
Let \(a, b\in \mathbb{R}\), and \(i\in \N\) be arbitrary with \(a<b\). It suffices to show that the set
\[U:=\makeset{f\in C}{\(((f)I)\pi_i\in (a, b)\)}\]
is open with respect to \(\mathcal{T}\restriction_{C}\). For each \(j\in \N \backslash \{i\}\), define \(g_j \in C([0, 1]^\N, [0, 1])\) to be the constant map with value \(0\). 
We then define \(g_i\in C([0, 1]^\N, [0, 1])\) to be the map with
\[(f)g_i= \left\{\begin{array}{lr}
(f)\pi_i     & \text{ if } (f)\pi_i\in (a, b)\\
 a    & \text{ if }(f)\pi_i\leq a\\
  b    & \text{ if }(f)\pi_i\geq b.
\end{array}\right\}\]
The map \(g:= \langle (g_j)_{j\in \N}\rangle_{[0, 1]^\N}\) is continuous, and is thus an element of \(C([0, 1]^\N)\).
Note that 
\[U  = \makeset{f\in C}{\(((f)\rho_g)I\pi_i\not \in\{a, b\}\)}=C \backslash ((\{a, b\})(I\pi_i)^{-1} \cap \im(\rho_g)) \rho_g^{-1}.\]
As the map \(\rho_g\) is continuous and the set \((\{a, b\})(I\pi_i)^{-1} \cap \im(\rho_g)\) has size two (and is hence closed), it follows that \(U\) is open with respect to \(\mathcal{T}\).\(\diamondsuit\)\\
It remains to show that \(I^{-1}\) is continuous. For each \(i\in \N\), we define \(f_i, g_i:[0, 1]\to [0, 1]\) by
\[(x)f_i = \frac{x}{2} + \frac{1}{4},\quad \text{ and }\quad(x)g_i = \left\{\begin{array}{cc}
  (x)f_i^{-1}   & \text{ if }x\in [\frac{1}{4}, \frac{3}{4}] \\
1     & \text{ if }x\in [\frac{3}{4}, 1]\\
0 & \text{ if } x\in [0, \frac{1}{4}]
\end{array}\right\}.\]
We then define \(f, g\in C([0, 1]^\N)\) by \(f=\langle (\pi_if_i)_{i\in \N}\rangle_{[0, 1]^\N}\) and \(g=\langle (\pi_i g_i)_{i\in \N}\rangle_{[0, 1]^\N}\).
In particular \(f g=\langle (\pi_if_i g_i)_{i\in \N}\rangle_{[0, 1]^\N}=\langle (\pi_i)_{i\in \N}\rangle_{[0, 1]^\N}\) is the identity function, and \(\im(f) = [\frac{1}{4}, \frac{3}{4}]^\N\subseteq (0, 1)^\N\).
As \(I^{-1}= f \circ I^{-1}\restriction_{(0, 1)^\N} \circ \rho_g\), it suffices to show that \(I\restriction_{(0, 1)^\N}\) is continuous.

For each \(i\in \N\), let \(\phi_i: (0, 1) \to \mathbb{R}\) be an order preserving homeomorphism. 
We then define \(\phi\in C((0, 1)^\N, \mathbb{R}^\N)\) to be \(\langle (\pi_i\phi_i)_{i\in \N} \rangle_{\mathbb{R}^\N}\).
This allows us to define a group structure on \(((0, 1)^\N)I^{-1}\) by \(xy = ((x)I\phi + (y)I\phi)(I\phi)^{-1}\) and \(x^{-1}= (-(x)I\phi)(I\phi)^{-1}\) for all \(x, y\in (0, 1)^\N\).
By definition (and the claim), the map \(I\phi\) is a continuous group isomorphism from \(((0, 1)^\N)I^{-1}\) to \(\mathbb{R}^\N\).
As \(I^{-1}\restriction_{(0, 1)^\N}= \phi(I\phi)^{-1}\) and \(\phi\) is continuous, it suffices to show that \((I\phi)^{-1}\) is continuous.
Thus by Theorem~\ref{comparable Polish group topologies theorem}, we need only show that \(\mathcal{T}\restriction_{((0, 1)^\N)I^{-1}}\) is Polish and semicompatible with the group structure on \(((0, 1)^\N)I^{-1}\).

The set \((0, 1)^\N\) is a countable intersection of open subsets of \([0, 1]^\N\), so by the continuity of \(I\), the set \(((0, 1)^\N)I^{-1}\) is a countable intersection of open subsets of \(C\).
As \(C\) is Polish, it follows from Corollary~\ref{even more nice Polish subspaces cor}, that \(((0, 1)^\N)I^{-1}\) is Polish.

It remains to show that \(\mathcal{T}\restriction_{((0, 1)^\N)I^{-1}}\) is semicompatible with the group structure on \(((0, 1)^\N)I^{-1}\).
Let \(h\in ((0, 1)^\N)I^{-1}\) be arbitrary.
As \(((0, 1)^\N)I^{-1}\) is commutative, it suffices to show that \(\rho_h\) is continuous with respect to \(\mathcal{T}\restriction_{((0, 1)^\N)I^{-1}}\) (here \(\rho_h\) is right multiplication by \(h\) in the group \(((0, 1)^\N)I^{-1}\)).
Let \(x\in ((0, 1)^\N)I^{-1}\) be arbitrary.
We have
\begin{align*}
    (x)\rho_h= ((x)I\phi + (h)I\phi)(I\phi)^{-1}= (((x)I\phi)\rho_{(h)I\phi})(I\phi)^{-1}= (x)I\phi\rho_{(h)I\phi}\phi^{-1}I^{-1}.
\end{align*}
Let \(h^*:= \phi\rho_{(h)I\phi}\phi^{-1}\).
For each \(i\in \N\), define \(h_i^*:(0, 1) \to (0, 1)\) to be the map \(\phi_i\rho_{(h)I\pi_i\phi_i}\phi_i^{-1}\), and note that \(h^*=\langle (\pi_i h_i^*)_{i\in \N}\rangle_{[0, 1]^\N}\). 
Thus for each \(i\in \N\), the map \(h_i':=h_i^* \cup \{(0, 0), (1, 1)\}\) is an order isomorphism (and hence homeomorphism) of \([0, 1]\).
We can thus define \(h'\in C([0, 1]^\N)\) by \(h':= \langle (h_i')_{i\in \N}\rangle_{[0, 1]^\N}\).
As \(h^*\subseteq h'\), we have \((x)\rho_h= (x)I h' I^{-1}\).
As \(x\in C\) and \(h'\in C([0, 1]^\N)\), it follows that
\[(x)\rho_h=I\phi\rho_{(h)I\phi}\phi^{-1}I^{-1}=I h^*I^{-1} = (x)\rho_{h'}\]
where \(\rho_h\) is using the group operation on \(((0, 1)^\N)I^{-1}\) and \(\rho_{h'}\) is using the semigroup operation on \(C([0, 1]^\N)\).
As \(\rho_{h'}\) is continuous by assumption, it follows that \(\rho_h\) is also continuous as required.
\end{proof}

As in the previous subsections, we need to establish property \textbf{W} for \(C([0, 1]^\N)\).
Most of the work required to establish this is in the proof of the following theorem from \cite{Mill2001aa}.
The proof is somewhat lengthy and only tangentially related to the theme of this document so we do not include it here.

\speeddictzero{223}{extending Hilbert homeomorphisms theorem}
\begin{theorem}[cf. Theorem 5.2.4 of~\cite{Mill2001aa}, Extending Hilbert homeomorphisms]\label{extending Hilbert homeomorphisms theorem}
Let \(d: [0, 1]^\N \times [0, 1]^\N \to \mathbb{R}\) be defined by
\[(x, y)d= \sum_{n\in \N}\frac{|(x)\pi_n- (y)\pi_n|}{2^{n+1}}.\]
If \(\varepsilon>0\), \(C_0, C_1\subseteq (0, 1)^\N\) are compact and \(h':C_0 \to C_1\) is a homeomorphism with 
\[\sup_{(x, y)\in h'}(x, y)d < \varepsilon,\] then there is \(h\in \Aut([0, 1]^\N)\) such that \(h'\subseteq h\) and \(\sup_{(x, y)\in h}(x, y)d < \varepsilon\).
\end{theorem}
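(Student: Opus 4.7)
The plan is to follow the standard infinite-dimensional topology route via the Z-set unknotting theorem for the Hilbert cube \(Q := [0,1]^\N\). Recall that a closed subset \(A \subseteq Q\) is called a \emph{Z-set} if for every \(\delta > 0\) there is a continuous \(f : Q \to Q\) with \(\sup_{x \in Q}(x,(x)f)d < \delta\) and \((Q)f \cap A = \varnothing\). The argument splits into three stages.

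First I would verify that any compact \(K \subseteq (0,1)^\N\) is a Z-set in \(Q\). Given \(\delta > 0\), use the tail bound \(\sum_{n > N} 2^{-(n+1)} < \delta/2\) to choose \(N\) large, and use compactness of \(K\) together with \(K \subseteq (0,1)^\N\) to find \(\eta > 0\) with \((K)\pi_n \subseteq [\eta, 1-\eta]\) for every \(n \leq N\). Then construct a map \(f : Q \to Q\) of the form \(\langle (\pi_n f_n)_{n \in \N}\rangle_{[0,1]^\N}\) which for \(n \leq N\) perturbs the \(n\)-th coordinate by less than \(\delta/2\) in a way that pushes every point off \([\eta, 1-\eta]\) in at least one of the first \(N\) coordinates, and for \(n > N\) is the identity. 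Then \((Q)f \cap K = \varnothing\) and \(\sup_{x \in Q} (x,(x)f)d < \delta\), so both \(C_0\) and \(C_1\) are Z-sets.

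Second, and this is the heart of the matter, I would invoke the \emph{Z-set unknotting theorem with metric control}: if \(A, B \subseteq Q\) are Z-sets and \(g : A \to B\) is a homeomorphism with \(\sup_{(a,b) \in g}(a,b)d < \delta\), then for any \(\delta' > \delta\) there is \(G \in \Aut(Q)\) with \(g \subseteq G\) and \(\sup_{(x,y) \in G}(x,y)d < \delta'\). This is the lengthy part (it is essentially the content of van Mill's Theorem 5.2.4). The standard proof is a back-and-forth construction: one builds a sequence of homeomorphisms \(G_i \in \Aut(Q)\), each agreeing with \(g\) on \(A\), such that \((G_i)_{i \in \N}\) is Cauchy in the sup metric on \(C(Q)\) and its limit \(G\) is still a homeomorphism. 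At each stage, one perturbs the current approximation inside an arbitrarily small tube around a collared neighbourhood of a Z-set, using that Z-sets in \(Q\) admit neighbourhoods that deformation retract onto them (via the Z-set property itself applied repeatedly, combined with the infinite-dimensional structure of \(Q\) which provides enough ``room'' to absorb perturbations). The key technical ingredient throughout is that Z-sets can be ``homeomorphed past'' one another without disturbing the rest of \(Q\) by more than a prescribed amount.

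Finally, applying this unknotting theorem to \(A = C_0\), \(B = C_1\), \(g = h'\), and choosing any \(\delta' \in (\sup_{(x,y) \in h'}(x,y)d,\ \varepsilon)\) produces the desired \(h \in \Aut([0,1]^\N)\) with \(h' \subseteq h\) and \(\sup_{(x,y) \in h}(x,y)d < \delta' < \varepsilon\). The main obstacle, which accounts for the ``lengthy and only tangential'' remark in the excerpt, is the inductive back-and-forth in stage two; the verification that compact subsets of \((0,1)^\N\) are Z-sets (stage one) and the bookkeeping on \(\delta'\) (stage three) are routine.
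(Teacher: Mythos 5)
The paper does not prove this statement at all: it cites Theorem 5.2.4 of van Mill's book and explicitly declines to reproduce the argument, so there is no ``paper proof'' to compare against. Your outline (reduce to the controlled Z-set unknotting theorem) is the standard route and is the right one in spirit, but stage one as you have written it is broken. You propose to move each of the first \(N\) coordinates by less than \(\delta/2\) so that every point lands outside \([\eta,1-\eta]\) in at least one of those coordinates; but each coordinate map \(f_n\) in \(\langle(\pi_n f_n)_{n\in\N}\rangle\) sees only the \(n\)-th coordinate, so a point whose first \(N\) coordinates all sit at \(1/2\) cannot be pushed out of \([\eta,1-\eta]\) in any of them by a perturbation of size \(\delta/2<\tfrac12-\eta\). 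The correct and much simpler verification perturbs a single \emph{deep} coordinate: choose \(N\) with \(2^{-(N+1)}<\delta\), use compactness of \(K\subseteq(0,1)^\N\) to get \(\eta>0\) with \((K)\pi_N\subseteq[\eta,1-\eta]\), and set \(f\) to be the identity on every coordinate except the \(N\)-th, where it is \(t\mapsto t\eta/2\). This moves every point by less than \(2^{-(N+1)}<\delta\) yet its image misses \(K\) entirely, so \(K\) is a Z-set.

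There is also a gap (which you yourself flag) in stage two: the ``controlled Z-set unknotting'' statement you invoke is essentially the target theorem, and your description of the back-and-forth construction is too vague to be checkable. Given that the paper treats this result as a black box from the literature, treating stage two as a citation is acceptable, but then you should present stages one and three as a reduction to van Mill's theorem rather than as a self-contained proof; as written, the proposal reads as if stage two were being proved when it is only being named. If you do intend to give a self-contained argument, the back-and-forth requires substantially more structure than ``perturb in a tube around a collared neighbourhood'': one needs an estimated homeomorphism extension theorem, absorption of Z-sets into small neighbourhoods, and a convergence criterion ensuring the limit of the \(G_i\) is injective, none of which appears in your sketch.
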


\speeddictsix{224}{Hilbert property W lemma}{Numbers defns}{Products in Categories Defn}{product topology defn}{Composition of continuous maps is continuous}{property W defn}{extending Hilbert homeomorphisms theorem}
\begin{theorem}[Property \textbf{W} for \(C({[0, 1]}^\N)\), \Assumed{224}]\label{Hilbert property W lemma}
The topological semigroup \(C([0, 1]^\N)\) has property \textbf{W} with respect to \(\Aut([0, 1]^\N)\).
\end{theorem}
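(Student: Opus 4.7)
The plan is to show, for each $s \in C([0,1]^\N)$, that $s$ factors as $g_s \cdot \operatorname{id}_{[0,1]^\N} \cdot f_s$ for suitable $g_s, f_s \in C([0,1]^\N)$ in such a way that small $\Aut([0,1]^\N)$-perturbations of $\operatorname{id}$ in the middle slot recover arbitrary small $C([0,1]^\N)$-perturbations of $s$. The witnessing term for property $\mathbf{W}$ is then $a(x_0, x_1, x_2) = x_1 x_0 x_2$ with $t_s = \operatorname{id}$, $t_{1,s} = g_s$, $t_{2,s} = f_s$. The mechanism for producing the required elements of $\Aut([0,1]^\N)$ is Theorem~\ref{extending Hilbert homeomorphisms theorem}: homeomorphisms with small displacement between compact subsets of $(0,1)^\N$ extend to global autohomeomorphisms with the same displacement bound.

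I would fix a coordinate-interleaving homeomorphism $\Phi : [0,1]^\N \to [0,1]^\N \times [0,1]^\N$, so that $\Phi^{-1}([\tfrac14,\tfrac34]^\N \times [\tfrac14,\tfrac34]^\N) = [\tfrac14,\tfrac34]^\N \subseteq (0,1)^\N$, and let $\alpha : [0,1]^\N \to [\tfrac14,\tfrac34]^\N$ be the coordinate-wise compression $y \mapsto y/2 + 1/4$. Let $\tilde\alpha^{-1} : [0,1]^\N \to [0,1]^\N$ be the continuous extension of $\alpha^{-1}$ obtained by clamping each coordinate below $\tfrac14$ to $0$ and above $\tfrac34$ to $1$. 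Define $(x)g_s = \Phi^{-1}(\alpha(x), \alpha((x)s))$ and $(y)f_s = ((y)\Phi \pi_1)\tilde\alpha^{-1}$. Then a direct computation gives $(x)(g_s \operatorname{id} f_s) = \tilde\alpha^{-1}(\alpha((x)s)) = (x)s$, so $g_s \operatorname{id} f_s = s$. Moreover the compact image $\Gamma_s' := \im(g_s)$ sits in $[\tfrac14,\tfrac34]^\N \subseteq (0,1)^\N$, and $g_s$ is a homeomorphism onto $\Gamma_s'$ because its first $\Phi$-coordinate $\alpha(\cdot)$ is already injective.

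Given a neighborhood $N$ of $\operatorname{id}$ in $\Aut([0,1]^\N)$, I would choose $\varepsilon > 0$ with $\{h \in \Aut([0,1]^\N) : d_\infty(h, \operatorname{id}) < \varepsilon\} \subseteq N$, where $d_\infty$ is the supremum metric arising from the metric $d$ on $[0,1]^\N$ of Theorem~\ref{extending Hilbert homeomorphisms theorem} (this is a neighborhood by Theorem~\ref{compact-open is Polish thm}). For any $s' \in C([0,1]^\N)$, the assignment
\[
\phi_{s'} : \Phi^{-1}(\alpha(x), \alpha((x)s)) \longmapsto \Phi^{-1}(\alpha(x), \alpha((x)s'))
\]
is a well-defined homeomorphism from $\Gamma_s'$ onto a compact set $\Gamma_{s'}' \subseteq (0,1)^\N$ (well-definedness and injectivity both follow from injectivity of $\alpha$ in the first $\Phi$-coordinate, and it is a continuous bijection between compact Hausdorff spaces). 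Using that $\alpha$ is $\tfrac12$-Lipschitz and $\Phi^{-1}$ is uniformly continuous, I would pick $\eta > 0$ so that whenever $d_\infty(s, s') < \eta$ we have $\sup_{p \in \Gamma_s'} d(p, (p)\phi_{s'}) < \varepsilon$. Theorem~\ref{extending Hilbert homeomorphisms theorem} then produces $h_{s'} \in \Aut([0,1]^\N)$ extending $\phi_{s'}$ with $d_\infty(h_{s'}, \operatorname{id}) < \varepsilon$, hence $h_{s'} \in N$, and
\[
(x)(g_s h_{s'} f_s) = ((x)g_s)\phi_{s'}f_s = \tilde\alpha^{-1}(\alpha((x)s')) = (x)s'.
\]
Thus $\{s' : d_\infty(s, s') < \eta\} \subseteq g_s N f_s$, so $g_s N f_s$ is a neighborhood of $s$, establishing property $\mathbf{W}$.

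The main obstacle is the uniform-continuity bookkeeping needed to legitimately invoke Theorem~\ref{extending Hilbert homeomorphisms theorem}: the compact sets $\Gamma_s', \Gamma_{s'}'$ must both lie in $(0,1)^\N$ (forcing the choice of $\alpha$ with image in $[\tfrac14,\tfrac34]^\N$ and a $\Phi$ that preserves the shape of the cube), and a $d_\infty$-displacement between $s$ and $s'$ in $C([0,1]^\N)$ must be converted, via $\alpha$ and $\Phi^{-1}$, into a $d$-displacement between the graphs that matches exactly the $\varepsilon$ inherited from $N$. Once these estimates are laid out, the extension theorem does the substantive work and everything else is routine verification.
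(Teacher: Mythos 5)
Your proposal is correct and takes essentially the same approach as the paper: encode the graph of $s$ via coordinate-doubling and compression into a compact subset of $(0,1)^\N$, then convert $d_\infty$-perturbations of $s$ into small-displacement homeomorphisms between such compact sets and extend them via Theorem~\ref{extending Hilbert homeomorphisms theorem}. The only cosmetic difference is that you place the $s$-dependence in the left multiplier $g_s$ and take $t_s=\operatorname{id}$, whereas the paper keeps both multipliers $s$-independent and encodes $s$ in the middle element $h_s\in\Aut([0,1]^\N)$; the two choices are interchangeable, and the estimates and invocation of the extension theorem are the same.
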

\begin{proof}
Let \(d\) be the metric on \([0, 1]^\N\) given in Theorem~\ref{extending Hilbert homeomorphisms theorem}, and let \(d_\infty\) be the metric on \(C([0, 1]^\N)\) given in Theorem~\ref{compact-open is Polish thm}.
It suffices to show that for all \(s\in C([0, 1]^\N)\), there are \(f_s, g_s\in C([0, 1]^\N)\) and \(h_s\in \Aut([0, 1]^\N)\) such that \(f_sh_s g_s= s\) and for all \(N\in \nbhd{\Aut([0 , 1]^\N)}{h_s}\), we have \(f_s N g_s\in \nbhd{C([0, 1]^\N)}{s}\).

Let \(s\in C([0, 1]^\N)\) be arbitrary.
Let \(f', g':[0, 1] \to [0, 1]\) be defined by
\[(x)f' = \frac{x}{2} + \frac{1}{4},\quad \text{ and }\quad(x)g' = \left\{\begin{array}{cc}
  (x)f'^{-1}   & \text{ if }x\in [\frac{1}{4}, \frac{3}{4}] \\
1     & \text{ if }x\in [\frac{3}{4}, 1]\\
0 & \text{ if }x\in [0, \frac{1}{4}]
\end{array}\right\}.\]
We define \(f_s, g_s\in C([0, 1]^\N)\) by \(f_s:= \langle (\pi_{\lfloor i/2 \rfloor}f')_{i\in \N}\rangle_{[0, 1]^\N}\), \(g_s:= \langle (\pi_{2i}g')_{i\in \N}\rangle_{[0, 1]^\N}\).
Note that \(f_s g_s= \langle (\pi_{i}f' g')_{i\in \N}\rangle_{[0, 1]^\N}=\langle (\pi_{i})_{i\in \N}\rangle_{[0, 1]^\N}\) is the identity map.

For each \(i\in 2\N\), we define \(h_{i}':\im(f_s) \to [\frac{1}{4}, \frac{3}{4}]\) by \((x)h_{i}'  = (x)g_s s\pi_{i/2} f'\).
For each \(i\in 2\N +1\), we define \(h_i':=\pi_i\).
We then define \(h_s': \im(f_s) \to [\frac{1}{4}, \frac{3}{4}]^\N\) by \(h_s':= \langle (h_{i}')_{i\in \N}\rangle_{[0, 1]^\N}\).

Note that for each \(i\in \N\) and \(x\in \im(f_s)\), we have \((x)h_s'\pi_{2i+1}=(x)\pi_{2i+1}= (x)\pi_{2i}\), so \(h_s\) is injective.
Moreover the sets \(\im(f_s), \im(h_s')\) are both contained in \([\frac{1}{4}, \frac{3}{4}]^\N\) and as they are continuous images of compact spaces, they are both compact.
Thus \(h_s'\) is a homeomorphism from \(\im(f_s)\) to \(\im(h_s)\) (Remark~\ref{compactness facts}) and by Theorem~\ref{extending Hilbert homeomorphisms theorem}, we can choose \(h_s\in \Aut([0, 1]^\N)\) with \(h_s'\subseteq h_s\).
We now have
\begin{align*}
    f_sh_s g_s&= f_s\langle (h_{i}')_{i\in \N}\rangle_{[0, 1]^\N} \langle (\pi_{2i}g')_{i\in \N}\rangle_{[0, 1]^\N}= f_s\langle (h_{2i}'g')_{i\in \N}\rangle_{[0, 1]^\N}\\
    &= f_s g_s\langle (s\pi_{i}f' g')_{i\in \N}\rangle_{[0, 1]^\N}= \langle (s\pi_{i})_{i\in \N}\rangle_{[0, 1]^\N}=s.
\end{align*}

Let \(N\in \nbhd{\Aut([0, 1]^\N)}{h_s}\) be arbitrary.
It suffices to show that \(f_s N g_s\in \nbhd{C([0, 1]^\N)}{s}\).
Let \(\varepsilon > 0\) be such that for all \(h\in \Aut([0, 1]^\N)\) with \((h_s, h)d_\infty < \varepsilon\) we have \(h\in N\), and let \(k\in C([0, 1]^\N)\) be arbitrary with \((k, s)d_{\infty} < \varepsilon\).
It suffices to show that \(k\in f_s N g_s\).

For each \(i\in 2\N\), we define \(h_{i, k}':\im(f_s) \to [\frac{1}{4}, \frac{3}{4}]\) by \((x)h_{i, k}'  = (x)g_s k\pi_{i/2} f'\).
For each \(i\in 2\N +1\), we define \(h_{i, k}':=\pi_i\).
We then define \(h_k': \im(f_s) \to [\frac{1}{4}, \frac{3}{4}]^\N\) by \(h_k':= \langle (h_{i, k}')_{i\in \N}\rangle_{[0, 1]^\N}\).
As was the case with \(h_s'\), the sets \(\im(f), \im(h_{k}')\) are both compact subsets of \([\frac{1}{4}, \frac{3}{4}]^\N\) and \(h_k'\) is a homeomorphism between these sets.

Note that \(h_s'^{-1}h_k'= \makeset{((y)h_s',(y)h_k')}{\(y\in \im(f)\)}\).
For all \(y\in \im(f)\), we have
\begin{align*}
     ((y)h_{s}', (y)h_{k}')d&=\sum_{n\in \N} \frac{|(y)h_{s}'\pi_n-(y)h_{k}'\pi_n|}{2^{n+1}}=\sum_{n\in \N} \frac{|(y)h_{n}'-(y)h_{n,k}'|}{2^{n+1}}\\
    &=\sum_{n\in 2\N} \frac{|(y)h_{n}'-(y)h_{n,k}'|}{2^{n+1}}=\sum_{n\in 2\N} \frac{|(y)g_s s\pi_{n/2}f'-(y)g_s k\pi_{n/2}f'|}{2^{n+1}}\\
    &\leq\sum_{n\in \N} \frac{|(y)g_s s\pi_{n}f'-(y)g_s k\pi_{n}f'|}{2^{n+1}}
 \leq\sum_{n\in \N} \frac{|(y)g_s s\pi_{n}-(y)g_s k\pi_{n}|}{2^{n+1}}\\
 &= ((y)g_s s, (y)g_s k)d\leq (s, k)d_\infty
\end{align*}
Thus \(\sup_{(x, y)\in h_s'^{-1}h_k'}(x, y)d < \varepsilon\).
By Theorem~\ref{extending Hilbert homeomorphisms theorem}, we can choose \(t\in \Aut([0, 1]^\N)\) such that we have \(\sup_{(x, y)\in t}(x, y)d < \varepsilon\) and \({h_s'}^{-1}h_k'\subseteq t\).
Let \(h_k:= h_st\). Note that
\begin{align*}
    f_sh_kg_s&=f_sh_s' t g_s=f_sh_k' g_s= f_s\langle (h_{i, k}')_{i\in \N}\rangle_{[0, 1]^\N} \langle (\pi_{2i}g')_{i\in \N}\rangle_{[0, 1]^\N}\\
    &= f_s\langle (h_{2i, k}'g')_{i\in \N}\rangle_{[0, 1]^\N}= f_s g_s\langle (k\pi_{i}f' g')_{i\in \N}\rangle_{[0, 1]^\N}= \langle (k\pi_{i})_{i\in \N}\rangle_{[0, 1]^\N}=k.
\end{align*}
It therefore suffices to show that \(h_k\in N\).
By the definition of \(\varepsilon\), it therefore suffices to show that \((h_k, h_s)d_\infty < \varepsilon\).
From the choice of \(t\), we have
\[(h_k, h_s)d_\infty= (h_st, h_s)d_\infty= \sup_{x\in [0, 1]^\N} (((x)h_s)t, (x)h_s)d=\sup_{x\in [0, 1]^\N} ((x)t, x)d< \varepsilon\]
so the result follows.
\end{proof}

\speeddictseven{225}{hilber cube main theorem}{comparable Polish group topologies theorem}{ac topology defn}{property W is nice lemma}{Containing the compact-open topology}{continuous Hilbert cube action lemma}{Hilbert property W lemma}{compact-open group is Polish prop}
\begin{theorem}[The Polish topologies of \(C({[0, 1]^\N})\), \Assumed{225}]\label{hilber cube main theorem}
The compact-open topology is the only Polish topology compatible with the semigroup \(C([0, 1]^\N)\).
\end{theorem}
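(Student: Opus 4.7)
The plan is to sandwich the arbitrary Polish topology $\mathcal{T}$ between two copies of the compact-open topology $\mathcal{T}_{\mathrm{co}}$. Let $\mathcal{T}$ be any Polish topology compatible with the semigroup $C([0,1]^{\N})$; the goal is to show $\mathcal{T} = \mathcal{T}_{\mathrm{co}}$. The inclusion $\mathcal{T}_{\mathrm{co}} \subseteq \mathcal{T}$ falls out of the two preparatory lemmas already in hand: Lemma~\ref{continuous Hilbert cube action lemma} gives that the evaluation action $[0,1]^{\N} \times (C([0,1]^{\N}), \mathcal{T}) \to [0,1]^{\N}$ is continuous, and Lemma~\ref{Containing the compact-open topology} then forces $\mathcal{T}$ to contain $\mathcal{T}_{\mathrm{co}}$.

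For the reverse containment, the plan is to first compare the two topologies on the group of units $\Aut([0,1]^{\N})$ via Theorem~\ref{comparable Polish group topologies theorem}, and then lift the comparison to all of $C([0,1]^{\N})$ using property \textbf{W}. To apply Theorem~\ref{comparable Polish group topologies theorem}, I need both restrictions $\mathcal{T}|_{\Aut([0,1]^{\N})}$ and $\mathcal{T}_{\mathrm{co}}|_{\Aut([0,1]^{\N})}$ to be Polish and semicompatible with the group. Polishness of the compact-open restriction is exactly Proposition~\ref{compact-open group is Polish prop}, whose proof moreover presents $\Aut([0,1]^{\N})$ as a countable intersection of $\mathcal{T}_{\mathrm{co}}$-open subsets of $C([0,1]^{\N})$; since $\mathcal{T}_{\mathrm{co}} \subseteq \mathcal{T}$ these sets are also $\mathcal{T}$-open, so $\Aut([0,1]^{\N})$ is $G_{\delta}$ in $(C([0,1]^{\N}), \mathcal{T})$ and Corollary~\ref{even more nice Polish subspaces cor} delivers Polishness of $\mathcal{T}|_{\Aut([0,1]^{\N})}$. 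Semi-compatibility of both restrictions with $\Aut([0,1]^{\N})$ as a group is automatic: $\lambda_s$ and $\rho_s$ are continuous on $C([0,1]^{\N})$ under either topology, and composing with a homeomorphism preserves $\Aut([0,1]^{\N})$. The identity map from $\mathcal{T}|_{\Aut([0,1]^{\N})}$ to $\mathcal{T}_{\mathrm{co}}|_{\Aut([0,1]^{\N})}$ is continuous because $\mathcal{T}_{\mathrm{co}} \subseteq \mathcal{T}$, so Theorem~\ref{comparable Polish group topologies theorem} forces $\mathcal{T}|_{\Aut([0,1]^{\N})} = \mathcal{T}_{\mathrm{co}}|_{\Aut([0,1]^{\N})}$.

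To finish, I will invoke property \textbf{W}. Theorem~\ref{Hilbert property W lemma} asserts that $(C([0,1]^{\N}), \mathcal{T}_{\mathrm{co}})$ has property \textbf{W} with respect to $\Aut([0,1]^{\N})$. Since the subspace topologies on $\Aut([0,1]^{\N})$ induced by $\mathcal{T}$ and $\mathcal{T}_{\mathrm{co}}$ agree, and since $\mathcal{T} \supseteq \mathcal{T}_{\mathrm{co}}$ implies every $\mathcal{T}_{\mathrm{co}}$-neighbourhood is automatically a $\mathcal{T}$-neighbourhood, the same witnessing terms and constants show that $(C([0,1]^{\N}), \mathcal{T})$ has property \textbf{W} with respect to $\Aut([0,1]^{\N})$ as well. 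Applying Lemma~\ref{property W is nice lemma} to the identity homomorphism $(C([0,1]^{\N}), \mathcal{T}) \to (C([0,1]^{\N}), \mathcal{T}_{\mathrm{co}})$, whose restriction to $\Aut([0,1]^{\N})$ is continuous by the preceding paragraph, then yields continuity on all of $C([0,1]^{\N})$, i.e.\ $\mathcal{T} \subseteq \mathcal{T}_{\mathrm{co}}$. Combining the two inclusions gives $\mathcal{T} = \mathcal{T}_{\mathrm{co}}$.

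The main obstacle I anticipate is ensuring Polishness of $\mathcal{T}|_{\Aut([0,1]^{\N})}$; once this is secured, both the group-level step via Theorem~\ref{comparable Polish group topologies theorem} and the lifting of property \textbf{W} to the semigroup are formal. The cleanest resolution is to read the explicit $G_{\delta}$ description of $\Aut([0,1]^{\N})$ inside $C([0,1]^{\N})$ from the proof of Proposition~\ref{compact-open group is Polish prop} and exploit the fact that $G_{\delta}$ subsets and property \textbf{W} both transfer to finer topologies.
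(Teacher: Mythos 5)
Your overall structure matches the paper's: establish $\mathcal{T}_{co} \subseteq \mathcal{T}$ from the preparatory lemmas, use comparable-Polish-topologies to equalise the restrictions to $\Aut([0,1]^\N)$, then lift via property \textbf{W}. However the final step has a direction error that prevents it from closing the argument. You apply Lemma~\ref{property W is nice lemma} to the identity homomorphism $\phi:(C([0,1]^\N),\mathcal{T}) \to (C([0,1]^\N),\mathcal{T}_{co})$ and claim the resulting continuity of $\phi$ gives $\mathcal{T} \subseteq \mathcal{T}_{co}$. But continuity of $\mathrm{id}:(C,\mathcal{T})\to(C,\mathcal{T}_{co})$ means precisely that every $\mathcal{T}_{co}$-open set is $\mathcal{T}$-open, i.e.\ $\mathcal{T}_{co}\subseteq\mathcal{T}$ --- the inclusion you already had. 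Your application of the lemma is therefore vacuous: it returns no new information and does not yield $\mathcal{T}\subseteq\mathcal{T}_{co}$.

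The fix reveals that your transfer of property \textbf{W} to $(C([0,1]^\N),\mathcal{T})$ is a red herring and can be deleted. Apply Lemma~\ref{property W is nice lemma} with $\mathbb{S}=(C([0,1]^\N),\mathcal{T}_{co})$ (which has property \textbf{W} with respect to $\Aut([0,1]^\N)$ directly by Lemma~\ref{Hilbert property W lemma}), $\mathbb{T}=(C([0,1]^\N),\mathcal{T})$, and $\phi=\mathrm{id}:\mathbb{S}\to\mathbb{T}$. The restriction $\phi|_{\Aut([0,1]^\N)}$ is continuous because $\mathcal{T}|_{\Aut([0,1]^\N)}=\mathcal{T}_{co}|_{\Aut([0,1]^\N)}$ (this is the use you already made of Theorem~\ref{comparable Polish group topologies theorem}). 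The lemma then gives continuity of $\mathrm{id}:(C,\mathcal{T}_{co})\to(C,\mathcal{T})$, which is exactly $\mathcal{T}\subseteq\mathcal{T}_{co}$. This is the argument the paper uses; your version arrives at the same claimed conclusion but by a reasoning that does not actually support it.
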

\begin{proof}
Let \(\mathcal{T}\) be a Polish topology compatible with \(C([0, 1]^\N)\), and let \(\mathcal{T}_{co}\) be the compact-open topology. As \(\mathcal{T}\) is Polish, it follows from Lemmas~\ref{continuous Hilbert cube action lemma} and \ref{Containing the compact-open topology} that \(\mathcal{T}\supseteq \mathcal{T}_{co}\).

As \(\mathcal{T}\supseteq \mathcal{T}_{co}\) and both of them are Polish, it follows from the argument given in Proposition~\ref{compact-open group is Polish prop} that both of \(\mathcal{T}\restriction_{\Aut([0, 1]^\N)}\) and \(\mathcal{T}_{co}\restriction_{\Aut([0, 1]^\N)}\) are Polish.

As \(\mathcal{T}\restriction_{\Aut([0, 1]^\N)}\supseteq \mathcal{T}_{co}\restriction_{\Aut([0, 1]^\N)}\), it follows from Theorem~\ref{comparable Polish group topologies theorem} that \(\mathcal{T}\restriction_{\Aut([0, 1]^\N)}= \mathcal{T}_{co}\restriction_{\Aut([0, 1]^\N)}\) (using the identity isomorphism).
Again using the identity isomorphism, it follows from Lemma~\ref{property W is nice lemma} that if \(C([0, 1]^\N)\) has property \textbf{W} with respect to \(\Aut([0, 1]^\N)\) then \(\mathcal{T}= \mathcal{T}_{co}\).
This was shown in Lemma~\ref{Hilbert property W lemma}.
\end{proof}

\speeddictfour{244}{Hilbert clone cor}{Composition of continuous maps is continuous}{topological abstract clones defn}{topological clones from topological semigroups}{hilber cube main theorem}
\begin{corollary}[The topologies of the Hilbert clone, \Assumed{244}]\label{Hilbert clone cor}
Let \(\mathcal{C}\) be the subcategory of topological spaces and continuous functions with the object set
\(\mathcal{O}_\mathcal{C}:= \makeset{([0, 1]^\N)^i}{\(i\in \N\)}\), and the morphism set \(\mathcal{M}_\mathcal{C}\) consisting of the all continuous maps between these objects. Let \(\mathcal{P}_\mathcal{O}\) be the discrete topology on \(\mathcal{O}_{\mathcal{C}}\) and let \(\mathcal{P}_\mathcal{M}\) be the topology on \(\mathcal{M}_\mathcal{C}\) generated by the sets of the form
\[A_{i, j} := C(([0, 1]^\N)^i, ([0, 1]^\N)^j)\quad \text{ and } \quad B(V, U):= \makeset{f\in \mathcal{M}_{\mathcal{C}}}{\((V)f\subseteq U\)}\]
for all \(i, j\in \N\), compact \(V\subseteq ([0, 1]^\N)^i\), and open \(U\subseteq ([0, 1]^\N)^j\).

The triple \((\mathcal{C}, \mathcal{P}_{\mathcal{O}}, \mathcal{P}_{M})\) is the a Polish abstract clone, and these are the only Polish topologies compatible with \(\mathcal{C}\).
\end{corollary}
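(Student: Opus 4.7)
The plan is to mimic the two-step strategy of Corollary~\ref{full func clone cor}. First I would verify that $(\mathcal{C}, \mathcal{P}_\mathcal{O}, \mathcal{P}_\mathcal{M})$ satisfies the seven conditions of Definition~\ref{topological abstract clones defn}, and then I would invoke Proposition~\ref{topological clones from topological semigroups} together with Theorem~\ref{hilber cube main theorem} to obtain the uniqueness statement. Note that every object $([0, 1]^\N)^k$ is itself homeomorphic to the Hilbert cube $[0,1]^\N$ (via a bijection $\bigsqcup_{l<k}\N \to \N$ of index sets), and in particular $(1)\operatorname{Pow}_\mathcal{C}^{-1}$ and $(2)\operatorname{Pow}_\mathcal{C}^{-1}$ are isomorphic in $\mathcal{C}$; moreover each object is compact metrizable.

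Conditions (1)--(5) of Definition~\ref{topological abstract clones defn} are immediate from the construction. For continuity of composition (condition (6)), I would partition $\operatorname{Comp}_\mathcal{C}$ into the clopen pieces $A_{n, i} \times A_{i, m}$. On each piece the restriction $\circ_\mathcal{C}\restriction_{A_{n, i} \times A_{i, m}}$ is the usual composition of continuous maps between finite powers of the Hilbert cube. Since each $A_{p, q}$ is defined with a subbasis consisting of sets $B(V, U)$ for $V \subseteq ([0,1]^\N)^p$ compact and $U \subseteq ([0,1]^\N)^q$ open, the subspace topology $\mathcal{P}_\mathcal{M}\restriction_{A_{p, q}}$ is exactly the compact-open topology. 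Continuity is then immediate from Corollary~\ref{Composition of continuous maps is continuous}.

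For continuity of the bijection $\phi_{i, j} : A_{i, 1}^j \to A_{i, j}$ (condition (7)), I would exploit metrizability. By Theorem~\ref{compact-open is Polish thm}, each $A_{p, q}$ is Polish and the compact-open topology agrees with the topology of uniform convergence with respect to any compatible metric $d$ on $([0,1]^\N)^q$. Convergence in $A_{i, 1}^j$ is coordinate-wise uniform convergence, and using the max metric on $([0, 1]^\N)^j$ this immediately implies uniform convergence of the tupled map $\langle f_0, \ldots, f_{j-1}\rangle$. Thus $\phi_{i, j}$ is sequentially continuous, and since the spaces are metrizable, continuous.

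Having established that $(\mathcal{C}, \mathcal{P}_\mathcal{O}, \mathcal{P}_\mathcal{M})$ is a topological abstract clone, I would then apply Proposition~\ref{topological clones from topological semigroups}. The endomorphism monoid of $(1)\operatorname{Pow}_\mathcal{C}^{-1} = [0,1]^\N$ is $C([0,1]^\N)$, and as noted above $\mathcal{P}_\mathcal{M}\restriction_{C([0, 1]^\N)}$ is precisely the compact-open topology, which is Polish by Theorem~\ref{compact-open is Polish thm}. Combining the correspondence of Proposition~\ref{topological clones from topological semigroups} (which preserves Polishness and the hypothesis that $(1)\operatorname{Pow}_\mathcal{C}^{-1}$ and $(2)\operatorname{Pow}_\mathcal{C}^{-1}$ are isomorphic is satisfied) with the uniqueness of Polish semigroup topologies on $C([0,1]^\N)$ from Theorem~\ref{hilber cube main theorem} yields the conclusion. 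The main technicality is verifying condition (7); everything else is a direct appeal to already-established machinery.
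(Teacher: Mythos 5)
Your proposal is correct and follows the same two-step strategy as the paper's proof: verify the seven conditions of Definition~\ref{topological abstract clones defn}, then combine Theorem~\ref{hilber cube main theorem} with Proposition~\ref{topological clones from topological semigroups}. The decomposition of $\operatorname{Comp}_\mathcal{C}$ into the clopen pieces $A_{n,i}\times A_{i,m}$ and the appeal to Corollary~\ref{Composition of continuous maps is continuous} for condition~(6) are both as in the paper.

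The one place you deviate is the verification of condition~(7). The paper argues topologically: it observes that the open $U\subseteq([0,1]^\N)^j$ of the form $U=\prod_{k<j}(U)\pi_k$ give a basis, uses Theorem~\ref{compact-open is Polish thm} to conclude the corresponding $B(V,U)$ generate the topology on $A_{i,j}$, and then computes $(B(V,U))\phi_{i,j}^{-1}=\prod_{k<j}B(V,(U)\pi_k)$, which is open in $A_{i,1}^j$. You instead argue metrically, via the $d_\infty$ metric and sequential continuity: coordinate-wise uniform convergence of a $j$-tuple of maps into $[0,1]^\N$ gives uniform convergence of the tupled map into $([0,1]^\N)^j$ with the max metric. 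Both are valid; the paper's version is slightly more self-contained (it never leaves the compact-open description), while yours leans on the Polish/metric side of Theorem~\ref{compact-open is Polish thm}. The outcome and remaining steps are identical.
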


\begin{proof}
We first show that \((\mathcal{C}, \mathcal{P}_{\mathcal{O}}, \mathcal{P}_{M})\) is a topological abstract clone.
We need to verify the seven conditions in Definition~\ref{topological abstract clones defn}.
The first five conditions are immediate from the definition. 
We next show that \(\circ_{\mathcal{C}}\) is continuous.

As the sets \((A_{n, i}\times A_{i, m})_{i, n, m\in \N}\) are a partition of \(\operatorname{Comp}_{\mathcal{C}}\) into open sets, it suffices to show that for all \(n, m, i\in \N\), the map
\(\circ_{\mathcal{C}}\restriction_{A_{n, i}\times A_{i, m}}\) is continuous.
For all \(j,k \in \N\),  the set \((A_{j, k})\circ_{\mathcal{C}}\restriction_{A_{n, i}\times A_{i, m}}^{-1}\) is either empty or \(A_{n, i}\times A_{i, m}\) and is thus open.
Moreover if \(V\subseteq ([0, 1]^\N)^j\) is compact and \(U\subseteq ([0, 1]^\N)^k\) is open, then \((B(V, U))\circ_{\mathcal{C}}\restriction_{A_{n, i}\times A_{i, m}}^{-1}\) is open by Corollary~\ref{Composition of continuous maps is continuous}.

It remains to show that for all \(i, j\in \N\), the bijection \(\phi_{i, j}:A_{i, 1}^j \to A_{i, j}\) from Definition~\ref{topological abstract clones defn} is continuous.
From the definition of the product topology on \(([0, 1]^\N)^j\), the open sets \(U\subseteq ([0, 1]^\N)^j\) such that \(U =\prod_{k\in \{0, 1, \ldots, j-1\}} (U)\pi_k \), form a basis for \(([0, 1]^\N)^j\).
Thus from Theorem~\ref{compact-open is Polish thm}, the sets \(B(V, U)\) where \(V\subseteq ([0, 1]^\N)^i\) is compact and \(U\subseteq ([0, 1]^\N)^j\) is open with \(U =\prod_{k\in \{0, 1, \ldots, j-1\}} (U)\pi_k \), generate the topology on \(A_{i, j}\).

If \(V\subseteq ([0, 1]^\N)^i\) is compact, \(U\subseteq ([0, 1]^\N)^j\) is open and \(U= \prod_{k\in \{0, 1, \ldots, j-1\}} (U)\pi_k\), then
\[(B(V, U))\phi_{i, j}^{-1}= \prod_{k\in \{0, 1, \ldots, j-1\}} B(V, (U)\pi_k)\]
so \(\phi_{i, j}\) is indeed continuous as required.

Thus \((\mathcal{C}, \mathcal{P}_{\mathcal{O}}, \mathcal{P}_{M})\) is a topological abstract clone.
The result now follows from Theorem~\ref{hilber cube main theorem} together with Proposition~\ref{topological clones from topological semigroups}.
\end{proof}

\subsection{The Cantor space}
In this subsection we are concerned with continuous transformations of the Cantor space \(2^\N\) (see Theorem~\ref{Cantor space main theorem} and Corollary~\ref{Cantor clone cor}).
The arguments follow a similar path to those in the Hilbert cube subsection but we can now make use of Theorem~\ref{automatic continuity examples thm}.

We know of no Cantor space analogue of Theorem~\ref{extending Hilbert homeomorphisms theorem} in the literature so we provide our own (see Lemma~\ref{extending Cantor homeomorphisms theorem}).
The proof of Lemma~\ref{extending Cantor homeomorphisms theorem} is based on the proof of 
Theorem 5.2.4 in~\cite{Mill2001aa} but is significantly shorter due to the fact that \(2^\N\) can be naturally viewed as a topological group.

\speeddictfour{226}{extending Cantor homeomorphisms theorem}{Products in Categories Defn}{compactness facts}{topological structures defn}{product structures}
\begin{lemma}[Extending \(2^\N\) homeomorphisms, \Assumed{226}]\label{extending Cantor homeomorphisms theorem}
Let \(d: 2^\N \times 2^\N \to \mathbb{R}\) be defined by
\[(x, y)d= \inf \makeset{\frac{1}{2^n}}{\(n\in \N,\ x\restriction_{n}= y\restriction_n\)}.\]
Let \(S := \makeset{x\in 2^\N}{\((x)\pi_i= 0\) for all \(i\in 2\N+1\)}\).
If \(\varepsilon>0\), \(C_0, C_1\subseteq S\) are compact and \(h':C_0 \to C_1\) is a homeomorphism with \(\sup_{(x, y)\in h'}(x, y)d < \varepsilon\), then there is \(h\in \Aut(2^\N)\) such that \(h'\subseteq h\) and \(\sup_{(x, y)\in h}(x, y)d < \varepsilon\).
\end{lemma}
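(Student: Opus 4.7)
The plan is to first reduce the problem to extending $h'$ separately on each cylinder of appropriate length, and then to handle each local extension using the fact that $C_0, C_1$ lie in a nowhere dense slice of $2^\N$.

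First I would find an integer $N_0$ such that $2^{-N_0} < \varepsilon$ and for every $(x,y) \in h'$ we have $x\restriction_{N_0} = y\restriction_{N_0}$. The graph of $h'$ is compact in $C_0 \times C_1$, and the continuous map $(x,y) \mapsto (x,y)d$ attains its supremum $\delta < \varepsilon$ on it. Since $d$ takes values in the discrete set $\{0\} \cup \{2^{-k} : k \in \N\}$, either $\delta = 0$ (in which case $h' = \operatorname{id}_{C_0}$ and I take $h = \operatorname{id}_{2^\N}$) or $\delta = 2^{-N_0}$ for some $N_0$, and $(x,y)d \leq 2^{-N_0}$ is equivalent to $x\restriction_{N_0} = y\restriction_{N_0}$.

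Next I would partition $2^\N$ into the clopen cylinders $V_\beta := \{x \in 2^\N : x\restriction_{N_0} = \beta\}$ for $\beta \in 2^{N_0}$. By the choice of $N_0$, $h'$ restricts to a homeomorphism $h'_\beta : C_0 \cap V_\beta \to C_1 \cap V_\beta$ for every $\beta$. Provided each $h'_\beta$ extends to a self-homeomorphism $h_\beta$ of $V_\beta$, the union $h := \bigcup_\beta h_\beta$ will be a self-homeomorphism of $2^\N$ extending $h'$, and since each $V_\beta$ has diameter $2^{-N_0}$, the bound $\sup_{(x, y)\in h}(x, y)d \leq 2^{-N_0} < \varepsilon$ will be automatic. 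To produce $h_\beta$, I would fix $\beta$ and split into cases: if $\beta$ contains a $1$ in some odd coordinate, then $V_\beta \cap S = \varnothing$ so $h_\beta := \operatorname{id}_{V_\beta}$ works; otherwise, I decompose $V_\beta \cong P \times Q$ where $P$ records the even coordinates $\geq N_0$ and $Q$ records the odd coordinates $\geq N_0$, so that $P, Q$ are both Cantor spaces (by Theorem~\ref{Brouwer' Theorem}) and $V_\beta \cap S$ corresponds to $P \times \{0_Q\}$. Under this identification $C_0 \cap V_\beta = A \times \{0_Q\}$ and $C_1 \cap V_\beta = B \times \{0_Q\}$ for compact $A, B \subseteq P$, and $h'_\beta$ becomes a homeomorphism $f : A \to B$.

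The remaining task, and the main obstacle, is to extend the map $(a, 0_Q) \mapsto (f(a), 0_Q)$ to a self-homeomorphism of $P \times Q$. The key observation that makes this possible is that $A \times \{0_Q\}$ and $B \times \{0_Q\}$ are closed nowhere dense subsets of the Cantor space $P \times Q$, because $Q$ has no isolated points. I would carry out a back-and-forth construction: at each level $n$, the length-$n$ cylinders in $P \times Q$ meeting $A \times \{0_Q\}$ form a finite clopen cover that shrinks to $A \times \{0_Q\}$ as $n \to \infty$, and similarly for $B \times \{0_Q\}$; using $f$ together with the extra ``room'' in $Q$ one matches these successive clopen partitions compatibly from one level to the next, and at each level invokes Theorem~\ref{projective fraisse limit theorem clopen sets version}(3) (applied with $\mathcal{C}$ the class of non-empty finite discrete sets, whose projective Fra\"iss\'e limit is $2^\N$) to realize the chosen bijection of clopen partitions by an actual self-homeomorphism of $P \times Q$. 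Passing to the limit of these compatible finite-level matchings yields a self-homeomorphism of $P \times Q$ whose restriction to $A \times \{0_Q\}$ is exactly the required extension of $f$; the hard part is ensuring that the successive approximations converge uniformly and that the limit genuinely agrees with $f \times \operatorname{id}_{\{0_Q\}}$ on the nowhere dense slice.
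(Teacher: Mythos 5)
Your reduction to the cylinder pieces \(V_\beta\) is correct: the supremum \(\delta\) is attained by compactness, the dichotomy \(\delta=0\) or \(\delta=2^{-N_0}\) holds, and after passing to cylinders of length \(N_0\) the \(\varepsilon\)-bound on the extension is automatic. The genuine gap is the final step, which you flag as ``the hard part'' and then do not carry out. Extending \((a,0_Q)\mapsto (f(a),0_Q)\) to a self-homeomorphism of \(P\times Q\) is (a special case of) the Knaster--Reichbach extension theorem, and it is essentially the entire content of the lemma; your reduction has stripped away the \(\varepsilon\), not the substance. A correct back-and-forth must, at each level, refine the source and target clopen partitions \emph{jointly} so that (1) the sub-pieces meeting \(A\times\{0_Q\}\) correspond under \(f\times\operatorname{id}\) to those meeting \(B\times\{0_Q\}\) --- which forces you to first choose a clopen partition of the trace on \(A\), push it through \(f\), and only then extend both to clopen partitions of the ambient pieces --- and (2) the remaining sub-pieces on the two sides have equal cardinality, which uses nowhere-denseness to pad with extra pieces. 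None of this bookkeeping appears in your sketch. Worse, invoking Theorem~\ref{projective fraisse limit theorem clopen sets version}(3) at each level buys you nothing: it produces a self-homeomorphism realizing each level-\(n\) bijection of clopen partitions in isolation, with no compatibility with the level-\((n-1)\) one, so the ``limit'' you gesture at is not defined. What is actually needed is a nested family of compatible clopen-partition bijections with mesh tending to \(0\); such a family determines a homeomorphism automatically (no level-wise applications of Theorem~\ref{projective fraisse limit theorem clopen sets version} required), but constructing the nested family is the work you have omitted.

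For contrast, the paper's proof avoids all back-and-forth combinatorics. It views \(2^\N\) as the topological group \((\mathbb{Z}/2\mathbb{Z})^\N\) under coordinatewise addition, shuffles coordinates so that \(C_0,C_1\) sit in the ``even'' Cantor factor while the odd coordinates above level \(n\) supply a spare Cantor factor, and then writes the extension as a closed-form composition \(h=\psi\phi_0\phi_1\phi_2\psi^{-1}\). Each \(\phi_i\) is visibly a homeomorphism because it is defined using coordinatewise addition and nearest-point retractions \(t_0,t_1\) onto \(C_0',C_1'\). This algebraic approach is precisely what makes the paper's argument short; your approach can be made to work but requires substantially more, and different, effort than the sketch suggests.
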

\begin{proof}
We view \{0,1\} as a discrete topological group with operation addition modulo \(2\), and we view \(2^\N\) as a power of this group (we'll denote the operation with \(+\)). 
Let \(\varepsilon>0\), \(C_0, C_1\subseteq S\) be closed and \(h':C_0 \to C_1\) be a homeomorphism with \(\sup_{(x, y)\in h'}(x, y)d < \varepsilon\).
Let \(n\in 2\N\) be such that \(\frac{1}{2^n} < \varepsilon\), and define \(\psi_0: 2^\N \to \{0, 1\}^{n/2}\), \(\psi_1, \psi_2:2^\N \to 2^\N\) by
\[(x)\psi_0=  \langle \pi_1, \pi_3,  \ldots, \pi_{n-1} \rangle_{\{0, 1\}^{n/2}},\quad  (x)\psi_1=  \langle (\pi_{2i})_{i\in \N}\rangle_{2^\N},\]
\[(x)\psi_2=  \langle (\pi_{n+ 2i+ 1})_{i\in \N}\rangle_{2^\N}.\]
We then define \(\psi:= \langle \psi_0, \psi_1, \psi_2 \rangle_{\{0, 1\}^{n/2} \times 2^\N \times 2^\N}\), \(C_0':= C_0\psi_1\) and \(C_1':=C_1\psi_1\) (note that \(\psi\) is a homeomorphism).
As \(C_0, C_1\subseteq S\), the map \(\psi_1\) is injective on these sets, thus \(h'':=\psi_1^{-1}h' \psi_1\)
is a homeomorphism from \(C_0'\) to \(C_1'\).

Let \(c:\mathcal{P}(2^\N)\backslash \{\varnothing\} \to 2^\N \) be such that \((A)c\in A\) for all \(A\in \dom(c)\).
For each \(i\in \{0, 1\}\), let \(t_i: 2^\N \to C_i'\) be defined by
\[(x)t_i= \left(\makeset{y\in C_i'}{\((x, y)d\leq (x, z)d\) for all \(z\in C_i'\)}\right)c\]
(this is well defined because \(C_i\) is compact).
Note that for each \(i\in \{0, 1\}\), \(t_i\restriction_{C_i}\) is the identity function, \(\im(t_i)= C_i\) and for all \(x, y\in 2^\N\) we have \(((x)t_i, (y)t_i)d\leq (x, y)d\). Let \(\phi_0, \phi_1, \phi_2:\{0, 1\}^{n/2}\times \{0,1\}^\N \times \{0,1\}^\N \to \{0, 1\}^{n/2}\times \{0,1\}^\N \times \{0,1\}^\N\) be defined by
\[(x, y, z)\phi_0= (x,y,  y+z), \quad (x, y, z)\phi_1= (x, y-(z)t_0+(z)t_0h'', z),\]
\[(x, y, z)\phi_2= (x, y, z- (y)t_1{h''}^{-1}).\]
It is routine to verify that each of these maps is a homeomorphism.

Finally, let \(h:=  \psi \phi_0\phi_1\phi_2 \psi^{-1}\in \Aut(2^\N).\)
We now show that \(h\) has the required properties. If \(x\in C_0\), then
\begin{align*}
    (x)h&= ((x)\psi_0, (x)\psi_1, 000\ldots)\phi_0\phi_1\phi_2\psi^{-1}\\
&= ((x)\psi_0, (x)\psi_1, (x)\psi_1)\phi_1\phi_2\psi^{-1}\\
&= ((x)\psi_0, (x)\psi_1- (x)\psi_1t_0+ (x)\psi_1t_0h'',  (x)\psi_1)\phi_2\psi^{-1}
\end{align*}
As \((x)\psi_1\in C_0'\), it follows that
\begin{align*}
    (x)h&=  ((x)\psi_0, (x)\psi_1- (x)\psi_1t_0+ (x)\psi_1t_0h'',  (x)\psi_1)\phi_2\psi^{-1}\\
    &=  ((x)\psi_0, (x)\psi_1- (x)\psi_1+ (x)\psi_1h'',  (x)\psi_1)\phi_2\psi^{-1}\\
    &=  ((x)\psi_0, (x)\psi_1h'',  (x)\psi_1)\phi_2\psi^{-1}\\
    &=  ((x)\psi_0, (x)\psi_1h'',  (x)\psi_1-(x)\psi_1)\psi^{-1}\\
    &=  ((x)\psi_0, (x)\psi_1h'', 000\ldots)\psi^{-1}= (x)h'.
\end{align*}
It remains to show that if \(x\in 2^\N\) is arbitrary, then \((x, (x)h)d< \varepsilon\). Let \(n_\varepsilon \in \N\) be such that if \(f, g\in C(2^\N)\) are arbitrary then \((f, g)d< \varepsilon \) if and only if \(f, g\) share a prefix of length \(n_\varepsilon\). Let \(i< n_\varepsilon\) be arbitrary. We show that \((x)h\pi_i = (x)\pi_i\).

By the definition of \(h\), there is some \(z'\in C_0'\) such that
\[((x)h\psi_0, (x)h\psi_1)= ((x)\psi_0, (x)\psi_1 - z'+(z')h'').\]
Let \(z\in C_0\) be such that \(z'= (z)\psi_1\). Note that \(n_\varepsilon \leq n\). Thus if \(i\) is odd, then 
\[(x)\pi_i= (x)\psi_0\pi_{(i-1)/2}= (x)h\psi_0\pi_{(i-1)/2}= (x)h\pi_i.\]
By assumption \((z, (z)h')d < \varepsilon\). So if \(i<n_\varepsilon\) is even, then
\[(z')\pi_{i/2}=((z)\psi_1)\pi_{i/2}=(z)\pi_i = (z)h'\pi_i=(z)h'\psi_1\pi_{i/2}=(z)\psi_1h''\pi_{i/2}=(z')h''\pi_{i/2}.\]
So \((z')\pi_{i/2}-(z')h''\pi_{i/2}= 0\) and hence
\[(x)\pi_i= (x)\psi_1 \pi_{i/2}= (x)\psi_1 \pi_{i/2}+(z')\pi_{i/2}-(z')h''\pi_{i/2}= (x)h\psi_1\pi_{i/2}= (x)h\pi_{i}.\]
\end{proof}

The following proof in very similar to the proof of Lemma~\ref{Hilbert property W lemma}, but adjusted to make use of Lemma~\ref{extending Cantor homeomorphisms theorem} instead of Theorem~\ref{extending Hilbert homeomorphisms theorem}.
\speeddictsix{227}{Cantor property W lemma}{Numbers defns}{Products in Categories Defn}{product topology defn}{Composition of continuous maps is continuous}{property W defn}{extending Cantor homeomorphisms theorem}
\begin{lemma}[Property \textbf{W} for \(C(2^\N)\), \Assumed{227}]\label{Cantor property W lemma}
The topological semigroup \(C(2^\N)\) with the compact-open topology has property \textbf{W} with respect to \(\Aut(2^\N)\).
\end{lemma}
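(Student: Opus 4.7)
The plan is to mirror the proof of Lemma~\ref{Hilbert property W lemma}, substituting Lemma~\ref{extending Cantor homeomorphisms theorem} for Theorem~\ref{extending Hilbert homeomorphisms theorem}. The technical wrinkle is that the Cantor extension lemma only applies when the compact sets involved lie inside the ``even-coordinate'' subspace \(S = \makeset{x\in 2^\N}{\((x)\pi_i = 0\) for all \(i\in 2\N + 1\)}\). So the spreading maps \(f_s, g_s\) must be chosen so that both \(\im(f_s)\) and \(\im(h_s')\) sit inside \(S\), while still leaving enough unused even coordinates to record the information needed for \(h_s'\) to be injective. This is the main complication compared with the Hilbert case, where the whole of \([\tfrac14,\tfrac34]^\N\) was available.

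For each \(s\in C(2^\N)\), I will define \(f_s \in C(2^\N)\) by \((x)f_s\pi_{4k} = (x)\pi_k\) and \((x)f_s\pi_i = 0\) for \(i\notin 4\N\), and \(g_s \in C(2^\N)\) by \((x)g_s\pi_k = (x)\pi_{4k}\). A direct check shows that \(f_s g_s\) is the identity on \(2^\N\), and \(\im(f_s)\subseteq S\) is compact. I will then define \(h_s'\) on \(\im(f_s)\) by
\[(y)h_s'\pi_{4k} = (y)g_s s\pi_k, \quad (y)h_s'\pi_{4k+2} = (y)\pi_{4k}, \quad (y)h_s'\pi_i = 0 \text{ for odd } i.\]
The coordinates \(\pi_{4k+2}\) record \((y)g_s\) faithfully, which (together with the fact that \(y\in \im(f_s)\) is determined by the values \((y)\pi_{4k}\)) makes \(h_s'\) injective; the coordinates \(\pi_{4k}\) carry the output of \(s\). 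Since \(h_s'\) is a continuous injection between compact Hausdorff spaces, it is a homeomorphism onto its image \(\im(h_s')\subseteq S\), and Lemma~\ref{extending Cantor homeomorphisms theorem} (applied with any \(\varepsilon_0>1\), since the \(d\)-diameter of \(2^\N\) is \(1\)) extends \(h_s'\) to some \(h_s \in \Aut(2^\N)\). Tracing through the definitions yields \(f_s h_s g_s = s\).

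For the neighbourhood condition, given \(N\in \nbhd{\Aut(2^\N)}{h_s}\) I will pick \(\varepsilon > 0\) with \(\makeset{h\in \Aut(2^\N)}{\(d_\infty(h, h_s)<\varepsilon\)}\subseteq N\) and choose \(m\in \N\) with \(1/2^{4m} < \varepsilon\). For \(k \in C(2^\N)\) with \(d_\infty(k, s) < 1/2^m\), I will define \(h_k'\) by the same formulas as \(h_s'\) with \(k\) in place of \(s\). The key observation is that for \(y\in \im(f_s)\) the images \((y)h_s'\) and \((y)h_k'\) can only differ at coordinates \(\pi_{4l}\) with \(l \geq m\), because \(d_\infty(s, k) < 1/2^m\) forces \((y)g_s s\pi_l = (y)g_s k\pi_l\) whenever \(l < m\); hence \(\sup_{(z, z')\in {h_s'}^{-1} h_k'} d(z, z') \leq 1/2^{4m} < \varepsilon\). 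A second application of Lemma~\ref{extending Cantor homeomorphisms theorem} extends \({h_s'}^{-1} h_k'\) to some \(t\in \Aut(2^\N)\) with \(\sup_{(x,y)\in t} d(x, y) < \varepsilon\). Setting \(h_k := h_s t\) gives \(d_\infty(h_k, h_s) < \varepsilon\) (so \(h_k \in N\)), and the inclusions \(h_s'\subseteq h_s\) and \({h_s'}^{-1} h_k' \subseteq t\) together yield \(f_s h_k g_s = f_s h_s' t g_s = f_s h_k' g_s = k\).

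The main obstacle is engineering the coordinate bookkeeping so that every intermediate image lies in \(S\) (which is essential for Lemma~\ref{extending Cantor homeomorphisms theorem} to apply) while still making \(h_s'\) injective; once this is set up, the distance estimate \(1/2^{4m}\) and the remaining verifications follow essentially as in the Hilbert argument.
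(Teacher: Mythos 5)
Your proof is correct and mirrors the paper's argument: the only difference is cosmetic bookkeeping — the paper's $f_s$ duplicates the input into both the $4k$ and $4k+2$ coordinates (so $h_s'$ can simply pass $\pi_{4k+2}$ through unchanged), whereas yours stores it only in $4k$ and has $h_s'$ explicitly copy $\pi_{4k}$ into $\pi_{4k+2}$; the resulting images, injectivity argument, and distance estimate $1/2^{4m}$ all coincide with the paper's.
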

\begin{proof}
Let \(d\) be the metric on \(2^\N\) given in Theorem~\ref{extending Cantor homeomorphisms theorem}, and let \(d_\infty\) be the metric on \(C(2^\N)\) given in Theorem~\ref{compact-open is Polish thm}.
It suffices to show that for all \(s\in C(2^\N)\), there are \(f_s, g_s\in C(2^\N)\) and \(h_s\in \Aut(2^\N)\) such that \(f_sh_s g_s= s\) and for all \(N\in \nbhd{\Aut(\{0 , 1\}^\N)}{h_s}\), we have \(f_s N g_s\in \nbhd{C(2^\N)}{s}\).

Let \(s\in C(2^\N)\) be arbitrary. For each \(i\in \N\), we define \(f_i: 2^\N \to \{0,1\}\) by
\[(x)f_i= \left\{\begin{array}{lr}
0     &  \text{ if }i\in 2\N + 1\\
(x)\pi_{i/4}& \text{ if }i\in 4\N\\
(x)\pi_{(i-2)/4}& \text{ if }i\in 4\N+2
\end{array}\right\}.\]
We then define \(f_s, g_s\in C(2^\N)\) by 
\[f_s:= \langle (f_i)_{i\in \N}\rangle_{2^\N}\quad\text{ and} \quad g_s:= \langle (\pi_{4i})_{i\in \N}\rangle_{2^\N}.\]
Note that \(f_s g_s=\langle (f_{4i})_{i\in \N}\rangle_{2^\N} =\langle (\pi_{i})_{i\in \N}\rangle_{2^\N}\) is the identity map.
Moreover, \(\im(f_s)\subseteq S\) (where \(S\) is as in Theorem~\ref{extending Cantor homeomorphisms theorem}).

For each \(i\in 4\N\), we define \(h_{i}':\im(f_s) \to \{0,1\}\) by \((x)h_{i}'  = (x)g_s s\pi_{i/4}\).
For each \(i\in \N\backslash 4\N\), we define \(h_i':=\pi_i\).
We then define \(h_s': \im(f_s) \to S\) by \(h_s':= \langle (h_{i}')_{i\in \N}\rangle_{2^\N}\).

Note that for each \(i\in \N\) and \(x\in \im(f_s)\), we have \((x)h_s'\pi_{4i+2}=(x)\pi_{4i+2}= (x)\pi_{4i}\), so \(h_s\) is injective.
Moreover the sets \(\im(f_s), \im(h_s')\) are both contained in \(S\) and they are both compact (see Remark~\ref{compactness facts}).
Thus \(h_s'\) is a homeomorphism from \(\im(f_s)\) to \(\im(h_s)\) and by Theorem~\ref{extending Cantor homeomorphisms theorem}, we can choose \(h_s\in \Aut(2^\N)\) with \(h_s'\subseteq h_s\).
We now have
\begin{align*}
    f_sh_s g_s&= f_s\langle (h_{i}')_{i\in \N}\rangle_{2^\N} \langle (\pi_{4i})_{i\in \N}\rangle_{2^\N}= f_s\langle (h_{4i}')_{i\in \N}\rangle_{2^\N}\\
    &= f_s g_s\langle (s\pi_{i})_{i\in \N}\rangle_{2^\N}= \langle (s\pi_{i})_{i\in \N}\rangle_{2^\N}=s.
\end{align*}

Let \(N\in \nbhd{\Aut(2^\N)}{h_s}\) be arbitrary.
It suffices to show that \(f_s N g_s\in \nbhd{C(2^\N)}{s}\).
Let \(\varepsilon > 0\) be such that for all \(h\in \Aut(2^\N)\) with \((h_s, h)d_\infty < \varepsilon\) we have \(h\in N\), and let \(k\in C(2^\N)\) be arbitrary with \((k, s)d_{\infty} < \varepsilon\).
It suffices to show that \(k\in f_s N g_s\).

For each \(i\in 4\N\), we define \(h_{i, k}':\im(f_s) \to \{0, 1\}\) by \((x)h_{i, k}'  = (x)g_s k\pi_{i/4} \).
For each \(i\in \N\backslash4\N\), we define \(h_{i, k}':=\pi_i\).
We then define \(h_k': \im(f_s) \to 2^\N\) by \(h_k':= \langle (h_{i, k}')_{i\in \N}\rangle_{2^\N}\).
As was the case with \(h_s'\), the sets \(\im(f_s), \im(h_{k}')\) are both compact subsets of \(S\) and \(h_k'\) is a homeomorphism between these sets.

Note that \({h_s'}^{-1}h_k'= \makeset{((y)h_{s}', (y)h_{k}')}{\(y\in \im(f)\)}\).
For all \(y\in \im(f)\) with \((y)h_{s}'\neq (y)h_{k}'\), we have
\begin{align*}
    ((y)h_{s}', (y)h_{k}')d&=\inf{\left(\makeset{\frac{1}{2^n}}{\(n\in \N,\ (y)h_{s}'\restriction_{n}= (y)h_{k}'\restriction_{n}\)}\right)}\\
    &=\max{\left(\makeset{\frac{1}{2^n}}{\(n\in \N,\ (y)h_{s}'\pi_n\neq (y)h_{k}'\pi_n\))}\right)}\\
     &= \max\left(\makeset{\frac{1}{2^n}}{\(n \in 4\N, \ (y)g_s s\pi_{n/4}\neq (y)g_s k\pi_{n/4}\)}\right)\\
    &= \max\left(\makeset{\frac{1}{2^{4n}}\in \N}{\(n\in \N, (y)g_s s\pi_{n}\neq (y)g_s k\pi_{n}\)}\right)\\
        &< \max\left(\makeset{\frac{1}{2^{n}}\in \N}{\(n\in \N, (y)g_s s\pi_{n}\neq (y)g_s k\pi_{n}\)}\right)\\
 &= ((y)g_s s, (y)g_s k)d\leq (s, k)d_\infty<\varepsilon.
\end{align*}
Thus \(\sup\limits_{\substack{(x, y)\in h_s'^{-1}h_k'}}(x, y)d < \varepsilon\).
By Theorem~\ref{extending Cantor homeomorphisms theorem}, we can choose \(t\in \Aut(2^\N)\) such that we have \(\sup_{(x, y)\in t}(x, y)d < \varepsilon\) and \({h_s'}^{-1}h_k'\subseteq t\).
Let \(h_k:= h_st\). Note that
\begin{align*}
    f_sh_kg_s&=f_sh_s' t g_s =f_sh_k' g_s=f_s\langle (h_{i, k}')_{i\in \N}\rangle_{2^\N} \langle (\pi_{4i})_{i\in \N}\rangle_{2^\N}= f_s\langle (h_{4i, k}')_{i\in \N}\rangle_{2^\N}\\
    &= f_s g_s\langle (k\pi_{i})_{i\in \N}\rangle_{2^\N}= \langle (k\pi_{i})_{i\in \N}\rangle_{2^\N}=k.
\end{align*}
It therefore suffices to show that \(h_k\in N\).
By the definition of \(\varepsilon\), it therefore suffices to show that \((h_k, h_s)d_\infty < \varepsilon\).
From the choice of \(t\), we have
\[(h_k, h_s)d_\infty= (h_st, h_s)d_\infty= \sup_{x\in 2^\N} (((x)h_s)t, (x)h_s)d=\sup_{x\in 2^\N} ((x)t, x)d< \varepsilon\]
so the result follows.
\end{proof}

\speeddictsix{229}{Cantor has SCC lemma}{Composition of continuous maps is continuous}{compact-open group is Polish prop}{automatic continuity examples thm}{ac topology is nice}{property W is nice lemma}{Cantor property W lemma}
\begin{lemma}[Cantor continuity, \Assumed{229}]\label{Cantor has SCC lemma}
The topologies \(\mathcal{SCC}(\Aut(2^\N))\) and \(\mathcal{SCC}(C(2^\N))\) are the compact-open topologies on each of these sets.
Moreover these topologies are Polish
\end{lemma}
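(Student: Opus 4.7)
The plan is to first establish $\mathcal{SCC}(\Aut(2^\N)) = \mathcal{T}_{co}$ and then bootstrap to $C(2^\N)$ via property \textbf{W}. The Polish claims are immediate: by Theorem~\ref{compact-open is Polish thm} combined with Corollary~\ref{Composition of continuous maps is continuous}, the compact-open topology $\mathcal{T}_{co}$ makes $C(2^\N)$ into a Polish topological semigroup, and by Proposition~\ref{compact-open group is Polish prop} the same holds for $\Aut(2^\N)$. In each case $\mathcal{T}_{co}$ is a second countable topology compatible with the semigroup operation, so by the definition of $\mathcal{SCC}$ we get the easy inclusions $\mathcal{T}_{co} \subseteq \mathcal{SCC}(\Aut(2^\N))$ and $\mathcal{T}_{co} \subseteq \mathcal{SCC}(C(2^\N))$.

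For the reverse inclusion in the case of $\Aut(2^\N)$, let $\mathcal{T}$ be any second countable topology compatible with the semigroup structure. The group inversion $\iota: g \mapsto g^{-1}$ is a semigroup anti-isomorphism of $\Aut(2^\N)$, so by Proposition~\ref{pull back topologies prop} the topology $\mathcal{T}^{-1} := \{U^{-1} : U \in \mathcal{T}\}$ is also a second countable topology compatible with the semigroup structure. By Lemma~\ref{combining topologies lemma} the join $\mathcal{T} \vee \mathcal{T}^{-1}$ is compatible, and inversion now interchanges $\mathcal{T}$ with $\mathcal{T}^{-1}$, hence is continuous with respect to $\mathcal{T} \vee \mathcal{T}^{-1}$. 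So $\mathcal{T} \vee \mathcal{T}^{-1}$ makes $\Aut(2^\N)$ into a second countable topological \emph{group}. By Theorem~\ref{automatic continuity examples thm}, the identity homomorphism from $(\Aut(2^\N), \mathcal{T}_{co})$ to $(\Aut(2^\N), \mathcal{T} \vee \mathcal{T}^{-1})$ is continuous, so $\mathcal{T} \subseteq \mathcal{T} \vee \mathcal{T}^{-1} \subseteq \mathcal{T}_{co}$. Taking the join over all such $\mathcal{T}$ gives $\mathcal{SCC}(\Aut(2^\N)) \subseteq \mathcal{T}_{co}$.

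For the reverse inclusion in the case of $C(2^\N)$, I appeal to Proposition~\ref{ac topology is nice}: it suffices to show that every semigroup homomorphism $\phi : (C(2^\N), \mathcal{T}_{co}) \to T$ into a second countable topological semigroup is continuous. By Lemma~\ref{Cantor property W lemma}, $(C(2^\N), \mathcal{T}_{co})$ has property \textbf{W} with respect to $\Aut(2^\N)$, so by Lemma~\ref{property W is nice lemma} it is enough to prove that $\phi\restriction_{\Aut(2^\N)}$ is continuous. But $\phi\restriction_{\Aut(2^\N)}$ is a semigroup homomorphism from $(\Aut(2^\N), \mathcal{T}_{co}\restriction_{\Aut(2^\N)})$ into the second countable topological semigroup $T$, so by the equality $\mathcal{SCC}(\Aut(2^\N)) = \mathcal{T}_{co}\restriction_{\Aut(2^\N)}$ established above together with Proposition~\ref{ac topology is nice}, it is automatically continuous.

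The main obstacle is the $\Aut(2^\N)$ step, since Theorem~\ref{automatic continuity examples thm} only guarantees automatic continuity for homomorphisms into second countable topological \emph{groups}, while $\mathcal{SCC}$ is built from compatibility with topological \emph{semigroups}. The inverse-topology join trick is precisely what bridges this gap, symmetrising an arbitrary second countable semigroup topology into one that also makes inversion continuous; once this is in place the passage to $C(2^\N)$ is a routine application of property \textbf{W}.
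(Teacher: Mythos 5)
Your proof is correct and follows essentially the same route as the paper: establish $\mathcal{T}_{co}\subseteq\mathcal{SCC}$ from Polishness, get the reverse inclusion for $\Aut(2^\N)$ from Theorem~\ref{automatic continuity examples thm}, and bootstrap to $C(2^\N)$ via property~\textbf{W} and Lemma~\ref{property W is nice lemma}. The only difference is cosmetic: to bridge the semigroup-vs-group gap for $\Aut(2^\N)$, the paper invokes the inverse-semigroup equivalences of Proposition~\ref{ac topology is nice}, whereas you reconstruct the same symmetrization trick ($\mathcal{T}\vee\mathcal{T}^{-1}$ via Proposition~\ref{pull back topologies prop} and Lemma~\ref{combining topologies lemma}) directly — this actually makes explicit a step the paper's parenthetical ``(and hence inverse semigroup)'' leaves slightly implicit, so it is a perfectly valid and arguably cleaner rendering of the same argument.
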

\begin{proof}
Let \(\mathcal{T}_{co}\) denote the compact-open topology on \(C(2^\N)\).
From Theorem~\ref{compact-open is Polish thm}, this topology is Polish and compatible with \(C(2^\N)\). As \(\mathcal{T}_{co}\) is second countable, it follows that 
\(\mathcal{SCC}(C(2^\N))\supseteq \mathcal{T}_{co}\) and \(\mathcal{SCC}(\Aut(2^\N))\supseteq \mathcal{T}_{co}\restriction_{\Aut(2^\N)}\).

By Theorem~\ref{automatic continuity examples thm} every homomorphism from \((\Aut(2^\N), \mathcal{T}_{co})\) to a second countable group (and hence inverse semigroup) is continuous.
From Proposition~\ref{ac topology is nice}, it follows that every homomorphism from the semigroup obtained by removing the unary operation of  \(\Aut(2^\N)\) to a second countable semigroup is continuous.

By Lemma~\ref{Cantor property W lemma}, \(C(2^\N)\) has property \textbf{W} with respect to \(\Aut(2^\N)\).
Thus from Lemma~\ref{property W is nice lemma}, all homomorphisms from \((C(2^\N), \mathcal{T}_{co})\) to second countable topological semigroups are continuous. So from Proposition~\ref{ac topology is nice} we also have \[\mathcal{SCC}(C(2^\N))\subseteq \mathcal{T}_{co}\quad\text{ and }\quad \mathcal{SCC}(\Aut(2^\N))\subseteq \mathcal{T}_{co}\restriction_{\Aut(2^\N)}.\]
\end{proof}

The following lemma is the Cantor space analogue or Lemma~\ref{continuous Hilbert cube action lemma}.
However in this case we can use a different proof with weaker assumptions due to the existence Lemma~\ref{Cantor has SCC lemma}.
\speeddictone{228}{continuous Cantor action lemma}{Cantor has SCC lemma}
\begin{lemma}[Continuous action on \(2^\N\), \Assumed{228}]\label{continuous Cantor action lemma}
Suppose that \(\mathcal{T}\) is a second countable Hausdorff topology compatible with the semigroup \(C(2^\N)\). In this case the map \(a:2^\N \times (C(2^\N), \mathcal{T}) \to 2^\N\) defined by \((x, f)a= (x)f\) is continuous.
\end{lemma}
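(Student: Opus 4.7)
The plan is to leverage the fact from Lemma~\ref{Cantor has SCC lemma} that the compact-open topology $\mathcal{T}_{co}$ on $C(2^\N)$ equals $\mathcal{SCC}(C(2^\N))$ and is Polish. Since $\mathcal{T}$ is a second countable topology compatible with $C(2^\N)$, the definition of $\mathcal{SCC}$ immediately gives $\mathcal{T} \subseteq \mathcal{T}_{co}$. This containment, combined with the fact that the Cantor space (unlike the Hilbert cube) is itself compact, allows a much shorter argument than the one used in Lemma~\ref{continuous Hilbert cube action lemma}, sidestepping any use of Theorem~\ref{comparable Polish group topologies theorem}.

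Following the strategy of Lemma~\ref{continuous Hilbert cube action lemma}, I would let $C \subseteq C(2^\N)$ denote the set of constant maps and let $I: C \to 2^\N$ be the bijection sending a constant map to its unique value. A direct check using the subbasis $\{B(K, V) \cap C : K \subseteq 2^\N \text{ compact},\ V \subseteq 2^\N \text{ open}\}$ shows that $I: (C, \mathcal{T}_{co}\restriction_{C}) \to 2^\N$ is a homeomorphism. In particular $(C, \mathcal{T}_{co}\restriction_{C})$ is compact.

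Next, consider the identity map $\mathrm{id}: (C, \mathcal{T}_{co}\restriction_{C}) \to (C, \mathcal{T}\restriction_{C})$. This is a continuous bijection (continuity follows from $\mathcal{T}\restriction_{C} \subseteq \mathcal{T}_{co}\restriction_{C}$) from a compact space to a Hausdorff space (since subspaces of Hausdorff spaces are Hausdorff). By Remark~\ref{compactness facts}, this map is a homeomorphism, so $\mathcal{T}\restriction_{C} = \mathcal{T}_{co}\restriction_{C}$, and in particular $I: (C, \mathcal{T}\restriction_{C}) \to 2^\N$ is also a homeomorphism.

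Finally, with $I$ now a homeomorphism in the topology $\mathcal{T}$, I would decompose the evaluation map exactly as in the Hilbert cube argument:
\[
a = \langle \pi_0 I^{-1},\ \pi_1 \rangle_{C \times C(2^\N)} \circ *^{C(2^\N)} \circ I,
\]
noting that the image of $*^{C(2^\N)}$ restricted to $C \times C(2^\N)$ lies in $C$ (a constant composed with anything is constant), so the final application of $I$ makes sense. Each component is continuous: $I^{-1}$ by the homeomorphism just established, $*^{C(2^\N)}$ by compatibility of $\mathcal{T}$ with the semigroup operation, and $I$ again by the homeomorphism. The main obstacle is confirming that $\mathcal{T}$ and $\mathcal{T}_{co}$ actually agree when restricted to $C$, but this reduces cleanly to compactness of $(C, \mathcal{T}_{co}\restriction_{C})$ together with Hausdorffness of $\mathcal{T}\restriction_{C}$.
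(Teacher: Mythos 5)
Your proof is correct and takes essentially the same approach as the paper's: decompose $a$ through the identification $I:C\to 2^\N$, establish $\mathcal{T}\restriction_C \subseteq \mathcal{T}_{co}\restriction_C$ via Lemma~\ref{Cantor has SCC lemma}, and use the compactness of $(C, \mathcal{T}_{co}\restriction_C)$ together with Hausdorffness of $\mathcal{T}\restriction_C$ to upgrade the containment to equality. The only cosmetic difference is that you first show $I$ is a homeomorphism for $\mathcal{T}_{co}\restriction_C$ and then transport, while the paper introduces an auxiliary topology $\mathcal{C}$ and identifies it with $\mathcal{T}_{co}\restriction_C$; these are the same argument.
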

\begin{proof}
Let \(C\) denote the set of constant maps from \(2^\N\) to itself and note that \(\mathcal{T}\restriction_{C}\) is second countable and Hausdorff. Let \(I:C\to 2^\N\) be the bijection sending a constant map to the unique point in its image.
Note that \(a = \langle \pi_0I^{-1}, \pi_1 \rangle_{C \times C(2^\N)} \circ *^{C(2^\N)} \circ I.\)
Thus to show that \(a\) is continuous, it suffices to show that \(I:C\to 2^\N\) is a homeomorphism. Let \(\mathcal{C}\) denote the topology \[\makeset{U\subseteq C}{\((U)I\) is open in \(2^\N\)}\]
on \(C\).
We will show that \(\mathcal{T}\restriction_C= \mathcal{C}\).
If \(U\subseteq 2^\N\) is open, then \((U)I^{-1} =\makeset{f\in C}{\((2^\N)f\subseteq U\)}\). So \(\mathcal{C}= \mathcal{T}_{co}\restriction_{C}\).
From Lemma~\ref{Cantor has SCC lemma}, it follows that
\[\mathcal{T}\restriction_C\subseteq \mathcal{SCC}(C(2^\N))\restriction_C =\mathcal{T}_{co}\restriction_C=\mathcal{C}.\]

Note that \(\mathcal{T}\restriction_C\subseteq  \mathcal{C}\), \(\mathcal{T}\restriction_C\) is Hausdorff and \(\mathcal{C}\) is compact.
Thus from Remark~\ref{compactness facts} we have \(\mathcal{T}\restriction_C= \mathcal{C}\) as required.
\end{proof}

\speeddictthree{230}{Cantor space main theorem}{Containing the compact-open topology}{Cantor has SCC lemma}{continuous Cantor action lemma}
\begin{theorem}[The topologies of \(C(2^\N)\), \Assumed{230}]\label{Cantor space main theorem}
The compact-open topology is the only second countable Hausdorff topology compatible with \(C(2^\N)\).
Moreover \(\mathcal{SCC}(C(2^\N))\) is the compact-open topology, and all homomorphisms from \(C(2^\N)\) to second countable topological semigroups are continuous.
\end{theorem}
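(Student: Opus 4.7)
The plan is to prove the theorem by assembling the lemmas that immediately precede it; almost all of the technical work (the property \textbf{W} argument, the Polishness of the compact-open topology, and the continuous-action lemma) has already been done, so this will essentially be a bookkeeping argument.

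First I would dispense with the second sentence. The claim that $\mathcal{SCC}(C(2^\N))$ equals the compact-open topology $\mathcal{T}_{co}$ is exactly Lemma~\ref{Cantor has SCC lemma}. The automatic continuity statement then follows immediately from Proposition~\ref{ac topology is nice}: since $(C(2^\N), \mathcal{T}_{co})$ satisfies $\mathcal{SCC}(C(2^\N)) \subseteq \mathcal{T}_{co}$ (in fact equality), every homomorphism from it to a second countable topological semigroup is continuous.

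For the first sentence, let $\mathcal{T}$ be an arbitrary second countable Hausdorff topology compatible with the semigroup $C(2^\N)$. I would establish the two containments separately. For $\mathcal{T} \subseteq \mathcal{T}_{co}$: by the definition of the second countable continuity topology (Definition~\ref{ac topology defn}), every second countable topology compatible with $C(2^\N)$ is contained in $\mathcal{SCC}(C(2^\N))$, which by Lemma~\ref{Cantor has SCC lemma} equals $\mathcal{T}_{co}$. For $\mathcal{T} \supseteq \mathcal{T}_{co}$: apply Lemma~\ref{continuous Cantor action lemma} (whose hypotheses are precisely second countability and Hausdorffness of $\mathcal{T}$) to conclude that the evaluation map $a : 2^\N \times (C(2^\N), \mathcal{T}) \to 2^\N$ is continuous; then Lemma~\ref{Containing the compact-open topology} yields $\mathcal{T}_{co} \subseteq \mathcal{T}$.

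Combining these gives $\mathcal{T} = \mathcal{T}_{co}$, completing the proof. There is no real obstacle here since both the ``upper bound'' (via $\mathcal{SCC}$ and the property \textbf{W} argument of Lemma~\ref{Cantor property W lemma}) and the ``lower bound'' (via continuity of the evaluation action and Lemma~\ref{Containing the compact-open topology}) have already been packaged into the preceding lemmas. The only thing worth flagging is that the hypothesis ``second countable Hausdorff'' is used twice in essential but different ways: second countability feeds into $\mathcal{SCC}$ to give the upper bound, while Hausdorffness is what Lemma~\ref{continuous Cantor action lemma} needs (via compactness of $C \cong 2^\N$ inside $(C(2^\N), \mathcal{T})$) to force the evaluation map to be continuous.
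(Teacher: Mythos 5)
Your proposal matches the paper's own proof essentially step for step: the $\mathcal{SCC}$ equality comes from Lemma~\ref{Cantor has SCC lemma}, the upper bound $\mathcal{T} \subseteq \mathcal{T}_{co}$ is immediate from the definition of $\mathcal{SCC}$, and the lower bound $\mathcal{T}_{co} \subseteq \mathcal{T}$ comes from Lemmas~\ref{continuous Cantor action lemma} and~\ref{Containing the compact-open topology}. Your explicit invocation of Proposition~\ref{ac topology is nice} to get the automatic-continuity clause is also the right move (the paper leaves this implicit), so the argument is correct and complete.
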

\begin{proof}
Let \(\mathcal{T}\) be an arbitrary second countable Hausdorff topology compatible with \(C(2^\N)\), and let \(\mathcal{T}_{co}\) be the compact-open topology.
The equality \(\mathcal{SCC}(C(2^\N))=\mathcal{T}_{co}\) follows from Lemma~\ref{Cantor has SCC lemma}.
It remains to show that \(\mathcal{T}=\mathcal{T}_{co}\). By the definition of \(\mathcal{SCC}(C(2^\N))\), it follows that \(\mathcal{T}\subseteq \mathcal{SCC}(C(2^\N))=\mathcal{T}_{co}\).
Moreover, from Lemmas~\ref{continuous Cantor action lemma} and \ref{Containing the compact-open topology}, we also have \(\mathcal{T}\supseteq \mathcal{T}_{co}\). The result follows.
\end{proof}

\speeddictfive{243}{Cantor clone cor}{Composition of continuous maps is continuous}{topological abstract clones defn}{topological clones from topological semigroups}{Hilbert clone cor}{Cantor space main theorem}
\begin{corollary}[The topologies of the Cantor clone, \Assumed{243}]\label{Cantor clone cor}
Let \(\mathcal{C}\) be the subcategory of topological spaces and continuous functions with the object set
\(\mathcal{O}_\mathcal{C}:= \makeset{(2^\N)^i}{\(i\in \N\)}\), and the morphism set \(\mathcal{M}_\mathcal{C}\) consisting of the all continuous maps between these objects. Let \(\mathcal{P}_\mathcal{O}\) be the discrete topology on \(\mathcal{O}_{\mathcal{C}}\) and let \(\mathcal{P}_\mathcal{M}\) be the topology on \(\mathcal{M}_\mathcal{C}\) generated by the sets of the form
\[A_{i, j} := C((2^\N)^i, (2^\N)^j)\quad \text{ and } \quad B(V, U):= \makeset{f\in \mathcal{M}_{\mathcal{C}}}{\((V)f\subseteq U\)}\]
for all \(i, j\in \N\), compact \(V\subseteq (2^\N)^i\), and open \(U\subseteq (2^\N)^j\).

The triple \((\mathcal{C}, \mathcal{P}_{\mathcal{O}}, \mathcal{P}_{M})\) is a Polish topological abstract clone.
Moreover if \((\mathcal{T}_{\mathcal{O}}, \mathcal{T}_{\mathcal{M}})\) are compatible with the clone \(\mathcal{C}\) then
\begin{enumerate}
    \item If \(\mathcal{T}_{\mathcal{M}}\) is Hausdorff and second countable then \(\mathcal{T}_{\mathcal{M}}= \mathcal{P}_{\mathcal{M}}\).
    \item If \(\mathcal{T}_{\mathcal{M}}\) is second countable then \(\mathcal{T}_{\mathcal{M}}\subseteq \mathcal{P}_{\mathcal{M}}\).
\end{enumerate}
\end{corollary}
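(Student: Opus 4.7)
The plan is to mirror the proof of Corollary~\ref{Hilbert clone cor} almost verbatim, substituting Theorem~\ref{Cantor space main theorem} for Theorem~\ref{hilber cube main theorem}. The argument splits naturally into two stages: first verifying that $(\mathcal{C}, \mathcal{P}_{\mathcal{O}}, \mathcal{P}_{\mathcal{M}})$ is a topological abstract clone, and then applying Proposition~\ref{topological clones from topological semigroups} to transfer the uniqueness and containment results from the semigroup $C(2^\N)$ to the whole clone.

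For the first stage, I would check the seven conditions of Definition~\ref{topological abstract clones defn}. Conditions (1)--(5) follow immediately from the definitions of $\mathcal{O}_{\mathcal{C}}$, $\mathcal{M}_{\mathcal{C}}$, $\mathcal{P}_{\mathcal{O}}$, and $\mathcal{P}_{\mathcal{M}}$. For condition (6), since $(A_{n,i}\times A_{i,m})_{i,n,m\in \N}$ partitions $\operatorname{Comp}_{\mathcal{C}}$ into clopen sets, it suffices to verify continuity of $\circ_{\mathcal{C}}$ on each such piece; preimages of basic sets $A_{j,k}$ under this restriction are either empty or the full piece, and preimages of $B(V,U)$ are open by Corollary~\ref{Composition of continuous maps is continuous} (using that each $(2^\N)^i$ is compact and metrizable). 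For condition (7), I would note that by Theorem~\ref{compact-open is Polish thm} the sets $B(V,U)$ with $V\subseteq (2^\N)^i$ compact and $U\subseteq (2^\N)^j$ of the form $\prod_{k<j} (U)\pi_k$ generate the topology on $A_{i,j}$, and then
\[
(B(V,U))\phi_{i,j}^{-1} = \prod_{k\in\{0,1,\ldots,j-1\}} B(V,(U)\pi_k),
\]
which is open in the product topology on $A_{i,1}^j$.

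For the second stage, the endomorphism monoid of $(1)\operatorname{Pow}_{\mathcal{C}}^{-1} = 2^\N$ inside $\mathcal{C}$ is exactly $C(2^\N)$, and $(1)\operatorname{Pow}_{\mathcal{C}}^{-1}$ and $(2)\operatorname{Pow}_{\mathcal{C}}^{-1} = (2^\N)^2$ are isomorphic in $\mathcal{C}$ by Theorem~\ref{Brouwer' Theorem}. The hypotheses of Proposition~\ref{topological clones from topological semigroups} are therefore met, giving a bijective, containment-preserving correspondence between topologies compatible with $C(2^\N)$ and pairs of topologies compatible with $\mathcal{C}$, and preserving the properties of being Hausdorff and second countable. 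Moreover $\mathcal{P}_{\mathcal{M}}\restriction_{C(2^\N)}$ is the compact-open topology, so $\mathcal{P}_{\mathcal{M}}$ is the pair corresponding to the compact-open topology under this bijection. The two conclusions of the corollary then follow directly from the two parts of Theorem~\ref{Cantor space main theorem}: (1) the compact-open topology is the unique second countable Hausdorff topology compatible with $C(2^\N)$, and (2) every second countable topology compatible with $C(2^\N)$ is contained in $\mathcal{SCC}(C(2^\N))$, which equals the compact-open topology.

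No serious obstacle is anticipated, since every step is either formal verification or an immediate application of an already-stated result; the proof is essentially a cosmetic adaptation of the Hilbert analogue, with the strengthening from ``Polish'' to ``second countable Hausdorff'' being absorbed automatically by the stronger form of Theorem~\ref{Cantor space main theorem}.
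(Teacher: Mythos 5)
Your proposal is correct and takes essentially the same approach as the paper; in fact the paper's own proof is the single sentence ``The proof of this corollary is the same as the proof of Corollary~\ref{Hilbert clone cor}, but using Theorem~\ref{Cantor space main theorem} instead of Theorem~\ref{hilber cube main theorem},'' which is exactly what you have spelled out. Your explicit check that $(2^\N)^2$ is homeomorphic to $2^\N$ (needed for Proposition~\ref{topological clones from topological semigroups}) is a welcome addition that the paper leaves implicit in both this corollary and its Hilbert analogue.
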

\begin{proof}
The proof of this corollary is the same as the proof of Corollary~\ref{Hilbert clone cor}, but using Theorem~\ref{Cantor space main theorem} instead of Theorem~\ref{hilber cube main theorem}. 
\end{proof}

\subsection{Boolean algebras}\label{boolean subsection}
In this subsection we discuss homomorphisms between boolean algebras.
In particular we are interested in \(B_\infty\) the boolean algebra of clopen subsets of the Cantor space.
This boolean algebra is of note as up to isomorphism it is the only countably infinite boolean algebra with no atoms (minimal non-zero elements).
It is also the Fr\"aiss\'e limit of the class of finite non-trivial boolean algebras. 
The results of this subsection are corollaries of Cantor space subsection using the Stone Duality Theorem (\ref{stone duality theorem}) and Proposition~\ref{topological clones from topological semigroups}.

\speeddictone{245}{boolean algbra signature defn}{signature defn}
\begin{defn}[Boolean algebra signature, \Assumed{245}]\label{boolean algbra signature defn}
We define the \(\sigma_B\) to be the signature
\[\sigma_B:= (\{ \wedge,\vee, \neg, \mathbf{0}, \mathbf{1}\}, \varnothing, \{ (\wedge, 2), (\vee, 2), (\neg, 1), (\mathbf{0}, 0), (\mathbf{1}, 0)\}).\]
The actual value of the symbols \(\wedge, \vee, \neg, \mathbf{0}\) and \(\mathbf{1}\) are not very important, but for completeness we'll define
\[\wedge:= (01, 14, 04), \quad\vee:= (15, 18 ),  \quad \neg:= (14, 15, 20),\]
\[ \mathbf{0}:= (26, 05, 18, 15), \quad \mathbf{1}:= (15, 14, 05).\]
\end{defn}

\speeddicttwo{246}{boolean algebra defn}{structures defn}{boolean algbra signature defn}
\begin{defn}[Boolean algebra, \Assumed{246}]\label{boolean algebra defn}
If \(\mathbb{B}\) is \(\sigma_B\)-structure and \(a, b\in \mathbb{B}\), then we will often write \(a\wedge b, a\vee b, \neg a, \mathbf{0}\) and \(\mathbf{1}\) instead of \((a, b)\wedge^\mathbb{B}, (a, b)\vee^\mathbb{B}, (a)\neg^\mathbb{B}, ()\mathbf{0}^\mathbb{B}\), and \(()\mathbf{1}^\mathbb{B}\) respectively.

We say a \(\sigma_B\)-structure \(\mathbb{B}\) is a \textit{boolean algebra} if for all \(a,b,c\in \mathbb{B}\) we have the following:
\begin{equation*}
    \begin{split}
        a\wedge (b \wedge c) &= (a\wedge b)\wedge c,\\
a\wedge b  &= b\wedge a,\\
a \wedge \mathbf{1}&= a,\\
a \vee (b \wedge c)&= (a \vee b) \wedge (a \vee c),\\
a\vee (a \wedge b) &= a,\\
a\vee \neg a &= \mathbf{1},\\
\neg(a\vee b) &=\neg a \wedge \neg b,
    \end{split}
    \quad\quad
    \begin{split}
          a\vee (b \vee c) &= (a\vee b)\vee c,\\
 a\vee b &= b\vee a,\\
 \quad a\vee \mathbf{0} &= a,\\
 a \wedge (b \vee c) &= (a \wedge b) \vee (a \wedge c),\\
 a\wedge (a \vee b) &= a,\\
 a \wedge \neg a &= \mathbf{0},\\
  \neg(a\wedge b) &=\neg a \vee \neg b.
    \end{split}
\end{equation*}
\end{defn}
It is worth noting that many of the assumptions in the above definitions can be inferred from each other, but the choice of which conditions constitute the definition varies between authors.

From Theorem~\ref{stone duality theorem}, it will follow that every boolean algebra is isomorphic to a boolean algebra of sets (with the operations of intersection, union, complement, empty set, and universe) so these are the key examples to keep in mind.

\speeddictone{248}{two element boolean algebra defn}{boolean algebra defn}
\begin{example}[The two element boolean algebra, \Assumed{248}]\label{two element boolean algebra defn}
If one considers \(0\) as representing ``false" and \(1\) as representing ``true", then \(\{0, 1\}\) is a boolean algebra when given the operations of ``and", ``or", ``not", ``false", and ``true".
More concretely if \(a, b \in \{0, 1\}\) then
\[a\wedge b := \min(a, b),\ a\vee b := \max(a, b),\ \neg a := 1-a,\ \mathbf{0}:=0,\ \text{ and }\ \mathbf{1}:=1\]
\end{example}

\speeddictthree{249}{stone spaces defn}{product topology defn}{structure hom defn}{two element boolean algebra defn}
\begin{defn}[Stone spaces, \Assumed{249}]\label{stone spaces defn}
We say that a topological space \(X\) is a \textit{Stone space} if it is zero-dimensional, Hausdorff and compact. If \(\mathbb{B}\) is a boolean algebra then let \(\operatorname{SSpace}(\mathbb{B})\) denote the set of homomorphisms from \(\mathbb{B}\) to the two element boolean algebra from Example~\ref{two element boolean algebra defn}.
We view \(\operatorname{SSpace}(\mathbb{B})\) as a topological space equipped with the pointwise topology (\(\{0,1\}\) has the discrete topology).
\end{defn}

\speeddictsix{247}{stone duality theorem}{Hausdorff defn}{compact defn}{disjoint union topology defn}{zero-dimensional defn}{dual semigroup defn}{stone spaces defn}
\begin{theorem}[cf. Theorem 34 of \cite{givant2008introduction}, Stone Duality Theorem, \Assumed{247}]\label{stone duality theorem}
If \(X\) is a Stone space, then the collection \(\Clo(X)\) of clopen subsets of \(X\) form a boolean algebra with the following operations.
If \(A, B\in \Clo(X)\) are arbitrary, then
\[A\vee B= A\cup B, \quad A\wedge B= A\cap B,  \quad \neg A= X\backslash A,\quad \mathbf{0}= \varnothing, \quad \mathbf{1}= X.\]

Conversely, if \(\mathbb{B}\) is a boolean algebra, then \(\operatorname{SSpace}(\mathbb{B})\) (recall Definition~\ref{stone spaces defn}) is the only Stone space \(X\) (up to homeomorphism) such that \(\mathbb{B}\cong \Clo(X)\). Moreover if \(X\) is a Stone space and \(\mathbb{B}\) is a boolean algebra, then
\[C(X)\cong \End(\Clo(X))^\dagger \quad \text{ and }\quad  \End(\mathbb{B})\cong C(\operatorname{SSpace}(\mathbb{B}))^\dagger \text{(recall Definition~\ref{dual semigroup defn})}.\]

Additionally if \(X\), \(Y\) are disjoint Stone spaces, then \(X\cup Y\) is a Stone space and \(\Clo(X \cup Y) \cong \Clo(X) \times \Clo(Y)\) .
\end{theorem}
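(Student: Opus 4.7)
The plan is to prove the four claims of the theorem in order, treating the construction $\mathbb{B} \mapsto \operatorname{SSpace}(\mathbb{B})$ and the isomorphism $\mathbb{B} \cong \Clo(\operatorname{SSpace}(\mathbb{B}))$ as the heart of the matter. First I would dispatch the easy direction: if $X$ is a Stone space, then $\Clo(X)$ is closed under finite intersections, unions, and complements, and the boolean algebra axioms in Definition~\ref{boolean algebra defn} all reduce to standard set-theoretic identities, so $\Clo(X)$ is a boolean algebra. For a boolean algebra $\mathbb{B}$, the space $\operatorname{SSpace}(\mathbb{B})$ sits inside the power $\{0,1\}^\mathbb{B}$, which is a Stone space by Tychonoff's Theorem (Remark~\ref{compactness facts}); and $\operatorname{SSpace}(\mathbb{B})$ is cut out by conditions of the form $\{\phi : (a,b)\wedge^\mathbb{B}\phi = (a)\phi\wedge(b)\phi\}$ etc., each of which depends only on finitely many coordinates into the discrete Hausdorff space $\{0,1\}$ and is thus closed. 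Hence $\operatorname{SSpace}(\mathbb{B})$ is a closed subspace of a Stone space, so itself a Stone space.

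Next I would construct the natural map $\eta : \mathbb{B} \to \Clo(\operatorname{SSpace}(\mathbb{B}))$ defined by $(b)\eta = \{\phi \in \operatorname{SSpace}(\mathbb{B}) : (b)\phi = 1\}$. Each $(b)\eta$ is the preimage of $\{1\}$ under the $b$-th projection, hence clopen; a direct check using the definition of a homomorphism shows that $\eta$ is a boolean algebra homomorphism. Surjectivity follows from compactness and zero-dimensionality of $\operatorname{SSpace}(\mathbb{B})$: every clopen subset is a finite union of basic clopen sets, each of which is a finite intersection of sets of the form $(b)\eta$ or $(\neg b)\eta$, and these assemble to a single $(c)\eta$. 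Injectivity is the genuinely nontrivial point: I would show that for every $b \in \mathbb{B}$ with $b \neq \mathbf{0}$, there is some $\phi \in \operatorname{SSpace}(\mathbb{B})$ with $(b)\phi = 1$. The natural approach is to consider the set of $c \in \mathbb{B}$ with $b \leq c$ (where $\leq$ is the canonical boolean order $x \leq y \iff x\wedge y = x$), extend this to an ultrafilter on $(\mathbb{B}, \leq)$ via Zorn's Lemma (Definition~\ref{filters defn}), and take its characteristic function, which is a homomorphism to $\{0,1\}$ sending $b$ to $1$. This is the main obstacle, being essentially the Boolean Prime Ideal Theorem.

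For uniqueness, if $X$ is any Stone space with $\mathbb{B} \cong \Clo(X)$, the evaluation map $X \to \operatorname{SSpace}(\Clo(X))$, $x \mapsto (A \mapsto \chi_A(x))$, is continuous by the definition of the product topology; it is injective because $X$ is Hausdorff and zero-dimensional, and surjective by an argument parallel to surjectivity of $\eta$ combined with compactness of $X$; Remark~\ref{compactness facts} then upgrades it to a homeomorphism. The endomorphism monoid claim follows by defining the dual map: a continuous $f : X \to Y$ induces the boolean algebra homomorphism $U \mapsto (U)f^{-1}$ from $\Clo(Y)$ to $\Clo(X)$, which reverses the order of composition and so lands in $\End(\Clo(X))^\dagger$; the inverse construction uses the Stone Duality correspondence to turn a boolean homomorphism $\Clo(Y) \to \Clo(X)$ into a continuous map $X \cong \operatorname{SSpace}(\Clo(X)) \to \operatorname{SSpace}(\Clo(Y)) \cong Y$ by precomposition, and the two constructions are mutually inverse by a routine verification. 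Finally, for disjoint $X, Y$ the space $X \cup Y$ is a Stone space directly from Definition~\ref{disjoint union topology defn} and Remark~\ref{compactness facts}, and the map $A \mapsto (A \cap X, A \cap Y)$ from $\Clo(X \cup Y)$ to $\Clo(X) \times \Clo(Y)$ is manifestly a boolean algebra isomorphism with inverse $(U, V) \mapsto U \cup V$.
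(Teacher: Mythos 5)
The paper does not prove this theorem: it is stated with the attribution ``cf.\ Theorem 34 of Givant--Halmos'' and imported as background, in the same way Brouwer's Theorem, the Baire Category Theorem, and Theorem~5.2.4 of van Mill are handled elsewhere in the document. There is therefore no internal proof to compare against, so I will assess your sketch on its own terms: it is the standard proof of Stone duality, and its steps are sound. The clopen-algebra direction is a set-theoretic verification; realising $\operatorname{SSpace}(\mathbb{B})$ as a closed subspace of $\{0,1\}^{\mathbb{B}}$ gives the topological properties for free; the map $\eta$ has surjectivity from compactness plus zero-dimensionality and injectivity from extending the proper filter $\makeset{c\in\mathbb{B}}{$b\leq c$}$ to an ultrafilter (a legitimate use of Zorn's Lemma, which the paper lists as assumed knowledge, and you correctly flag this as the genuine obstacle); uniqueness follows from the evaluation map being a continuous bijection of compact Hausdorff spaces, so a homeomorphism by Remark~\ref{compactness facts}; and the monoid and disjoint-union claims are the expected contravariant dualisations. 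The one spot worth an extra sentence is the injectivity reduction: to separate distinct $b_1, b_2\in\mathbb{B}$ you apply the nonzero-element case to $b_1\wedge\neg b_2$ or $\neg b_1\wedge b_2$, whichever is nonzero, and that intermediate step should be stated rather than left implicit.
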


\speeddictfive{250}{main boolean algebra theorem}{pull back topologies prop}{Cantor space main theorem}{stone duality theorem}{full trans def}{automorphism groups defn}
\begin{theorem}[The topologies of \(\End(B_\infty)\), \Assumed{250}]\label{main boolean algebra theorem}
Let \(B_\infty\) denote the boolean algebra \(\Clo(2^\N)\).
The pointwise topology \(\mathcal{PT}_{B_{\infty}}\restriction_{\End(B_{\infty})}\) (recall Example~\ref{full trans def}) is the only second countable Hausdorff topology compatible with \(\End(B_{\infty})\).
Moreover this topology is Polish, equal to \(\mathcal{SCC}(\End(B_{\infty}))\), and all homomorphisms from \((\End(B_\infty), \mathcal{PT}_{B_{\infty}}\restriction_{\End(B_{\infty})})\) to second countable semigorups are continuous.
\end{theorem}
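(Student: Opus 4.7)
The plan is to deduce this theorem from Theorem~\ref{Cantor space main theorem} via Stone Duality (Theorem~\ref{stone duality theorem}). Since $B_\infty = \Clo(2^\N)$, Stone Duality gives $\End(B_\infty) \cong C(\operatorname{SSpace}(B_\infty))^\dagger \cong C(2^\N)^\dagger$ as semigroups, where the isomorphism $\Psi: \End(B_\infty) \to C(2^\N)^\dagger$ sends an endomorphism $\phi$ to the continuous map $f_\phi \in C(2^\N)$ characterized by $\phi(U) = (U)f_\phi^{-1}$ for every clopen $U \subseteq 2^\N$. Recall also that by Proposition~\ref{pull back topologies prop} (applied to the identity anti-isomorphism), a topology on the set $C(2^\N)$ is compatible with $C(2^\N)$ iff it is compatible with $C(2^\N)^\dagger$.

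The key step is to show that $\Psi$ is a homeomorphism between $(\End(B_\infty), \mathcal{PT}_{B_\infty}\restriction_{\End(B_\infty)})$ and $C(2^\N)$ with the compact-open topology $\mathcal{T}_{co}$. For one direction, a subbasic open set of the pointwise topology on $\End(B_\infty)$ has the form $\{\phi : \phi(U) = V\}$ for clopen $U, V$; under $\Psi$ this becomes $\{f : (U)f^{-1} = V\} = B(V, U) \cap B(2^\N \setminus V, 2^\N \setminus U)$ which lies in $\mathcal{T}_{co}$ since $V$ and $2^\N \setminus V$ are clopen (hence compact) and $U$, $2^\N \setminus U$ are open. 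For the converse direction, let $f \in B(K, W)$ with $K \subseteq 2^\N$ compact and $W \subseteq 2^\N$ open. Since $2^\N$ is zero-dimensional Hausdorff compact, I can sandwich $(K)f \subseteq U \subseteq W$ with $U$ clopen, and then by continuity and compactness sandwich $K \subseteq V \subseteq (U)f^{-1}$ with $V$ clopen. Then $f \in \Psi(\{\phi : V \subseteq \phi(U)\}) \subseteq B(K, W)$, and $\{\phi : V \subseteq \phi(U)\} = \bigcup_{V' \supseteq V} \{\phi : \phi(U) = V'\}$ is open in the pointwise topology on $\End(B_\infty)$. So $\Psi$ is a topological isomorphism onto $(C(2^\N), \mathcal{T}_{co})$, viewed as a topological semigroup using the dual operation.

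With this identification in hand, the theorem follows by transporting Theorem~\ref{Cantor space main theorem} along $\Psi$. Specifically: Theorem~\ref{Cantor space main theorem} says $\mathcal{T}_{co}$ is the unique second countable Hausdorff topology compatible with $C(2^\N)$, equals $\mathcal{SCC}(C(2^\N))$, and every homomorphism from $(C(2^\N), \mathcal{T}_{co})$ to a second countable topological semigroup is continuous. Pulling back along $\Psi$ (and using Proposition~\ref{pull back topologies prop} to pass between $C(2^\N)$ and $C(2^\N)^\dagger$, and Proposition~\ref{ac topology is nice} which shows $\mathcal{SCC}$ is invariant under passing to the dual semigroup), each of these statements transfers to $\End(B_\infty)$ equipped with the pointwise topology. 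Polishness also transfers since $\mathcal{T}_{co}$ is Polish on $C(2^\N)$ (Theorem~\ref{compact-open is Polish thm}) and $\Psi$ is a homeomorphism.

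There is no substantial obstacle; the only delicate point is verifying the homeomorphism claim in the middle paragraph, which relies essentially on $2^\N$ being zero-dimensional so that every compact--open pair can be approximated by a clopen--clopen pair. Once that is established, everything else is formal.
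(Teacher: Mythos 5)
Your proof is correct and takes essentially the same route as the paper: Stone duality transports Theorem~\ref{Cantor space main theorem} from \(C(2^\N)^\dagger\) to \(\End(B_\infty)\). The paper avoids your explicit middle-paragraph verification that \(\Psi\) carries \(\mathcal{PT}_{B_\infty}\restriction_{\End(B_\infty)}\) to the compact-open topology by invoking the uniqueness clause directly --- it suffices to observe that \(\mathcal{PT}_{B_\infty}\restriction_{\End(B_\infty)}\) is second countable, Hausdorff and compatible, whence uniqueness forces it to be the transported topology, so your (correct but laborious) clopen-sandwiching argument is redundant.
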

\begin{proof}
From Theorem~\ref{stone duality theorem}, we know that \(\End(B_{\infty}) \cong C(2^\N)^\dagger\).
Thus from Proposition~\ref{pull back topologies prop}, there is a (bijective and topological property preserving) correspondence between the topologies compatible with \(\End(B_{\infty})\), and those compatible with \(C(2^\N)\).
Thus from Theorem~\ref{Cantor space main theorem} there is a unique second countable Hausdorff topology compatible with \(\End(B_{\infty})\), and this topology coincides with \(\mathcal{SCC}(\End(B_{\infty}))\).
This topology is also Polish from Lemma~\ref{Cantor has SCC lemma}.
As \(\mathcal{PT}_{B_{\infty}}\restriction_{\End(B_{\infty})}\) is second countable, Hausdorff and compatible with \(\End(B_{\infty})\), the result follows.
\end{proof}

\speeddictthree{251}{boolean clone cor}{topological abstract clones defn}{topological clones from topological semigroups}{main boolean algebra theorem}
\begin{corollary}[The topologies of the \(B_\infty\) polymorphism clone, \Assumed{242}]\label{boolean clone cor}
Let \(B_{\infty}\) be a boolean algebra, with universe \(\N\), which is isomorphic to \(\Clo(2^\N)\).
Let \(\mathcal{C}\) be the subcategory of boolean algebras and homomorphisms with the object set
\(\mathcal{O}_\mathcal{C}:= \makeset{B_{\infty}^i}{\(i\in \N\)}\), and the morphism set \(\mathcal{M}_\mathcal{C}\) consisting of the all homomorphisms between these objects.
We view \(\mathcal{O}_{\mathcal{C}}\) and \(\mathcal{P}_{\mathcal{C}}\) as subspaces of the topological spaces of the same names given in Theorem~\ref{full func clone cor}.
Let \(\mathcal{P}_\mathcal{O}\) and \(\mathcal{M}_\mathcal{C}\) be the topologies on these sets respectively.

The triple \((\mathcal{C}, \mathcal{P}_{\mathcal{O}}, \mathcal{P}_{M})\) is a Polish topological abstract clone. 
Moreover if \((\mathcal{T}_{\mathcal{O}}, \mathcal{T}_{\mathcal{M}})\) are compatible with the clone \(\mathcal{C}\) then
\begin{enumerate}
    \item If \(\mathcal{T}_{\mathcal{M}}\) is Hausdorff and second countable then \(\mathcal{T}_{\mathcal{M}}= \mathcal{P}_{\mathcal{M}}\).
    \item If \(\mathcal{T}_{\mathcal{M}}\) is second countable then \(\mathcal{T}_{\mathcal{M}}\subseteq \mathcal{P}_{\mathcal{M}}\).
\end{enumerate}
\end{corollary}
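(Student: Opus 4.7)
The plan is to follow the same template as Corollaries~\ref{Hilbert clone cor} and \ref{Cantor clone cor}: first verify that $(\mathcal{C}, \mathcal{P}_{\mathcal{O}}, \mathcal{P}_{\mathcal{M}})$ satisfies the seven conditions of Definition~\ref{topological abstract clones defn}, and then transfer the topological uniqueness/bounds statements from $\End(B_\infty)$ to the whole clone via Proposition~\ref{topological clones from topological semigroups} combined with Theorem~\ref{main boolean algebra theorem}.

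For the first step, the key observation is that $\mathcal{M}_\mathcal{C}$ is a subset of the morphism set of the full function clone built in Corollary~\ref{full func clone cor}, and $\mathcal{P}_{\mathcal{M}}$ is defined as the corresponding subspace topology. Thus continuity of source, target, composition, and the product maps $\phi_{i,j}$ all descend immediately from the full function clone, provided that the relevant image sets are contained in $\mathcal{M}_\mathcal{C}$. This last point is just the fact that a composite of boolean algebra homomorphisms is a boolean algebra homomorphism, and that $\langle f_0, \ldots, f_{j-1} \rangle_{B_\infty^j}$ is a homomorphism whenever each $f_i$ is. I would further note that $\mathcal{M}_\mathcal{C}$ is a closed subspace of the full function clone's morphism space: being a boolean algebra homomorphism is an intersection over $(a,b)\in \N^2$ of the closed (elementary algebraic) conditions $f(a\wedge b)=f(a)\wedge f(b)$, and analogous conditions for the other operations. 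Hence $\mathcal{P}_{\mathcal{M}}$ is Polish by Lemma~\ref{nice Polish subspaces lemma}.

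For the second step, I need to verify the hypotheses of Proposition~\ref{topological clones from topological semigroups}. The endomorphism monoid of $(1)\operatorname{Pow}_{\mathcal{C}}^{-1}=B_\infty$ in $\mathcal{C}$ is exactly $\End(B_\infty)$, and the restriction $\mathcal{P}_{\mathcal{M}}\restriction_{\End(B_\infty)}$ equals $\mathcal{PT}_{B_\infty}\restriction_{\End(B_\infty)}$ directly from the definitions of the subbasic open sets. The remaining input is that the objects $B_\infty$ and $B_\infty^2$ are isomorphic in $\mathcal{C}$. This is the main Stone-theoretic step: by Theorem~\ref{stone duality theorem}, $B_\infty \cong \Clo(2^\N)$, and $B_\infty^2 \cong \Clo(2^\N)\times \Clo(2^\N)\cong \Clo(2^\N \sqcup 2^\N)$; since a disjoint union of two copies of the Cantor space is again homeomorphic to $2^\N$ (by Brouwer's Theorem~\ref{Brouwer' Theorem}, as the union remains second countable, zero-dimensional, Hausdorff, compact, and without isolated points), one obtains $B_\infty^2\cong B_\infty$ as boolean algebras and hence as objects of $\mathcal{C}$.

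With both verifications in hand, Theorem~\ref{main boolean algebra theorem} supplies that $\mathcal{PT}_{B_\infty}\restriction_{\End(B_\infty)}$ is a Polish topology which is simultaneously the unique second countable Hausdorff topology compatible with $\End(B_\infty)$ and equal to $\mathcal{SCC}(\End(B_\infty))$. Applying Proposition~\ref{topological clones from topological semigroups} transfers each of these properties to the pair $(\mathcal{P}_{\mathcal{O}}, \mathcal{P}_{\mathcal{M}})$ on the whole clone: it is Polish, any compatible Hausdorff second countable pair must coincide with it, and any compatible second countable pair must be contained in it. The main obstacle I anticipate is not any single step but the careful bookkeeping to confirm that the subspace-of-the-full-function-clone topology genuinely yields a topological abstract clone structure on $\mathcal{C}$; once this is set up cleanly, the rest is formal application of already-proved results.
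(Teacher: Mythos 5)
Your proposal is correct and follows essentially the same route as the paper's proof: both inherit the topological abstract clone structure from the full function clone of Corollary~\ref{full func clone cor}, establish $B_\infty \cong B_\infty^2$ via Stone duality and the homeomorphism $2 \times 2^\N \cong 2^\N$, and then conclude by Proposition~\ref{topological clones from topological semigroups} combined with Theorem~\ref{main boolean algebra theorem}. Your write-up is more detailed than the paper's (e.g.\ spelling out that composition and tupling stay inside $\mathcal{M}_\mathcal{C}$, and that $\mathcal{M}_\mathcal{C}$ is a closed subspace and hence Polish), but these are elaborations of the same argument rather than a different approach.
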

\begin{proof}
First note that \((\mathcal{C}, \mathcal{P}_{\mathcal{O}}, \mathcal{P}_{M})\) is a topological abstract clone as it is contained in the clone from Theorem~\ref{full func clone cor}.

Note that \(2^\N\) is isomorphic to its disjoint union with itself (\(2\times 2^\N\cong 2^\N\)).
Thus from Theorem~\ref{stone duality theorem} it follows that \(B_{\infty}\cong B_{\infty}^2\).
Thus the result follows from Proposition~\ref{topological clones from topological semigroups} together with Theorem~\ref{main boolean algebra theorem}.
\end{proof}

\part{Automorphisms of \(dV_n\)}\label{nv section}
The content of this part is a more detailed version of the Paper \cite{elliott2020description} by the same author as this document. 

The groups \(dV_n\) include both the Brin-Thompson groups \(nV\) \((=nV_2)\), and the Higman-Thompson groups \(V_n\) \((=1V_n)\).
They also overlap with the class of groups \(G_{n, r}\) \((G_{n, 1}= 1V_n)\).
The groups \(nV\) \cite{Brin2004, Brin2005, Bleak2010, hennig2011,Belk2016, quick2019, lawson2019higher}, as well as automorphisms of Thompson groups \cite{Brin_AutF, Brin_1998,bleak2016,OlukoyaAutTnr} have each been well researched in the literature.

The paper \cite{bleak2016} of C. Bleak, P. Cameron, Y. Maissel, A. Navas, and F. Olukoya gives a description of the groups \(\Aut(G_{n, r})\) and \(\Out(G_{n, r})\) using Rubin's Theorem and transducers.
Here we give an analogous description of the groups \(\Aut(dV_n)\) and \(\Out(dV_n)\) (see Theorem~\ref{autnV}) and use it to give an embedding of \(\Out(dV_n)\) into \(\Out(V_n){{{\wr}}} \Sym(d)\) (see Theorem~\ref{main theorem 1}).
Moreover in the case that \(n=2\), our embedding is actually an isomorphism (see Theorem~\ref{main theorem 2}).
This proves a conjecture made by Nathan Barker in 2012: The groups \(\Out(dV)\) and \(\Out(V) \wr \Sym(d)\) are isomorphic.

The groups \(nV\) for \(n\geq 2\) have also been described with transducers before but not in the same manner done here (in \cite{Belk2016} it is shown that they embed in the rational group).
We discuss this alternative transducer viewpoint more in Section~\ref{rationality sec}.

For our representation of \(dV_n\) we extend the notion of transducer established by Grigorchuk,  Nekrashevich, and Sushchanskii \cite{GNS2000}, and then identify a class of transducers which is appropriate for representing continuous transformations of \(n\)-dimensional Cantor spaces (see Theorem~\ref{transducerable = continuous theorem}).

\section{Higher dimensional words}
In this section we define the groups of interest and introduce the basic concepts surrounding muti-dimensional words (these will be used heavily thought this part).

\speeddictthree{100}{cones words and Cantor spaces defn}{Brouwer' Theorem}{product structures}{words defn}
\begin{defn}[Higher dimensional words, \Assumed{100}]\label{cones words and Cantor spaces defn}
If \(n\geq 2\), then we define \(X_n := \{0, 1, \ldots, n-1\}\) and \(\mathfrak{C}_n:= \{0, 1, \ldots , n-1\}^\N\).
We think of the elements of \(\mathfrak{C}_n\) as \textit{infinite words} over the alphabet \(X_n\) (see Definition~\ref{words defn}) and we give them the product topology using the discrete topology on \(X_n\).
It follows from Theorem~\ref{Brouwer' Theorem}, that all of these are Cantor spaces.
For each \(i\in \{0,1, \ldots,d-1\}\) and \(x\in X_n\), we define \(x_{d, i}\in (X_n^*)^d\) by \((i)x_{d,i} =x\) and \((j)x_{d,i} = \varepsilon\) for all \(j\neq i\).
Similarly, we define \(\varepsilon_d\in (X_n^*)^d\) to be the constant tuple with value \(\varepsilon\).

If \(d\in \N\backslash \{0\}\), then we extend the prefix relation \(\leq\) (from Definition~\ref{words defn}) on \(\{0, 1, \ldots, n-1\}^* \cup \mathfrak{C}_n\) to the set \((\{0, 1, \ldots, n-1\}^* \cup \mathfrak{C}_n)^d\) in the natural fashion (as in Example~\ref{product poset}).
We also extend the concatenation operation to the domain \((X_n^*)^d \times (X_n^* \cup \mathfrak{C}_n)^d\) coordinatewise. For \(w\in (X_n^*)^d\), then we also extend \(\lambda_w\) to the domain \((X_n^* \cup \mathfrak{C}_n)^d\) to be concatenation with \(w\) on the left (note that this map is injective).

Note that if \(w\in (X_n^*)^d\) then
\[w\mathfrak{C}_n^d= (\mathfrak{C}_n^d)\lambda_w= \makeset{x\in  \mathfrak{C}_n^d}{\( w\leq x\)}.\]

It is routine to verify that these sets are clopen, and the collection of all such sets is a basis for \(\mathfrak{C}_n^d\). Such basic open sets will be referred to as \textit{cones}.
As the cones form a basis of clopen sets for \(\mathfrak{C}_n^d\) and this space is compact, it follows that the clopen subsets of \(\mathfrak{C}_n^d\) are precisely the finite unions of cones.
\end{defn}

\speeddicttwo{101}{prefix codes and dVn defn}{automorphism groups defn}{cones words and Cantor spaces defn}
\begin{defn}[Prefix codes and \(dV_n\), \Assumed{101}]\label{prefix codes and dVn defn}
Suppose that \(d\in \mathbb{N}\backslash \{0\}\) and \(n\in \mathbb{N}\backslash \{0, 1\}\). We say that a subset \(F\subseteq (X_n^*)^d\) is a \(\textit{complete prefix code}\) if \(\makeset{w\mathfrak{C}_n^d}{\(w\in F\)}\) is a partition of \(\mathfrak{C}_n^d\) into clopen sets. Note that as \(\mathfrak{C}_n^d\) is compact, all complete prefix codes will be finite.

If \(F_1, F_2\) are complete prefix codes of \(\mathfrak{C}_n^d\), and \(\phi: F_1 \to F_2\) is a bijection, then we define the \textit{prefix exchange map} \(f_\phi:~\mathfrak{C}_n^d~\to~\mathfrak{C}_n^d\) by
\[f_{\phi} := \union{w\in F_1} (\lambda_w^{-1} \circ \lambda_{(w)\phi})\restriction_{w\mathfrak{C}_n^d}.\]
That is, if \(w\in F_1\) and \(x\in \mathfrak{C}_n^d\), then \((w x)f_{\phi}=((w)\phi) (x)\).

It is routine to verify that such prefix exchange maps are always homeomorphisms, and moreover the set of prefix exchange maps is closed under composition and inversion. We define \(dV_n\) to be the subgroup of \(\Aut(\mathfrak{C}_n^d)\) consisting all the prefix exchange maps. If \(n=2\) or \(d= 1\) we will often omit them in the notation \(dV_n\).
\end{defn}

\speeddictone{102}{anti-chain sizes}{prefix codes and dVn defn}
\begin{lemma}[Complete prefix code sizes, \Assumed{102}]\label{anti-chain sizes}
Suppose that \(m, d\geq 1\) and \(n\geq 2\). 
There is a complete prefix code for \(\mathfrak{C}_n^d\) of cardinality \(m\) if and only if \(m\in  (1 + (n-1)\mathbb{N})\).
\end{lemma}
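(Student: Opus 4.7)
The plan is to prove both directions by an induction on $|F|$ based on the following expansion/contraction operation: given a complete prefix code $F$ and a tuple $v \in (X_n^*)^d$ with $v \in F$, we may replace $v$ by the $n$-element set $\{v x_{d,i} : x \in X_n\}$ for any fixed $i \in \{0,\ldots,d-1\}$ (this is well-defined and yields a complete prefix code because $v\mathfrak{C}_n^d = \bigsqcup_{x\in X_n} v x_{d,i}\mathfrak{C}_n^d$). Call this an \emph{expansion at coordinate $i$}. Each expansion increases $|F|$ by exactly $n-1$, and its inverse, a \emph{contraction}, decreases $|F|$ by $n-1$.

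For the ``if'' direction, I would start from $F_0 = \{\varepsilon_d\}$ (which is a complete prefix code of size $1$) and iteratively apply expansions at some fixed coordinate (say $i=0$). After $k$ expansions the result is a complete prefix code of size $1 + k(n-1)$, so every value of the form $1 + k(n-1)$ with $k \in \mathbb{N}$ is realised.

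For the ``only if'' direction, I would induct on $|F|$. The base case $|F|=1$ forces $F = \{\varepsilon_d\}$, since $w\mathfrak{C}_n^d = \mathfrak{C}_n^d$ forces $w=\varepsilon_d$. For $|F|>1$, choose $w \in F$ maximising the total length $|w| := \sum_{i<d}|(i)w|$; since $|F|>1$ we cannot have $\varepsilon_d \in F$, so $|w|\geq 1$ and we may fix $i<d$ with $|(i)w|\geq 1$. Let $w'$ be $w$ with the last letter of its $i$-th coordinate removed. The cone $w'\mathfrak{C}_n^d = \bigsqcup_{b\in X_n} w' b_{d,i}\mathfrak{C}_n^d$ is partitioned by those elements $v \in F$ with $v\mathfrak{C}_n^d \subseteq w'\mathfrak{C}_n^d$, and any such $v$ satisfies $w' \leq v$ with $|v|\leq|w| = |w'|+1$; hence $v = w'b_{d,i}$ for some $b\in X_n$. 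Conversely, each $w'b_{d,i}\mathfrak{C}_n^d$ must be covered, and by the same length bound cannot be refined further inside $F$, so every $w'b_{d,i}$ lies in $F$. Thus $F$ contains the full ``sibling family'' $\{w'b_{d,i} : b \in X_n\}$, and contracting this family yields a complete prefix code $F''$ with $|F''| = |F|-(n-1)$. Applying the induction hypothesis to $F''$ gives $|F| \equiv 1 \pmod{n-1}$.

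The only slightly delicate step is the sibling-family argument in the inductive step: one must justify that maximality of $|w|$ together with the prefix-code condition forces \emph{all} $n$ siblings $w'b_{d,i}$ (and not just $w$ itself) to belong to $F$. I do not expect this to be hard, but it is where the argument must be written carefully, as it simultaneously uses the disjointness of cones in $F$, the compactness-driven completeness of the covering of $w'\mathfrak{C}_n^d$, and the maximality of $|w|$ to rule out strictly longer refinements.
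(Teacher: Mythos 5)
The ``if'' direction of your proposal is fine and matches the paper.  The ``only if'' direction, however, has a genuine gap exactly at the step you flagged as ``slightly delicate'', and it is not a matter of writing the argument more carefully --- the argument as structured cannot work.

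The problematic claim is that ``the cone $w'\mathfrak{C}_n^d$ is partitioned by those elements $v \in F$ with $v\mathfrak{C}_n^d \subseteq w'\mathfrak{C}_n^d$''. This is false: when $d\geq 2$, a cone $v\mathfrak{C}_n^d$ with $v\in F$ may intersect $w'\mathfrak{C}_n^d$ without being contained in it. Two tuples $v$ and $w'$ have overlapping cones precisely when each coordinate of one is comparable to the corresponding coordinate of the other, and the direction of this comparison can vary between coordinates; so neither $v \leq w'$ nor $w' \leq v$ need hold. Maximality of $|w|$ does not exclude this. Concretely, consider the example from Remark~\ref{scary anti-chains}: for $n=2$, $d=3$,
\[
F = \{(\varepsilon, 0, 0),\ (0, 1 , \varepsilon),\ (1, \varepsilon, 1),\ (0, 0, 1),\ (1, 1, 0)\}.
\]
Every element has total length $2$, so any $w$ is maximal. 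Take $w=(\varepsilon,0,0)$ and $i=1$, so $w'=(\varepsilon,\varepsilon,0)$. Then $v=(0,1,\varepsilon)\in F$ has $v\mathfrak{C}_2^3\cap w'\mathfrak{C}_2^3\neq\varnothing$ but $v\mathfrak{C}_2^3\not\subseteq w'\mathfrak{C}_2^3$, and the cones of $F$ \emph{contained} in $w'\mathfrak{C}_2^3$ (namely $(\varepsilon,0,0)$ and $(1,1,0)$) do not cover $w'\mathfrak{C}_2^3$. One checks that in fact \emph{no} two elements of this $F$ form a sibling pair, so a single contraction is impossible for any choice of $w$ and $i$, and your downward induction never gets started. (That this $F$ exists is exactly the content of Remark~\ref{scary anti-chains}: not every complete prefix code arises by successive expansions of $\{\varepsilon_d\}$ when $d\geq 2$, which is precisely what your downward induction would assert.)

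The paper avoids this by going in the opposite direction: rather than contracting $F$ down to $\{\varepsilon_d\}$, it repeatedly \emph{expands} $F$ (each step adding $n-1$ elements, hence preserving $|F| \bmod (n-1)$) until every coordinate of every element has a common length $k$, at which point $F$ must equal $(X_n^k)^d$ and so $|F| = n^{kd} \in 1 + (n-1)\mathbb{N}$. Expansion is always possible and terminates because all lengths are bounded by $k$, so this direction does not encounter the obstruction that sinks the contraction argument.
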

\begin{proof}
\((\Leftarrow):\) We show by induction on \(k\), that for all \(k\in \N\) there is a complete prefix code of \(\mathfrak{C}_n^d\) of size \(1 + (n-1)k\).
If \(k = 0\), then \(1+ (n-1)k= 1\) and we can choose the complete prefix code \(\{\varepsilon_d\}\).
Suppose that \(k>0\) and the claim holds for smaller \(k\).
Let \(F\) be a complete prefix code with \(|F| = 1 + (n-1)(k-1)\).
Choose \(w\in F\) and define \(F' := (F\backslash \{w\}) \cup \makeset{w x_{d,0}}{\(x\in X_n\)}\), then \(F'\) is a complete prefix code with \(|F'| = 1 + (n-1)k\).

\((\Rightarrow):\) Let \(F\) be a complete prefix code for \(\mathfrak{C}_n^d\).
Let \(k := \max\makeset{|(w)\pi_i|}{\(w\in F, i\in \{0, 1,\ldots, d-1\}\)}\).
Suppose that there is \(w\in F\) and \(i\in \{0, 1, \ldots, d-1\}\) such that \(|(w)\pi_i| \neq k\).
It follows that \(|(w)\pi_i| <k\).
If we then define \(F' := (F\backslash \{w\}) \cup \makeset{w x_{d, i}}{\(x\in X_n\)}\), then \(|F'| -|F|=n-1\) and 
\[\max\makeset{|(w)\pi_i|}{\(w\in F\)} = \max\makeset{|(w)\pi_i|}{\(w\in F'\)}.\]

Thus we can assume without loss of generality that \(|(w)\pi_i| = k\) for all \(w\in F\) and \(i\in \{0, 1, \ldots, d-1\}\). As \(F\) is a complete prefix code, it follows that \(F  = (X_n^k)^d\). In particular \(|F| = n^{k d}\in 1 + (n-1)\N\) as required.
\end{proof}

\speeddictone{103}{scary anti-chains}{anti-chain sizes}
\begin{remark}[Scary prefix codes, \Assumed{103}]\label{scary anti-chains}
In the proof of Lemma~\ref{anti-chain sizes}, we construct complete prefix codes of the required sizes by starting at the trivial prefix code \(\{\varepsilon_{d}\}\), and sequentially refining it by replacing one element with \(n\) new elements.
It can be shown than in the case \(d = 1\), all complete prefix codes can be found this way. However this is not true in general.
For example consider the following complete prefix code when \(n=2\) and \(d=3\):
\[\{(\varepsilon, 0, 0), (0, 1 , \varepsilon), (1, \varepsilon, 1), (0, 0, 1), (1, 1, 0)\}.\]
\end{remark}

\section{Generalizing the transducers of Grigorchuk,  Nekrashevich, and Sushchanskii}
In this section we introduce the type of transducers we will be using throughout this part.
The transducers defined by Grigorchuk, Nekrashevich, and Sushchanskii (which we shorten to GNS) in \cite{GNS2000}, are machines which ``read" letters and then ``transition" between various ``states" and ``write" letters accordingly.
These machines can then read words by reading each letter in turn.

This can be thought of as assigning to each letter of an alphabet \(X_n\), a transformation of a state set, and a word to write for each state.
This assignment is then extended to all elements of \(X_n^*\) via its ``freeness" property.
Our definition has the view of reading/writing elements of a semigroup, but we do not restrict to free monoids.

\speeddictfour{108}{transducers defn}{Numbers defns}{Categories defn}{semigroup defn}{group actions defn}
\begin{defn}[Transducers, \Assumed{108}]\label{transducers defn}
We say that \(T := (Q_T, D_T, R_T, \boldsymbol{\pi}_T, \boldsymbol{\lambda}_T)\) is a transducer if:
\begin{enumerate}
    \item \(Q_T\) is a set (called the set of states).
    \item \(D_T\) is a semigroup (called the domain semigroup).
    \item \(R_T\) is a semigroup (called the range semigroup).
    \item \(\boldsymbol{\pi}_T: Q_T\times D_T \to Q_T\) is an action of \(D_T\) on the discrete set \(Q_T\) (called the transition function).
    \item \(\boldsymbol{\lambda}_T : Q_T\times D_T \to R_T\) is a function with the property that for all \(q\in Q_T\) and \(s,t\in D_T\) we have \[( q,st)\boldsymbol{\lambda}_T= (q,s)\boldsymbol{\lambda}_T(( q,s)\boldsymbol{\pi}_T,t)\boldsymbol{\lambda}_T\text{ (called the output function).}\] 
\end{enumerate}
\end{defn}

\speeddictone{109}{Transducers vs Homomorphims}{transducers defn}
\begin{remark}[Transducers vs homomorphims, \Assumed{109}]\label{Transducers vs Homomorphims}
Defining a one state transducer with domain \(D\) and range \(R\) is equivalent to defining a semigroup homomorphism from \(D\) to \(R\).
In this sense transducers can be thought of as generalisations of semigroup homomorphisms.
\end{remark}

Note that our definition of a transducer consists only of a set, two semigroups and some maps between them.
It is thus easy and natural to view our transducers as objects of a category.
This viewpoint will be key to many of the arguments later in the part.

\speeddictfour{110}{transducers homomorphisms defn}{Products in Categories Defn}{semigroup defn}{congruences and quotients defn}{transducers defn}
\begin{defn}[Transducer homomorphisms, \Assumed{110}]\label{transducers homomorphisms defn}
Let \(A, B\) be transducers. We say that \(\phi\) is a \textit{transducer homomorphism} from \(A\) to \(B\) (written \(\phi:A \to B\)), if \(\phi\) is a 3-tuple \((\phi_Q, \phi_D, \phi_R)\) with the following properties: 
\begin{enumerate}
    \item \(\phi_{R}:R_{A}\to R_{B}\) is a semigroup homomorphism.
    \item \(\phi_{D}:D_{A}\to D_{B}\) is a semigroup homomorphism.
    \item \(\phi_Q:Q_{A} \to Q_B\) is a function, such that 
    \[\langle \boldsymbol{\pi}_{A}, \boldsymbol{\lambda}_{A}  \rangle_{Q_A\times R_A}\langle \pi_0\phi_Q, \pi_1\phi_R\rangle_{Q_B\times R_B} =  \langle \pi_0\phi_Q, \pi_1\phi_D\rangle_{Q_B\times D_B}\langle \boldsymbol{\pi}_{B}, \boldsymbol{\lambda}_{B}  \rangle_{Q_B\times R_B}\]
    (recall Definition~\ref{Products in Categories Defn}). Or equivalently, for all \(q\in Q_{A}\) and \(s\in D_A\) we have
    \[(q, s)\boldsymbol{\pi}_{A}\phi_Q = ((q)\phi_Q, (s)\phi_D)\boldsymbol{\pi}_{B} \quad \text{and}\quad (q, s)\boldsymbol{\lambda}_{A}\phi_R = ((q)\phi_Q, (s)\phi_D) \boldsymbol{\lambda}_{B}.\]
\end{enumerate}
 If the maps \(\phi_D, \phi_R\) are identity maps, then we say that \(\phi\) is \textit{strong}. 
 We compose transducer homomorphisms component-wise.
It is routine to verify that transducers and transducer homomorphisms form a category.
Moreover transducers and strong transducer homomorphisms form a subcategory of this category.

We say that a transducer homomorphism \(\phi\) is a \textit{quotient map} if each of \(\phi_{Q}, \phi_D, \phi_R\) is surjective (so each of them is a quotient map as in Definition~\ref{congruences and quotients defn}).
We say that \(\phi:A\to B\) is a \textit{transducer isomorphism} if it is an isomorphism in the category of transducers and transducer homomorphisms, or equivalently if each of the maps \(\phi_{Q}, \phi_D, \phi_R\) is a bijection.

We say that two transducers \(A, B\) are \textit{isomorphic} (denoted \(A\cong B\)) if there is a transducer isomorphism \(\phi:A\to B\). Similarly we say that  \(A, B\) are \textit{strongly isomorphic} (denoted \(A\cong_S B\)) if there is a strong transducer isomorphism \(\phi:A\to B\)
\end{defn}

The following notion of a ``minimal transducer" corresponds to the notion of ``combining equivalent states" from GNS.
The GNS notion of being ``reduced" is the notion discussed in Theorem~\ref{unique minimal thm}.
\speeddicttwo{111}{minimal transducer defn}{types of binary relation defns}{transducers homomorphisms defn}
\begin{defn}[Minimal transducers, \Assumed{111}]\label{minimal transducer defn}
If \(T\) is a transducer and \(\lambda_{r}\) injective for all \(r\in R_T\), then we define its \textit{minimal transducer} \(M_T\) to be \((Q_T/\sim_{M_T}, D_T, R_T, \boldsymbol{\pi}_{M_T}, \boldsymbol{\lambda}_{M_T})\) where \(\sim_{M_T}, \pi_{M_T}\) and \(\lambda_{M_T}\) are defined by:
\begin{enumerate}
    \item \(\sim_{M_T}\) is the equivalence relation 
    \[\makeset{(p, q)\in Q_T^2}{\((p, s)\boldsymbol{\lambda}_{T} = (q, s)\boldsymbol{\lambda}_{T}\) for all \(s\in D_T\)};\]
    
    \item If \(q\in Q_T\), \(s\in D_T\) then \(([q]_{\sim_{M_T}}, s)\boldsymbol{\pi}_{M_T} = [(q,s)\boldsymbol{\pi}_T]_{\sim_{M_T}}\);
    
    \item If \(q\in Q_T\), \(s\in D_T\) then \(([q]_{\sim_{M_T}}, s)\boldsymbol{\lambda}_{M_T} = (q,s)\boldsymbol{\lambda}_T\).
\end{enumerate}
Moreover, we define a strong quotient map \(q_T:T\to M_T\), by \((p){q_T}_Q= [p]_{\sim_{M_T}}\). We will justify these definitions in Lemma~\ref{minimal transducers are valid lemma}.
\end{defn}

\speeddictone{112}{minimal transducers are valid lemma}{minimal transducer defn}
\begin{lemma}[Minimal transducers are valid, \Assumed{112}]\label{minimal transducers are valid lemma}
The objects defined in Definition~\ref{minimal transducer defn} are well-defined and have the stated properties.
Moreover if \(A\) is a transducer, \(\lambda_r\) is injective for all \(r\in R_A\), and \(\phi:A\to B\) is a strong quotient map, then there is a strong quotient map \(\psi:B\to M_A\) with \(q_A = \phi\psi\).
\end{lemma}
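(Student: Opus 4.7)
The plan is to first show that the equivalence relation $\sim_{M_T}$ is compatible with $\boldsymbol{\pi}_T$ (it is automatically compatible with $\boldsymbol{\lambda}_T$ by its defining property), so that $\boldsymbol{\pi}_{M_T}$ and $\boldsymbol{\lambda}_{M_T}$ are well-defined. This is the one place the injectivity hypothesis is used. Given $p \sim_{M_T} q$ and $s \in D_T$, for arbitrary $t \in D_T$, the output property and the defining property of $\sim_{M_T}$ give
\[
(p,s)\boldsymbol{\lambda}_T \cdot ((p,s)\boldsymbol{\pi}_T, t)\boldsymbol{\lambda}_T = (p, st)\boldsymbol{\lambda}_T = (q, st)\boldsymbol{\lambda}_T = (q,s)\boldsymbol{\lambda}_T \cdot ((q,s)\boldsymbol{\pi}_T, t)\boldsymbol{\lambda}_T.
\]
Since $(p,s)\boldsymbol{\lambda}_T = (q,s)\boldsymbol{\lambda}_T$, applying the injective map $\lambda_{(p,s)\boldsymbol{\lambda}_T}$ on the left cancels this common factor, so $((p,s)\boldsymbol{\pi}_T, t)\boldsymbol{\lambda}_T = ((q,s)\boldsymbol{\pi}_T, t)\boldsymbol{\lambda}_T$ for every $t$, i.e.\ $(p,s)\boldsymbol{\pi}_T \sim_{M_T} (q,s)\boldsymbol{\pi}_T$ as required. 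This cancellation step is the main (and really the only non-routine) obstacle.

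Next I would verify that $M_T$ is a transducer: $\boldsymbol{\pi}_{M_T}$ inherits the action axiom directly from $\boldsymbol{\pi}_T$ by unwrapping the equivalence-class representative, and the output identity for $\boldsymbol{\lambda}_{M_T}$ follows by applying the output identity for $\boldsymbol{\lambda}_T$ on a representative and then re-expressing everything in terms of $\boldsymbol{\pi}_{M_T}$ and $\boldsymbol{\lambda}_{M_T}$. The map $q_T := ([\cdot]_{\sim_{M_T}}, \operatorname{id}_{D_T}, \operatorname{id}_{R_T})$ is then a strong transducer homomorphism by construction, and it is a quotient map since its $Q$-component is surjective onto $Q_T/\sim_{M_T}$ and its other components are identity maps.

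For the moreover part, I would construct $\psi:B\to M_A$ as follows. Since $\phi$ is strong, $D_B = D_A = D_{M_A}$ and $R_B = R_A = R_{M_A}$ with $\phi_D, \phi_R, \psi_D, \psi_R$ all identity maps, so only $\psi_Q$ needs defining. Given $b \in Q_B$, choose any $a \in Q_A$ with $(a)\phi_Q = b$ (possible since $\phi_Q$ is surjective) and set $(b)\psi_Q := [a]_{\sim_{M_A}}$. To see this is well-defined, suppose $(a_1)\phi_Q = (a_2)\phi_Q = b$; then for every $s \in D_A$,
\[
(a_1, s)\boldsymbol{\lambda}_A = ((a_1)\phi_Q, s)\boldsymbol{\lambda}_B = (b, s)\boldsymbol{\lambda}_B = ((a_2)\phi_Q, s)\boldsymbol{\lambda}_B = (a_2, s)\boldsymbol{\lambda}_A,
\]
so $a_1 \sim_{M_A} a_2$. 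The equation $q_A = \phi\psi$ is immediate from the definition of $\psi_Q$, and surjectivity of $\psi_Q$ follows since $([a]_{\sim_{M_A}}) = ((a)\phi_Q)\psi_Q$. It remains to check $\psi$ is a homomorphism: for $b = (a)\phi_Q \in Q_B$ and $s \in D_B$, using that $\phi$ is a homomorphism,
\[
(b, s)\boldsymbol{\pi}_B\, \psi_Q = ((a, s)\boldsymbol{\pi}_A\, \phi_Q)\psi_Q = [(a,s)\boldsymbol{\pi}_A]_{\sim_{M_A}} = ([a]_{\sim_{M_A}}, s)\boldsymbol{\pi}_{M_A} = ((b)\psi_Q, s)\boldsymbol{\pi}_{M_A},
\]
and similarly $(b, s)\boldsymbol{\lambda}_B = (a, s)\boldsymbol{\lambda}_A = ((b)\psi_Q, s)\boldsymbol{\lambda}_{M_A}$, completing the verification.
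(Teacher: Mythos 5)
Your proposal is correct and follows essentially the same approach as the paper: the same cancellation step using injectivity of $\lambda_{(p,s)\boldsymbol{\lambda}_T}$ to show $\boldsymbol{\pi}_{M_T}$ is well-defined on equivalence classes (the paper phrases this via $\lambda^{-1}$ instead of ``inject then cancel,'' but these are the same thing), and the same construction of $\psi_Q$ via pre-images under $\phi_Q$ with identical well-definedness and homomorphism checks.
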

\begin{proof}
Let \(T\) be as in Definition~\ref{minimal transducer defn}. We first need to show that \(\boldsymbol{\pi}_{M_T}\) is well-defined. Let \(p, q\in Q_T\) and \(s\in D_T\) be such that \(p\sim_{M_T} q\).
We need to show that \((p,s)\boldsymbol{\pi}_T \sim_{M_T} (q,s)\boldsymbol{\pi}_T\) and \((p,s)\boldsymbol{\lambda}_T = (q,s)\boldsymbol{\lambda}_T\). The equality \((p,s)\boldsymbol{\lambda}_T = (q,s)\boldsymbol{\lambda}_T\) is immediate from the definition of \(\sim_{M_T}\). To show that \((p,s)\boldsymbol{\pi}_T \sim_{M_T} (q,s)\boldsymbol{\pi}_T\), it suffices to show that if \(t\in D_T\) is arbitrary then \(((p,s)\boldsymbol{\pi}_T, t)\boldsymbol{\lambda}_T = ((q,s)\boldsymbol{\pi}_T, t)\boldsymbol{\lambda}_T\).  This follows from
\begin{align*}
    ((p,s)\boldsymbol{\pi}_T, t)\boldsymbol{\lambda}_T &= ((((p,s)\boldsymbol{\pi}_T, t)\boldsymbol{\lambda}_T)\lambda_{(p,s)\boldsymbol{\lambda}_T})\lambda_{(p,s)\boldsymbol{\lambda}_T}^{-1}\\
    &= ((p,s)\boldsymbol{\lambda}_T((p,s)\boldsymbol{\pi}_T, t)\boldsymbol{\lambda}_T)\lambda_{(p,s)\boldsymbol{\lambda}_T}^{-1}\\
    &= ((p,st)\boldsymbol{\lambda}_T)\lambda_{(p,s)\boldsymbol{\lambda}_T}^{-1}\\
    &= ((q,st)\boldsymbol{\lambda}_T)\lambda_{(q,s)\boldsymbol{\lambda}_T}^{-1}\\
 &= ((q,s)\boldsymbol{\lambda}_T((q,s)\boldsymbol{\pi}_T, t)\boldsymbol{\lambda}_T)\lambda_{(q,s)\boldsymbol{\lambda}_T}^{-1}\\
  &= ((((q,s)\boldsymbol{\pi}_T, t)\boldsymbol{\lambda}_T)\lambda_{(q,s)\boldsymbol{\lambda}_T})\lambda_{(q,s)\boldsymbol{\lambda}_T}^{-1}\\
     &= ((q,s)\boldsymbol{\pi}_T, t)\boldsymbol{\lambda}_T.
\end{align*}

We next need to show that \(q_T\) is a quotient map. The maps \({q_T}_D, {q_T}_R, {q_T}_Q\) are all surjective by definition, so we need only show that \(q_T\) is a transducer homomorphism. If \(q\in Q_T\) and \(s\in D_T\), then by definition we have
 \[(q, s)\boldsymbol{\pi}_{T}{q_T}_Q = [(q, s)\boldsymbol{\pi}_{T}]_{\sim_{M_T}}=([q]_{\sim_{M_T}}, s)\boldsymbol{\pi}_{M_T}=  ((q){q_T}_Q, (s){q_T}_{D})\boldsymbol{\pi}_{M_T},\] 
 \[(q, s)\boldsymbol{\lambda}_{T}{q_A}_R =(q, s)\boldsymbol{\lambda}_{T}= ([q]_{\sim_{M_T}}, s) \boldsymbol{\lambda}_{M_T}=((q){q_A}_Q, (s){q_A}_D) \boldsymbol{\lambda}_{M_T}\]
 as required. 
 We define \(\psi:B\to M_A\) by having \(\psi_D, \psi_R\) be the identity maps and defining \(\psi_Q\) by:
\[((q)\phi_Q)\psi_Q:= (q){q_A}_Q.\]

As \({q_A}_Q\) is surjective, it suffices to show that \(\psi\) is a well-defined transducer homomorphism.
Note that all of the maps \(\phi_D, \phi_R, \psi_D, \psi_R, {q_A}_D\) and \({q_A}_R\) are identity maps so we can ignore them for the purposes of this proof. 
We first show that \(\psi\) is well-defined. 
Suppose that \(q_0, q_1\in Q_A\) satisfy \((q_0)\phi_Q= (q_1)\phi_Q\). 
We need to show that \(q_0\sim_{M_T} q_1\).
For \(s\in D_A\) be arbitrary, we have
\[(q_0, s)\boldsymbol{\lambda}_A = ((q_0)\phi_Q, s)\boldsymbol{\lambda}_B= ((q_1)\phi_Q, s)\boldsymbol{\lambda}_B= (q_1, s)\boldsymbol{\lambda}_A.\]
So indeed \(q_0\sim_{M_T} q_1\).
It remains to show that \(\psi\) is a homomorphism.
Using the fact that \(\phi_Q\) is surjective, let \((p,s)= ((q)\phi_Q,s)\in Q_B~\times~D_B\) be arbitrary.
We need to show that \(\psi\) satisfies the third condition for being a transducer homomorphism.
We have
\begin{align*}
    (p, s)\boldsymbol{\pi}_B\psi_Q&=((q)\phi_Q, s)\boldsymbol{\pi}_B\psi_Q&\text{ by the definition of }p\\
    &=(q, s)\boldsymbol{\pi}_A\phi_Q\psi_Q& \text{ because }\phi\text{ is a homomorphism} \\
    &=(((q)\phi_Q)\psi_Q, s)\boldsymbol{\pi}_{M_A}&\text{ because }q_A=\phi\psi\text{ is a homomorphism}\\
    &=((p)\psi_Q, s)\boldsymbol{\pi}_{M_A}&\text{ by the definition of }p,
\end{align*}
and similarly \((p, s)\boldsymbol{\lambda}_B=((p)\psi_Q, s)\boldsymbol{\lambda}_{M_A}\) as required.
\end{proof}

\speeddicttwo{113}{subtransducer def}{function defn}{transducers defn}
\begin{defn}[Subtransducers, \Assumed{113}]\label{subtransducer def}
If \(A, B\) are transducers, \(Q_A\subseteq Q_B, D_A\subseteq D_B\), \(R_A\subseteq R_B\), \(\boldsymbol{\pi}_A\subseteq \boldsymbol{\pi}_B\) and \(\boldsymbol{\lambda}_A\subseteq \boldsymbol{\lambda}_B\), then we call \(A\) \textit{subtransducer} of \(B\). 
We will sometimes identify a subtransducer of a transducer (with full domain and range) with its state set for simplicity.
\end{defn}

The transducers and maps from the following definitions, will be our main focus (due to Theorem~\ref{transducerable = continuous theorem}).
\speeddicttwo{114}{dn transducer def}{cones words and Cantor spaces defn}{transducers defn}
\begin{defn}[\((d, n)\)-transducers, \Assumed{114}]\label{dn transducer def}
If \(d, k\in \N\) and \(n, m\in \N\backslash \{0, 1\}\), then we define a \textit{\((d,n, k, m)\)-transducer} to be a transducer \(T\) with \((X_{n}^*)^{d}\) as its domain, \((X_{m}^*)^{k}\) as its range, and such that for all \(q\in Q_T\), we have
\[(q, \varepsilon_d)\boldsymbol{\pi}_T = q, \quad\text{ and }\quad (q, \varepsilon_d)\boldsymbol{\lambda}_T = \varepsilon_k.\]
We will be primarily concerned with the case that \(d=k\) and \(n=m\). Thus to simplify notation we define a \((d, n)\)-transducer to be a \((d, n, d, n)\)-transducer.
\end{defn}



\speeddictone{115}{reading infinite words defn}{dn transducer def}
\begin{defn}[Reading infinite words, \Assumed{115}]\label{reading infinite words defn}
Suppose that \(d, k\in \N\), \(n, m\in \N\backslash \{0, 1\}\), \(T\) is a \((d, n, k, m)\)-transducer and \(q\in Q_T\).
If \(w\in ((X_n)^\N)^d\) and \(j\in \N\), then we denote the unique element of \((X_n^j)^d\), which is a prefix of \(w\), by \(w\restriction_j\).
Note that if \(i\in \{0, 1, \ldots, k-1\}\) and \(j_0\leq j_1\), then \((q, w\restriction_{j_0})\boldsymbol{\lambda}_T\pi_i\subseteq (q, w\restriction_{j_1})\boldsymbol{\lambda}_T\pi_i\).

Thus we can define \(f_{T,q}:(X_n^\N)^d \to (X_m^\N \cup X_m^*)^k\) to be the map with
\[(w)f_{T,q}\pi_i = \union{j\in \N}(q, w\restriction_{j})\boldsymbol{\lambda}_T \pi_i\]
for all \(i\in \{0, 1, \ldots, k-1\}\).
In particular \((w)f_{T,q}\) is the smallest element of \((X_m^\N \cup X_m^*)^k\) which is greater that \((q, w')\boldsymbol{\lambda}_T\) for all \(w'\in (X_n^*)^d\) with \(w'\leq w\).
\end{defn}

\speeddicttwo{116}{induced maps preserved remark}{minimal transducer defn}{reading infinite words defn}
\begin{remark}[Induced maps are preserved, \Assumed{116}]\label{induced maps preserved remark}
Let \(d, k\in \N\) and \(n, m\in \N\backslash \{0, 1\}\). 
If \(A\) is a \((d, n, k, m)\)-transducer, \(q\in Q_A\) and \(\phi:A\to B\) is a strong transducer homomorphism, then the maps \(s\to (q, s)\boldsymbol{\lambda}_A\) and \(s\to ((q)\phi, s)\boldsymbol{\lambda}_B\) are equal. Hence \(f_{A,q}=f_{B, (q)\phi_Q}\). In particular this is true of the quotient map \(q_A\).
\end{remark}

It is routine to verify that the notion of degenerate in the following definition is equivalent to the GNS notion of degenerate when both are applicable. 
\speeddicttwo{117}{degenerate transducers defn}{cones words and Cantor spaces defn}{reading infinite words defn}
\begin{defn}[Degenerate transducers, \Assumed{117}]\label{degenerate transducers defn}
Let \(d, k\in \N\) and \(n, m\in \N\backslash \{0, 1\}\). 
We say that a \((d, n, k, m)\)-transducer \(T\) is \textit{degenerate} if there exist \(q\in Q_T\), \(x\in \mathfrak{C}_n^d\) such that \((x)f_{T,q} \notin \mathfrak{C}_m^k\).
\end{defn}

\speeddicttwo{118}{transducers are continuous lemma}{cones words and Cantor spaces defn}{degenerate transducers defn}
\begin{lemma}[Continuity of transducers, \Assumed{118}]\label{transducers are continuous lemma}
Let \(d, b\in \N\) and \(n, l\in \N\backslash \{0, 1\}\).
If \(T\) is a non-degenerate \((d, n, b, l)\)-transducer and \(q\in Q_T\), then for all \(m\in \N\) there is \(k\in \N\) such that for all \(i\in \{0, 1, \ldots, b-1\}\) and \(w\in (X_n^k)^d\) we have \(|(q, w)\boldsymbol{\lambda}_T\pi_i| \geq m\).

Moreover, the map \(f_{T, q}:\mathfrak{C}_n^d\to \mathfrak{C}_l^b\) is continuous.
\end{lemma}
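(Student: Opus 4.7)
The plan is to establish statement 1 first, since statement 2 will follow quickly from it. Statement 1 says the transducer ``grows'' its output uniformly: after reading any input tuple of length $k$, every output coordinate has length at least $m$. The natural approach is a compactness argument on $\mathfrak{C}_n^d$, using non-degeneracy to derive a contradiction from the failure of such a $k$ to exist.

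Concretely, I will introduce for each $k \in \N$ and $i \in \{0, 1, \ldots, b-1\}$ the clopen set
\[A_{k, i} := \makeset{x\in \mathfrak{C}_n^d}{\(|(q, x\restriction_k)\boldsymbol{\lambda}_T\pi_i|<m\)},\]
where $x\restriction_k$ denotes the coordinatewise length-$k$ prefix (an element of $(X_n^k)^d$). The cocycle identity from condition (5) of Definition~\ref{transducers defn}, applied with the next-letter tuple $v\in (X_n^1)^d$ satisfying $x\restriction_{k+1} = x\restriction_k \cdot v$, gives
\[(q, x\restriction_{k+1})\boldsymbol{\lambda}_T = (q, x\restriction_k)\boldsymbol{\lambda}_T \cdot ((q, x\restriction_k)\boldsymbol{\pi}_T, v)\boldsymbol{\lambda}_T,\]
so (coordinatewise) $(q, x\restriction_k)\boldsymbol{\lambda}_T\pi_i$ is a prefix of $(q, x\restriction_{k+1})\boldsymbol{\lambda}_T\pi_i$ and hence $A_{k+1, i}\subseteq A_{k, i}$. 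If statement 1 fails for some $m$, then $\bigcup_i A_{k, i}$ is nonempty for every $k$; this is a decreasing chain of nonempty closed subsets of the compact space $\mathfrak{C}_n^d$, so compactness yields some $x$ in the intersection. For each $k$ I choose $i(k)$ with $x\in A_{k, i(k)}$; pigeonhole on the finite set $\{0, 1, \ldots, b-1\}$ together with the nestedness in $k$ forces a single $i$ with $x\in A_{k, i}$ for all $k$. Then $(x)f_{T, q}\pi_i$ is the union of a chain of prefixes each of length strictly less than $m$, so it lies in $X_l^*$ rather than $X_l^\N$, contradicting non-degeneracy at $(q, x)$.

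For statement 2, I will check continuity at an arbitrary $x\in\mathfrak{C}_n^d$ using cones as a basis of neighborhoods. Given a cone $v\mathfrak{C}_l^b$ containing $(x)f_{T, q}$, set $m = \max_i|(v)\pi_i|$ and apply statement 1 to obtain $k$. For any $y\in\mathfrak{C}_n^d$ with $y\restriction_k = x\restriction_k$, the partial output $(q, x\restriction_k)\boldsymbol{\lambda}_T$ equals $(q, y\restriction_k)\boldsymbol{\lambda}_T$, and each of its coordinates has length at least $m\geq|v\pi_i|$. Since this common partial output is a prefix of both $(x)f_{T, q}$ and $(y)f_{T, q}$ coordinatewise, the shorter prefix $v$ of $(x)f_{T, q}$ must also be a prefix of $(y)f_{T, q}$. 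Hence the cone $x\restriction_k \mathfrak{C}_n^d$ maps into $v\mathfrak{C}_l^b$ under $f_{T, q}$, giving continuity at $x$.

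The main obstacle is in statement 1: converting a sequence of ``bad'' finite inputs into a single infinite input whose output stays bounded in one fixed coordinate. What makes this work is that $(X_n^*)^d$ and $(X_l^*)^b$ are product semigroups with coordinatewise concatenation (so the cocycle nestedness $A_{k+1, i}\subseteq A_{k, i}$ really does hold coordinate-by-coordinate), the finiteness of $\{0, 1, \ldots, b-1\}$ (for the pigeonhole), and the compactness of $\mathfrak{C}_n^d$. Once these are in place the remaining manipulations are routine prefix bookkeeping.
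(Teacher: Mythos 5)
Your proof is correct and follows essentially the same strategy as the paper's: both are contradiction arguments that extract an infinite word $x$ along which one fixed output coordinate $i$ stays bounded by $m$, then appeal to non-degeneracy, and the continuity step via cones is likewise identical in substance. The only cosmetic difference is that the paper builds the bad infinite word explicitly via a K\"onig's-lemma-style construction (choosing each $w_{j+1}$ with a lookahead condition guaranteeing bad extensions at every level), whereas you phrase the same extraction as a compactness argument on the nested clopen sets $\bigcup_i A_{k,i}$ in $\mathfrak{C}_n^d$; these are interchangeable, and your framing is arguably a bit cleaner.
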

\begin{proof}
Let \(T\) be a non-degenerate \((d,n, b, l)\)-transducer and \(q\in Q_T\) be arbitrary.
To prove the first claim, suppose for a contradiction that there is \(m\in \N\) such that for all \(k\in \N\) there is \(w\in (X_n^k)^d\) and \(i\in \{0,1, \ldots, b-1\}\) with \(|(q, w)\boldsymbol{\lambda}_T\pi_i| < m\).

Using this assumption we define a sequence \((w_{j})_{j\in \N}\) in \((X_n^*)^d\) by induction as follows:
\begin{enumerate}
    \item Define \(w_0 := \varepsilon_d\).
    \item Suppose that \(w_j\) is defined, choose \(w_{j+1}\in (X_n^{j+1})^d\) such that 
    \begin{enumerate}
        \item \(w_j\leq w_{j+1}\),
        \item there is \(i\in \{0, 1, \ldots, b-1\}\) such that \(|(q, w_{j+1})\boldsymbol{\lambda}_T\pi_i| \leq m\), and
        \item for all \(k\in \N\backslash \{0,1, \ldots, j\}\), there is \(w_{j + 1, k}\in (X_n^{k})^d\) such that \(w_{j +1} \leq w_{j+1, k}\) and there is \(i\in \{0, 1, \ldots, b-1\}\) such that \(|(q, w_{j+1, k})\boldsymbol{\lambda}_T\pi_i| \leq m\).
    \end{enumerate}
\end{enumerate}

As \(b\) is finite, there must be some \(i\in \{0, 1, \ldots, d-1\}\) such that for infinitely many (and hence all) \(j\in \N\) we have \(|(q, w_{j})\boldsymbol{\lambda}_T\pi_i| \leq m\).
Thus if we define \(w\in (X_n^\N)^d\) to be such that for all \(i\in \{0, 1, \ldots, b-1\}\) we have 
\[(w)\pi_i= \union{j\in \N}(w_j)\pi_i,\] then \(|(w)f_{T, q}\pi_i|\leq m\).
This is a contradiction as \(T\) was assumed to be non-degenerate.

The topology on \((X_n^\N)^d\) is the product topology.
Thus it suffices to show that if \(x\in (X_n^\N)^d, i\in \{0, 1, \ldots, n-1\}\) and \(j\in \N\) then there is a neighbourhood \(U\) of \(x\), in \( (X_n^\N)^d\), such that for all \(y\in U\) we have \((j)((i)((y)f_{T, q})) = (j)((i)((x)f_{T, q}))\).
By the first part of the lemma, we can choose \(k\in \N\) such that for all \(a\in \{0, 1, \ldots, b-1\}\) and \(w\in (X_n^k)^d\) we have \(|(q, w)\boldsymbol{\lambda}_T\pi_a| \geq j + 1\).
Thus defining
\[U := \makeset{y\in (X_n^\N)^d}{\(((y)\pi_a)\restriction_{j}=((x)\pi_a)\restriction_{j}\) for all \(a\in \{0, 1, \ldots, d-1\}\)}\]
gives the required neighbourhood of \(x\).
\end{proof}

We give two distinct ways of multiplying transducers.
The first (Definition~\ref{transducers composition defn}) is an extension of the definition given by GNS, and the second (Definition~\ref{transducer products defn}) is a categorical product object.

\speeddictone{119}{transducers composition defn}{transducers defn}
\begin{defn}[Transducer composition, \Assumed{119}]\label{transducers composition defn}
If \(A\) and \(B\) are transducers, such that the range of \(A\) is contained in the domain of \(B\), then we define their \textit{composite} by
\[AB:= (Q_{AB}, D_{AB}, R_{AB}, \boldsymbol{\pi}_{AB}, \boldsymbol{\lambda}_{AB}).\]
Where
\begin{enumerate}
    \item \(Q_{AB}:= Q_A\times Q_B\), \(D_{AB}:= D_A\), and \(R_{AB}:= R_B\).
    \item \(((a, b), s)\boldsymbol{\pi}_{AB}= ((a,s)\boldsymbol{\pi}_A, (b, (a,s)\boldsymbol{\lambda}_{A})\boldsymbol{\pi}_B)\).
    \item \(((a, b), s)\boldsymbol{\lambda}_{A,B}= (b,(a,s)\boldsymbol{\lambda}_{A})\boldsymbol{\lambda}_B\).
\end{enumerate}
\end{defn}


\speeddictthree{120}{composition works as expected lemma}{reading infinite words defn}{transducers are continuous lemma}{transducers composition defn}
\begin{lemma}[Composition works as expected, \Assumed{120}]\label{composition works as expected lemma}
If \(A\) is a non-degenerate \((a, b, c, d)\)-transducer and \(B\) is a non-degenerate \((c, d, e, f)\) transducer, and \((p,q)\in Q_A\times Q_B\), then \(f_{A, p}f_{B,q}=f_{AB,(p,q)}\) (in particular, \(AB\) is non-degenerate).
\end{lemma}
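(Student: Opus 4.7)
The plan is to compare the two maps pointwise on an arbitrary $w \in \mathfrak{C}_b^a$ by unpacking definitions. Since $A$ is non-degenerate, $y := (w) f_{A,p}$ lies in $\mathfrak{C}_d^c$, and for each $j \in \N$ I set $u_j := (p, w\restriction_j)\boldsymbol{\lambda}_A \in (X_d^*)^c$; these $u_j$ form an increasing chain (coordinatewise in the prefix order) whose supremum is $y$. Directly from the definition of $\boldsymbol{\lambda}_{AB}$ in Definition~\ref{transducers composition defn},
\[
((p,q), w\restriction_j)\boldsymbol{\lambda}_{AB} = (q, u_j)\boldsymbol{\lambda}_B,
\]
so for each coordinate $i$,
\[
(w) f_{AB,(p,q)} \pi_i = \bigcup_{j \in \N} (q, u_j)\boldsymbol{\lambda}_B \pi_i,
\qquad
((w)f_{A,p})f_{B,q} \pi_i = \bigcup_{k \in \N} (q, y\restriction_k)\boldsymbol{\lambda}_B \pi_i.
\]
The task is to show these two unions (suprema in the prefix order) are equal.

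For the inclusion $\subseteq$, I would fix $j$ and pick any $k \geq \max_i |u_j \pi_i|$, so that coordinatewise $y\restriction_k \geq u_j$; writing $y\restriction_k = u_j \cdot \delta$ for some $\delta \in (X_d^*)^c$ and applying the defining identity of an output function gives
\[
(q, y\restriction_k)\boldsymbol{\lambda}_B = (q, u_j)\boldsymbol{\lambda}_B \cdot ((q, u_j)\boldsymbol{\pi}_B,\, \delta)\boldsymbol{\lambda}_B,
\]
which shows that $(q, u_j)\boldsymbol{\lambda}_B \pi_i$ is a prefix of $(q, y\restriction_k)\boldsymbol{\lambda}_B \pi_i$. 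For the reverse inclusion, given $k$ I need some $j$ with $u_j \geq y\restriction_k$ coordinatewise, which is where the non-degeneracy of $A$ enters through Lemma~\ref{transducers are continuous lemma}: that lemma supplies $J$ such that for all $j \geq J$ and all $i$, $|u_j \pi_i| \geq k$, and since $u_j \leq y$ this forces $u_j \geq y\restriction_k$. The symmetric application of the output-function identity then shows $(q, y\restriction_k)\boldsymbol{\lambda}_B \pi_i$ is a prefix of $(q, u_j)\boldsymbol{\lambda}_B \pi_i$, completing the equality of the two unions and hence the equality of maps $f_{AB,(p,q)} = f_{A,p} f_{B,q}$.

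The ``in particular'' is then automatic: for any $w \in \mathfrak{C}_b^a$, non-degeneracy of $A$ gives $(w) f_{A,p} \in \mathfrak{C}_d^c$, and then non-degeneracy of $B$ gives $((w) f_{A,p}) f_{B,q} \in \mathfrak{C}_f^e$, so by the main equality $(w) f_{AB,(p,q)} \in \mathfrak{C}_f^e$ for every $(p,q)$ and $w$. The only real obstacle is the coordinatewise bookkeeping — one must be careful that the $u_j$ have finite components of possibly differing lengths, so neither $u_j$ nor $y\restriction_k$ is an obvious prefix of the other until the right $j$ or $k$ is chosen; once the monotonicity of $u_j$ and the uniform growth estimate from Lemma~\ref{transducers are continuous lemma} are combined, both inclusions follow from a single application of the output composition identity.
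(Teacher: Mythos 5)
Your proof is correct, and it takes a genuinely different route from the paper. The paper's argument has the shape: first show the one-sided bound $(w)f_{AB,(p,q)} \le (w)f_{A,p}f_{B,q}$ (via the output-function identity, as in your $\subseteq$ direction), then separately show that $(w)f_{AB,(p,q)}$ is an infinite word by applying Lemma~\ref{transducers are continuous lemma} to both $A$ (to get $k_A$ with outputs of length $\ge k_B$) and $B$ (to get outputs of length $\ge m$); since an infinite word is maximal in the prefix order, the one-sided bound then forces equality, and the non-degeneracy of $AB$ is established along the way. You instead prove both inclusions of the coordinatewise suprema directly: the $\subseteq$ direction matches the paper's first step, but your $\supseteq$ direction replaces the infiniteness-then-maximality argument with a symmetric prefix argument that invokes Lemma~\ref{transducers are continuous lemma} only on $A$ (to find $j$ with $u_j \ge y\restriction_k$). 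Non-degeneracy of $AB$ then falls out at the end as a consequence of the established equality plus non-degeneracy of $A$ and $B$, rather than being a step within the proof. Your approach is slightly more elementary in that it avoids relying on the observation that infinite words cannot be properly extended, and it uses Lemma~\ref{transducers are continuous lemma} once rather than twice; the paper's version is marginally shorter because it only needs one inclusion before invoking maximality. Both are sound, and your identification of the key step — that monotonicity of $u_j$ plus the uniform growth estimate from Lemma~\ref{transducers are continuous lemma} lets you line up $u_j$ with $y\restriction_k$ — is exactly the crux in either presentation.
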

\begin{proof}
Let \(w\in (X_b^\N)^a\) be arbitrary. 
From Definition~\ref{reading infinite words defn}, it suffices to show that \((w)f_{A, p}f_{B,q}\) is the smallest element of \((X_f^\N \cup X_n^*)^e\) which is greater that \(((p, q), w')\boldsymbol{\lambda}_{AB}\) for all \(w'\in (X_b^*)^a\) with \(w'\leq w\).

Let \(w'\in (X_b^*)^a\) be arbitrary with \(w'\leq w\). We have \((p, w')\boldsymbol{\lambda}_{A} \leq (w)f_{A, p}\), thus
\[((p, q), w')\boldsymbol{\lambda}_{AB} = (q, (p, w')\boldsymbol{\lambda}_{A})\boldsymbol{\lambda}_{B} \leq ((w)f_{A, p})f_{B, q}= (w)f_{A, p}f_{B, q}.\]

We now have \((w)f_{A, p}f_{B,q} \geq (w)f_{AB,(p,q)}\).
To show equality it suffices to show that \((w)f_{AB,(p,q)} \in (X_f^\N)^e\).
Let \(m\in \N\) be arbitrary, we show that for all \(i\in \{0, 1, \ldots, e-1\}\) we have \(|(w)f_{AB, (p, q)}\pi_i| \geq m\).
By Lemma~\ref{transducers are continuous lemma}, let \(k_A, k_B\in \N\) be such that if \(w_A\in (X_b^{k_A})^a, w_B\in (X_d^{k_B})^c\) then for all valid choices of \(i\) we have
\[|(p, w_A)\boldsymbol{\lambda}_{A}\pi_i|\geq k_B, \quad \text{ and }\quad |(p, w_B)\boldsymbol{\lambda}_{B}\pi_i|\geq m.\]
It follows that if \(w'\in (X_b^{k_A})^a\), \(w'\leq w\) and \(i\in \{0, 1, \ldots, e-1\}\), then
\[|(w)f_{AB, (p, q)}\pi_i| \geq |( (p, q), w')\boldsymbol{\lambda}_{AB}\pi_i|=|(q,(p, w')\boldsymbol{\lambda}_{A})\boldsymbol{\lambda}_B\pi_i|\geq m\]
as required.
\end{proof}

\speeddictfour{121}{transducer products defn}{Products in Categories Defn}{product structures}{transducers homomorphisms defn}{transducer products are products lemma}
\begin{defn}[Transducer products, \Assumed{121}]\label{transducer products defn}
If \((A)_{i\in I}\) are transducers, then we define their \textit{product}
\(\prod_{i\in I}A_i\) to be the transducer \(P\)
where
\[Q_P:=\prod_{i\in I}Q_{A_i}, \quad D_P:= \prod_{i\in I}D_{A_i}, \quad R_P := \prod_{i\in I}R_{A_i},\]
and for all \((p_i)_{i\in I}\in Q_P\) and \((s_i)_{i\in I}\in D_P\) we have
\[((p_i)_{i\in I}, (s_i)_{i\in I})\boldsymbol{\pi}_{P}=
((p_i,s_i)\boldsymbol{\pi}_{A_i})_{i\in I},\]
\[((p_i)_{i\in I}, (s_i)_{i\in I})\boldsymbol{\lambda}_{P}=
((p_i,s_i)\boldsymbol{\lambda}_{A_i})_{i\in I}.\]
We show in Lemma~\ref{transducer products are products lemma} that \((P, ((\pi_i, \pi_i, \pi_i))_{i\in I})\) is a product in the category of transducers and transducer homomorphisms (recall Definition~\ref{Products in Categories Defn}).
\end{defn}

\speeddictfour{700}{transducer products are products lemma}{Products in Categories Defn}{product structures}{transducers homomorphisms defn}{transducer products defn}
\begin{lemma}[Transducer products are products, \Assumed{121}]\label{transducer products are products lemma}
If \((A)_{i\in I}\) are transducers, then \((\prod_{i\in I}A_i, ((\pi_i, \pi_i, \pi_i))_{i\in I})\) is a product in the category of transducers and transducer homomorphisms.
\end{lemma}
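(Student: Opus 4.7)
The plan is to unpack the categorical product definition (Definition~\ref{Products in Categories Defn}) and verify each required condition coordinatewise, leveraging the fact that the product of semigroups (with projections) is already a categorical product in the category of semigroups and homomorphisms, and similarly for sets and functions.

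First I would verify that $P := \prod_{i \in I} A_i$ as specified in Definition~\ref{transducer products defn} really is a transducer in the sense of Definition~\ref{transducers defn}. The domain and range $D_P, R_P$ are semigroups because products of semigroups are semigroups via coordinatewise multiplication. That $\boldsymbol{\pi}_P$ is an action of $D_P$ on $Q_P$ follows coordinatewise: for each $i$, the $i$th coordinate of $((p_j)_{j \in I}, (s_j)_{j \in I} (t_j)_{j \in I}) \boldsymbol{\pi}_P$ equals $(p_i, s_i t_i) \boldsymbol{\pi}_{A_i} = ((p_i, s_i) \boldsymbol{\pi}_{A_i}, t_i) \boldsymbol{\pi}_{A_i}$, which is precisely the $i$th coordinate of $(((p_j)_{j \in I}, (s_j)_{j \in I}) \boldsymbol{\pi}_P, (t_j)_{j \in I}) \boldsymbol{\pi}_P$. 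The output cocycle identity $(q, st)\boldsymbol{\lambda}_P = (q, s)\boldsymbol{\lambda}_P ((q, s)\boldsymbol{\pi}_P, t)\boldsymbol{\lambda}_P$ likewise reduces to the corresponding identity in each $A_i$.

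Next I would check that for each $i \in I$ the triple $(\pi_i, \pi_i, \pi_i)$ is a transducer homomorphism $P \to A_i$. The maps $\pi_i: D_P \to D_{A_i}$ and $\pi_i: R_P \to R_{A_i}$ are semigroup homomorphisms because projection from a product of semigroups is a homomorphism. The compatibility condition of Definition~\ref{transducers homomorphisms defn} reads, for each $(q, s) \in Q_P \times D_P$,
\[
(q, s)\boldsymbol{\pi}_P \pi_i = ((q)\pi_i, (s)\pi_i) \boldsymbol{\pi}_{A_i}, \qquad (q, s)\boldsymbol{\lambda}_P \pi_i = ((q)\pi_i, (s)\pi_i) \boldsymbol{\lambda}_{A_i},
\]
and both equalities are immediate from the coordinatewise definitions of $\boldsymbol{\pi}_P$ and $\boldsymbol{\lambda}_P$.

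Finally I would verify the universal property. Suppose $B$ is a transducer and $(f_i)_{i \in I}$ is a family of transducer homomorphisms $f_i = (f_{i, Q}, f_{i, D}, f_{i, R}) : B \to A_i$. Because the category of sets has arbitrary products and the category of semigroups has arbitrary products (both given by coordinatewise structure with projections), there exist unique functions and semigroup homomorphisms
\[
\langle (f_{i,Q})_{i \in I} \rangle_{Q_P} : Q_B \to Q_P, \quad \langle (f_{i,D})_{i \in I} \rangle_{D_P} : D_B \to D_P, \quad \langle (f_{i,R})_{i \in I} \rangle_{R_P} : R_B \to R_P
\]
satisfying the usual factorisation through the projections. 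I would set $\langle (f_i)_{i \in I} \rangle_P$ to be the triple of these three maps. Checking the transducer homomorphism condition for this triple reduces, via postcomposition with each $\pi_i$ and the fact that $Q_{A_i}, D_{A_i}, R_{A_i}$ are separated by the projections, to the already-verified condition for each $f_i$. Uniqueness of $\langle (f_i)_{i \in I} \rangle_P$ follows from the uniqueness of the three componentwise universal maps, since if $\psi: B \to P$ satisfies $\psi \circ (\pi_i, \pi_i, \pi_i) = f_i$ for all $i$, then each of $\psi_Q, \psi_D, \psi_R$ must agree with the respective universal map. The only mildly delicate point is ensuring that the componentwise universal maps together really do form a transducer homomorphism (rather than only satisfying the conditions after composing with each $\pi_i$), but this is immediate because $\boldsymbol{\pi}_P$ and $\boldsymbol{\lambda}_P$ are themselves defined coordinatewise, so the two required equations in Definition~\ref{transducers homomorphisms defn} hold in $P$ if and only if they hold after projecting to every $A_i$.
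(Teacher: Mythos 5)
Your proof is correct and takes essentially the same route as the paper's: both reduce the universal property to the corresponding products in the categories of sets and semigroups, observe that the component maps are uniquely forced, and verify the transducer-homomorphism conditions via the coordinatewise definitions of $\boldsymbol{\pi}_P$ and $\boldsymbol{\lambda}_P$. The only minor difference is that you also spell out that $P$ is itself a transducer, which the paper leaves as part of Definition~\ref{transducer products defn}.
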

\begin{proof}
Let \(P:= \prod_{i\in I}A_i\). Note that by the definition of \(P\), for all \(i\in I\) the triple \((\pi_i, \pi_i, \pi_i)\) is a transducer homomorphism from \(P\) to \(A_i\).

Let \(B\) be a transducer and for each \(i\in I\), let \(\phi_{i}\) be a transducer homomorphism from \(B\) to \(A_i\). 
We must show that there is a unique transducer homomorphism \(\psi:B \to P\) such that for all \(i\in I\), the composition of the transducer homomorphism \(\psi\) with the transducer homomorphism \((\pi_i, \pi_i, \pi_i)\) is \(\phi_i\).

As \((Q_P, (\pi_i)_{i\in I})\), \((D_P, (\pi_i)_{i\in I})\), and \((R_P, (\pi_i)_{i\in I})\) are products in the categories of sets and functions, there exist unique functions \(\psi_Q:Q_B\to Q_{P}\), \(\psi_D:D_B\to D_{P}\), and \(\psi_R:R_B\to R_{P}\) such that 
\[\psi_Q\circ \pi_i = {\phi_i}_Q, \quad \psi_D\circ \pi_i = {\phi_i}_D, \text{ and }\psi_R\circ \pi_i = {\phi_i}_R\]
for all \(i\in I\). 
It follows that \(\psi:= (\psi_Q, \psi_D, \psi_R)\) that it is the only triple of functions whose coordinate wise composition with \((\pi_i, \pi_i, \pi_i)\) results in \(\phi_i\) for all \(i\in I\). It therefore suffices to show that \(\psi\) is a transducer homomorphism from \(B\) to \(P\).

As \((D_P, (\pi_i)_{i\in I})\) and \((R_P, (\pi_i)_{i\in I})\) are also products in the category of semigroups and homomorphisms, the maps \(\psi_D\) and \(\psi_R\) are semigroup homomorphisms. For all \(q\in Q_B\) and \(s\in D_B\) we have
\begin{align*}
    (q, s)\boldsymbol{\pi}_{B}\psi_Q&= ((q, s)\boldsymbol{\pi}_{B}{\psi_i}_Q)_{i\in I} = (((q){\psi_i}_Q, (s){\psi_i}_D)\boldsymbol{\pi}_{A_i})_{i\in I}\\
    &= (((q){\psi_i}_Q)_{i\in I}, ((s){\psi_i}_D)_{i\in I})\boldsymbol{\pi}_{P}=((q)\psi_Q, (s)\psi_D)\boldsymbol{\pi}_{P}
\end{align*}
 and similarly
 \begin{align*}
    (q, s)\boldsymbol{\lambda}_{B}\psi_R &= ((q, s)\boldsymbol{\lambda}_{B}{\psi_i}_R)_{i\in I} = (((q){\psi_i}_Q, (s){\psi_i}_D) \boldsymbol{\lambda}_{A_i})_{i\in I}\\
    &=(((q){\psi_i}_Q)_{i\in I}, ((s){\psi_i}_D)_{i\in I}) \boldsymbol{\lambda}_{P} =((q)\psi_Q, (s)\psi_D) \boldsymbol{\lambda}_{P}
\end{align*}
 It follows that \(\psi\) is a transducer homomorphism as required. 
\end{proof}

\speeddictthree{122}{transducerable = continuous theorem}{Products in Categories Defn}{cones words and Cantor spaces defn}{transducers are continuous lemma}
\begin{theorem}[Transducerable = continuous, \Assumed{122}]\label{transducerable = continuous theorem}
A function \(h: \mathfrak{C}_n^d \to \mathfrak{C}_n^d\) is continuous if and only if there is a non-degenerate \((d, n)\)-transducer \(T\) and \(q\in Q_T\) such that \(h= f_{T,q}\).
\end{theorem}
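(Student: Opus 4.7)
The $(\Leftarrow)$ implication is exactly Lemma~\ref{transducers are continuous lemma}: the map $f_{T,q}$ arising from a non-degenerate transducer is continuous. For $(\Rightarrow)$, given a continuous $h : \mathfrak{C}_n^d \to \mathfrak{C}_n^d$, my plan is to construct a non-degenerate $(d, n)$-transducer $T$ together with a state $q_h \in Q_T$ satisfying $f_{T, q_h} = h$, generalising the reduced transducer of Grigorchuk, Nekrashevich and Sushchanskii from the case $d = 1$. For each $w \in (X_n^*)^d$, the map $h_w : \mathfrak{C}_n^d \to \mathfrak{C}_n^d$ defined by $(z) h_w = (wz) h$ is continuous, since it factors as $\lambda_w$ followed by $h$. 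The image $(\mathfrak{C}_n^d) h_w$ has a well-defined coordinate-wise longest common prefix $L(w) \in (X_n^* \cup \mathfrak{C}_n)^d$, and when every coordinate of $L(w)$ is finite there is a unique continuous residual map $r(w) : \mathfrak{C}_n^d \to \mathfrak{C}_n^d$ with $L(w) \cdot (z) r(w) = (wz) h$ for all $z$, namely the coordinate-wise suffix.

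The plan is to take as the state set of $T$ the continuous maps $\mathfrak{C}_n^d \to \mathfrak{C}_n^d$ obtained by iterating this residual operation starting from $h$, augmented by a family of absorbing tail states (explained below) for coordinates in which $L(w)$ becomes infinite. The transition from state $g$ on input $w$ is the residual of $g_w$, the output is the corresponding prefix $L(w)$, and the distinguished starting state $q_h$ is $h$ itself. The verification then splits into three main steps. First, the transducer composition identity $(g, w_1 w_2) \boldsymbol{\lambda}_T = (g, w_1) \boldsymbol{\lambda}_T \cdot ((g, w_1)\boldsymbol{\pi}_T, w_2) \boldsymbol{\lambda}_T$ follows from the fact that the longest common prefix of outputs on a subset refines that of a superset, so that $L(w_1 w_2)$ factors as $L(w_1)$ followed by the longest common prefix of the shifted residual on input $w_2$; the commutation $x_{d, i} y_{d, j} = y_{d, j} x_{d, i}$ for $i \neq j$ in $(X_n^*)^d$ is automatically respected because longest common prefixes depend only on the set of outputs, which is unchanged under reordering of the input factors.

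Second, non-degeneracy follows from uniform continuity of $h$ on the compact metrizable space $\mathfrak{C}_n^d$: for every $k$ there exists $N$ such that for all $w \in (X_n^N)^d$ the longest common prefix of $(w \mathfrak{C}_n^d) h$ has length at least $k$ in every coordinate, forcing $|(q_h, x\restriction_j) \boldsymbol{\lambda}_T \pi_i| \to \infty$ as $j \to \infty$ in every coordinate $i$. Third, $f_{T, q_h} = h$ follows since $(q_h, x\restriction_j) \boldsymbol{\lambda}_T$ is by construction a coordinate-wise prefix of $(x) h$ for every $j$, and its coordinates grow unboundedly, so the union coincides with $(x) h$.

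The main obstacle is handling the case where $h$ is locally constant in some coordinate on some cone $w \mathfrak{C}_n^d$; this is precisely when a coordinate of $L(w)$ is infinite, and the straightforward residual construction breaks down. I plan to resolve this by introducing absorbing tail states indexed by infinite words $y \in \mathfrak{C}_n$ in each coordinate, which on reading any single-letter input emit one letter of $y$ in that coordinate and transition to the corresponding shifted tail state, independently of the input letter read. When the residual construction encounters an infinite coordinate of $L(w)$, the corresponding coordinate transitions into such a tail state while the remaining (still active) coordinates continue via the usual residual. These tail states branch trivially on input, so the transducer composition laws restricted to them are immediate; the delicate part is arranging the transition from an active state into the tail states at the correct step and in the correct coordinates, and checking that the resulting combined transition function on the mixed product of active and absorbed coordinates respects the commutation and associativity required of an action of $(X_n^*)^d$. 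This bookkeeping is where the bulk of the technical work will sit.
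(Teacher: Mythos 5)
The forward implication is the same in both. For the converse, your plan diverges genuinely from the paper's: the paper takes $Q_T := (X_n^*)^d$ with transition given by concatenation, so the state is the entire input history, whereas you propose states that are residual maps of $h$ (quotienting input histories that yield the same residual), augmented with absorbing tail states for absorbed coordinates. You correctly identify the obstacle (coordinates where the longest common prefix becomes infinite) and the paper's own case 3(c) exists precisely to handle it, using the rule that the emitted finite prefix has length $\max\bigl(\{|(w)\pi_0|, \ldots, |(w)\pi_{d-1}|\} \cup \{|(\varepsilon_d, w')\boldsymbol{\lambda}_{T, i}| : w' < w\}\bigr)$, a quantity which depends on the full set of prefixes $w' \leq w$ and is therefore a function of the input history but not of the residual map.

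The gap is that your rule ``emit one letter of $y$ per single-letter input from a tail state'' (with no emission specified at the moment of absorption) violates the action law for $(X_n^*)^d$ as soon as $d \geq 2$, because different orderings of the commuting generators hit the absorbing coordinate with different amounts already emitted. Take $d = n = 2$ and define $h$ on coordinate $0$ by $(0z_0, 0z_1)h\pi_0 = 0^\infty$, $(0z_0, 1z_1)h\pi_0 = 0^5 z_0$, $(1z_0, 0z_1)h\pi_0 = 0^3 z_0$, $(1z_0, 1z_1)h\pi_0 = z_0$, and coordinate $1$ by the identity. This $h$ is continuous. Reading $(0,0) = 0_{2,0}\,0_{2,1}$ first emits $(0^5, \varepsilon)$ (the LCP on the cone $(0,\varepsilon)\mathfrak{C}_2^2$), then the next letter absorbs coordinate $0$ emitting nothing, for a total of $0^5$; reading it as $0_{2,1}\,0_{2,0}$ first emits $(0^3, 0)$, then absorbs emitting nothing, for a total of $0^3$. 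So $(q_h, (0,0))\boldsymbol{\lambda}_T$ is not well-defined. To fix this the emission at the absorbing step must equal the paper's max-over-prefixes quantity minus what has already been emitted; but that quantity depends on LCPs of $h$ at sibling prefixes (here $(\varepsilon, 0)$ after having read only $(0, \varepsilon)$), which lie outside the cone determining the residual map and so cannot be recovered from your state. One can see this is not a fixable bookkeeping issue: take two continuous maps $h^{(1)}, h^{(2)}$ agreeing on $(0,\varepsilon)\mathfrak{C}_2^2$ (hence with the same residual $g_1$ after $0_{2,0}$) but with $(1z_0, 0z_1)h^{(1)}\pi_0 = 0^3 z_0$ versus $0^7 z_0$; the normalized residuals $r_{h^{(i)}}(0_{2,1})$ then coincide, yet the absorption step must emit two letters longer for $h^{(1)}$ than for $h^{(2)}$, which is impossible if the emission is a function of the state. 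The paper's choice of $Q_T = (X_n^*)^d$ with no quotienting is exactly what lets the output rule be stated and verified; your framework would need to enlarge the state to carry the history of emitted prefix lengths, at which point it is no simpler than the paper's.
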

\begin{proof}
\((\Leftarrow):\) This is immediate from Lemma~\ref{transducers are continuous lemma}.

\((\Rightarrow):\) Let \(h: \mathfrak{C}_n^d \to \mathfrak{C}_n^d\) be a continuous map.
 We define a \((d, n)\)-transducer \(T\) and then proceed to show that \(T\) is well-defined, non-degenerate and satisfies \(f_{T, \varepsilon_d} = h\).
\begin{enumerate}
    \item Define \(Q_T := (X_n^*)^d\),  \(D_T := (X_n^*)^d\), \(R_T := (X_n^*)^d\).
    \item Define \(\boldsymbol{\pi}_T: Q_T\times D_T\to Q_T\) by \((q, w)\boldsymbol{\pi}_T: = q w\).
    \item For each \(i\in \{0, 1, \ldots, n-1\}\), define \(\boldsymbol{\lambda}_{T, i}: Q_T\times D_T\to X_n^*\) as follows:
    \begin{enumerate}
    \item If \(q = w = \varepsilon_d\), then \((q, w)\boldsymbol{\lambda}_{T, i} = \varepsilon\);
       \item if \(q =\varepsilon_d\) and \(w \neq  \varepsilon_d\) and \(|(w\mathfrak{C}_n^d)h\pi_i| > 1\), then \((q, w)\boldsymbol{\lambda}_{T, i}\) is defined to be the longest common prefix of all the words in \((w\mathfrak{C}_n^d)h\pi_i\);
       \item if \(q = \varepsilon_d\), \(w \neq \varepsilon_d\), \((w\mathfrak{C}_n^d)h\pi_i = \{x\}\) for some \(x\in X_n^\N\), and \((q, w')\boldsymbol{\lambda}_{T_i}\) is already defined for all \(w'< w\), then \((q, w)\boldsymbol{\lambda}_{T_i}\) is defined to be the prefix of \(x\) with length
       \[\max\left(\{|(w)\pi_0|, |(w)\pi_1|, \ldots, |(w)\pi_{d-1}|\}\cup \makeset{|(q, w')\boldsymbol{\lambda}_{T, i}|}{\(w'< w\)} \right);\]
       \item if \(q \neq \varepsilon_d\), then \((q, w)\boldsymbol{\lambda}_{T, i}\) is defined to be \(((\varepsilon_d, q w)\boldsymbol{\lambda}_{T, i})\lambda_{(\varepsilon_d, q)\boldsymbol{\lambda}_{T, i}}^{-1}\).
    \end{enumerate}
    We then define \(\boldsymbol{\lambda}_T:Q_T\times D_T \to R_T\) by \(\boldsymbol{\lambda}_T = \langle \boldsymbol{\lambda}_{T, 0}, \boldsymbol{\lambda}_{T, 1}, \ldots, \boldsymbol{\lambda}_{T, d-1} \rangle_{(X_n^*)^d}\) (Definition~\ref{Products in Categories Defn}).
\end{enumerate}
To show that \(T\) is well-defined, we first need to show that if \(i\in \{0, 1, \ldots, d-1\}\), \(q,w\in D_T\) and \(q\neq \varepsilon_d\), then \((\varepsilon_d, q)\boldsymbol{\lambda}_{T, i}\) is a prefix of \((\varepsilon_d, q w)\boldsymbol{\lambda}_{T, i}\). There are \(2\) cases to consider:
\begin{enumerate}
   \item If \(|(q w\mathfrak{C}_n^d)h\pi_i| \geq 2\), then as \((q w\mathfrak{C}_n^d)h\pi_i \subseteq (q\mathfrak{C}_n^d)h\pi_i\), it follows that \((\varepsilon_d, q)\boldsymbol{\lambda}_{T, i}\) is a common prefix of all the words in \((q\mathfrak{C}_n^d)h\pi_i\).
   Hence \((\varepsilon_d, q)\boldsymbol{\lambda}_{T, i}\) is a common prefix of all the words in \((q w\mathfrak{C}_n^d)h\pi_i\), so it is a prefix of the longest common prefix \((\varepsilon_d, q w)\boldsymbol{\lambda}_{T, i}\).
    \item If \((q w\mathfrak{C}_n^d)h\pi_i = \{x\}\) for some \(x\in X_n^\N\), then (via either \(3b\) or \(3c\)) \((\varepsilon_d, q)\boldsymbol{\lambda}_{T, i}\) is a finite prefix of \(x\).
    Moreover by definition \((\varepsilon_d, q w)\boldsymbol{\lambda}_{T, i}\) is also a prefix of \(x\) with \(|(\varepsilon_d, q w)\boldsymbol{\lambda}_{T, i}|\geq |(\varepsilon_d, q)\boldsymbol{\lambda}_{T, i}|\).
    Thus \((\varepsilon_d, q w)\boldsymbol{\lambda}_{T, i}\geq (\varepsilon_d, q)\boldsymbol{\lambda}_{T, i}\).
\end{enumerate}
We next need to show that \(T\) is a transducer. If \(q\in Q_T\) and \(s, t\in D_T\) are arbitrary, then \((q, st)\boldsymbol{\pi}_T = q s t = ((q, s)\boldsymbol{\pi}_T, t)\boldsymbol{\pi}_T\), so \(\boldsymbol{\pi}_T\) is an action of \(D_T\) on \(Q_T\). Moreover for all \(i\in \{0, 1, \ldots, d-1\}\) we have
\begin{align*}
 ((q, s)\boldsymbol{\lambda}_T(q s, t)\boldsymbol{\lambda}_T)\pi_i &=  ((\varepsilon_d, q s)\boldsymbol{\lambda}_{T}\pi_i)\lambda_{(\varepsilon_d, q)\boldsymbol{\lambda}_{T}\pi_i}^{-1}((\varepsilon_d, q s t)\boldsymbol{\lambda}_{T}\pi_i)\lambda_{(\varepsilon_d, q s)\boldsymbol{\lambda}_{T}\pi_i}^{-1} \\
&= (((\varepsilon_d, q s t)\boldsymbol{\lambda}_{T}\pi_i)\lambda_{(\varepsilon_d, q s)\boldsymbol{\lambda}_{T}\pi_i}^{-1})\lambda_{(\varepsilon_d, q s)\boldsymbol{\lambda}_{T}\pi_i}\lambda_{(\varepsilon_d, q)\boldsymbol{\lambda}_{T}\pi_i}^{-1} \\
 &=  ((\varepsilon_d, q s t)\boldsymbol{\lambda}_{T, i})\lambda_{(\varepsilon_d, q)\boldsymbol{\lambda}_{T, i}}^{-1}\\
 &=(q, st)\boldsymbol{\lambda}_T\pi_i .
\end{align*}

We next show that \(h = f_{T, \varepsilon_d}\).
Let \(i\in \{0, 1, \ldots, d-1\}\) be arbitrary.
By definition, if \(w'\in (X_n^*)^d\) and \(w'\leq w\in (X_n^\N)^d\), then the word \((\varepsilon_d, w')\boldsymbol{\lambda}_{T, i}\) is a prefix of \((w)h\pi_i\).
So we need only show that for all \(m\in \N\), there is \(k_m\in \N\) such that if \(w'\in (X_n^k)^d\), then \(|(\varepsilon_d, w')\boldsymbol{\lambda}_{T, i}|\geq m\).

For each \(v\in (X_n^m)^d\), we have by assumption that \((v\mathfrak{C}_n^d)h^{-1}\) is clopen. So there is some \(k_v\in \N\) and \(S_v\subseteq (X_n^{k_v})^d\) such that
\[(v\mathfrak{C}_n^d)h^{-1} = \union{s\in S_v} s\mathfrak{C}_n^d.\]
Let \(k_m := \max\left(\makeset{k_v}{\(v\in (X_n^m)^d\)} \cup \{m+1\}\right)\).
Thus if \(w'\in (X_n^{k_m})^d\), then the longest common prefix of all the elements of the set \(((w')\mathfrak{C}_n^d)h\pi_i\) has length at least \(m\). Thus \(|(\varepsilon_d, w')\boldsymbol{\lambda}_{T, i}|\geq m\). It follows that indeed \(h = f_{T, \varepsilon_d}\).

Finally we show that \(T\) is not degenerate. Let \(q\in Q_T\) and \(w\in (X_n^\N)^d\) be arbitrary. Suppose for a contradiction that \((w)f_{T, q}\not\in (X_n^\N)^d\).
It follows that
\[(q w)h =(w)f_{T, \varepsilon_d} = (\varepsilon_d, q)\lambda_T(w)f_{T, q} \not\in (X_n^\N)^d,\]
this is a contradiction.
\end{proof}

It will be useful to have concrete transducers representing homeomorphisms of Cantor spaces,
we constructed such transducers in the proof of the previous theorem, but these transducers have a rather messy definition.
In the case of homeomorphisms this can be made a bit simpler, as we will now see.
\speeddictfour{123}{transducing homeomorphisms defn}{automorphism groups defn}{cones words and Cantor spaces defn}{minimal transducer defn}{inj_clo}
\begin{defn}[Transducing homeomorphisms, \Assumed{123}]\label{transducing homeomorphisms defn}
If \(h\in \Aut(\mathfrak{C}_n^d)\), then we define \(T_h\) to be the \((d, n)\)-transducer with
\begin{enumerate}
    \item \(Q_{T_h}:= (X_n^*)^d\), \(D_{T_h}:= (X_n^*)^d\), and \(R_{T_h}:= (X_n^*)^d\).
    \item \((s, t)\boldsymbol{\pi}_{T_h}= st\).
    \item \(((s, t)\boldsymbol{\lambda}_{T_h})\pi_i\) is \((b)\lambda_a^{-1}\), where \(b\) is the longest common prefix of the words in the set \(((st\mathfrak{C}_n^d)h)\boldsymbol{\pi}_i\) and \(a\) is the longest common prefix of the words in the set \(((s\mathfrak{C}_n^d)h)\boldsymbol{\pi}_i\).
\end{enumerate}
Moreover, we define \(M_h:= M_{T_h}\) and \(e_h := (\varepsilon_d){q_{T_h}}_Q\). By Lemma~\ref{inj_clo} we have \(f_{M_h, e_h}= h\). Thus we call \(M_h\) the \textit{minimal transducer for }\(h\).
\end{defn}

\speeddictthree{124}{inj_clo}{induced maps preserved remark}{transducerable = continuous theorem}{transducing homeomorphisms defn}
\begin{lemma}[Minimal transducers are nice, \Assumed{124}]\label{inj_clo}
If \(h\in \Aut(\mathfrak{C}_n^d)\) then \(T_h\) is a well-defined non-degenerate \((d, n)\)-transducer and \(f_{M_h, e_h} = h\). Moreover if \(q\in Q_{M_h}\), then \(f_{M_h, q}\) is a homeomorphism from \(\mathfrak{C}_n^d\) to a clopen subset of \(\mathfrak{C}_n^d\).
\end{lemma}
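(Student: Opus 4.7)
The plan is to verify the four claims of the lemma in order: well-definedness of $T_h$, its being a transducer, the identity $f_{M_h, e_h} = h$ (which entails non-degeneracy), and the homeomorphism property of $f_{M_h, q}$ for arbitrary states $q$.

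First I would check that the output function $\boldsymbol{\lambda}_{T_h}$ in Definition~\ref{transducing homeomorphisms defn} is well-defined. The key observation is that for any non-empty clopen $C \subseteq \mathfrak{C}_n^d$, the projection $C\pi_i$ is not a singleton (any point of $C$ has a basic cone neighbourhood in $C$, whose $i$-th coordinate projection is itself a cone, and hence contains infinitely many infinite words). Applied to $C = (s\mathfrak{C}_n^d)h$ this shows the longest common prefix $a$ of $((s\mathfrak{C}_n^d)h)\pi_i$ is a finite word; since $((st\mathfrak{C}_n^d)h)\pi_i \subseteq ((s\mathfrak{C}_n^d)h)\pi_i$, the corresponding $b$ satisfies $a \leq b$, so $(b)\lambda_a^{-1}$ is defined. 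Next, $\boldsymbol{\pi}_{T_h}$ is obviously an action since concatenation is associative. For the output axiom, fix $q,s,t$ and $i$, and let $a,b,c$ be the longest common prefixes of $((q\mathfrak{C}_n^d)h)\pi_i$, $((qs\mathfrak{C}_n^d)h)\pi_i$, $((qst\mathfrak{C}_n^d)h)\pi_i$ respectively. Write $b = aa'$ and $c = bb'$; then both sides of the axiom equal $a'b' = (c)\lambda_a^{-1}$ in coordinate $i$, which handles Step 2 in one line.

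For the third claim, I would directly compute $(w)f_{T_h, q}$ for $w \in \mathfrak{C}_n^d$ and $q \in (X_n^*)^d$. Unwinding Definition~\ref{reading infinite words defn}, one gets
\[
(w) f_{T_h, q} \pi_i = \bigcup_{j \in \N} (q, w\restriction_j)\boldsymbol{\lambda}_{T_h}\pi_i = \left(\bigcup_{j\in\N} b_j\right)\lambda_{a_i}^{-1}
\]
where $a_i$ is the (finite) longest common prefix of $((q\mathfrak{C}_n^d)h)\pi_i$ and $b_j$ is the longest common prefix of $((qw\restriction_j \mathfrak{C}_n^d)h)\pi_i$. Continuity of $h$ implies that $\bigcup_j b_j = ((qw)h)\pi_i$: for any $m$, continuity lets us choose $j$ so large that the entire clopen image $(qw\restriction_j \mathfrak{C}_n^d)h$ lies inside $\prod_i ((qw)h\pi_i\restriction_{m})\mathfrak{C}_n$, forcing $|b_j|\geq m$. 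Letting $\alpha = (a_0, \ldots, a_{d-1}) \in (X_n^*)^d$, this gives the clean formula $(w)f_{T_h, q} = \lambda_\alpha^{-1}((qw)h)$, which is an infinite word (so $T_h$ is non-degenerate) and equals $(w)h$ when $q = \varepsilon_d$, $\alpha = \varepsilon_d$. The equality $f_{M_h, e_h} = h$ then follows from Remark~\ref{induced maps preserved remark} applied to the strong quotient map $q_{T_h}: T_h \to M_h$ (noting that left multiplication in $(X_n^*)^d$ is injective, so Definition~\ref{minimal transducer defn} applies).

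For the final claim, given $q \in Q_{M_h}$, pick a representative $q'\in (X_n^*)^d$ with $(q'){q_{T_h}}_Q=q$; Remark~\ref{induced maps preserved remark} again yields $f_{M_h, q} = f_{T_h, q'}$. The formula $f_{T_h, q'}(w) = \lambda_\alpha^{-1}((q'w)h)$ exhibits $f_{T_h, q'}$ as the composition $\lambda_{q'} \circ h \circ (\lambda_\alpha^{-1}\restriction_{\alpha\mathfrak{C}_n^d})$, each factor being a homeomorphism (onto the appropriate cone), so $f_{M_h, q}$ is a homeomorphism from $\mathfrak{C}_n^d$ onto $((q'\mathfrak{C}_n^d)h)\lambda_\alpha^{-1}$, a clopen subset since $(q'\mathfrak{C}_n^d)h$ is clopen in $\alpha\mathfrak{C}_n^d$. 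The main obstacle is likely the first step: convincing oneself that the longest common prefix $a$ is always finite (so that $\lambda_a^{-1}$ makes sense on $R_{T_h}$) and being careful about the interaction between the coordinatewise definitions and the product structure of $(X_n^*)^d$; the rest is essentially unwinding definitions once this is secured.
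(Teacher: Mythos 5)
Your proof is correct, and it takes a genuinely different route from the paper's. The paper dispatches the first half of the lemma by observing that for a homeomorphism $h$ the transducer $T_h$ coincides with the transducer $T$ constructed in the proof of Theorem~\ref{transducerable = continuous theorem} (case (3c) never fires because $|(w\mathfrak{C}_n^d)h\pi_i|\neq 1$), which immediately delivers well-definedness, non-degeneracy, and $f_{T_h,\varepsilon_d}=h$ for free. You instead verify the transducer axioms from scratch (finiteness of the longest common prefix via openness of projections of clopen sets, the output axiom by the prefix computation $a\leq b\leq c$) and then establish the explicit formula $(w)f_{T_h, q'} = ((q'w)h)\lambda_\alpha^{-1}$ for arbitrary states $q'$, not just $\varepsilon_d$. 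This pays off in the second half: the paper re-derives injectivity and the clopen-image property somewhat indirectly (computing $(w'x)h$ and then exhibiting the image as a finite union of cones), whereas your formula exhibits $f_{M_h,q}$ outright as the composition of three homeomorphisms $\lambda_{q'}$, $h$, and $\lambda_\alpha^{-1}\restriction_{\alpha\mathfrak{C}_n^d}$, from which both the homeomorphism property and the clopen image $((q'\mathfrak{C}_n^d)h)\lambda_\alpha^{-1}$ fall out at once. Your version is more self-contained and arguably more transparent; the paper's is shorter because it reuses earlier work. One tiny notational point: in the paper's left-action convention the expression $\lambda_\alpha^{-1}((qw)h)$ should be written $((qw)h)\lambda_\alpha^{-1}$, which is in fact what your composition formula computes.
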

\begin{proof}
Note that when \(h\in \Aut(\mathfrak{C}_n^d)\) is a homeomorphism and \(i\in \{0, 1, \ldots, d-1\}\), the longest common prefix of the words in \((\mathfrak{C}_n^d)h\pi_i\) is \(\varepsilon\).
Moreover as \(h\) maps open sets to open sets, if \(w\in (X_n^*)^d\) then \((w\mathfrak{C}_n^d)h\pi_i\) is an open subset of \(\mathfrak{C}_n\).
In particular \(|(w\mathfrak{C}_n^d)h\pi_i|\neq 1\). It follows that \(T_h\) is identical to the transducer \(T\) defined in the proof of Theorem~\ref{transducerable = continuous theorem}.
Thus \(f_{T_h, \varepsilon_d} = h\).
By Remark~\ref{induced maps preserved remark}, it follows that \(f_{T_h, \varepsilon_d}=f_{M_h, e_h}\), so in particular \(f_{M_h, e_h} = h\).

Let \(q\in Q_{M_h}\) be arbitrary and let \(w'\in Q_{T_h}\) be such that \((w'){q_{T_h}}_Q = q\). We next show that \(f_{M_h, q}\) is injective. Suppose that \(w_0, w_1\in \mathfrak{C}_n^d\) and \((w_0)f_{M_h, q}=(w_1)f_{M_h, q}\). We have 
\[(w' w_0)h = (e_h,w' )\boldsymbol{\lambda}_{M_h}(w_0)f_{M_h, q} =(e_h,w' )\boldsymbol{\lambda}_{M_h}(w_1)f_{M_h, q} =(w' w_1)h .\]
Thus \(w' w_0=w' w_1\), and hence \(w_0=w_1\) as required.

 As \(w'\mathfrak{C}_n^d\) is a clopen set, the set \((w'\mathfrak{C}_n^d)h\) is also a clopen set. Moreover if \(x\in \mathfrak{C}_n^d\) is arbitrary, then
\[(w' x)h = (e_h, w')\boldsymbol{\lambda}_{M_h}(x)f_{M_h, q}.\]
Thus as \(h\) is a homeomorphism, \(f_{M_h, q}\) is a homeomorphism onto its image. Moreover every element of \((w'\mathfrak{C}_n^d)h\) is greater than \((e_h, w')\boldsymbol{\lambda}_{M_h}\). So there is some finite \(S\subseteq (X_n^*)^d\) such that
\[(w'\mathfrak{C}_n^d)h=\union{s\in S} (e_h, w')\boldsymbol{\lambda}_{M_h}s\mathfrak{C}_n^d.\]
It follows that \((\mathfrak{C}_n^d)f_{M_h, q}=\union{s\in S} s\mathfrak{C}_n^d\) and is thus clopen as required.
\end{proof}

The following theorem establishes that the GNS notion of a reduced transducer (see Proposition 2.8 of \cite{GNS2000}) representing a homeomorphism still applies in our context.
\speeddictfive{125}{unique minimal thm}{transducers homomorphisms defn}{minimal transducer defn}{minimal transducers are valid lemma}{transducing homeomorphisms defn}{inj_clo}
\begin{theorem}[Minimal transducers are unique, \Assumed{125}]\label{unique minimal thm}
Let \(d\in \N\backslash \{0\}\) and \(n\in \N\backslash\{0, 1\}\). Suppose that \(h\in \Aut(\mathfrak{C}_n^d)\), and \((T, q)\) is a pair such that:
\begin{enumerate}
    \item \(T\) is a \((d, n)\)-transducer, \(q\in Q_T\) and \(f_{T, q} = h\);
    \item if \(p_0, p_1\in Q_T\) and \(p_0 \sim_{M_T} p_1\), then \(p_0 = p_1\);
    \item \(Q_T = (q, D_T)\boldsymbol{\pi}_T\);
    \item if \(p\in Q_T\), \(w\in R_T\) and \((\mathfrak{C}_n^d)f_{T, p}\subseteq w\mathfrak{C}_n^d\), then \(w = \varepsilon_d\) (we call this condition \textbf{complete response}).
\end{enumerate}
Then there is a unique strong transducer isomorphism \(\phi: T\to M_h\) with \((q)\phi_Q = e_h\).
Moreover if \(h\in \Aut(\mathfrak{C}_n^d)\) is arbitrary, then the pair \((M_h, e_h)\) satisfies all of the above conditions.  
\end{theorem}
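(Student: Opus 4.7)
The plan is to reduce everything to the following key lemma: under the assumptions of the theorem, $(q, s)\boldsymbol{\lambda}_T = (e_h, s)\boldsymbol{\lambda}_{M_h}$ for every $s \in D_T = (X_n^*)^d$. Both sides are easily seen to be common prefixes (coordinate by coordinate) of the cone image $(s\mathfrak{C}_n^d)h$, and complete response promotes each to \emph{the} longest such common prefix. Specifically, if the longest common prefix of $(s\mathfrak{C}_n^d)h$ strictly exceeded $(q,s)\boldsymbol{\lambda}_T$ by some $v \neq \varepsilon_d$, then unwinding the identity $(sw)h = (q,s)\boldsymbol{\lambda}_T \cdot (w)f_{T,(q,s)\boldsymbol{\pi}_T}$ would yield $(\mathfrak{C}_n^d) f_{T,(q,s)\boldsymbol{\pi}_T} \subseteq v\mathfrak{C}_n^d$, contradicting condition (4). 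This underlying rewriting identity, which follows from the transducer axioms and Definition~\ref{reading infinite words defn}, is actually the main piece of bookkeeping in the proof: its multi-dimensional version requires choosing a sequence $w_j' \in (X_n^*)^d$ with each coordinate of $w_j'$ eventually longer than the corresponding coordinate of $s$, and then passing to a limit via Lemma~\ref{transducers are continuous lemma}. The same reasoning applies on the $M_h$ side using the analogous complete response property of $M_h$, which I verify below.

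Uniqueness of $\phi$ is then essentially free: since $\phi_D$ and $\phi_R$ are forced to be identity maps, the homomorphism axiom forces $(q, s)\boldsymbol{\pi}_T\phi_Q = (e_h, s)\boldsymbol{\pi}_{M_h}$ for every $s$, and condition (3) pins $\phi_Q$ down on all of $Q_T$. I would therefore \emph{define} $\phi$ by this very formula: $(p)\phi_Q := (e_h, s)\boldsymbol{\pi}_{M_h}$ for any choice of $s \in D_T$ with $(q,s)\boldsymbol{\pi}_T = p$. To check well-definedness of $\phi_Q$, suppose $(q,s_0)\boldsymbol{\pi}_T = (q,s_1)\boldsymbol{\pi}_T = p$. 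Combining the key lemma with the transducer axioms applied to both $T$ and $M_h$, together with cancellativity of concatenation (Definition~\ref{words defn}), I obtain that $((e_h, s_i)\boldsymbol{\pi}_{M_h}, t)\boldsymbol{\lambda}_{M_h} = (p, t)\boldsymbol{\lambda}_T$ for every $t \in D_T$ and $i \in \{0,1\}$; so the two candidates $(e_h, s_0)\boldsymbol{\pi}_{M_h}$ and $(e_h, s_1)\boldsymbol{\pi}_{M_h}$ produce identical outputs on every input, forcing them to be equal by the minimality of $M_h$. Running the same chain of identities in the opposite direction proves injectivity of $\phi_Q$ via condition (2) on $T$; surjectivity follows from the description $(e_h, s)\boldsymbol{\pi}_{M_h} = [s]_{\sim_{M_{T_h}}}$ below, which exhausts $Q_{M_h}$; and the two homomorphism equations $(p, s)\boldsymbol{\pi}_T\phi_Q = ((p)\phi_Q, s)\boldsymbol{\pi}_{M_h}$ and $(p, s)\boldsymbol{\lambda}_T = ((p)\phi_Q, s)\boldsymbol{\lambda}_{M_h}$ follow by the same cancellativity trick.

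For the ``moreover'' clause, I verify the four conditions on $(M_h, e_h)$ in turn. Condition (1) is exactly Lemma~\ref{inj_clo}. Condition (2) holds because $M_h = M_{T_h}$ is by construction the quotient of $T_h$ by $\sim_{M_{T_h}}$, so distinct elements of $Q_{M_h}$ already have distinguishable outputs. Reachability (3) is immediate because $Q_{T_h} = (X_n^*)^d$ and $(\varepsilon_d, s)\boldsymbol{\pi}_{T_h} = s$, so $(e_h, s)\boldsymbol{\pi}_{M_h} = [s]_{\sim_{M_{T_h}}}$ exhausts $Q_{M_h}$ as $s$ ranges over $(X_n^*)^d$. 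Finally, complete response (4) for $M_h$ is baked into the definition of $\boldsymbol{\lambda}_{T_h}$: if $(\mathfrak{C}_n^d)f_{M_h, p} \subseteq v\mathfrak{C}_n^d$ for $p = (e_h, s)\boldsymbol{\pi}_{M_h}$, then $(s\mathfrak{C}_n^d)h \subseteq (e_h, s)\boldsymbol{\lambda}_{M_h} \cdot v\mathfrak{C}_n^d$, but the defining recipe for $\boldsymbol{\lambda}_{T_h}$ already forces $(e_h, s)\boldsymbol{\lambda}_{M_h}$ to be the longest common prefix of $(s\mathfrak{C}_n^d)h$, so $v = \varepsilon_d$.
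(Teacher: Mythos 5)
Your proposal is correct, but organized differently from the paper's proof. The paper goes \emph{up} before it goes \emph{down}: it first defines a map $\psi : T_h \to T$ on states by $(p)\psi_Q := (q, p)\boldsymbol{\pi}_T$, verifies $\psi$ is a strong quotient map (the heart of that verification is the same complete-response computation you isolate as your ``key lemma''), and then invokes the universal factorization property of minimal transducers (Lemma~\ref{minimal transducers are valid lemma}) to obtain the desired $\phi : T \to M_h$ with $\psi\phi = q_{T_h}$; injectivity of $\phi_Q$ then falls out of condition~(2). You instead write down $\phi : T \to M_h$ directly via the formula forced by the uniqueness argument, and then verify well-definedness, injectivity, surjectivity, and the two homomorphism equations by hand using your key lemma plus cancellativity. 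The paper's route lets the universal property absorb the well-definedness check for free, which is why it never has to state the equality $(q,s)\boldsymbol{\lambda}_T = (e_h,s)\boldsymbol{\lambda}_{M_h}$ as a standalone fact; yours makes the symmetry between the two complete-response hypotheses (on $T$ and on $M_h$) explicit and brings the output-equality lemma to the foreground, which is arguably more transparent. One small logical note worth making precise when writing this up: your key lemma needs complete response of $M_h$, which you defer to the ``moreover'' clause, but that verification depends only on the definition of $T_h$ and Remark~\ref{induced maps preserved remark}, not on the key lemma, so there is no circularity.
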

\begin{proof}
Let \(T\) be a \((d, n)\)-transducer and \(q\in Q_T\) be such that the above conditions are satisfied. 
If \(\phi: T\to M_h\) is as required then we must have:
\begin{enumerate}
    \item Both \(\phi_{D}, \phi_{R}\) are the identity function on \((X_n^*)^d\);
    \item if \(w\in D_T\) then
    \(((q, w)\boldsymbol{\pi}_T)\phi_Q = ((q)\phi_Q, w)\boldsymbol{\pi}_{M_h} =  (e_h, w)\boldsymbol{\pi}_{M_h}.\)
\end{enumerate}
As \(Q_T = (q, D_T)\boldsymbol{\pi}_T\), it follows that the above two conditions define \(\phi\). Thus if the required \(\phi\) exists, then it is unique.

Define \(\psi_Q: Q_{T_h} \to Q_T\) by \((p)\psi_Q = (q, p)\pi_T\). As \(D_T = Q_{T_h}\), it follows from the second condition on \(T\) that \(\psi_Q\) is surjective. Let \(\psi_D, \psi_R\) be the identity functions on \((X_n^*)^d\). It follows that if \(\psi:=(\psi_Q, \psi_D, \psi_T)\) is a transducer homomorphism, then \(\psi\) is a strong quotient map. 
We now show that \(\psi\) is a transducer homomorphism.

Let \(p\in Q_{T_h}\), \(s, t\in (X_n^*)^d\) be arbitrary. Then
\[(p, s)\boldsymbol{\pi}_{T_h}\phi_{D} =(q,(p, s)\boldsymbol{\pi}_{T_h})\boldsymbol{\pi}_{T} =(q, p s)\boldsymbol{\pi}_{T}=((q, p)\boldsymbol{\pi}_T, s)\boldsymbol{\pi}_{T}= ((p)\phi_Q, s)\boldsymbol{\pi}_{T}.\]
We need to show that \((p, s)\boldsymbol{\lambda}_{T_h} = ((p)\phi_Q, s) \boldsymbol{\lambda}_{T}\). For all \(i\in \{0, 1, \ldots, d-1\}\) we have by definition that \((p, s)\boldsymbol{\lambda}_{T_h}\pi_i= (b)\lambda_a^{-1}\), where \(b\) is the longest common prefix of the elements of \((p s\mathfrak{C}_n^d)h\pi_i\) and \(a\) is the longest common prefix of the elements of \((p\mathfrak{C}_n^d)h\pi_i\).
Define \(a' := (q, p)\boldsymbol{\lambda}_{T}\), \(b' := (q, p s)\boldsymbol{\lambda}_{T}\), 
as \(T\) is a transducer, we have
\[a'((p)\phi_Q, s) \boldsymbol{\lambda}_{T}=(q, p)\boldsymbol{\lambda}_{T}((q, p)\boldsymbol{\pi}_T, s) \boldsymbol{\lambda}_{T}= (q, p s) \boldsymbol{\lambda}_{T}=b'.\]
Thus \(((p)\phi_Q, s) \boldsymbol{\lambda}_{T}= (b')\lambda_{a'}^{-1}\). 
To conclude that \((p, s)\boldsymbol{\lambda}_{T_h} = ((p)\phi_Q, s) \boldsymbol{\lambda}_{T}\), it suffices to show that \(a'= a\) and \(b'=b\).
We show \(a'=a\), showing that \(b'=b\) is almost identical.
As \(a' =(q, p)\boldsymbol{\lambda}_{T}\) and \(f_{T, q} = h\), it follows that \(a'\) is a prefix of all words in the set \((p\mathfrak{C}_n^d)h\pi_i\).
In particular \(a'\leq a\). 
Let \(w'\in X_n^*\) be such that \(a' w' =a\).
For all \(x\in \mathfrak{C}_n^d\), we have
\[ a'((x)f_{T, (q, p)\boldsymbol{\pi}_T}\pi_i)=(p x)f_{T, q}\pi_i=(p x)h\pi_i  \in a\mathfrak{C}_n=a' w'\mathfrak{C}_n.\]
Thus \((\mathfrak{C}_n^d)f_{T, (q, p)}\boldsymbol{\pi}_T\pi_i \subseteq w' \mathfrak{C}_n\).
As \(T\) has complete response, it follows that \(w'\leq (\varepsilon_d)\pi_i= \varepsilon\) and thus \(a=a'\).

We now have that \(\psi: T_h\to T\) is a strong quotient map. 
By Lemma~\ref{minimal transducers are valid lemma}, there is a strong quotient map \(\phi: T\to M_h\) with \(\psi\phi= q_{T_h}\).
From this equality it follows that \((q)\phi= (\varepsilon_d){q_{T_h}}_Q = e_h\).
We next show that \(\phi_Q\) is injective. Let \(p_0, p_1\in Q_T\) be arbitrary with \((p_0)\phi_Q = (p_1)\phi_Q\), we will show that \(p_0= p_1\).
Let \(p_0', p_1'\in Q_{T_h}\) be such that \((p_0')\psi_Q = p_0\) and \((p_1')\psi_Q = p_1\). We have
\[(p_0')q_{T_h}= (p_0')\psi_Q\phi_Q= (p_0)\phi_Q= (p_1)\phi_Q=(p_1')\psi_Q\phi_Q = (p_1')q_{T_h}.\]
Thus \(p_0'\sim_{M_{T_h}} p_1'\), and as \(\psi\) is a transducer homomorphism it follows that \(p_0\sim_{M_{T}} p_1\). By the second assumption on \(T\), it follows that \(p_0 = p_1\).
As \(\psi_Q\) is injective, it follows that \(\phi\) is a strong transducer isomorphism as required.

Finally we show that \(M_h\) and \(e_h\) satisfy the required conditions. The first condition is immediate from Lemma~\ref{inj_clo}. The second condition follows from the definition of \(M_h\). The third condition follows as \((p)q_{T_{h}}=(e_h, p)\boldsymbol{\pi}_{M_h}\) for all \(p\in Q_{T_h}\). The forth condition is immediate from the definition on \(T_h\) together with Remark~\ref{induced maps preserved remark}.
\end{proof}

\section{Generalizing the synchronizing homeomorphisms of Bleak, Cameron, Maissel, Navas, and Olukoya}

We now have a class of transducers for describing continuous maps.
We restrict our transducers to those which represent automorphisms of \(dV_n\).
As was the case in \cite{bleak2016}, this is done with a synchronization condition.

\speeddictthree{126}{synchronizing defn}{function defn}{binary relations defns}{dn transducer def}
\begin{defn}[Synchronizing transducers, \Assumed{126}]\label{synchronizing defn}
Suppose that \(T\) is a \((d, n)\)-transducer and \(k\in \N\). 
We say that \(T\) is \textit{synchonizing at level }\(k\) if for all \(p, q\in Q_T\) and \(w\in (X_n^*)^d\) with \(\min\left(\makeset{|(w)\pi_i|}{\(i\in \{0, 1, \ldots, d-1\}\)}\right)\geq k\), we have \[(p, w)\pi_T = (q, w)\pi_T.\]
We say that \(T\) is \textit{synchronizing} if there is a level at which it is synchronizing and we call the smallest such level the \textit{synchronizing length} of \(T\). In this case we define the function
\[\mathfrak{s}_T:= \makeset{(w, q)\in (X_n^*)^d\times Q_T}{for all \(p\in Q_T\) we have \((w,p)\boldsymbol{\pi}_T=q\)}.\]
The map \(\mathfrak{s}_T\) is essentially \(\boldsymbol{\pi}_T\) restricted to the part of its domain where the input state is not needed. The image of \(\mathfrak{s}_T\), denoted \(\C(T)\), is called the \textit{core} of \(T\).
\end{defn}

\speeddicttwo{127}{Core Transducers remark}{subtransducer def}{synchronizing defn}
\begin{remark}[Core transducers, \Assumed{127}]\label{Core Transducers remark}
Suppose that \(T\) is a \((d, n)\)-transducer with synchronizing level \(k\).
If \(q\in \C(T)\), \(s\in (\{q\})\mathfrak{s}_T^{-1}\) and \(w\in D_T\) then
\[(q, w)\boldsymbol{\pi}_T= ((s)\mathfrak{s}_T, w)\boldsymbol{\pi}_T  = (s w)\mathfrak{s}_T\in \C(T).\]
Thus \(\C(T)\) is a subtransducer of \(T\).
Moreover \(\mathfrak{s}_T\subseteq \mathfrak{s}_{\C(T)}\) and \(\C(T)= ((X_n^k)^d)\mathfrak{s}_T\), so \(\C(T)\) has at most \(n^{k d}\) states and has synchronizing length at most \(k\).
\end{remark}

The core of a synchronizing transducer, consists precisely of the state which are ``always reachable", in particular it is the smallest subtransducer which still has the same domain and range as the original transducer.

\speeddicttwo{128}{identity states defn}{subtransducer def}{unique minimal thm}
\begin{defn}[Identity states, \Assumed{128}]\label{identity states defn}
If \(I\in \Aut(\mathfrak{C}_n^d)\) is the identity map, then (by Theorem~\ref{unique minimal thm}) \(M_I\) is a transducer with one state \(e_I\) and output function given by \((e_I, w)\boldsymbol{\lambda}_{M_I} = w\) (for all \(w\in (X_n^*)^d\)).
We call a transducer strongly isomorphic to this transducer an \textit{identity transducer}.

If \(T\) is a \((d, n)\)-transducer and \(q\in Q_T\) is such that \(\{q\}\) is a subtransducer of \(T\) isomorphic to an identity transducer, then we call \(q\) an \textit{identity state}.
\end{defn}

We can now show that, as was the case with \(G_{n, r}\) \cite{bleak2016}, the elements of \(dV_n\) are precisely those elements with ``trivial" cores.

\speeddictfive{129}{nV_placement}{prefix codes and dVn defn}{unique minimal thm}{synchronizing defn}{Core Transducers remark}{identity states defn}
\begin{proposition}[\(dV_n\) transducers, \Assumed{129}]\label{nV_placement}
If \(d\in \N\backslash\{0\}\), \(n\in \N\backslash\{0, 1\}\) and \(h\in \Aut(\mathfrak{C}_n^d)\), then \(h\in dV_n\) if and only if \(M_h\) is a synchronizing transducer whose core is an identity transducer.
\end{proposition}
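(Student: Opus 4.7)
My plan is to prove both directions by explicitly relating the combinatorial data of a prefix exchange map to the structure of the minimal transducer, using complete response (Theorem~\ref{unique minimal thm}) as the key technical tool.

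For the forward direction, suppose $h \in dV_n$ is given by a bijection $\phi : F_1 \to F_2$ of complete prefix codes. Let $K := \max \{|w\pi_j| : w \in F_1,\ 0 \le j < d\}$. The first step, which I believe is the main obstacle, will be to establish the following multi-dimensional ``alignment lemma'': if $v \in (X_n^\ast)^d$ satisfies $|v\pi_j| \ge K$ for every $j$, then there is a \emph{unique} $w_i \in F_1$ with $v \ge w_i$ in the product prefix order. Remark~\ref{scary anti-chains} warns that higher-dimensional prefix codes are subtle, so this needs care: I would fix any $x \in v\mathfrak{C}_n^d$, use completeness of $F_1$ to find the unique $w_i \in F_1$ with $x \in w_i \mathfrak{C}_n^d$, and then argue that since $|v\pi_j| \ge K \ge |w_i\pi_j|$ for each $j$, the prefix $w_i\pi_j$ of $x\pi_j$ must actually be a prefix of $v\pi_j$, giving $v \ge w_i$; independence of the choice of $x$ follows from disjointness of distinct cones in $F_1$.

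Once the alignment lemma is in place, writing $v = w_i u$, we get $h(vy) = (w_i)\phi \cdot u \cdot y$ for every $y \in \mathfrak{C}_n^d$, so $h$ acts on $v\mathfrak{C}_n^d$ by prepending a fixed tuple. A direct computation using Definition~\ref{transducing homeomorphisms defn} then shows that for any $q \in Q_{T_h}$ and $s \in (X_n^\ast)^d$, the output $(qv, s)\boldsymbol{\lambda}_{T_h}$ equals $s$, so the state $[qv]_{\sim_{M_{T_h}}}$ of $M_h$ has the identity output function. Hence $M_h$ is synchronizing at level $K$ and every state in its core is an identity state; since identity states are unique up to $\sim_{M_{T_h}}$, the core is an identity transducer.

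For the reverse direction, suppose $M_h$ is synchronizing at level $K$ with core consisting of the unique identity state $e$. Let $F_1^0 := \{v \in (X_n^\ast)^d : |v\pi_j| = K \text{ for all } j\}$, which is a complete prefix code, and define $\phi_0 : F_1^0 \to (X_n^\ast)^d$ by $(v)\phi_0 := (e_h, v)\boldsymbol{\lambda}_{M_h}$. For $v \in F_1^0$ and $y \in \mathfrak{C}_n^d$, the factorization $f_{M_h, e_h} = f_{M_h, e_h}$ together with $(e_h, v)\boldsymbol{\pi}_{M_h} = e$ and $f_{M_h, e} = \mathrm{id}$ gives $h(vy) = (v)\phi_0 \cdot y$, so $h$ sends the cone $v\mathfrak{C}_n^d$ bijectively onto $(v)\phi_0 \cdot \mathfrak{C}_n^d$. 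Since $h$ is a bijection of $\mathfrak{C}_n^d$, the image cones $\{(v)\phi_0 \cdot \mathfrak{C}_n^d : v \in F_1^0\}$ are pairwise disjoint and cover $\mathfrak{C}_n^d$, so $F_2^0 := (F_1^0)\phi_0$ is a complete prefix code and $\phi_0 : F_1^0 \to F_2^0$ is a bijection. Hence $h$ is precisely the prefix exchange map $f_{\phi_0}$ from Definition~\ref{prefix codes and dVn defn}, so $h \in dV_n$.
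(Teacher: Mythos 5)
Your proof is correct and takes essentially the same route as the paper: both arguments rest on the same two observations, namely that once every coordinate of the transition word has length at least $K$, the corresponding state of $T_h$ has identity output (so all such states collapse to a single identity state in $M_h$), and conversely that reading any member of the complete prefix code $(X_n^K)^d$ lands the transducer in the identity core, exhibiting $h$ as the prefix exchange determined by $v \mapsto (e_h, v)\boldsymbol{\lambda}_{M_h}$. The only difference is that you isolate and justify the ``alignment lemma'' (unique $w' \in F_1$ below any $v$ with all coordinates of length $\geq K$), which the paper simply asserts, and there is a harmless typo in your reverse direction where ``$f_{M_h,e_h} = f_{M_h,e_h}$'' should read the standard factorization $(vy)f_{M_h,e_h} = (e_h, v)\boldsymbol{\lambda}_{M_h}\cdot (y)f_{M_h,(e_h,v)\boldsymbol{\pi}_{M_h}}$.
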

\begin{proof}
\((\Rightarrow):\) Let \(h\in dV_n\), and let \(F_1, F_2\) be complete prefix codes of \(\mathfrak{C}_n^d\) and \(\phi:F_1\to F_2\) be a bijection such that \(h= f_\phi\) (as in Definition~\ref{prefix codes and dVn defn}). Define
\[k := \max\left(\makeset{|(w)\pi_i|}{\(i\in \{0,1 , \ldots, d-1\}, w\in F_1\)}\right).\]

Let \(w\in (X_n^*)^{d}\) be arbitrary with \(\min\left(\makeset{|(w)\pi_i|}{\(i\in \{0, 1, \ldots, d-1\}\)}\right) \geq k\).
We show for all \(s\in (X_n^*)^d\) that \((w, s)\boldsymbol{\lambda}_{T_h} =s\).

By the definitions of \(k\) and \(F_1\), there is a unique \(w'\in F_1\) with \(w'\leq w\).
It follows that \((e_h, w)\boldsymbol{\lambda}_{M_h}=(\varepsilon_d, w)\boldsymbol{\lambda}_{T_h} = (w)\lambda_{w'}^{-1}\lambda_{(w')\phi}\).
So if \(s\in (X_n^*)^d\) is arbitrary, then we have
\[(e_h, w s)\boldsymbol{\lambda}_{M_h}= (w s)\lambda_{w'}^{-1}\lambda_{(w')\phi}= (w)\lambda_{w'}^{-1}\lambda_{(w')\phi}s\]
and thus \((w, s)\boldsymbol{\lambda}_{T_h} =s\).
It follows that if \(v, w\in (X_n^*)^{d}\) are arbitrary with \[\min\left(\makeset{|(v)\pi_i|}{\(i\in \{0, 1, \ldots, d-1\}\)}\right) \geq k,\quad  \min\left(\makeset{|(w)\pi_i|}{\(i\in \{0, 1, \ldots, d-1\}\)}\right) \geq k,\]
then \((v){q_{T_h}}_Q=(w){q_{T_h}}_Q\) and moreover this state is an identity state.
Thus \(M_h\) is a synchronizing of length at most \(k\) and the core of \(M_h\) consists of a single identity state.

\((\Leftarrow):\) Let \(h\in \Aut(\mathfrak{C}_n^d)\) be such that \(M_h\) has an identity core.
We show that \(h\in dV_n\).
Let \(k\) be the synchronizing length of \(M_h\) and let \(I\) be the core state of \(M_h\).
Define \(F_1:= (X_n^k)^d\) and note that \(F_1\) is a complete prefix code for \(\mathfrak{C}_n^d\). Define \(\phi:F_1 \to (X_n^*)^d\) by \((w)\phi = (e_h, w)\boldsymbol{\lambda}_{M_h}\). If \(x\in \mathfrak{C}_n^d\) and \(w\in (X_n^k)^d\) are arbitrary then
\[(w x)h = (w x)f_{M_h, e_h} =  (e_h, w)\boldsymbol{\lambda}_T (x)f_{M_h, I}=(w)\phi x = (w x)\lambda_w^{-1}\lambda_{(w)\phi}.\]
Let \(F_2 := \im(\phi)\). As \(h\) is a homeomorphism, it follows from the equality \((w x)h= (w x)\lambda_w^{-1}\lambda_{(w)\phi}\) that \(F_2\) is a complete prefix code and \(\phi\) is a bijection between \(F_1\) and \(F_2\).
Thus \(h = f_\phi \in dV_n\).
\end{proof}

\speeddicttwo{165}{picture definitions}{cones words and Cantor spaces defn}{dn transducer def}
\begin{defn}[Defining transducers with diagrams, \Assumed{165}]\label{picture definitions}
As we see in the Example~\ref{baker's map example}, it is useful to think about transducers as diagrams consisting of collections of dots (representing states) with arrows between them (representing the transition and output functions).
If \(T\) is a transducer and \(X\) is a generating set for \(D_T\), then we can do this by having an arrow originating from each state for each element of \(X\).
We give each arrow from a state \(q\) to a state \(p\) a label of the form \(x/s\) where \(x\in X\) and \(s\in R_T\). The meaning of this being that 
\[(q, x)\boldsymbol{\pi}_T = p \quad\text{and}\quad (q, x)\boldsymbol{\lambda}_T = s.\]
As \(X\) generates \(D_T\), the conditions in the definition of a transducer determine the rest of the maps \(\boldsymbol{\pi}_T\) and \(\boldsymbol{\lambda}_T\). 
If an element of \(D_T\) can be expressed as a product of elements of \(X\) in multiple ways then this may result in \(\boldsymbol{\pi}_T\) or \(\boldsymbol{\lambda}_T\) not being well-defined by a particular diagram, so not all diagrams of this type give well-defined transducers.

We will be particularly interested in \((d, n)\)-transducers using the generating set 
\[\makeset{x_{d,i}}{\(x\in X_n, i\in \{0, 1, \ldots, d-1\}\)}\]
(recall Definition~\ref{cones words and Cantor spaces defn}). In the case that \(n=1\) this is particularly easy as each element of \(X_n^*\) can be expressed as a product of generators in a unique fashion, thus all diagrams define valid transducers (as was the case in GNS).
\end{defn}

\speeddictfive{130}{baker's map example}{prefix codes and dVn defn}{unique minimal thm}{identity states defn}{nV_placement}{picture definitions}
\begin{example}[Baker's map, \Assumed{130}]\label{baker's map example}
Unlike for \(V_n\), the minimal transducers for elements of \(dV_n\) can sometimes be infinite. For example if we define \(F_1 := \{(0, \varepsilon), (1, \varepsilon)\}\),  \(F_2 := \{(\varepsilon, 0), (\varepsilon, 1)\}\) and \(\phi:F_1\to F_2\) by
\[(0, \varepsilon)\phi = (\varepsilon, 0), \quad (1, \varepsilon)\phi = (\varepsilon, 1).\]
Then the map \(b:= f_\phi\in 2V\) (called the \textit{baker's map}) has the property that \(M_b\) has infinitely many states.

This can be seen by observing that if \(w\in (X_2^*)^2\) and \((w)\pi_0= \varepsilon\), then \((w, (0, \varepsilon))\boldsymbol{\lambda}_T = (0, w)\). Thus if \(v, w\in (X_2^*)^2\) are distinct then \((e_b, v)\boldsymbol{\pi}_{M_b} \neq (e_b, v)\boldsymbol{\pi}_{M_b}\). 
In Figure~\ref{baker}, we see a diagram of the transducer \(M_b\).
\end{example}
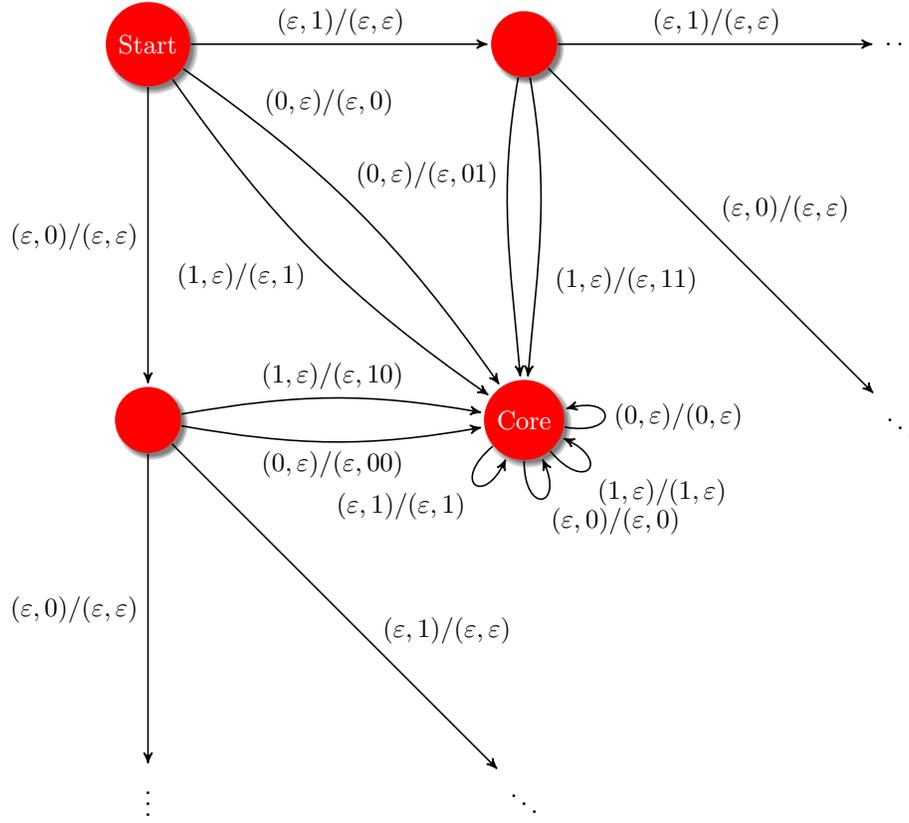
\begin{figure}
\begin{center}
\begin{tikzpicture}[->,>=stealth',shorten >=1pt,auto,node distance=5cm,on grid,semithick,
                    every state/.style={fill=red,draw=none,circular drop shadow,text=white}]  
  \node [state] (A)                {Start};
  \node [state] (B)  [right= of A] {};
  \node [state] (C)  [below= of B] {Core};
  \node [state] (D)  [below= of A] {};
  \node [] (E) [below= of D] {$\vdots$};
  \node [] (G) [right= of E] {$\ddots$};
  \node [] (F) [right= of B] {$\ldots$};
  \node [] (H) [below= of F] {$\ddots$};
 \path [->]
 (A) edge [out=305,in= 145] node [swap]
 {$(1, \varepsilon)/(\varepsilon, 1)$} (C)
 (A) edge [out=325,in=125] node [swap,xshift = 17pt,yshift = 50pt]
 {$(0, \varepsilon)/(\varepsilon, 0)$} (C)
 (A) edge [out=0,in=180] node [swap, yshift= 17pt]
 {$(\varepsilon,1)/(\varepsilon,\varepsilon)$} (B)
 (A) edge [out=270,in=90] node [swap]
 {$(\varepsilon,0)/(\varepsilon,\varepsilon)$} (D)
 
 (B) edge [out=280,in=85] node [swap, xshift=63pt, yshift = -20pt]
 {$(1, \varepsilon)/(\varepsilon, 11)$} (C)
 (B) edge [out=260, in= 95] node [swap, yshift= 20pt]
 {$(0, \varepsilon)/(\varepsilon, 01)$} (C)
 (B) edge [out=00,in=180] node [swap, yshift=17pt]
 {$(\varepsilon,1)/( \varepsilon,\varepsilon)$} (F)
 (B) edge [out=315,in=135] node []
 {$(\varepsilon,0)/( \varepsilon,\varepsilon)$} (H)
 
 (C) edge  [out=310,in=335, loop]
 node  [swap]{$(1, \varepsilon)/(1, \varepsilon)$} (C)
 (C) edge  [out=350,in=15, loop]
 node [swap]{$(0, \varepsilon)/(0, \varepsilon)$} (C)
 (C) edge [out=220,in=245, loop] node [swap]
 {$(\varepsilon,1)/( \varepsilon,1)$} (C)
 (C) edge [out=270,in=295, loop] node [swap]
 {$(\varepsilon,0)/( \varepsilon,0)$} (C)
 
 (D) edge [out=10,in=170] node [swap, yshift=17pt]
 {$(1, \varepsilon)/(\varepsilon, 10)$} (C)
 (D) edge [out=350,in=190] node [swap]
 {$(0, \varepsilon)/(\varepsilon, 00)$} (C)
 (D) edge [out=315,in=135] node [swap, xshift=70pt]
 {$(\varepsilon,1)/(\varepsilon, \varepsilon)$} (G)
 (D) edge [out=270, in= 90] node [swap]
 {$(\varepsilon,0)/(\varepsilon, \varepsilon)$} (E);
 \end{tikzpicture}
 \end{center}
\caption{Part of a minimal transducer with \((X_2^*)^2\) as domain and range. This represents the baker's map in \(2V\) (this transducer is infinite and every state has an edge with the core as its target).}\label{baker}
\end{figure}


\speeddicttwo{131}{synchronizing homeomorphisms}{unique minimal thm}{synchronizing defn}
\begin{defn}[Synchronizing homeomorphisms, \Assumed{131}]\label{synchronizing homeomorphisms}
We say an element \(h\in \Aut(\mathfrak{C}_n^d)\) is \textit{synchronizing} if \(M_h\) is synchronizing. We define \(d\mathcal{S}_{n,1}\) to be the set of synchronizing elements of \(\Aut(\mathfrak{C}_n^d)\).
\end{defn}
The subscript \(1\) in the above definition is due the the overlap between these sets of homeomorphisms and the homeomorphism groups \(\mathcal{S}_{n,r}\) from \cite{bleak2016} (\(\mathcal{S}_{n,1}=1\mathcal{S}_{n,1}\)). 
Later we will use the notations \(d\mathcal{B}_{n, 1}\) and \(d\mathcal{O}_{n, 1}\) for the same reason.

We will need to be able to compute the minimal transducer representative (see Theorem~\ref{unique minimal thm}) of a product of homeomorphisms from the minimal transducers representing the individual homeomorphisms.
For this we will need to be able to convert a transducer defining a homeomorphism into one with complete response.

A means of doing this is provided by GNS (see Proposition 2.16 of \cite{GNS2000}), we now show that a similar construction applies to our context.

\speeddictthree{132}{Removing Incomplete Response}{inj_clo}{unique minimal thm}{removing incomplete responce works}
\begin{defn}[Removing incomplete response, \Assumed{132}]\label{Removing Incomplete Response}
Suppose that \(T\) is a \((d, n)\)-transducer and for all \(q\in Q_T\) the function \(f_{T, q}\) is a homeomorphism from \(\mathfrak{C}_n^d\) to a clopen subset of \(\mathfrak{C}_n^d\).
We define 
\[CR(T):= (Q_T, D_T, R_T, \boldsymbol{\pi}_T, \boldsymbol{\lambda}_{CS(T)}).\]
The map \(\boldsymbol{\lambda}_{CR(T)}: Q_T\times D_T\to R_T\) is defined by
\[(q, s)\boldsymbol{\lambda}_{CR(T)}\pi_i = (b)\lambda_a^{-1},\]
where \(b\) is the longest common prefix of the elements of \((s\mathfrak{C}_n^d)f_{T, q}\pi_i\) and \(a\) is the longest common prefix of the elements of \((\mathfrak{C}_n^d)f_{T, q}\pi_i\). We show in Lemma~\ref{removing incomplete responce works} that \(CR(T)\) is a well-defined \((d, n)\)-transducer.
\end{defn}

\speeddictthree{133}{removing incomplete responce works}{inj_clo}{unique minimal thm}{Removing Incomplete Response}
\begin{lemma}[Removing incomplete response works, \Assumed{133}]\label{removing incomplete responce works}
Suppose that \(T\) is a \((d, n)\)-transducer and for all \(q\in Q_T\) the function \(f_{T, q}\) is a homeomorphism from \(\mathfrak{C}_n^d\) to a clopen subset of \(\mathfrak{C}_n^d\). The object \(CR(T)\) defined in Definition~\ref{Removing Incomplete Response} is a \((d, n)\)-transducer with complete response (see Theorem~\ref{unique minimal thm}).

Moreover \(CR(T)\) is non-degenerate and if \(q\in Q_T =Q_{CR(T)}\) and \(p\) is the largest common prefix of the elements of \((\mathfrak{C}_n^d)f_{T, q}\), then \(f_{CR(T), q}=f_{T, q}\lambda_{p}^{-1} \). 
\end{lemma}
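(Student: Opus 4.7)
The plan is to verify the claims in the order they are stated: well-definedness of $\boldsymbol{\lambda}_{CR(T)}$, then the transducer axioms, then the formula $f_{CR(T), q} = f_{T,q}\lambda_p^{-1}$, and finally to deduce both non-degeneracy and complete response from that formula. First I will observe that because $f_{T,q}$ is a homeomorphism from $\mathfrak{C}_n^d$ onto a clopen subset of $\mathfrak{C}_n^d$, each set $(\mathfrak{C}_n^d)f_{T,q}\pi_i$ is a finite union of cones (using that compact open subsets of $\mathfrak{C}_n$ are finite unions of cones), so its longest common prefix $a$ is a well-defined finite word; the same holds for $(s\mathfrak{C}_n^d)f_{T,q}\pi_i \subseteq (\mathfrak{C}_n^d)f_{T,q}\pi_i$, forcing $a \leq b$, which makes $(b)\lambda_a^{-1}$ well-defined.

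For the transducer axioms, checking $(q,\varepsilon_d)\boldsymbol{\lambda}_{CR(T)} = \varepsilon_d$ is immediate since $a = b$ when $s = \varepsilon_d$. The output function identity $(q, st)\boldsymbol{\lambda}_{CR(T)} = (q,s)\boldsymbol{\lambda}_{CR(T)}\, ((q,s)\boldsymbol{\pi}_T, t)\boldsymbol{\lambda}_{CR(T)}$ is the main computational step. My plan here is to use the pointwise identity $(sx)f_{T,q} = (q,s)\boldsymbol{\lambda}_T \cdot (x)f_{T,(q,s)\boldsymbol{\pi}_T}$ (valid coordinatewise) to decompose each longest common prefix. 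Writing $a_q, b_s, b_{st}$ for the relevant longest common prefixes at state $q$, and $a'$, $b_t'$ for those at state $(q,s)\boldsymbol{\pi}_T$, this identity gives $b_s\pi_i = (q,s)\boldsymbol{\lambda}_T\pi_i \cdot a'\pi_i$ and $b_{st}\pi_i = (q,s)\boldsymbol{\lambda}_T\pi_i \cdot b_t'\pi_i$. Writing $(q,s)\boldsymbol{\lambda}_T\pi_i \cdot a'\pi_i = a_q\pi_i \cdot u$ and $b_t'\pi_i = a'\pi_i \cdot v$, both sides of the output identity collapse to $u \cdot v$, so the identity holds. This algebraic bookkeeping is where I expect the most careful attention to be needed, though it is elementary once the decompositions above are on the table.

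For the induced-map formula with $p = a_q$, I will compute $(w)f_{CR(T), q}\pi_i$ as the union over $j$ of $(q, w{\restriction_j})\boldsymbol{\lambda}_{CR(T)}\pi_i = (b_j\pi_i)\lambda_{a_q\pi_i}^{-1}$, where $b_j$ is the longest common prefix of $(w{\restriction_j}\mathfrak{C}_n^d)f_{T,q}\pi_i$. Continuity of $f_{T,q}$ (which is $f_{M_T,q}$-like) together with the fact that the cones $w{\restriction_j}\mathfrak{C}_n^d$ shrink to $\{w\}$ ensures $\bigcup_j b_j\pi_i = (w)f_{T,q}\pi_i$, so after stripping the common prefix $a_q\pi_i$ on each side, we conclude $f_{CR(T), q} = f_{T,q}\lambda_{a_q}^{-1}$.

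The remaining claims are then immediate consequences. Non-degeneracy: since $(x)f_{T,q} \in \mathfrak{C}_n^d$ and $a_q$ is a finite prefix componentwise, $(x)f_{T,q}\lambda_{a_q}^{-1}$ is still an element of $\mathfrak{C}_n^d$. Complete response: if $(\mathfrak{C}_n^d)f_{CR(T), q} \subseteq w\mathfrak{C}_n^d$ then $(\mathfrak{C}_n^d)f_{T, q} \subseteq a_q w \,\mathfrak{C}_n^d$, so componentwise $a_q\pi_i \cdot w\pi_i$ is a common prefix of $(\mathfrak{C}_n^d)f_{T,q}\pi_i$; maximality of $a_q\pi_i$ forces $w\pi_i = \varepsilon$ for every $i$, i.e.\ $w = \varepsilon_d$.
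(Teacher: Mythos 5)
Your proposal is correct and follows essentially the same approach as the paper: the well-definedness argument via clopen images, the verification of the output-function identity through the pointwise decomposition $(sx)f_{T,q} = (q,s)\boldsymbol{\lambda}_T\,(x)f_{T,(q,s)\boldsymbol{\pi}_T}$ and matching longest common prefixes, and the induced-map formula. The one organizational difference is that you establish $f_{CR(T),q} = f_{T,q}\lambda_{p}^{-1}$ first and then deduce both complete response and non-degeneracy as immediate corollaries, whereas the paper proves complete response directly from the definition of $\boldsymbol{\lambda}_{CR(T)}$ before turning to the formula; both orderings are sound and rest on the same computations.
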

\begin{proof}
We first show that \(CR(T)\) is a \((d, n)\)-transducer. We need to show that the image of \(\boldsymbol{\lambda}_{CR(T)}\) is contained in \(R_T\). For each \(p\in Q_{T}\), the map \(f_{T, q}\) maps open sets to open sets, so if \(i\in \{0,1 , \ldots, d-1\}\) then \((s\mathfrak{C}_n^d)f_{T, q}\pi_i\) is an open subset of \(\mathfrak{C}_n\). Thus \(|(s\mathfrak{C}_n^d)f_{T, q}\pi_i| \neq 1\) and so the longest common prefix of the words in \((s\mathfrak{C}_n^d)f_{T, q}\pi_i\) is finite.

Let \(p\in Q_{CR(T)}\) and \(s, t\in D_{CR(T)}\) be arbitrary.
As \(T\) is a transducer, the map \(\boldsymbol{\pi}_{T}=\boldsymbol{\pi}_{CR(T)}\) is an action.
We need to show that
\[(p, st)\boldsymbol{\lambda}_{CR(T)} = (p, s)\boldsymbol{\lambda}_{CR(T)}((p, s)\boldsymbol{\pi}_{CR(T)}, t)\boldsymbol{\lambda}_{CR(T)}.\]
We have by assumption that \((p, st)\boldsymbol{\lambda}_{T} = (p, s)\boldsymbol{\lambda}_{T}((p, s)\boldsymbol{\pi}_{T}, t)\boldsymbol{\lambda}_{T}\), and thus if \(x\in \mathfrak{C}_n^d\) then
\[(s x)f_{T, p} = (p, s)\boldsymbol{\lambda}_{CR(T)}(x)f_{T, (p, s)\boldsymbol{\pi}_{T}}\]
so if \(i\in \{0, 1, \ldots, d-1\}\), then \(a\) is a prefix of all the words in the set \((\mathfrak{C}_n^d)f_{T, p}\pi_i\) if and only if \((p, s)\boldsymbol{\lambda}_{CR(T)}\pi_i a\) is a common prefix of all the words in the set \((s\mathfrak{C}_n^d)f_{T, p}\pi_i\). The required condition on \(\boldsymbol{\lambda}_{CR(T)}\) follows.

We next show that \(CR(T)\) has complete response. Let \(p\in Q_{CR(T)}\) and suppose that \(w\in (X_n^*)^d\) is such that \((\mathfrak{C}_n^d)f_{CR(T), q}\subseteq w\mathfrak{C}_n^d\). Let \(i\in \{0,1 , \ldots, d-1\}\) be arbitrary and let \(a\) be the longest common prefix of all the words in the set \(((\mathfrak{C}_n^d)f_{T, p})\pi_i\).
By the definition of \(\boldsymbol{\lambda}_{CS(T)}\), \(a((w)\pi_i)\) is also a common prefix of all the words in the set \(((\mathfrak{C}_n^d)f_{T, p})\pi_i\).
Hence \(a\leq a((w)\pi_i) \leq a\) and \(((w)\pi_i) = \varepsilon\).
As \(i\) was arbitrary it follows that \(w = \varepsilon_d\) as required.

Now let \(q\in Q_{T}\) be arbitrary. By assumption, the set \((\mathfrak{C}_n^d)f_{T, q}\) is open, so for each \(i\in \{0, 1, \ldots, d-1\}\), the set \((\mathfrak{C}_n^d)f_{T, q}\pi_i\) is open.
Thus the element \(p\in (X_n^*)^d\) with each \((p)\pi_i\) the longest common prefix of the words in the set \((\mathfrak{C}_n^d)f_{T, q}\pi_i\) is the maximum element of \((X_n^*)^d\) smaller than all the elements of \((\mathfrak{C}_n^d)f_{T, q}\).

For all \(w\in \mathfrak{C}_n^d\) and \(w'\in (X_n^*)^d\) with \(w'\leq w\), we have that \(p(q, w')\boldsymbol{\lambda}_{CR(T)}\) is the largest element of \((X_n^*)^d\) which is smaller than all the elements of \((w'\mathfrak{C}_n^d)f_{T, q}\).
In particular \(p(q, w')\boldsymbol{\lambda}_{CR(T)} \leq (w)f_{T, q}\).
As \(h\) is continuous, it follows that if \(m\in \N\), then there is \(k\in \N\) such that for all \(w'\in (X_n^k)^d\), there is \(v\in (X_n^m)^d\) with \((w'\mathfrak{C}_n^d)f_{T, q}\subseteq v\mathfrak{C}_n^d\).
Thus \(p(w)f_{CR(T), q}\in \mathfrak{C}_n^d\).
As \(p(w)f_{CR(T), q}\) is also comparable with \((w)f_{T, q}\), it follows that \((w)f_{T, q}= p(w)f_{CR(T), q}\).
As \(w\in \mathfrak{C}_n^d\) was arbitrary, the result follows.
\end{proof}

\speeddictfour{134}{Cores From Cores lemma}{transducers composition defn}{transducers are continuous lemma}{synchronizing defn}{Core Transducers remark}
\begin{lemma}[Cores from cores, \Assumed{134}]\label{Cores From Cores lemma}
If \(A, B\) are synchronizing, non-degenerate \((d, n)\)-transducers then so is their composite \(AB\). Moreover \(\C(AB)\) is a subtransducer of the composite transducer \(\C(A)\C(B).\)
\end{lemma}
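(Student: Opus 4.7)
The plan is to first verify the synchronizing condition for $AB$ (non-degeneracy is already a conclusion of Lemma~\ref{composition works as expected lemma}, so nothing extra is needed for that), and then read off the core inclusion from the same analysis by tracking which pairs of states can occur.

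First I would fix synchronizing lengths $k_A$ for $A$ and $k_B$ for $B$. The key observation is that, by Remark~\ref{Core Transducers remark}, the core $\C(A)$ is finite (with at most $n^{k_A d}$ states). Applying Lemma~\ref{transducers are continuous lemma} to $A$ at each of these finitely many states with target output length $k_B$, and taking the maximum of the resulting bounds, I obtain a single $k_2 \in \N$ such that for every $r \in \C(A)$ and every $w_2 \in (X_n^*)^d$ with $\min_i |(w_2)\pi_i| \geq k_2$, each coordinate of $(r, w_2)\boldsymbol{\lambda}_A$ has length at least $k_B$.

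Now I would claim $AB$ is synchronizing with length at most $k_A + k_2$. Let $w \in (X_n^*)^d$ satisfy $\min_i |(w)\pi_i| \geq k_A + k_2$, and split $w = w_1 w_2$ with $w_1 \in (X_n^{k_A})^d$ and $\min_i|(w_2)\pi_i| \geq k_2$. For any $(p, q) \in Q_{AB}$, by Definition~\ref{transducers composition defn} and the transducer axiom for $A$,
\[
((p, q), w)\boldsymbol{\pi}_{AB} = \bigl((p, w)\boldsymbol{\pi}_A,\; (q', (r, w_2)\boldsymbol{\lambda}_A)\boldsymbol{\pi}_B\bigr),
\]
where $r = (p, w_1)\boldsymbol{\pi}_A$ and $q' = (q, (p, w_1)\boldsymbol{\lambda}_A)\boldsymbol{\pi}_B$. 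Synchronicity of $A$ at length $k_A$ gives $(p, w)\boldsymbol{\pi}_A = \mathfrak{s}_A(w)$ and $r = \mathfrak{s}_A(w_1) \in \C(A)$, both independent of $p$. By the choice of $k_2$, the word $(r, w_2)\boldsymbol{\lambda}_A$ has every coordinate of length at least $k_B$, so synchronicity of $B$ gives $(q', (r, w_2)\boldsymbol{\lambda}_A)\boldsymbol{\pi}_B = \mathfrak{s}_B((r, w_2)\boldsymbol{\lambda}_A)$ independently of $q'$, hence independently of $(p, q)$. Thus $((p, q), w)\boldsymbol{\pi}_{AB}$ depends only on $w$, which is exactly the synchronizing condition.

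For the core inclusion, I would first note that $\C(A)\C(B)$ is itself a subtransducer of $AB$: its state set $Q_{\C(A)} \times Q_{\C(B)}$ sits inside $Q_A \times Q_B = Q_{AB}$, and because $\C(A), \C(B)$ inherit their transition and output functions from $A, B$ as restrictions, the composite transducer formulas of Definition~\ref{transducers composition defn} agree on this subset. The displayed computation above shows that every state reached by $\mathfrak{s}_{AB}$ has first coordinate $\mathfrak{s}_A(w) \in \C(A)$ and second coordinate $\mathfrak{s}_B((r, w_2)\boldsymbol{\lambda}_A) \in \C(B)$, so $Q_{\C(AB)} \subseteq Q_{\C(A)} \times Q_{\C(B)}$, and the required inclusion of subtransducers follows because $\boldsymbol{\pi}_{\C(AB)}$ and $\boldsymbol{\lambda}_{\C(AB)}$ are likewise inherited restrictions from $AB$. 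The main delicate point is the uniformity step that collapses the state-dependent bounds from Lemma~\ref{transducers are continuous lemma} into a single $k_2$; this is precisely where finiteness of $\C(A)$ (and hence synchronicity of $A$, not merely non-degeneracy) is essential.
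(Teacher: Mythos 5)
Your proposal is correct and follows essentially the same approach as the paper's proof: split $w$ into a prefix of length $k_A$ and a tail, use synchronicity of $A$ to land in a state of $\C(A)$, invoke finiteness of $\C(A)$ (from Remark~\ref{Core Transducers remark}) together with Lemma~\ref{transducers are continuous lemma} to get a uniform length bound on the output of $A$, then apply synchronicity of $B$. The only superficial differences are the names of the split variables and your slightly more explicit remark that $\C(A)\C(B)$ is a subtransducer of $AB$ before reading off the core inclusion.
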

\begin{proof}
Let \(k_A, k_B\) be the synchronizing lengths of \(A\) and \(B\) respectively.
By Remark~\ref{Core Transducers remark}, we have \(\C(A)\) is finite.
Moreover by Lemma~\ref{transducers are continuous lemma}, for each \(q\in \C(A),\) there is \(k_q\in \N\) such that if \(w\in (X_n^*)^d\), \(\min\left(\makeset{(w)\pi_i}{\(i\in \{0, 1, \ldots, d-1\}\)}\right) \geq k_q\) and \(i\in \{0, 1, \ldots, d-1\}\) then \(|(q, w)\boldsymbol{\lambda}_A| \geq k_{B}\).

Let \((p_0, p_1)\in Q_{AB}\) and \(w\in D_{AB} = (X_n^*)^d\) be arbitrary such that
\[\min\left(\makeset{(w)\pi_i}{\(i\in \{0, 1, \ldots, d-1\}\)}\right) \geq k_A + \max\left(\makeset{k_q}{\(q\in \C(A)\)}\right).\]
It suffices to show that \(((p_0, p_1), w)\boldsymbol{\pi}_{AB}\) is an element of \(\C(A)\C(B)\) which does not depend of \((p_0, p_1)\). Let \(w = w_0w_1\) where \(w_0\in (X_n^{k_A})^d\). Then
\begin{align*}
    ((p_0, p_1), w)\boldsymbol{\pi}_{AB} &= ((p_0, w)\boldsymbol{\pi}_{A}, (p_1, (p_0, w)\boldsymbol{\lambda}_A)\boldsymbol{\pi}_{B})\\
&= ((w)\mathfrak{s}_A, (p_1, (p_0, w)\boldsymbol{\lambda}_A)\boldsymbol{\pi}_{B})\\
&= ((w)\mathfrak{s}_A, (p_1, (p_0, w_0)\boldsymbol{\lambda}_A((p_0, w_0)\boldsymbol{\pi}_A, w_1)\boldsymbol{\lambda}_A)\boldsymbol{\pi}_{B})\\
&= ((w)\mathfrak{s}_A, ((p_1, (p_0, w_0)\boldsymbol{\lambda}_A)\boldsymbol{\pi}_B, ((p_0, w_0)\boldsymbol{\pi}_A, w_1)\boldsymbol{\lambda}_A)\boldsymbol{\pi}_{B})\\
&= ((w)\mathfrak{s}_A, ((p_1, (p_0, w_0)\boldsymbol{\lambda}_A)\boldsymbol{\pi}_B, ((w_0)\mathfrak{s}_A, w_1)\boldsymbol{\lambda}_A)\boldsymbol{\pi}_{B})\\
&= ((w)\mathfrak{s}_A, (((w_0)\mathfrak{s}_A, w_1)\boldsymbol{\lambda}_A)\mathfrak{s}_{B}).
\end{align*}
Note that \(p_0\) and \(p_1\) are not mentioned in the final expression above. Moreover this is a state of the composite of \(\C(A)\) and \(\C(B)\).
\end{proof}

\speeddictsix{135}{sync_monoid}{semigroup defn}{inj_clo}{unique minimal thm}{synchronizing homeomorphisms}{removing incomplete responce works}{Cores From Cores lemma}
\begin{corollary}[Synchronizing monoid, \Assumed{135}]\label{sync_monoid}
If \(d\in \N\backslash \{0\}\) and \(n\in \N\backslash\{0, 1\}\), then \(d\mathcal{S}_{n, 1}\) is a monoid (under composition of functions).
\end{corollary}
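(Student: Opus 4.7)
The plan is to show that $d\mathcal{S}_{n,1}$ contains the identity of $\Aut(\mathfrak{C}_n^d)$ and is closed under composition; associativity is inherited from composition of functions.

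For the identity $I\in \Aut(\mathfrak{C}_n^d)$, the transducer $M_I$ has a single state (Definition~\ref{identity states defn}), so the synchronizing condition (Definition~\ref{synchronizing defn}) holds trivially at level $0$. This gives $I\in d\mathcal{S}_{n,1}$.

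The substantive step is closure under composition. Given $g,h\in d\mathcal{S}_{n,1}$, the plan is to construct an explicit transducer representative of $gh$ that is synchronizing and satisfies the four hypotheses of Theorem~\ref{unique minimal thm}, and then invoke that theorem to identify it (up to strong isomorphism) with $M_{gh}$. First, form the composite transducer $T:=M_gM_h$ as in Definition~\ref{transducers composition defn}. By Lemma~\ref{composition works as expected lemma}, $f_{T,(e_g,e_h)}=f_{M_g,e_g}f_{M_h,e_h}=gh$ and $T$ is non-degenerate; by Lemma~\ref{Cores From Cores lemma}, $T$ is synchronizing. By Lemma~\ref{inj_clo} each $f_{M_g,p}$ and $f_{M_h,q}$ is a homeomorphism onto a clopen subset of $\mathfrak{C}_n^d$, so the same holds for every $f_{T,(p,q)}=f_{M_g,p}f_{M_h,q}$; this is precisely the hypothesis needed for $CR(T)$ to be defined (Definition~\ref{Removing Incomplete Response}).

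Next, set $T_0:=CR(T)$. By Lemma~\ref{removing incomplete responce works}, $T_0$ is a non-degenerate $(d,n)$-transducer with complete response, and since $gh\in\Aut(\mathfrak{C}_n^d)$ the longest common prefix of the image is $\varepsilon_d$, giving $f_{T_0,(e_g,e_h)}=gh$. Crucially, $\boldsymbol{\pi}_{T_0}=\boldsymbol{\pi}_T$, so the synchronizing property and its level are inherited from $T$. Restrict to the subtransducer $T_1$ whose state set is the orbit $((e_g,e_h),D_{T_0})\boldsymbol{\pi}_{T_0}$: it is still synchronizing, still has complete response, still represents $gh$ from $(e_g,e_h)$, and by construction satisfies condition $(3)$ of Theorem~\ref{unique minimal thm}. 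Finally, pass to the minimal transducer $M_{T_1}$ via the strong quotient map $q_{T_1}:T_1\to M_{T_1}$ of Definition~\ref{minimal transducer defn} and Lemma~\ref{minimal transducers are valid lemma}. Remark~\ref{induced maps preserved remark} gives $f_{M_{T_1},(e_g,e_h){q_{T_1}}_Q}=gh$; conditions $(2),(3)$ of Theorem~\ref{unique minimal thm} are immediate from the quotient construction; complete response descends because the induced maps are preserved; and synchronization descends because if $(p,w)\boldsymbol{\pi}_{T_1}$ is independent of $p$ for all $w$ above the synchronizing length, then so is $([p]_{\sim_{M_{T_1}}},w)\boldsymbol{\pi}_{M_{T_1}}=[(p,w)\boldsymbol{\pi}_{T_1}]_{\sim_{M_{T_1}}}$. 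Therefore $(M_{T_1},(e_g,e_h){q_{T_1}}_Q)$ satisfies all four hypotheses of Theorem~\ref{unique minimal thm}, and the theorem produces a strong transducer isomorphism $M_{T_1}\cong M_{gh}$. Since synchronization is preserved by strong isomorphism, $M_{gh}$ is synchronizing, i.e.\ $gh\in d\mathcal{S}_{n,1}$.

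The main obstacle is simply bookkeeping: verifying that each of the three successive modifications of $T$ (complete response, restriction to the reachable part, minimal quotient) preserves both the synchronizing property and the hypotheses of Theorem~\ref{unique minimal thm}. None of these steps alters the transition function in a way that could break synchronization, but one has to check each of the four conditions of Theorem~\ref{unique minimal thm} against the correct chosen basepoint state, which is where most of the care is needed.
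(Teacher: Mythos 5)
Your proof is correct and follows essentially the same route as the paper's: form $M_g M_h$, observe it is synchronizing (Lemma~\ref{Cores From Cores lemma}) and that every state defines a homeomorphism onto a clopen set (Lemmas~\ref{composition works as expected lemma} and \ref{inj_clo}), apply $CR$ and restrict to the reachable part, pass to the minimal quotient, and then invoke Theorem~\ref{unique minimal thm} to identify the result with $M_{gh}$. The only difference is cosmetic: you apply $CR$ before restricting to the reachable subtransducer, whereas the paper restricts first and then applies $CR$; since $CR$ leaves $\boldsymbol{\pi}$ unchanged and $f_{T,q}$ depends only on states reachable from $q$, the two operations commute and produce the same transducer. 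Your write-up is somewhat more detailed than the paper's, explicitly justifying why $f_{CR(T),(e_g,e_h)}=gh$ (via $p=\varepsilon_d$) and verifying each of the four hypotheses of Theorem~\ref{unique minimal thm} step by step, which is a genuine gap-filling the paper leaves implicit.
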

\begin{proof}
First note that from Definition~\ref{identity states defn}, we know that \(\text{id}_{\mathfrak{C}_n^d}\in d\mathcal{S}_{n, 1}\).
Let \(f, g\in d\mathcal{S}_{n, 1}\) be arbitrary. It suffices to show that \(f g\in d\mathcal{S}_{n, 1}\).
By Lemmas~\ref{composition works as expected lemma} and \ref{inj_clo}, we have that \(f_{M_f M_g, (e_f, e_g)} = f g\).
Moreover by Lemma~\ref{Cores From Cores lemma}, the transducer \(M_f M_g\) is synchonizing. Let \(M'\) be the subtransducer of \(M_f M_g\) with state set \(((e_f, e_g), (X_n^*)^d)\boldsymbol{\pi}_{M_f M_g}\).

By Lemmas~\ref{composition works as expected lemma} and \ref{inj_clo}, for all \(q\in Q_{M'}\) the function \(f_{M', q}\) is a homeomorphism from \(\mathfrak{C}_n^d\) to an open subset of  \(\mathfrak{C}_n^d\).
Thus by Lemma~\ref{removing incomplete responce works}, \(CR(M')\) is a well-defined non-degenerate \((d, n)\)-transducer with complete response such that \(f_{CR(M'), (e_f, e_g)} = f_{M',(e_f, e_g)} =f g\).
Moreover as \(M_f M_g\) was synchonizing, so is \(M'\) and hence so is \(CR(M')\).

We know that the transducer \(M_{CR(M')}\) is synchonizing, and satisfies all of the conditions required by Theorem~\ref{unique minimal thm}.
Thus \(M_{f g}\) is strongly isomorphic to the synchonizing transducer \(M_{CR(M')}\), so \(M_{f g}\) is also synchonizing and \(f g\in d\mathcal{S}_{n, 1}\) as required.
\end{proof}

\speeddictfive{137}{core monoid defn}{types of binary relation defns}{semigroup defn}{transducers homomorphisms defn}{sync_monoid}{core monoids work lemma}
\begin{defn}[Core monoid, \Assumed{137}]\label{core monoid defn}
For \(d\in \N\backslash \{0\}\) and \(n\in \N\backslash \{0, 1\}\), we define \(\widetilde{d\mathcal{O}_{n,1}}:=\makeset{\C(M_f)}{\(f\in d\mathcal{S}_{n,1}\)}/\cong_S\) (recall Definitions~\ref{types of binary relation defns} and \ref{transducers homomorphisms defn}) and we define a binary operation on \(\widetilde{d\mathcal{O}_{n,1}}\) by \[[\C(M_f)]_{\cong_S}[\C(M_g)]_{\cong_S}=[\C(M_{f g})]_{\cong_S}.\]
We show in Lemma~\ref{core monoids work lemma} that the above operation is well defined and makes \(\widetilde{d\mathcal{O}_{n,1}}\) into a monoid.
\end{defn}

\speeddictthree{138}{core monoids work lemma}{Cores From Cores lemma}{sync_monoid}{core monoid defn}
\begin{lemma}[Core monoids work, \Assumed{138}]\label{core monoids work lemma}
The object \(\widetilde{d\mathcal{O}_{n,1}}\) defined in Definition~\ref{core monoid defn} is a well-defined monoid, the map \(\gamma_{d, n}: d\mathcal{S}_{n, 1} \to \widetilde{d\mathcal{O}_{n,1}}\) defined by \((h)\gamma_{d, n} = [\C(M_h)]_{\cong_S}\) is a quotient map, and for all \(f, g\in d\mathcal{S}_{n, 1}\) we have
\[\C(M_{f g})\cong_S M_{CR(\C(\C(M_f)\C(M_g)))}.\]
\end{lemma}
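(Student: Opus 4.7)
The plan is to first establish the displayed formula $\C(M_{fg}) \cong_S M_{CR(\C(M_f)\C(M_g))}$ of the lemma, and then to observe that this formula forces the binary operation on $\widetilde{d\mathcal{O}_{n,1}}$ to be well-defined and monoidal; the claim that $\gamma_{d,n}$ is a quotient map will then follow immediately.

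For the formula, write $A = \C(M_f)$, $B = \C(M_g)$, $P = AB$, and let $M' \subseteq M_f M_g$ be the subtransducer generated by $(e_f, e_g)$ under the transition action, as constructed in the proof of Corollary~\ref{sync_monoid}. That proof already yields $M_{fg} \cong_S M_{CR(M')}$, so $\C(M_{fg}) \cong_S \C(M_{CR(M')})$. The strategy is to establish the chain $\C(M_{CR(M')}) = q_{CR(M')}(\C(CR(M'))) = q_{CR(M')}(CR(\C(P))) \cong_S M_{CR(\C(P))}$ via three observations. First, a direct unfolding of the definitions shows that if $S$ is a subtransducer of a synchronizing transducer $T$ with $\C(T) \subseteq S$, then $\C(S) = \C(T)$; combining this with Lemma~\ref{Cores From Cores lemma} (which gives $\C(M_f M_g) \subseteq P$) and the observation that every core state of $M_f M_g$ is reachable from $(e_f, e_g)$ and hence lies in $Q_{M'}$, we obtain $\C(M') = \C(M_f M_g) = \C(P)$. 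Second, since $CR$ preserves states and transitions and since $(q,s)\boldsymbol{\lambda}_{CR(T)}$ depends on $T$ only through the induced map $f_{T,q}$, and since the state set of each of $M'$, $P$, and $\C(P)$ is closed under transitions in $M_f M_g$, we get $f_{M',q} = f_{P,q} = f_{\C(P),q}$ for every $q \in \C(P)$; consequently $\C(CR(M')) = CR(\C(P))$ as transducers. Third, for a strong quotient $q_0:T\to T'$ of synchronizing transducers one has $\C(T') = q_0(\C(T))$ (a routine verification using that $q_0$ preserves transitions and the synchronizing property), so $\C(M_{CR(M')}) = q_{CR(M')}(CR(\C(P)))$; and since the equivalence relation $\sim_{M_{CR(M')}}$ restricted to $\C(P)$ depends only on the outputs of $CR(M')$ at states of $\C(P)$, it coincides with $\sim_{M_{CR(\C(P))}}$, giving the last strong isomorphism.

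With the formula in hand, the binary operation on $\widetilde{d\mathcal{O}_{n,1}}$ is independent of the choice of representatives: if $\C(M_f) \cong_S \C(M_{f'})$ and $\C(M_g) \cong_S \C(M_{g'})$, the transducer $M_{CR(\C(M_f)\C(M_g))}$ is strongly isomorphic to $M_{CR(\C(M_{f'})\C(M_{g'}))}$ because the operations $\cdot$ (Definition~\ref{transducers composition defn}), $\C$, $CR$, and $M$ all respect strong isomorphism directly from their definitions. Associativity follows from associativity of function composition in $d\mathcal{S}_{n,1}$ together with the formula applied to both groupings, and the class of any identity transducer acts as a two-sided identity, since composition with $\operatorname{id}_{\mathfrak{C}_n^d}$ changes nothing. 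That $\gamma_{d,n}$ is surjective is built into the definition of $\widetilde{d\mathcal{O}_{n,1}}$, and it is a homomorphism by construction of the product. The main obstacle I anticipate will be the second observation above: verifying that $\C(CR(M')) = CR(\C(P))$ requires carefully tracking which transducer is being used at each step and exploiting that each of the relevant state sets is closed under transitions within $M_f M_g$.
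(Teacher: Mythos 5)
Your proof is correct and follows essentially the same chain of isomorphisms as the paper's (reduce to $M_{CR(M')}$ via Corollary~\ref{sync_monoid}, show $\C(M')=\C(M_f M_g)=\C(\C(M_f)\C(M_g))$, commute $\C$ past $CR$ and $M$), though you spell out the step the paper only asserts, namely that $\C(M_{CR(M')})\cong_S M_{\C(CR(M'))}$ — your route through $q_{CR(M')}(\C(CR(M')))$ and the observation that $\sim_{M_{CR(M')}}$ restricted to $\C(P)$ agrees with $\sim_{M_{CR(\C(P))}}$ is exactly what justifies that commutation. One transcription slip worth fixing: in your opening sentence and again in the well-definedness paragraph you write $M_{CR(\C(M_f)\C(M_g))}$, dropping the outer $\C$ from the lemma's $M_{CR(\C(\C(M_f)\C(M_g)))}$; your own chain of isomorphisms correctly terminates at $M_{CR(\C(P))}$ with $P=\C(M_f)\C(M_g)$, which is the stated right-hand side, so the error is only in how you quoted the formula, not in the derivation. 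Note also that the paper dispatches associativity and identity more economically by observing that $\widetilde{d\mathcal{O}_{n,1}}$ is a quotient of the monoid $d\mathcal{S}_{n,1}$ once well-definedness is established, which you could have invoked instead of arguing them separately.
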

\begin{proof}
If \(\widetilde{d\mathcal{O}_{n,1}}\) is well defined, then the map \(\gamma_{d, n}\) is a surjective homomorphism by definition.
By Lemma~\ref{sync_monoid}, the set \(d\mathcal{S}_{n, 1}\) is a monoid.
Thus \(\widetilde{d\mathcal{O}_{n,1}}\) is a monoid if and only if the product operation is well-defined.

In the proof of Lemma~\ref{sync_monoid}, it is shown that if \(f, g\in d\mathcal{S}_{n, 1}\), then the \(M_{f g}\) is strongly isomorphic to the transducer \(M_{CR(M')}\) where \(M'\) is the subtransducer \(((e_f, e_g), (X_n^*)^d)\boldsymbol{\pi}_{M_f M_g}\) of \(M_f M_g\).
It follows that 
\[\C(M_{f g})\cong_S \C(M_{CR(M')}) \cong_S M_{\C(CR(M'))}= M_{CR(\C(M'))}.\]
As the core of a transducer \(T\) is contained in every subtransducer of \(T\) with the same domain and range, it follows that \(\C(M_{f g})\cong_S M_{CR(\C(M'))}= M_{CR(\C(M_f M_g))}\). Thus by Lemma~\ref{Cores From Cores lemma}, we have 
\[\C(M_{f g})\cong_S M_{CR(\C(\C(M_f)\C(M_g)))}.\]
Thus the strong isomorphism type of \(\C(M_{f g})\) is uniquely determined by the isomorphism types of \(\C(M_f)\) and \(\C(M_g)\), and the operation is well-defined as required.
\end{proof}

\speeddictthree{139}{dBn and dOn defn}{group defn}{core monoid defn}{core monoids work lemma}
\begin{defn}[\(d\mathcal{B}_{n, 1}\) and \(d\mathcal{O}_n\), \Assumed{139}]\label{dBn and dOn defn}
For \(d\in \N\backslash \{0\}\) and \(n\in \N\backslash \{0, 1\}\), we define \(d\mathcal{B}_{n, 1}\) to be the group of units of \(d\mathcal{S}_{n, 1}\), and \(d\mathcal{O}_{n,1}:=(d\mathcal{B}_{n, 1})\gamma_{d, n}\).
\end{defn}

The groups \(d\mathcal{B}_{n, 1}\) and \(d\mathcal{O}_{n, 1}\) are the main purpose of this section, as we will show in Theorem~\ref{autnV} these will be respectively the automorphism and outer-automorphism groups of \(dV_n\).

\speeddictfour{140}{nV_is_normal}{centralizer defn}{nV_placement}{core monoids work lemma}{dBn and dOn defn}
\begin{lemma}[\(dV_n\) is normal, \Assumed{140}]\label{nV_is_normal}
If \(d\in \N\backslash \{0\}\) and \(n\in \N\backslash\{0, 1\}\), then \(d\mathcal{V}_{n} \normalsubgroup d\mathcal{B}_{n, 1}\) and \(d\mathcal{B}_{n, 1}/d\mathcal{V}_{n} \cong d\mathcal{O}_{n,1}\).
\end{lemma}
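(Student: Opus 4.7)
The plan is to realise $dV_n$ as the kernel of the monoid homomorphism $\gamma_{d,n}$ from Lemma~\ref{core monoids work lemma}, restricted to the group of units $d\mathcal{B}_{n,1}$, and then apply the first isomorphism theorem for groups. The image of this restriction is $d\mathcal{O}_{n,1}$ by the very definition in Definition~\ref{dBn and dOn defn}, so the whole task reduces to identifying the kernel.

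First I would check that $dV_n$ is genuinely a subgroup of $d\mathcal{B}_{n,1}$. By Proposition~\ref{nV_placement} every $h\in dV_n$ has $M_h$ synchronizing (with identity core), so $h\in d\mathcal{S}_{n,1}$; the same holds for $h^{-1}\in dV_n$, so $h$ is a unit of $d\mathcal{S}_{n,1}$, i.e.\ $h\in d\mathcal{B}_{n,1}$.

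The main step is to show that for any $h\in d\mathcal{S}_{n,1}$ one has $(h)\gamma_{d,n}=[\C(M_{\mathrm{id}_{\mathfrak{C}_n^d}})]_{\cong_S}$ if and only if $\C(M_h)$ is an identity transducer (in the sense of Definition~\ref{identity states defn}). The reverse implication is immediate since $M_{\mathrm{id}_{\mathfrak{C}_n^d}}$ is itself, by Theorem~\ref{unique minimal thm} together with Definition~\ref{identity states defn}, the identity transducer, and its core is itself. For the forward implication, any strong isomorphism between $\C(M_h)$ and an identity transducer forces $\C(M_h)$ to have a single state with trivial transition and output behaviour, which is the definition of an identity transducer. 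Combining this with Proposition~\ref{nV_placement}, for $h\in d\mathcal{B}_{n,1}$ the condition $(h)\gamma_{d,n}=[\C(M_{\mathrm{id}})]_{\cong_S}$ is equivalent to $h\in dV_n$. Hence $\ker(\gamma_{d,n}\restriction_{d\mathcal{B}_{n,1}})=dV_n$, so in particular $dV_n\trianglelefteq d\mathcal{B}_{n,1}$.

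Finally, since $\gamma_{d,n}$ is a monoid homomorphism (Lemma~\ref{core monoids work lemma}), its restriction to the group $d\mathcal{B}_{n,1}$ is a group homomorphism onto its image $d\mathcal{O}_{n,1}$, and the first isomorphism theorem gives $d\mathcal{B}_{n,1}/dV_n\cong d\mathcal{O}_{n,1}$. I do not foresee a serious obstacle here: the substantive work has already been done in Proposition~\ref{nV_placement} and in the construction of $\gamma_{d,n}$; the only point requiring slight care is spelling out that a strong isomorphism of a core transducer with the one-state identity transducer really does pin down the structure on the nose, so that the characterisation of the kernel matches the hypothesis of Proposition~\ref{nV_placement} exactly.
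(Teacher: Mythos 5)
Your proposal is correct and follows essentially the same route as the paper: identify $dV_n$ as the kernel of $\gamma_{d,n}$ restricted to $d\mathcal{B}_{n,1}$ (via Proposition~\ref{nV_placement} and the observation that an element maps to the identity of $d\mathcal{O}_{n,1}$ exactly when its core is an identity transducer), and then apply the first isomorphism theorem. The paper phrases the last step as checking directly that $h\,dV_n \mapsto (h)\gamma_{d,n}$ is a well-defined isomorphism, which is the same argument.
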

\begin{proof}
As \(\text{id}_{\mathfrak{C}_n^d}\) is the identity of \(d\mathcal{B}_{n, 1}\), \((\text{id}_{\mathfrak{C}_n^d})\gamma_{d, n}\) is the identity of \(d\mathcal{O}_{n, 1}\).
Thus if \(f\in d\mathcal{B}_{n, 1}\), then \((f)\gamma_{d, n}\) is the identity if and only if the core of \(f\) is an identity transducer.
Thus by Proposition~\ref{nV_placement}, \((f)\gamma_{d, n}\) is the identity if and only if \(f\in dV_{n}\).
It follows that \(dV_n\) is a normal subgroup of \(d\mathcal{B}_{n,1}\).
Moreover the map \(h dV_n \to (h)\gamma_{d, n}\) is a well-defined isomorphism between \(d\mathcal{B}_{n, 1}/d\mathcal{V}_{n}\) and \(d\mathcal{O}_{n,1}\).
\end{proof}

\speeddictsix{141}{main_lemma}{centralizer defn}{transducers are continuous lemma}{composition works as expected lemma}{inj_clo}{unique minimal thm}{nV_placement}
\begin{lemma}[Local synchronization for the normalizer, \Assumed{141}]\label{main_lemma}
Let \(d\in \N\backslash \{0\}\), \(n\in \N\backslash \{0, 1\}\), \(h\in N_{\Aut(\mathfrak{C}_n^d)}(dV_n)\) and \(s,t\in (X_n^*)^d\backslash \{\varepsilon_d\}\).
There exists \(K_{h, s, t}\in \mathbb{N}\) such that for all \(a\in (X_n^*)^d\) with \(\min\left(\makeset{|(a)\pi_i|}{\(i\in \{0,1, \ldots, d-1\}\)}\right)\geq K_{h,s,t}\), we have \((e_h,s a)\boldsymbol{\pi}_{M_h} = (e_h,ta)\boldsymbol{\pi}_{M_h}\).
\end{lemma}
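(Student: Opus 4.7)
The plan is to first reduce to the case where $s$ and $t$ are incomparable in the componentwise prefix order on $(X_n^*)^d$, and then exploit a prefix exchange in $dV_n$ that swaps the cones $s\mathfrak{C}_n^d$ and $t\mathfrak{C}_n^d$ together with the fact that $h$ normalises $dV_n$. For the reduction: given a comparable pair (say $s \leq t$, both nonempty), one can always produce $u \in (X_n^*)^d \setminus \{\varepsilon_d\}$ incomparable with both $s$ and $t$ by choosing a coordinate $i$ in which $s$ has a nonempty entry and placing in that coordinate of $u$ a single letter different from the initial letter of $(s)\pi_i$ (which exists because $n \geq 2$); setting $K_{h,s,t} := \max\{K_{h,s,u}, K_{h,u,t}\}$ then reduces to the incomparable case.

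Assuming $s, t$ are incomparable, I would extend $\{s, t\}$ to a complete prefix code $F$ of $\mathfrak{C}_n^d$ and let $g := f_\phi \in dV_n$, where $\phi : F \to F$ is the involution swapping $s$ with $t$ and fixing every other element of $F$, so that $g(sax) = tax$ and $g(tax) = sax$ for every $a \in (X_n^*)^d$ and $x \in \mathfrak{C}_n^d$. Since $h \in N_{\Aut(\mathfrak{C}_n^d)}(dV_n)$, the element $g' := hgh^{-1}$ lies in $dV_n$; by Proposition~\ref{nV_placement} its minimal transducer is synchronising, so I fix $L \in \N$ with the property that whenever $u' \in (X_n^*)^d$ has $\min_i |(u')\pi_i| \geq L$, the map $g'$ restricted to $u' \mathfrak{C}_n^d$ is a pure translation $u'z \mapsto u'' z$ for some $u'' \in (X_n^*)^d$ depending only on $u'$ and the prefix code of $g'$. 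Applying Lemma~\ref{transducers are continuous lemma} to $M_h$ at the states $(e_h, s)\boldsymbol{\pi}_{M_h}$ and $(e_h, t)\boldsymbol{\pi}_{M_h}$, I then choose $K_{h,s,t}$ large enough that whenever $\min_i |(a)\pi_i| \geq K_{h,s,t}$, both $u_s := (e_h, sa)\boldsymbol{\lambda}_{M_h}$ and $u_t := (e_h, ta)\boldsymbol{\lambda}_{M_h}$ have every coordinate of length at least $L$.

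The identity $g(sax) = tax$, pushed through $h$ using $h(wax) = (e_h, wa)\boldsymbol{\lambda}_{M_h} \cdot f_{M_h, (e_h, wa)\boldsymbol{\pi}_{M_h}}(x)$, becomes
\[u_s' \cdot f_{M_h, q_{sa}}(x) = u_t \cdot f_{M_h, q_{ta}}(x) \qquad (x \in \mathfrak{C}_n^d),\]
where $q_{sa}, q_{ta}$ are the two states in question and $u_s'$ is the translate of $u_s$ under $g'$. Theorem~\ref{unique minimal thm} furnishes the complete-response property of $M_h$: for every state $q$ the image $f_{M_h, q}(\mathfrak{C}_n^d)$ has trivial coordinatewise longest common prefix. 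Extracting longest common prefixes on both sides therefore forces $u_s' = u_t$, and cancellation yields $f_{M_h, q_{sa}} = f_{M_h, q_{ta}}$ as functions. Invoking complete response once more recovers the state output behaviours $(q, \cdot)\boldsymbol{\lambda}_{M_h}$ as longest common prefixes of $f$-values, so the defining property of the minimal transducer forces $q_{sa} = q_{ta}$. The main obstacle will be securing the translation step cleanly: one must verify that a single $K_{h,s,t}$ works uniformly in $a$ so that $u_s$ is always deep enough for $g'$ to act as a pure translation on $u_s \mathfrak{C}_n^d$, and that the complete-response machinery unambiguously pins down $u_s'$ and $u_t$ from the displayed equation; once these are secured, the minimal-transducer rigidity closes the argument.
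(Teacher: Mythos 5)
Your main argument is sound modulo one missing case, but the reduction step has a genuine gap. You reduce to ``$s, t$ incomparable'' and then want to extend $\{s, t\}$ to a complete prefix code $F$ so that the involution $\phi : F \to F$ swapping $s$ and $t$ defines $g = f_\phi \in dV_n$. But to be a prefix code, $\{s, t\}$ needs the stronger property that $s\mathfrak{C}_n^d \cap t\mathfrak{C}_n^d = \varnothing$, and in dimension $d \geq 2$ incomparability in the componentwise prefix order does \emph{not} imply disjoint cones. For instance with $d = 2$, $n = 2$, the pair $s = (00, 0)$ and $t = (0, 00)$ is incomparable (coordinate $0$ gives $s > t$, coordinate $1$ gives $s < t$), yet $s\mathfrak{C}_n^d \cap t\mathfrak{C}_n^d \ni (000\ldots, 000\ldots)$. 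Your reduction as stated handles only the comparable pairs; it does not produce an intermediate $u$ for an incomparable pair with overlapping cones, so for such pairs neither the reduction nor the main argument applies. (The gap is patchable---for each coordinate $i$ in which $s$ or $t$ is non-trivial, choose a letter distinct from the first letter of the non-trivial one; since overlapping cones force $(s)\pi_i$ and $(t)\pi_i$ to agree on their common initial segment, a single alternative letter suffices, giving a $u$ whose cone is disjoint from both---but you need to say this.)

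The paper sidesteps the issue entirely by choosing \emph{two} complete prefix codes $F_s \ni s$ and $F_t \ni t$ of equal cardinality and a bijection $\phi : F_s \to F_t$ with $(s)\phi = t$; this requires only that $s$ and $t$ are each non-trivial, so no case split or reduction is needed. Apart from this, your argument---conjugating a prefix exchange by $h$, pushing $g(sax) = tax$ through $h$, using the synchronising identity core of the conjugate $g'$ to get a pure translation at depth, and then complete response plus minimality of $M_h$ to force $q_{sa} = q_{ta}$---follows the same skeleton as the paper's; the paper just organises the middle step via the transducer products $M_f M_h$ and $M_h M_g$ rather than working with the function values directly, and these presentations are interchangeable once the depth bound from Lemma~\ref{transducers are continuous lemma} is in place.
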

\begin{proof}
For all \(x\in (X_n^*)^d\), let \(q_x := (e_h, x)\boldsymbol{\pi}_{M_h}\).
As \(s, t\) are non-trivial, we can find complete prefix codes \(F_s, F_t\) for \(\mathfrak{C}_n^d\), such that \(s\in F_s, t\in F_t\) and \(|F_s|= |F_t|\). Let \(\phi:F_s \to F_t\) be a bijection such that \((s)\phi = t\), and let \(f:= f_\phi\) (Definition~\ref{prefix codes and dVn defn}).

As \(h\in N_{\Aut(\mathfrak{C}_n^d)}(dV_n)\), we have that \(h dV_n = dV_n h\).
In particular \(f h\in h dV_n\). Let \(g\in nV\) such that \(f h=h g\). Thus by Lemmas~\ref{composition works as expected lemma} and \ref{inj_clo}, we have
\[f_{M_f M_h, (e_f,e_h)} =f h=h g= f_{M_h M_g, (e_h, e_g)}.\]
By Proposition~\ref{nV_placement}, each of \(M_f, M_g\) has a single identity state as its core. Let \(I_f, I_g\) be the core states of \(M_f\) and \(M_g\) respectively. Note that
\[((e_f, e_h), s)\boldsymbol{\pi}_{M_f M_h} = (I_f, q_t),\quad ((e_h, e_g), s)\boldsymbol{\pi}_{M_h M_g} = (q_s, (e_g,(q_h, s)\boldsymbol{\lambda}_{M_h})\boldsymbol{\pi}_{M_g}).\]
By Lemma~\ref{transducers are continuous lemma} applied to the state \(q_s\) of \(M_h\), let \(K\in \mathbb{N}\) be such that for all \(w\in (X_n^*)^d\) with 
\[\min\left(\makeset{|(w)\pi_i|}{\(i\in \{0, 1, \ldots, d-1\}\)}\right)\geq K,\]
we have
\(\min\left(\makeset{|(q_s, w)\boldsymbol{\lambda}_{M_h}\pi_i|}{\(i\in \{0, 1,\ldots, d-1\}\)}\right)\)
is at least the synchronizing length of \(M_g\).

Let \(a\in (X_n^*)^d\) be arbitrary such that \(\min\left(\makeset{|(a)\pi_i|}{\(i\in \{0, 1, \ldots, d-1\}\)}\right)\geq K\).
It suffices to show that \(q_{ta}=q_{s a}\).
From the choice of \(K\), we obtain
\[((e_f, e_h), s a)\boldsymbol{\pi}_{M_f M_h} = (I_f, q_{ta}),\quad ((e_h, e_g), s a)\boldsymbol{\pi}_{M_h M_g} = (q_{s a}, I_g).\]
Thus, for all \(v\in (X_n^\N)^d\) we have

\begin{align*}
((e_f, e_h),s a)\boldsymbol{\lambda}_{M_f M_h}(v)f_{M_h, q_{ta}}
&=((e_f, e_h),s a)\boldsymbol{\lambda}_{M_f M_h}(v)f_{M_f M_h, (I_f,q_{ta})}\\
&=(s a v)f h\\
&= (s a v)h g\\
&=((e_h, e_g),s a)\boldsymbol{\lambda}_{M_h M_g}(v)f_{M_h M_g, (q_{s a},I_g)}\\
&=((e_h, e_g),s a)\boldsymbol{\lambda}_{M_h M_g}(v)f_{M_h, q_{s a}}.
\end{align*}

From the above equation we have for all \(i\in \{0, 1, \ldots, d-1\}\), that the words \(((e_f, e_h), s a)\boldsymbol{\lambda}_{M_f M_h}\pi_i\) and \(((e_h, e_g), s a)\boldsymbol{\lambda}_{M_h M_g}\pi_i\) are comparable.

If there was \(i\in \{0, 1, \ldots, d-1\}\) such that \(((e_f, e_h), s a)\lambda_{M_f M_h}\pi_i\neq((e_h, e_g), s a)\lambda_{M_h M_g}\pi_i\), it would follow that either the map \(f_{M_h, q_{ta}}\) or \(f_{M_h, q_{s a}}\) has its image contained in a proper cone.
By Theorem~\ref{unique minimal thm}, the transducer \(M_h\) has complete response, so this cannot occur and we must have \(((e_f, e_h), s a)\lambda_{M_f M_h} = ((e_h, e_g),s a)\lambda_{M_h M_g}\).

From the equality
\[((e_f, e_h), s a)\lambda_{M_f M_h}(v)f_{M_h, q_{ta}}
=((e_h, e_g), s a)\lambda_{M_h M_g}(v)f_{M_h, q_{s a}}\]
it follows that \(f_{M_h, q_{ta}}=f_{M_h, q_{s a}}\).
Thus, as \(M_h\) has complete response, we have \(q_{ta}\sim_{M_h} q_{s a}\).
Finally, as \(M_h\) is minimal, it follows that \(q_{ta}=q_{s a}\) as required.
\end{proof}

\speeddicttwo{142}{hard_aut_containment}{dBn and dOn defn}{main_lemma}
\begin{lemma}[Synchronizing normalizer, \Assumed{142}]\label{hard_aut_containment}
For \(d\in \N\backslash \{0\}\) and \(n\in \N\backslash \{0, 1\}\), the group \(N_{\A(\mathfrak{C}_n^d)}(dV_n)\) is contained in \(d\mathcal{B}_{n, 1}\).
\end{lemma}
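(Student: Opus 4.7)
The plan is to prove $h \in N_{\Aut(\mathfrak{C}_n^d)}(dV_n)$ lies in $d\mathcal{B}_{n,1}$ by showing that both $h$ and $h^{-1}$ lie in the monoid $d\mathcal{S}_{n,1}$ of synchronizing homeomorphisms. Since Corollary \ref{sync_monoid} tells us $d\mathcal{S}_{n,1}$ is a monoid and $h$ is already a bijective homeomorphism of $\mathfrak{C}_n^d$, establishing $h, h^{-1} \in d\mathcal{S}_{n,1}$ identifies $h$ as a unit of this monoid, which by Definition \ref{dBn and dOn defn} places it in $d\mathcal{B}_{n,1}$.

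First I would observe that $h^{-1}$ also normalizes $dV_n$, since the relation $h \cdot dV_n \cdot h^{-1} = dV_n$ is equivalent to $h^{-1} \cdot dV_n \cdot h = dV_n$. By this symmetry, it suffices to prove that $M_h$ is synchronizing for every $h \in N_{\Aut(\mathfrak{C}_n^d)}(dV_n)$, and the same argument applied to $h^{-1}$ will give $M_{h^{-1}}$ synchronizing.

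To exhibit a synchronizing level for $M_h$, I would apply Lemma \ref{main_lemma} to every pair of distinct tuples $s, t \in (X_n^*)^d \setminus \{\varepsilon_d\}$ satisfying $\max_i |(s)\pi_i|, \max_i |(t)\pi_i| \leq 1$. Since only finitely many such tuples exist, setting $K_0 := \max K_{h,s,t}$ over all such pairs yields a single uniform bound controlling every ``short'' pair. The core claim is then that $M_h$ is synchronizing at some level bounded by a function of $K_0$. To verify this, given arbitrary states $p = (e_h, v_1)\boldsymbol{\pi}_{M_h}$ and $q = (e_h, v_2)\boldsymbol{\pi}_{M_h}$ of $M_h$ (every state has this form by Theorem \ref{unique minimal thm}), and $w \in (X_n^*)^d$ with $\min_i |(w)\pi_i|$ sufficiently large, I would decompose $w$ into a short leading ``letter-tuple'' followed by a long tail, and then iteratively apply Lemma \ref{main_lemma} to collapse the pair $v_1, v_2$ together with the short prefix of $w$ down to one of the short test tuples handled by $K_0$, before consuming the remaining long tail.

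The hard part will be exactly this last step: lifting the pointwise bounds $K_{h,s,t}$ of Lemma \ref{main_lemma} (which depend on the specific pair $s, t$ and are not \emph{a priori} uniform) to a single synchronizing level valid for all pairs of states. The difficulty is sharpened by the fact that $Q_{M_h}$ may be infinite, as in Example \ref{baker's map example}, so there are infinitely many state pairs to handle simultaneously. The key leverage will be the factorization $(e_h, v \cdot u)\boldsymbol{\pi}_{M_h} = ((e_h, v)\boldsymbol{\pi}_{M_h}, u)\boldsymbol{\pi}_{M_h}$, which lets us absorb arbitrary prefixes $v$ into the start of $w$ and thereby reduce the general question to the finitely many short-tuple cases for which Lemma \ref{main_lemma} already provides a uniform estimate $K_0$.
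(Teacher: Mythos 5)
Your overall plan --- closure of the normalizer under inversion, a finite test set fed into Lemma~\ref{main_lemma} to extract a uniform bound, then an iteration to collapse arbitrary states --- is the same as the paper's, and the first two observations are sound. The gap is in your choice of test pairs: the collection $\{ (s,t) : \max_i|(s)\pi_i| \leq 1,\ \max_i|(t)\pi_i| \leq 1\}$ is too small to support the iteration.

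Concretely, the iteration that makes the argument work writes the state as $q = (e_h, v)\boldsymbol{\pi}_{M_h}$, decomposes $v = v_0 v_1 \cdots v_l$ into single-letter generators $v_i \in \{x_{d,j} : x \in X_n,\ j \in \{0,1,\dots,d-1\}\}$, parses $w = w_0 w'$ with $w_0$ one more such generator, and then peels off one $v_i$ per step by invoking Lemma~\ref{main_lemma} on the pair $(v_i v_{i+1},\ v_{i+1})$ (with tail $v_{i+2}\cdots v_l w_0 w'$), and finally on $(v_l w_0,\ w_0)$ (with tail $w'$). Whenever $v_i$ and $v_{i+1}$ lie in the same coordinate $j$, the tuple $v_i v_{i+1}$ has $|(v_i v_{i+1})\pi_j| = 2$, which is outside your test set. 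This case cannot be avoided: for $d = 1$ every adjacent pair $v_i v_{i+1}$ is a two-letter word in the single coordinate, and for $d \geq 2$ it occurs whenever $v$ has two consecutive letters in one coordinate (e.g.\ $v = (00,\varepsilon)$). Since Lemma~\ref{main_lemma} only permits replacing a leading block of the word by another block, no pair with $\max_i |\cdot| \leq 1$ on both sides can shorten such a prefix, so $K_0$ as you defined it does not bound the synchronizing length. The fix --- which is what the paper does --- is to take the test pairs to be $(x_{d,i},\ y_{d,j}z_{d,k})$ over all $x,y,z \in X_n$ and $i,j,k \in \{0,1,\dots,d-1\}$, i.e.\ a single generator against a product of two generators; this set is still finite, and the resulting bound $K := \max K_{h, x_{d,i}, y_{d,j}z_{d,k}} + 1$ (the ``$+1$'' covers the last step, whose tail $w'$ is one letter shorter than $w$) makes the iteration go through. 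With that modification your argument matches the paper's proof.
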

\begin{proof}
 Let \(h\in N_{\A(\mathfrak{C}_n^d)}(dV_n)\) be arbitrary. We show that \(h\in d\mathcal{S}_n\), as \(h\) is arbitrary we then also conclude that \(h^{-1}\in d\mathcal{S}_n\) and so \(h\) must be a unit of \(d\mathcal{S}_n\).
 
 Thus we need only show that \(M_h\) is a synchronizing transducer. We define
 \[K:= \max\left(\makeset{K_{h, x_{d,i}, y_{d,j}z_{d, k}}}{\(x, y, z\in X_n, i, j, k\in \{0, 1, \ldots, d-1\}\)}\right) + 1\]
 where \(x_{d,i}, y_{d,j}, z_{d,k}\) are as in Definition~\ref{cones words and Cantor spaces defn}, and \(K_{h, x_{d, i}, y_{d,j}z_{d,k}}\) is as in Lemma~\ref{main_lemma}.
 Let \(w\in (X_n^*)^d\) be arbitrary, such that 
\[\min\left(\makeset{|(w)\pi_i|}{\(i\in \{0, 1, \ldots, d-1\}\)}\right)\geq K.\]
 It suffices to show for all \(q\in Q_{M_h}\), we have \((q, w)\boldsymbol{\pi}_{M_h} = (e_h, w)\boldsymbol{\pi}_{M_h}\).
 Let \(q\in Q_{M_h}\) be arbitrary and let \(v\in (X_n^*)^d\) be such that \((e_h, v)\boldsymbol{\pi}_{M_h} = q\). Let \(w = w_0w'\), and \(v= v_0v_1\ldots v_l\) where 
 \[v_0,v_1,\ldots, v_l,w_0\in \makeset{x_{d,i}}{\(x\in X_n, i\in \{0, 1, \ldots, d-1\}\)}.\]
 By the choice of \(w\), we obtain:
 \begin{align*}
     (q, w)\boldsymbol{\pi}_{M_h}&= (e_h, v w)\boldsymbol{\pi}_{M_h}= (e_h, v_0v_1v_2\ldots v_l w_0w')\boldsymbol{\pi}_{M_h}\\
     &= (e_h, (v_0v_1)(v_2\ldots v_l w_0w'))\boldsymbol{\pi}_{M_h}= (e_h, (v_1)(v_2\ldots v_l w_0w'))\boldsymbol{\pi}_{M_h}\\
     &= (e_h, (v_1v_2)(v_3\ldots v_l w_0w'))\boldsymbol{\pi}_{M_h}= (e_h, (v_2)(v_3\ldots v_l w_0w'))\boldsymbol{\pi}_{M_h}\\
   &\vdots\\
    &= (e_h, (v_l w_0)w')\boldsymbol{\pi}_{M_h}= (e_h, w_0w')\boldsymbol{\pi}_{M_h}= (e_h, w)\boldsymbol{\pi}_{M_h}
 \end{align*}
 as required.
\end{proof}

We can now tie everything together and use Rubin's Theorem to connect our groups to the automorphism groups of \(dV_n\).
\speeddicttwo{143}{autnV}{nV_is_normal}{hard_aut_containment}
\begin{theorem}[Automorphisms characterised, \Assumed{143}]\label{autnV}
Let \(d\in \N\backslash\{0\}\) and \(n\in \N\backslash \{0, 1\}\).
The groups \(N_{\A(\mathfrak{C}_n^d)}(dV_n)\) and \(d\mathcal{B}_{n,1}\) are equal, countably infinite and isomorphic to \(\A(dV_n)\). Moreover \(\Out(dV_n) \cong d\mathcal{O}_{n, 1}\).
\end{theorem}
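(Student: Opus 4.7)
The plan is to apply Corollary~\ref{Rubin Automorphisms cor} to the natural action $a:\mathfrak{C}_n^d \times dV_n \to \mathfrak{C}_n^d$ given by $(x, f)a = (x)f$, and then identify the resulting normalizer group with $d\mathcal{B}_{n,1}$ using the two Lemmas~\ref{nV_is_normal} and \ref{hard_aut_containment}. So the first step is to verify the hypotheses of Rubin's Theorem for $a$. The space $\mathfrak{C}_n^d$ is a Cantor space (a product of Cantor spaces is still a Cantor space, by Theorem~\ref{Brouwer' Theorem}), so it is compact (hence locally compact), Hausdorff and has no isolated points. The action $a$ is continuous (each $f \in dV_n$ is a homeomorphism) and faithful (prefix exchange maps are determined by their action on points). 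For local moving-ness, given $x\in \mathfrak{C}_n^d$ and a neighbourhood $U$ of $x$, pick a cone $w\mathfrak{C}_n^d \subseteq U$ containing $x$; any nonempty open subset of $w\mathfrak{C}_n^d$ contains a subcone, and a prefix exchange between two subcones of $w\mathfrak{C}_n^d$ (fixing the complement) is an element of $dV_n$ supported in $U$ which can be chosen to move $x$ into an arbitrary nonempty open subset of $w\mathfrak{C}_n^d$. Thus the $G_{U,a}$-orbit of $x$ is dense in $w\mathfrak{C}_n^d$, so $x \in ((x, G_{U,a})a)^{-\circ}$.

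Corollary~\ref{Rubin Automorphisms cor} then yields an isomorphism $\A(dV_n) \cong N_{\A(\mathfrak{C}_n^d)}((dV_n)\phi_a) = N_{\A(\mathfrak{C}_n^d)}(dV_n)$, which restricts to an isomorphism from $\operatorname{Inn}(dV_n)$ onto $dV_n$. Next I would show the equality $N_{\A(\mathfrak{C}_n^d)}(dV_n) = d\mathcal{B}_{n,1}$. The containment $d\mathcal{B}_{n,1} \subseteq N_{\A(\mathfrak{C}_n^d)}(dV_n)$ is immediate from Lemma~\ref{nV_is_normal} (which says $dV_n \normalsubgroup d\mathcal{B}_{n,1}$), and the reverse containment is exactly Lemma~\ref{hard_aut_containment}. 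Putting these together gives $\A(dV_n) \cong d\mathcal{B}_{n,1}$.

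To obtain $\Out(dV_n) \cong d\mathcal{O}_{n,1}$, I would use the fact (from Corollary~\ref{Rubin Automorphisms cor}) that this isomorphism sends $\operatorname{Inn}(dV_n)$ onto the copy of $dV_n$ inside $d\mathcal{B}_{n,1}$. Taking quotients and invoking Lemma~\ref{nV_is_normal}, which asserts $d\mathcal{B}_{n,1}/dV_n \cong d\mathcal{O}_{n,1}$, delivers the required isomorphism $\Out(dV_n) = \A(dV_n)/\operatorname{Inn}(dV_n) \cong d\mathcal{B}_{n,1}/dV_n \cong d\mathcal{O}_{n,1}$.

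Finally, to see that $d\mathcal{B}_{n,1}$ is countably infinite, note it contains $dV_n$ (which is infinite, since it contains all prefix exchange maps), so it is infinite. For countability, by Lemma~\ref{nV_is_normal} it suffices to show that both $dV_n$ and $d\mathcal{O}_{n,1}$ are countable. The group $dV_n$ is countable because complete prefix codes (Definition~\ref{prefix codes and dVn defn}) are finite and there are countably many pairs $(F_1, F_2, \phi)$ defining prefix exchange maps. The set $d\mathcal{O}_{n,1}$ is countable because, by Remark~\ref{Core Transducers remark}, the core of any synchronizing transducer has at most $n^{kd}$ states (where $k$ is the synchronizing length); there are thus only countably many finite $(d,n)$-transducers up to strong isomorphism, so $\widetilde{d\mathcal{O}_{n,1}} \supseteq d\mathcal{O}_{n,1}$ is countable. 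I do not expect any serious obstacle here; the only subtlety is correctly matching the Rubin isomorphism's image of $\operatorname{Inn}(dV_n)$ with the subgroup $dV_n \leq d\mathcal{B}_{n,1}$ so that the quotient step goes through cleanly, but this is built into the statement of Corollary~\ref{Rubin Automorphisms cor}.
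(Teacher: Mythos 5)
Your proposal follows the paper's proof essentially step for step: verify the hypotheses of Rubin's Theorem (with local moving established via prefix exchanges supported inside cones), apply Corollary~\ref{Rubin Automorphisms cor} to get $\A(dV_n)\cong N_{\A(\mathfrak{C}_n^d)}(dV_n)$, identify that normalizer with $d\mathcal{B}_{n,1}$ via Lemmas~\ref{hard_aut_containment} and \ref{nV_is_normal}, pass to quotients for $\Out(dV_n)\cong d\mathcal{O}_{n,1}$, and count. The reasoning is correct and matches the paper; the only cosmetic difference is that the paper works out the local-moving prefix exchange a bit more explicitly via bijections of complete prefix codes, but your swap-two-subcones-and-fix-the-rest construction amounts to the same thing.
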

\begin{proof}
We show the continents 
\[N_{\A(\mathfrak{C}_n^d)}(dV_n)\subseteq d\mathcal{B}_{n, 1}\subseteq N_{\A(\mathfrak{C}_n^d)}(dV_n)\]
in Lemmas~\ref{hard_aut_containment} and \ref{nV_is_normal} respectively.
Thus \(N_{\A(\mathfrak{C}_n^d)}(dV_n)=d\mathcal{B}_{n,1}\).

As there is a countably infinite number of complete prefix codes for \(\mathfrak{C}_n^d\), the group \(dV_n\) is countably infinite.
By Theorem~\ref{nV_is_normal}, we have \[|d\mathcal{B}_{n, 1}|= |dV_n||d\mathcal{B}_{n, 1}/dV_n|=|dV_n||d\mathcal{O}_{n, 1}|.\]
An element of \(d\mathcal{O}_{n, 1}\) is a strong isomorphism class of core transducers all of which are finite.
Moreover the monoid \((X_n^*)^d\) is finitely generated and by the definition of a transducer, the output and transition maps are determined by their actions on the generators of \((X_n^*)^d\). Hence for each \(k\in \N\) there are only finitely many \((d, n)\)-transducer with \(k\) states up to strong isomorphism. Thus the group \(d\mathcal{O}_{n, 1}\) is countable and so 
\[|d\mathcal{B}_{n, 1}|= |dV_n||d\mathcal{O}_{n, 1}|= |\N||d\mathcal{O}_{n, 1}|=|\N|.\]

By Rubin's Theorem (Corollary~\ref{Rubin Automorphisms cor}) and Lemma~\ref{nV_is_normal}, it suffices to show that the standard action \(a:\mathfrak{C}_n^d \times dV_n \to \mathfrak{C}_n^d\) given by \((p, f)a = (p)f\) is locally moving (Definition~\ref{locally moving defn}).

Let \(x\in \mathfrak{C}_n^d\) be arbitrary and let \(w\mathfrak{C}_n^d\) be a basic open neighbourhood of \(x\).
Let \(u_x, v\in (X_n^*)^d\) be arbitrary such that \(w<v\) and \(w<u_x< x\). It suffices to show that \((x)(dV_n)_{w\mathfrak{C}_n^d, a} \cap v\mathfrak{C}_n^d \neq \varnothing\).

Let \(S\subseteq (X_n^*)^d\) be such that
\(\makeset{s\mathfrak{C}_n^d}{\(s\in S\)}\)
is a partition of \(\mathfrak{C}_n^d\backslash w\mathfrak{C}_n^d\) into clopen sets. Also choose \(F_{u_x}, F_v\subseteq (X_n^*)^d\) to be such that \(|F_{u_x}|= |F_v|, u_x\in F_{u_x}, v\in F_v\) and
\(\makeset{s\mathfrak{C}_n^d}{\(s\in F_{u_x}\)},\)
\(\makeset{s\mathfrak{C}_n^d}{\(s\in F_v\)},\)
are both partitions of \(w\mathfrak{C}_n^d\) into clopen sets.
Let \(\phi': F_{u_x}\to F_{v}\) be any bijection with \((u_x)\phi'= v\), and let \(\phi := \phi' \cup \makeset{(s, s)}{\(s\in S\)}\).
It follows that the map \(f_{\phi}\) is an element of \((dV_n)_{w\mathfrak{C}_n^d, a}\) and satisfies \((x)f_{\phi}\in v\mathfrak{C}_n^d\) as required.
\end{proof}

\section{Rationality}\label{rationality sec}

In GNS \cite{GNS2000}, the notion of a rational homomorphism is introduced (GNS also proves Proposition~\ref{isomorphic rational groups prop}).

\speeddictfive{165}{rational transformations defn}{Categories defn}{automorphism groups defn}{dn transducer def}{reading infinite words defn}{composition works as expected lemma}
\begin{defn}[Rational transformations, \Assumed{165}]\label{rational transformations defn}
If say that \(f: \mathfrak{C}_n \to \mathfrak{C}_m\) is \textit{rational} if \(f= f_{T, q}\) for some \((1, n, 1, m)\)-transducer \(T\) with finitely many states and some \(q\in Q_T\).
By Lemma~\ref{composition works as expected lemma} it follows that the set of objects \[\makeset{\mathfrak{C}_n}{\(n\in \N\backslash \{0, 1\}\)}\]
together with the set of morphisms
\[\makeset{f: \mathfrak{C}_n \to \mathfrak{C}_m}{\(f\) is rational and \(n, m\in \N\backslash\{0, 1\}\)}\]
form a small category. We call this the category of rational transformations and denote it by \(\mathcal{R}\). We denote the endomorphism monoid and automorphism group of \(\mathfrak{C}_n\) in this category by \(\widetilde{\mathcal{R}_n}\) and \(\mathcal{R}_n\) respectively.
\end{defn}

\speeddicttwo{166}{isomorphic rational groups prop}{picture definitions}{rational transformations defn}
\begin{proposition}[cf \cite{GNS2000}, Rational isomorphisms, \Assumed{166}]\label{isomorphic rational groups prop}
For all \(n, m\in \N\backslash \{0, 1\}\), the objects \(\mathfrak{C}_n, \mathfrak{C}_m\) are isomorphic in the category \(\mathcal{R}\), thus 
\[\widetilde{\mathcal{R}_n} \cong \widetilde{\mathcal{R}_m},\quad \text{and} \quad \mathcal{R}_n \cong \mathcal{R}_m.\]
\end{proposition}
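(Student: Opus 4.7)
The plan is to reduce to showing $\mathfrak{C}_n \cong \mathfrak{C}_2$ in $\mathcal{R}$ for every $n \geq 2$; transitivity of isomorphism in any category then yields $\mathfrak{C}_n \cong \mathfrak{C}_m$ for all $n, m \geq 2$. Once objects are isomorphic via some $\phi: \mathfrak{C}_m \to \mathfrak{C}_n$ in $\mathcal{R}$, the map $f \mapsto \phi \circ f \circ \phi^{-1}$ gives a monoid isomorphism $\widetilde{\mathcal{R}_n} \to \widetilde{\mathcal{R}_m}$ restricting to a group isomorphism $\mathcal{R}_n \to \mathcal{R}_m$, which is a formal categorical observation.

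The central task is to construct two explicit finite-state transducers implementing mutually inverse rational bijections between $\mathfrak{C}_2$ and $\mathfrak{C}_n$. The idea is to use the complete prefix code $C := \{0, 10, 110, \ldots, 1^{n-2}0, 1^{n-1}\} \subseteq X_2^*$, which by Lemma~\ref{anti-chain sizes} is a complete prefix code of $\mathfrak{C}_2$ with $n$ elements, and identify $\mathfrak{C}_n$ with infinite concatenations of blocks from $C$. Concretely, I would define the $(1, n, 1, 2)$-transducer $A$ with a single state $q$ whose output on generator $k \in X_n$ is $1^k 0$ for $k < n-1$ and $1^{n-1}$ for $k = n-1$; and the $(1, 2, 1, n)$-transducer $B$ with states $s_0, \ldots, s_{n-2}$, where from $s_k$ reading $0$ outputs letter $k$ and returns to $s_0$, reading $1$ outputs $\varepsilon$ and moves to $s_{k+1}$ when $k < n-2$, and reading $1$ from $s_{n-2}$ outputs letter $n-1$ and returns to $s_0$. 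Both have finitely many states, so $f_{A,q}$ and $f_{B, s_0}$ are rational.

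I would then verify non-degeneracy (each generator of $A$ produces non-empty output, and $B$ cannot execute more than $n-1$ consecutive transitions without producing an output letter, so infinite inputs yield infinite outputs) and argue that the induced maps are mutual inverses. By Lemma~\ref{composition works as expected lemma}, $f_{A,q} \circ f_{B, s_0} = f_{AB, (q, s_0)}$ and $f_{B,s_0} \circ f_{A, q} = f_{BA, (s_0, q)}$, so it suffices to check that each composite transducer reads back its input. This reduces to the combinatorial observation that $B$ parses any infinite binary word into a unique concatenation of blocks from $C$ (producing one letter of $X_n$ per block) while $A$ re-expands each such letter into the corresponding block of $C$, and conversely.

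The delicate part is the bookkeeping in the composite verification, in particular handling the behaviour on a maximal run of $1$'s of length exactly $n-1$: $B$ must emit the letter $n-1$ and reset to $s_0$ precisely when it has counted $n-1$ ones, rather than waiting for a terminating $0$. This is the reason $B$ has a dedicated transition from $s_{n-2}$ on input $1$ that outputs $n-1$ directly. Aside from this combinatorial verification, everything else is either a direct appeal to the machinery already developed (Definition~\ref{rational transformations defn}, Lemma~\ref{composition works as expected lemma}) or pure category theory, so no genuinely new ideas are needed beyond the block-encoding construction.
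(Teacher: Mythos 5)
Your proposal is correct and takes essentially the same route as the paper: reduce to showing $\mathfrak{C}_2 \cong \mathfrak{C}_n$, pick a complete prefix code of $\mathfrak{C}_2$ of size $n$, and build a one-state encoder and a finite-state decoder that parse/expand along that code, then invoke Lemma~\ref{composition works as expected lemma}. The only cosmetic difference is your choice of prefix code $\{0, 10, 110, \ldots, 1^{n-2}0, 1^{n-1}\}$, which is the paper's code with the roles of $0$ and $1$ swapped.
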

\begin{proof}
Let \(n\in \N\backslash \{0, 1, 2\}\) be arbitrary. We give a rational isomorphism from \(\mathfrak{C}_2 \to \mathfrak{C}_n\). We define \(F_1 := \{1, 01, 001, \ldots, 0^{n-1}1\}\), \(F_2 := X_n\) and \(\phi: F_1\to F_2\) by \(((0)^i1)\phi = i\) for all \(i\in \{0, 1, \ldots, n-1\}\). We then define \(f_{\phi}:\mathfrak{C}_n \to \mathfrak{C}_n\) and \(f_{\phi^{-1}}:\mathfrak{C}_n \to \mathfrak{C}_2\) as in Figure~\ref{figure7} (recall Definition~\ref{picture definitions}).

\begin{figure}
\begin{center}
\begin{tikzpicture}[->,>=stealth',shorten >=1pt,auto,node distance=5cm,on grid,semithick,
                    every state/.style={fill=red,draw=none,circular drop shadow,text=white}]
  \node [state, at={(1, -2.5)}] (A) [rotate = 0 ]{$q_0$};
  \node [state, at={(3, -2.5)}] (B)  {};
  \node [state, at={(5, -2.5)}] (C)   {};
  \node [state, at={(7, -2.5)}] (D)   {};
  \node [at={(10,-2.5)},state] (E) {$q_0$};
  \node [at={(10,-1.9)}]    (F)  {\(\ldots\)};
  \node [at={(6,-2.5)}]     (G)  {\(\ldots\)};
 \path [->]
 (A) edge [out=190,in=170, loop]       node {\small $1/0$} (A)
 (A) edge      node [swap]{\small $0/\varepsilon$} (B)
 (B) edge       node [swap]{\small $0/\varepsilon$} (C)
 (B) edge [out=120,in=60]      node {\small $1/1$} (A)
 (C) edge [out=90,in=90]      node {\small $1/2$} (A)
 (D) edge [out=60,in=120]      node [swap]{\small $1/(n-1)$} (A)

 (E) edge [out=20,in=00, loop]       node {\small $(n-1)/0^{n-1}1$} (E)
 (E) edge [out=60,in=40, loop]       node {\small $(n-2)/0^{n-2}1$} (E)
 (E) edge [out=140,in=120, loop]       node {\small $1/01$} (E)
 (E) edge [out=190,in=170, loop]       node {\small $0/1$} (E);
 \end{tikzpicture}
 \end{center}
\caption{We define \(f_\phi:= f_{T, q_0}\) where \(T\) is the left transducer, and \(f_{\phi^{-1}}:= f_{T, q_0}\) where \(T\) is the right transducer}\label{figure7}
\end{figure}
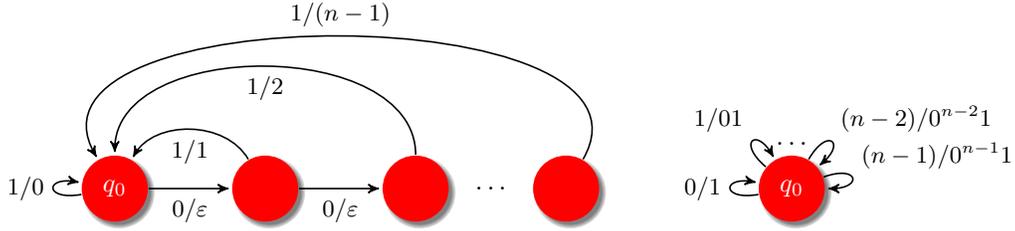
Note that
\[(w x)f_{\phi} = (w)\phi (x)f_{\phi}, \quad\text{and}\quad (v y)f_{\phi^{-1}}=(v)\phi^{-1} (y)f_{\phi^{-1}}\]
where \(w\in F_1, v\in F_2, x\in \mathfrak{C}_2, y\in \mathfrak{C}_n\) are arbitrary. In particular we have that \(f_{\phi}, f_{\phi^{-1}}\) are inverses of each other.
As these maps are rational, the result follows.
\end{proof}

If \(k, n\in \N\), then there are only finitely many words over the alphabet \(X_n\) with length at most \(k\).
 Thus from Remark~\ref{Core Transducers remark}, it follows that \(1\mathcal{B}_{n, 1}\leq \mathcal{R}_n\). 
 However if \(d\geq 2\), then there are infinitely many elements \(w\) of \((X_n^*)^d\) with \[\min\left(\makeset{|(w)\pi_i|}{\(i\in \{0, 1, \ldots, d-1\}\)}\right) <k.\]
Thus this reasoning does not follow in the case of \(d\mathcal{B}_{n, 1}\). 
In Figure~\ref{baker} we see that the bakers map cannot be defined by a finite \((2, 2)\)-transducer.

In \cite{Belk2016} Theorem 5.2, it is shown that \(2V\) can be embedded in the rational group.
They embed \(2V\) by conjugating it by the map \(f\) defined the transducer in Figure~\ref{figure8}.
As \(\mathcal{B}_{2, 1}\leq \mathcal{R}_2\) and \(f 2V f^{-1}\leq \mathcal{R}_4\), one might expect that \(f2\mathcal{B}_{2, 1}f^{-1}\leq \mathcal{R}_4\).
We show in Proposition~\ref{rat problems prop} that this is not the case.

\speeddictfour{164}{swap transducer example}{dn transducer def}{reading infinite words defn}{synchronizing defn}{picture definitions}
\begin{example}[\(0\leftrightarrow 00\) transducer, \Assumed{164}]\label{swap transducer example}
Suppose \(T\) is the left \((1, 2)\)-transducer from Figure~\ref{figure1} (recall Definition~\ref{picture definitions}).
The transducer \(T\) is synchronizing with \(\C(T)=T\).  Moreover if \(n_0\in \N\), \(n_1, \ldots, m_0,m_1, \ldots\in (\N\backslash\{0\})\cup \{\infty\}\) and \(s:\N\cup \{\infty\}\to \N\cup \{\infty\}\) is the permutation with \((1)s=2, (2)s =1, (n)s =n\) for \(n\not\in \{1, 2\}\), then
\[(0^{n_0}1^{m_0}0^{n_1}\ldots)f_{T, q_0} = 0^{(n_0)s}1^{m_0}0^{(n_1)s}\ldots.\]
In particular \(f_{T, q_0}\) is a homeomorphism.
\end{example}
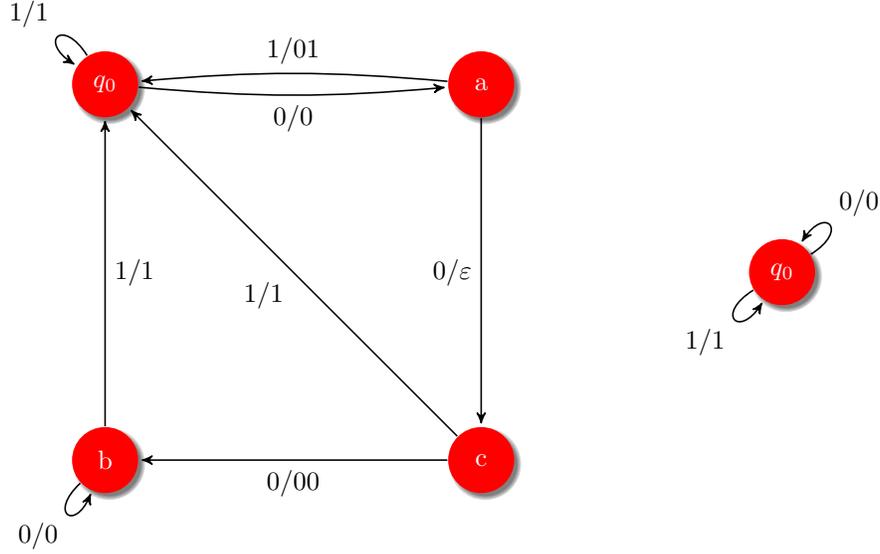
\begin{figure}
\begin{center}
\begin{tikzpicture}[->,>=stealth',shorten >=1pt,auto,node distance=5cm,on grid,semithick,
                    every state/.style={fill=red,draw=none,circular drop shadow,text=white}]  \node [state] (A)                {$q_0$};
  \node [state] (B)  [right= of A] {a};
  \node [state] (C)  [below= of B] {c};
  \node [state] (D)  [below= of A] {b};
  \node [at={(9,-2.5)},state] (E) {$q_0$};
 \path [->]
 (A) edge [out=122,in=147, loop] node [swap]{$1/1$} (A)
 
 (A) edge [out=355,in=185]        node [swap]{$0/0$} (B)
 
 (B) edge [out=175,in=5]        node [swap]{$1/01$} (A)
 (B) edge                        node [swap]{$0/\varepsilon$} (C)
 (C) edge                        node {$1/1$} (A)
 (C) edge                        node {$0/00$} (D)
 (D) edge                        node [swap]{$1/1$} (A)
 (D) edge [out=223,in=247, loop] node [swap]{$0/0$} (D)
 (E) edge [out=212,in=237, loop] node [swap]{$1/1$} (E)
 (E) edge [out=32,in=57, loop] node [swap]{$0/0$} (E);
 \end{tikzpicture}
 \end{center}
\caption{Two transducers with domain and range \(X_2^*\)}\label{figure1}
\end{figure}
\begin{proof}
By the definition of \(T\), it follows that for all \(q\in Q_T\) we have
\[(q, 000)\boldsymbol{\pi}_T = b, \quad (q, 100)\boldsymbol{\pi}_T = c, \quad (q, 10)\boldsymbol{\pi}_T = a \quad, (q, 1)\boldsymbol{\pi}_T = q_0\]
In particular if \(T\) is synchronizing at level \(3\). 
By definition we have
\[(0^{n_0}1^{m_0}0^{n_1}\ldots)f_{T, q_0} = (q_0, 0^{n_0}1^{m_0})\boldsymbol{\lambda}_T (0^{n_1}1^{m_1}0^{n_2}\ldots)f_{T, (q_0, 0^{n_0}1^{m_0})\boldsymbol{\pi}_T}.\]
Moreover, as \(m_0 \geq 1\) and \((q, 1)\boldsymbol{\pi}_T = q_0\) for all \(q\in Q_T\), we have \((q_0, 0^{n_0}1^{m_0})\boldsymbol{\pi}_T = q_0\). Thus 
\[(0^{n_0}1^{m_0}0^{n_1}\ldots)f_{T, q_0} = (q_0, 0^{n_0}1^{m_0})\boldsymbol{\lambda}_T (0^{n_1}1^{m_1}0^{n_2}\ldots)f_{T,q_0}.\]
By repeatedly applying the above equality, it suffices to show that \((q_0, 0^{n_0}1^{m_0})\boldsymbol{\lambda}_T = 0^{(n_0)s}1^{m_0}\).
If \(n\in \N\) is arbitrary, then by definition
\[(q_0, 0^11^{m_0})\boldsymbol{\lambda}_T = (0)(01)(1)^{m_0-1}=0^21^{m_0},\]
\[(q_0, 0^21^{m_0})\boldsymbol{\lambda}_T = (0)()(1)(1)^{m_0-1}=0^11^{m_0},\]
\[(q_0, 0^{3+n}1^{m_0})\boldsymbol{\lambda}_T = (0)()(00)(0)^n(1)(1)^{m_0-1}=0^{3+n}1^{m_0},\]
as required.
\end{proof}

\begin{figure}
\begin{center}
\begin{tikzpicture}[->,>=stealth',shorten >=1pt,auto,node distance=5cm,on grid,semithick,
                    every state/.style={fill=red,draw=none,circular drop shadow,text=white}]  \node [state] (A)                {$q_0$};
  \node [state] (B)  [right= of A] {};
  \node [state] (C)  [below= of B] {};
  \node [state] (D)  [below= of A] {};
 \path [->]
 (A) edge [out=122,in=147, loop] node [swap]{$(1, \varepsilon)/(1, \varepsilon)$} (A)
 (A) edge [out=355,in=185]        node [swap]{$(0, \varepsilon)/(0, \varepsilon)$} (B)
 (A) edge [out=182,in=207, loop] node [swap]{$(\varepsilon,1)/( \varepsilon,1)$} (A)
 (A) edge [out=62,in=87, loop] node [swap]{$(\varepsilon,0)/( \varepsilon,0)$} (A)
 
 (B) edge [out=175,in=5]        node [swap]{$(1, \varepsilon)/(01, \varepsilon)$} (A)
 (B) edge                        node [swap]{$(0, \varepsilon)/(\varepsilon, \varepsilon)$} (C)
 (B) edge [out=10,in=345, loop] node [swap, xshift = 55pt]{$(\varepsilon,1)/( \varepsilon,1)$} (B)
 (B) edge [out=75,in=100, loop] node [swap]{$(\varepsilon,0)/( \varepsilon,0)$} (B)
 
 (C) edge                        node {$(1, \varepsilon)/(1, \varepsilon)$} (A)
 (C) edge                        node {$(0, \varepsilon)/(00, \varepsilon)$} (D)
 (C) edge [out=260,in=285, loop] node [swap]{$(\varepsilon,1)/( \varepsilon,1)$} (C)
 (C) edge [out=340,in=5, loop] node [swap]{$(\varepsilon,0)/( \varepsilon,0)$} (C)
 
 (D) edge                        node [swap,xshift = -60pt]{$(1, \varepsilon)/(1, \varepsilon)$} (A)
 (D) edge [out=223,in=247, loop] node [swap]{$(0, \varepsilon)/(0, \varepsilon)$} (D)
 (D) edge [out=282,in=307, loop] node [swap]{$(\varepsilon,1)/( \varepsilon,1)$} (D)
 (D) edge [out=142,in=167, loop] node [swap]{$(\varepsilon,0)/( \varepsilon,0)$} (D);
 \end{tikzpicture}
 \end{center}
\caption{The transducer obtained by taking the categorical product of the transducers in Figure~\ref{figure1}}\label{figure2}
\end{figure}
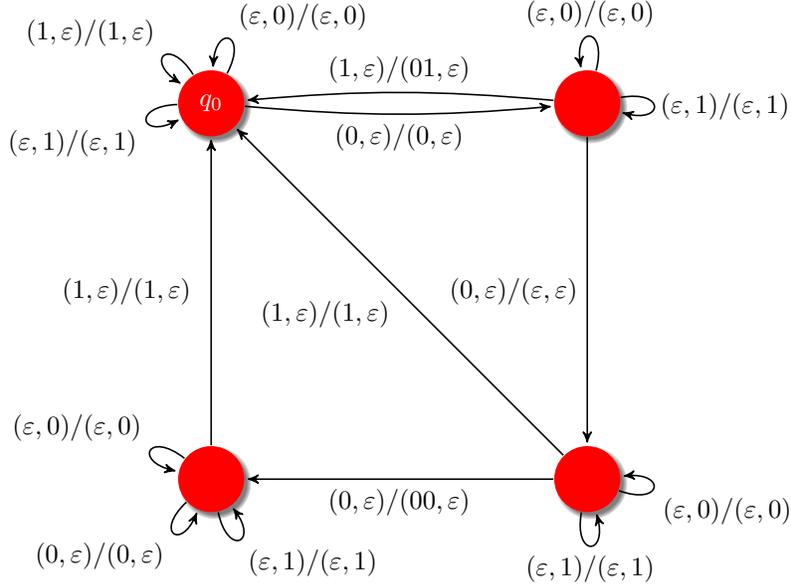

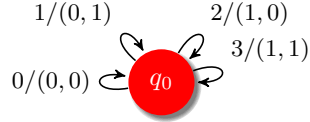
\begin{figure}
\begin{center}
\begin{tikzpicture}[->,>=stealth',shorten >=1pt,auto,node distance=5cm,on grid,semithick,
                    every state/.style={fill=red,draw=none,circular drop shadow,text=white}]
  \node [at={(10,-2.5)},state] (E) {$q_0$};
 \path [->]
 (E) edge [out=20,in=00, loop]       node {\small $3/(1, 1)$} (E)
 (E) edge [out=60,in=40, loop]       node {\small $2/(1, 0)$} (E)
 (E) edge [out=140,in=120, loop]       node {\small $1/(0, 1)$} (E)
 (E) edge [out=190,in=170, loop]       node {\small $0/(0, 0)$} (E);
 \end{tikzpicture}
 \end{center}
\caption{A single state \((1, 4, 2, 2)\)-transducer defining a homeomorphism from \(\mathfrak{C}_{4}\) to \(\mathfrak{C}_2^2\) (analogous to those in  Figure~\ref{figure7})}\label{figure8}
\end{figure}

\speeddictseven{167}{rat problems prop}{picture definitions}{nV_placement}{composition works as expected lemma}{transducing homeomorphisms defn}{words defn}{swap transducer example}{unique minimal thm}
\begin{proposition}[Rationality problems in higher dimensions, \Assumed{167}]\label{rat problems prop}
Let \(f:\mathfrak{C}_4\to \mathfrak{C}_2^2\) be the homeomorphism defined by the transducer in Figure~\ref{figure8} (recall Definition~\ref{picture definitions}). This transducer has finitely many states but \(f^{-1}\) cannot be defined by a \((2, 2,1, 2)\)-transducer with finitely many states. Moreover we have 
\[f 2V f^{-1}\subseteq \mathcal{R}_4 \quad \text{but} \quad f2\mathcal{B}_{2, 1}f^{-1}\not \subseteq \mathcal{R}_4.\]
\end{proposition}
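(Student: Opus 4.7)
For the first claim I would argue by contradiction: suppose $f^{-1}=f_{T,q}$ for some $(2,2,1,2)$-transducer $T$ with finite state set $Q_T$. The idea is to use lopsided inputs to extract infinitely many pairwise distinguishable states. Set $q_n:=(q,(0^n,\varepsilon))\boldsymbol{\pi}_T$. The output $(q,(0^n,\varepsilon))\boldsymbol{\lambda}_T$ must be a common prefix of $f^{-1}((0^n,\varepsilon)\cdot z)$ as $z$ ranges over $\mathfrak{C}_2^2$; demultiplexing $(0^n w_0, w_1)$ yields a 4-ary sequence whose first $n$ letters are exactly $(w_1)_0,\ldots,(w_1)_{n-1}\in\{0,1\}$ and hence range freely over $\{0,1\}^n$ as $w_1$ varies, so this longest common prefix is $\varepsilon$. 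Consequently $f_{T,q_n}(w)=f^{-1}((0^n,\varepsilon)\cdot w)$ for every $w\in\mathfrak{C}_2^2$. Evaluating at $w=(1^\infty,0^\infty)$ gives $f_{T,q_n}(1^\infty,0^\infty)=f^{-1}(0^n 1^\infty,0^\infty)=0^n 2^\infty$, which is distinct for each $n$, forcing the $q_n$ to be pairwise distinct and contradicting $|Q_T|<\infty$.

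For the second claim, let $g=f_\phi\in 2V$ with $\phi\colon F_1\to F_2$ a bijection between finite complete prefix codes of $\mathfrak{C}_2^2$, and set $K:=\max\{|(w)\pi_i|:w\in F_1\cup F_2,\ i\in\{0,1\}\}$. I plan to write down a finite $(1,4)$-transducer $T'$ computing $f g f^{-1}$ explicitly, with states organised into two layers. The initial layer contains one state per possible 4-ary prefix read so far of length at most $K$; there are only finitely many such prefixes. Once $K$ letters $a_0\cdots a_{K-1}$ have been consumed, demultiplexing identifies the unique $s=(s_0,s_1)\in F_1$ with $s\le(x_0\cdots x_{K-1},y_0\cdots y_{K-1})$, and we transition to a steady-state layer indexed by $(s,\text{buffer})$, where the buffer records the bounded residue of binary letters not yet multiplexed into a 4-ary letter. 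The inequalities $|s_i|,|t_i|\le K$ for $t=(s)\phi$ imply that the buffer always has length at most $2K$, giving finitely many steady-state possibilities; in this phase each subsequent 4-ary input letter contributes one letter to each binary stream and we emit a 4-ary letter whenever both streams have a new letter available, which realises the ``identity beyond the prefix exchange'' behaviour of $g$.

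For the third claim the main obstacle is exhibiting an explicit $h\in 2\mathcal{B}_{2,1}$ for which $f h f^{-1}$ cannot be realised by any finite $(1,4)$-transducer. I would search for an $h$ whose minimal transducer $M_h$ has a non-identity core whose per-step output pattern in the two binary components interacts with the multiplex by $f$ in such a way that any $(1,4)$-transducer simulating $f h f^{-1}$ must remember arbitrarily long portions of the 4-ary input; the crucial feature to exploit is that $f$ forces reads of $M_h$ along the balanced diagonal of $(X_2^*)^2$, so states of $M_h$ reachable only through unbalanced reads cannot be used to compress the bookkeeping. Once an appropriate $h$ is identified, non-rationality of $f h f^{-1}$ would be proved by a distinguishability argument parallel to the first claim: assuming $f h f^{-1}=f_{S,p}$ for a finite-state $(1,4)$-transducer $S$, one exhibits an infinite sequence of 4-ary words $(w_n)_{n\in\mathbb{N}}$ for which the states $(p,w_n)\boldsymbol{\pi}_S$ induce pairwise distinct functions $f_{S,(p,w_n)\boldsymbol{\pi}_S}$ (by evaluating at a single carefully chosen continuation and using the lopsided behaviour of $f^{-1}$), contradicting finiteness of $Q_S$.
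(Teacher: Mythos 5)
Your argument for the first claim is correct and is in fact a different, more self-contained route than the paper's. You compute the residual states $q_n=(q,(0^n,\varepsilon))\boldsymbol{\pi}_T$ directly, observe (correctly) that $(q,(0^n,\varepsilon))\boldsymbol{\lambda}_T=\varepsilon$ because the first $n$ output letters range freely as the second coordinate of the input varies, and then separate the $q_n$ by a single evaluation at $(1^\infty,0^\infty)$. The paper instead derives this claim at the very end, as a corollary of the third claim: if $f^{-1}=f_{D',p}$ for a finite-state $D'$ then $TBD'$ would be a finite-state transducer computing the non-rational map $fbf^{-1}$. Your approach is cleaner if one only wants the first claim; the paper's approach saves work given that the third claim has to be proved anyway. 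Your sketch for the second claim is plausible but is only a plan: you would still need to verify that the ``two-layer'' state machine is a well-defined $(1,4)$-transducer in the sense of Definition~\ref{transducers defn}. The paper's argument is tighter: it takes the composite $TM_gD$ (where $D$ is the natural infinite-state transducer for $f^{-1}$) and observes that, once $M_g$ has synchronized, the imbalance $\bigl|\,|((q_0,e_g),w)\boldsymbol{\lambda}_{TM_g}\pi_0|-|((q_0,e_g),w)\boldsymbol{\lambda}_{TM_g}\pi_1|\,\bigr|$ is frozen, so only finitely many states of $D$ are reachable.

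The genuine gap is in the third claim, which is the heart of the proposition. You correctly identify the criterion an $h\in 2\mathcal{B}_{2,1}$ must satisfy (a core whose output imbalance between the two binary coordinates grows without bound along balanced 4-ary reads), but you never exhibit such an $h$ nor carry out the distinguishability computation, and without a concrete witness the claim is not proved. The paper's witness is $b=f_{B,q_0}$, where $B$ (Figure~\ref{figure2}) is the categorical product of the $0\leftrightarrow 00$ transducer from Example~\ref{swap transducer example} with an identity transducer; the key feature is the transition outputting $\varepsilon$, which produces a linearly growing length imbalance between the two binary coordinates. Concretely, the paper computes $((q_0,q_0,\varepsilon_2),(02)^n)\boldsymbol{\lambda}_{TBD}=(002)^{2n/3}$ and $(\mathfrak{C}_4)f_{TBD,p_n}=(((001)^{n/3},\varepsilon)\mathfrak{C}_2^2)f^{-1}$ for $n\in 3\mathbb{N}$, where $p_n=((q_0,q_0,\varepsilon_2),(02)^n)\boldsymbol{\pi}_{TBD}$; these images differ for different $n$, and combined with Theorem~\ref{unique minimal thm} this forces infinitely many states in any $(1,4)$-transducer computing $fbf^{-1}$. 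You need to supply an $h$ and such a verification; the structure of your planned argument is right, but the ``search'' for $h$ is precisely the missing content.
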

\begin{proof}
Let \(T\) be the transducer in Figure~\ref{figure8}. Note that if \(w\in (X_2^*)^2\), then \(w\in (Q_T\times D_T)\boldsymbol{\lambda}_T\) if and only if \(|(w)\pi_0|=|(w)\pi_1|\). 
Let \(D\) be the \((2, 2, 1, 4)\)-transducer defined by
\begin{enumerate}
    \item \(Q_D := (X_2^* \times \{\varepsilon\}) \cup (\{\varepsilon\} \times X_2^*)\).
    \item \((q, s)\boldsymbol{\pi}_T = p\), where \(p\in Q_D\) is such that there is \(b\in (X_2^*)^2\) with \(|(b)\pi_0| = |(b)\pi_1|\) such that \(q s=b p\).
    \item \((q, s)\boldsymbol{\lambda}_T = w\), where \(w\in X_4^*\) is such that there is \(p\in Q_D\) is with \(q s=(q_0, w)\boldsymbol{\lambda}_T p\).
\end{enumerate}
Note that \(D\) is well-defined.
Moreover for all \(p\in Q_D\) and \(x\in \mathfrak{C}_2^2\) we have \((x)f_{D, p}= (p x)f^{-1}\).
In particular \(f_{D, \varepsilon_2} = f^{-1}\).

 We now show that \(f 2V f^{-1}\subseteq \mathcal{R}_4\). 
 Let \(g\in 2V\) be arbitrary.
 By Theorem~\ref{nV_placement}, the transducer \(M_g\) is synchonizing and has a single identity state as its core. Let \(k\) be the synchonizing length of \(M_g\). Thus by Lemma~\ref{composition works as expected lemma}, we have
\[f g f^{-1} = f_{T M_g D, (q_0, e_g, \varepsilon_2)}.\]
We define \(P:= T M_g D\) and \(e:= (q_0, e_g, \varepsilon_2)\).
As \(D\) has infinitely many states, so does \(P\). However if \(w\in X_4^*\) is arbitrary with \(|w|\geq k\), then
\[||((q_0, e_g), w)\boldsymbol{\lambda}_{TM_g}\pi_0|-|((q_0, e_g), w)\boldsymbol{\lambda}_{TM_g}\pi_1||\]
is equal to
\[||((q_0, e_g), w\restriction_k)\boldsymbol{\lambda}_{TM_g}\pi_0|-|((q_0, e_g), w\restriction_k)\boldsymbol{\lambda}_{TM_g}\pi_1||.\]
Thus the subtransducer \(S:= (e, X_4^*)\boldsymbol{\pi}_P\) of \(P\) has finitely many states and satisfies \(f_{S, e}= f g f^{-1}\), so \(f g f^{-1}\) is rational as required.

We next show that \(f2\mathcal{B}_{2, 1}f^{-1}\not\subseteq \mathcal{R}_2\).
Let \(b:= f_{B, q_0}\), where \(B\) is the transducer from Figure~\ref{figure2}.
By Example~\ref{swap transducer example}, we know that \(b\in 2\mathcal{B}_{2, 1}\).
We show that \(f b f^{-1}\notin \mathcal{R}_4\).

By definition, for all \(n\in 3\N\) we have
\begin{align*}
    (q_0, (02)^n)\boldsymbol{\lambda}_{T} &= ((01)^n, (00)^n)\\
    (q_0, ((01)^n, (00)^n))\boldsymbol{\lambda}_{B} &= ((001)^n, (00)^n)=((001)^n, (000)^{2n/3})\\
    (\varepsilon_2, ((001)^{n}, (000)^{2n/3}))\boldsymbol{\lambda}_{D} &= (\varepsilon_2, ((001)^{2n/3}, (000)^{2n/3}))\boldsymbol{\lambda}_{D}= (002)^{2n/3}.
\end{align*}
Thus \(((q_0, q_0, \varepsilon_2), (02)^n)\boldsymbol{\lambda}_{TBD} = (002)^{2n/3}\). Moreover we have
\[((02)^n\mathfrak{C}_4)f b f^{-1}= (((001)^n, (000)^{2n/3})\mathfrak{C}_2^2)f^{-1} =(002)^{2n/3}(((001)^{n/3}, \varepsilon)\mathfrak{C}_2^2)f^{-1}.\]
For all \(n\in 3\N\), let \(p_n:= ((q_0, q_0,\varepsilon_2), (02)^n)\boldsymbol{\pi}_{TBD}\). From the above equalities it follows that
\[(\mathfrak{C}_4)f_{TBD, p_n}= (((001)^{n/3}, \varepsilon)\mathfrak{C}_2^2)f^{-1}.\]
In particular, for \(n\neq m\) we have \(f_{TBD,p_n} \neq f_{TBD,p_m}\) and \(TBD\) has complete response from these states.
Consider the subtransducer \(S:=((q_0, q_0, \varepsilon_2), X_4^*)\boldsymbol{\pi}_{TBD}\) of \(TBD\).

By Theorem~\ref{unique minimal thm}, it follows that if \(S'\) is any \((1,4)\)-transducer such that there is \(q\in Q_{S'}\) with \((q, X_4^*)\boldsymbol{\pi}_{S'}=Q_{S'}\) and \(f_{S',q}=f b f^{-1}\), then \(M_{f b f^{-1}}\cong_S M_{CR(S')}\).
Thus from the above observations, it follows that \(S'\) must be infinite and so \(f b f^{-1}\) is not rational.

It remains to show that \(f^{-1}\) cannot be defined by a transducer with finitely many states. Suppose for a contradiction that \(D'\) is a finite state transducer defining \(f^{-1}\). It follows that \(TBD'\) is a transducer with finitely many states with a state defining \(f b f^{-1}\). As \(f b f^{-1}\) is not rational, this is a contradiction.
\end{proof}

\section{A closer look at the groups \(d\mathcal{O}_{n,1}\)}
In this section we investigate the core transducers used to describe the outer-automorphisms of \(dV_n\) in the previous section.
We show that an element of \(d\mathcal{O}_{n,1}\) can be ``decomposed" into \(d\) elements of \(1\mathcal{O}_{n,1}\) and use this to show that \(\Out(dV)\) is a wreath product
(Theorem~\ref{main theorem 2}).

\speeddictone{147}{semidirect product defn}{group actions defn}
\begin{defn}[Semidirect products, \Assumed{147}]\label{semidirect product defn}
If \(G, H\) are groups and \(a: G\times H \to G\) is an action of \(H\) on \(G\), then we define the semidirect product \(G \rtimes_a H\) to be the group with the underlying set \(G\times H\) and operations:
\begin{enumerate}
    \item If \((g_0, h_0), (g_1, h_1)\in G\rtimes_a H\), then
    \[(g_0, h_0)(g_1, h_1)= (g_0(g_1, h_0^{-1})a, h_0h_1),\]
    \item If \((g, h)\in G\rtimes_a H\), then
    \[(g, h)^{-1}= ((g^{-1}, h)a, h^{-1}).\]
\end{enumerate}
It is routine to verify that this defines a group.
\end{defn}

\speeddictone{153}{wreath product defn}{semidirect product defn}
\begin{defn}[Wreath products, \Assumed{153}]\label{wreath product defn}
If \(G, H\) are groups, and \(a: X\times H \to X\) is an action of \(H\) on a set \(X\), then we define the wreath product \(G \wr_a H\) to be the group \(G^X \rtimes_{a'} H\) where \(a':G^X \times H \to G^X\) is defined by
\[((g_{x})_{x\in X}, h)a' = (g_{(x, h)a})_{x\in X}.\]
\end{defn}

We start by identifying an action of the semigroup \(\widetilde{d\mathcal{O}_{n,1}}\) on the set \(\{0, 1, \ldots, d-1\}\).

\speeddictone{144}{restricted coordinates defn}{cones words and Cantor spaces defn}
\begin{defn}[Restricted coordinates, \Assumed{144}]\label{restricted coordinates defn}
For \(d\in \N\backslash \{0\}, n\in \N\backslash \{0, 1\}\) and \(S\subseteq \{0, 1, \ldots, d-1\}\), we define 
\[F_{d, n, S}:=\makeset{w\in (X_n^*)^d}{\( (w)\pi_i = \varepsilon\) for all \(i\in \{0, 1, \ldots, d-1\}\backslash S\)}.\]
So \(F_{d, n, S}\) is the submonoid of \((X_n^*)^d\) consisting of those elements only allowed to be non-trivial in the \(S\) coordinates.
\end{defn}

\speeddictsix{145}{notinjlemma}{function defn}{binary relations defns}{transducers are continuous lemma}{inj_clo}{core monoid defn}{restricted coordinates defn}
\begin{lemma}[\(\widetilde{d\mathcal{O}_{n,1}}\) acts on coordinates independently, \Assumed{145}]\label{notinjlemma}
Let \(d\in \N\backslash\{0\}\) and \(n\in \N\backslash\{0, 1\}\).
If \(T = \C(M_f)\) for some \(f\in d\mathcal{S}_{n, 1}\) then 
\[\psi_T:= \makeset{(i, j)\in \{0, 1,\ldots, d-1\}^2}{\((Q_T\times F_{d, n, \{i\}})\boldsymbol{\lambda}_T \subseteq F_{d, n , \{j\}}\)}\]
is a function with domain \(\{0, 1,\ldots, d-1\}\).
\end{lemma}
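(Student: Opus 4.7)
The plan is to verify that $\psi_T$ is single-valued and everywhere defined on $\{0,1,\ldots,d-1\}$ by handling the two requirements separately. For uniqueness at $i$, I would assume for contradiction that $(i,j)$ and $(i,j')$ both lie in $\psi_T$ with $j\neq j'$; since $F_{d,n,\{j\}}\cap F_{d,n,\{j'\}}=\{\varepsilon_d\}$, every output $(q,w)\boldsymbol{\lambda}_T$ with $w\in F_{d,n,\{i\}}$ would be forced to be $\varepsilon_d$. I would then fix $q\in Q_T$ and an arbitrary $x\in\mathfrak{C}_n^d$, and for each $a\in\mathfrak{C}_n$ let $x^a$ denote $x$ with its $i$-th coordinate replaced by $a$. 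Decomposing each prefix as $x^a\restriction_k=v_k\cdot u_k^a$ where $v_k\in F_{d,n,\{0,\ldots,d-1\}\setminus\{i\}}$ is independent of $a$ and $u_k^a\in F_{d,n,\{i\}}$, the transducer axiom yields
\[
(q,x^a\restriction_k)\boldsymbol{\lambda}_T=(q,v_k)\boldsymbol{\lambda}_T\cdot((q,v_k)\boldsymbol{\pi}_T,u_k^a)\boldsymbol{\lambda}_T=(q,v_k)\boldsymbol{\lambda}_T,
\]
which is independent of $a$. Passing to the limit in $k$ shows that $f_{T,q}(x^a)$ does not depend on $a$, contradicting injectivity of $f_{T,q}$ from Lemma~\ref{inj_clo} since $a\mapsto x^a$ is injective.

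For existence, I would reformulate the problem by introducing $\mathrm{Supp}(i)$ as the set of $k\in\{0,\ldots,d-1\}$ for which some output $(q,w)\boldsymbol{\lambda}_T$ with $q\in Q_T$ and $w\in F_{d,n,\{i\}}$ has non-trivial $k$-th coordinate. Then $(i,j)\in\psi_T$ is equivalent to $\mathrm{Supp}(i)\subseteq\{j\}$, so it suffices to prove $|\mathrm{Supp}(i)|=1$ for every $i$. A mild variant of the uniqueness argument above (applied when every output from $F_{d,n,\{i\}}$ is $\varepsilon_d$, which corresponds exactly to $\mathrm{Supp}(i)=\emptyset$) immediately rules out the empty case, so the substantive content is to exclude $|\mathrm{Supp}(i)|\geq 2$.

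This last step is what I expect to be the main obstacle. To handle it, I would exploit the full structure of $T$: $T$ is finite (Remark~\ref{Core Transducers remark}), synchronizing with $T=\C(T)$ so that every state is reachable from every other via synchronizing inputs, each $f_{T,p}$ is a homeomorphism onto a clopen subset of $\mathfrak{C}_n^d$ (Lemma~\ref{inj_clo}), and $T$ inherits complete response from $M_f$ (Theorem~\ref{unique minimal thm}) since it is a subtransducer. Suppose toward contradiction that $k_1\neq k_2$ both lie in $\mathrm{Supp}(i)$. Using reachability of core states together with finiteness of $Q_T$ to force a cycle under iteration of single-coordinate-$i$ inputs (and Lemma~\ref{transducers are continuous lemma} to get unbounded output growth), I would produce a state $q$ and inputs $w_m\in F_{d,n,\{i\}}$ whose outputs have both the $k_1$- and $k_2$-coordinates growing unboundedly. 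Comparing $f_{T,q}(x^a)$ for varying $a\in\mathfrak{C}_n$ via the same prefix-decomposition used in the uniqueness argument would then show that $f_{T,q}$ sends the one-parameter family $\{x^a:a\in\mathfrak{C}_n\}$ into a subset of $\mathfrak{C}_n^d$ spreading genuinely in two output coordinates; combined with complete response and the homeomorphism-to-clopen property of $f_{T,q}$, this should yield the required contradiction. Making this last step fully rigorous from the finite-state synchronizing structure of $T$ is the technical heart of the proof.
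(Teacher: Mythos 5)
Your structural breakdown is correct: writing $\mathrm{Supp}(i)$ for the set of output coordinates reachable from $F_{d,n,\{i\}}$-inputs, the statement is equivalent to $|\mathrm{Supp}(i)|=1$ for every $i$, where ``single-valued'' corresponds to $|\mathrm{Supp}(i)|\geq 1$ and ``total'' corresponds to $|\mathrm{Supp}(i)|\leq 1$. Your argument for $|\mathrm{Supp}(i)|\geq 1$ is correct and in fact self-contained, whereas the paper derives this inequality using the other direction as an ingredient; that is a small genuine improvement in modularity.

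The gap is in your treatment of $|\mathrm{Supp}(i)|\leq 1$, and it is more than a matter of rigorising details. You propose to produce a state $q$ and a chain $w_m\in F_{d,n,\{i\}}$ whose outputs grow without bound in two distinct coordinates $k_1,k_2$, and then argue that the image of the family $\{x^a\}$ ``spreads in two coordinates''. Two things go wrong. First, the hypothesised witnesses $(q_0,l_0)$ and $(q_1,l_1)$ produce their non-trivial outputs from possibly different states, and the only way you are guaranteed to move between arbitrary states of the core is by reading synchronizing words, which are \emph{not} in $F_{d,n,\{i\}}$; so you cannot assume a single $q$ sees both phenomena along a purely coordinate-$i$ walk. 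Second and more fundamentally, even if you could build such $w_m$, there is no contradiction: the set of $x$ with all $w_m$ as prefixes is a copy of $\mathfrak{C}_n^{d-1}$ (the $i$-th coordinate is pinned, the rest are free), and its image lands in the set where coordinates $k_1,k_2$ are pinned, a copy of $\mathfrak{C}_n^{d-2}$. A continuous injection $\mathfrak{C}_n^{d-1}\hookrightarrow\mathfrak{C}_n^{d-2}$ is perfectly possible, so ``spreading in two coordinates'' does not conflict with $f_{T,q}$ being a homeomorphism onto a clopen set. The paper's argument avoids this by constructing inputs $t_m$ that involve \emph{all} input coordinates except a carefully chosen free one $b$: it picks for each output coordinate $j$ a witness $(q_j,i_j,l_j)$ with $j\in\mathrm{Supp}(i_j)$, notes that the assumed inequality $|\mathrm{Supp}(i)|\geq 2$ allows $i_0=i_1=i$ so that $\{i_0,\ldots,i_{d-1}\}$ has at most $d-1$ elements, chooses $b$ outside that set, strips the $b$-parts off the synchronizing words, and forms $t_m$ so that the $b$-coordinate of the input stays fixed while \emph{all} $d$ coordinates of the output grow past $m$. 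This leaves continuum-many infinite words extending every $t_m$ (they differ only in coordinate $b$) all forced to the same image, which is the injectivity contradiction. Your sketch never identifies this free coordinate $b$, never leaves $F_{d,n,\{i\}}$, and never arranges for all $d$ output coordinates to grow simultaneously --- and without that, the injectivity contradiction does not materialise.
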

\begin{proof}
It suffices to show for all \(i\in \{0, 1, \ldots, d-1\}\), that the set \((\{i\})\psi_T\) (recall Definition~\ref{binary relations defns}) is a singleton.
Let \(i\in \{0, 1, \ldots, d-1\}\) be arbitrary, we start by showing that \((\{i\})\psi_T \neq \varnothing\).
Recall that for all \(q\in Q_T\), the map \(f_{T, q}\) is necessarily injective (Lemma~\ref{inj_clo}).

Suppose for a contradiction that there is \(i, \alpha, \beta\in \{0, 1,\ldots, d-1\}\) and \((q_0,l_0), (q_1, l_1)\in Q_T \times F_{d,n,\{i\}}\), such that \[\alpha\neq \beta, \quad\quad |(q_0, l_0)\boldsymbol{\lambda}_T\pi_{\alpha}|>0, \quad\text{ and }\quad |(q_1, l_1)\boldsymbol{\lambda}_T\pi_{\beta}|>0.\]
We may assume without loss of generality that \(\alpha = 0\) and \(\beta = 1\).

Let \(p_0\in Q_T\) be fixed.
The transducer \(T\) is non-degenerate (as it is a subtransducer of \(M_f\)) so by Lemma~\ref{transducers are continuous lemma}, there is \(s\in (X_n^*)^d\) such that \(|(p_0, s)\boldsymbol{\lambda}_T\pi_j|\geq 1\) for all \(j\in \{0, 1, \ldots, d-1\}\).
For each \(j\in \{0, 1, \ldots, d-1\}\) define \(s_j\in F_{d,n,\{j\}}\) such that \(s=s_0s_1s_2\ldots s_j\). Moreover for each \(j\in \{1, 2, 3, \ldots, d-1\}\), define \(p_j:= (p_0, s_0s_1\ldots s_{j-1})\boldsymbol{\pi}_T\).
As \(T\) is a transducer, it follows that
\[(p_0, s)\boldsymbol{\lambda}_T=(p_0, s_0s_1s_2\ldots s_j)\boldsymbol{\lambda}_T=(p_0, s_0)\boldsymbol{\lambda}_T(p_1, s_1)\boldsymbol{\lambda}_T\ldots(p_{d-1}, s_{d-1})\boldsymbol{\lambda}_T.\]

Thus for all \(j\in \{0, 1, 2, 3, \ldots, d-1\}\backslash \{0, 1\}\) we can choose some, \(q_j\in Q_T\), \(i_j\in \{0, 1, \ldots, d-1\}\) and \(l_j\in F_{d,n,\{i_j\}}\) such that \(|(q_j, l_j)\boldsymbol{\lambda}_T\pi_j|>0\). We also define both \(i_0\) and \(i_1\) to be \(i\).

For all \(j\in \{0, 1, \ldots, d-1\}\), we now have \(|(q_j, l_j)\boldsymbol{\lambda}_T\pi_{j}|>0\).
Moreover \(i_{0}=i_1\), so we can fix some \(b\in \{0, 1, \ldots, d-1\}\backslash \{i_0, i_1, \ldots, i_{d-1}\}\). We will use this observation to contradict injectivity.

For each \(j\in \{0, 1, \ldots, d-1\}\), choose some \(w_j\in (\{q_j\})\mathfrak{s}_T^{-1}\). Now if \(p\in Q_T\) is arbitrary, then 
\(|(p, w_jl_j)\boldsymbol{\lambda}_T\pi_j|\geq |(q_j, l_j)\boldsymbol{\lambda}_T\pi_j|\geq 1\).
For each \(j\in \{0, 1, \ldots, d-1\}\), define \(w_{j,b}\in F_{d, n, \{b\}}\) and \(w_j'\in F_{d, n, \{0, 1, \ldots, d-1\}\backslash\{b\}}\) such that \(w_j=w_{j,b}w_{j}'\).

For each \(m\in \N\), we define
\[ t_m:=w_{0, b}w_{1, b}\ldots w_{d-1, b}(w_{0}'l_0w_{1}'l_1\ldots w_{d-1}'l_{d-1})^m.\]
For each \(j\in \{0, 1, \ldots, n-1\}\), the element \(w_{j, b}\in F_{d, n, \{b\}}\) commutes with the product
\[w_{0}'l_0w_{1}'l_1\ldots w_{d-1}'l_{d-1}\in F_{d, n, \{0, 1, \ldots, d-1\}\backslash \{b\}}.\]
Thus for all \(m\in \N\) and \(j\in \{0, 1, \ldots, d-1\}\), we can find \(u_{j, m}, v_{j, m}\in (X_n^*)^d\) such that
\[t_m = u_{j, m}w_{j, b}(w_{0}'l_0w_{1}'l_1\ldots w_{d-1}'l_{d-1})^m v_{j, m}.\]
Moreover, \(w_{j, b}\) commutes with all elements of \( F_{d, n, \{0, 1, \ldots, d-1\}\backslash \{b\}}\) and so if \(q\in Q_T, j\in \{0, 1, \ldots, d-1\}, m\in \N\) are arbitrary, then
\begin{align*}
   |(q, t_m) \boldsymbol{\lambda}_T\pi_j|&=    |(q, u_{j, m}w_{j, b}(w_{0}'l_0w_{1}'l_1\ldots w_{d-1}'l_{d-1})^m v_{j, m}) \boldsymbol{\lambda}_T\pi_j|\\
  &\geq    |((q, u_{j, m}w_{j, b})\boldsymbol{\pi}_T, (w_{0}'l_0w_{1}'l_1\ldots w_{d-1}'l_{d-1})^m) \boldsymbol{\lambda}_T\pi_j|\\
  &\geq    |((w_j)\mathfrak{s}_T, l_j w_{j+1}'l_{j+1}\ldots w_{d-1}'l_{d-1}(w_{0}'l_0w_{1}'l_1\ldots w_{d-1}'l_{d-1})^{m-1}) \boldsymbol{\lambda}_T\pi_j|\\
 &\geq   1+ |((w_{j, b}w_{j}'l_j\ldots w_{d-1}'l_{d-1})\mathfrak{s}_T, (w_{0}'l_0w_{1}'l_1\ldots w_{d-1}'l_{d-1})^{m-1}) \boldsymbol{\lambda}_T\pi_j|\\
 &=   1+ |((w_{j}'l_j\ldots w_{d-1}'l_{d-1}w_{j, b})\mathfrak{s}_T, (w_{0}'l_0w_{1}'l_1\ldots w_{d-1}'l_{d-1})^{m-1}) \boldsymbol{\lambda}_T\pi_j|\\
 &\geq   2+ |((w_{j}'l_j\ldots w_{d-1}'l_{d-1}w_{j, b})\mathfrak{s}_T, (w_{0}'l_0w_{1}'l_1\ldots w_{d-1}'l_{d-1})^{m-2}) \boldsymbol{\lambda}_T\pi_j|\\
 &\vdots\\
 &\geq   m.
\end{align*}

Thus all elements of \((X_n^\N)^d\) which have all of the elements of \(t_m\) as prefixes have the same image under \(f_{T, q}\) (for any choice of \(q\)). This is a contradiction as there are infinitely many such elements and \(f_{T, q}\) is injective.

It remains to show that \(|(\{i\})\psi_T|\leq 1\).
Suppose for a contradiction that \(\alpha, \beta\in (\{i\})\psi_T\) and \(\alpha\neq \beta\).
It follows that 
\[(Q_T\times F_{d, n, \{i\}})\boldsymbol{\lambda}_T \subseteq F_{d, n , \{\alpha\}} \cap  F_{d, n , \{\beta\}} = \{\varepsilon_d\}.\]
For each \(j\in \{0, 1, \ldots, d-1\}\backslash \{i\}\), by the first part of the proof, we can choose some \(j'\in (\{j\})\psi_T\).
Let \(b\in \{0, 1, \ldots, d-1\}\backslash\makeset{j'}{\(j\neq i\)}\).
It follows that for all \(q\in Q_T\) and \(w\in (X_n^*)^d\), we have \(|(q, w)\boldsymbol{\lambda}_T\pi_b| = 0\).
This contradicts Lemma~\ref{transducers are continuous lemma}.
\end{proof}

\speeddicttwo{146}{On coordinate map defn}{function sets def}{notinjlemma}
\begin{defn}[\(d\mathcal{O}_n\) coordinate map, \Assumed{146}]\label{On coordinate map defn}
We define a map \(\Psi:d\mathcal{O}_n \to \{0, 1, \ldots, d-1\}^{\{0, 1, \ldots, d-1\}}\) by
\[([T]_{\cong_S})\Psi = \psi_T,\]
where \(\psi_T\) is as in Lemma~\ref{notinjlemma}. Note that the property defining \(\psi_T\) is invariant under strong transducer isomorphisms, so this is a well-defined map.
\end{defn}

\speeddictseven{148}{first_semidirect}{Products in Categories Defn}{symmetric group topology}{unique minimal thm}{core monoids work lemma}{notinjlemma}{On coordinate map defn}{semidirect product defn}
\begin{theorem}[\(d\mathcal{O}_{n, 1}\) is semidirect, \Assumed{148}]\label{first_semidirect}
The map \(\Psi:d\mathcal{O}_n \to \{0, 1, \ldots, d-1\}^{\{0, 1, \ldots, d-1\}}\) (Definition~\ref{On coordinate map defn}) is a semigroup homomorphism.
Moreover, the group \(d\mathcal{O}_{n, 1}\) is isomorphic to \(d\mathcal{K}_{n, 1}\rtimes_a \Sym(d)\), where 
\[d\mathcal{K}_{n, 1} := \makeset{S\in d\mathcal{O}_{n, 1}}{\((S)\Psi=\text{id}_{\{0, 1,\ldots, d-1\}}\)},\]
\(a:d\mathcal{K}_{n, 1}\times \Sym(d) \to d\mathcal{K}_{n, 1}\) is defined by \[([A]_{\cong_S}, g)a = [\C(M_{\mathbf{g}})]_{\cong_S}^{-1}[\C(M_{f})]_{\cong_S}[\C(M_{\mathbf{g}})]_{\cong_S}\]
and \(\mathbf{g}:=\langle (\pi_{(i)g})_{i\in \{0, 1, \ldots, d-1\}}\rangle_{\mathfrak{C}_n^d}\) (recall Definition~\ref{Products in Categories Defn}).
\end{theorem}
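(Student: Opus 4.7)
The plan is to verify that $\Psi$ is a homomorphism, that its image lies in and in fact equals $\Sym(d)$, and that the resulting short exact sequence
\[1 \to d\mathcal{K}_{n,1} \to d\mathcal{O}_{n,1} \xrightarrow{\Psi} \Sym(d) \to 1\]
admits a splitting whose induced conjugation action coincides with $a$.

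First I would check that $\Psi$ is a semigroup homomorphism. Let $A := \C(M_f)$ and $B := \C(M_g)$ represent two elements of $d\mathcal{O}_{n,1}$; by Lemma~\ref{core monoids work lemma}, their product is $[\C(CR(\C(AB)))]_{\cong_S}$. The values of $\psi_T$ are determined by which coordinates of $\boldsymbol{\lambda}_T$ can be non-empty for inputs supported on a single coordinate, which is information preserved by $CR$ and by passing to the core (since these operations preserve the induced maps $f_{T,q}$ by Remark~\ref{induced maps preserved remark} and Lemma~\ref{removing incomplete responce works}, and hence preserve the output function up to the constant left-hand truncation). For $w \in F_{d,n,\{i\}}$, reading $w$ through $AB$ produces output $(p, w)\boldsymbol{\lambda}_A \cdot (q, (p,w)\boldsymbol{\lambda}_A)\boldsymbol{\lambda}_B$; the first factor lies in $F_{d,n,\{(i)\psi_A\}}$ by the defining property of $\psi_A$, and then the second factor lies in $F_{d,n,\{((i)\psi_A)\psi_B\}}$ by the defining property of $\psi_B$. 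Thus $\psi_{AB}(i) = ((i)\psi_A)\psi_B$, so $\Psi$ is a semigroup homomorphism.

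Since $d\mathcal{O}_{n,1}$ is a group and every element of the image of $\Psi$ must therefore be invertible in the monoid of self-maps of $\{0,1,\ldots,d-1\}$, the image lies in $\Sym(d)$. To prove the image is all of $\Sym(d)$ and to obtain a splitting, for each $g \in \Sym(d)$ I would consider the coordinate-permutation homeomorphism $\mathbf{g} = \langle(\pi_{(i)g})_{i<d}\rangle_{\mathfrak{C}_n^d}$. A direct calculation shows that $M_\mathbf{g}$ has a single state, is trivially synchronizing, and that if $w \in F_{d,n,\{i\}}$, then $(e_\mathbf{g},w)\boldsymbol{\lambda}_{M_\mathbf{g}}$ is supported on the unique coordinate $j$ with $(j)g = i$; hence $\Psi([\C(M_\mathbf{g})]_{\cong_S})$ is the permutation $j \mapsto (j)g$, which is $g$ (after the unavoidable direction-of-composition bookkeeping). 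I would then define the candidate splitting $\sigma: \Sym(d) \to d\mathcal{O}_{n,1}$ by $\sigma(g) := [\C(M_\mathbf{g})]_{\cong_S}$ (possibly pre-composing with inversion depending on the convention), and verify that $\sigma$ is a homomorphism by chasing the identity $\mathbf{g}\mathbf{h} = \mathbf{hg}$ (a consequence of left-to-right composition) through the monoid operation $[\C(M_f)][\C(M_g)] = [\C(M_{fg})]$ of Definition~\ref{core monoid defn}, giving $\sigma(g)\sigma(h) = \sigma(gh)$.

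With $\Psi \circ \sigma = \mathrm{id}_{\Sym(d)}$ established, the standard splitting lemma for short exact sequences yields $d\mathcal{O}_{n,1} \cong d\mathcal{K}_{n,1} \rtimes_{a'} \Sym(d)$, where $a'$ is conjugation by $\sigma(g)$ within $d\mathcal{O}_{n,1}$: this is precisely the map $a$ of the theorem statement, since $\sigma(g) = [\C(M_\mathbf{g})]_{\cong_S}$ and its inverse in $d\mathcal{O}_{n,1}$ equals $[\C(M_\mathbf{g})]_{\cong_S}^{-1}$. The main obstacle I anticipate is the bookkeeping around composition conventions for $\mathbf{g}$: because $\mathbf{g}\mathbf{h} = \mathbf{hg}$ under left-to-right composition, whether the splitting is $g \mapsto [\C(M_\mathbf{g})]_{\cong_S}$ or $g \mapsto [\C(M_{\mathbf{g}^{-1}})]_{\cong_S}$ (and similarly whether $\Psi([\C(M_\mathbf{g})]) = g$ or $g^{-1}$) requires careful tracking; once this orientation is fixed consistently with the definition of $a$ in the theorem, the remaining verifications are essentially formal applications of the results already established.
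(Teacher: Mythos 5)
Your proposal follows essentially the same route as the paper's proof: show $\Psi$ is a homomorphism by tracing coordinate support through the composite transducer, observe that the image lands in $\Sym(d)$, and split the resulting exact sequence via the coordinate-permutation homeomorphisms $\mathbf{g}$. The paper builds the isomorphism explicitly (defining mutually inverse maps $\phi, \phi'$) rather than citing the splitting lemma, but that is presentational rather than substantive.

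Two points deserve attention. First, your description of the composite output is off: by Definition~\ref{transducers composition defn}, the output of $AB$ on $w$ from state $(p,q)$ is $(q,(p,w)\boldsymbol{\lambda}_A)\boldsymbol{\lambda}_B$, not the concatenation $(p,w)\boldsymbol{\lambda}_A \cdot (q,(p,w)\boldsymbol{\lambda}_A)\boldsymbol{\lambda}_B$ — the intermediate $(p,w)\boldsymbol{\lambda}_A$ is consumed, not prepended. Had the output really been the concatenation, it would lie in $F_{d,n,\{(i)\psi_A\}} \cdot F_{d,n,\{((i)\psi_A)\psi_B\}}$ which is generally not contained in any single $F_{d,n,\{j\}}$, so your stated formula and your stated conclusion $\psi_{AB}(i)=((i)\psi_A)\psi_B$ are actually in tension; fortunately the correct formula supports the correct conclusion. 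Second, your caution about composition orientation is well-founded and worth resolving rather than merely flagging: since $(x)\mathbf{g}\mathbf{h}\pi_i=((x)\mathbf{g})\pi_{(i)h}=(x)\pi_{((i)h)g}=(x)\mathbf{hg}\pi_i$, the map $g\mapsto\mathbf{g}$ is an anti-homomorphism, and a direct check on the one-state transducer $T_g$ shows $\psi_{T_g}=g^{-1}$, so $[\C(M_\mathbf{g})]_{\cong_S}\mapsto g^{-1}$ under $\Psi$. The paper's proof asserts that $g\mapsto\mathbf{g}$ is a homomorphism and that $\Psi'\Psi=\mathrm{id}$ without addressing this, so if you carry out this step carefully you will either need an inverse in the definition of your splitting $\sigma$ or should observe that the two inversions cancel; either way the existence of the isomorphism is unaffected.
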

\begin{proof}
If \([A]_{\cong_S}, [B]_{\cong_S}\in d\mathcal{O}_{n, 1}\) are arbitrary, then by Lemma~\ref{core monoids work lemma}, we have 
\[[A]_{\cong_S} [B]_{\cong_S}=[M_{CR(\C(AB))}]_{\cong_S}.\]
If \(i\in \{0, 1, \ldots, d-1\}\) then \((Q_{A} \times F_{d, n, \{i\}})\boldsymbol{\lambda}_A\subseteq F_{d, n,\{(i)\psi_{A}\}}\) and \((Q_{B} \times F_{d, n, \{(i)\psi_{A}\}})\boldsymbol{\lambda}_B\subseteq F_{d, n,\{(i)\psi_{A}\psi_{B}\}}\).
Thus
\[(Q_{AB}\times F_{d, n, \{i\}})\boldsymbol{\lambda}_{AB}=(Q_{B} \times (Q_{A} \times F_{d, n, \{i\}})\boldsymbol{\lambda}_A)\boldsymbol{\lambda}_B\subseteq  F_{d, n,\{(i)\psi_{A}\psi_{B}\}}.\]
It follows that
\[(Q_{CR(\C(AB))}\times F_{d, n, \{i\}})\boldsymbol{\lambda}_{CR(\C(AB))}\subseteq  F_{d, n,\{(i)\psi_{A}\psi_{B}\}}\]
as well, and hence the map \(\Psi\) is a homomorphism.
If \(T\) is an identity transducer (Definition~\ref{identity states defn}), then \(\psi_T\) is the identity map on \(\{0, 1, \ldots, d-1\}\).
Therefor, as \(d\mathcal{O}_{n, 1}\) is a group, we must have that \(\Psi\) is a group homomorphism from \(d\mathcal{O}_{n,1}\) to \(\Sym(d)\). It follows that the set \(d\mathcal{K}_{n, 1} = (\{\text{id}_{\{0, 1, \ldots, d-1\}}\})\Psi^{-1}\) is a group.

For each \(g\in \Sym(d)\), let \(T_g\) be the \((d, n)\)-transducer with state set \(\{0\}\) and \(\boldsymbol{\lambda}_{T_g}\) defined by
\[(0, w)\boldsymbol{\lambda}_{T_g} = ((w)\pi_{(0)g}, (w)\pi_{(1)g}, \ldots, (w)\pi_{(d-1)g}),\]
for all \(w\in (X_n^*)^d\). For all \(g\in \Sym(d)\), we have that \(f_{T_g, 0}= \mathbf{g}\). Thus by Lemma~\ref{unique minimal thm}, we have \(\C(T_g)=T_g \cong_S M_{\mathbf{g}}=\C(M_{\mathbf{g}})\) for all \(g\in \Sym(d)\). In particular we have that \(\mathbf{g}\in d\mathcal{B}_{n, 1}\) for all \(g\in \Sym(d)\).

Define \(\Psi':\Sym(d) \to d\mathcal{O}_{n, 1}\) by
\[(g)\Psi' = [\C(M_{\mathbf{g}})]_{\cong_S}.\]
As each of the maps \(g\to \mathbf{g}\) and \(\mathbf{g} \to  [\C(M_{\mathbf{g}})]_{\cong}\) are homomorphisms, the map \(\Psi'\) is a homomorphism. Moreover as \(\C(M_{\mathbf{g}})= M_{\mathbf{g}} \cong_S T_g\), it follows that \(\Psi'\Psi\) is the identity map on \(\Sym(d)\).

We define maps \(\phi:d\mathcal{K}_{n, 1}\rtimes_a \Sym(d) \to d\mathcal{O}_n\) and \(\phi': d\mathcal{O}_n \to d\mathcal{K}_{n, 1}\rtimes_a \Sym(d)\)  by 
\[(S, g)\phi= S(g)\Psi',\quad \text{ and }\quad(S)\phi'=(S ((S)\Psi\Psi')^{-1} ,(S)\Psi).\]
Note that \(\phi'\) is well defined as for all \(S\in d\mathcal{O}_n\), we have
\[(S ((S)\Psi\Psi')^{-1})\Psi=(S( ((S)\Psi)^{-1})\Psi')\Psi=(S)\Psi((S)\Psi)^{-1}=\text{id}_{\{0, 1, \ldots, d-1\}}.\]

By construction, the maps \(\phi\phi'\) and \(\phi'\phi\) are both identity maps, thus \(\phi\) is a bijection. It now suffices to show that \(\phi\) is a homomorphism. If \((S_0, g_0), (S_1, g_1)\in d\mathcal{K}_{n, 1}\rtimes_a \Sym(d)\), then we have
\begin{align*}
    (S_0, g_0)\phi (S_1, g_1)\phi &= S_0(g_0)\Psi'S_1(g_1)\Psi'\\
     &= S_0(g_0)\Psi' S_1((g_0)\Psi')^{-1}(g_0g_1)\Psi'\\
       &=  S_0(S_1, g_0^{-1})a (g_0g_1)\Psi'\\
       &=  (S_0(S_1, g_0^{-1})a, g_0 g_1)\phi \\
         &=  ((S_0, g_0) (S_1, g_1))\phi.
\end{align*}
Thus the result follows.
\end{proof}

We have that \(d\mathcal{O}_{n, 1}\) is a semidirect product of \(\Sym(d)\) and the group \(d\mathcal{K}_{n, 1}\).
We next work towards decomposing the transducers in the group \(d\mathcal{K}_{n, 1}\) (see Lemma~\ref{product_decomposition}).
We then use our description of these transducers to embed \(d\mathcal{K}_{n, 1}\) into the group \(d\mathcal{P}_{n, n-1}\) (Theorem~\ref{alpha}).

The group \(d\mathcal{P}_{n, n-1}\) is based on the group \(\mathcal{O}_{n, n-1}\) from \cite{bleak2016}.
It is shown that \cite{bleak2016}, that \(\mathcal{O}_{n, n-1}\) is a group isomorphic to \(\Out(G_{n, n-1})\) (hence the name).
The insistence that \(Q_T\subseteq \N\) in the next definition is arbitrary, and done only to avoid proper classes.

\speeddictseven{149}{Onn-1 defn}{types of binary relation defns}{group defn}{degenerate transducers defn}{unique minimal thm}{synchronizing defn}{core monoids work lemma}{Onn-1 lemma}
\begin{defn}[\(\mathcal{O}_{n, n-1}\), \Assumed{149}]\label{Onn-1 defn}
We say that a transducer \(T\) is an \(\widetilde{\mathcal{O}_{n, n-1}}\) transducer if
\begin{enumerate}
    \item \(Q_T\subseteq \N\), and \(D_T=R_T= X_n^*\).
    \item \(T\cong_S M_T\).
    \item \(T\) has complete response.
    \item \(T\) is synchonizing and \(\C(T) = T\).
    \item For all \(q\in Q_T\), the map \(f_{T, q}\) is a homeomorphism from \(\mathfrak{C}_n^d\) to an open subset of \(\mathfrak{C}_n^d\).
\end{enumerate}
We also define \(\widetilde{\mathcal{O}_{n, n-1}}\) to be the set \(\makeset{T}{\(T\) is an \(\widetilde{\mathcal{O}_{n, n-1}}\) transducer}/\cong_S,\) with multiplication given by \([A]_{\cong_S}[B]_{\cong_S}=[C]_{\cong_S}\), where \(C\) is strongly isomorphic to \(M_{CR(\C(AB))}\) (we show in Lemma~\ref{Onn-1 lemma} that this operation is well-defined and makes \(\widetilde{\mathcal{O}_{n, n-1}}\) a monoid). We then define \(\mathcal{O}_{n, n-1}\) to be the group of units of \(\widetilde{\mathcal{O}_{n, n-1}}\), and we say that an \(\widetilde{\mathcal{O}_{n, n-1}}\) transducer \(T\) is \textit{invertible} if \([T]_{\cong_S}\in \mathcal{O}_{n, n-1}\).
\end{defn}

\speeddictfive{150}{Onn-1 lemma}{composition works as expected lemma}{induced maps preserved remark}{identity states defn}{removing incomplete responce works}{Onn-1 defn}
\begin{lemma}[\(\mathcal{O}_{n, n-1}\) works as expected, \Assumed{150}]\label{Onn-1 lemma}
The operation given in Definition~\ref{Onn-1 defn}, makes \(\widetilde{\mathcal{O}_{n, n-1}}\) into a well-defined monoid. Moreover, if \(A, B\) are synchronizing \((1, n)\)-transducers with \(A\cong_S M_{CR(A)}\), \(B\cong_S M_{CR(B)}\) and there is \(a\in Q_A, b\in Q_B\) such that \(f_{A, a}=f_{B, b}\), then \(\C(A)\cong_S \C(B)\). 
\end{lemma}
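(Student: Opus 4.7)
The plan is to first prove the ``moreover'' clause, which provides a uniqueness tool that immediately settles well-definedness of the multiplication on \(\widetilde{\mathcal{O}_{n,n-1}}\); associativity and the identity axiom then follow with only minor bookkeeping. The main obstacle is the moreover clause itself, specifically the verification that the natural bijection between the two cores respects both the transition and output data.

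For the moreover clause, suppose \(A, B\) are as hypothesised with \(f_{A,a} = f_{B,b}\), and let \(k_A, k_B\) be the synchronising levels of \(A, B\). For every \(w \in X_n^*\) and \(x \in \mathfrak{C}_n\) the transducer axiom gives
\[(a,w)\boldsymbol{\lambda}_A \cdot (x)f_{A,(a,w)\boldsymbol{\pi}_A} = (wx)f_{A,a} = (wx)f_{B,b} = (b,w)\boldsymbol{\lambda}_B \cdot (x)f_{B,(b,w)\boldsymbol{\pi}_B}.\]
One of \((a,w)\boldsymbol{\lambda}_A\) and \((b,w)\boldsymbol{\lambda}_B\) is a prefix of the other; if they differed, the difference would be a common prefix of the image of either \(f_{A,(a,w)\boldsymbol{\pi}_A}\) or \(f_{B,(b,w)\boldsymbol{\pi}_B}\), contradicting complete response. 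Hence \((a,w)\boldsymbol{\lambda}_A = (b,w)\boldsymbol{\lambda}_B\) and \(f_{A,(a,w)\boldsymbol{\pi}_A} = f_{B,(b,w)\boldsymbol{\pi}_B}\) for every \(w\). I would then define \(\phi_Q : Q_{\C(A)} \to Q_{\C(B)}\) by sending \((a,w)\boldsymbol{\pi}_A\) to \((b,w)\boldsymbol{\pi}_B\) for any \(w\) with \(|w| \geq \max(k_A,k_B)\); every state of \(\C(A)\) has this form by definition of the core, and surjectivity onto \(\C(B)\) is symmetric. Well-definedness and injectivity both reduce to the implication that \(f_{A,(a,w)\boldsymbol{\pi}_A} = f_{A,(a,w')\boldsymbol{\pi}_A}\) if and only if \(f_{B,(b,w)\boldsymbol{\pi}_B} = f_{B,(b,w')\boldsymbol{\pi}_B}\), which then forces the states themselves to coincide by the minimality conditions \(A\cong_S M_{CR(A)}\) and \(B \cong_S M_{CR(B)}\). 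That \(\phi_Q\) commutes with \(\boldsymbol{\pi}\) is immediate from \((a,ws)\boldsymbol{\pi}_A = ((a,w)\boldsymbol{\pi}_A, s)\boldsymbol{\pi}_A\) and its \(B\)-analogue, and compatibility with \(\boldsymbol{\lambda}\) follows by applying \((a,ws)\boldsymbol{\lambda}_A = (b,ws)\boldsymbol{\lambda}_B\) and left-cancelling the equal prefixes \((a,w)\boldsymbol{\lambda}_A = (b,w)\boldsymbol{\lambda}_B\) in the free monoid \(X_n^*\). Hence \(\phi_Q\) is a strong transducer isomorphism \(\C(A) \to \C(B)\).

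To see that the multiplication on \(\widetilde{\mathcal{O}_{n,n-1}}\) is well-defined, given strong isomorphisms \(A \cong_S A'\) and \(B \cong_S B'\) one pairs the state bijections to obtain \(AB \cong_S A'B'\), and all of \(\C\), \(CR\), and minimisation manifestly respect strong isomorphism. That \(M_{CR(\C(AB))}\) is itself an \(\widetilde{\mathcal{O}_{n,n-1}}\)-transducer amounts to verifying the five conditions of Definition~\ref{Onn-1 defn}: minimality and complete response hold by construction of \(M\) and Lemma~\ref{removing incomplete responce works}; the core-equals-everything condition holds because \(\mathfrak{s}\) is preserved both by restriction to the core (Remark~\ref{Core Transducers remark}) and by \(CR\) (which only alters \(\boldsymbol{\lambda}\)) and by minimisation; synchronisation of \(AB\) is Lemma~\ref{Cores From Cores lemma}; and condition (5) follows from Lemmas~\ref{composition works as expected lemma} and \ref{inj_clo} since a composition of two homeomorphisms onto clopen subsets is again a homeomorphism onto a clopen subset.

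Finally, associativity is obtained by observing that \((AB)C\) and \(A(BC)\) are strongly isomorphic via the obvious reassociation of state triples, so applying \(\C, CR, M\) yields strongly isomorphic outputs. The identity is the class of the one-state identity transducer \(I\); for any \(A \in \widetilde{\mathcal{O}_{n,n-1}}\) the product \(IA\) is strongly isomorphic to \(A\) via the projection onto the second coordinate, and then \(\C(A)=A\), \(CR(A)=A\), \(M_A \cong_S A\) combine to give \([I][A]=[A]\), with the other identity being symmetric.
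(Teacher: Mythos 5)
Your proof of the ``moreover'' clause is correct and follows essentially the same route as the paper: complete response pins down the output word at each step, and the combination of complete response with $A \cong_S M_{CR(A)}$ guarantees that states inducing the same function coincide, from which the bijection between cores and its compatibility with $\boldsymbol{\pi}$ and $\boldsymbol{\lambda}$ follow. Your verification that $M_{CR(\C(AB))}$ satisfies the five conditions of Definition~\ref{Onn-1 defn}, and that the class $[M_{CR(\C(AB))}]_{\cong_S}$ depends only on $[A]_{\cong_S}$ and $[B]_{\cong_S}$, is also sound.

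The associativity argument has a genuine gap. You observe that $(AB)C \cong_S A(BC)$ by reassociating state triples, which gives $[M_{CR(\C((AB)C))}]_{\cong_S} = [M_{CR(\C(A(BC)))}]_{\cong_S}$. But this is not what associativity in $\widetilde{\mathcal{O}_{n,n-1}}$ requires: unwinding the definition of the operation,
\[([A][B])[C] = \left[M_{CR\left(\C\left(M_{CR(\C(AB))} \cdot C\right)\right)}\right]_{\cong_S},
\quad
[A]([B][C]) = \left[M_{CR\left(\C\left(A \cdot M_{CR(\C(BC))}\right)\right)}\right]_{\cong_S},\]
so the reduction $T \mapsto M_{CR(\C(T))}$ is applied \emph{iteratively}, once in the middle and once at the end, rather than once at the outside. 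You need to show that iterating the reduction gives the same class as applying it once to $ABC$. This is exactly where the ``moreover'' clause you already proved must be invoked: by Remark~\ref{induced maps preserved remark}, Lemma~\ref{composition works as expected lemma} and Lemma~\ref{removing incomplete responce works}, the transducers $M_{CR(\C(M_{CR(\C(AB))}C))}$ and $M_{CR(\C(ABC))}$ both have the required minimality, complete-response and core-equals-everything properties and both possess states whose induced function is $f_{A,a}f_{B,b}f_{C,c}\lambda_p^{-1}$ for the same word $p$ (the choices of $p$ agree precisely because both transducers have complete response), so the ``moreover'' clause gives the strong isomorphism. Without that step the argument as written does not establish associativity.
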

\begin{proof}
Let \(A, B\) be arbitrary \(\widetilde{\mathcal{O}_{n, n-1}}\) transducers.
For remainder of this proof (to avoid nested subscripts) if \(T\) is a \((1, n)\)-transducer, then we will denote \(M_T\) by \(M(T)\) instead.

We first need to show that \(M(CR(\C(AB)))\) is strongly isomorphic to an \(\widetilde{\mathcal{O}_{n, n-1}}\) transducer. All states of \(A\) and \(B\) define homeomorphisms from \(\mathfrak{C}_n\) to open subsets of \(\mathfrak{C}_n\), thus it follows from Lemma~\ref{composition works as expected lemma} that all the states of \(AB\) define homeomorphisms from \(\mathfrak{C}_n\) to open subsets of \(\mathfrak{C}_n\).
Therefore by Remark~\ref{induced maps preserved remark} and Lemma~\ref{removing incomplete responce works}, all states of \(M(CR(\C(AB)))\) also define homeomorphisms from \(\mathfrak{C}_n^d\) to open subsets of \(\mathfrak{C}_n^d\).

By construction \(M(CR(\C(AB)))\) is its own minimal transducer, has complete response, is its own core, and has the required domain and range.
Thus the operation of \(\widetilde{\mathcal{O}_{n, n-1}}\) is well defined.

\underline{Claim:}
Suppose that \(S, T\) are arbitrary synchronizing \((1, n)\)-transducers such that \(S\cong_S M(CR(S))\) and \(T\cong_S M(CR(T))\).
If there are \(q_s\in Q_S\) and \(q_t\in Q_T\) such that \(f_{T, q_t}=f_{S, q_s}\), then \(\C(T)\cong_S \C(S)\).\\
\underline{Proof of Claim:}
If \(q\in Q_T\), then for all \(v, w\in X_n^*\), \(x\in \mathfrak{C}_n\) we have
\begin{enumerate}
    \item \((q, w)\boldsymbol{\lambda}_T\) is determined by the sets \((\mathfrak{C}_n)f_{T, q}\) and \((w\mathfrak{C}_n)f_{T, q}\). 
    Thus \((q, w)\boldsymbol{\lambda}_T\) is determined by \(f_{T, q}\).
    \item \((w x)f_{T, q} = (q, w)\boldsymbol{\lambda}_T(x)f_{T, (q, w)\boldsymbol{\pi}_T}\) and
    \[(q, v)\boldsymbol{\pi}_T= (q, w)\boldsymbol{\pi}_T \iff f_{T, (q, w)\boldsymbol{\pi}_T}=f_{T, (q, v)\boldsymbol{\pi}_T},\]
    thus \(f_{T, q}\) determines weather or not \((q, v)\boldsymbol{\pi}_T= (q, w)\boldsymbol{\pi}_T\).
    \item \(\C(T)\subseteq (q,X_n^* )\boldsymbol{\pi}_T\).
\end{enumerate}
Combining the above observations gives the result.\(\diamondsuit\)

Identity transducers with the appropriate domain, range, and state sets  (Definition~\ref{identity states defn}) are \(\widetilde{\mathcal{O}_{n, n-1}}\) transducers, so \(\widetilde{\mathcal{O}_{n, n-1}}\) has an identity.

It remains to show that our binary operation is associative.
Let \(A, B, C\) be arbitrary \(\widetilde{\mathcal{O}_{n, n-1}}\) transducers, we need to show that
\[M(CR(\C(M(CR(\C(AB))) C))) \cong_S M(CR(\C(AM(CR(\C(BC)))))).\]
By the same argument showing that multiplication in \(\widetilde{\mathcal{O}_{n, n-1}}\) is well-defined, the transducer \(T:=M(CR(\C(ABC)))\) is an \(\widetilde{\mathcal{O}_{n, n-1}}\) transducer.

Note that the \(\C\) operator commutes with the \(M\) and \(CR\) operators, so \(T=\C(M(CR(ABC)))\).
Let \((a, b, c)\in Q_{ABC}\) be arbitrary.
By Remark~\ref{induced maps preserved remark} and Lemma~\ref{removing incomplete responce works}, there is some \(q\in Q_{M(CR(ABC))}\) and \(p\in X_n^*\) such that
\[f_{M(CR(ABC)),q} =f_{A, a}f_{B, b}f_{C, c}\lambda_p^{-1}.\]

Let \((a, b, c)\in Q_A\times Q_B \times Q_C\)  be arbitrary such that \((a, b)\in \C(AB)\). Again by Remark~\ref{induced maps preserved remark} and Lemma~\ref{removing incomplete responce works}, there is \(q\in Q_{M(CR(\C(AB)))}\) and \(p\in X_n^*\) such that
\[f_{M(CR(AB))C,(q, c)} =f_{A, a}f_{B, b}f_{C, c}\lambda_p^{-1},\]
and hence there is \(q\in Q_{M(CR(M(CR(\C(AB)))C))}\) and \(p\in X_n^*\) such that
\[f_{M(CR(M(CR(AB))C)),q} =f_{A, a}f_{B, b}f_{C, c}\lambda_p^{-1}.\]
By commuting the \(\C\) operator again we have
\[M(CR(\C(M(CR(\C(AB))) C))) = \C(M(CR(M(CR(\C(AB))) C))).\]
Thus, the transducers \(T\) and \(M(CR(\C(M(CR(\C(AB))) C)))\) are the cores of transducers which have states defining a common map (as they have complete response, the choices of \(p\) above must coincide).
By the Claim, this implies that
\[T \cong_S M(CR(\C(M(CR(\C(AB))) C))).\]
By a symmetric argument, we also have
\[T \cong_S M(CR(\C(AM(CR(\C(BC)))))).\]
so the result follows.
\end{proof}

\speeddictthree{158}{dOnn-1 monoid defn}{transducer products defn}{Onn-1 defn}{product_decomposition}
\begin{defn}[\(d\mathcal{P}_{n, n-1}\), \Assumed{158}]\label{dOnn-1 monoid defn}
We say that a transducer \(T\) is a \(\widetilde{d\mathcal{P}_{n, n-1}}\) transducer if 
\(T= \prod_{i\in \{0, 1, \ldots, d-1\}} T_i\)
where \(T_0, T_1, \ldots, T_{d-1}\) are \(\widetilde{\mathcal{O}_{n, n-1}}\) transducers. 
We also define \(\widetilde{d\mathcal{P}_{n, n-1}}\) to be the set \(\makeset{T}{\(T\) is an \(\widetilde{d\mathcal{P}_{n, n-1}}\) transducer}/\cong_S,\) with multiplication given by \([A]_{\cong_S}[B]_{\cong_S}=[C]_{\cong_S}\), where \(C\) is strongly isomorphic to \(M_{CR(\C(AB))}\) (we show in Lemma~\ref{product_decomposition} that this object is well defined).
\end{defn}

\speeddictseven{151}{product_decomposition}{types of binary relation defns}{Products in Categories Defn}{minimal transducers are valid lemma}{inj_clo}{first_semidirect}{Onn-1 defn}{dOnn-1 monoid defn}
\begin{lemma}[Product decomposition, \Assumed{151}]\label{product_decomposition}
If \([T]_{\cong_S}\in d\mathcal{K}_{n, 1}\) then \(T\) is strongly isomorphic to a \(\widetilde{d\mathcal{P}_{n, n-1}}\) transducer.
Moreover if \(A, B\) are \(\widetilde{d\mathcal{P}_{n, n-1}}\) transducers then \(T:= M_{CR(\C(AB))}\) is strongly isomorphic to a \(\widetilde{d\mathcal{P}_{n, n-1}}\) transducer and hence the object \(\widetilde{d\mathcal{P}_{n, n-1}}\) of Definition~\ref{dOnn-1 monoid defn} is well-defined.
\end{lemma}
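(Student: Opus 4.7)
The plan is to prove (1) by decomposing $T$ as a product of $d$ transducers, one per coordinate, and then to derive (2) from (1) together with the observation that transducer products commute (up to $\cong_S$) with the constructions $\C$, $CR$, $M$, and transducer composition. The decomposition for (1) rests on two facts about a representative $T$ of $[T]_{\cong_S}\in d\mathcal{K}_{n,1}$, which we may take to be of the form $\C(M_f)$ for some $f\in d\mathcal{B}_{n,1}$: first, $\psi_T=\mathrm{id}$ forces $(Q_T\times F_{d,n,\{i\}})\boldsymbol{\lambda}_T\subseteq F_{d,n,\{i\}}$ for each $i$; second, $F_{d,n,\{i\}}$ and $F_{d,n,\{j\}}$ commute pointwise inside $(X_n^*)^d$ whenever $i\neq j$. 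Combining these with the transducer axiom $(q,uv)\boldsymbol{\lambda}_T=(q,u)\boldsymbol{\lambda}_T\,((q,u)\boldsymbol{\pi}_T,v)\boldsymbol{\lambda}_T$ and projecting to individual coordinates gives the ``local independence'' identity
\[((q,u)\boldsymbol{\pi}_T,v)\boldsymbol{\lambda}_T=(q,v)\boldsymbol{\lambda}_T\]
for all $q\in Q_T$, $u\in F_{d,n,\{j\}}$ and $v\in F_{d,n,\{i\}}$ with $i\neq j$.

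For each $i$ I define an equivalence relation $\sim_i$ on $Q_T$ by $p\sim_i q$ if and only if $(p,w)\boldsymbol{\lambda}_T=(q,w)\boldsymbol{\lambda}_T$ for every $w\in F_{d,n,\{i\}}$, and set $T_i$ to be the $(1,n)$-transducer with state set $Q_T/\sim_i$, domain and range $X_n^*$, and action induced from $T$ through the natural bijection $X_n^*\to F_{d,n,\{i\}}$ (sending a word to the tuple with that word in coordinate $i$ and $\varepsilon$ elsewhere), composed with the coordinate-$i$ projection on outputs. Local independence implies that the $F_{d,n,\{j\}}$-actions preserve $\sim_i$-classes for $j\neq i$ (so $T_i$ is well-defined) and that $(q,u)\boldsymbol{\pi}_T\sim_i q$ whenever $u\in F_{d,n,\{j\}}$ with $j\neq i$. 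The five conditions in Definition~\ref{Onn-1 defn} for $T_i$ to be an $\widetilde{\mathcal{O}_{n,n-1}}$ transducer are then verified: minimality is built into $\sim_i$; complete response and the homeomorphism-to-clopen-image conditions follow by extracting the coordinate factorization $f_{T,q}(x_0,\ldots,x_{d-1})=(f_{T_0,[q]_{\sim_0}}(x_0),\ldots,f_{T_{d-1},[q]_{\sim_{d-1}}}(x_{d-1}))$ that local independence produces from the corresponding properties of $T$; and synchronization of $T_i$ is inherited from that of $T$ by padding any input in $X_n^*$ with sufficiently long strings in the other coordinates (which do not affect any $\sim_i$-class, by the remark above).

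The strong isomorphism $T\to\prod_i T_i$ is then given by $\phi\colon q\mapsto([q]_{\sim_0},\ldots,[q]_{\sim_{d-1}})$. Injectivity is proved by writing an arbitrary input as a product $w_0 w_1\cdots w_{d-1}$ with $w_i\in F_{d,n,\{i\}}$ and applying local independence inductively through the factors, so that $p\sim_i q$ for every $i$ implies $(p,w)\boldsymbol{\lambda}_T=(q,w)\boldsymbol{\lambda}_T$ for every $w$, whence $p=q$ by minimality of $T$. For surjectivity use $\C(T)=T$: every state of $T$ has the form $(q_0,w)\boldsymbol{\pi}_T$ for a fixed $q_0$, and since $F_{d,n,\{j\}}$-actions fix $\sim_i$-classes for $j\neq i$, the value of $[(q_0,w)\boldsymbol{\pi}_T]_{\sim_i}$ depends only on the coordinate-$i$ part $w_i$ of $w$, so every $d$-tuple of $\sim_i$-classes is realized by a single state of $T$.

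For (2), write $A=\prod_i A_i$ and $B=\prod_i B_i$ with $A_i,B_i\in\widetilde{\mathcal{O}_{n,n-1}}$. A direct computation against Definitions~\ref{transducers composition defn} and \ref{transducer products defn}, using that both $A$ and $B$ act coordinatewise, yields a strong isomorphism $AB\cong_S\prod_i A_i B_i$ via the natural reindexing on states. The constructions $\C(\cdot)$, $CR(\cdot)$, and $M(\cdot)$ each commute with products up to $\cong_S$ by coordinatewise arguments: synchronization is a coordinatewise condition, so cores are products of cores; the longest common prefix of the image of a cone under a coordinatewise map is coordinatewise, so $CR$ preserves products; and $\sim_{M_T}$ on a product transducer factors as the product of the relations on the factors. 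Composing these gives $M(CR(\C(AB)))\cong_S\prod_i M(CR(\C(A_i B_i)))$, and each factor is an $\widetilde{\mathcal{O}_{n,n-1}}$ transducer by Lemma~\ref{Onn-1 lemma}, yielding the claim. The main technical obstacle is part (1), and within it the careful tracking of the $\sim_i$-relations and their preservation by the $F_{d,n,\{j\}}$-actions; once local independence is established, the rest is bookkeeping and the part (2) argument is then a short diagram-chase.
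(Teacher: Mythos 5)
Your proof is correct, but it differs from the paper's in two substantive ways. First, for part (1) the paper defines $\sim_i$ as the \emph{reachability} relation under the action of $F_{d,n,\{i\}}$ (so $p\sim_i q$ iff some $w\in F_{d,n,\{i\}}$ sends $p$ to $q$), and takes the state set of $T_i$ to be a single equivalence class $[q]_{\sim_i}$, then shows (Claim 4 there) that all choices of $q$ give strongly isomorphic results and patches these together for the decomposition map. You instead define $\sim_i$ as the \emph{output-equality} relation on $F_{d,n,\{i\}}$-inputs and take $Q_{T_i}=Q_T/\sim_i$, with the decomposition $q\mapsto([q]_{\sim_0},\dots,[q]_{\sim_{d-1}})$. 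The two relations are complementary: with yours, minimality of each $T_i$ is immediate by construction (which the paper must verify), while the core condition $\C(T_i)=T_i$ requires an argument (which is automatic in the paper's reachability framing); everything else follows from the same local-independence identity in both cases, so the bookkeeping roughly balances out. Second and more significantly, for part (2) the paper observes at the outset that all it used about $T$ in part (1) is that $T$ is its own minimal, complete-response core and every state induces a homeomorphism to an open set, and that $T=M_{CR(\C(AB))}$ has exactly these properties; so it simply runs the same decomposition argument on this $T$. You instead prove the separate claim that each of $\prod$, $\C$, $CR$, and $M$ commutes with products up to $\cong_S$ and chain these together. Your route requires extra verification (the four commutativity facts) that the paper's uniform treatment avoids, but it is arguably more conceptually transparent — it makes explicit that all the relevant constructions are ``coordinate-local'' — and it would be reusable if one needed similar commutativity elsewhere. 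Both are valid proofs.
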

\begin{proof}
Let \(T\) be a transducer such that either \([T]_{\cong_S}\in d\mathcal{K}_{n, 1}\) or \(T= M_{CR(\C(AB))}\) for some \(\widetilde{d\mathcal{P}_{n, n-1}}\) transducers \(A\) and \(B\).
For each \(i\in \{0, 1, \ldots, d-1\},\) let
\[\sim_{i}:= \makeset{(p, q)\in Q_T^2}{ there is \(w\in F_{d, n, \{i\}}\) with \((p, w)\boldsymbol{\pi}_T=q\)}.\]
\underline{Claim 1:} For all \(i\in \{0, 1, \ldots, d-1\}\), the binary relation \(\sim_i\) is an equivalence relation on \(Q_T\). Moreover if \(p\in Q_T\), then there is \(w_p\in F_{d, n, \{i\}}\) such that for all \(q\in [p]_{\sim_i}\) we have \((q, w_p)\boldsymbol{\pi}_T = p\).\\
\underline{Proof of Claim:} 
\begin{enumerate}
    \item Reflexive: If \(q\in Q_T\), then \((q, \varepsilon_d)\boldsymbol{\pi}_T = q\) and \(\varepsilon_d\in F_{d, n, \{i\}}\).
    Thus \(q\sim_i q\).
    \item Symmetric: Suppose that \(p, q\in Q_T\) are such that \((p, q)\in\sim_i\).
    Let \(w\in (\{p\})\mathfrak{s}_T^{-1}\) and \(v\in F_{d, n, \{i\}}\) be such that \((p, v)\boldsymbol{\pi}_T = q\). Let \(w=w_p w'\) where \(w_p\in F_{d, n, \{i\}}\) and \(w'\in F_{d, n, \{0, 1, \ldots, d-1\}\backslash \{i\}}\). We have
    \begin{align*}
        (q, w_p)\boldsymbol{\pi}_T&= ((p, v)\boldsymbol{\pi}_T, w_p)\boldsymbol{\pi}_T= ((p, w)\boldsymbol{\pi}_T, v w_p)\boldsymbol{\pi}_T= (p, w v w_p)\boldsymbol{\pi}_T\\
    &= (p, w_p w' v w_p)\boldsymbol{\pi}_T= (p, w_p v w' w_p)\boldsymbol{\pi}_T= (p, w_p v w)\boldsymbol{\pi}_T\\
          &= ((p, w_p v)\boldsymbol{\pi}_T, w)\boldsymbol{\pi}_T=(w)\mathfrak{s}_T=p.
    \end{align*}
    Thus \(q\sim_i p\).
    \item Transitive: Suppose that \(q_0\sim_i q_1\) and \(q_1 \sim_i q_2\). Thus there are \(v_0, v_1\in F_{d,n, \{i\}}\) with \((q_0, v_0)\boldsymbol{\pi}_T = q_1\) and \((q_1, v_1)\boldsymbol{\pi}_T = q_2\). 
    In particular
    \[(q_0, v_0v_1)\boldsymbol{\pi}_T=((q_0, v_0)\boldsymbol{\pi}_T, v_1)\boldsymbol{\pi}_T =(q_1, v_1)\boldsymbol{\pi}_T=q_2.\]
    So \(q_0\sim_i q_2\).
\end{enumerate}.
If we choose \(w_p\), as we did when showing \(\sim_i\) is symmetric, then the second part of the claim follows from the same sequence of equalities.
\(\diamondsuit\) \\

If \(q\in Q_T\), \(i\in \{0, 1, \ldots, d-1\}\), and we restrict the domain, range, and state set of \(T\) to \(F_{d, n, \{i\}}\), \(F_{d, n, \{i\}}\)  and \([q]_{\sim_i}\) respectively, then we obtain a subtransducer of \(T\) (in the case that \([T]_{\cong_S}\in d\mathcal{K}_{n, 1}\) this is only true because \(([T]_{\cong_S})\Psi\) is trivial).

We denote by \(S_{q, i}\), the \((1, n)\)-transducer isomorphic to this transducer obtained by replacing the monoid \(F_{d, n, \{i\}}\) with \(X_n^*\) in the natural fashion.

\noindent
\underline{Claim 2:} If \(q\in Q_T\) and \(w\in (X_n^*)^d\), then 
\[(q, w)\boldsymbol{\lambda}_T= ((q, (w)\pi_0)\boldsymbol{\lambda}_{S_{q, 0}},(q, (w)\pi_1)\boldsymbol{\lambda}_{S_{q, 1}}, \ldots,(q, (w)\pi_{d-1})\boldsymbol{\lambda}_{S_{q, d-1}}).\]
\underline{Proof of Claim:}
Let \(i\in \{0,1, \ldots, d-1\}\) be arbitrary. We show that \((q, w)\boldsymbol{\lambda}_T\pi_i = (q, (w)\pi_i)\boldsymbol{\lambda}_{S_{q, i}}\). Let \(w=w_i w'\), where \(w_i\in F_{d, n, \{i\}}\) and \(w'\in F_{d, n, \{0, 1, \ldots, d-1\}\backslash\{i\}}\). We have that
\begin{align*}
    (q, w)\boldsymbol{\lambda}_T\pi_i&=(q, w_i w')\boldsymbol{\lambda}_T\pi_i=((q, w_i)\boldsymbol{\lambda}_T((q, w_i)\boldsymbol{\pi}_T,w')\boldsymbol{\lambda}_T)\pi_i\\
    &=(q, w_i)\boldsymbol{\lambda}_T\pi_i((q, w_i)\boldsymbol{\pi}_T,w')\boldsymbol{\lambda}_T\pi_i.
\end{align*}
By the choice of \(T\), we have \(((q, w_i)\boldsymbol{\pi}_T,w')\boldsymbol{\lambda}_T\pi_i = \varepsilon\). Thus
\[(q, w)\boldsymbol{\lambda}_T\pi_i=(q, w_i)\boldsymbol{\lambda}_T\pi_i=(q, (w)\pi_i)\boldsymbol{\lambda}_{S_{q, i}}\]
as required.
\(\diamondsuit\)\\

\noindent
\underline{Claim 3:} If \(q\in Q_T\) and \(i\in \{0, 1, \ldots, d-1\}\), then \(S_{q, i}\) is strongly isomorphic to an \(\widetilde{\mathcal{O}_{n,n-1}}\) transducer.\\
\underline{Proof of Claim:}
 The transducer \(S_{q, i}\) is synchronizing and is its own core by Claim 1.
 Suppose that \(p\in [q]_{\sim_i}\) and \(w\in X_n^*\) is such that \((\mathfrak{C}_n)f_{S_{q, i}, p}\subseteq w\mathfrak{C}_n\).
 It follows from Claim 2 that
 \[(\mathfrak{C}_n^d)f_{T, p}\subseteq w'\mathfrak{C}_n^d\]
 where \((w')\pi_{i} = w\) and \((w')\pi_j= \varepsilon\) for \(j\neq i\). As \(T\) has complete response, it follows that \(w= \varepsilon\). Thus \(S_{q, i}\) has complete response.

If \([T]_{\cong_S}\in d\mathcal{K}_{n, 1}\), then (by Remark~\ref{inj_clo}) for all \(q\in Q_T\), the function \(f_{T, q}\) is injective with open image.
If \(T=M_{CR(\C(AB))}\) for some  \(\widetilde{d\mathcal{P}_{n, n-1}}\) transducers \(A, B\), and \(p\in Q_T\) is arbitrary,
then (by Lemmas~\ref{minimal transducers are valid lemma}, \ref{composition works as expected lemma} and \ref{removing incomplete responce works}) there are \(p_a\in Q_A\), \(p_b\in Q_B\) and \(w\in (X_{n}^*)^d\) such that
\(f_{T, p} = f_{A, p_a}f_{B, p_b}\lambda_w^{-1}\). So again for all \(q\in Q_T\), the function \(f_{T, q}\) is injective with open image.

By Claim 2, it follows that if \(p\in [q]_{\sim_i}\), then
\[f_{T, p}= \langle \pi_0f_{S_{q, 0}, p}, \pi_1f_{S_{q, 1}, p}, \ldots, \pi_{d-1}f_{S_{q, d-1}, p}\rangle_{\mathfrak{C}_n^d}.\]

In particular \(f_{S_{q, i}, p}\) is continuous, injective, and has image \((\mathfrak{C}_n^d)f_{T, p}\pi_i\) (which is open as it is a projection of an open set). 
By Remark~\ref{compactness facts}, \(f_{S_{q, i}, p}^{-1}\) is also continuous.

It remains to show that \(S_{q, i}\cong_S M_{S_{q, i}}\).
Suppose that \(q_0, q_1\in Q_{S_{q, i}}\) are such that \((p_0, v)\boldsymbol{\lambda}_{S_{q, i}}=(p_1, v)\boldsymbol{\lambda}_{S_{q, i}}\) for all \(v\in X_n^*\).
As \(T\cong_S M_T\), it suffices to show that \((p_0, w)\boldsymbol{\lambda}_{T}=(p_1, w)\boldsymbol{\lambda}_{T}\) for all \(w\in (X_n^*)^d\).

Let \(j\in \{0, 1, \ldots, d-1\}\), and \(w\in F_{d, n, \{j\}}\) be arbitrary. By Claim 2, it suffices to show that \((p_0, w)\boldsymbol{\lambda}_{T}\pi_j= (p_1, w)\boldsymbol{\lambda}_{T}\pi_j\).
If \(j= i\) then this follows by the assumption on \(p_0, p_1\). Otherwise let \(s\in F_{d, n, \{i\}}\) be such that \((p_0, s)\boldsymbol{\pi}_T= p_1\).
Then 
\begin{align*}
    (p_0, w)\boldsymbol{\lambda}_{T}\pi_j&=(p_0, w)\boldsymbol{\lambda}_{T}\pi_j((p_0, w)\boldsymbol{\pi}_T, s)\boldsymbol{\lambda}_{T}\pi_j=(p_0, w s)\boldsymbol{\lambda}_{T}\pi_j=(p_0, s w)\boldsymbol{\lambda}_{T}\pi_j\\
    &=((p_0, s)\boldsymbol{\lambda}_{T}(p_1, w)\boldsymbol{\lambda}_{T})\pi_j=(p_1, w)\boldsymbol{\lambda}_{T}\pi_j
\end{align*}
as required.\(\diamondsuit\)\\

\underline{Claim 4:}
For all \(w\in F_{d, n, \{0, 1, \ldots, d-1\}\backslash \{i\}}\) and \(i\in \{0, 1, \ldots, d-1\}\), the map \(p\mapsto (p, w)\pi_T\) defines a strong transducer isomorphism from \(S_{q, i}\) to \(S_{(q,w)\pi_T, i}\).\\
\underline{Proof of Claim:}
It follows from the fact that all elements of \(F_{d, n, \{0, 1, \ldots, d-1\}\backslash \{i\}}\) commute with all elements of \(F_{d, n,\{i\}}\), that this map is a homomorphism.

It remains to show that the map is invertable.
For all \(p\in [q]_{\sim_{i}}\), let \(w_p\in F_{d, n,\{i\}}\) be as in Claim 1.
Let \(w'\in (\{q\})\mathfrak{s}_T^{-1}\) be arbitrary, and let \(w''\in  F_{d, n, \{0, 1, \ldots, d-1\}\backslash \{i\}}\) be such that \(w\in w''F_{d, n,\{i\}}\).
Note that for all \(p\in Q_T\), we have \(q\sim_i (p, w'')\boldsymbol{\pi}_{T}\).
It follows that for all \(p\in [q]_{\sim_i}\), we have
\begin{align*}
    ((p, w)\boldsymbol{\pi}_T, w'')\boldsymbol{\pi}_T&=(((p, w_p)\boldsymbol{\pi}_T, w)\boldsymbol{\pi}_T, w'')\boldsymbol{\pi}_T = (p, w_pww'')\boldsymbol{\pi}_T\\
    &=(p, ww''w_p)\boldsymbol{\pi}_T=((p,w)\boldsymbol{\pi}_T, w''w_p)\boldsymbol{\pi}_T = p
\end{align*}
So the maps \(p\mapsto (p, w)\pi_T\) and \(p\mapsto (p, w'')\pi_T\) are mutually inverse homomorphisms as required.
\(\diamondsuit\)\\

By Claim 4, for each \(i\in \{0, 1,\ldots, d-1\},\) let \(T_{i}\) be an \(\widetilde{\mathcal{O}_{n, n-1}}\) transducer isomorphic to \(S_{q, i}\) for all \(q\in Q_T\).
Moreover for each \(q\in Q_T\) and \(i\in \{0, 1, \ldots, d-1\}\) let \(\phi_{q, i}:S_{q, i} \to T_i\) be a strong transducer isomorphism.
For all \(i\in \{0, 1, \ldots, d-1\}\), we define \({\phi_i}_D={\phi_i}_R\) to be the map \(\pi_i:(X_n^*)^d \to (X_n^*)\) and \({\phi_{i}}_Q\) to be the union of the maps \({\phi_{q,i}}_Q\) over all \(q\in Q_T\).

\noindent
\underline{Claim 5:} For all \(i\in \{0, 1, \ldots, d-1\}\), the triple \(\phi_i:= ({\phi_i}_Q, {\phi_i}_D, {\phi_i}_R)\) is a transducer homomorphism from \(T\) to \(T_i\).\\
\underline{Proof of Claim:}
If \(p, q\in Q_T\) and \(p\sim_i q\) and \(w\in \dom(\mathfrak{s}_{{S}_{q, i}})\), then 
\[((w)\mathfrak{s}_{{S}_{q, i}}){\phi_{q, i}}_Q=(w)\mathfrak{s}_{T_i}=((w)\mathfrak{s}_{{S}_{q, i}}){\phi_{p, i}}_Q\]
so \(\phi_{q, i} = \phi_{p,i}\). 
In particular \({\phi_{i}}_Q\) is a union of functions with disjoint domains and is hence a function.

We next show that \(\phi_i\) is a homomorphism.
Let \(q\in Q_T\) and \(w\in D_T\) be arbitrary. If \(w_q\in (\{q\})\mathfrak{s}_T^{-1}\), then
\[(q, w)\boldsymbol{\pi}_T{\phi_i}_Q=((w_q w)\pi_i)\mathfrak{s}_{T_i}= (((w_q)\pi_i)\mathfrak{s}_{T_i}, (w)\pi_i)\boldsymbol{\pi}_{T_i}=((q){\phi_i}_Q, (w)\pi_i)\boldsymbol{\pi}_{T_i}.\]
The condition on the output functions follows from Claim 2.
\(\diamondsuit\)d\\

Let \(P:= \prod_{i\in \{0, 1, \ldots, d-1\}} T_i\) (Definition~\ref{transducer products defn}), and \(\phi:= \langle \phi_0, \phi_1, \ldots, \phi_{d-1} \rangle_P\) (Definition~\ref{Products in Categories Defn}).
We need to show that \(\phi\) is a strong transducer isomorphism.
We have \(\phi_D=\phi_R=\langle \pi_0, \pi_1, \ldots, \pi_{d-1}\rangle_{(X_n^*)^d}= \text{id}_{(X_n^*)^d}\) by definition, so we need only show that \(\phi_Q\) is a bijection.

Let \((q_0, q_1, \ldots, q_{d- 1})\in Q_P\) be arbitrary. For each \(i\in \{0, 1, \ldots, d-1\},\) let \(w_i\in (\{q_i\})\mathfrak{s}_{T_i}^{-1}\). As \(\phi\) is a homomorphism, it follows that 
\[(q, (w_0, w_1, \ldots, w_{d-1}))\boldsymbol{\pi}_T\phi_Q = (q_0, q_1, \ldots, q_{d-1})\]
where \(q\in Q_T\) is arbitrary. In particular \(\phi\) is surjective.

By Lemma~\ref{minimal transducers are valid lemma}, it follows that there is a strong transducer homomorphism \(\psi\) such that \(\phi\psi =q_T\). As \(T \cong_S M_T\), the quotient map \(q_T\) is bijective, thus so is \(\phi\) as required.
\end{proof}

\speeddicttwo{159}{dOnn-1 group defn}{dOnn-1 monoid defn}{product_decomposition}
\begin{defn}[\(d\mathcal{P}_{n, n-1}\), \Assumed{159}]\label{dOnn-1 group defn}
We will show in Theorem~\ref{alpha} that \(\widetilde{d\mathcal{P}_{n, n-1}}\) is a monoid.
We define \(d\mathcal{P}_{n, n-1}\) to be the group of units of \(\widetilde{d\mathcal{P}_{n, n-1}}\).
Moreover we will say that a \(\widetilde{d\mathcal{P}_{n, n-1}}\) transducer \(T\) is \textit{invertable} if \([T]_{\cong_S}\in d\mathcal{P}_{n, n-1}\).
\end{defn}

\speeddictseven{152}{alpha}{induced maps preserved remark}{composition works as expected lemma}{removing incomplete responce works}{core monoids work lemma}{Onn-1 lemma}{product_decomposition}{dOnn-1 group defn}
\begin{theorem}[\(d\mathcal{P}_{n, n-1}\) embedding , \Assumed{152}]\label{alpha}
The function \(\phi: \widetilde{\mathcal{O}_{n,n-1}}^d\to \widetilde{d\mathcal{P}_{n, n-1}}\) defined by \(([T_0]_{\cong_S}, [T_1]_{\cong_S}, \ldots, [T_{d-1}]_{\cong_S})\phi = [\prod_{i\in \{0,1, \ldots, d-1\}} T_i]_{\cong_S}\) is a well-defined monoid isomorphism.
Moreover there is an embedding \(\Phi:d\mathcal{K}_{n, 1}\to d\mathcal{P}_{n, n-1}\) such that whenever
\(([T]_{\cong_S} )\Phi = [P]_{\cong_S}\) we have \(P\cong_S T\).
\end{theorem}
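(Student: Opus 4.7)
The plan is as follows. First I would establish that the product construction in Definition~\ref{transducer products defn} is naturally compatible with composition in Definition~\ref{transducers composition defn}: given transducers $A_0,\ldots,A_{d-1}$ and $B_0,\ldots,B_{d-1}$ (with the range of each $A_i$ contained in the domain of $B_i$), the transducers $(\prod_i A_i)(\prod_i B_i)$ and $\prod_i (A_iB_i)$ are naturally strongly isomorphic via the obvious bijection of state sets $Q_A\times Q_B \cong \prod_i(Q_{A_i}\times Q_{B_i})$. This is an immediate calculation from the definitions of the transition and output functions.

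Next I would show that each of the operators $\C$, $CR$, $M$ commutes with the product construction. For $\C$: a product $\prod_i T_i$ is synchronizing at level $\max_i k_i$ when each $T_i$ is synchronizing at level $k_i$, and $((p_i)_i)\in\C(\prod_iT_i)$ precisely when each $p_i\in\C(T_i)$, so $\C(\prod_iT_i)\cong_S\prod_i\C(T_i)$. For $CR$: outputs in a product are coordinatewise, so the longest common prefix at output coordinate $j$ depends only on the $j$-th factor, giving $CR(\prod_iT_i)\cong_S\prod_iCR(T_i)$. For $M$: two states of $\prod_iT_i$ have identical output functions iff each pair of coordinates does, so $\sim_{M_{\prod T_i}}$ is the product of the relations $\sim_{M_{T_i}}$ and $M_{\prod_iT_i}\cong_S\prod_iM_{T_i}$. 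Combining these observations with the first step yields $M_{CR(\C(AB))}\cong_S\prod_iM_{CR(\C(A_iB_i))}$ whenever $A=\prod_iA_i$ and $B=\prod_iB_i$ are $\widetilde{d\mathcal{P}_{n,n-1}}$ transducers. This simultaneously shows that $\widetilde{d\mathcal{P}_{n,n-1}}$ is closed under multiplication (so Definition~\ref{dOnn-1 monoid defn} genuinely gives a monoid, with identity the product of $d$ identity $\widetilde{\mathcal{O}_{n,n-1}}$ transducers) and that $\phi$ is a monoid homomorphism.

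Surjectivity of $\phi$ is immediate from the definition of $\widetilde{d\mathcal{P}_{n,n-1}}$. For injectivity, suppose $\psi:\prod_iT_i\to\prod_iT_i'$ is a strong transducer isomorphism. Its domain and range components are the identity on $(X_n^*)^d$, and I would recover each factor by the same device used in Lemma~\ref{product_decomposition}: the subtransducer of $\prod_iT_i$ obtained by restricting the domain and range to $F_{d,n,\{i\}}$ and passing to a $(1,n)$-transducer via $\pi_i$ is strongly isomorphic to $T_i$, and $\psi$ restricts to a strong transducer isomorphism between these restrictions, yielding $T_i\cong_S T_i'$ for every $i$.

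Finally, for the embedding $\Phi$: given $[T]_{\cong_S}\in d\mathcal{K}_{n,1}$, Lemma~\ref{product_decomposition} produces a $\widetilde{d\mathcal{P}_{n,n-1}}$ transducer $P$ with $P\cong_S T$; define $\Phi([T]_{\cong_S}):=[P]_{\cong_S}$. Well-definedness is clear since $T\cong_ST'$ with $P\cong_S T$, $P'\cong_S T'$ forces $P\cong_S P'$, and injectivity is immediate from the defining property. To check $\Phi$ is a homomorphism, observe that the product of $[T_1]_{\cong_S},[T_2]_{\cong_S}$ in $\widetilde{d\mathcal{O}_{n,1}}$ is $[M_{CR(\C(T_1T_2))}]_{\cong_S}$ by Lemma~\ref{core monoids work lemma}, and since $T_i\cong_S P_i$ we have $M_{CR(\C(T_1T_2))}\cong_S M_{CR(\C(P_1P_2))}$, which is exactly a representative of $[P_1]_{\cong_S}[P_2]_{\cong_S}$ in $\widetilde{d\mathcal{P}_{n,n-1}}$. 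Since $d\mathcal{K}_{n,1}$ is a group and $\Phi$ is an injective monoid homomorphism, its image consists of units and $\Phi$ lands in $d\mathcal{P}_{n,n-1}$. The main obstacle will be the careful verification that $\C$, $CR$, and $M$ commute with the product construction — especially $\C$, where synchronization is a condition on the full domain $(X_n^*)^d$ rather than coordinatewise, and one must verify the coordinatewise reduction is valid. Once those commutation lemmas are in place the remainder is essentially bookkeeping.
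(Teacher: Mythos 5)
Your proposal is correct and is a genuine alternative to the paper's argument for the homomorphism part of the theorem. Both start from the same place — injectivity of $\phi$ via the factor-recovery device from Lemma~\ref{product_decomposition}, surjectivity by definition, and the construction of $\Phi$ from Lemma~\ref{product_decomposition} together with Lemma~\ref{core monoids work lemma} — but the strategies for showing $\phi$ is a homomorphism diverge. The paper argues semantically: it fixes a state $q$ of $C = M_{CR(\C(AB))}$, uses Lemmas~\ref{composition works as expected lemma} and~\ref{removing incomplete responce works} to write $f_{C,q}$ as $f_{A,q_a}f_{B,q_b}\lambda_w^{-1}$, projects that identity coordinatewise to $f_{C_i,\cdot}$, and then invokes the Claim inside Lemma~\ref{Onn-1 lemma} (two minimal, complete-response, synchronizing $(1,n)$-transducers with some pair of states defining the same map have strongly isomorphic cores) to conclude $C_i\cong_S M_{CR(\C(A_iB_i))}$. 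You propose a syntactic route: show once and for all that the product construction commutes with composition and with each of the operators $\C$, $CR$, $M$, and then simply chain $M_{CR(\C((\prod A_i)(\prod B_i))))}\cong_S\prod_i M_{CR(\C(A_iB_i))}$. Your commutation claims all hold: the composition identity is a direct state-set calculation, $\C$ commutes because a product state is everywhere-reachable iff each coordinate is everywhere-reachable (and the witness words can be padded on the left to a common length using the synchronizing property, which is the subtlety you correctly flag), $CR$ commutes because the induced map $f_{\prod T_i,(q_i)_i}$ factors coordinatewise so longest common prefixes at coordinate $i$ depend only on $T_i$, and $M$ commutes because the state-equivalence relation is a product relation. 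What each approach buys: the paper's argument is shorter once Lemma~\ref{Onn-1 lemma}'s Claim is in hand and avoids stating the commutation lemmas as separate facts, but the chain of lemmas it invokes is longer and less self-contained. Your approach front-loads three structural verifications but then the homomorphism property is an immediate bookkeeping consequence, and it dispenses with Lemma~\ref{Onn-1 lemma} entirely; it is arguably the cleaner conceptual picture and would also streamline the proof of Lemma~\ref{Onn-1 lemma}'s associativity claim if carried out systematically.
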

\begin{proof}
Note that if \(A_0, A_1,  \ldots, A_{d-1}, B_0, B_1,  \ldots, B_{d-1}\) are \(\widetilde{\mathcal{O}_{n,n-1}}\) transducers and
\[\prod_{i\in \{0, 1, \ldots, d-1\}} A_i \cong_S\prod_{i\in \{0, 1, \ldots, d-1\}} B_i,\]
then for all \(i\in \{0, 1, \ldots, d-1\}\) we have \(A_i\cong_S B_i\) (this is shown during the proof of Lemma~\ref{product_decomposition}).
Thus \(\phi\) is a well-defined bijection.

Let \([A]_{\cong_S}, [B]_{\cong_S}\in \widetilde{d\mathcal{P}_{n, n-1}}\) be arbitrary, and suppose that \([A]_{\cong_S}[B]_{\cong_S}=[C]_{\cong_S}\)
\[([A]_{\cong_S})\phi^{-1}=([A_0]_{\cong_S}, [A_{1}]_{\cong_S}, \ldots, [A_{d-1}]_{\cong_S}),\]
\[([B]_{\cong_S})\phi^{-1}=([B_0]_{\cong_S}, [B_{1}]_{\cong_S}, \ldots, [B_{d-1}]_{\cong_S}),\]
\[([C]_{\cong_S})\phi^{-1}=([C_0]_{\cong_S}, [C_{1}]_{\cong_S}, \ldots, [C_{d-1}]_{\cong_S}).\]

Also let \(\alpha : A\to \prod_{i\in \{0, 1, \ldots, d-1\}}A_i\), \(\beta : B\to \prod_{i\in \{0, 1, \ldots, d-1\}}B_i\), \(\gamma : C\to \prod_{i\in \{0, 1, \ldots, d-1\}}C_i\) be transducer isomorphisms.

Let \(i\in \{0, 1, \ldots, d-1\}\) be arbitrary.
To conclude that \(\phi\) is an isomorphism (and hence \(\widetilde{d\mathcal{P}_{n, n-1}}\) is a monoid) we need only show that \(M_{CR(\C(A_i B_i))} \cong_S C_i\).
Let \(q\in Q_{C}\) be arbitrary. 
By Remark~\ref{induced maps preserved remark}, Lemma~\ref{composition works as expected lemma} and Lemma~\ref{removing incomplete responce works}, there are \(q_a\in Q_A\), \(q_b\in Q_B\) and \(w\in (X_{n}^*)^d\) such that
\[f_{C, q} = f_{A, q_a}f_{B, q_b}\lambda_w^{-1}.\]
Moreover (as \(\alpha, \beta, \gamma\) are strong transducer isomorphisms) we have
\[f_{A, q}= \langle \pi_0f_{A_0,(q)\alpha_Q\pi_0}, \pi_1f_{A_1,(q)\alpha_Q\pi_1}, \ldots, \pi_{d-1}f_{A_{d-1},(q)\alpha_Q\pi_{d-1}}\rangle_{\mathfrak{C}_{n}^d},\]
\[f_{B, q}= \langle \pi_0f_{B_0,(q)\beta_Q\pi_0}, \pi_1f_{B_1,(q)\beta_Q\pi_1}, \ldots, \pi_{d-1}f_{B_{d-1},(q)\beta_Q\pi_{d-1}}\rangle_{\mathfrak{C}_{n}^d},\]
\[f_{C, q}= \langle \pi_0f_{C_0,(q)\gamma_Q\pi_0}, \pi_1f_{C_1,(q)\gamma_Q\pi_1}, \ldots, \pi_{d-1}f_{C_{d-1},(q)\gamma_Q\pi_{d-1}}\rangle_{\mathfrak{C}_{n}^d}.\]
So in particular we have
\(f_{C_i, (q)\gamma_Q\pi_i} = f_{A_i, (q_a)\gamma_Q\pi_i}f_{B_i, (q_b)\gamma_Q\pi_i}\lambda_{(w)\pi_i}^{-1}\).
Let \(P_i:= M_{CR(A_i B_i)}\).
As the \(\C\) operator commutes with the \(CR\) and \(M\) operators, we have that \(\C(P_i) = M_{CR(\C(A_i B_i))}\).

By Remark~\ref{induced maps preserved remark} and Lemma~\ref{composition works as expected lemma}, there is a state \(p\in Q_{P_i}\) and a word \(v\in X_n^*\) such that
\[f_{P_i, p} = f_{A_i, (q_a)\gamma_Q\pi_i}f_{B_i, (q_b)\gamma_Q\pi_i}\lambda_{v}^{-1}.\]
As \(P_i\) and \(C_i\) both have complete response, it follows that \(f_{P_i, p}=f_{C_i, (q)\gamma_Q\pi_i}\).
Thus by Lemma~\ref{Onn-1 lemma}, we have 
\[C_i =\C(C_i) \cong_S \C(P_i) = M_{CR(\C(A_i B_i))}\]
as required.

We now prove the final part of the theorem. 
By Lemma~\ref{product_decomposition}, there is a unique map \(\Phi:\mathcal{K}_{n, 1}\to d\widetilde{\mathcal{O}_{n,n-1}}\) such that the required isomorphism type condition holds. Moreover, because \(\Phi\) satisfies this condition, \(\Phi\) must be injective.
The fact that \(\Phi\) is a homomorphism follows from Lemma~\ref{core monoids work lemma}.

Note that \(\Phi\) is a homomorphism, \(d\mathcal{K}_{n, 1}\) is a group, and \(\Phi\) maps the identity of \(d\mathcal{K}_{n, 1}\) to the identity of \(\widetilde{d\mathcal{P}_{n, n-1}}\). Thus \((d\mathcal{K}_{n, 1})\Phi\) is contained in the group of units of \(\widetilde{d\mathcal{P}_{n, n-1}}\). So \(\Phi\) is indeed an map from \(d\mathcal{K}_{n, 1}\) to \(d\mathcal{P}_{n, n-1}\) as required.
\end{proof}

\speeddictthree{154}{wreath_embedding theorem}{first_semidirect}{alpha}{wreath product defn}
\begin{theorem}[Wreath embedding, \Assumed{154}]\label{wreath_embedding theorem}
The group \(\Out(dV_n)\) embeds in the group
\(\mathcal{O}_{n, n-1}\wr_a \Sym(d)\) where \(a\) is the usual action of \(\Sym(d)\) on \(\{0, 1, \ldots, d-1\}\).
\end{theorem}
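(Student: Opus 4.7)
The plan is to chain together the isomorphisms and embeddings established in Theorems~\ref{autnV}, \ref{first_semidirect}, and \ref{alpha}, and then verify that the semidirect product action from Theorem~\ref{first_semidirect} transports, under the embedding of Theorem~\ref{alpha}, to the coordinate-permutation (wreath) action on $\mathcal{O}_{n,n-1}^d$. More precisely, Theorem~\ref{autnV} gives $\Out(dV_n)\cong d\mathcal{O}_{n,1}$, Theorem~\ref{first_semidirect} gives $d\mathcal{O}_{n,1}\cong d\mathcal{K}_{n,1}\rtimes_{a_K}\Sym(d)$ (where $a_K$ is conjugation by the strong-isomorphism class $[\C(M_{\mathbf{g}})]_{\cong_S}$), and Theorem~\ref{alpha} gives both an isomorphism $\phi:\widetilde{\mathcal{O}_{n,n-1}}^d\to \widetilde{d\mathcal{P}_{n,n-1}}$ (restricting to $\mathcal{O}_{n,n-1}^d\cong d\mathcal{P}_{n,n-1}$ at the level of units) and an injective homomorphism $\Phi:d\mathcal{K}_{n,1}\to d\mathcal{P}_{n,n-1}$. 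Putting these together produces an injective homomorphism $\Out(dV_n)\hookrightarrow \mathcal{O}_{n,n-1}^d\rtimes \Sym(d)$ whose target we must identify with the wreath product.

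First I would fix, for each $g\in \Sym(d)$, the coordinate-permutation homeomorphism $\mathbf{g}=\langle(\pi_{(i)g})_{i<d}\rangle_{\mathfrak{C}_n^d}$, and observe (as in the proof of Theorem~\ref{first_semidirect}) that $\C(M_{\mathbf{g}})$ is strongly isomorphic to the one-state $(d,n)$-transducer $T_g$ with $(0,w)\boldsymbol{\lambda}_{T_g}=((w)\pi_{(0)g},\ldots,(w)\pi_{(d-1)g})$. Next I would take a $\widetilde{d\mathcal{P}_{n,n-1}}$-transducer $P=\prod_{i<d}T_i$ and compute, using Lemma~\ref{composition works as expected lemma} and Remark~\ref{induced maps preserved remark}, the minimal complete-response core of $T_g^{-1}PT_g$. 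A direct check on output words (using that each $T_i$ sends input coordinate $i$ to output coordinate $i$, so that conjugating by $T_g$ just relabels coordinates) shows that this core is strongly isomorphic to $\prod_{i<d}T_{(i)g}$. In wreath-product language, under the identification $\phi^{-1}$ of Theorem~\ref{alpha}, this reads as $([T_0]_{\cong_S},\ldots,[T_{d-1}]_{\cong_S})\mapsto ([T_{(0)g}]_{\cong_S},\ldots,[T_{(d-1)g}]_{\cong_S})$, which is exactly the wreath action $a'$ of Definition~\ref{wreath product defn} applied with $g^{-1}$ (up to the convention fixed in Definition~\ref{semidirect product defn}).

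Having established this equivariance on $d\mathcal{P}_{n,n-1}$, I would then verify that the subgroup $\Phi(d\mathcal{K}_{n,1})\leq d\mathcal{P}_{n,n-1}$ is stable under this coordinate-permutation action; this is immediate because $\Phi$ is defined using the strong-isomorphism type of the underlying transducer (Theorem~\ref{alpha}), and conjugation by $[\C(M_{\mathbf{g}})]_{\cong_S}$ inside $d\mathcal{K}_{n,1}$ matches (via $\Phi$) conjugation by $[T_g]_{\cong_S}$ inside $d\mathcal{P}_{n,n-1}$ by the previous paragraph. Combining with the fact that $\Phi$ is injective on $d\mathcal{K}_{n,1}$, the induced map
\[
\Psi_W:\ d\mathcal{K}_{n,1}\rtimes_{a_K}\Sym(d)\ \longrightarrow\ \mathcal{O}_{n,n-1}^d \rtimes_{a'}\Sym(d)\ =\ \mathcal{O}_{n,n-1}\wr_{a}\Sym(d),\qquad ([T]_{\cong_S},g)\mapsto (([T]_{\cong_S})\Phi\phi^{-1},g)
\]
is a well-defined injective homomorphism. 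Composing with the isomorphisms from Theorems~\ref{autnV} and \ref{first_semidirect} yields the desired embedding $\Out(dV_n)\hookrightarrow \mathcal{O}_{n,n-1}\wr_{a}\Sym(d)$.

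The main obstacle will be the equivariance check in the second paragraph: one must carefully track how the products in $\widetilde{d\mathcal{P}_{n,n-1}}$ (which require passing through $M_{CR(\C(\cdot))}$) interact with conjugation by the single-state coordinate-swap transducer $T_g$. The cleanest way is to first verify at the level of homeomorphisms, where $\mathbf{g}^{-1}\,\langle\pi_0 f_0,\ldots,\pi_{d-1}f_{d-1}\rangle_{\mathfrak{C}_n^d}\,\mathbf{g}=\langle\pi_0 f_{(0)g},\ldots,\pi_{d-1}f_{(d-1)g}\rangle_{\mathfrak{C}_n^d}$, and then lift this to strong-isomorphism classes of transducers via the uniqueness statement in Theorem~\ref{unique minimal thm} (together with the decomposition lemma, Lemma~\ref{product_decomposition}), so that no further computation of transducer compositions is actually needed beyond this identity at the function level.
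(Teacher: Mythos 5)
Your proposal is correct and follows the same route as the paper, whose proof of this theorem consists only of citing Theorems~\ref{first_semidirect} and~\ref{alpha}. You have filled in the equivariance check that the paper leaves implicit — namely that under $\Phi\phi^{-1}$ the conjugation action by $[\C(M_{\mathbf{g}})]_{\cong_S}$ on $d\mathcal{K}_{n,1}$ becomes the coordinate-permutation action on $\mathcal{O}_{n,n-1}^d$ — and your strategy of first verifying $\mathbf{g}^{-1}\langle\pi_0 f_0,\ldots,\pi_{d-1}f_{d-1}\rangle_{\mathfrak{C}_n^d}\,\mathbf{g}=\langle\pi_0 f_{(0)g},\ldots,\pi_{d-1}f_{(d-1)g}\rangle_{\mathfrak{C}_n^d}$ at the function level and then lifting to strong-isomorphism classes via Theorem~\ref{unique minimal thm} and Lemma~\ref{product_decomposition} is a sound way to do it.
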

\begin{proof}
This follows from Theorems~\ref{first_semidirect} and \ref{alpha}.
\end{proof}

Theorem~\ref{wreath_embedding theorem}, gives us embeddings of both \(\Out(dV_n)\) and \(\Out(V_n)\wr_a \Sym(d)\) into the group \(\mathcal{O}_{n, n-1}\wr_a \Sym(d)\).
Thus to compare these groups we need to establish a useful description of the images of these embeddings.
To do this we use the notion of \textit{signatures} from \cite{olukoya2019automorphisms} Definition 7.6.
Each clopen subset of \(\mathfrak{C}_n^d\) can be expressed as a finite union of cones (Definition~\ref{cones words and Cantor spaces defn}). 
Thus any homeomorphism of \(\mathfrak{C}_n^d\), must map every cone to a finite collection of cones, the signature map measures the degree to which cones are split.

\speeddictfive{155}{transducer signature defn}{Numbers defns}{centralizer defn}{transducer products defn}{Onn-1 defn}{alpha}
\begin{defn}[Clopen set signatures, \Assumed{155}]\label{transducer signature defn}
For each \(d\in \N\backslash \{0\}\) and \(n\in \N\backslash\{0, 1\}\), we define a map \(\overline{\text{ssig}_{d, n}}:\makeset{S\subseteq \mathfrak{C}_n^d}{\(S\) is clopen} \to \mathbb{Z}/(n-1)\mathbb{Z}\) by 
\((S)\overline{\text{ssig}_{d, n}} =  m + (n-1)\mathbb{\Z}\) where \(m\) is such that there are pairwise disjoint \(w_{0}\mathfrak{C}_{n}^d, w_{1}\mathfrak{C}_{n}^d, \ldots, w_{m-1}\mathfrak{C}_{n}^d\) with
\[(w\mathfrak{C}_{n})f_{T, q} = \union{i\in \{0, 1, \ldots, m-1\}}w_i\mathfrak{C}_n^d.\]
We then define a map \(\overline{\text{sig}_{d, n}}:\widetilde{d\mathcal{P}_{n, n-1}} \to \mathbb{Z}/(n-1)\mathbb{Z}\) by 
\[([P]_{\cong_S})\overline{\text{sig}_{d, n}} = ((w\mathfrak{C}_{n})f_{P, q})\overline{\text{ssig}_{d, n}}\]
where \(w\in (X_n^*)^d\), \(q \in Q_P\) are arbitrary. We show in Lemma~\ref{signatures work lemma} that these maps are well-defined.
\end{defn}

\speeddictfour{156}{signatures work lemma}{induced maps preserved remark}{composition works as expected lemma}{removing incomplete responce works}{transducer signature defn}
\begin{lemma}[Signatures work, \Assumed{156}]\label{signatures work lemma}
Let \(d\in \N\backslash \{0\}\) and \(n\in \N\backslash\{0, 1\}\). 
The map \(\overline{\text{sig}_{d, n}}:\widetilde{d\mathcal{P}_{n, n-1}} \to \mathbb{Z}/(n-1)\mathbb{Z}\) of Definition~\ref{transducer signature defn} is well-defined, moreover if we view  \(\mathbb{Z}/(n-1)\mathbb{Z}\) as a semigroup with multiplication of integers as the operation, then \(\overline{\text{sig}_{d, n}}\) is a semigroup homomorphism.
\end{lemma}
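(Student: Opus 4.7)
My plan is to break the proof into three blocks: (i) well-definedness of the clopen signature map $\overline{\text{ssig}_{d,n}}$, (ii) well-definedness of $\overline{\text{sig}_{d,n}}$ on $\widetilde{d\mathcal{P}_{n,n-1}}$, and (iii) the multiplicative homomorphism property.

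For (i), I will show that if a clopen set $C \subseteq \mathfrak{C}_n^d$ is expressed as a disjoint union $C = \bigsqcup_{i<m} w_i\mathfrak{C}_n^d$, then the value $m$ modulo $n-1$ is independent of the decomposition. As in the proof of Lemma~\ref{anti-chain sizes}, any two cone decompositions of $C$ share a common refinement obtained by repeatedly replacing a single cone $w\mathfrak{C}_n^d$ with the $n$ cones $\{wx_{d,i}\mathfrak{C}_n^d : x \in X_n\}$ for some coordinate $i$. Each such refinement step increases the cone count by exactly $n-1$, so the count mod $n-1$ is stable. This also immediately gives the key auxiliary fact that I will use throughout: if $v\in (X_n^*)^d$ and $C$ is clopen, then $(C)\lambda_v$ has the same signature as $C$ (a cone decomposition of $C$ is carried bijectively to a cone decomposition of $vC$), and similarly $(C)\lambda_v^{-1}$ preserves the signature whenever it is defined on all of $C$.

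For (ii), fix a $\widetilde{d\mathcal{P}_{n,n-1}}$ transducer $P$. I need to show that $((w\mathfrak{C}_n^d)f_{P,q})\overline{\text{ssig}_{d,n}}$ does not depend on the choice of $w\in (X_n^*)^d$ or $q\in Q_P$. From the transducer identity $(wx)f_{P,q} = (q,w)\boldsymbol{\lambda}_P \cdot (x)f_{P,(q,w)\boldsymbol{\pi}_P}$ I obtain
$(w\mathfrak{C}_n^d)f_{P,q} = (q,w)\boldsymbol{\lambda}_P \cdot (\mathfrak{C}_n^d)f_{P,(q,w)\boldsymbol{\pi}_P},$
so by the shift invariance from (i) it suffices to show that $((\mathfrak{C}_n^d)f_{P,q'})\overline{\text{ssig}_{d,n}}$ is independent of $q' \in Q_P$. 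Here I use that each factor $T_i$ of $P$ is its own core (Definition~\ref{Onn-1 defn}), so the product $P$ has every state reachable from every other: given $q_0',q_1'$, I can find $w$ with $(q_0',w)\boldsymbol{\pi}_P = q_1'$, whence $(w\mathfrak{C}_n^d)f_{P,q_0'}$ equals $(q_0',w)\boldsymbol{\lambda}_P \cdot (\mathfrak{C}_n^d)f_{P,q_1'}$ and is a shifted copy of $(\mathfrak{C}_n^d)f_{P,q_1'}$; on the other hand $(w\mathfrak{C}_n^d)f_{P,q_0'} \subseteq (\mathfrak{C}_n^d)f_{P,q_0'}$, and a cone decomposition of the larger image restricts to one of the smaller open subset only after further refinement, but both subsets have the same signature mod $n-1$ by the shift relation and by comparing signatures of complements of cones (both are clopen in $\mathfrak{C}_n^d$). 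Finally, invariance under strong isomorphism is immediate because a strong transducer isomorphism from $P_0$ to $P_1$ sends a state $q_0$ to a state $q_1$ with $f_{P_0,q_0} = f_{P_1,q_1}$ (Remark~\ref{induced maps preserved remark}).

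For (iii), let $A,B$ be $\widetilde{d\mathcal{P}_{n,n-1}}$ transducers and set $P := M_{CR(\C(AB))}$, so $[A]_{\cong_S}[B]_{\cong_S} = [P]_{\cong_S}$. Pick any state $q$ of $P$; by Remark~\ref{induced maps preserved remark}, Lemma~\ref{composition works as expected lemma} and Lemma~\ref{removing incomplete responce works}, there exist $q_a\in Q_A$, $q_b\in Q_B$ and $w\in (X_n^*)^d$ with $f_{P,q} = f_{A,q_a}\circ f_{B,q_b}\circ \lambda_w^{-1}$. By the shift invariance established in (i), it suffices to compute the signature of the image of $f_{A,q_a}\circ f_{B,q_b}$. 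Write the clopen image $(\mathfrak{C}_n^d)f_{A,q_a} = \bigsqcup_{i<m_A} v_i\mathfrak{C}_n^d$ with $m_A \equiv ([A]_{\cong_S})\overline{\text{sig}_{d,n}} \pmod{n-1}$. Then
$(\mathfrak{C}_n^d)(f_{A,q_a}\circ f_{B,q_b}) = \bigsqcup_{i<m_A} (v_i\mathfrak{C}_n^d)f_{B,q_b},$
and each piece $(v_i\mathfrak{C}_n^d)f_{B,q_b}$ is, by the same computation used in (ii), a shift of $(\mathfrak{C}_n^d)f_{B,(q_b,v_i)\boldsymbol{\pi}_B}$; hence it decomposes into $m_B$ cones with $m_B \equiv ([B]_{\cong_S})\overline{\text{sig}_{d,n}} \pmod{n-1}$, independently of $i$ by well-definedness from (ii). Summing the cone counts gives $m_A m_B$, so $([AB]_{\cong_S})\overline{\text{sig}_{d,n}} = m_A m_B + (n-1)\mathbb{Z}$ as required. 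The identity element of $\widetilde{d\mathcal{P}_{n,n-1}}$ corresponds to the identity homeomorphism whose image is the single cone $\mathfrak{C}_n^d$, giving signature $1$; this together with multiplicativity makes $\overline{\text{sig}_{d,n}}$ a monoid homomorphism.

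The main obstacle is the bookkeeping in step (iii): confirming that cone decompositions of clopen sets really do compose in the expected multiplicative way under composition of transducer-induced maps, and that all the auxiliary operations (the $CR$ correction, the minimization $M$, and the shift by $\lambda_w^{-1}$) preserve the signature modulo $n-1$. Once the shift-invariance lemma from (i) is in hand, everything else reduces to unwinding the definitions carefully.
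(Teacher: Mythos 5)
Your overall plan is sound and your steps (i) and (iii) are essentially correct (step (i) is somewhat more elaborate than the paper's argument, which simply appeals to Lemma~\ref{anti-chain sizes} via a common level-$k$ refinement, but the sequential-refinement reasoning you sketch does go through). However, step (ii) contains a genuine gap.

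You correctly reduce to showing that $\big((\mathfrak{C}_n^d)f_{P,q}\big)\overline{\text{ssig}_{d,n}}$ is independent of $q\in Q_P$, and you correctly observe that $P$ is strongly connected (every state is reachable from every state, since each factor $T_i$ is its own core). But reachability alone does not give constancy of the signature. What you actually establish is that $\big((w\mathfrak{C}_n^d)f_{P,q_0'}\big)\overline{\text{ssig}_{d,n}} = \big((\mathfrak{C}_n^d)f_{P,q_1'}\big)\overline{\text{ssig}_{d,n}}$ where $(q_0',w)\boldsymbol{\pi}_P=q_1'$; what you still need is $\big((\mathfrak{C}_n^d)f_{P,q_0'}\big)\overline{\text{ssig}_{d,n}} = \big((w\mathfrak{C}_n^d)f_{P,q_0'}\big)\overline{\text{ssig}_{d,n}}$, and in general the signature of a clopen subset of a clopen set bears no relation to the signature of the whole set (a set of signature $1$ can have a clopen subset of signature $2$ when $n\geq 3$). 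Your phrase ``both subsets have the same signature mod $n-1$ by the shift relation and by comparing signatures of complements of cones'' does not actually supply an argument, and any attempt to close it by writing $(\mathfrak{C}_n^d)f_{P,q_0'}$ as a disjoint union of shifted images $(v\mathfrak{C}_n^d)f_{P,q_0'}$ and claiming all the pieces have equal signature is circular --- it assumes the very state-independence you are trying to prove.

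The missing ingredient is synchronization, not just strong connectivity. The paper's argument takes $k$ to be the synchronizing length of $P$ and decomposes
\[
\big((w\mathfrak{C}_n^d)f_{P,q}\big)\overline{\text{ssig}_{d,n}}
=\sum_{v\in (X_n^k)^d}\big((\mathfrak{C}_n^d)f_{P,(wv)\mathfrak{s}_P}\big)\overline{\text{ssig}_{d,n}}
=\sum_{v\in (X_n^k)^d}\big((\mathfrak{C}_n^d)f_{P,(v)\mathfrak{s}_P}\big)\overline{\text{ssig}_{d,n}},
\]
where the last equality holds because $v$ has all coordinates of length at least $k$, so $(wv)\mathfrak{s}_P=(v)\mathfrak{s}_P$ regardless of $w$ (and of $q$). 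This expression is manifestly independent of both $w$ and $q$, which is what you need. You should replace the reachability argument with this synchronization argument; once that is done, your step (iii) goes through as written.
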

\begin{proof}
We first show that \(\overline{\text{ssig}_{d, n}}\) is well defined. Let \(S\) be a clopen subset of \(\mathfrak{C}_n^d\). Suppose that \(F_1, F_2 \subseteq (X_n^*)^d\) are such that both collections of sets \(\makeset{v\mathfrak{C}_n^d}{\(v\in F_1\)}\) and \(\makeset{v\mathfrak{C}_n^d}{\(v\in F_2\)}\) are individually pairwise disjoint, and
\[S = \union{v\in F_1} v\mathfrak{C}_n^d = \union{v\in F_2} v\mathfrak{C}_n^d.\]
As \(S\) is compact, it follows that both \(F_1\) and \(F_2\) are finite. 
We define
\[k:= \max\left(\makeset{|(v)\pi_i|}{\(v\in F_1 \cup F_2, i\in \{0, 1, \ldots, d-1\}\)}\right),\] and define \(F_3:= \makeset{v\in (X_n^k)^d}{\(v\mathfrak{C}_n^d \subseteq S\)}\).
For all \(v\in F_1\cup F_2\), we have
\(v\mathfrak{C}_n^d = \union{}\makeset{u\mathfrak{C}_n^d}{\(u\in F_3, v\leq u\)}.\)
Thus by Lemma~\ref{anti-chain sizes}, we have
\[\sum_{v\in F_1} 1 + (n-1)\Z = \sum_{v\in F_1} \left|\makeset{u\in F_3}{\(v\leq u\)}\right| + (n-1)\Z = |F_3| +(n-1)\Z.\]
In particular \(|F_1|+(n-1)\Z = |F_3| + (n-1)\Z\) and by a symmetric argument, we have \(|F_2| + (n-1)\Z=|F_3| + (n-1)\Z=|F_1|+(n-1)\Z\). Thus the map \(\overline{\text{ssig}_{d, n}}\) is well-defined.

Let \([P]_{\cong_S}\in \widetilde{d\mathcal{P}_{n, n-1}}\), \(w\in (X_n^*)^d\) and \(q\in Q_P\) be arbitrary.
Let \(k\) be the synchronizing length of \(P\).
To conclude that \(\overline{\text{sig}_{d, n}}\) is well-defined, we need only show that \(((w\mathfrak{C}_n^d)f_{P, q})\overline{\text{ssig}_{d, n}}\) is independent of \(w\) and \(q\).
This can be seen as follows:
\begin{align*}
    ((w\mathfrak{C}_n^d)f_{P, q})\overline{\text{ssig}_{d, n}} &= \left(\left(\union{v\in (X_n^k)^d}w v\mathfrak{C}_n^d\right)f_{P, q}\right)\overline{\text{ssig}_{d, n}}= \left(\union{v\in (X_n^k)^d}\left(w v\mathfrak{C}_n^d\right)f_{P, q}\right)\overline{\text{ssig}_{d, n}}\\
    &= \sum_{v\in (X_n^k)^d}\left(\left(w v\mathfrak{C}_n^d\right)f_{P, q}\right)\overline{\text{ssig}_{d, n}}= \sum_{v\in (X_n^k)^d}\left((q, w v)\boldsymbol{\lambda}_P\left(\mathfrak{C}_n^d\right)f_{P, (w v)\mathfrak{s}_P}\right)\overline{\text{ssig}_{d, n}}\\
    &= \sum_{v\in (X_n^k)^d}\left(\left(\mathfrak{C}_n^d\right)f_{P, (w v)\mathfrak{s}_P}\right)\overline{\text{ssig}_{d, n}}= \sum_{v\in (X_n^k)^d}\left(\left(\mathfrak{C}_n^d\right)f_{P, (v)\mathfrak{s}_P}\right)\overline{\text{ssig}_{d, n}}.
\end{align*}
We now need to show that \(\overline{\text{sig}_{d, n}}\) is a semigroup homomorphism.
Let \(A, B, C\in \widetilde{d\mathcal{P}_{n, n-1}}\) be such that \([A]_{\cong_S} [B]_{\cong_S}= [C]_{\cong_S}\). 
Let \(q\in Q_C\), by Remark~\ref{induced maps preserved remark}, Lemma~\ref{composition works as expected lemma} and \ref{removing incomplete responce works}, there is \((q_a, q_b)\in Q_A\times Q_B\) and \(w\in X_n^*\) such that \(f_{C, q}=f_{A, q_a}f_{B, q_b}\lambda_{w}^{-1}\).
Moreover, let \(F\subseteq (X_n^*)^d\) be arbitrary such that \((\mathfrak{C}_n)f_{A, q_a} = \union{v\in F}v\mathfrak{C}_n\) and such that the sets of the collection \(\makeset{v\mathfrak{C}_n}{\(v\in F\)}\) are pairwise disjoint. We now have
\begin{align*}
    ([C]_{\cong_S})\overline{\text{sig}_{d, n}}&=((\mathfrak{C}_n)f_{C, q})\overline{\text{ssig}_{d, n}}=((\mathfrak{C}_n)f_{A, q_a}f_{B, q_b}\lambda_{w}^{-1})\overline{\text{ssig}_{d, n}}=((\mathfrak{C}_n)f_{A, q_a}f_{B, q_b})\overline{\text{ssig}_{d, n}}\\
    &=\left(\left(\union{v\in F}v\mathfrak{C}_n\right)f_{B, q_b}\right)\overline{\text{ssig}_{d, n}}=\left(\union{v\in F}\left(v\mathfrak{C}_n\right)f_{B, q_b}\right)\overline{\text{ssig}_{d, n}}\\
     &=\sum_{v\in F}\left(\left(v\mathfrak{C}_n\right)f_{B, q_b}\right)\overline{\text{ssig}_{d, n}}     =\sum_{v\in F}\left([B]_{\cong_S}\right)\overline{\text{sig}_{d, n}}=\left([B]_{\cong_S}\right)\overline{\text{sig}_{d, n}}|F|\\
&=\left([A]_{\cong_S}\right)\overline{\text{sig}_{d, n}}\left([B]_{\cong_S}\right)\overline{\text{sig}_{d, n}}.
\end{align*}
\end{proof}

\speeddictfour{160}{higher dimensional signatures}{cones words and Cantor spaces defn}{alpha}{transducer signature defn}{signatures work lemma}
\begin{lemma}[Higher dimensional signatures from decomposition, \Assumed{160}]\label{higher dimensional signatures}
Let \(d\in \N\backslash\{0\}\) and \(n\in \N\backslash \{0, 1\}\).
Let \(\phi\) be as in Theorem~\ref{alpha}.
If \([T]_{\cong_S}\in \widetilde{d\mathcal{P}_{n, n - 1}} \), then
\[([T]_{\cong_S})\overline{\text{sig}_{d, n}}=\prod_{i\in \{0, 1, \ldots, d-1\}} (([T]_{\cong_S})\phi^{-1}\pi_i)\overline{\text{sig}_{1, n}}\]
\end{lemma}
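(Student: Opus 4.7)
The plan is to use the product structure of $T$ together with the fact that signatures are defined via cardinalities of disjoint cone decompositions modulo $n-1$, and then observe that Cartesian products of cones in $\mathfrak{C}_n$ give cones in $\mathfrak{C}_n^d$.

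First I would choose a representative $T \cong_S \prod_{i\in \{0, 1, \ldots, d-1\}} T_i$ where each $T_i$ is an $\widetilde{\mathcal{O}_{n,n-1}}$ transducer representing $([T]_{\cong_S})\phi^{-1}\pi_i$, and pick any state $q = (q_0, q_1, \ldots, q_{d-1}) \in \prod_i Q_{T_i}$. Since products of transducers are products in the category (Lemma~\ref{transducer products are products lemma}) and $\pi_i \circ \boldsymbol{\lambda}_{T} = \boldsymbol{\lambda}_{T_i}$ component-wise, the induced map satisfies
\[f_{T,q} = \langle \pi_0 f_{T_0, q_0},\, \pi_1 f_{T_1, q_1},\, \ldots,\, \pi_{d-1} f_{T_{d-1}, q_{d-1}}\rangle_{\mathfrak{C}_n^d},\]
so that $(\mathfrak{C}_n^d)f_{T, q} = \prod_{i} (\mathfrak{C}_n)f_{T_i, q_i}$ as a Cartesian product of subsets.

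Next I would choose, for each $i$, a finite $F_i \subseteq X_n^*$ such that the cones $\{v\mathfrak{C}_n : v \in F_i\}$ are pairwise disjoint and $(\mathfrak{C}_n)f_{T_i, q_i} = \bigcup_{v \in F_i} v\mathfrak{C}_n$. By definition we have $|F_i| + (n-1)\mathbb{Z} = ([T_i]_{\cong_S})\overline{\text{sig}_{1, n}}$. Distributing the Cartesian product over these disjoint unions yields
\[(\mathfrak{C}_n^d)f_{T, q} = \union{(v_0, v_1, \ldots, v_{d-1}) \in \prod_i F_i} (v_0, v_1, \ldots, v_{d-1})\mathfrak{C}_n^d,\]
because $\prod_i (v_i\mathfrak{C}_n) = (v_0, v_1, \ldots, v_{d-1})\mathfrak{C}_n^d$ by the definition of the extended concatenation/product cone in Definition~\ref{cones words and Cantor spaces defn}. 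Distinct tuples differ in some coordinate $i$, where the corresponding cones $v_i\mathfrak{C}_n$ and $v_i'\mathfrak{C}_n$ are disjoint by choice of $F_i$, so the cones in $\mathfrak{C}_n^d$ are pairwise disjoint.

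Finally, applying the definition of $\overline{\text{sig}_{d,n}}$ with $w = \varepsilon_d$ gives
\[([T]_{\cong_S})\overline{\text{sig}_{d, n}} = \left|\prod_i F_i\right| + (n-1)\mathbb{Z} = \prod_i (|F_i| + (n-1)\mathbb{Z}) = \prod_{i\in \{0, 1, \ldots, d-1\}} (([T]_{\cong_S})\phi^{-1}\pi_i)\overline{\text{sig}_{1, n}},\]
since multiplication in $\mathbb{Z}/(n-1)\mathbb{Z}$ is well-defined. There is no genuine obstacle here; the only point requiring care is verifying that the Cartesian product of disjoint cone collections in $\mathfrak{C}_n$ produces a pairwise disjoint collection of cones in $\mathfrak{C}_n^d$, which is immediate from the coordinate-wise characterization of the partial order on $(X_n^*\cup \mathfrak{C}_n)^d$.
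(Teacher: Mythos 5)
Your proof is correct and follows essentially the same route as the paper: decompose $T$ as a product of $\widetilde{\mathcal{O}_{n,n-1}}$ transducers, express each $(\mathfrak{C}_n)f_{T_i,q_i}$ as a disjoint union of cones indexed by some $F_i$, observe that the Cartesian product of these gives a disjoint union of cones in $\mathfrak{C}_n^d$ indexed by $\prod_i F_i$, and count modulo $n-1$. The extra care you take in verifying that $f_{T,q}$ is the coordinate-wise tuple of the $f_{T_i,q_i}$ and that the resulting product cones are pairwise disjoint is harmless but left implicit in the paper's version.
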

\begin{proof}
Let \(T_0, T_1,  \ldots, T_{d-1}\) be arbitrary \(\widetilde{\mathcal{O}_{n, n - 1}}\) transducers, and let \(P:= \prod_{i\in \{0, 1, \ldots, d-1\}} T_i\).
For each \(i\in \{0, 1, \ldots, d-1\}\), let \(q_i\in Q_{T_i}\), and \(F_i\subseteq X_n^*\) be such that the elements of \(F_i\) are pairwise incomparable and
\[(\mathfrak{C}_n)f_{T_i, q_i}= \union{w\in F_i} w\mathfrak{C}_n.\]

We now have
\begin{align*}
    ((\mathfrak{C}_n^d)f_{P, (q_0, q_1, \ldots, q_{d-1})})\overline{\text{ssig}_{d, n}}&= \left(\prod_{i\in \{0, 1, \ldots, d-1\}}(\mathfrak{C}_n)f_{T_i, q_i}\right)\overline{\text{ssig}_{d, n}}= \left(\prod_{i\in \{0, 1, \ldots, d-1\}}\union{w\in F_i} w\mathfrak{C}_n\right)\overline{\text{ssig}_{d, n}}\\
    &= \left(\union{w\in F_0\times F_1 \times\ldots \times F_{d-1}} w\mathfrak{C}_n^d\right)\overline{\text{ssig}_{d, n}}= \prod_{i\in \{0, 1, \ldots, d-1\}} |F_i| + (n-1)\Z.
\end{align*}
By definition, \(((\mathfrak{C}_n^d)f_{P, (q_0, q_1, \ldots, q_{d-1})})\overline{\text{ssig}_{d, n}}= ([P]_{\cong_S})\overline{\text{sig}_{d, n}}\) and for each \(i\in \{0, 1, \ldots, d-1\}\) we have \(|F_i|+(n-1)\Z=([T_i]_{\cong_S})\overline{\text{sig}_{1, n}}\). Thus the result follows.
\end{proof}

In \cite{olukoya2019automorphisms} Proposition 7.7, the notion of signature is used to describe \(\mathcal{O}_{n, r}\) as subgroup of \(\mathcal{O}_{n, n-1}\).
We can now to the same with \(d\mathcal{K}_{n, 1}\) and \(d\mathcal{P}_{n, n-1}\).

\speeddictseven{161}{lemma2}{anti-chain sizes}{transducers are continuous lemma}{unique minimal thm}{induced maps preserved remark}{removing incomplete responce works}{alpha}{signatures work lemma}
\begin{lemma}[Identifying \(d\mathcal{K}_{n, 1}\) with signatures, \Assumed{161}]\label{lemma2}
Let \(d\in \N\backslash \{0\}\) and \(n\in \N\backslash \{0, 1\}\).
 If \(\Phi:d\mathcal{K}_{n, 1}\to d\mathcal{P}_{n, n-1}\) is as in Theorem~\ref{alpha}, then 
 \[(d\mathcal{K}_{n, 1})\Phi = (\{1 + (n-1)\Z\})\overline{\text{sig}_{d, n}}^{-1}\cap d\mathcal{P}_{n, n-1}.\]
\end{lemma}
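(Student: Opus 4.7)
We prove the two inclusions separately. The containment $(d\mathcal{K}_{n,1})\Phi \subseteq d\mathcal{P}_{n,n-1}$ is already guaranteed by Theorem~\ref{alpha}, so only the signature condition needs attention.

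For $(\subseteq)$, let $[T]_{\cong_S} \in d\mathcal{K}_{n, 1}$ and write $T = \C(M_h)$ for some $h \in d\mathcal{B}_{n, 1}$. Pick $k$ at least the synchronizing length of $M_h$. Then every $v$ in the complete prefix code $(X_n^k)^d$ satisfies $q_v := (e_h, v)\boldsymbol{\pi}_{M_h} \in Q_T$. Since $h$ is a bijection, the sets $h(v\mathfrak{C}_n^d)$ ($v \in (X_n^k)^d$) partition $\mathfrak{C}_n^d$, and each equals $(e_h, v)\boldsymbol{\lambda}_{M_h} \cdot (\mathfrak{C}_n^d)f_{T, q_v}$. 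Prepending a fixed prefix leaves the signature of a clopen set unchanged, so by Lemma~\ref{signatures work lemma} this signature has the common value $s := (([T]_{\cong_S})\Phi)\overline{\text{sig}_{d, n}}$, independent of $q_v$. Additivity of $\overline{\text{ssig}_{d, n}}$ over the partition then yields $n^{kd}\, s = 1 + (n-1)\Z$. Because $n \equiv 1 \pmod{n-1}$, we conclude $s = 1 + (n-1)\Z$.

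For $(\supseteq)$, let $[P]_{\cong_S} \in d\mathcal{P}_{n, n-1}$ have signature $1 + (n-1)\Z$. We construct $h \in d\mathcal{B}_{n, 1}$ with $\C(M_h) \cong_S P$; the $\Psi$-triviality condition required for $d\mathcal{K}_{n,1}$ then follows automatically because $P$ is a product of $\widetilde{\mathcal{O}_{n,n-1}}$ transducers, whose associated $\psi$-map from Lemma~\ref{notinjlemma} is the identity. The construction adjoins to $P$ a finite preamble of new non-core states rooted at a fresh start state $q_0'$: reading each word $w$ of some finite prefix code $F \subseteq (X_n^*)^d$ from $q_0'$ outputs a prefix $v_w \in (X_n^*)^d$ and transitions into some $q_w \in Q_P$. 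The crucial requirement is that $\{v_w \cdot (\mathfrak{C}_n^d)f_{P, q_w} : w \in F\}$ is a partition of $\mathfrak{C}_n^d$. Given this, $h := f_{T', q_0'}$ is a homeomorphism of $\mathfrak{C}_n^d$, $T'$ is synchronizing with core strongly isomorphic to $P$, and therefore $([\C(M_h)]_{\cong_S})\Phi = [P]_{\cong_S}$.

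The main obstacle is exhibiting valid data $(F, (q_w)_w, (v_w)_w)$ realising the partition. The signature hypothesis is precisely what makes this numerically feasible: each $(\mathfrak{C}_n^d)f_{P, q_w}$ has signature $1 + (n-1)\Z$, so additivity forces $|F| \equiv 1 \pmod{n-1}$, a constraint that Lemma~\ref{anti-chain sizes} permits for any complete prefix code. A concrete construction can proceed greedily: fix any $q_0 \in Q_P$ and use its image $(\mathfrak{C}_n^d)f_{P, q_0}$ (after suitable prefixing) to cover part of $\mathfrak{C}_n^d$; then fill the clopen complement iteratively by selecting further states of $Q_P$ whose images, translated by appropriate cone prefixes, cover successive clopen pieces of the remainder. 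Invertibility of $[P]_{\cong_S}$ provides enough flexibility in reaching any desired clopen set, and the procedure terminates by compactness of $\mathfrak{C}_n^d$. Verifying that the resulting $T'$ is synchronizing with core $P$ and that $f_{T', q_0'}$ is a homeomorphism is then routine once the partition property is in place.
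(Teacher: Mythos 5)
Your $(\subseteq)$ direction matches the paper's approach: partition $\mathfrak{C}_n^d$ by cones of depth at least the synchronizing length, use additivity of $\overline{\text{ssig}_{d,n}}$, note prefixes don't change signatures, and conclude from $n^{kd}\equiv 1\pmod{n-1}$. That part is fine.

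The $(\supseteq)$ direction has the right skeleton — adjoin a preamble of non-core states to $P$ so that the resulting transducer is synchronizing with core $P$ and the translated images partition $\mathfrak{C}_n^d$ — but the heart of the argument, exhibiting the data $(F,(q_w)_w,(v_w)_w)$ realizing the partition, is waved away as a greedy procedure whose steps you never show can actually be taken. The difficulty is that "fill the clopen complement iteratively by selecting further states whose images cover successive pieces" is not a construction: there is no argument that the complement at each stage admits a piece of the right shape, nor that choices along the way cohere into a single well-defined transducer (a single state of the preamble can be reached by several reads, and its outgoing transitions must agree). The paper avoids all of this with a single-shot construction that you are missing. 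Fix one state $q\in Q_P$ and one cone decomposition $(\mathfrak{C}_n^d)f_{P,q}=\bigcup_{a\in A}a\mathfrak{C}_n^d$. Choose $k$ larger than the synchronizing length and large enough that every output $(q,w)\boldsymbol{\lambda}_P$ for $w\in (X_n^k)^d$ already has the prefix $a_w\in A$. Then for each fixed $a\in A$, the collection $\{((q,w)\boldsymbol{\lambda}_P\lambda_{a}^{-1})(\mathfrak{C}_n^d)f_{P,(q,w)\boldsymbol{\pi}_P}: a_w=a\}$ is automatically a partition of $\mathfrak{C}_n^d$ — this is a consequence of the structure of $P$ alone (synchronizing, complete response, states giving homeomorphisms onto clopen sets), not of invertibility. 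Now $|A|\equiv 1\pmod{n-1}$ is exactly what the signature hypothesis gives you, so Lemma~\ref{anti-chain sizes} produces a complete prefix code $B$ with $|B|=|A|$, and a bijection $\phi:A\to B$ lets you transplant the tiling of each $a\mathfrak{C}_n^d$ to $(a)\phi\mathfrak{C}_n^d$; the combined collection $\{(a_w)\phi((q,w)\boldsymbol{\lambda}_P\lambda_{a_w}^{-1})(\mathfrak{C}_n^d)f_{P,(q,w)\boldsymbol{\pi}_P}:w\in(X_n^k)^d\}$ is then a genuine partition of $\mathfrak{C}_n^d$, and the preamble $N$ consists of the words with some coordinate shorter than $k$. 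This is where the signature hypothesis does its real work, and it is the step your greedy sketch does not carry out.

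You also misplace the role of invertibility. You use it to assert the greedy partition is "numerically feasible," but invertibility is not needed for the partition construction at all. Where it is genuinely needed — and where your proof has a second gap — is to show that the resulting homeomorphism $h=f_{T',q_0'}$ is a unit of $d\mathcal{S}_{n,1}$, i.e., that $h^{-1}$ is also synchronizing. Being a homeomorphism does not give this. The paper runs the same construction on a representative of $[P]_{\cong_S}^{-1}$ to produce $g\in d\mathcal{S}_{n,1}$ with $\C(M_g)\cong_S P^{-1}$, observes $fg\in dV_n$ (its core is the identity), and concludes $f^{-1}=g(fg)^{-1}\in d\mathcal{S}_{n,1}$. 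Your proposal never addresses this, so even granting the partition, the membership $[\C(M_h)]_{\cong_S}\in d\mathcal{K}_{n,1}$ is not established.
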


\begin{proof}
\((\subseteq):\) Let \([T]_{\cong_S}\in d\mathcal{K}_{n, 1}\) be arbitrary, and let \(f~\in~d\mathcal{B}_{n, 1}\) be such that \(T= \C(M_f)\). Let \(k\) be the synchronizing length of \(M_f\). It follows that
\begin{align*}
    1+(n-1)\mathbb{Z}  &= (\mathfrak{C}_n^d)\overline{\text{ssig}_{d, n}}= ((\mathfrak{C}_n^d)f)\overline{\text{ssig}_{d, n}}= \left(\union{w\in (X_n^k)^d}(w\mathfrak{C}_n^d)f\right)\overline{\text{ssig}_{d, n}}\\
     &= \left(\union{w\in (X_n^k)^d}(e_f, w)\boldsymbol{\lambda}_{M_f}(\mathfrak{C}_n^d)f_{T, (w)\mathfrak{s}_{T}}\right)\overline{\text{ssig}_{d, n}}\\
     &= \sum_{w\in (X_n^k)^d}\left((e_f, w)\boldsymbol{\lambda}_{M_f}(\mathfrak{C}_n^d)f_{T, (w)\mathfrak{s}_{T}}\right)\overline{\text{ssig}_{d, n}}=\sum_{w\in (X_n^k)^d} ((\mathfrak{C}_n^d)f_{T,(w) \mathfrak{s}_{T}})\overline{\text{ssig}_{d, n}}\\
    &=\sum_{w\in (X_n^k)^d} (([T]_{\cong_S})\Phi)\overline{\text{sig}_{d, n}}=n^{k d} (([T]_{\cong_S})\Phi)\overline{\text{sig}_{d, n}}=(([T]_{\cong_S})\Phi)\overline{\text{sig}_{d, n}}
\end{align*}
as required.

\((\supseteq):\) Let \([P]_{\cong_S}\in (\{1 + (n-1)\Z\})\overline{\text{sig}_{d, n}}^{-1}\cap d\mathcal{P}_{n, n-1} \), and \(q\in Q_P\) be arbitrary.
Then let \((a\mathfrak{C}_n^d)_{{a\in A}}\) be disjoint cones with union \((\mathfrak{C}_n^d)f_{P, q}\).
By the choice of \(P\), we have \(|A| \in 1 + (n-1)\mathbb{Z}\).
By Lemma~\ref{transducers are continuous lemma}, we can choose \(k\in \mathbb{N}\) greater that \(|A|\), such that for all \(w\in (X_n^k)^d\) and \(i\in \{0, 1, \ldots, d-1\}\), we have 
\[|(q, w)\boldsymbol{\lambda}_P\pi_i| \geq \max\makeset{|(a)\pi_j|}{\(a\in A, j\in \{0, 1, \ldots, d-1\}\)}.\]
For all \(w\in (X_n^k)^d\) let \(a_w\in A\) be the unique element such that \(a_w\leq (q, w)\boldsymbol{\lambda}_P\).
By the definition of \(A\), for all \(a\in A\) we now have
\[\union{w\in (X_n^k)^d \\ a_w = a} (w\mathfrak{C}_n^d)f_{P, q} = a\mathfrak{C}_n^d.\]
In particular
\[\makeset{((q, w)\boldsymbol{\lambda}_P\lambda_{a_w}^{-1})(\mathfrak{C}_n^d)f_{P, (q, w)\pi_T}}{\(w\in (X_n^k)^d\) satisfies \(a_w = a\)}\]
is a partition of \(\mathfrak{C}_n^d\).

As \(|A|\in 1 + (n-1)\mathbb{N}\), we have by Lemma~\ref{anti-chain sizes} that there is a complete prefix code \(B\) for \(\mathfrak{C}_n^d\) with \(|B|=|A|\).
Let \(\phi: A \to B\) be a bijection.
It follows that
\[\makeset{(a_w)\phi((q, w)\boldsymbol{\lambda}_P\lambda_{a_w}^{-1})(\mathfrak{C}_n^d)f_{P, (q, w)\pi_T}}{\(w\in (X_n^k)^d\)}\]
 is a partition of \(\mathfrak{C}_n^d\).
We define a \((d, n)\)-transducer \(D\) as follows:
\begin{enumerate}
    \item \(N :=\makeset{w\in (X_n^*)^d}{there is \(i\in \{0, 1,\ldots, d-1\}\) with \((w)\pi_i<k\)}\) (we assume without loss of generality that \(N\cap Q_P = \varnothing\)).
    \item \(Q_D := Q_P\cup N\).
    \item If \(p\in N\), \(w\in (X_n^*)^d\) and \(p w\in N\), then \((p, w)\boldsymbol{\pi}_D = p w\) and \((p, w)\boldsymbol{\lambda}_D = \varepsilon_d\).
    \item If \(p\in N\), \(w\in (X_n^*)^d\) and \(p w=b t\) where \(b\in (X_n^k)^d\) and \(t\in (X_n^*)^d\), then 
    \[(p, w)\boldsymbol{\pi}_D := ((q, w)\boldsymbol{\pi}_P, t)\boldsymbol{\pi}_P, \quad (p, w)\boldsymbol{\lambda}_D := (a_w)\phi((q, w)\boldsymbol{\lambda}_P\lambda_{a_w}^{-1})((q, w)\boldsymbol{\pi}_P, t)\boldsymbol{\lambda}_P.\]
    \item If \(p\in Q_P\) and \(w\in (X_n^*)^d\) then 
    \[(p, w)\boldsymbol{\pi}_D :=(p, w)\boldsymbol{\pi}_P, \quad (p, w)\boldsymbol{\lambda}_D :=(p, w)\boldsymbol{\lambda}_P.\]
\end{enumerate}
It is routine to verify that \(D\) is a \((d, n)\)-transducer. 
By definition, the transducer \(D\) is synchronizing with synchronizing length at most \(k\) plus the synchonizing length of \(P\). 
Moreover \(\C(D) = P\). We now define \(f:=f_{D, \varepsilon_d}\)

If \(w\in (X_n^k)^d\), \(x\in \mathfrak{C}_n^d\), then
\[(w x)f = (a_w)\phi((q, w)\boldsymbol{\lambda}_P\lambda_{a_w}^{-1}) (x)f_{P, (q, w)\boldsymbol{\pi}_P},\]
as each of the maps \(f_{P, (q, w)\boldsymbol{\pi}_P}\) is a homeomorphism to an open subset of \(\mathfrak{C}_n^d\), it follows that \(f\in \Aut(\mathfrak{C}_n^d)\).

By Remark~\ref{induced maps preserved remark} and Lemma~\ref{removing incomplete responce works}, together with Theorem~\ref{unique minimal thm}, we see that \(M_{CR(D)}\cong_S M_{f}\), and \(\C(M_{CR(D)}) \cong_S P\).
Thus if \(f\in d\mathcal{B}_{n, 1}\), then \([M_{f}]_{\cong_S}\in d\mathcal{O}_{n, 1}\) and \(([M_{f}]_{\cong_S})\Phi = [P]_{\cong_S}\) as required.

We have already shown that \(f\in d\mathcal{S}_{n, 1}\), thus we need only show that \(f^{-1}\in d\mathcal{S}_{n,1}\).
By assumption, the transducer \(P\) is invertable, so let \(P^{-1}\in [P]_{\cong_S}^{-1}\) be arbitrary.
By the same construction given for \(P\), let \(g\in d\mathcal{S}_{n, 1}\) be such that \(\C(M_g) \cong_S P^{-1}\).

We thus have \([\C(M_{f g})]_{\cong_S} = [\C(M_f)]_{\cong_S}[\C(M_g)]_{\cong_S}= 1_{d\mathcal{O}_{n, 1}}\). So by Lemma~\ref{nV_placement}, we have \(f g\in dV_n\) and in particular \((f g)^{-1}\in d\mathcal{S}_{n,1}\). Therefore \(f^{-1} = g(f g)^{-1}\in d\mathcal{S}_{n, 1}\) as well.
\end{proof}

\speeddictfive{162}{main theorem 1}{autnV}{first_semidirect}{alpha}{higher dimensional signatures}{lemma2}
\begin{theorem}[Connecting \(\Out(V_n)\) with \(\Out(dV_n)\), \Assumed{162}]\label{main theorem 1}
For all \(d\in \N\backslash\{0\}\) and \(n\in \N\backslash\{0, 1\}\), we have
\[\Out(dV_n) \cong \makeset{(\boldsymbol{T}, f)\in\mathcal{O}_{n, n-1}\wr_a  \Sym(d)}{\((\boldsymbol{T})\phi\overline{\text{sig}_{d, n}} = 1+(n-1)\Z\)},\]
Moreover, the group \(\Out(V_n)\wr_a \Sym(d)\) embeds in the group \(\Out(dV_n)\). 
Here \(a\) is the standard action of \(\Sym(d)\) on \(\{0, 1, \ldots, d-1\}\), and \(\phi\) is as in Theorem~\ref{alpha}.
 \(\Out(dV_n)\).
\end{theorem}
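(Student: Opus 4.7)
The plan is to chain together the isomorphisms and embeddings already established. By Theorem~\ref{autnV} we have $\Out(dV_n) \cong d\mathcal{O}_{n,1}$, and by Theorem~\ref{first_semidirect} we have $d\mathcal{O}_{n,1} \cong d\mathcal{K}_{n,1} \rtimes_a \Sym(d)$ where the action $a$ is conjugation by the coordinate-permuting homeomorphism $\mathbf{g}$. Theorem~\ref{alpha} gives an embedding $\Phi: d\mathcal{K}_{n,1}\to d\mathcal{P}_{n,n-1}$ and an isomorphism $\phi: \widetilde{\mathcal{O}_{n,n-1}}^d \to \widetilde{d\mathcal{P}_{n,n-1}}$ sending a tuple $([T_0]_{\cong_S},\ldots,[T_{d-1}]_{\cong_S})$ to $[\prod_i T_i]_{\cong_S}$. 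Restricted to groups of units this gives an isomorphism $\phi:\mathcal{O}_{n,n-1}^d \to d\mathcal{P}_{n,n-1}$. Composing everything, $\Out(dV_n)$ is isomorphic to a subgroup of $\mathcal{O}_{n,n-1}^d \rtimes \Sym(d)$ for some action to be identified.

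First I would verify that under the composed isomorphism $\phi^{-1}\circ \Phi$, the conjugation action $a$ of $\Sym(d)$ on $d\mathcal{K}_{n,1}$ translates into the standard permutation action on $\mathcal{O}_{n,n-1}^d$, so that the resulting semidirect product is exactly the wreath product $\mathcal{O}_{n,n-1}\wr_a \Sym(d)$. Explicitly, the element $\mathbf{g}=\langle (\pi_{(i)g})_{i}\rangle_{\mathfrak{C}_n^d}$ is itself realized by a one-state $(d,n)$-transducer whose product decomposition (in the sense of Lemma~\ref{product_decomposition}) is trivial on each coordinate, so conjugating a product transducer $\prod_i T_i$ by $\mathbf{g}$ must permute its factor transducers. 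This should reduce to a direct chase through the definitions of $\boldsymbol{\pi}$ and $\boldsymbol{\lambda}$ on $Q_{T_{\mathbf{g}}\cdot \prod T_i\cdot T_{\mathbf{g}^{-1}}}$, then taking the core and minimising.

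Next I would identify the image of $\Phi$ inside $\mathcal{O}_{n,n-1}^d$. By Lemma~\ref{lemma2}, the image of $d\mathcal{K}_{n,1}$ in $d\mathcal{P}_{n,n-1}$ is the preimage of $1+(n-1)\Z$ under $\overline{\text{sig}_{d,n}}$, and by Lemma~\ref{higher dimensional signatures} this condition pulls back along $\phi$ to the condition $\prod_{i\in\{0,\ldots,d-1\}} ([T_i]_{\cong_S})\overline{\text{sig}_{1,n}} = 1+(n-1)\Z$. Since the first coordinate of the semidirect product is unaffected by the choice of $f\in \Sym(d)$ in the signature condition, I obtain precisely
\[\Out(dV_n) \cong \makeset{(\boldsymbol{T}, f)\in\mathcal{O}_{n, n-1}\wr_a \Sym(d)}{\((\boldsymbol{T})\phi\overline{\text{sig}_{d, n}} = 1+(n-1)\Z\)}.\]

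For the second part, I would specialize the first isomorphism to $d=1$: since $\Sym(1)$ is trivial, this yields $\Out(V_n) \cong \{T \in \mathcal{O}_{n,n-1} : (T)\overline{\text{sig}_{1,n}} = 1+(n-1)\Z\}$. Then for any $\boldsymbol{T}\in \Out(V_n)^d \subseteq \mathcal{O}_{n,n-1}^d$ and any $f\in \Sym(d)$, Lemma~\ref{higher dimensional signatures} gives $(\boldsymbol{T})\phi\overline{\text{sig}_{d,n}} = \prod_i (T_i)\overline{\text{sig}_{1,n}} = \prod_i (1+(n-1)\Z) = 1+(n-1)\Z$, so the natural inclusion $\Out(V_n)\wr_a \Sym(d) \hookrightarrow \mathcal{O}_{n,n-1}\wr_a \Sym(d)$ lands inside the subgroup characterizing $\Out(dV_n)$, giving the desired embedding.

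The main obstacle I expect is the verification in step one: namely, that conjugation by the one-state transducer $T_\mathbf{g}$ genuinely permutes the factors of a $\widetilde{d\mathcal{P}_{n,n-1}}$ transducer under the $\phi$ identification, rather than inducing some twisted or unexpected action. This requires careful bookkeeping with the product, composition, core, and minimisation operators, but should follow cleanly from the fact that the decomposition of Lemma~\ref{product_decomposition} is canonical (the factor $T_i$ is recovered from the equivalence classes $\sim_i$ determined by which coordinate is read), and $T_\mathbf{g}$ simply relabels coordinates.
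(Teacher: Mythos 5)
Your proposal is correct and follows essentially the same route as the paper's proof: it chains Theorem~\ref{autnV}, Theorem~\ref{first_semidirect}, Theorem~\ref{alpha}, Lemma~\ref{lemma2}, and Lemma~\ref{higher dimensional signatures} in the same order, with the same reduction to $d=1$ for the final embedding. The verification you flag as the main obstacle (that conjugation by $\mathbf{g}$ translates to the coordinate-permutation action under the identification $\phi^{-1}\Phi$) is also left implicit in the paper's proof, and your explanation of why it should hold, via the canonical nature of the $\sim_i$-decomposition in Lemma~\ref{product_decomposition}, is the right justification.
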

\begin{proof}
By Theorem~\ref{autnV} we have \(\Out(dV_n) \cong d\mathcal{O}_{n,1}\). 
Thus from Theorem~\ref{first_semidirect} and Lemma~\ref{lemma2}, we have
\[\Out(dV_n) \cong \makeset{(\boldsymbol{T}, f)\in\mathcal{O}_{n, n-1}\wr_a  \Sym(d)}{\((\boldsymbol{T})\phi\overline{\text{sig}_{d, n}} = 1+(n-1)\Z\)}.\]
In particular \(\Out(V_n) \cong \makeset{(\boldsymbol{T}, f)\in\mathcal{O}_{n, n-1}}{\((\boldsymbol{T})\phi\overline{\text{sig}_{1, n}} = 1+(n-1)\Z\)}\).
The result the follows from Lemma~\ref{higher dimensional signatures}.
\end{proof}

\speeddictone{163}{main theorem 2}{main theorem 1}
\begin{theorem}[Connecting \(\Out(V)\) with \(\Out(dV)\), \Assumed{163}]\label{main theorem 2}
For all \(d\in \N\backslash \{0\}\), we have \(\Out(dV) \cong \Out(V)\wr_a \Sym(d)\) (where \(a\) is the standard action of \(\Sym(d)\) on \(\{0, 1, \ldots, d-1\}\)).
\end{theorem}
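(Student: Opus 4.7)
The plan is to derive this theorem as the $n=2$ specialization of Theorem~\ref{main theorem 1}. The key observation is that when $n=2$, the modulus $n-1=1$, so the signature codomain $\mathbb{Z}/(n-1)\mathbb{Z} = \mathbb{Z}/\mathbb{Z}$ is the trivial group. In particular, the congruence class $1+(n-1)\mathbb{Z}$ is the unique element of $\mathbb{Z}/(n-1)\mathbb{Z}$, so the constraint
\[(\boldsymbol{T})\phi\,\overline{\text{sig}_{d,2}} = 1+(n-1)\mathbb{Z}\]
is automatically satisfied for every element $(\boldsymbol{T}, f)\in \mathcal{O}_{2,1}\wr_a \Sym(d)$.

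First I would apply Theorem~\ref{main theorem 1} with $n=2$ to obtain
\[\Out(dV) = \Out(dV_2) \cong \makeset{(\boldsymbol{T}, f)\in\mathcal{O}_{2,1}\wr_a \Sym(d)}{(\boldsymbol{T})\phi\,\overline{\text{sig}_{d,2}} = 1+(n-1)\mathbb{Z}}.\]
Next I would observe that by Lemma~\ref{signatures work lemma}, the map $\overline{\text{sig}_{d,2}}$ takes values in $\mathbb{Z}/\mathbb{Z}$, which is the trivial group, so the displayed constraint is vacuous and the set on the right-hand side is simply all of $\mathcal{O}_{2,1}\wr_a \Sym(d)$. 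This yields $\Out(dV) \cong \mathcal{O}_{2,1}\wr_a \Sym(d)$.

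Finally I would apply Theorem~\ref{main theorem 1} again, this time with $n=2$ and $d=1$, to obtain
\[\Out(V) = \Out(1V_2) \cong \makeset{\boldsymbol{T}\in\mathcal{O}_{2,1}}{(\boldsymbol{T})\phi\,\overline{\text{sig}_{1,2}} = 1+\mathbb{Z}} = \mathcal{O}_{2,1},\]
again using the triviality of $\mathbb{Z}/\mathbb{Z}$. Combining the two isomorphisms gives $\Out(dV) \cong \Out(V)\wr_a \Sym(d)$, completing the proof.

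There is essentially no obstacle here since all the heavy lifting has been done in Theorem~\ref{main theorem 1} and the auxiliary results describing the signature map; the only thing to verify is the triviality of the modular condition when $n=2$, which is immediate.
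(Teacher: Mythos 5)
Your proof is correct and follows exactly the paper's own argument: specialize Theorem~\ref{main theorem 1} to $n=2$, observe $\mathbb{Z}/(n-1)\mathbb{Z}$ is then trivial so the signature constraint is vacuous, and conclude $\Out(dV)\cong \mathcal{O}_{2,1}\wr_a \Sym(d)$ and $\Out(V)\cong\mathcal{O}_{2,1}$.
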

\begin{proof}
Note that \(|\Z/(2-1)\Z| = 1\), thus by Theorem~\ref{main theorem 1}, we have
\(\Out(V)\cong \mathcal{O}_{n, n-1}\) and \(\Out(dV)\cong \mathcal{O}_{n, n-1}\wr_a \Sym(d)\).  
\end{proof}

\cite{https://doi.org/10.48550/arxiv.2203.11577}


\bibliography{bib.bib}
\bibliographystyle{plain}

\end{document}